\newcommand{\csf}{\mathsf {cf}}
\newcommand{\sfv}{\mathsf v}
\newcommand{\bbsR}{\bbR_{\rm sym}}
\newcommand{\Babs}{\B_{\rm abs}}
\newcommand{\cN}{N}
\newcommand{\sigp}{\sigma_{\rm {post}}}
\newcommand{\calM}{\mathcal M}
\newcommand{\calA}{\mathcal A}
\newcommand{\calB}{\mathcal B}
\newcommand{\hmu}{\widehat{\mu}}
\newcommand{\hC}{\widehat{C}}
\newcommand{\mue}{\mu^{\epsilon}}
\newcommand{\Ve}{V^{\epsilon}}
\newcommand{\pd}{p^{\dagger}}
\newcommand{\qd}{q^{\dagger}}
\definecolor{darkred}{rgb}{.7,0,0}
\definecolor{darkgreen}{rgb}{0,0.5,0}
\definecolor{darkblue}{rgb}{0,0,0.7}
\definecolor{darkyellow}{rgb}{0.0,0.704,0.7}
\newcommand{\bbR}{\mathbb{R}}
\newcommand{\bbC}{\mathbb{C}}
\newcommand{\cf}{{\mathcal F}}
\newcommand{\bF}{{\overline f}}
\newcommand{\cP}{\mathcal{P}}
\newcommand{\dt}{\tau}
\newcommand{\fm}{f_{\rm max}}
\newcommand{\bbaP}{\bbP_{\rm approx}}
\newcommand{\va}{v_{\rm approx}}
\newcommand{\Jj}{\mathsf J}
\newcommand{\Ii}{\mathsf I}
\newcommand{\Ibi}{\overline {\Ii}}
\newcommand{\Jjd}{{\mathsf J}_{\rm det}}
\newcommand{\Iid}{{\mathsf I}_{\rm det}}
\newcommand{\Ibid}{\overline {\Ibid}}
\newcommand{\Ir}{{\mathsf I}_{\mbox r}}
\newcommand{\Phr}{\PPhi_{\mbox r}}
\newcommand{\Jr}{{\mathsf J}_{\mbox r}}
\newcommand{\Jtd}{{\mathsf I}_{\rm filter}}
\newcommand{\Jtdn}{{\mathsf I}_{{\rm filter},n}}
\newcommand{\dhh}{d_{\mbox {\tiny{\rm Hell}}}}
\newcommand{\dtv}{d_{\mbox {\tiny{\rm TV}}}}
\newcommand{\dkl}{D_{\mbox {\tiny{\rm KL}}}}
\newcommand{\MAT}{{\sc {matlab}}\,\,}
\newcommand{\hx}{h_{{\tiny{\rm max}}}}
\newcommand{\ppx}{\Phi_{{\tiny{\rm max}}}}
\newcommand{\us}{\mathsf{z}}
\newcommand{\N}{\mathbb{N}}
\newcommand{\rp}{\mathbb{P}}
\newcommand{\rqq}{\mathbb{Q}}
\newcommand{\bbP}{\mathbb{P}}
\newcommand{\bbE}{\mathbb{E}}
\newcommand{\J}{\mathbb{J}}
\newcommand{\G}{N}
\newcommand{\B}{\mathcal{B}}
\newcommand{\Z}{\mathbb{Z}}
\newcommand{\bbI}{\mathbb{I}}
\newcommand{\norm}[1]{\big\|#1\big\|}
\newcommand{\s}{\sigma}
\newcommand{\iid}{\text {i.i.d.}}
\newcommand{\R}{\mathbb{R}}
\newcommand{\C}{C}
\newcommand{\bbN}{\mathbb{N}}
\newcommand{\bbQ}{\mathbb{Q}}
\newcommand{\hc}{\widehat{C}}
\newcommand{\hm}{\widehat{m}}
\newcommand{\hv}{\widehat{v}}
\newcommand{\hw}{\widehat{w}}
\newcommand{\tw}{\widetilde{w}}
\newcommand{\PPsi}{\mathrm{\Psi}}
\newcommand{\PPhi}{\mathrm{\Phi}}
\newcommand{\Phid}{{\PPhi}_{\rm det}}
\newcommand{\mdet}{{m}_{\rm det}}
\newcommand{\Cdet}{{C}_{\rm det}}
\newcommand{\Ldet}{{L}_{\rm det}}
\newcommand{\pst}{\mathrm{\Psi}_{\rm true}}
\newcommand{\vd}{v^{\dagger}}
\newcommand{\yd}{y^{\dagger}}
\newtheorem{theorem}{Theorem}[chapter]{\bfseries}{\itshape}
\newtheorem{remark}[theorem]{Remark}{\bfseries}{\itshape}
\newtheorem{example}[theorem]{Example}{\bfseries}{}
\let\Oldendexample\endexample%
\def\endexample{ \hfill $\spadesuit$ \Oldendexample}%
\newtheorem{lemma}[theorem]{Lemma}{\bfseries}{\itshape}
\newtheorem{definition}[theorem]{Definition}{\bfseries}{\itshape}
\newtheorem{corollary}[theorem]{Corollary}{\bfseries}{\itshape}
{\bfseries}{\itshape}
\newenvironment{proof}{\emph{Proof} }{\hfill$\square$}
\titleformat{\chapter}[display]
  {\normalfont\bfseries\LARGE}
  {\chaptertitlename~\thechapter}{1pc}
  {{\titlerule[1pt]}\vspace{1pc}}
\begin{document}

\author{K.J.H. Law, A.M. Stuart and K.C. Zygalakis}
\title{Data Assimilation:\\ A Mathematical Introduction}

\maketitle



\tableofcontents



\chapter{Mathematical Background}\label{sec:mathbackground}
\label{sec:mb}

The purpose of this chapter is to briefly overview the key 
mathematical ways of thinking
that underpin our presentation of the subject of data assimilation \index{data assimilation}. In
particular we touch on the subjects of probability, dynamical systems,
probability metrics and dynamical systems for probability measures, in
sections \ref{ssec:p}, \ref{ssec:ds}, \ref{ssec:pm} and \ref{ssec:pds}
respectively. Our treatment is necessarily terse and very selective and
the bibliography section \ref{ssec:backbib} provides references to the
literature. 
We conclude with exercises in section \ref{ex:intro}.

We highlight here the fact that, throughout this book,
all probability measures on $\bbR^{\ell}$ will be assumed to possess
a density with respect to Lebesgue measure and, furthermore, this density
will be assumed to be strictly
positive everywhere in $\bbR^{\ell}$. This assumption simplifies greatly 
our subsequent probabilistic calculations.

\section{Probability}

We describe here some basic notation and facts from probability theory, all of
which will be fundamental to formulating data assimilation \index{data assimilation} from a probabilistic
perspective.

\label{ssec:p}
\subsection{Random Variables on $\bbR^{\ell}$}
We consider {\em random variables}\index{random variable} $z$ on $\bbR^{\ell}.$ To 
define a probability
measure $\mu$ on $\bbR^{\ell}$ we need to work with a sufficiently
rich collection of subsets of $\bbR^{\ell}$, to each of which we can assign
the probability that $z$ is contained in it; this collection of
subsets is termed a {\em $\sigma$-algebra}\index{$\sigma$-algebra}.
Throughout these  notes we work with $\calB(\bbR^{\ell})$, the 
Borel $\sigma$-algebra\index{$\sigma$-algebra!Borel}\index{Borel!$\sigma$-algebra}
generated by the open sets; we will abbreviate 
this $\sigma$-algebra\index{$\sigma$-algebra!Borel}
by $\calB$, when the set $\bbR^{\ell}$ is clear. 
The Borel $\sigma$-algebra\index{$\sigma$-algebra!Borel}
is the natural collection of subsets
available on $\bbR^{\ell}$; an element in $\calB$ will be termed
a {\em Borel set}\index{Borel!set}.
From a practical viewpoint the reader of this
book does not need to understand the finer properties of the
Borel $\sigma$-algebra.

We have defined a {\em probability triple}\index{probability!triple} $\bigl(\bbR^{\ell},
\calB,\mu\bigr).$
For simplicity we assume throughout the book
that $z$ has a strictly positive {\em probability density function}\index{probability density function}  (pdf) $\rho$ with respect to Lebesgue measure. Then, for any Borel set\index{Borel!set} $A\subset \bbR^{\ell},$
\begin{equation*}
\bbP(A)=\bbP (z\in A)=\int_A \rho(x)\,dx,
\end{equation*}
where $\rho:\, \bbR^{\ell}\to\bbR^+$ satisfies
$$\int_{\bbR^{\ell}} \rho(x)\,dx =1.$$
A Borel set \index{Borel!set} $A\subset \bbR^{\ell}$ is sometimes
termed an {\em event}\index{event} and the event is said to occur
{\em almost surely}\index{almost surely} if $\bbP(A)=1.$ Since $\rho$
integrates to $1$ over $\bbR^{\ell}$ and is strictly positive,
this implies that the Lebesgue
measure\index{measure!Lebesgue} of the complement of $A$, the set $A^{c}$, is zero.

We write $z\sim \mu$ as shorthand for the statement that
$z$ is {\em distributed}\index{distributed} according to probability measure $\mu$ on $\bbR^{\ell}$. 
Note that here $\mu: \calB(\bbR^{\ell})\to[0,1]$ denotes a probability measure 
and $\rho:\bbR^{\ell} \to \bbR^+$ the corresponding density. However, we will 
sometimes use the letter $\bbP$ to denote both the measure and its 
corresponding pdf. This should create no confusion: $\bbP(\cdot)$ will be a probability measure whenever its argument is a Borel set\index{Borel!set}, 
and a density whenever its argument is a point in $\bbR^{\ell}.$

For any function $f:\bbR^{\ell} \to \bbR^{p\times q}$ 
we denote by $\bbE f(z)$ the {\em expected
value}\index{expected value} 
of the random variable $f(z)$ on $\bbR^{p\times q};$ this
expectation is given by
\begin{equation*}
\bbE f(z)=\int_{\bbR^{\ell}}f(x)\mu(dx), \quad \mu(dx)=\rho(x)dx.
\end{equation*}
We also sometimes write $\mu(f)$ for $\bbE f(z)$.
The case where the function $f$ is vector valued corresponds to $q=1$ so 
that $\bbR^{p\times q}=\bbR^{p\times 1}\equiv \bbR^{p}.$
We will sometimes write $\bbE^{\mu}$ if we wish to differentiate 
between different measures with respect to which the expectation is to be understood.

The {\em characteristic function} of the random variable $z$ on $\bbR^\ell$ 
is $\cf:\bbR^\ell \to \bbC$ defined by
$$\csf(h)=\bbE \exp\bigl(i \langle h,z \rangle\bigr).$$

\begin{example}
Let $\ell =1$ and set $\rho(x)=\frac{1}{\pi (1+x^2)}.$ Note that $\rho(x)>0$ for every $x\in\bbR.$ Also, using the change of variables $x=\tan \theta,$  
\begin{equation*}
\int_{-\infty}^{\infty} \frac{dx}{\pi(1+x^2)} =
2 \int_{0}^{\infty} \frac{dx}{\pi(1+x^2)} =\int_{\arctan(0)}^{\arctan(\infty)} \frac{2\sec^2 \theta\,d\theta}{\pi(1+\tan^2\theta)}=\frac{2}{\pi}\int_{0}^{\pi/2}
d\theta= 1,
\end{equation*} 
and therefore $\rho$ is the pdf of a random variable $z$ on $\bbR.$ We say that such random variable has the {\em Cauchy distribution}. 
\end{example}

The {\em pushforward}\index{pushforward} of a pdf $\rho$ on $\bbR^l$ under a map
$G: \bbR^\ell \to \bbR^\ell$ is denoted $G\star\rho.$
It may be calculated explicitly by means of the change of variable
formula under an integral.
Indeed if $G$ is invertible then
$$G \star \rho(v) := \rho(G^{-1}(v))|DG^{-1}(v)|.$$

We will occasionally use the {\em Markov inequality}\index{Markov inequality}
which states that, for a random variable $z$ on 
$\bbR^{\ell}$, and any $R>0$,
\begin{equation}
\label{eq:mie}
\bbP(|z|\ge R) \le R^{-1}\bbE |z|.
\end{equation}
As a consequence
\begin{equation}
\label{eq:mie2}
\bbP(|z|<R) \ge 1- R^{-1}\bbE |z|.
\end{equation}
In particular, if $\bbE|z|<\infty$, then choosing $R$ sufficiently large
shows that $\bbP(|z|<R)>0.$ In our setting this last inequality follows
in any case, by assumption on the strict positivity of $\rho(\cdot)$ everywhere
on $\bbR^{\ell}$. 

Finally we say that a sequence of probability measures $\mu^{(n)}$ on $\bbR^{\ell}$ is
said to {\em converge weakly}\index{weak convergence}
to a limiting probability measures $\mu$ on $\bbR^{\ell}$
if, for all continuous bounded functions $\varphi: \bbR^{\ell} \to \bbR$,
$$\bbE^{\mu^{(n)}} \varphi(u) \to  \bbE^{\mu} \varphi(u)$$
as $n \to \infty.$

\subsection{Gaussian Random Variables}
We work in finite dimensions, but all the ideas can be generalized to
infinite dimensional contexts, such as the Hilbert space setting, for example.
A {\em Gaussian}\index{Gaussian}\index{random variable!Gaussian}
random variable\footnote{Sometimes
also called  {\em normal} random variable\index{random variable!normal}} on $\bbR^{\ell}$ is characterized by:
\begin{itemize}
\item Mean: $m\in\bbR^{\ell}.$
\item Covariance: $C\in \bbsR^{\ell\times\ell},\quad C\ge 0.$
\end{itemize}
We write $z\sim N(m,C)$ and call the Gaussian random variable {\em centred}
if $m=0.$ If $C>0$ then $z$ has strictly positive pdf on $\bbR^{\ell},$ given by
\begin{subequations}
\label{gaussiandensity}
\begin{eqnarray}
\rho(x)
&=&\frac{1}{(2\pi)^{\ell /2} (\det C)^{1/2}}  \exp\Bigl(-\frac{1}{2}\bigl|C^{-\frac12}(x-m)\bigr|^2\Bigr)\\
&=&\frac{1}{(2\pi)^{\ell /2} (\det C)^{1/2}}  \exp\Bigl(-\frac{1}{2}|x-m|_{C}^2\Bigr).
\end{eqnarray}
\end{subequations}

It can be shown that indeed $\rho$ given by \eqref{gaussiandensity}  satisfies 
\begin{equation}\label{integral1}
\int_{\bbR^{\ell}} \rho(x)\,dx =1. 
\end{equation}

\begin{lemma}
Let $z\sim N(m,C),$ $C>0.$ Then
\begin{enumerate}
\item $\bbE z=m.$
\item $\bbE (z-m)(z-m)^T =C.$
\end{enumerate}  
\end{lemma}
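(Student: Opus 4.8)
The plan is to reduce the general case to the standard Gaussian $N(0,I_\ell)$ by an affine change of variables, compute the first two moments there using symmetry of the integrand and a single one‑dimensional integral, and then transform back. Since $C>0$, both $C^{1/2}$ and $C^{-1/2}$ exist, are symmetric, and are invertible, so the substitution is legitimate.

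First I would set $w := C^{-1/2}(z-m)$, i.e. apply the invertible affine map $G(x)=C^{-1/2}(x-m)$, whose inverse is $G^{-1}(w)=m+C^{1/2}w$ with $|DG^{-1}(w)|=(\det C)^{1/2}$. Feeding this into the pushforward formula $G\star\rho(w)=\rho(G^{-1}(w))\,|DG^{-1}(w)|$ and using the explicit density \eqref{gaussiandensity}, the $(\det C)^{1/2}$ factors cancel and the exponent becomes $-\tfrac12|w|^2$, so $w\sim N(0,I_\ell)$. Because $z=m+C^{1/2}w$ and $(z-m)(z-m)^T=C^{1/2}\,ww^T\,C^{1/2}$ (using symmetry of $C^{1/2}$), linearity of expectation reduces the two assertions to showing $\bbE w=0$ and $\bbE ww^T=I_\ell$: indeed then $\bbE z=m+C^{1/2}\bbE w=m$ and $\bbE(z-m)(z-m)^T=C^{1/2}(\bbE ww^T)C^{1/2}=C$.

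Next I would note that the standard Gaussian density factorizes as $\prod_{j=1}^\ell (2\pi)^{-1/2}e^{-w_j^2/2}$, so the coordinates $w_j$ are independent one‑dimensional standard Gaussians. For the mean, $\bbE w_j=\int_{\bbR} s\,(2\pi)^{-1/2}e^{-s^2/2}\,ds=0$ since $s\,e^{-s^2/2}$ is absolutely integrable and odd. For the covariance, the off‑diagonal entries $\bbE w_i w_j$ with $i\neq j$ factor as $(\bbE w_i)(\bbE w_j)=0$ by independence, while each diagonal entry is $\bbE w_j^2=\int_{\bbR}s^2(2\pi)^{-1/2}e^{-s^2/2}\,ds$. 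This remaining integral I would evaluate by integration by parts, writing $s^2e^{-s^2/2}=s\cdot\bigl(-\tfrac{d}{ds}e^{-s^2/2}\bigr)$, so that $\int_{\bbR}s^2e^{-s^2/2}\,ds=\bigl[-s\,e^{-s^2/2}\bigr]_{-\infty}^{\infty}+\int_{\bbR}e^{-s^2/2}\,ds=\sqrt{2\pi}$, the last equality being the $\ell=1$ instance of the normalization \eqref{integral1}. Hence $\bbE w_j^2=1$ and $\bbE ww^T=I_\ell$, completing the argument.

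All steps are elementary; there is no genuine obstacle. The only points needing care are the correct bookkeeping of the Jacobian in the pushforward step (so that the density really collapses to the standard Gaussian), and checking absolute integrability of $s$ and $s^2$ against the Gaussian weight before splitting integrals, invoking oddness, and using Fubini to factor the independent coordinates — all routine.
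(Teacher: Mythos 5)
Your proposal is correct and follows essentially the same route as the paper: a change of variables reducing to the standard Gaussian, oddness/factorization arguments for the mean and off-diagonal covariance entries, and integration by parts against the normalization \eqref{integral1} for the diagonal entries. The only cosmetic difference is that you pass to $N(0,I_\ell)$ for both moments at the outset, whereas the paper handles the mean with just the shift $y=x-m$ and only introduces $w=C^{-1/2}y$ for the covariance; the content is identical.
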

\begin{proof}
For the first item 
\begin{align*}
\bbE z &=  \frac{1}{(2\pi)^{\ell /2} (\det C)^{1/2}} \int_{\bbR^{\ell}} x \exp\bigl(-\frac{1}{2}|x-m|_{C}^2\bigr)\, dx  \\
&= \frac{1}{(2\pi)^{\ell /2} (\det C)^{1/2}} \int_{\bbR^{\ell}} (y+m) \exp\bigl(-\frac{1}{2}|y|_{C}^2\bigr)\, dy \\
&= \frac{1}{(2\pi)^{\ell /2} (\det C)^{1/2}} \int_{\bbR^{\ell}} y \exp\bigl(-\frac{1}{2}|y|_{C}^2\bigr)\, dy +\frac{m}{(2\pi)^{\ell /2} (\det C)^{1/2}} \int_{\bbR^{\ell}}  \exp\bigl(-\frac{1}{2}|y|_{C}^2\bigr)\, dy \\
&= 0+m\\
&= m,
\end{align*}
where we used in the last line that the function $y\mapsto y \exp\bigl(-\frac{1}{2}|y|_{C}^2\bigr)$ is even and the fact that,
by \eqref{integral1},
$$\frac{1}{(2\pi)^{\ell /2} (\det C)^{1/2}} \int_{\bbR^{\ell}}  \exp\bigl(-\frac{1}{2}|y|_{C}^2\bigr)=1.$$ 
For the second item
\begin{align*}
\bbE (z-m)(z-m)^T &=  \frac{1}{(2\pi)^{\ell /2} (\det C)^{1/2}} \int_{\bbR^{\ell}} (x-m)(x-m)^T \exp\bigl(-\frac{1}{2}|x-m|_{C}^2\bigr)\, dx \\
&= \frac{1}{(2\pi)^{\ell /2} (\det C)^{1/2}} \int_{\bbR^{\ell}} yy^T \exp\bigl(-\frac{1}{2}|C^{-1/2}y|^2\bigr)\, dy \\
&= \frac{1}{(2\pi)^{\ell /2} (\det C)^{1/2}} \int_{\bbR^{\ell}} C^{1/2} ww^T C^{1/2} \exp\bigl(-\frac{1}{2}|w|^2\bigr)\det(C^{1/2})\, dw  \\
&= C^{1/2} J C^{1/2}
\end{align*}
where
\begin{equation*}
J= \frac{1}{(2\pi)^{\ell /2}} \int_{\bbR^{\ell}} ww^T \exp\bigl(-\frac{1}{2}|w|^2\bigr)\, dw \in \bbR^{\ell}\times\bbR^{\ell}
\end{equation*}
and so
\begin{equation*}
 J_{ij}= \frac{1}{(2\pi)^{\ell /2}} \int_{\bbR^{\ell}} w_i w_j \exp\bigl(-\frac{1}{2}\sum_{k=1}^{\ell} w_k^2\bigr) \prod_{k=1}^{\ell} \,dw_k.
\end{equation*}
To complete the proof we need to show that $J$ is the identity matrix $I$
on $\bbR^{\ell}\times \bbR^{\ell}.$
Indeed, for $i\neq j$
\begin{equation*}
J_{ij}\propto \int_{\bbR} w_i \exp\bigl(-\frac{1}{2} w_i^2\bigr)\,dw_i \int_{\bbR} w_j \exp\bigl(-\frac{1}{2} w_j^2\bigr)\,dw_j =0,
\end{equation*}
by symmetry; and for $i=j$
\begin{align*}
J_{jj} &= \frac{1}{(2\pi)^{\frac12}}   \int_{\bbR} w_j^2 \exp\bigl(-\frac{1}{2} w_j^2\bigr)\,dw_j \left( \frac{1}{(2\pi)^{\frac12}}  \int_{\bbR}  \exp\bigl(-\frac{1}{2} w_k^2\bigr)\,dw_k \right)^{\ell -1} \\
&= \frac{1}{(2\pi)^{\frac12}}   \int_{\bbR} w_j^2 \exp\bigl(-\frac{1}{2} w_j^2\bigr)\,dw_j \\
&= -\frac{1}{(2\pi)^{\frac12}}w_j \exp\bigl(-\frac{1}{2}w_j^2\bigr) \Big|_{-\infty}^\infty  + \frac{1}{(2\pi)^{\frac12}}   \int_{\bbR}  \exp\bigl(-\frac{1}{2} w_j^2\bigr)\,dw_j =1,
\end{align*}
where we again used \eqref{integral1} in the first and last lines.
Thus $J=I$, the identity in $\bbR^{\ell},$ and $\bbE (z-m)(z-m)^T = C^{1/2}C^{1/2}=C.$
\end{proof}

The following characterization of Gaussians is often useful.

\begin{lemma}
The characteristic function of the Gaussian $N(m,C)$ is given by
$$\csf(h)=\exp\bigl(i\langle h,m\rangle-\frac12 \langle Ch,h\rangle \bigr).$$
\end{lemma}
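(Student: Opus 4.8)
The plan is to reduce everything to the one-dimensional standard Gaussian and then reassemble. First I would represent $z\sim N(m,C)$ as $z = m + C^{1/2}w$ with $w\sim N(0,I)$ on $\bbR^{\ell}$; that this $z$ has the stated law is exactly the change of variables behind \eqref{gaussiandensity} applied to the affine map $w\mapsto m+C^{1/2}w$ (and is consistent with the mean/covariance lemma above, and with the degenerate case $C\ge 0$ as well). Using symmetry of $C^{1/2}$ to write $\langle h, C^{1/2}w\rangle = \langle C^{1/2}h, w\rangle$, linearity of expectation gives
$$\csf(h) = \bbE \exp\bigl(i\langle h,m\rangle + i\langle C^{1/2}h, w\rangle \bigr) = \exp\bigl(i\langle h,m\rangle\bigr)\, \bbE \exp\bigl(i\langle g, w\rangle\bigr), \qquad g:=C^{1/2}h,$$
so it remains to compute the characteristic function of $N(0,I)$ at an arbitrary point $g\in\bbR^{\ell}$.

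Second, the density of $N(0,I)$ factorizes as $\prod_{k=1}^{\ell}(2\pi)^{-1/2}\exp(-\frac12 w_k^2)$, so Fubini splits $\bbE\exp(i\langle g,w\rangle)$ into a product of one-dimensional integrals, and everything comes down to the scalar identity
$$\phi(s):=\frac{1}{\sqrt{2\pi}}\int_{\bbR}\exp\Bigl(isx-\frac12 x^2\Bigr)\,dx = \exp\Bigl(-\frac12 s^2\Bigr), \qquad s\in\bbR.$$

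Third — the one genuinely analytic step — I would prove this scalar identity by the ODE method: differentiation under the integral sign is legitimate since $\partial_s$ of the integrand is dominated by $|x|\exp(-\frac12 x^2)$ uniformly in $s$, so $\phi'(s) = \frac{1}{\sqrt{2\pi}}\int_{\bbR}ix\exp(isx-\frac12 x^2)\,dx$; integrating by parts using $\frac{d}{dx}\exp(-\frac12 x^2) = -x\exp(-\frac12 x^2)$ (the boundary term vanishing by Gaussian decay) turns the right-hand side into $-s\,\phi(s)$. Since $\phi(0)=1$ by \eqref{integral1} with $\ell=1$, the linear ODE $\phi' = -s\phi$ forces $\phi(s)=\exp(-\frac12 s^2)$. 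Alternatively one completes the square, $isx-\frac12 x^2 = -\frac12(x-is)^2 - \frac12 s^2$, and shifts the integration contour, justified by Cauchy's theorem and decay of $\exp(-\frac12\zeta^2)$ across a horizontal strip in $\bbC$.

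Finally, reassembly: $\bbE\exp(i\langle g,w\rangle) = \prod_{k=1}^{\ell}\exp(-\frac12 g_k^2) = \exp(-\frac12 |g|^2)$, and $|g|^2 = |C^{1/2}h|^2 = \langle C^{1/2}h, C^{1/2}h\rangle = \langle Ch,h\rangle$, so $\csf(h) = \exp\bigl(i\langle h,m\rangle - \frac12\langle Ch,h\rangle\bigr)$, as claimed. The only real obstacle is the scalar Gaussian integral with an imaginary argument; the surrounding steps are routine linear algebra and an application of Fubini's theorem.
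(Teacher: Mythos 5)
Your proof is correct, but it takes a genuinely different route from the one in the paper. The paper works directly in $\bbR^{\ell}$: it completes the square in the exponent of $\int \exp\bigl(i\langle h,x\rangle\bigr)\rho(x)\,dx$ via the identity
$$\frac12|x-m|_{C}^2-i \langle h,x \rangle=
\frac12\bigl|x-(m+iCh)\bigr|_{C}^2-i\langle h,m \rangle+\frac12 \langle Ch,h \rangle,$$
i.e.\ it translates the mean by the \emph{complex} vector $iCh$ and then implicitly uses that the integral of the complex-shifted Gaussian density is still $1$ (a contour-deformation fact the paper does not spell out). You instead factor the randomness as $z=m+C^{1/2}w$ with $w\sim N(0,I)$, use Fubini to reduce to the scalar case, and settle the scalar integral by the ODE method $\phi'(s)=-s\phi(s)$, $\phi(0)=1$. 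What your route buys is that the only analytic step is a real-variable differentiation under the integral sign plus an integration by parts, so no appeal to Cauchy's theorem is needed (your contour-shift alternative is essentially the paper's argument in dimension one); it also isolates cleanly where positive semi-definiteness enters, via $|C^{1/2}h|^2=\langle Ch,h\rangle$, which is consonant with the remark following the lemma about using the characteristic function to \emph{define} $N(m,C)$ for $C\ge 0$. What the paper's route buys is brevity: one algebraic identity and no reduction to coordinates. Both are complete proofs; yours is longer but more self-contained at the level of rigour the paper otherwise maintains.
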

\begin{proof}
This follows from noting that
$$\frac12|x-m|_{C}^2-i \langle h,x \rangle=
\frac12|x-(m+iCh)|_{C}^2-i\langle h,m \rangle+\frac12 \langle Ch,h \rangle.$$
\end{proof}

\begin{remark}
Note that the pdf for the Gaussian random variable that we wrote
down in equation \eqref{gaussiandensity} is defined only for $C>0$ since
it involves $C^{-1}.$ 
The characteristic function appearing in the preceding lemma can
be used to {\em define} a Gaussian with mean $m$ and covariance $C$, including
the case where $C \ge 0$ so that
the Gaussian covariance $C$ is only positive semi-definite, since
it is defined in terms of $C$ and not $C^{-1}$. For example if we
let $z\sim N(m,C)$ with $C=0$ then $z$ is a {\em Dirac mass}\index{Dirac mass}
at $m,$ i.e. $z=m$ almost surely and for any continuous function $f$
\begin{equation*}
\bbE f(z) = f(m). 
\end{equation*} 
This Dirac mass \index{Dirac mass} may be viewed as a particular case of a Gaussian
random variable. We will write $\delta_m$ for $N(m,0).$
\end{remark}

\begin{lemma}
\label{lem:to}
The following hold for Gaussian random variables:
\begin{itemize}
\item If $z=a_1 z_1+a_2 z_2$ where $z_1, z_2$ are independent
Gaussians with distributions $N(m_1,C_1)$ and $N(m_2,C_2)$
respectively then $z$ is Gaussian with distribution $N(a_1 m_1+a_2 m_2,
a_1^2C_1+a_2^2 C_2).$
\item If $z\sim N(m,C)$ and $w=Lz+a$ then $w\sim N(Lm+a,LCL^T)$.
\end{itemize}
\end{lemma}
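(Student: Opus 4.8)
The plan is to argue entirely through characteristic functions, using the formula from the preceding lemma together with the fact that a probability measure on $\bbR^{\ell}$ is uniquely determined by its characteristic function. This avoids any manipulation of densities (which would in any case be awkward in the second item, since $L$ need not be invertible), and it automatically accommodates the degenerate case in which the output covariance is only positive semi-definite, interpreted via the convention in the Remark above.

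For the first item, I would compute, for $h\in\bbR^{\ell}$,
\[
\csf_z(h)=\bbE\exp\bigl(i\langle h,a_1z_1+a_2z_2\rangle\bigr)
=\bbE\Bigl(\exp\bigl(i\langle a_1h,z_1\rangle\bigr)\,\exp\bigl(i\langle a_2h,z_2\rangle\bigr)\Bigr).
\]
The crucial step is to invoke independence of $z_1$ and $z_2$ to split this into the product $\bbE\exp(i\langle a_1h,z_1\rangle)\cdot\bbE\exp(i\langle a_2h,z_2\rangle)$, and then to evaluate each factor by the preceding lemma. Adding the resulting exponents and using bilinearity (so that $\langle C_ja_jh,a_jh\rangle=a_j^2\langle C_jh,h\rangle$) gives
\[
\csf_z(h)=\exp\Bigl(i\bigl\langle h,a_1m_1+a_2m_2\bigr\rangle-\tfrac12\bigl\langle(a_1^2C_1+a_2^2C_2)h,h\bigr\rangle\Bigr),
\]
which is exactly the characteristic function of $N(a_1m_1+a_2m_2,\,a_1^2C_1+a_2^2C_2)$; uniqueness of the characteristic function then identifies the law of $z$.

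The second item is handled the same way and is shorter. For $w=Lz+a$ I would write
\[
\csf_w(h)=\exp\bigl(i\langle h,a\rangle\bigr)\,\bbE\exp\bigl(i\langle L^{T}h,z\rangle\bigr)
=\exp\bigl(i\langle h,a\rangle\bigr)\,\exp\Bigl(i\langle L^{T}h,m\rangle-\tfrac12\langle CL^{T}h,L^{T}h\rangle\Bigr),
\]
and then rewrite $\langle L^{T}h,m\rangle=\langle h,Lm\rangle$ and $\langle CL^{T}h,L^{T}h\rangle=\langle LCL^{T}h,h\rangle$ to recognise the right-hand side as the characteristic function of $N(Lm+a,\,LCL^{T})$, again invoking uniqueness.

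There is no real obstacle here; the only point requiring care is the appeal to the uniqueness theorem for characteristic functions, which is standard and not reproved in the text, so it should simply be cited. It is also worth noting in passing that even when $C>0$ the matrix $LCL^{T}$ (respectively $a_1^2C_1+a_2^2C_2$) may be only positive semi-definite --- for instance if $L$ has nontrivial kernel, or if $a_1=a_2=0$ --- so the conclusions are to be read in the generalized sense of the Remark, with the Gaussian defined through its characteristic function rather than through \eqref{gaussiandensity}.
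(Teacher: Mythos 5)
Your proof is correct and follows essentially the same route as the paper's: compute the characteristic function, use independence to factor it (first item) or the identity $\langle h,Lz\rangle=\langle L^Th,z\rangle$ (second item), apply the formula from the preceding lemma, and identify the resulting exponent. Your explicit appeal to the uniqueness theorem for characteristic functions and your remark about possibly degenerate output covariances are sensible additions that the paper leaves implicit.
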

\begin{proof}
The first result follows from computing the characteristic function
of $z.$ By independence this is the product of the characteristic
functions of $a_1 z_1$ and of $a_2 z_2$. The characteristic
function of $a_i z_i$ has logarithm equal to
$$i\langle h, a_i m_i \rangle-\frac12\langle a_i^2 C h,h \rangle.$$
Adding this for $i=1,2$ gives the logarithm of the characteristic
function of $z$.

For the second result we note that the characteristic function
of $a+Lz$ is the expectation of the exponential of
$$i\langle h,a+Lz \rangle= i \langle h,a \rangle+i \langle L^T h, z \rangle.$$
Using the properties of the characteristic functions of $z$ we deduce
that the logarithm of the characteristic function of $a+Lz$  is equal to
$$i\langle h,a \rangle+i \langle L^T h, m \rangle-\frac 12 \langle CL^Th,
L^T h\rangle.$$
This may be re-written as
$$i \langle h,a+Lm \rangle-\frac12 \langle LCL^T h,h \rangle$$
which is the logarithm of the characteristic function
of $N(a+Lm,LCL^T)$ as required.
\end{proof}

We finish by stating a lemma whose proof is straightforward, given the
foregoing material in this section, and left as an exercise.

\begin{lemma}
\label{l:qf}
Define
$$I(v):=\frac12 \bigl\langle (v-m),L(v-m)\bigr\rangle$$ 
with $L\in \bbsR^{\ell \times \ell}$
satisfying $L>0$ and $m \in \bbR^{\ell}.$
Then $\exp\bigl(-I(v)\bigr)$ can be normalized to produce the pdf of
the Gaussian random variable $N(m,L^{-1})$ on $\bbR^{\ell}$. The matrix
$L$ is known as the {\em precision matrix}\index{precision}
of the Gaussian random variable.
\end{lemma}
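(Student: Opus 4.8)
The plan is to identify $\exp\bigl(-I(v)\bigr)$, up to a multiplicative constant, with the Gaussian density \eqref{gaussiandensity} for mean $m$ and covariance $C:=L^{-1}$. First I would note that, since $L\in\bbsR^{\ell\times\ell}$ with $L>0$, the inverse $L^{-1}$ exists, is symmetric, and is strictly positive definite; hence $C:=L^{-1}$ is an admissible Gaussian covariance with $C>0$, so the density formula \eqref{gaussiandensity} is legitimately available for $N(m,C)$.

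Next I would rewrite the quadratic form in the exponent. Using the definition of the weighted norm, $|v-m|_C^2 = |C^{-1/2}(v-m)|^2 = \langle C^{-1}(v-m),v-m\rangle = \langle L(v-m),v-m\rangle = 2I(v)$, since $C^{-1}=L$. Therefore $\exp\bigl(-I(v)\bigr) = \exp\bigl(-\frac12|v-m|_C^2\bigr)$, which is exactly the exponential factor in \eqref{gaussiandensity}. Multiplying by the constant $\bigl((2\pi)^{\ell/2}(\det C)^{1/2}\bigr)^{-1} = (2\pi)^{-\ell/2}(\det L)^{1/2}$ thus produces precisely the pdf of $N(m,L^{-1})$; by \eqref{integral1} the resulting function integrates to $1$ over $\bbR^{\ell}$, which is exactly the assertion that $\exp\bigl(-I(v)\bigr)$ can be normalized to a pdf, with normalization constant $(2\pi)^{\ell/2}(\det L)^{-1/2}$. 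The name \emph{precision matrix} for $L$ is then simply terminology for the inverse covariance $C^{-1}$.

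There is essentially no obstacle in this argument: the only points requiring (brief) care are verifying that $L^{-1}$ inherits symmetry and positive-definiteness from $L$, so that \eqref{gaussiandensity} applies, and correctly matching the normalization constant through the identity $\det(L^{-1}) = (\det L)^{-1}$. Everything else is a direct transcription of notation already established in this section, which is why the statement is reasonably left as an exercise.
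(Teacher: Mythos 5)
Your proof is correct, and since the paper leaves this lemma as an exercise (relying on the ``foregoing material''), your argument --- identifying $2I(v)$ with $|v-m|_{C}^2$ for $C=L^{-1}$, checking symmetry and positive-definiteness of $L^{-1}$, and normalizing via \eqref{gaussiandensity} and \eqref{integral1} --- is exactly the intended one. No gaps.
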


\subsection{Conditional and Marginal Distributions}
Let $(a,b)\in \bbR^{\ell} \times \bbR^m$ denote a jointly varying random variable.
\begin{definition}
The {\em marginal pdf}\index{marginal distribution} of $a,$ $\bbP(a),$ is given in terms of the pdf of $(a,b),$ $\bbP (a,b),$ by 
\begin{equation*}
\bbP(a) = \int_{\bbR^m} \bbP(a,b) \,db.
\end{equation*} 
\end{definition}
\begin{remark}
With this definition, for $A \subset \calB(\bbR^{\ell}),$ 
\begin{align*}
\bbP(a\in A) &= \bbP \bigg( (a,b) \in A\times \bbR^m\bigg) = \int_A \int_{\bbR^m} \bbP(a,b) \,da \,db\\
&=\int_A \left(\int_{\bbR^m} \bbP(a,b)\,db\right) da = \int_A \bbP(a)\,da.
\end{align*}
Thus the marginal pdf $\bbP(a)$\index{probability!marginal} 
is indeed the pdf for $a$ in situations
where we have no information about the random variable $b$, other than that it
is in $\bbR^m$.
\end{remark}

We now consider the situation which is the extreme opposite of the marginal
situation. To be precise, we assume that we know {\em everything} about the 
random variable $b$: we have observed it and know what value it takes. 
This leads to
consideration of the random variable $a$ {\em given} that we know the value 
taken by $b$; we write $a|b$ for $a$ given $b$. The following definition is 
then natural:

\begin{definition}\label{definitionconditional}
The {\em conditional pdf}\index{conditional distribution} of $a|b,$ $\bbP(a|b),$ is defined by 
\begin{equation}
\label{eq:cpeq}
\bbP(a|b) = \frac{\bbP(a,b)}{\bbP(b)}.
\end{equation}
\end{definition}
\begin{remark}
Conditioning a jointly varying random variable can be 
useful when computing probabilities\index{probability!conditional}, 
as the following calculation
demonstrates.
\begin{align*}
\bbP \bigg( (a,b)\in A\times B\bigg) &= \int_A \int_B \bbP(a,b) \,da\,db \\
&=\int_A \int_B \bbP(a|b)\bbP(b) \,da\,db  \\
&= \int_B \underbrace{\left( \int_A \bbP(a|b)\,da \right)}_{=:I_1} \underbrace{\bbP(b) \,db}_{=:I_2}.
\end{align*}
Given $b,$ $I_1$ computes the probability that $a$ is in $A.$ $I_2$ then denotes averaging over given outcomes of $b$ in $B$. 
\end{remark}
\subsection{Bayes' Formula \index{Bayes' formula}}
\label{ssec:BF}
By Definition \ref{definitionconditional} we have 
\begin{subequations}
\label{eq:preB}
\begin{eqnarray}
\bbP(a,b) = \bbP (a|b)\bbP(b),\\
\bbP(a,b) = \bbP (b|a)\bbP(a).
\end{eqnarray}
\end{subequations}
Equating and rearranging we obtain {\bf Bayes' formula}\index{Bayes' formula}
which states that 
\begin{equation}
\label{eq:bayes}
\bbP(a|b)=\frac{1}{\bbP(b)}\bbP(b|a)\bbP(a).
\end{equation}
The beauty of this formula is apparent
in situations where $\bbP(a)$ and $\bbP(b|a)$ are individually easy to write down. Then $\bbP(a|b)$ may be identified easily too.
\begin{example}
Let $(a,b)\in \bbR \times \bbR$ be a jointly varying random variable specified via
\begin{subequations}
\begin{align*}
a&\sim N(m,\sigma^2), \quad\bbP(a);\\
b|a &\sim N(f(a),\gamma^2), \quad\bbP(b|a).
\end{align*}
\end{subequations}
Notice that, by using equation \eqref{eq:cpeq}, 
$\bbP(a,b)$ is defined via two Gaussian distributions.
In fact we have 
\begin{equation*}
\bbP(a,b) = \frac{1}{2\pi\gamma\sigma} \exp\left( -\frac{1}{2\gamma^2}|b-f(a)|^2 - \frac{1}{2\sigma^2}|a-m|^2\right). 
\end{equation*}
Unless $f(\cdot)$ is linear this is not the pdf of a Gaussian distribution.
Integrating over $a$ we obtain, from the definition of the marginal pdf of $b$,
\begin{equation*}
\bbP(b)=\frac{1}{2\pi\gamma\sigma}\int_{\bbR} \exp\left( -\frac{1}{2\gamma^2}|b-f(a)|^2 - \frac{1}{2\sigma^2}|a-m|^2\right) \, da.
\end{equation*}
Using equation \eqref{eq:preB} then shows that
\begin{equation*}
\bbP(a|b) = \frac{1}{\bbP(b)}\times \frac{1}{2\pi\pi\gamma\sigma}\exp\left( -\frac{1}{2\gamma^2}|b-f(a)|^2 - \frac{1}{2\sigma^2}|a-m|^2\right).
\end{equation*}
Note that $a|b$, like $(a,b)$, is not Gaussian. Thus, for both $(a,b)$
and $a|b$, we have constructed
a non-Gaussian pdf in a simple fashion from the knowledge of the two Gaussians
and $a$ and $b|a$.
\end{example}

When Bayes' formula\index{Bayes' formula}  
\eqref{eq:bayes} is used in statistics
then typically $b$ is observed data and $a$ is the unknown about which
we wish to find information, using the data. In this context
we refer to $\bbP(a)$ as the {\bf prior}, to $\bbP(b|a)$ as the
{\bf likelihood \index{likelihood}} and to $\bbP(a|b)$ as the {\bf posterior}.
The beauty of Bayes's formula\index{Bayes' formula}
as a tool in applied mathematics is that the likelihood \index{likelihood} is
often easy to determine explicitly, given reasonable assumptions
on the observational noise, whilst there is considerable
flexibility inherent in modelling prior knowledge via
probabilities, to give the prior. Combining the prior and
likelihood \index{likelihood} as in \eqref{eq:bayes}
gives the posterior, which is the random variable of interest;
whilst the probability distributions used to define
the likelihood \index{likelihood} $\bbP(b|a)$ (via a probability density on the
data space) and prior $\bbP(a)$ (via a probability on the space
of unknowns) may be quite simple, the resulting posterior
probability distribution can be very complicated.
A second key
point to note about Bayes' formula\index{Bayes' formula} in this context is that $\bbP(b)$,
which normalizes the posterior to a pdf, may be hard to determine
explicitly, but algorithms exist to find information from
the posterior without knowing this normalization constant\index{normalization
constant}.
We return to this point in subsequent chapters.

\subsection{Independence}
Consider the jointly varying random variable 
$(a,b)\in \bbR^{\ell}\times \bbR^m$. The random variables $a$ and $b$ are 
said to be {\em independent} if
\begin{equation*}
\bbP(a,b)=\bbP(a)\bbP(b).
\end{equation*}
In this case, for $f:\bbR^\ell \to \bbR^{\ell'}$ and $g:\bbR^m \to \bbR^{m'}$,
\begin{equation*}
\bbE f(a)g(b)^T= \left(\bbE f(a)\right) \times \left (\bbE g(b)^T\right)
\end{equation*}
as 
\begin{equation*}
\bbE f(a)g(b)^T = \int_{\bbR^{\ell}\times\bbR^m} f(a)g(b)^T\bbP(a)\bbP(b)\,da\,db = \left(\int_{\bbR^{\ell}} f(a)\bbP(a)da\right) \left(\int_{\bbR^{m}} g(b)^T\bbP(b)db\right).
\end{equation*}

An $\iid$ (independent, identically distributed)\index{i.i.d.} 
sequence $\{\xi_j\}_{j\in \N}$ 
is one for which:\footnote{This discussion is easily generalized to $j \in \Z^+$.}
\begin{itemize}
\item each $\xi_j$ is distributed according to the same pdf $\rho;$
\item $\xi_j$ is independent of $\xi_k$ for $j\neq k.$
\end{itemize}
If $\J$ is a subset of $\N$ with finite cardinality then this i.i.d. \index{i.i.d.}
sequence satisfies
\begin{equation*}
\bbP\left(\{\xi_j\}_{j\in\J} \right) = \prod_{j\in\J} \rho(\xi_j)
\end{equation*}

\section{Dynamical Systems}

We will discuss data assimilation \index{data assimilation} in the context of both discrete-time\index{dynamical system!discrete-time}
and continuous-time dynamical systems\index{dynamical system!continuous-time}. 
In this section we introduce
some basic facts about such dynamical systems\index{dynamical system}.

\label{ssec:ds}
\subsection{Iterated Maps}
Let $ \in C(\bbR^{\ell},\bbR^{\ell}).$ We will frequently be interested in the iterated map, or discrete-time dynamical system\index{dynamical system!discrete-time}, defined by 
\begin{equation*}
v_{j+1}=\PPsi(v_j),\quad  v_0=u,
\end{equation*}
and in studying properties of the sequence $\{v_j\}_{j\in \Z^+}.$
A {\em fixed point}\index{fixed point} 
of the map is a point $v_{\infty}$ which satisfies
$v_{\infty}=\PPsi(v_{\infty})$; initializing the map at $u=v_{\infty}$ will
result in a sequence satisfying $v_j=v_{\infty}$ for all $j\in \Z^+$.

\begin{example}
Let 
\begin{equation*}
\PPsi(v)=\lambda  v+a.
\end{equation*}
Then 
\begin{equation*}
v_{j+1}=\lambda v_j + a, \quad v_0 = u.
\end{equation*}
By induction we see that, for $\lambda\neq 1,$
\begin{equation*}
v_j= \lambda^j u + a\sum_{i=0}^{j-1} \lambda^i = \lambda^j u + a \frac{1-\lambda^j}{1-\lambda}.
\end{equation*}
Thus if $|\lambda|<1$ then 
\begin{equation*}
v_j\to \frac{a}{1-\lambda} \quad {\text{as}} \quad j\to\infty.
\end{equation*}
The limiting value $\frac{a}{1-\lambda}$ is a fixed point \index{fixed point} of the map.
\end{example}

\begin{remark}
In the preceding example the long-term dynamics of the map, for
$|\lambda|<1$, is described by convergence to a fixed point  \index{fixed point}. Far
more complex behaviour is, of course, possible; we will explore such
complex behaviour in the next chapter.
\end{remark}

The following result is known as the (discrete time) Gronwall 
lemma\index{Gronwall lemma!discrete time}.

\begin{lemma}
\label{lem:GL}
Let $\{v_j\}_{j\in \Z^+}$ be a positive sequence and $(\lambda,a)$ a pair of reals with $\lambda>0.$ Then if 
\begin{equation*}
v_{j+1}\le \lambda v_j +a, \quad \quad j=0,1,\ldots
\end{equation*}
it follows that 
\begin{equation*}
v_j\le \lambda^j v_0 + a \frac{1-\lambda^j}{1-\lambda}, \quad \quad \lambda\neq 1,
\end{equation*}
and 
\begin{equation*}
v_j\le v_0+ja, \quad \quad \lambda=1.
\end{equation*}
\end{lemma}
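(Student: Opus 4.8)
The plan is to prove both asserted bounds by a single induction on $j$, noting first that the right-hand sides are precisely the closed-form solutions of the affine recursion $v_{j+1}=\lambda v_j+a$ computed in the Example above (for $\lambda\neq 1$ and $\lambda=1$ respectively). So the lemma is really a comparison statement: passing from the equality to the inequality, and using $\lambda>0$ to preserve inequalities under multiplication by $\lambda$, reproduces the same formula as an upper bound. Accordingly, I would organize the proof as two parallel inductions, one for each case.

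First, take $\lambda\neq 1$. The base case $j=0$ is immediate, since $\frac{1-\lambda^0}{1-\lambda}=0$ and the claim reduces to $v_0\le v_0$. For the inductive step, assume $v_j\le \lambda^j v_0+a\frac{1-\lambda^j}{1-\lambda}$. Because $\lambda>0$, multiplying this inequality by $\lambda$ does not reverse it; adding $a$ and invoking the hypothesis $v_{j+1}\le\lambda v_j+a$ then gives
\begin{equation*}
v_{j+1}\le \lambda^{j+1} v_0 + a\Bigl(\lambda\frac{1-\lambda^j}{1-\lambda}+1\Bigr),
\end{equation*}
and the only computation needed is the elementary identity $\lambda\frac{1-\lambda^j}{1-\lambda}+1=\frac{1-\lambda^{j+1}}{1-\lambda}$, obtained by putting the left-hand side over the common denominator $1-\lambda$. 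This closes the induction. The case $\lambda=1$ runs the same way, with base case $v_0\le v_0$ and inductive step $v_{j+1}\le v_j+a\le (v_0+ja)+a=v_0+(j+1)a$, and here no algebraic rearrangement is required at all.

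There is no genuine obstacle here: each case is a one-line induction. The only points deserving a word of care are that the hypothesis $\lambda>0$ is exactly what permits multiplying the inductive bound by $\lambda$ while keeping the direction of the inequality, and that in the $\lambda\neq 1$ case one must correctly collapse the finite geometric sum. Positivity of $\{v_j\}$ itself is not used in deriving the bound; it simply records that the statement concerns a nonnegative sequence, as is the case in the applications where this lemma will be invoked.
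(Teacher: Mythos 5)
Your proof is correct and follows essentially the same route as the paper's: induction on $j$, with the base case $v_0\le v_0$ and the inductive step carried out by multiplying the hypothesis by $\lambda>0$, adding $a$, and collapsing $\lambda\frac{1-\lambda^j}{1-\lambda}+1$ into $\frac{1-\lambda^{j+1}}{1-\lambda}$. The paper merely states that the $\lambda=1$ case is proved similarly, whereas you write it out, which is a harmless difference.
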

\begin{proof}
We prove the case $\lambda\neq 1$ as the case $\lambda=1$ may be proved
similarly. We proceed by induction. The result clearly holds for $j=0.$ Assume that the result is true for $j=J.$ Then 
\begin{align*}
v_{J+1}&\le \lambda v_J+a \\
&\le \lambda\left(\lambda^J v_0 + a \frac{1-\lambda^J}{1-\lambda}\right) +a\\
&= \lambda^{J+1}v_0 + a \frac{\lambda-\lambda^{J+1}}{1-\lambda} + a \frac{1-\lambda}{1-\lambda}\\
&= \lambda^{J+1}v_0 + a \frac{1-\lambda^{J+1}}{1-\lambda}.
\end{align*}
This establishes the inductive step and the proof is complete.
\end{proof}

We will also be interested in stochastic dynamical systems \index{dynamical system!stochastic} of the form 
\begin{equation*}
v_{j+1}=\PPsi(v_j) + \xi_j, \quad v_0=u,
\end{equation*}
where $\xi=\{\xi_j\}_{j\in\N}$ is an $\iid$  sequence of random variables on $\bbR^{\ell},$ and $u$ is a random variable on $\bbR^{\ell},$ independent of $\xi.$
\begin{example}\label{examplegauss}
This is a simple but important one dimensional (i.e. $\ell=1.$) example. Let $|\lambda|<1$ and let 
\begin{subequations}
\begin{align*}
v_{j+1}&=\lambda v_j+ \xi_j, \quad \quad \xi_j\sim N(0,\sigma^2)\,\,\, \iid,\\
v_0&\sim N(m_0,\sigma_0^2).
\end{align*}
\end{subequations}
By induction
\begin{equation*}
v_j= \lambda^j v_0 + \sum_{i=0}^{j-1} \lambda^{j-i-1} \xi_i.
\end{equation*}
Thus $v_j$ is Gaussian, as a linear transformation of Gaussians -- see
Lemma \ref{lem:to}. Furthermore,
using independence of the initial condition from the sequence $\xi$, we obtain
\begin{align*}
m_j&:= \bbE v_j = \lambda^j m_0 \\
\sigma_j^2 &:= \bbE (v_j-m_j)^2 = \lambda^{2j} \bbE (v_0-m_0)^2 + \sum_{i=0}^{j-1} \lambda^{2j-2i-2}\sigma^2\\
&=\lambda^{2j}\sigma_0^2 +\sigma^2 \sum_{i=0}^{j-1} \lambda^{2i} = \lambda^{2j}\sigma_0^2 + \sigma^2 \frac{1-\lambda^{2j}}{1-\lambda^2}.
\end{align*}
Since $|\lambda|<1$ we deduce that $m_j \to 0$ and $\sigma_j^2 \to \sigma^2
(1-\lambda^2)^{-1}.$ Thus the sequence of Gaussians generated by this
stochastic dynamical system \index{dynamical system!stochastic} has a limit, which is a centred Gaussian
with variance larger than the variance of $\xi_1$, unless $\lambda=0.$
\end{example}

\subsection{Differential Equations}
Let $f\in \C^1(\bbR^{\ell},\bbR^{\ell})$ and consider the 
ordinary differential equation (ODE) 
\begin{equation*}
\frac{dv}{dt}=f(v), \quad\quad v(0)=u.
\end{equation*}
Assume a solution exists for all $u\in \bbR^{\ell},$ $t\in\bbR^+;$ for any given $u$ this solution
is then an element of the space $C^1(\bbR^+;\bbR^{\ell}).$ In this situation, the
ODE generates a continuous-time dynamical system\index{dynamical system!continuous-time}. We are interested in properties of the function $v.$ 
An {\em equilibrium point}\index{equilibrium point} $v_{\infty} \in \bbR^{\ell}$ is a point for which
$f(v_{\infty})=0.$ Initializing the equation at $u=v_{\infty}$ results
in a solution $v(t)=v_{\infty}$ for all $t \ge 0.$

\begin{example}\label{exode}
Let $f(v)=-\alpha v +\beta.$  Then 
\begin{equation*}
e^{\alpha t} \left(\frac{dv}{dt}+\alpha v\right) = \beta e^{\alpha t}
\end{equation*}
and so 
\begin{equation*}
\frac{d}{dt}\Bigl(e^{\alpha t} v\Bigr)  = \frac{d}{dt}\left( \frac{\beta}{\alpha} e^{\alpha t} \right).
\end{equation*}
Thus 
\begin{equation*}
e^{\alpha t} v(t) - u = \frac{\beta}{\alpha} (e^{\alpha t} -1),
\end{equation*}
so that
\begin{equation*}
v(t)= e^{-\alpha t} u + \frac{\beta}{\alpha} (1- e^{-\alpha t} ).
\end{equation*}
If $\alpha>0$ then 
\begin{equation*}
v(t)\to \frac{\beta}{\alpha} \quad \quad {\text{as}} \,\,\, t\to \infty.
\end{equation*}
Note that $v_{\infty}:=\frac{\beta}{\alpha}$ is a the unique equilibrium
point of the equation.
\end{example} 

\begin{remark}
In the preceding example the long-term dynamics of the ODE, for
$\alpha>0$, is described by convergence to an equilibrium point \index{equilibrium point}. As in discrete
time, far
more complex behaviour is, of course, possible; we will explore this
possibility in the next chapter.
\end{remark}

If the differential equation has a solution for every $u\in \bbR^{\ell}$ and 
every $t\in \bbR^+$ then 
there is a one-parameter semigroup\index{semigroup} of operators
$\PPsi(\cdot;t)$, parametrized by time $t \ge 0$, with
the properties that
\begin{subequations}
\label{eq:sg}
\begin{eqnarray}
v(t)&=&\PPsi(u;t), \;t \in (0,\infty),\\
\PPsi(u;t+s)&=&\PPsi\bigl(\PPsi(u;s);t\bigr), \;t,s \in \bbR^+, u\in\bbR^\ell,\\
\PPsi(u;0)&=&u \in \bbR^\ell.
\end{eqnarray}
\end{subequations}
We call $\PPsi(\cdot;\cdot)$ the solution operator\index{solution operator}
for the ODE.
In this scenario we can consider the iterated map defined by $\PPsi(\cdot)= \PPsi(\cdot;h),$ for some fixed $h>0$, thereby linking the discrete time iterated
maps with continuous time ODEs.

\begin{example}{(Example \ref{exode} continued)}
Let 
\begin{equation*}
\PPsi(u;t)= e^{-\alpha t} u + \frac{\beta}{\alpha} \Bigl(1-e^{-\alpha t}\Bigr)
\end{equation*}
which is the solution operator\index{solution operator} for the equation in that $v(t)=\PPsi(u;t)$.
Clearly $\PPsi(u;0)=u.$ Also 
\begin{align*}
\PPsi(u;t+s) &= e^{-\alpha t} e^{-\alpha s} u + \frac{\beta}{\alpha} \Bigl(1- e^{-\alpha t} e^{-\alpha s}\Bigr)\\
&= e^{-\alpha t} \left( e^{-\alpha s} u + \frac{\beta}{\alpha} 
\bigl(1- e^{-\alpha s}\bigr) \right) + \frac{\beta}{\alpha} \Bigl(1-e^{-\alpha t}\Bigr) \\
&= \PPsi\left(\PPsi(u;s);t\right).
\end{align*}
\end{example}

The following result is known as the (continuous time) Gronwall lemma\index{Gronwall lemma!continuous time}.

\begin{lemma}\label{gronwallcts}
Let $z\in \C^1 (\bbR^+, \bbR)$ satisfy 
\begin{equation*}
\frac{dz}{dt}\le az + b, \quad z(0)= z_0,
\end{equation*}
for some $a,b\in\bbR.$ Then $$z(t)\le e^{at}z_0 + \frac{b}{a}\bigl(e^{at} -1\bigr).$$
\end{lemma}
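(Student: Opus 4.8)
The plan is to reduce the differential inequality to a differential \emph{equality} via an integrating factor, exactly as in Example~\ref{exode}. First I would multiply the inequality $\frac{dz}{dt}\le az+b$ by the integrating factor $e^{-at}$ (which is strictly positive, so the inequality is preserved), obtaining
\begin{equation*}
e^{-at}\Bigl(\frac{dz}{dt}-az\Bigr)\le b\,e^{-at},
\end{equation*}
and then recognize the left-hand side as the derivative of $e^{-at}z(t)$, so that
\begin{equation*}
\frac{d}{dt}\bigl(e^{-at}z(t)\bigr)\le b\,e^{-at}.
\end{equation*}

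Next I would integrate this scalar inequality over $[0,t]$. Since the derivative of $e^{-as}z(s)$ is pointwise bounded above by $b\,e^{-as}$, integrating both sides from $0$ to $t$ gives
\begin{equation*}
e^{-at}z(t)-z_0\le \int_0^t b\,e^{-as}\,ds=\frac{b}{a}\bigl(1-e^{-at}\bigr).
\end{equation*}
Rearranging and multiplying through by $e^{at}>0$ yields $z(t)\le e^{at}z_0+\frac{b}{a}\bigl(e^{at}-1\bigr)$, which is the claimed bound.

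The only points that need a word of care are the following. The step ``$g'(s)\le h(s)$ on $[0,t]$ implies $g(t)-g(0)\le\int_0^t h(s)\,ds$'' is just the fundamental theorem of calculus applied to $g(t)-g(0)=\int_0^t g'(s)\,ds$ together with monotonicity of the integral; this is legitimate because $z\in C^1$, so $e^{-as}z(s)$ is $C^1$ and its derivative is continuous hence integrable. One should also note that the final formula $\frac{b}{a}(e^{at}-1)$ is written assuming $a\neq 0$, but the expression $\int_0^t b\,e^{-as}\,ds$ makes sense for all $a\in\bbR$ and equals $bt$ when $a=0$, so the statement is interpreted with the obvious limiting convention (mirroring the $\lambda=1$ case in Lemma~\ref{lem:GL}); I would simply remark on this rather than treat $a=0$ separately. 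I do not anticipate any real obstacle here: the whole argument is the continuous-time shadow of the induction used for the discrete Gronwall lemma, with the integrating factor playing the role of the telescoping in that proof.
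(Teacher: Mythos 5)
Your proof is correct and follows exactly the same route as the paper's: multiply by the integrating factor $e^{-at}$, recognize the total derivative, integrate over $[0,t]$, and rearrange. The additional remarks on the fundamental theorem of calculus and the $a=0$ convention are sound but not needed beyond what the paper already does implicitly.
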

\begin{proof}
Multiplying both sides of the given identity by $e^{-at}$ we obtain
\begin{equation*}
e^{-at}\left(\frac{dz}{dt} - az\right) \le b e^{-at}
\end{equation*}
which implies that
\begin{equation*}
\frac{d}{dt}\left(e^{-at} z\right) \le be^{-at}.
\end{equation*}
Therefore,
\begin{equation*}
e^{-at}z(t) - z(0) \le \frac{b}{a} \bigl(1-e^{-at}\bigr)
\end{equation*}
so that
\begin{equation*}
z(t)\le e^{at} z_0 + \frac{b}{a} \bigl(e^{at} - 1\bigr).
\end{equation*}
\end{proof}

\subsection{Long-Time Behaviour}
We consider the long-time behaviour of discrete-time dynamical systems \index{dynamical system!discrete}. The
ideas are easily generalized to continuous-time dynamical systems -- ODEs -- and
indeed our example will demonstrate such a generalization. To facilitate
our definitions we now extend $\PPsi$ to act on Borel\index{Borel!set} subsets of $\bbR^{\ell}$. Note that
currently $\PPsi: \bbR^{\ell} \to \bbR^{\ell}$; we extend to 
$\PPsi:\calB(\bbR^{\ell}) \to \calB(\bbR^{\ell})$ via
$$\PPsi(A)=\bigcup_{u \in A} \PPsi(u), \quad A \in \calB(\bbR^{\ell}).$$ 
For both $\PPsi:\bbR^{\ell} \to \bbR^{\ell}$ and 
$\PPsi:\calB(\bbR^{\ell}) \to \calB(\bbR^{\ell})$ we denote by
$$\PPsi^{(j)} = \PPsi \circ \dots \circ \PPsi$$ 
the $j-$fold composition of $\PPsi$ with itself.
In the following, let $B(0,R)$ denote the ball of radius $R$ in $\bbR^{\ell}$, in the
Euclidean norm, centred at the origin.
\begin{definition}
\label{def:dabs}
A discrete time dynamical system \index{dynamical system!discrete} has a bounded {\em absorbing set}\index{absorbing set} $\Babs\subset \bbR^{\ell}$ if, for every $R>0$, there exists $J= J(R)$ such that 
\begin{equation*}
\PPsi^{(J)} \left(B(0,R)\right) \subset \Babs, \quad \forall j\ge J.
\end{equation*}
\end{definition}

\begin{remark}
\label{rem:cabz}
The definition of absorbing set is readily generalized to
continuous time dynamical systems; this is left as an exercise
for the reader.
\end{remark}

\begin{example}
\label{ex:diss}
Consider an ODE for which there exist $\alpha,\beta >0$ such that 
\begin{equation*}
\langle f(v),v\rangle \le \alpha - \beta |v|^2, \quad \forall v\in \bbR^{\ell}.
\end{equation*}
Then 
\begin{equation*}
\frac{1}{2}\frac{d}{dt}|v|^2 = \Bigl \langle v,\frac{dv}{dt} \Bigr \rangle = 
\langle v, f(v) \rangle \le \alpha - \beta |v|^2.
\end{equation*}
Applying the Gronwall\index{Gronwall lemma} Lemma \ref{gronwallcts} gives 
\begin{equation*}
|v(t)|^2 \le e^{-2\beta t} |v(0)|^2 + \frac{\alpha}{\beta} \bigl(1-e^{-2\beta t}\big).
\end{equation*}
Hence, if $|v(0)|^2\le R$ then 
\begin{equation*}
|v(t)|^2 \le 2 \frac{\alpha}{\beta} \quad \forall t\ge T: \, e^{-2\beta t} R^2 \le \frac{\alpha}{\beta}.
\end{equation*} 
Therefore the set $\Babs= B\left(0, \sqrt{\frac{2\alpha}{R}} \right)$ is 
absorbing for the ODE (with the generalization of the above
definition of absorbing set to continuous time, as in Remark
\ref{rem:cabz}).

If $v_j= v(jh)$ so that $\PPsi(\cdot)= \PPsi(\cdot; h)$ and $v_{j+1}=\PPsi(v_j)$ then
\begin{equation*}
|v_j|^2 \le 2 \frac{\alpha}{\beta} \quad \forall J\ge \frac{T}{h}, 
\end{equation*}
where $T$ is as for the ODE case. Hence $\Babs= B\left(0, \sqrt{\frac{2\alpha}{R}} \right)$ is also an absorbing set \index{absorbing set} for the iterated map associated with the ODE.
\end{example}

\begin{definition}
\label{def:ga}
When the discrete time dynamical system \index{dynamical system!discrete} has a bounded absorbing set \index{absorbing set} $\Babs$ we define the global attractor\index{global attractor} $\calA$ to be 
\begin{equation*}
\calA= \bigcap_{k\ge 0} \overline{\bigcup_{j\ge k} \PPsi^{(j)}(\Babs).}
\end{equation*}
\end{definition}
This object captures all the long-time dynamics of the dynamical system.\index{dynamical system} As for the absorbing\index{absorbing set} set itself
this definition is readily generalized to continuous time.

\subsection{Controlled Dynamical Systems}
It is frequently of interest to add a {\em controller} $w=\{w_j\}_{j=0}^\infty$ to the discrete time dynamical system to obtain
\begin{equation*}
v_{j+1}=\PPsi(v_j) + w_j.
\end{equation*}
The aim of the controller is to ``steer" the dynamical system to achieve some objective. Interesting examples include:
\begin{itemize}
\item given point $v^* \in \bbR^{\ell}$ and time $J\in \Z^+,$ choose $w$ so that $v_J=v^*;$
\item given open set $B$ and  time $J\in \Z^+,$ choose $w$ so that $v_j\in B$ for all $j\ge J;$
\item given $y=\{y_j\}_{j\in\N},$ where $y_j\in \bbR^m,$ and given a function $h:\bbR^{\ell} \to \bbR^m,$ choose $w$ to keep $|y_j-h(v_j)|$ small in some sense.
\end{itemize}

The third option is most relevant in the context of data assimilation \index{data assimilation},
and so we focus on it. In this context we will consider controllers of the
form $w_j=K\bigl(y_j-h(v_j)\bigr)$ so that
\begin{equation}
\label{eq:ctrl}
v_{j+1}=\PPsi(v_j) + K\bigl(y_j-h(v_j)\bigr).
\end{equation}
A key question is then how to choose $K$ to ensure the desired property.
We present a simple example which illustrates this.
\begin{example}
Let $\ell=m=1$, $\PPsi(v)=\lambda v$ and $h(v)=v.$ We assume that the data 
$\{y_j\}_{j \in \bbN}$ is given by $y_{j+1}= \vd_{j+1}$ where $\vd_{j+1}= \lambda \vd_j.$ Thus the
data is itself generated by the uncontrolled dynamical system. We wish to use
the controller to ensure that the solution of the
controlled system is close to the data $\{y_j\}_{j \in \N}$
generated by the uncontrolled dynamical system, and hence to the solution of
the uncontrolled dynamical system itself.

Consider the controlled dynamical system
\begin{align*}
v_{j+1} &=\PPsi(v_j) + K\left(y_j - h(v_j)\right)\\
 &=\lambda v_j + \underbrace{K(y_j -v_j)}_{w_j},  \quad \quad j\ge 1.
\end{align*}
and assume that $v_{0}\neq \vd_{0}$. We are interested in whether $v_j$ 
approaches $\vd_j$ as $j \to \infty.$ 

To this end suppose that $K$ is chosen so that $|\lambda-K|<1.$ Then note that 
\begin{equation*}
\vd_{j+1} = \lambda \vd_j + K\underbrace{(y_j-\vd_j)}_{=0}.
\end{equation*}
Hence $e_j=v_j-\vd_j$ satisfies
\begin{equation*}
e_{j+1} = (\lambda-K)e_j
\end{equation*}
and 
\begin{equation*}
|e_{j+1}| = |\lambda-K| |e_j|.
\end{equation*}
Since we have chosen $K$ so that $|\lambda-K|<1$ 
then we have $|e_j|\to 0$ as $j\to\infty.$ Thus 
the controlled dynamical system approaches the solution of the uncontrolled 
dynamical system as $j\to\infty.$ This is prototypical of certain data
assimilation algorithms that we will study in Chapter \ref{sec:dtfa}.
\end{example}

It is also of interest to consider continuous time controllers $\{w(t)\}_{t\ge 0}$ for differential equations 
\begin{equation*}
\frac{dv}{dt} = f(v) + w.
\end{equation*}
Again, the goal is to choose $w$ to achieve some objective analogous to those
described in discrete time.

\section{Probability Metrics}
\label{ssec:pm}

Since we will frame data assimilation \index{data assimilation} in terms of probability, natural measures
of robustness of the problem will require the idea of distance \index{distance} between
probability measures\index{probability measure}. 
Here we introduce basic metric properties,
and then some specific distances on probability measures, and their
properties.
 
\subsection{Metric Properties}
\begin{definition}
A metric\index{metric} on a set $X$ is a function $d: X\times X \to \bbR^+$ (distance)\index{distance} satisfying the following properties:
\begin{itemize}
\item coincidence: $d(x,y) = 0$ iff $x=y;$ 
\item symmetry: $d(x,y)=d(y,x);$
\item triangle: $d(x,z)\le d(x,y)+d(y,z).$
\end{itemize}
\end{definition}
\begin{example}
Let $X=\bbR^{\ell}$ viewed as a normed vector space with 
norm $\|\cdot\|$; for example we might take $\|\cdot\|=|\cdot|$, the
Euclidean norm. Then the function $d:\bbR^{\ell}\times \bbR^{\ell} \to \bbR^+$ given by $d(x,y):=\|x-y\|$ defines a metric. Indeed
\begin{itemize}
\item $\|x-y\|=0$ iff $x=y.$
\item $\|x-y\|= \|y-x\|.$
\item $\|x-z\|=\|x-y+y-z\|\le \|x-y\| + \|y-z\|.$
\end{itemize}
from properties of norms.
\end{example}

\subsection{Metrics on Spaces of Probability Measures}
Let $\calM$ denote the space of probability measures on $\bbR^{\ell}$ with strictly positive Lebesgue density on $\bbR^{\ell}.$
Throughout this section we let $\mu$ and $\mu'$ be two probability measures on $\calM$, and let $\rho$ and $\rho'$ denote the corresponding densities;
recall that we assume that these densities are positive everywhere,
in order to simplify the presentation.
We define two useful metrics on probability measures.
\begin{definition} The {\em total variation distance}\index{distance!total variation}\index{metric!total variation} on $\calM$ is
defined by
\begin{align*}
\dtv(\mu,\mu')&=\frac{1}{2}\int_{\bbR^{\ell}} |\rho(u)-\rho'(u)|\,du\\
&=\frac{1}{2}\bbE^\mu \left|1-\frac{\rho'(u)}{\rho(u)}\right|.
\end{align*}
\label{def:tvd}
\end{definition}

Thus the total variation distance is half of the $L^1$ norm of the difference
of the two pdfs.
Note that clearly $\dtv(\mu,\mu')\ge 0.$ Also
\begin{align*}
\dtv(\mu,\mu')&\le \frac{1}{2} \int_{\bbR^{\ell}} |\rho(u)|\,du + \frac{1}{2} \int_{\bbR^{\ell}} |\rho'(u)|\,du\\
&= \frac{1}{2} \int_{\bbR^{\ell}} \rho(u)\,du + \frac{1}{2} \int_{\bbR^{\ell}} \rho'(u)\,du\\
&=1.
\end{align*}
Note also that $\dtv$ may be characterized as
\begin{equation}
\label{eq:vtd}
\dtv(\mu,\mu') = \frac12 {\rm sup}_{|f|_{\infty} \leq 1} |\bbE^\mu(f)-\bbE^{\mu'}(f)|=
\frac12{\rm sup}_{|f|_{\infty} \leq 1} |\mu(f)-\mu'(f)|
\end{equation}
where we have used the convention that $\mu(f)=\bbE^\mu(f)=\int_{\bbR^\ell} f(v)\mu(dv)$ and $|f|_{\infty}=\sup_{u} |f(u)|.$

\begin{definition} The {\em Hellinger distance}\index{distance!Hellinger}\index{metric!Hellinger} on $\calM$ is defined by
\begin{align*}
\dhh(\mu,\mu') &= \left(  \frac12 \int_{\bbR^{\ell}} \left(\sqrt{\rho(u)} - \sqrt{\rho'(u)}\right)^2 \,du \right)^{1/2} \\
&= \left( \frac12 \bbE^{\mu} \left( 1- \sqrt{\frac{\rho'(u)}{\rho(u)}} \right)^2 \right)^{1/2}.
\end{align*}
\end{definition}

Thus the Hellinger distance is a multiple of the $L^2$ distance between
the square-roots of the two pdfs. Again clearly $\dhh(\mu,\mu')\ge 0.$ Also 
\begin{equation*}
\dhh(\mu,\mu')^2 \le \frac12 \int_{\bbR^{\ell}} \left( \rho(u) + \rho'(u) \right)\, du = 1.
\end{equation*}
We also note that the Hellinger and TV distances can be written in a symmetric
way and satisfy the triangle inequality -- they are indeed valid distance metrics on the space of probability
measures\index{probability measure}.

\begin{lemma}
\label{lem:tvh}
The total variation and Hellinger distances satisfy
\begin{equation*}
0\le \frac{1}{\sqrt{2}} \dtv(\mu,\mu') \le \dhh(\mu,\mu')\le \dtv(\mu,\mu')^{1/2} \le 1
\end{equation*}
\end{lemma}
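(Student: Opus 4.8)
The plan is to establish the chain of inequalities in Lemma~\ref{lem:tvh} by working with the two pointwise quantities $|\rho-\rho'|$ and $(\sqrt{\rho}-\sqrt{\rho'})^2$ and relating them through the elementary algebraic identity
\begin{equation*}
|\rho(u)-\rho'(u)| = \left|\sqrt{\rho(u)}-\sqrt{\rho'(u)}\right|\cdot\left|\sqrt{\rho(u)}+\sqrt{\rho'(u)}\right|,
\end{equation*}
and then applying the Cauchy--Schwarz inequality. The non-negativity and the final bound $\dtv(\mu,\mu')^{1/2}\le 1$ are already recorded in the text (we showed $\dtv\le 1$), so the real content is the two middle inequalities. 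I would present them in the order: first the upper bound $\dhh\le \dtv^{1/2}$, then the lower bound $\frac{1}{\sqrt 2}\dtv\le \dhh$.

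For the upper bound $\dhh(\mu,\mu')\le \dtv(\mu,\mu')^{1/2}$, first I would write, using $(\sqrt\rho-\sqrt{\rho'})^2 = |\sqrt\rho-\sqrt{\rho'}|^2 \le |\sqrt\rho-\sqrt{\rho'}|\cdot(\sqrt\rho+\sqrt{\rho'})$ (valid since $|\sqrt\rho-\sqrt{\rho'}|\le \sqrt\rho+\sqrt{\rho'}$ because all terms are non-negative), that
\begin{equation*}
\dhh(\mu,\mu')^2 = \frac12\int_{\bbR^\ell}\left(\sqrt{\rho(u)}-\sqrt{\rho'(u)}\right)^2 du \le \frac12\int_{\bbR^\ell}\left|\sqrt{\rho(u)}-\sqrt{\rho'(u)}\right|\left(\sqrt{\rho(u)}+\sqrt{\rho'(u)}\right) du = \dtv(\mu,\mu'),
\end{equation*}
where the last equality is exactly the factorization identity above integrated against $du$. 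Taking square roots gives the claim.

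For the lower bound $\frac{1}{\sqrt2}\dtv(\mu,\mu')\le \dhh(\mu,\mu')$, I would apply Cauchy--Schwarz to the same factorization:
\begin{equation*}
2\dtv(\mu,\mu') = \int_{\bbR^\ell}\left|\sqrt{\rho}-\sqrt{\rho'}\right|\left(\sqrt{\rho}+\sqrt{\rho'}\right) du \le \left(\int_{\bbR^\ell}\left(\sqrt{\rho}-\sqrt{\rho'}\right)^2 du\right)^{1/2}\left(\int_{\bbR^\ell}\left(\sqrt{\rho}+\sqrt{\rho'}\right)^2 du\right)^{1/2}.
\end{equation*}
The first factor is $(2\dhh(\mu,\mu')^2)^{1/2}=\sqrt2\,\dhh(\mu,\mu')$. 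For the second factor I would bound $(\sqrt\rho+\sqrt{\rho'})^2 = \rho + \rho' + 2\sqrt{\rho\rho'} \le 2(\rho+\rho')$ using $2\sqrt{\rho\rho'}\le \rho+\rho'$ (AM--GM), so that $\int(\sqrt\rho+\sqrt{\rho'})^2 du \le 2\int(\rho+\rho')du = 4$, giving second factor $\le 2$. Combining, $2\dtv \le \sqrt2\,\dhh\cdot 2$, i.e. $\dtv\le \sqrt2\,\dhh$, which rearranges to $\frac{1}{\sqrt2}\dtv\le\dhh$.

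The steps are all routine once the factorization $|\rho-\rho'|=|\sqrt\rho-\sqrt{\rho'}|\cdot|\sqrt\rho+\sqrt{\rho'}|$ is in hand; the only point requiring a little care is getting the constants right in the second Cauchy--Schwarz factor (one must remember to bound $(\sqrt\rho+\sqrt{\rho'})^2$ by $2(\rho+\rho')$ rather than by $\rho+\rho'$, and to keep track of the factor $\frac12$ hidden in the definitions of both distances). I do not anticipate any genuine obstacle; this is a standard comparison and the main thing is bookkeeping of the $\frac12$ normalizations so that the final constants match the statement exactly.
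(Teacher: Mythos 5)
Your proof is correct and follows essentially the same route as the paper's: the factorization $|\rho-\rho'|=|\sqrt{\rho}-\sqrt{\rho'}|\,|\sqrt{\rho}+\sqrt{\rho'}|$, Cauchy--Schwarz for the lower bound with the bound $(\sqrt{\rho}+\sqrt{\rho'})^2\le 2(\rho+\rho')$, and the elementary inequality $|\sqrt{a}-\sqrt{b}|\le\sqrt{a}+\sqrt{b}$ for the upper bound. The only cosmetic difference is that the paper phrases the integrals as expectations under $\mu$ (working with the ratio $\rho'/\rho$), whereas you work directly with the densities; the constants all check out.
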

\begin{proof}
The upper and lower bounds of, respectively, $0$ and $1$ are proved above. 
We show first that $\frac{1}{\sqrt{2}} \dtv(\mu,\mu') \le \dhh(\mu,\mu').$ Indeed, by the Cauchy-Schwarz inequality,
\begin{align*}
\dtv(\mu,\mu') &= \frac12 \int_{\bbR^{\ell}} \left| 1- \sqrt{\frac{\rho'(u)}{\rho(u)}} \right| \left| 1+ \sqrt{\frac{\rho'(u)}{\rho(u)}} \right| \rho(u)\, du \\
&\le \left(  \frac12 \int_{\bbR^{\ell}} \left| 1- \sqrt{\frac{\rho'(u)}{\rho(u)}} \right|^2\rho(u) \, du  \right)^{1/2}     
\left( \frac12 \int_{\bbR^{\ell}} \left| 1+ \sqrt{\frac{\rho'(u)}{\rho(u)}} \right|^2\rho(u) \, du  \right)^{1/2}  \\   
& \le \dhh(\mu,\mu') \left( \int_{\bbR^{\ell}} \left| 1 + \frac{\rho'(u)}{\rho(u)} \right| \rho(u)\,du \right)^{1/2} \\
&= \sqrt{2} \dhh(\mu,\mu').
\end{align*}
Finally, for the inequality $\dhh(\mu,\mu')\le \dtv(\mu,\mu')^{1/2}$ note that 
\begin{equation*}
|\sqrt{a}-\sqrt{b}|\le \sqrt{a} + \sqrt{b} \quad \forall a,b>0.
\end{equation*} 
Therefore,
\begin{align*}
\dhh(\mu,\mu')^2 &= \frac12 \int_{\bbR^{\ell}} \left|1- \sqrt{\frac{\rho'(u)}{\rho(u)}} \right| \left|1- \sqrt{\frac{\rho'(u)}{\rho(u)}} \right| \rho(u) \,du \\
&\le  \frac12 \int_{\bbR^{\ell}} \left|1- \sqrt{\frac{\rho'(u)}{\rho(u)}} \right| \left|1+ \sqrt{\frac{\rho'(u)}{\rho(u)}} \right| \rho(u) \,du \\
&= \frac12 \int_{\bbR^{\ell}} \left|1-\frac{\rho'(u)}{\rho(u)} \right| \rho(u)\, du \\
&= \dtv(\mu,\mu').
\end{align*}
\end{proof}

Why do we bother to introduce the Hellinger\index{distance!Hellinger} 
distance, rather than
working with the more familiar total variation\index{distance!total variation}?
The answer stems from the following two lemmas.

\begin{lemma}
\label{lem:1h}
Let $f:\R^\ell\to\R^p$ be such that
$$(\bbE^{\mu}|f(u)|^2+\bbE^{\mu'}|f(u)|^2)<\infty.$$
Then 
\begin{equation}\label{eq:dtf8}|\bbE^{\mu}f(u)-\bbE^{\mu'}f(u)|\leq2(\bbE^{\mu}|f(u)|^2+\bbE^{\mu'}|f(u)|^2)^\frac12d_{{\rm{\tiny Hell}}}(\mu, \mu').
\end{equation}
As a consequence 
\begin{equation}\label{eq:dtf808}|\bbE^{\mu}f(u)-\bbE^{\mu'}f(u)|\leq2(\bbE^{\mu}|f(u)|^2+\bbE^{\mu'}|f(u)|^2)^\frac12d_{{\rm{\tiny tv}}}(\mu, \mu')^{\frac12}.
\end{equation}
\end{lemma}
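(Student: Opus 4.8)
The plan is to bound the difference of expectations by inserting the ratio of densities and applying Cauchy--Schwarz with respect to one of the measures, exploiting the algebraic identity $1-\rho'/\rho = (1-\sqrt{\rho'/\rho})(1+\sqrt{\rho'/\rho})$ that already appeared in the proof of Lemma~\ref{lem:tvh}. First I would write
\begin{equation*}
\bbE^{\mu}f(u)-\bbE^{\mu'}f(u)=\int_{\bbR^{\ell}} f(u)\bigl(\rho(u)-\rho'(u)\bigr)\,du=\int_{\bbR^{\ell}} f(u)\Bigl(1-\frac{\rho'(u)}{\rho(u)}\Bigr)\rho(u)\,du,
\end{equation*}
which is legitimate since $\rho>0$ everywhere. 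Then I would factor $1-\rho'/\rho=\bigl(1-\sqrt{\rho'/\rho}\bigr)\bigl(1+\sqrt{\rho'/\rho}\bigr)$ and split the weight $\rho=\sqrt{\rho}\cdot\sqrt{\rho}$, grouping one factor of $\sqrt{\rho}$ with each bracket, so that
\begin{equation*}
\bbE^{\mu}f(u)-\bbE^{\mu'}f(u)=\int_{\bbR^{\ell}} \Bigl[\bigl(1-\sqrt{\tfrac{\rho'(u)}{\rho(u)}}\bigr)\sqrt{\rho(u)}\Bigr]\Bigl[f(u)\bigl(1+\sqrt{\tfrac{\rho'(u)}{\rho(u)}}\bigr)\sqrt{\rho(u)}\Bigr]\,du.
\end{equation*}

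Next I would apply the Cauchy--Schwarz inequality in $L^2(du)$ to this product. The first factor contributes $\bigl(\int (1-\sqrt{\rho'/\rho})^2\rho\,du\bigr)^{1/2}=\sqrt{2}\,\dhh(\mu,\mu')$ by the definition of the Hellinger distance. For the second factor I would use $(1+\sqrt{\rho'/\rho})^2\le 2(1+\rho'/\rho)$ (the elementary inequality $(1+t)^2\le 2(1+t^2)$), so that
\begin{equation*}
\int_{\bbR^{\ell}}|f(u)|^2\bigl(1+\sqrt{\tfrac{\rho'(u)}{\rho(u)}}\bigr)^2\rho(u)\,du\le 2\int_{\bbR^{\ell}}|f(u)|^2\bigl(\rho(u)+\rho'(u)\bigr)\,du=2\bigl(\bbE^{\mu}|f(u)|^2+\bbE^{\mu'}|f(u)|^2\bigr).
\end{equation*}
Multiplying the two square roots gives $\sqrt{2}\,\dhh(\mu,\mu')\cdot\sqrt{2}\,\bigl(\bbE^{\mu}|f|^2+\bbE^{\mu'}|f|^2\bigr)^{1/2}=2\,\bigl(\bbE^{\mu}|f|^2+\bbE^{\mu'}|f|^2\bigr)^{1/2}\dhh(\mu,\mu')$, which is exactly \eqref{eq:dtf8}. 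The consequence \eqref{eq:dtf808} then follows immediately by substituting the bound $\dhh(\mu,\mu')\le\dtv(\mu,\mu')^{1/2}$ from Lemma~\ref{lem:tvh}.

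There is no serious obstacle here; the only point requiring a little care is the bookkeeping of constants — making sure the factor $\sqrt{2}$ from the Hellinger term and the factor $\sqrt{2}$ from the inequality $(1+t)^2\le 2(1+t^2)$ combine to give the stated constant $2$ rather than something larger. One should also note that when $f$ is vector-valued ($p>1$) the Cauchy--Schwarz step is applied componentwise to $|f(u)|$ after taking norms, or equivalently one works with the scalar function $|\bbE^{\mu}f-\bbE^{\mu'}f|$ via a unit vector pairing; either way the bound is unaffected, and the finiteness hypothesis on $\bbE^{\mu}|f|^2+\bbE^{\mu'}|f|^2$ is precisely what makes the right-hand side of the Cauchy--Schwarz estimate finite.
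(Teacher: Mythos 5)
Your proof is correct and is essentially the paper's argument: after multiplying your two brackets by $\sqrt{\rho}$ you recover exactly the factorization $\rho-\rho'=(\sqrt{\rho}-\sqrt{\rho'})(\sqrt{\rho}+\sqrt{\rho'})$ used in the text, followed by the same Cauchy--Schwarz step and the same elementary bound $(\sqrt{a}+\sqrt{b})^2\le 2(a+b)$, with the constants combining to $2$ in the same way. The deduction of \eqref{eq:dtf808} from Lemma \ref{lem:tvh} also matches the paper.
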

\begin{proof}
In the following all integrals are over $\bbR^{\ell}$. Now 
\begin{align*}|\bbE^{\mu}f(u)-\bbE^{\mu'}f(u)|&\leq\int|f(u)||\rho(u)-\rho'(u)|du\\
&=\int\sqrt{2}|f(u)||\sqrt{\rho(u)}+\sqrt{\rho'(u)}|\cdot\frac1{\sqrt{2}}|\sqrt{\rho(u)}-\sqrt{\rho'(u)}|du\\
&\leq\left(\int2|f(u)|^2|\sqrt{\rho(u)}+\sqrt{\rho'(u)}|^2du\right)^\frac12\left(\frac12\int|\sqrt{\rho(u)}-\sqrt{\rho'(u)}|^2du\right)^{\frac12}\\
&\leq\left(\int4|f(u)|^2(\rho(u)+\rho'(u))du\right)^\frac12\left(\frac12\int\left(1-\frac{\sqrt{\rho'(u)}}{\sqrt{\rho(u)}}\right)^2\rho(u)du\right)^{\frac12}\\
&=2(\bbE^{\mu}|f(u)|^2+\bbE^{\mu'}|f(u)|^2)^\frac12d_{{\rm{\tiny Hell}}}(\mu, \mu').
\end{align*}
Thus \eqref{eq:dtf8} follows.
The bound \eqref{eq:dtf808} follows from Lemma \ref{lem:tvh}.
\end{proof}

\begin{remark}
\label{lem:thel}
The preceding lemma shows that,
if two measures $\mu$ and $\mu'$ are ${\mathcal O}(\epsilon)$
close in the Hellinger metric, and if the function $f(u)$ is
square integrable with respect to $u$ distributed according to
$\mu$ and $\mu'$, then expectations of $f(u)$ with
respect to $\mu$ and $\mu'$ are also ${\mathcal O}(\epsilon)$ close.
It also shows that, under the same assumptions on $f$, 
if two measures $\mu$ and $\mu'$ are ${\mathcal O}(\epsilon)$
close in the total variation metric, then expectations of $f(u)$ with
respect to $\mu$ and $\mu'$ are only ${\mathcal O}(\epsilon^{\frac12})$ 
close. This second result is sharp and
to get ${\mathcal O}(\epsilon)$ closeness of expectations using
${\mathcal O}(\epsilon)$ closeness in the TV metric requires a
stronger assumption on $f$, as we now show.
\end{remark}

\begin{lemma}
\label{lem:1tv}
Assume that $|f|$ is finite almost surely with respect to both $\mu$ and 
$\mu'$ and denote the almost
sure upper bound on $|f|$ by $\fm.$ Then
$$|\bbE^{\mu}f(u)-\bbE^{\mu'}f(u)| \le 2\fm \dtv(\mu, \mu').$$
\end{lemma}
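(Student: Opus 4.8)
The plan is to bound the difference of expectations directly by the $L^1$ norm of the difference of densities, using the almost sure bound $\fm$ on $|f|$ to pull the supremum of $|f|$ out of the integral. First I would write, with all integrals over $\bbR^\ell$,
\[
|\bbE^\mu f(u) - \bbE^{\mu'} f(u)| = \Bigl| \int f(u)\bigl(\rho(u) - \rho'(u)\bigr)\,du \Bigr| \le \int |f(u)|\,|\rho(u) - \rho'(u)|\,du.
\]
This first step uses only linearity of the integral, the identities $\mu(dx) = \rho(x)\,dx$ and $\mu'(dx) = \rho'(x)\,dx$, and the triangle inequality for integrals; note $f$ is vector valued in $\bbR^p$, so $|\cdot|$ denotes the Euclidean norm and the elementary inequality $|\int g| \le \int |g|$ is applied componentwise or, more cleanly, via $|\int g|^2 = \langle \int g, \int g\rangle = \int \langle g, \int g\rangle \le \int |g| |\int g|$.

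Next I would invoke the almost sure bound: since $|f(u)| \le \fm$ for $u$ outside a set that is null with respect to both $\mu$ and $\mu'$, and since $|\rho(u) - \rho'(u)|\,du \le (\rho(u) + \rho'(u))\,du$ as a measure is absolutely continuous with respect to both $\mu$ and $\mu'$, the integrand $|f(u)|\,|\rho(u)-\rho'(u)|$ equals $\fm\,|\rho(u)-\rho'(u)|$ off a null set, so
\[
\int |f(u)|\,|\rho(u)-\rho'(u)|\,du \le \fm \int |\rho(u) - \rho'(u)|\,du = \fm \cdot 2\,\dtv(\mu,\mu'),
\]
the last equality being just Definition \ref{def:tvd}. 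Combining the two displays gives $|\bbE^\mu f(u) - \bbE^{\mu'} f(u)| \le 2\fm\,\dtv(\mu,\mu')$, as claimed.

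There is no real obstacle here; the only point requiring a word of care is the almost sure nature of the bound on $|f|$ rather than an everywhere bound. This is handled by observing that the set where $|f(u)| > \fm$ has zero Lebesgue measure (since it is $\mu$-null and $\rho$ is strictly positive, hence equivalent to Lebesgue measure), so it contributes nothing to the Lebesgue integral $\int |f(u)|\,|\rho(u)-\rho'(u)|\,du$. Contrast with Lemma \ref{lem:1h}: there the factor $(\bbE^\mu|f|^2 + \bbE^{\mu'}|f|^2)^{1/2}$ is split off by Cauchy--Schwarz and pairs with the Hellinger distance, giving only $\dtv^{1/2}$ when converted via Lemma \ref{lem:tvh}; here the stronger uniform bound on $f$ lets us keep the full first power of $\dtv$, which is the point of the remark preceding the lemma.
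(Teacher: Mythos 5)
Your proof is correct and follows essentially the same route as the paper: bound the difference of expectations by $\int|f|\,|\rho-\rho'|\,du$, pull out the uniform bound $\fm$, and recognize the remaining integral as $2\dtv(\mu,\mu')$. The only quibble is the phrase ``equals $\fm\,|\rho(u)-\rho'(u)|$ off a null set,'' which should read ``is at most''; your extra care with the almost-sure versus everywhere bound (via strict positivity of the densities) is a detail the paper glosses over.
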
  
\begin{proof}
Under the given assumption on $f$,
\begin{align*}|\bbE^{\mu}f(u)-\bbE^{\mu'}f(u)|&\leq\int|f(u)||\rho(u)-\rho'(u)|du\\
&\leq 2\fm\left(\frac12\int|\rho(u)-\rho'(u)|du\right)\\
&\leq 2\fm\left(\frac12\int\left|1-\frac{\rho'(u)}{\rho(u)}\right|\rho(u)du\right)\\
&=2\fm \dtv(\mu, \mu').
\end{align*}
\end{proof}

The implication of the preceding two lemmas and remark
is that it is natural to work with the Hellinger\index{metric!Hellinger} 
metric, rather than the total variation metric, whenever considering
the effect of perturbations of the measure on expectations of
functions which are square integrable, but not bounded.

\section{Probabilistic View of Dynamical Systems}
\label{ssec:pds}

Here we look at the natural connection between dynamical systems,
and the underlying dynamical system that they generate on probability
measures\index{probability measure}. 
The key idea here is that the Markovian propagation of
probability measures is linear, even when the underlying dynamical
system is nonlinear. This advantage of linearity is partially offset
by the fact that the underlying dynamics on probability distributions
is infinite dimensional, but it is nonetheless a powerful perspective
on dynamical systems. Example \ref{examplegauss} provides a nice introductory
example demonstrating the probability distributions carried by
a stochastic dynamical system; in that case the probability
distributions are Gaussian and we explicitly characterize their evolution
through the mean and covariance.  The idea of mapping probability
measures under the dynamical system can be generalized, but
is typically more complicated because the probability distributions
are typically not Gaussian and not characterized by a finite number of
parameters.

\subsection{Markov\index{Markov kernel} Kernel}
\label{ssec:mk}
\begin{definition}
$p: \bbR^{\ell} \times \calB(\bbR^{\ell}) \to \bbR^+$ is a {\em Markov kernel}\index{Markov kernel} if: 
\begin{itemize}
\item for each $x\in\bbR^{\ell},$ $p(x,\cdot)$ is a probability measure on $\bigl(\bbR^{\ell}, \calB(\bbR^{\ell})\bigr);$
\item $x\mapsto p(x, A)$ is $\calB(\bbR^{\ell})$-measurable for all $A \in \calB(\bbR^{\ell}).$
\end{itemize}
\end{definition}
The first condition is the key one for the material in this book:
the Markov kernel at fixed $x$ describes the probability distribution
of a new point $y \sim p(x,\cdot).$ By iterating on this we may
generate a sequence of points which constitute a sample from 
the distribution of the Markov chain\index{Markov chain}, as described
below, defined by the Markov kernel.\index{Markov kernel} 
The second measurability\index{measurable} condition ensures
an appropriate mathematical setting for the problem, but an in-depth
understanding of this condition is not essential for the reader of this book. 
In the same way that we use $\bbP$ to denote both the probability
measure and its pdf, we sometimes use $p(x,\cdot):\bbR^{\ell} \to \bbR^+$, for each
fixed $x \in \bbR^{\ell}$, to denote the corresponding pdf of 
the Markov kernel\index{Markov kernel} from the preceding definition. 

Consider the stochastic dynamical system 
\begin{equation*}
v_{j+1}=\PPsi(v_j) + \xi_j,
\end{equation*}
where $\xi=\{\xi_j\}_{j\in\Z^+}$ is an $\iid$ sequence distributed according 
to probability measure\index{probability measure}
on $\bbR^{\ell}$ with density $\rho(\cdot).$ We assume that the initial
condition $v_0$ is possibly random, but independent of $\xi.$
Under these assumptions on the probabilistic structure,
we say that $\{v_j\}_{j \in \Z^+}$ is a {\em Markov chain}\index{Markov chain}.
For this Markov chain\index{Markov chain} we have
\begin{equation*}
\bbP(v_{j+1}|v_j) = \rho\bigl(v_{j+1}-\PPsi(v_j)\bigr);
\end{equation*}
thus
\begin{equation*}
\bbP(v_{j+1}\in A|v_j) =\int_A \rho\bigl(v_{j+1}-\PPsi(v_j)\bigr) \,dv.
\end{equation*}
In fact we can define a Markov Kernel 
\begin{equation*}
p(u,A)= \int_A \rho\bigl(v-\PPsi(u)\bigr) \,dv,
\end{equation*}
with the associated pdf  
\begin{equation*}
p(u,v)=\rho\left(v-\PPsi(u)\right).
\end{equation*}
If $v_j\sim \mu_j$ with pdf $\rho_j$ then 
\begin{align*}
\mu_{j+1} &= \bbP(v_{j+1}\in A) \\
&=\int_{\bbR^{\ell}} \bbP(v_{j+1}\in A|v_j) \bbP(v_j) \, dv_j \\
&= \int_{\bbR^{\ell}} p(u,A) \rho_j(u) \, du.
\end{align*}
And then 
\begin{align*}
\rho_{j+1}(v) &= \int_{\bbR^{\ell}} p(u,v) \rho_j(u) \, du\\
&= \int_{\bbR^{\ell}}  \rho\bigl(v - \PPsi(u) \bigr) \rho_j(u) \, du.
\end{align*}
Furthermore we have a linear dynamical system for the evolution of the pdf
\begin{equation}
\rho_{j+1} = P \rho_j,
\label{eq:rd}
\end{equation}
where $P$ is the integral operator 
\begin{equation*}
(P\pi)(v) = \int_{\bbR^{\ell}}  \rho\bigl(v-\PPsi(u)\bigr) \pi(u) \, du.
\end{equation*}

\begin{example}
Let  $\PPsi: \bbR^{\ell} \to \bbR^{\ell}.$ Assume that $\xi_1 \sim N(0,\sigma^2 I).$ Then 
\begin{equation*}
\rho_{j+1}(v) = \int_{\bbR^{\ell}}  \frac{1}{(2\pi)^{\ell/2} \sigma^{{\ell}}} \exp\left( -\frac{1}{2\sigma^2} |v-\PPsi(u)|^2 \right) \rho_j(u) \, du.
\end{equation*}
As $\sigma\to\infty$ we obtain the deterministic model
\begin{equation*}
\rho_{j+1}(v) = \int_{\bbR^{\ell}} \delta\bigl(v- \PPsi(u)\bigr) \rho_j(u) \, du.
\end{equation*}
\end{example}

For each integer $n \in \bbN$, we use the notation $p^n(u,\cdot)$ to
denote the Markov kernel arising from $n$ steps of the Markov chain;
thus $p^1(u,\cdot)=p(u,\cdot).$
Furthermore, $p^n(u,A)=\rp(u^{(n)} \in A|u^{(0)}=u).$

\subsection{Ergodicity}\index{ergodic}
In many situations we will appeal to {\em ergodic
theorems}\index{ergodic!theorem} to extract information
from sequences $\{v_j\}_{j \in \Z^+}$ generated by a
(possibly stochastic) dynamical systems. Assume that this dynamical
systems is invariant with respect to probability measure $\mu_{\infty}$.
Then, roughly speaking, an  ergodic \index{dynamical system!ergodic}\index{ergodic!dynamical system}
dynamical systems is one for which, 
for a suitable class of test functions $\varphi: \bbR^{\ell} \to \bbR$, and $v_0$ almost surely with respect to the 
{\em invariant measure}\index{invariant measure} $\mu_{\infty},$ the 
Markov chain\index{Markov chain} from the
previous subsection satisfies 
\begin{equation}\label{eq:erg}
\frac{1}{J}\sum_{j=1}^J \varphi(v_j) \to \int_{\bbR^{\ell}} \varphi(v)\mu_{\infty}(dv) = \bbE^{\mu_{\infty}} \varphi(v).
\end{equation}
We say that {\em the time average equals the space average.} 
The preceding identity encodes the idea that the histogram formed by
a single trajectory  $\{v_j\}$ of the Markov chain \index{Markov chain} looks more and more
like the pdf of the underlying invariant measure\index{invariant measure}.
Since the convergence is almost sure with respect to the initial condition,
this implies that the statistics of where the trajectory spends time is, 
asymptotically, independent of the initial condition; this is a very
powerful property.

If the Markov chain \index{Markov chain} has a unique 
{\em invariant density}\index{invariant density} $\rho_{\infty}$, which is
a fixed point  \index{fixed point} of the linear dynamical 
system \eqref{eq:rd}, then it will satisfy 
\begin{equation}
\label{eq:comeon}
\rho_{\infty} = P\rho_{\infty},
\end{equation}
or equivalently
\begin{equation}
\label{eq:comeon2}
\rho_{\infty}(v)=\int_{\bbR^{\ell}} p(u,v)\rho_{\infty}(u) du.
\end{equation}
In the  ergodic\index{ergodic} setting, this equation will have a form of
uniqueness within the class of pdfs and, furthermore, it is often possible
to prove, in some norm, the convergence 
\begin{equation*}
\rho_j\to \rho_{\infty} \quad {\text{as}} \,\, j\to\infty.
\end{equation*}

\begin{example}
Example \ref{examplegauss} generates an ergodic
Markov chain \index{Markov chain}$\{v_j\}_{j \in \Z^+}$
carrying the sequence of pdfs $\rho_j$. Furthermore, each 
$\rho_j$ is the density of a Gaussian $N(m_j,\sigma_j^2).$
If $|\lambda|<1$ then $m_j\to 0$ and $\sigma_j^2 \to \sigma_{\infty}^2$ where
\begin{equation*}
\sigma_{\infty}^2 = \frac{\sigma^2}{1-\lambda^2}.
\end{equation*} 
Thus $\rho_{\infty}$ is the density of a Gaussian $N(0,\sigma_{\infty}^2).$
We then have, 
\begin{equation*}
\frac{1}{J}\sum_{j=1}^J \varphi(v_j)  = \frac{1}{J}\sum_{j=1}^J \varphi\left(\lambda^j v_0 + \sum_{i=1}^{J-1} \lambda^{j-i-1} \xi_i\right) \to \int_{\bbR} \rho_{\infty}(v) \varphi(v)\,dv.
\end{equation*}
\end{example}

\subsection{Bayes' Formula \index{Bayes' formula} as a Map}
\label{ssec:btm}
Recall that Bayes' formula \index{Bayes' formula} states that 
\begin{equation*}
\bbP(a|b) = \frac{1}{\bbP(b)}\bbP(b|a)\bbP(a).
\end{equation*}
This may be viewed as a map from $\bbP(a)$ (what we know about $a$ a priori,
the prior\index{prior}) to $\bbP(a|b)$ (what we know about $a$ once we
have observed the variable 
$b$, the posterior.\index{posterior}) Since 
\begin{equation*}
\bbP(b) = \int_{\bbR^{\ell}} \bbP(b|a) \bbP(a)\, da 
\end{equation*} 
we see that 
\begin{equation*}
\bbP(a|b) = \frac{\bbP(b|a)\bbP(a)}{\int_{\bbR^{\ell}} \bbP(b|a)\bbP(a)\,da} =:L\bbP(a).
\end{equation*}
$L$ is a {\em nonlinear} map which takes pdf $\bbP(a)$ into $\bbP(a|b).$
We use the letter $L$ to highlight the fact that the map is defined, in the
context of Bayesian statistics, by using the likelihood\index{likelihood}
to map prior to posterior.

\section{Bibliography}
\label{ssec:backbib}

\begin{itemize}

\item For background material on probability, as covered in section 
\ref{ssec:p}, the reader is directed to the
elementary textbook \cite{breiman1992probability}, 
and to the more advanced texts \cite{G&Z,williamsm} for further material
(for example the definition of measurable\index{measurable}.)
The book \cite{Nor97}, together with the references therein, provides an
excellent introduction to Markov chains. The book \cite{MT93}
is a comprehensive study of ergodicity\index{ergodic} for
Markov chains\index{Markov chain}; 
the central use of Lyapunov\index{Lyapunov function} functions will
make it particular accessible for readers with a background in dynamical systems.
Note also that Theorem \ref{th21} 
contains a basic ergodic result for Markov chains.

\item Section \ref{ssec:ds} concerns dynamical systems and stochastic
dynamical systems. The deterministic setting is over-viewed in numerous
textbooks, such as \cite{Guck,Wig90}, with more advanced material, related
to infinite dimensional problems, covered in \cite{book:Temam1997}.
The ergodicity\index{ergodic}
of stochastic dynamical systems is over-viewed in \cite{Arn} and targeted 
treatments based on the small noise scenario include \cite{FW,BerGen06}. 
The book \cite{SH96} contains elementary chapters on dynamical systems,
and the book chapter \cite{HS02} contains related material in the context
of stochastic dynamical systems. For the subject of control
theory the reader is directed to \cite{zabczyk2009mathematical}, which
has a particularly good exposition of the linear theory, and
\cite{son98} for the nonlinear setting.

\item Probability metrics are the subject of section \ref{ssec:pm}
and the survey paper \cite{GS02} provides a very readable introduction to
this subject, together with references to the wider literature.

\item Viewing (stochastic) dynamical systems as generating a dynamical system
on the probability measure\index{probability measure}
which they carry is an enormously powerful
way of thinking. The reader is directed to the books \cite{viana1997stochastic} 
and \cite{baladi2000positive} for
overviews of this subject, and further references. 

\end{itemize}

\section{Exercises}
\label{ex:intro}

\begin{enumerate}

\item Consider the ODE
$$\frac{dv}{dt}=v-v^3, \quad v(0)=v_0.$$
By finding the exact solution, determine the one-parameter
semigroup\index{semigroup} $\PPsi(\cdot;t)$ with properties \eqref{eq:sg}.

\item Consider a jointly varying random variable $(a,b) \in \bbR^2$ defined
as follows: $a \sim N(0,\sigma^2)$ and $b|a \sim N(a,\gamma^2).$ Find a
formula for the probability density function \index{probability density function} of $(a,b)$, using
(\ref{eq:cpeq}b), and demonstrate that the random variable is a Gaussian
with mean and covariance which you should specify. Using \eqref{eq:bayes},
find  a formula for the probability density function \index{probability density function} of $a|b$; again 
demonstrate that the random variable is a Gaussian
with mean and covariance which you should specify.

\item Consider two Gaussian densities on $\R$: $\cN(m_1,\sigma_1^2)$
and $\cN(m_2,\sigma_2^2)$.
Show that the Hellinger distance \index{distance!Hellinger} between them is given by
$$\dhh(\mu,\mu')^2=1-\sqrt{\exp\Bigl(-\frac{(m_1-m_2)^2}{2(\sigma_1^2
+\sigma_2^2)}\Bigr)\frac{2\sigma_1\sigma_2}{(\sigma_1^2+\sigma_2^2)}}.$$

\item Consider two Gaussian measures on $\R$: $\cN(m_1,\sigma_1^2)$
and $\cN(m_2,\sigma_2^2)$. Show that the total variation distance \index{distance!total variation}
between the measures tends to zero if $m_2 \to m_1$ and
$\sigma_2^2 \to \sigma_1^2.$ 

\item The Kullback-Leibler divergence\index{Kullback-Leibler
divergence} between
two measures $\mu'$ and $\mu$, with pdfs $\rho'$ and 
$\rho$ respectively, is
$$\dkl(\mu'||\mu)=\int \log\Bigl( \frac{\rho(x)'}{\rho(x)}\Bigr)
\rho'(x)dx.$$
Does $\dkl$ define a metric on probability measures? Justify your answer.
Consider two Gaussian densities on $\R$: $\cN(m_1,\sigma_1^2)$
and $\cN(m_2,\sigma_2^2)$.
Show that
the Kullback-Leibler divergence \index{Kullback-Leibler
divergence}between them is given by
$$\dkl(\mu_1||\mu_2)=\ln\Bigl(\frac{\sigma_2}{\sigma_1}\Bigr)+
\frac12\Bigl(\frac{\sigma_1^2}{\sigma_2^2}-1\Bigr)+\frac{(m_2-m_1)^2}{2\sigma_2^2}.$$

\item Assume that two measures $\mu$ and $\mu'$
have positive Lebesgue densities $\rho$ and $\rho'$ respectively. 
Prove the bounds:
$$\dhh(\mu,\mu')^2 \le \frac12 \dkl(\mu||\mu')\;,\qquad
\dtv(\mu,\mu')^2 \le \dkl(\mu||\mu'),$$
where the Kullback-Leibler divergence \index{Kullback-Leibler
divergence} $\dkl$ is defined in the preceding exercise.

\item Consider the stochastic dynamical system of Example \ref{examplegauss}.
Find explicit formulae for the maps $m_j \mapsto m_{j+1}$ and
$\sigma_j^2 \mapsto \sigma_{j+1}^2.$

\item Directly compute the mean and covariance of $w=a+Lz$ if $z$ 
is Gaussian $N(m,C)$, without using the characteristic function. 
Verify that you obtain the same result as in Lemma \ref{lem:to}. 

\item Prove Lemma \ref{l:qf}.

\item Generalize Definitions \ref{def:dabs} and \ref{def:ga}
to continuous time, as
suggested in Remark \ref{rem:cabz}.

\end{enumerate}

\graphicspath{{./figs/chapter1/}}
\chapter{Discrete Time: Formulation}\label{sec:dtf}


In this chapter we introduce the mathematical framework
for discrete-time data assimilation.
Section \ref{ssec:s} describes the mathematical
models we use for the underlying signal\index{signal}, 
which we wish to recover, and for the data\index{data}, 
which we use for the recovery.
In section \ref{ssec:ge} we introduce a number of
examples used throughout the text to illustrate the
theory. Sections \ref{ssec:sp} and \ref{ssec:fp}
respectively describe two key problems related to
the conditioning of the signal $v$ on the data $y$,
namely {\bf smoothing}\index{smoothing} and 
{\bf filtering}\index{filtering}; in
section \ref{ssec:fasar} we describe how these
two key problems are related.
Section \ref{ssec:wp} proves that the smoothing\index{smoothing} problem
is well-posed \index{well-posed} and, using the connection to filtering\index{filtering}
described in \ref{ssec:fasar}, that the filtering\index{filtering} problem
is well-posed\index{well-posed}; here well-posedness \index{well-posed} refers to continuity
of the desired conditioned probability distribution
with respect to the observed data.
Section \ref{ssec:qual} discusses approaches to evaluating
the quality of data assimilation algorithms. In
section \ref{ssec:ill} we describe various
illustrations of the foregoing theory and conclude the
chapter with section \ref{ssec:bib} devoted to
a bibliographical overview and section \ref{ex:first} containing
exercises.

\section{Set-Up}\label{ssec:s}
We assume throughout the book that
$\PPsi \in C(\R^n,\R^n)$ and consider the Markov chain  \index{Markov chain} $v=\{v_j\}_{j\in\Z^+}$ 
defined by the random map
\begin{subequations}
\label{eq:dtf1}
\begin{eqnarray}
&v_{j+1}=\PPsi(v_j)+\xi_j, \;j\in\Z^+, \\
&v_0\sim \G(m_0,C_0),
\end{eqnarray}
\end{subequations}
where $\xi=\{\xi_j\}_{j\in\Z^+}$ is an i.i.d. \index{i.i.d.} sequence, 
with $\xi_0\sim\G(0,\Sigma)$ and $\Sigma>0$. Because $(v_0,\xi)$ is a random variable,
so too is the solution sequence $\{v_j\}_{j\in\Z^+}$: the 
{\bf signal}\index{signal},
which determines the state of the system at each discrete time instance.
For simplicity we assume that $v_0$ and $\xi$ are independent.
The probability distribution of the random variable $v$ quantifies
the uncertainty in predictions arising from this
{\bf stochastic dynamics \index{stochastic dynamics}} model.

In many applications, models such as \eqref{eq:dtf1} are
supplemented by observations of the system as it evolves;
this information then changes the probability distribution on the
signal, typically reducing the uncertainty. To describe such
situations we assume that we are given {\bf data}\index{data}, or
{\bf observations}\index{observations}, $y=\{y_j\}_{j\in\N}$ defined as 
follows. At each discrete time instance we observe a (possibly nonlinear)
function of the signal, with additive noise \index{noise!additive}:
\begin{equation}\label{eq:dtf2}
y_{j+1}=h(v_{j+1})+\eta_{j+1}, \;j\in\Z^+,
\end{equation}
where $h \in C(\R^n,\R^m)$ and $\eta=\{\eta_j\}_{j\in\N}$ is an i.i.d. \index{i.i.d.} sequence,
independent of $(v_0,\xi)$, with $\eta_1\sim\G(0,\Gamma)$ and $\Gamma>0$. 
The function $h$ is known as the {\bf observation operator}\index{observation operator}.
The objective of {\bf data assimilation}\index{data assimilation} 
is to determine 
information about the signal\index{signal} $v$, given data\index{data} $y$.
Mathematically we wish to solve the problem of conditioning
the random variable $v$ on the observed data $y$, or problems closely
related to this. 
Note that we have assumed that both the model noise 
$\xi$ and the observational noise $\eta$ are Gaussian\index{Gaussian}; 
this assumption is made for convenience only, 
and could be easily generalized.

We will also be interested in the case where the
dynamics is deterministic and \eqref{eq:dtf1} becomes
\begin{subequations}
\label{eq:dtf11}
\begin{eqnarray}\label{eq:dtf11a}
&v_{j+1}=\PPsi(v_j), \;j\in\Z^+, \\
&v_0\sim\G(m_0,C_0).
\end{eqnarray}
\end{subequations}
In this case, which we refer to as 
{\bf deterministic dynamics},\index{deterministic dynamics}
we are interested in the random variable $v_0$, given
the observed data $y$; note that $v_0$ determines all subsequent values
of the signal $v$.

Finally we mention that in many applications the function
$\PPsi$ is the solution operator \index{solution operator} for an ordinary
differential equation (ODE) of the form
\footnote{Here the use of $v=\{v(t)\}_{t \ge 0}$ for the solution of this
equation should be distinguished
from our use of $v=\{v_j\}_{j=0}^{\infty}$ 
for the solution of \eqref{eq:dtf1}.} 
\begin{subequations}\label{eq:ode1}
\begin{eqnarray}
\frac{dv}{dt}&=&f(v), \;t \in (0,\infty),\\
v(0)&=&v_0.
\end{eqnarray}
\end{subequations}
Then, assuming the solution exists for all $t \ge 0$,
there is a one-parameter semi-group of operators
$\PPsi(\cdot;t)$, parametrized by time $t \ge 0$, with
properties defined in \eqref{eq:sg}. 
In this situation we assume that $\PPsi(u)=\PPsi(u;\dt)$, i.e. the solution
operator over $\dt$ time units, where $\dt$ is the time 
between observations;\index{observations}
thus we implicitly make the simplifying assumption that the observations
are made at equally spaced time-points, and note that the state $v_j=v(jh)$
evolves according to (\ref{eq:dtf11a}). 
We use the notation $\PPsi^{(j)}(\cdot)$
to denote the $j-$fold composition of $\PPsi$ with itself.
Thus, in the case of continuous time dynamics, 
$\PPsi(\cdot\,;j\tau)=\PPsi^{(j)}(\cdot)$.

\section{Guiding Examples}\label{ssec:ge}
Throughout these notes we will use the following examples to illustrate the 
theory and algorithms presented.

\begin{example} \label{ex:ex1}
We consider the case of one dimensional linear dynamics where
\begin{equation} \label{eq:ex1}
\PPsi(v)=\lambda v
\end{equation}
for some scalar $\lambda \in \bbR.$
Figure \ref{fig:ex1} compares the behaviour of 
the stochastic dynamics \index{stochastic dynamics} (\ref{eq:dtf1}) and
deterministic dynamics (\ref{eq:dtf11}) for 
the two values $\lambda=0.5$ and $\lambda=1.05$. We
set $\Sigma=\sigma^{2}$ and in
both cases $50$ iterations of the 
map are shown.
We observe that the presence of noise does not significantly alter the 
dynamics of the system for the case when $|\lambda|>1$, since for
both the stochastic and deterministic models  
$|v_{j}|\rightarrow \infty$ as $j \to \infty.$ The effects of stochasticity are
more pronounced when $|\lambda|<1$, since in this case the deterministic 
map satisfies $v_{j} \rightarrow 0$ whilst, for the stochastic model,
$v_{j}$ fluctuates randomly around 0. 

\begin{figure}[h]
\centering
\subfigure[$\lambda=0.5$]{\includegraphics[scale=0.365]{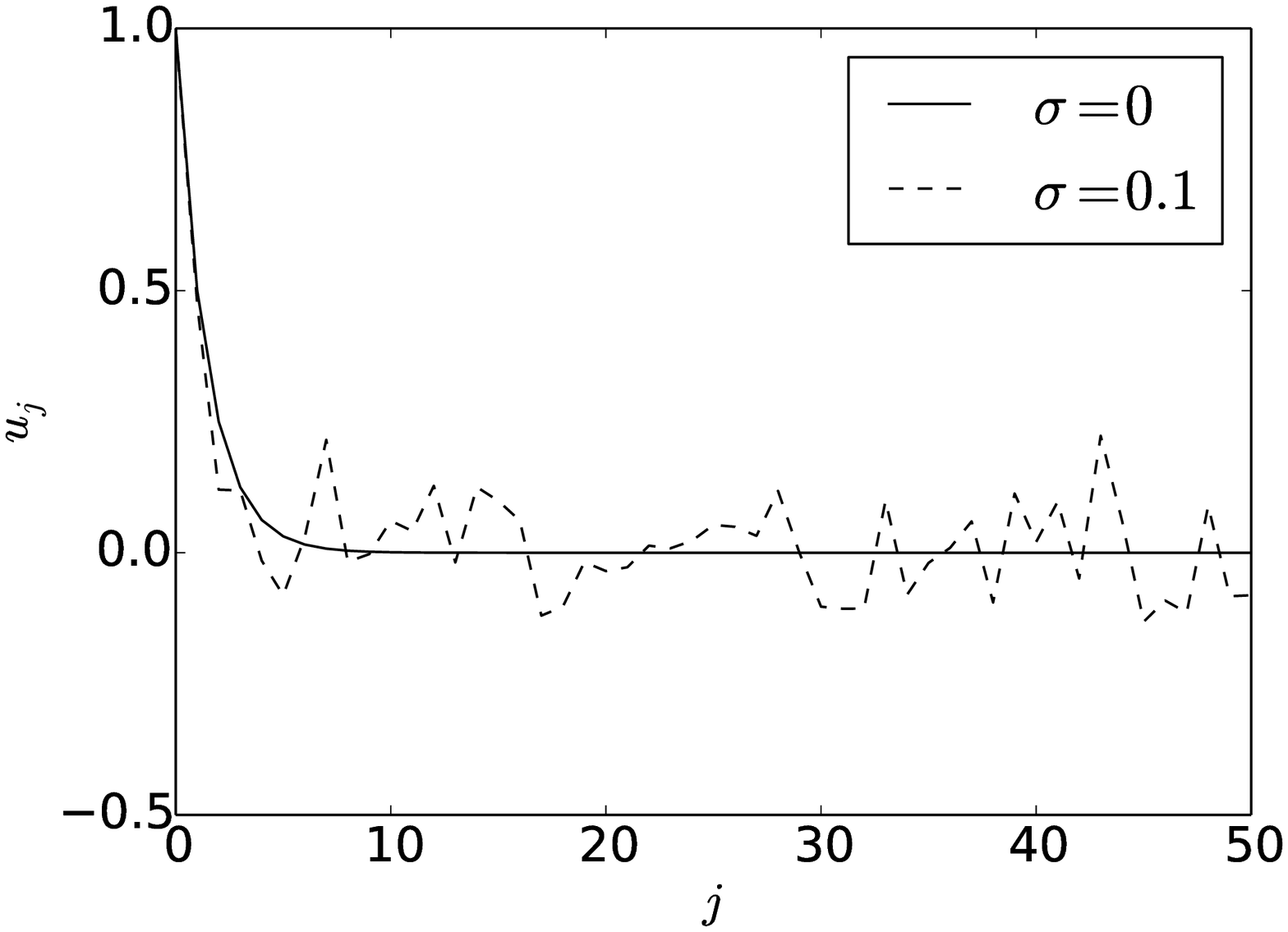}}
\subfigure[$\lambda=1.05$]{\includegraphics[scale=0.365]{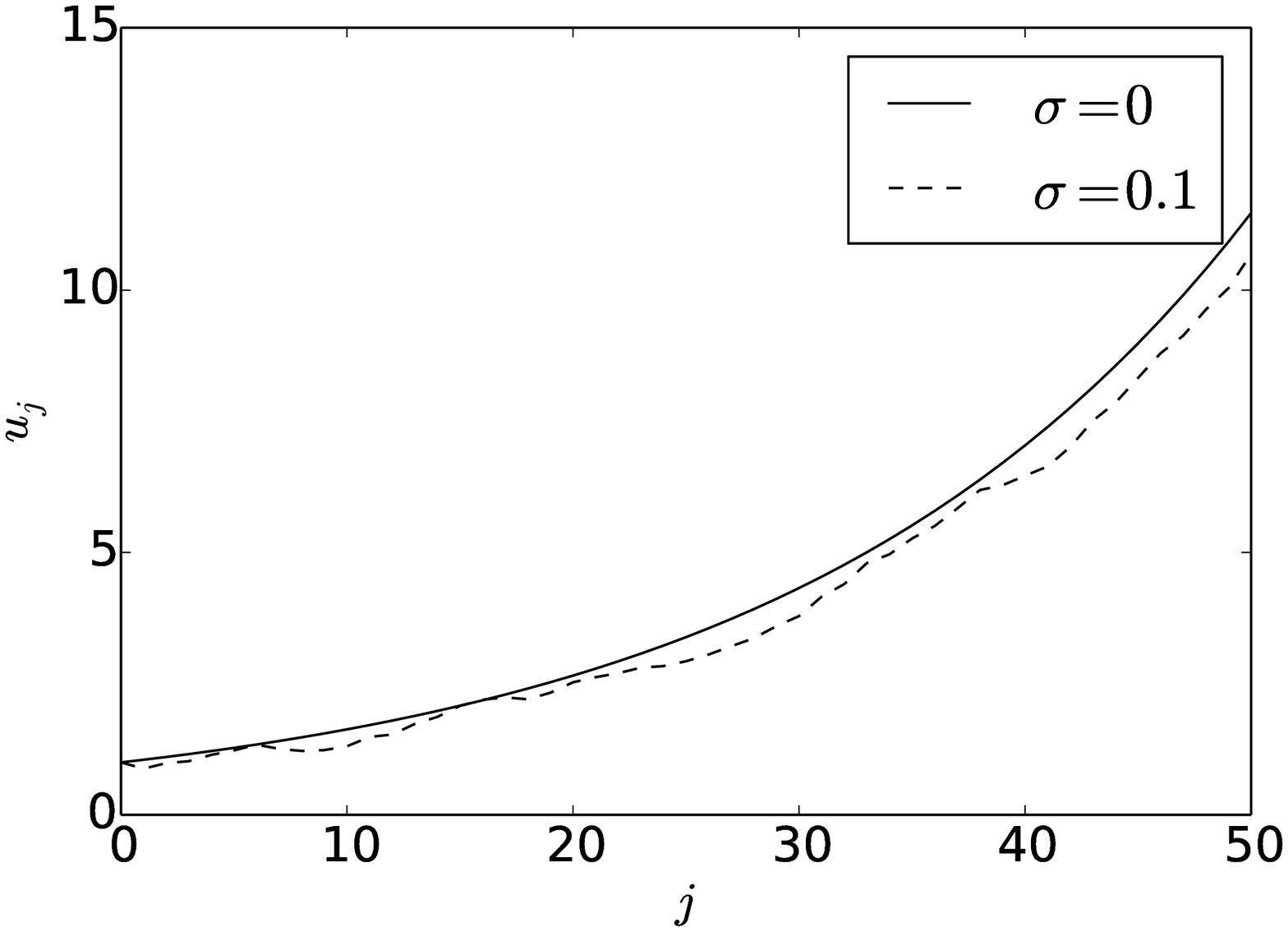}}
\caption{Behaviour of (\ref{eq:dtf1}) for $\PPsi$ given by \eqref{eq:ex1} for different values of $\lambda$ and $\Sigma=\sigma^{2}$.}
\label{fig:ex1}
\end{figure}

Using (\ref{eq:dtf1}a),
together with the linearity of $\PPsi$ and the Gaussianity
of the noise $\xi_{j}$,  we obtain
\[
\bbE(v_{j+1})=\lambda \bbE(v_{j}), \quad \bbE(v^{2}_{j+1})=\lambda^{2} \bbE(v^{2}_{j})+\sigma^{2}.
\] 
If $|\lambda|>1$ then the the second moment 
explodes as $j \to \infty$, as does the modulus of the first moment
if $\bbE(v_0) \ne 0.$
On the other hand, if $|\lambda|<1$, we see (Example \ref{examplegauss})
that $\bbE(v_j) \to 0$ and 
$\bbE (v^{2}_{j}) \to \sigma_{\infty}^2$ where
\begin{equation} 
\label{eq:OU_v}
\sigma^{2}_{\infty}=\frac{\sigma^{2}}{1-\lambda^{2}}.
\end{equation}    
Indeed, since $v_0$ is Gaussian, the model (\ref{eq:dtf1}a)
with linear $\PPsi$ and Gaussian noise $\xi_{j}$ gives rise
to a random variable $v_j$ which is also Gaussian.
Thus, from the convergence of the mean and the second moment of $v_j$,
we conclude that $v_{j}$ converges weakly to the random variable $N(0,\sigma^{2}_{\infty}).$ 
This is an example of ergodicity\index{ergodic} as expressed in 
\eqref{eq:erg}; the invariant measure \index{invariant measure} $\mu_{\infty}$ is the 
Gaussian $N(0,\sigma^{2}_{\infty})$ and the density $\rho_{\infty}$ is the 
Lebesgue density of this Gaussian.  
\end{example}

\begin{example} \label{ex:ex2}
Now consider the case of two dimensional linear dynamics. In this case 
\begin{equation} \label{eq:ex2}
\PPsi(v)=Av,
\end{equation}
with $A$ a $2\times 2$ dimensional matrix of one of the
following three forms $A_{\ell}$:
\[A_{1}=
\left( \begin{array}{cc}
\lambda_{1} & 0 \\
0 & \lambda_{2} \end{array} \right), \quad A_{2}=\left( \begin{array}{cc}
\lambda & \alpha \\
0 & \lambda \end{array} \right), \quad A_{3}=\left( \begin{array}{cc}
0 & 1 \\
-1 & 0 \end{array} \right)
\]
For $\ell=1,2$ the behaviour of (\ref{eq:dtf1}) for $\PPsi(u)=A_\ell u$ 
can be understood from the analysis underling the
previous Example \ref{ex:ex1} and the behaviour is similar, in each
coordinate, depending on whether the $\lambda$ value on the diagonal
is smaller than, or larger than, $1$. 
However, the picture is more interesting 
when we consider the third choice $\PPsi(u)=A_3u$ as, in this case, 
the matrix $A_3$ has purely imaginary eigenvalues and corresponds to a rotation by $\pi/2$ on the plane; this is illustrated in Figure \ref{fig:ex2}a. 
Addition of noise into the dynamics gives a qualitatively different picture: 
now the step $j$ to $j+1$ corresponds to a rotation by $\pi/2$, composed with a random shift of origin; this is illustrated in Figure \ref{fig:ex2}b.

\begin{figure}[h]
\centering
\subfigure[Deterministic dynamics]{\includegraphics[scale=0.365]{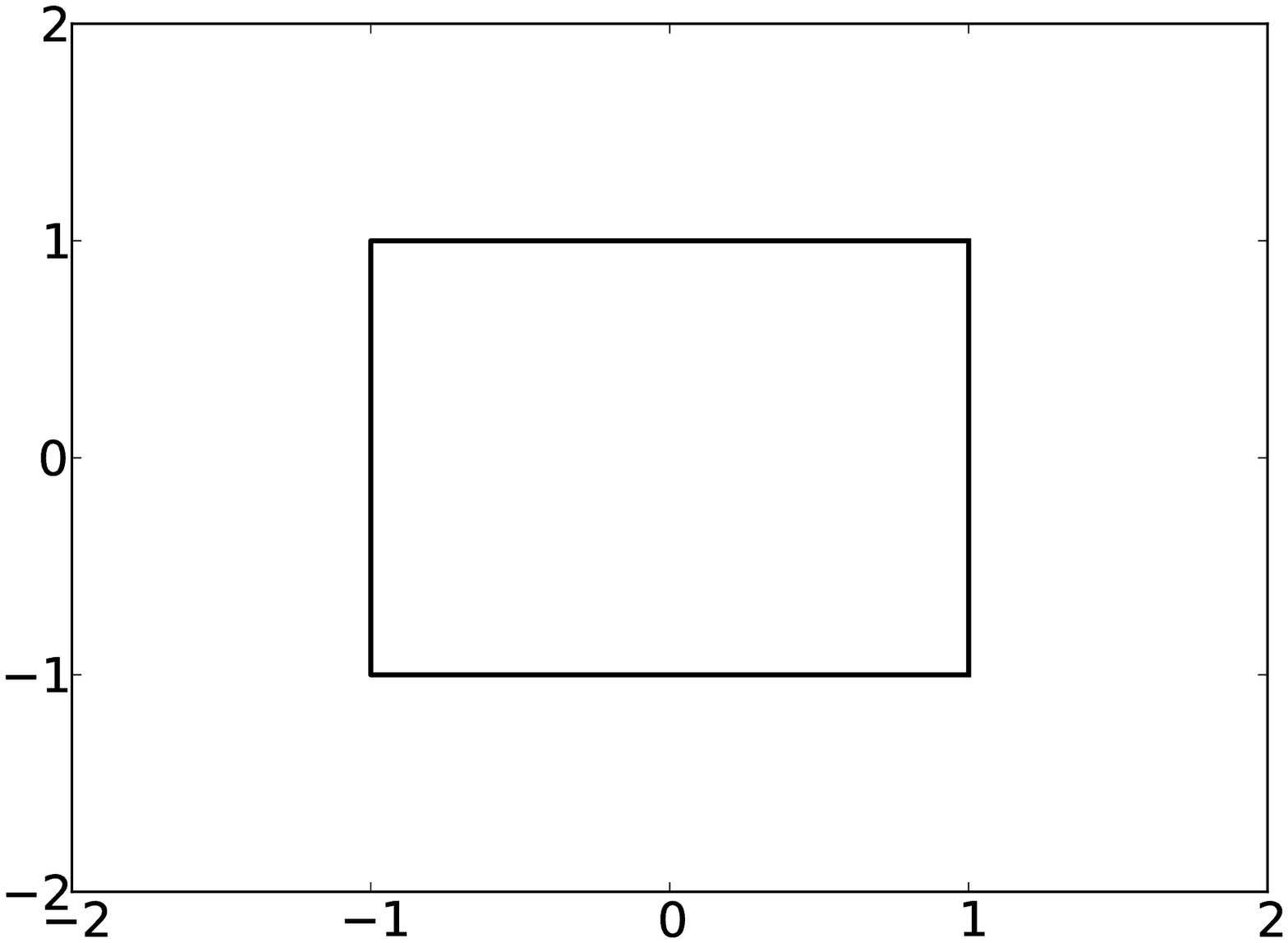}}
\subfigure[Stochastic dynamics, $\sigma=0.1$ ]{\includegraphics[scale=0.365]{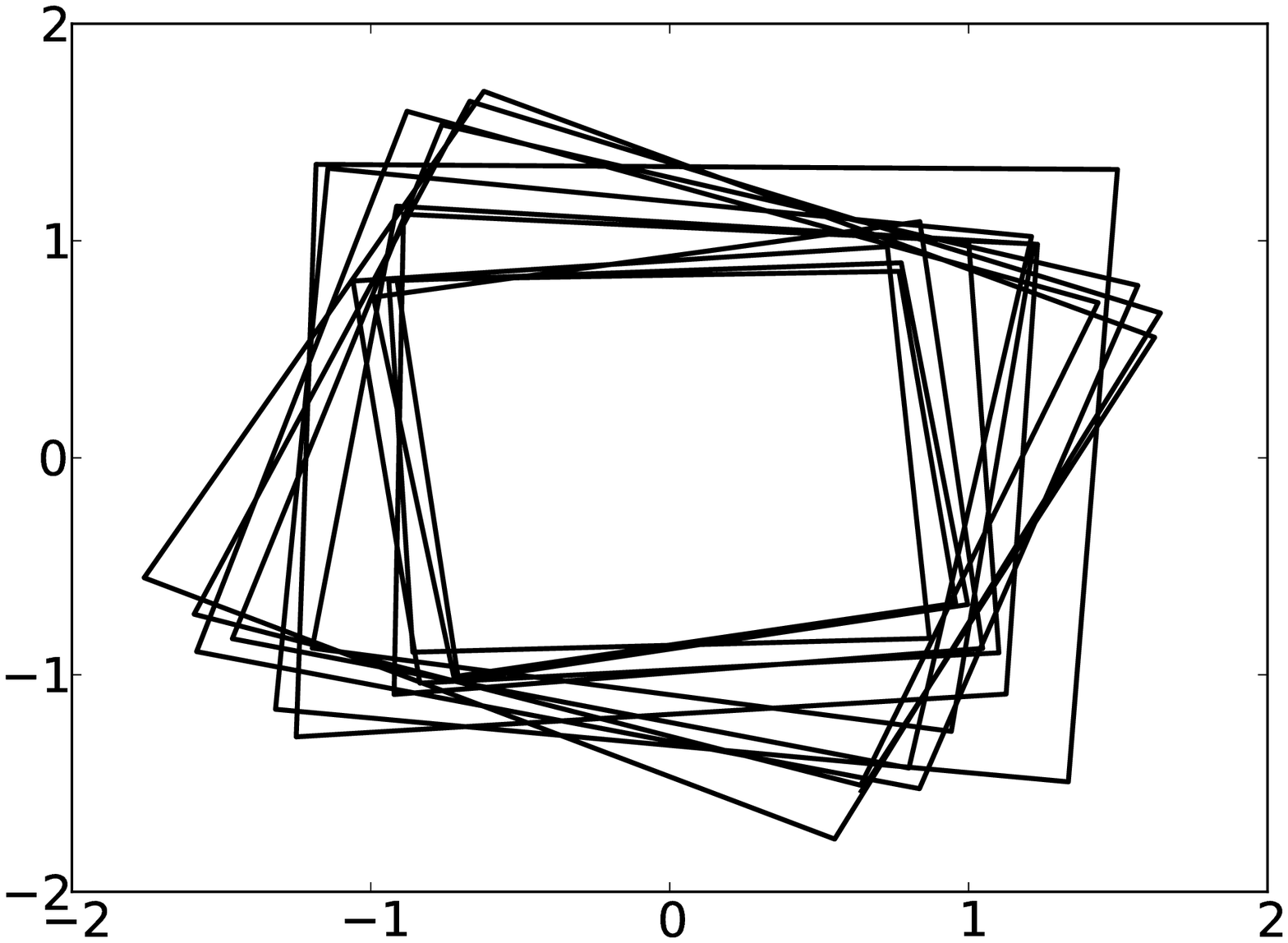}}
\caption{Behaviour of (\ref{eq:dtf1}) for $\PPsi$ given by \eqref{eq:ex2}, and $\Sigma=\sigma^{2}$.}
\label{fig:ex2}
\end{figure}
 \end{example}

\begin{example} \label{ex:ex3}
We now consider our first nonlinear example, namely the
one-dimensional dynamics for which 
\begin{equation} \label{eq:ex3}
\PPsi(v)=\alpha \sin{v}. 
\end{equation}
Figure \ref{fig:p1} illustrates the behaviour of  (\ref{eq:dtf1}) for this 
choice of $\PPsi$, and with $\alpha=2.5$, both for deterministic and stochastic dynamics \index{stochastic dynamics}. 
In the case of deterministic dynamics, Figure \ref{fig:p1}a, we see that eventually 
iterates of the discrete map converge to a period $2$ solution. Although
only one period $2$ solution is seen in  this single trajectory, we
can deduce that there will be another
period $2$ solution, related to this one by the symmetry $u \mapsto -u$.
This second solution is manifest when we consider stochastic dynamics \index{stochastic dynamics}.
\begin{figure}[h!]
\centering
\subfigure[Deterministic dynamics]{\includegraphics[scale=0.365]{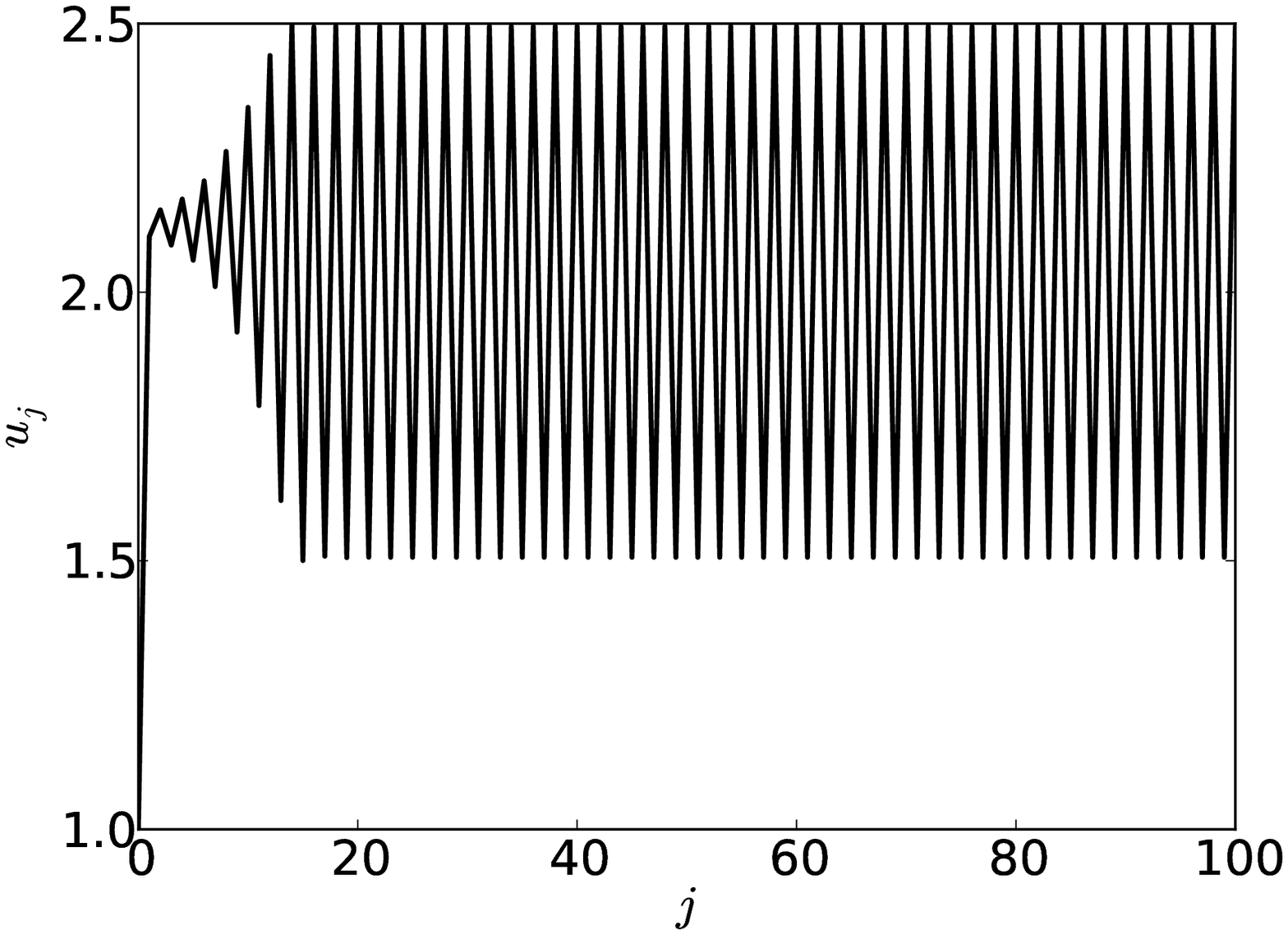}}
\subfigure[Stochastic dynamics, $\sigma=0.25$]{\includegraphics[scale=0.365]{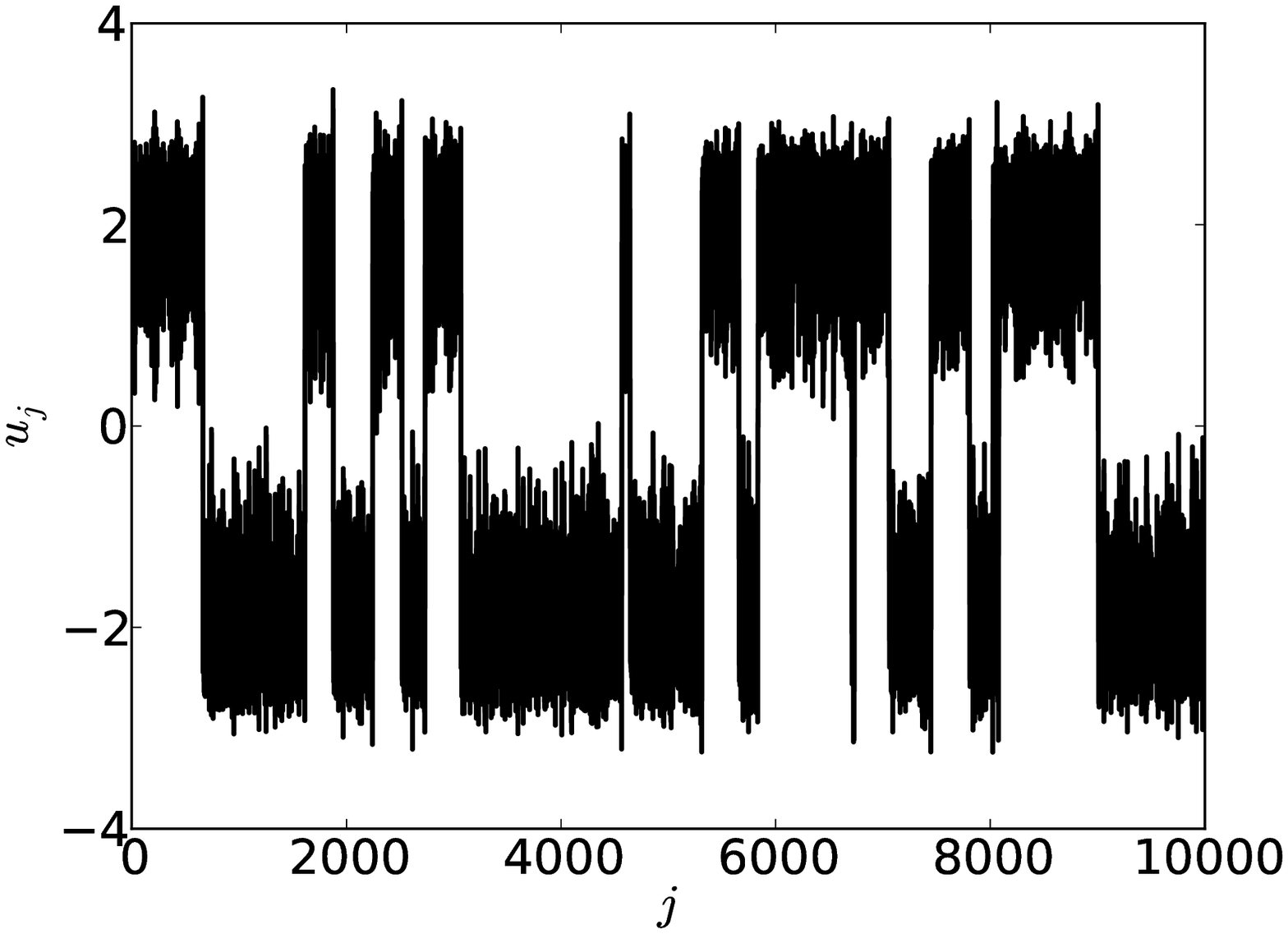}}
\caption{Behaviour of (\ref{eq:dtf1}) for $\PPsi$ given by \eqref{eq:ex3} for $\alpha=2.5$ and $\Sigma=\sigma^{2}$, see also { \tt p1.m} in section \ref{ssec:p1}.}
\label{fig:p1}
\end{figure}
Figure \ref{fig:p1}b
demonstrates that the inclusion of noise significantly changes the 
behaviour of the system. The signal now 
exhibits bistable behaviour and, within each mode of the behavioural
dynamics, vestiges of the period $2$ dynamics may be seen: the upper
mode of the dynamics is related to the period $2$ solution shown in
Figure \ref{fig:p1}a and the lower mode to the period $2$ solution
found from applying the symmetry $u \mapsto -u$ to obtain a second
period $2$ solution from that shown in Figure \ref{fig:p1}a. 

A good way of visualizing ergodicity\index{ergodic}
is via the {\em empirical\index{empirical measure} measure}
or {\em histogram}\index{histogram}, generated by a trajectory of the dynamical system.
Equation \eqref{eq:erg} formalizes the idea that the histogram\index{histogram}, 
in the large $J$ limit, converges to the probability density function \index{probability density function}  
of a random variable, independently of the starting point $v_0.$
Thinking in terms of pdfs of the signal, or functions of the signal, and
neglecting time-ordering information,
is a very useful viewpoint throughout these notes.

Histograms visualize complex dynamical behaviour
such as that seen in Figure \ref{fig:p1}b by ignoring
time-correlation in the signal and simply 
keeping track of
{\em where} the solution goes as time elapses, but not the {\em order}
in which places are visited.
This is illustrated in Figure 
\ref{fig:ex34}a, where we plot the 
histogram\index{histogram} 
corresponding to the dynamics shown in Figure \ref{fig:p1}b,
but calculated using a simulation of length $J=10^7$. 
We observe that the system quickly forgets its initial condition 
and spends an almost equal proportion of time around the 
positive and negative period $2$ solutions of the underlying 
deterministic map. The Figure \ref{fig:ex34}a would change 
very little if the system were started from a different initial condition,
reflecting ergodicity\index{ergodic} of the underlying map.

\end{example}

\begin{figure}[h!]
\centering
\subfigure[Example \ref{ex:ex3}, as in Figure \ref{fig:p1}b]{\includegraphics[scale=0.365]{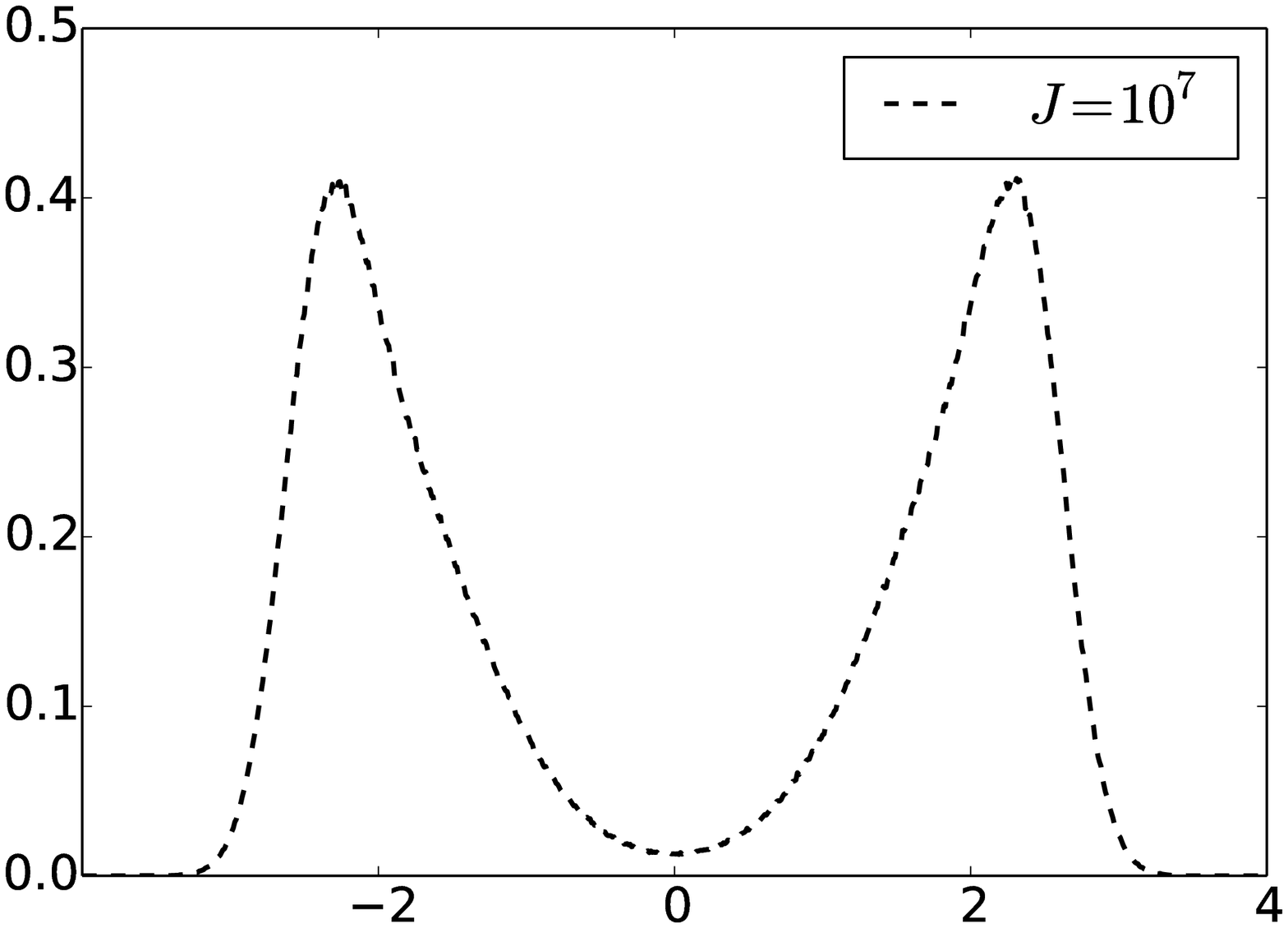}}
\subfigure[Example \ref{ex:ex4}, as in Figure \ref{fig:ex4}b]{\includegraphics[scale=0.365]{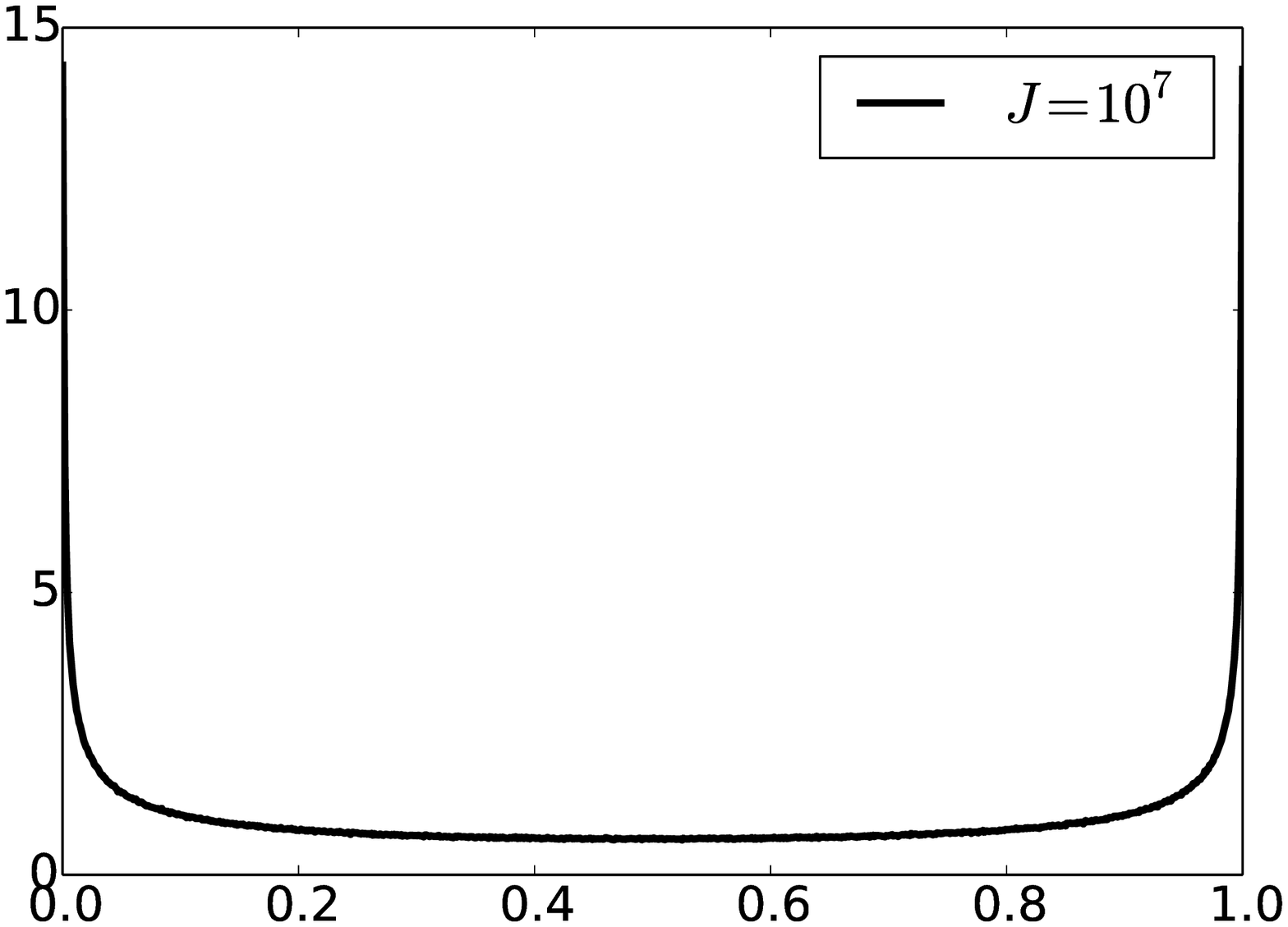}}
\caption{Probability density functions for $v_{j}, j=0, \cdots, J$, for $J=10^7$}
\label{fig:ex34}
\end{figure}

\begin{example} \label{ex:ex4}
We now consider a second one-dimensional and nonlinear map, for which 
\begin{equation} \label{eq:ex4}
\PPsi(v)=r v(1-v). 
\end{equation}
We consider initial data $v_0 \in [0,1]$ noting that, for $r \in [0,4]$,
the signal will then satisfy $v_j \in [0,1]$ for all $j$, in the case
of the deterministic dynamics \index{deterministic dynamics} \eqref{eq:dtf11}. We confine our discussion
here to the deterministic case which can itself exhibit quite rich behaviour.
In particular, the behaviour of (\ref{eq:dtf11}, \ref{eq:ex4}) can be seen in Figure \ref{fig:ex4} for the values of 
\begin{figure}
\centering
\subfigure[Deterministic dynamics, $r=2$]{\includegraphics[scale=0.365]{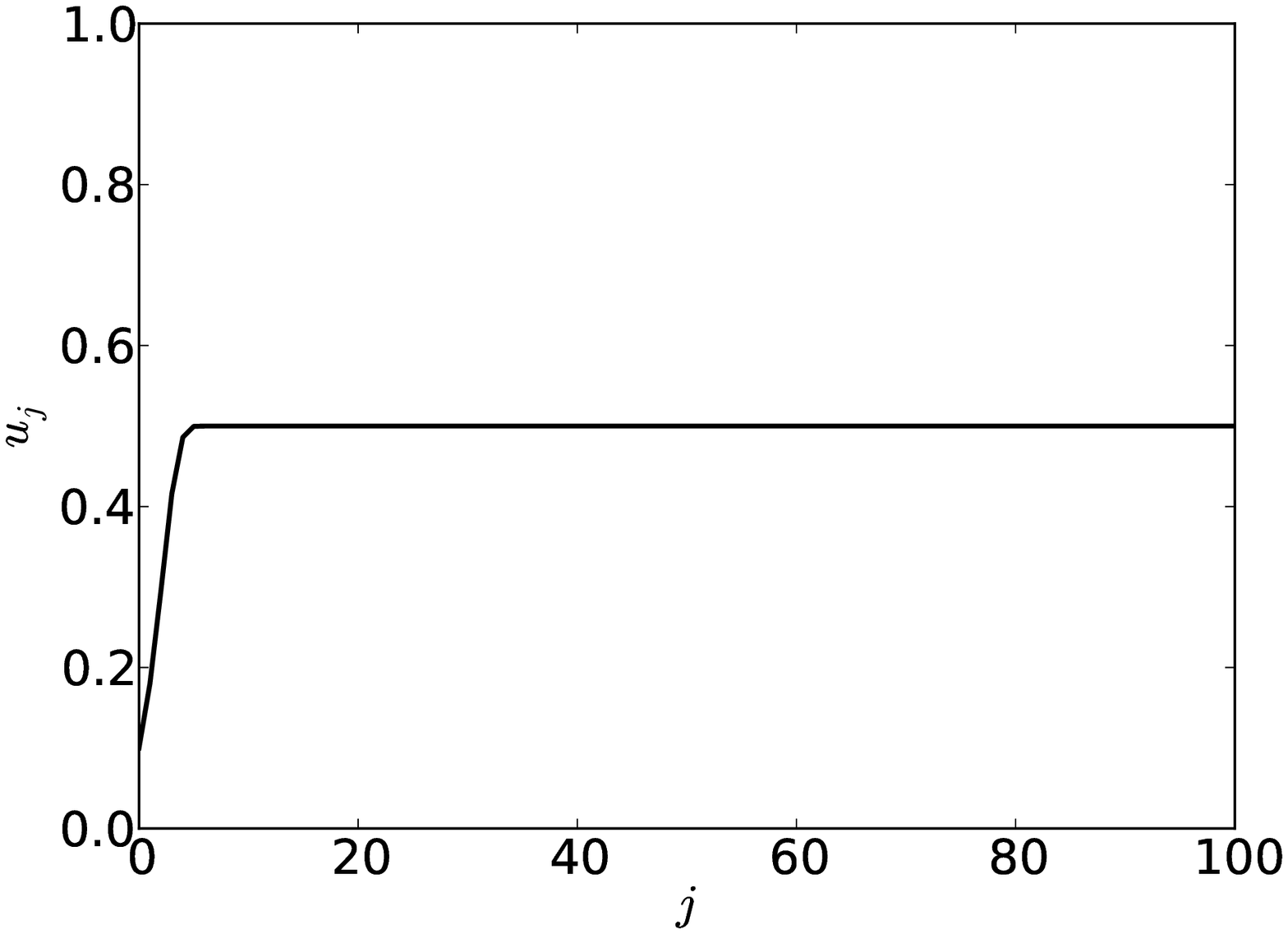}}
\subfigure[Deterministic dynamics, $r=4$]{\includegraphics[scale=0.365]{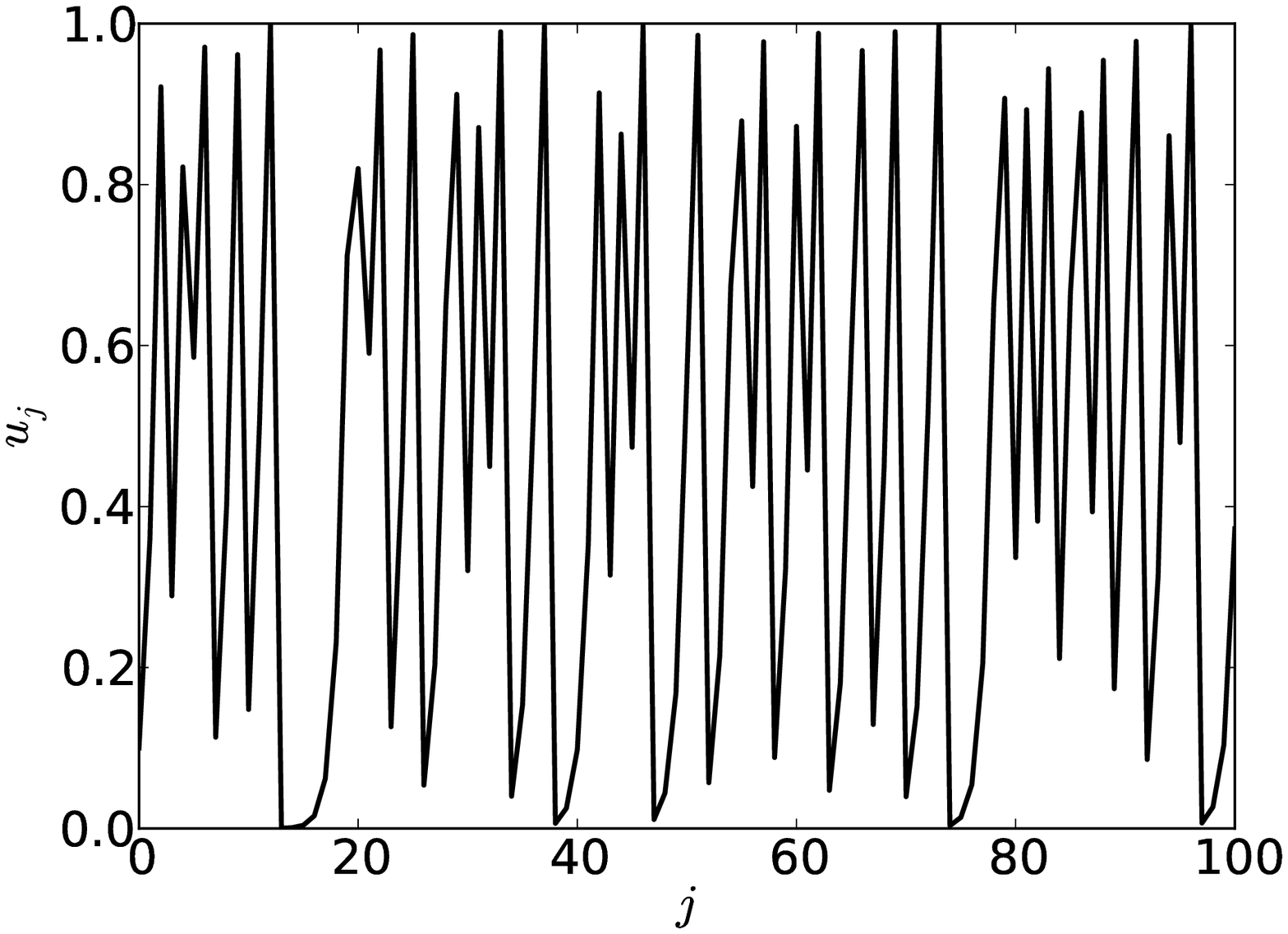}}
\caption{Behaviour of (\ref{eq:dtf1}) for $\PPsi$ given by \eqref{eq:ex4}.}
\label{fig:ex4}
\end{figure}
$r=2$ and $r=4$. These values of $r$ have the desirable property that
it is possible to determine the signal analytically. 
For $r=2$ one obtains
\begin{equation} \label{eq:solr2}
v_j = \frac{1}{2} - \frac{1}{2}(1-2v_0)^{2^{j}},
\end{equation}
which implies that, for any value of $v_{0} \neq 0,1$, $v_{j} \rightarrow 1/2$ as we can also see in Figure \ref{fig:ex4}a. For $v_0=0$ the
solution remains at the unstable fixed point  \index{fixed point} $0$, whilst for
$v_0=1$ the solution maps onto $0$ in one step, and then remains 
there.  In the case $r=4$ the solution is given by 
\begin{equation}
\label{eq:solr3}
v_{j}=4\sin^{2}(2^{j}\pi\theta),\quad \text{with} \quad  v_{0}=4\sin^{2}(\pi\theta)
\end{equation}
This solution can also be expressed in the form
\begin{equation}
\label{eq:solr4}
v_{j}=\sin^{2}(2 \pi z_{j}).
\end{equation}
where 
\[
z_{j+1}=\begin{cases}2z_j, & 0 \le z_j < \frac{1}{2}, \\2z_j -1, & \frac{1}{2} \le z_j < 1, \end{cases} 
\]
and using this formula it is possible to show that this map produces  chaotic dynamics for almost all initial conditions. This is illustrated in Figure \ref{fig:ex4}b, where we plot the first $100$ iterations of the map.  In addition,
 in Figure \ref{fig:ex34}b, we plot the pdf using a long trajectory of $v_{j}$ of length $J=10^7$, {demonstrating the ergodicity\index{ergodic} of the map.}
In fact there is an analytic formula for the steady state value of the pdf 
(the invariant density\index{invariant density}) found as
$J \to \infty$; it is given by 
\begin{equation} \label{eq:logistic_invariant}
\rho(x) =  \pi ^{-1}x^{-1/2}(1-x)^{-1/2}.
\end{equation}   
\end{example}

\begin{figure}
\centering
\subfigure[$u_{1}$ vs $u_{2}$]{\includegraphics[scale=0.365]{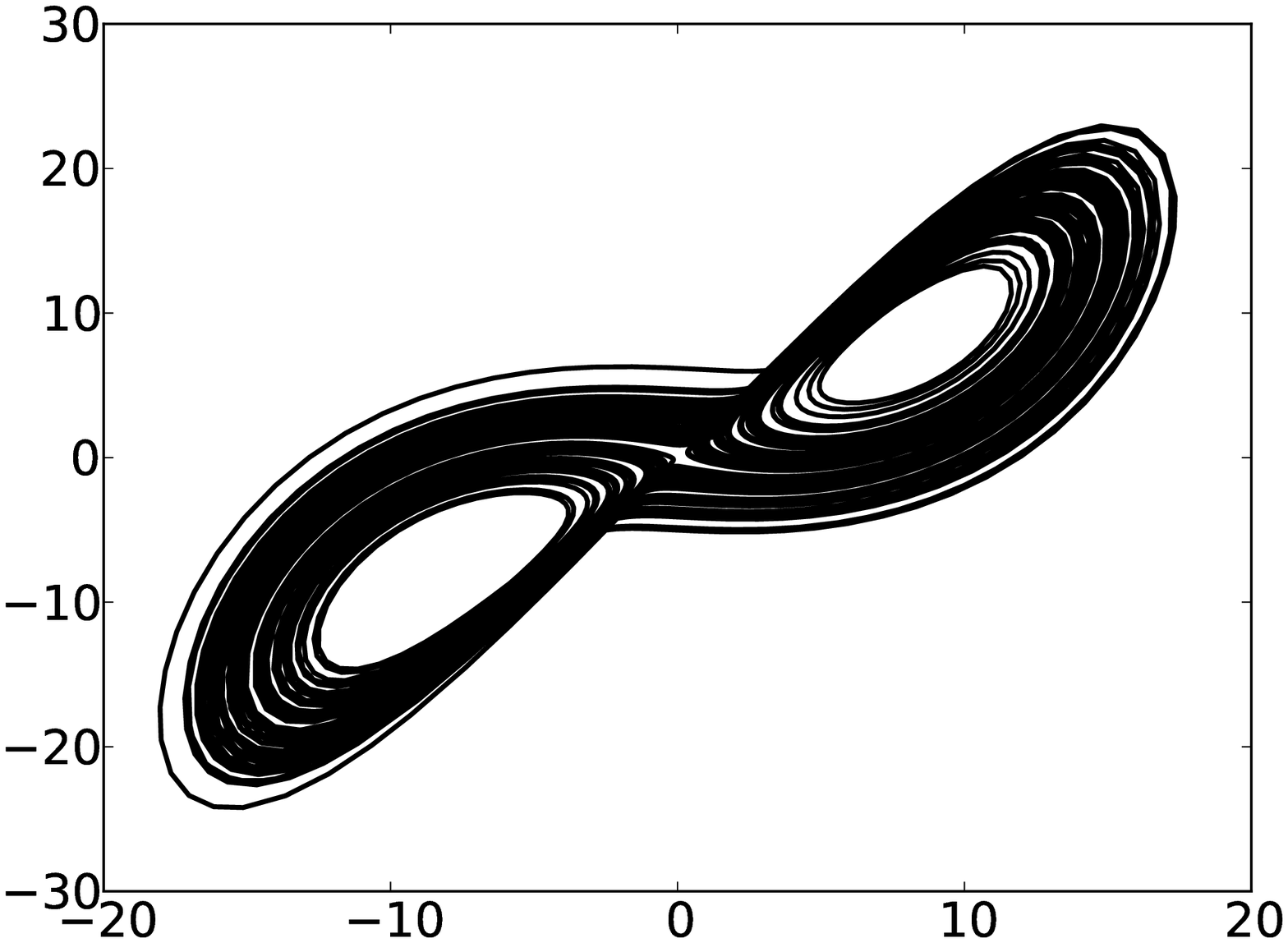}}
\subfigure[$u_{2}$ vs $u_{3}$]{\includegraphics[scale=0.365]{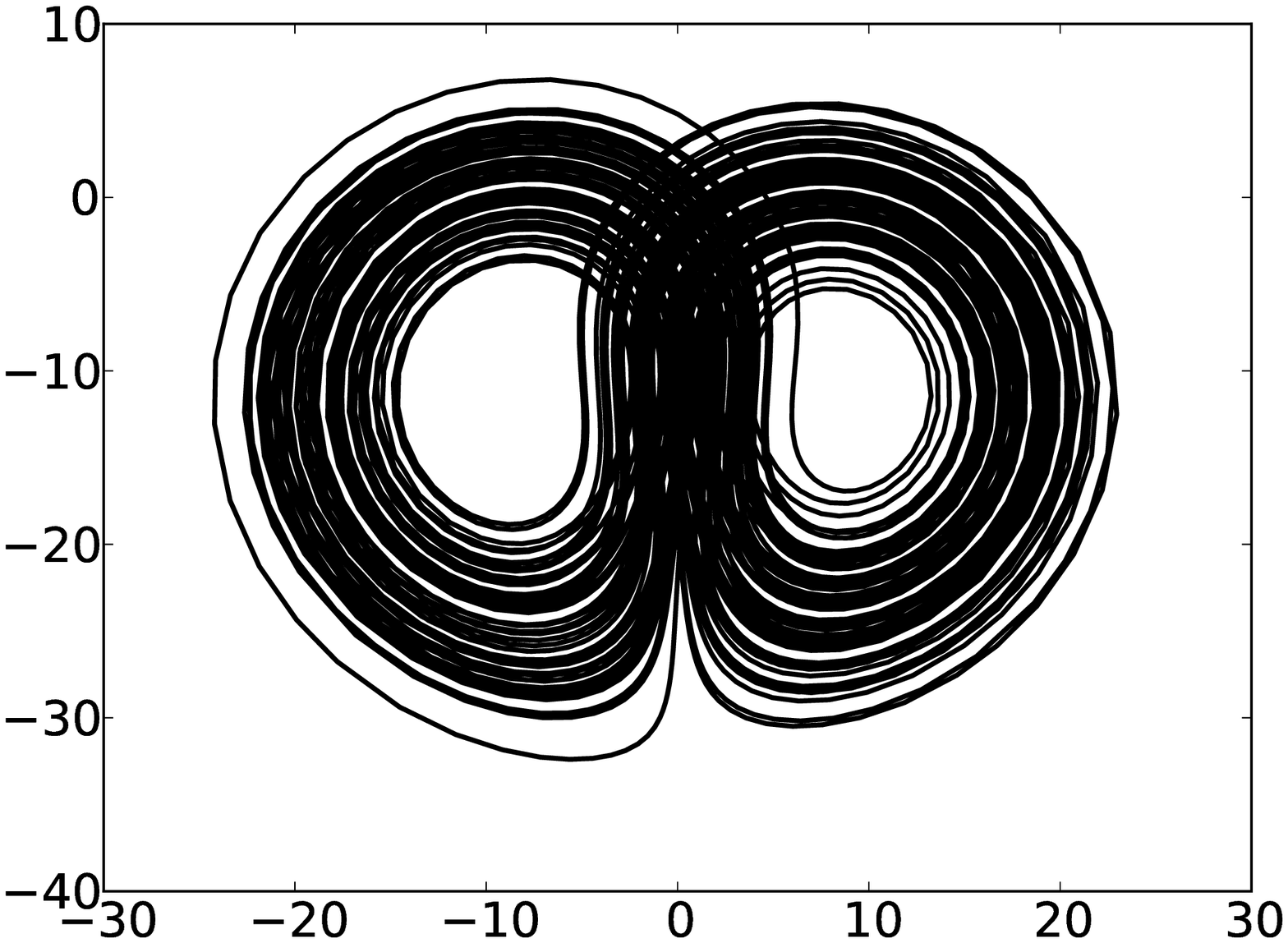}}
\caption{Projection of the Lorenz'63 attractor onto two different pairs of coordinates.}
\label{fig:ex6_1}
\end{figure}

\begin{example}
\label{ex:ex5}
Turning now to maps $\PPsi$ derived from differential equations,
the simplest case is to consider linear autonomous dynamical systems of the form
\begin{subequations}
\begin{eqnarray}
\frac{dv}{dt}&=& Lv,\\
v(0)&=&v_0.
\end{eqnarray}
\end{subequations}
Then $\PPsi(u)=Au$ with $A=\exp(L\tau).$
\end{example}

\begin{example}
\label{ex:ex6}
The Lorenz\index{Lorenz model!'63} '63 model is perhaps the simplest continuous-time system
to exhibit sensitivity to initial conditions and chaos. It is a system of
three coupled non-linear ordinary differential equations whose solution $v\in\mathbb{R}^3$, where $v=(v_1, v_2, v_3)$, 
satisfies\footnote{Here index denotes
components of the solution, not discrete time.}
\begin{subequations} \label{eq:ex6}
\begin{eqnarray}
\frac{dv_1}{dt}&=&a(v_2-v_1),
\label{32a}\\
\frac{dv_2}{dt}&=&-av_1-v_2-{v_1}v_3, \label{32b}\\
\frac{dv_3}{dt}&=&v_1{v_2}-b{v_3}-b(r+a). \label{32c}
\end{eqnarray}
\end{subequations}
Note that we have employed a coordinate system where the origin 
in the original version of the equations proposed by Lorenz is shifted. In the coordinate
system that we employ here we have equation \eqref{eq:ode1} with vector field
$f$ satisfying
\begin{equation}
\langle f(v), v \rangle \le \alpha-\beta|v|^2 \label{eq:diss}
\end{equation}
for some $\alpha,\beta>0$. As demonstrated in Example \ref{ex:diss},
this implies the existence of an absorbing
set\index{absorbing set}:
\begin{equation}
\limsup_{t \to \infty} |v(t)|^2<R \label{eq:ab}
\end{equation}
for any $R>\alpha/\beta.$ Mapping the ball $B(0,R)$ forward under the
dynamics gives the global attractor\index{global attractor} (see Definition
\ref{def:ga}) for the dynamics. In Figure \ref{fig:ex6_1}
we visualize this attractor, projected onto two different pairs
of coordinates {at the classical parameter
values $(a,b,r)=(10,\frac{8}{3},28)$.

Throughout these notes we will use the classical parameter 
values $(a,b,r)=(10,\frac{8}{3},28)$ in
all of our numerical experiments; at these values the
system is chaotic \index{chaos} and exhibits} sensitive dependence with respect to the
initial condition. A trajectory
of $v_{1}$ versus time can be found in Figure \ref{fig:ex6_2}a and
in Figure \ref{fig:ex6_2}b we illustrate the evolution of a small
perturbation to the initial condition which generated Figure \ref{fig:ex6_2}a;
to be explicit we plot the evolution of the error in the Euclidean
norm $|\cdot|$, for an initial perturbation of magnitude $10^{-4}$. 
{Figure \ref{fig:ex6_1} suggests that the  measure $\mu_{\infty}$
is supported on a strange set with Lebesgue measure zero, and this is
indeed the case; for this example there is no Lebesgue density $\rho_{\infty}$
for the invariant measure\index{invariant measure}, reflecting the fact that the attractor has a fractal
dimension less than three, the dimension of the space where the dynamical
system lies.}

\begin{figure}
\centering
\subfigure[$u_{1}$ as a function of time]{\includegraphics[scale=0.364]{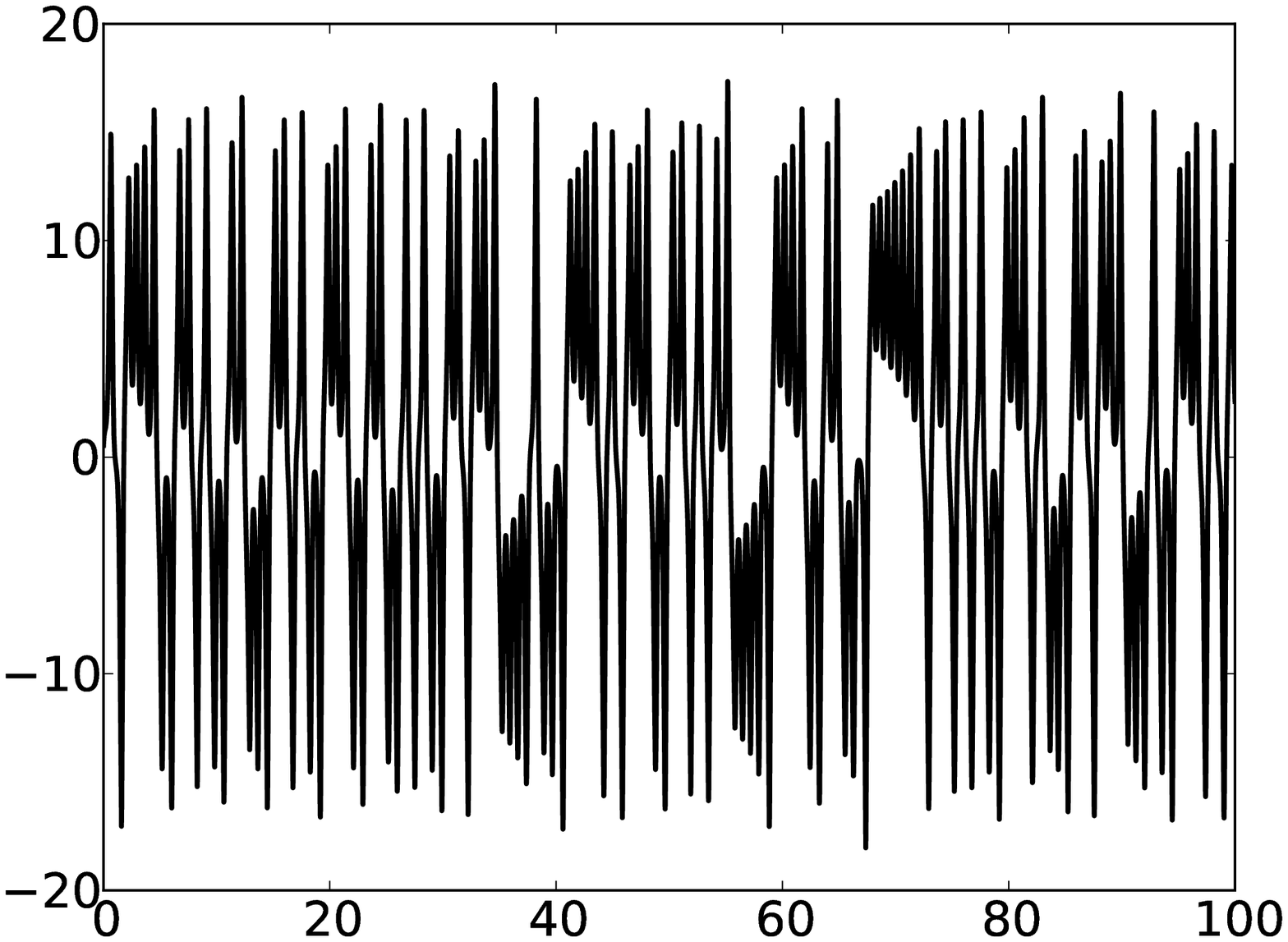}}
\subfigure[Evolution of error for a small perturbation]{\includegraphics[scale=0.364]{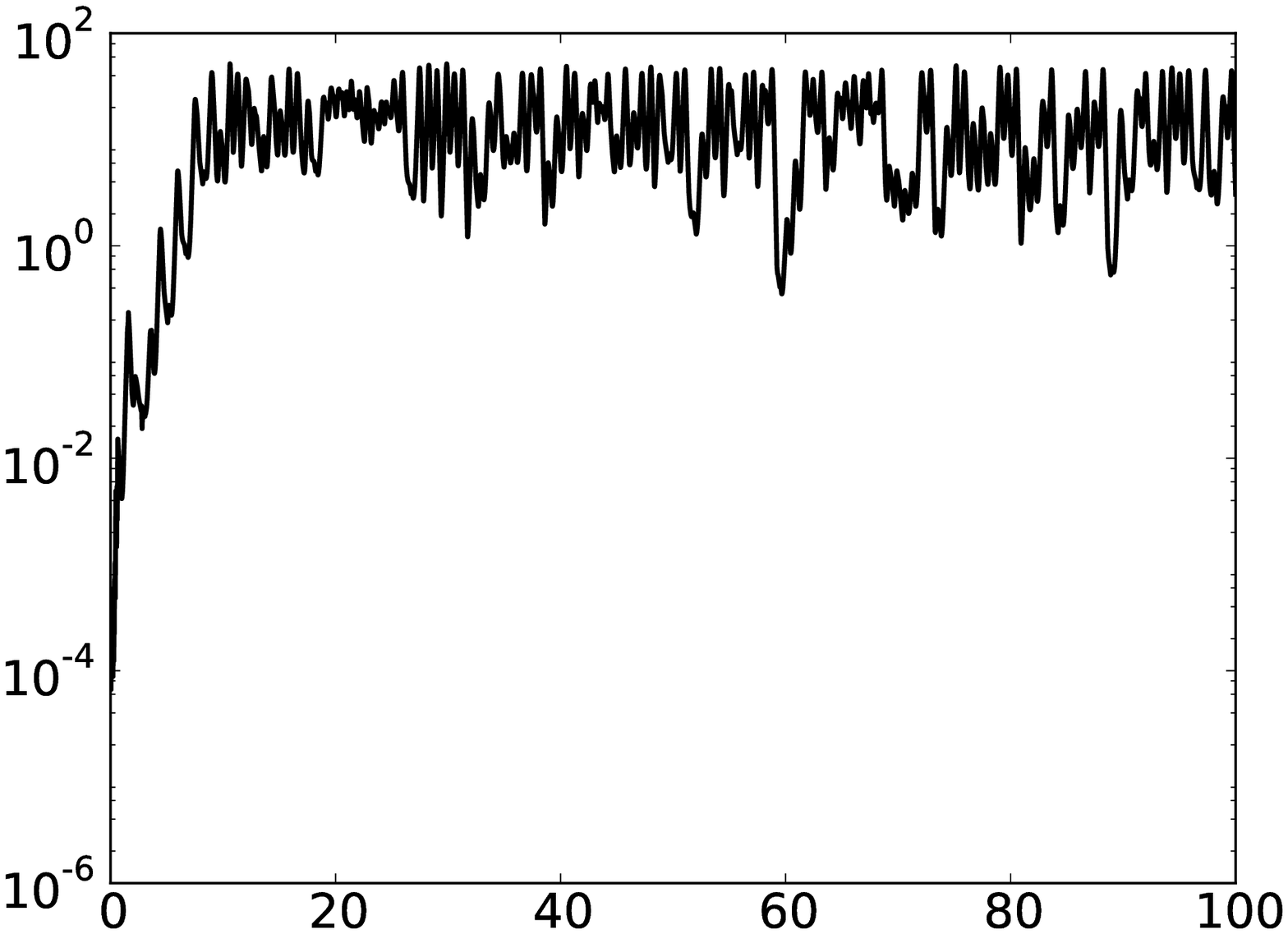}}
\caption{Dynamics of the Lorenz'63 model in the chaotic regime $(a,b,r)=(10,\frac{8}{3},28)$ }
\label{fig:ex6_2}
\end{figure}

\end{example}

\begin{example}
\label{ex:ex7}
The Lorenz\index{Lorenz model!'96} '96 model is a simple dynamical system\index{dynamical system}, of tunable dimension,
which was designed as a caricature of the dynamics of Rossby
waves in atmospheric\index{atmospheric sciences} dynamics.  
The equations have a periodic ``ring''
formulation and take the form\footnote{Again, here index denotes
components of the solution, not discrete time.}
\begin{subequations} \label{eq:ex7}
\begin{eqnarray}
\frac{dv_k}{dt}&=&v_{k-1}\bigl(v_{k+1}-v_{k-2}\bigr)-v_k
+F,\;\; k \in \{1,\cdots,K\},\\
v_0&=&v_K,\;\; v_{K+1}=v_{1}, \;\;v_{-1}=v_{K-1}. 
\end{eqnarray}
\end{subequations}
Equation \eqref{eq:ex7} satisfies the same dissipativity property 
\eqref{eq:diss} satisfied by the Lorenz '63 model, for appropriate
choice of $\alpha,\beta>0$, and hence also satisfies the absorbing ball 
property \eqref{eq:ab} thus having a global attractor\index{global attractor}
(see Definition \ref{def:ga}).

\begin{figure}
\centering
\subfigure[$v_{1}$ as a function of time]{\includegraphics[scale=0.364]{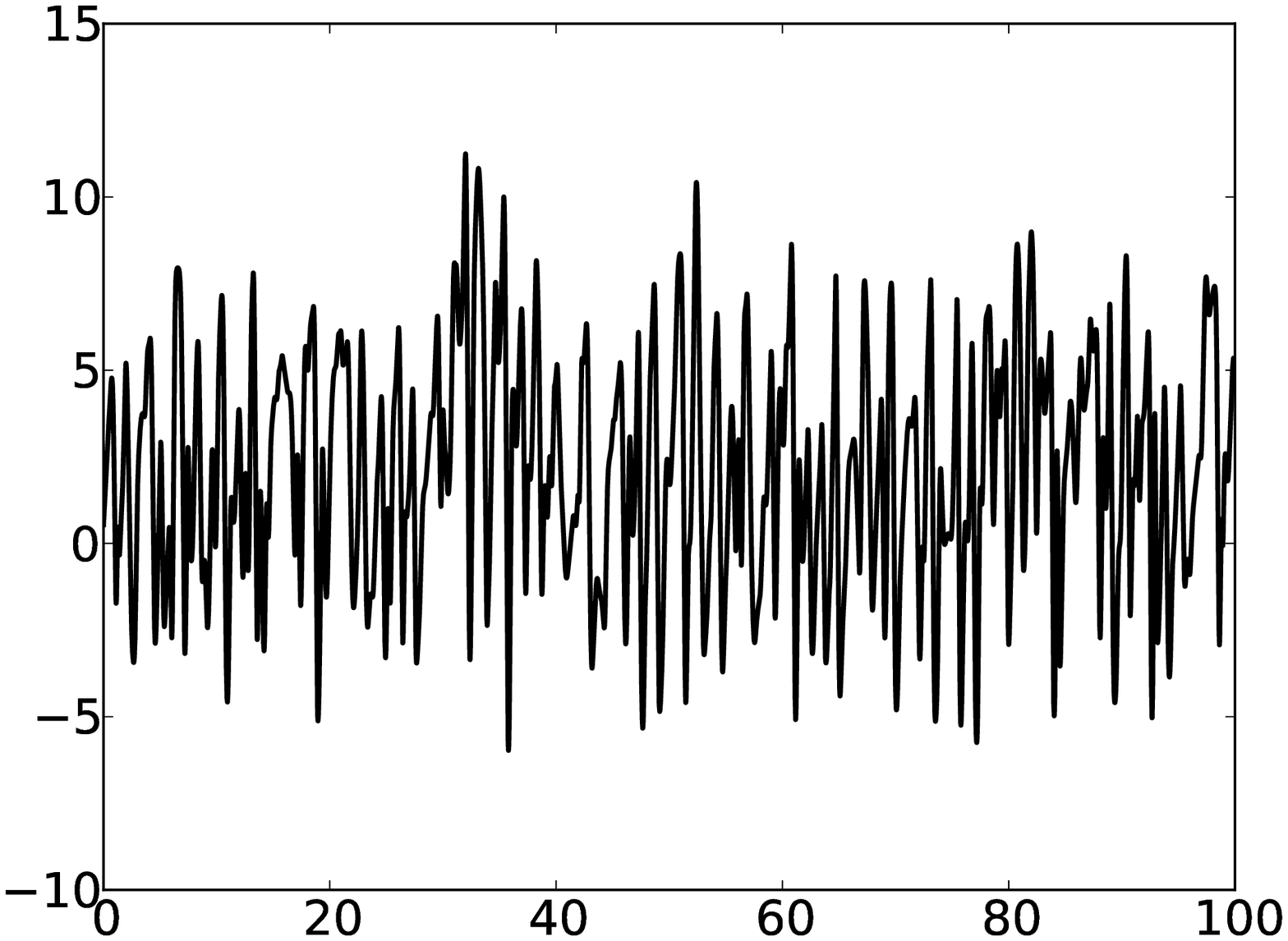}}
\subfigure[Evolution of error for a small  initial perturbation]{\includegraphics[scale=0.364]{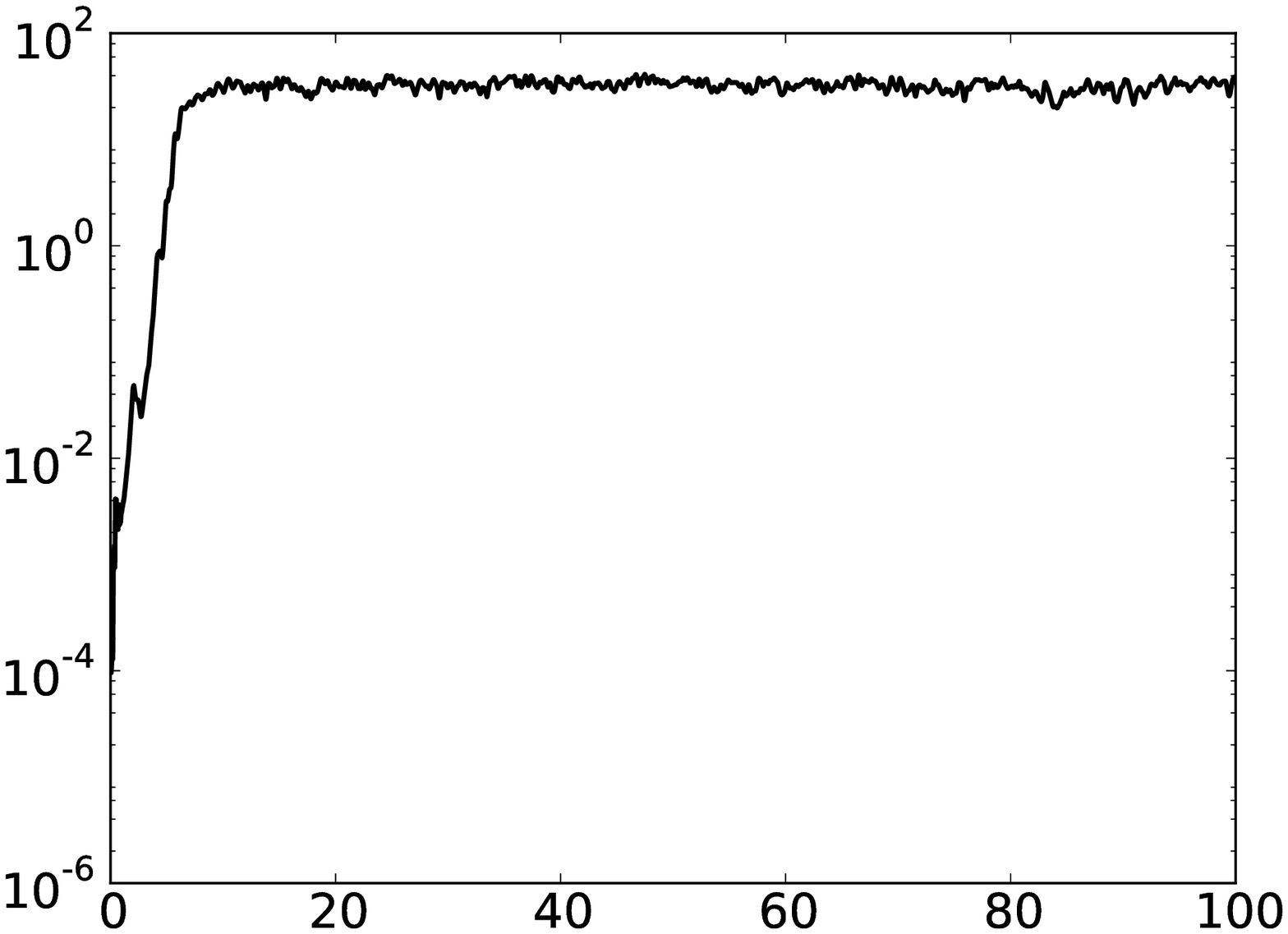}}
\caption{Dynamics of the Lorenz'96 model in the chaotic regime $(F,K)=(8,40)$ }
\label{fig:ex7_2}
\end{figure}
In Figure \ref{fig:ex7_2}a we plot a trajectory of $v_{1}$ versus time for $F=8$ and $K=40$. Furthermore, as we did in the case of the Lorenz '63 model,
we also show the evolution of the Euclidean norm of the error $|\cdot|$ for an initial perturbation of magnitude $10^{-4}$; this is displayed in 
Figure \ref{fig:ex7_2}b and clearly demonstrates
sensitive dependence on initial conditions.  We visualize the attractor, projected onto two different pairs of coordinates,
in Figure \ref{fig:ex7_1}.

\begin{figure}
\centering
\subfigure[$v_{1}$ vs $v_{K}$]{\includegraphics[scale=0.365]{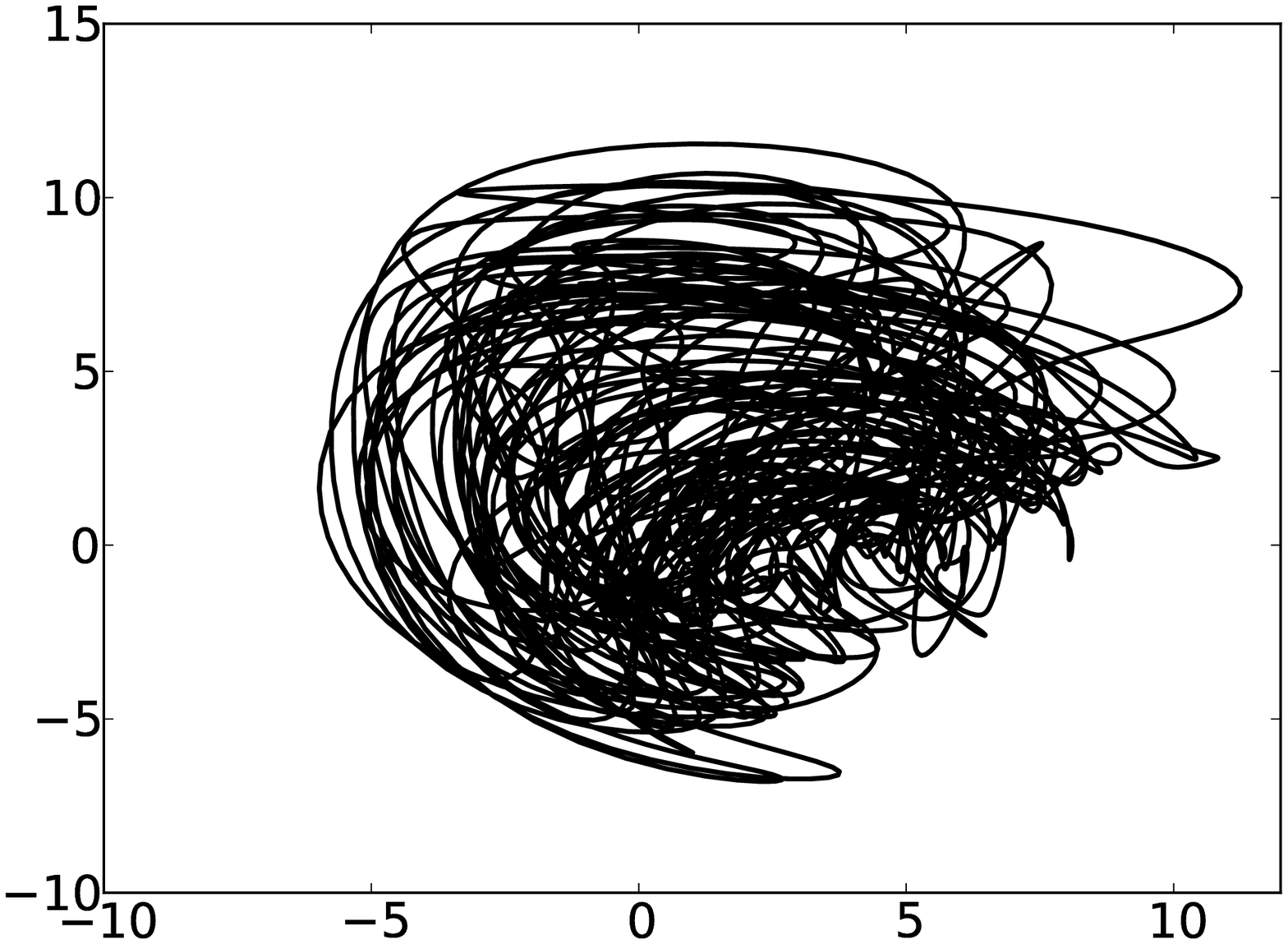}}
\subfigure[$v_{1}$ vs $v_{K-1}$]{\includegraphics[scale=0.365]{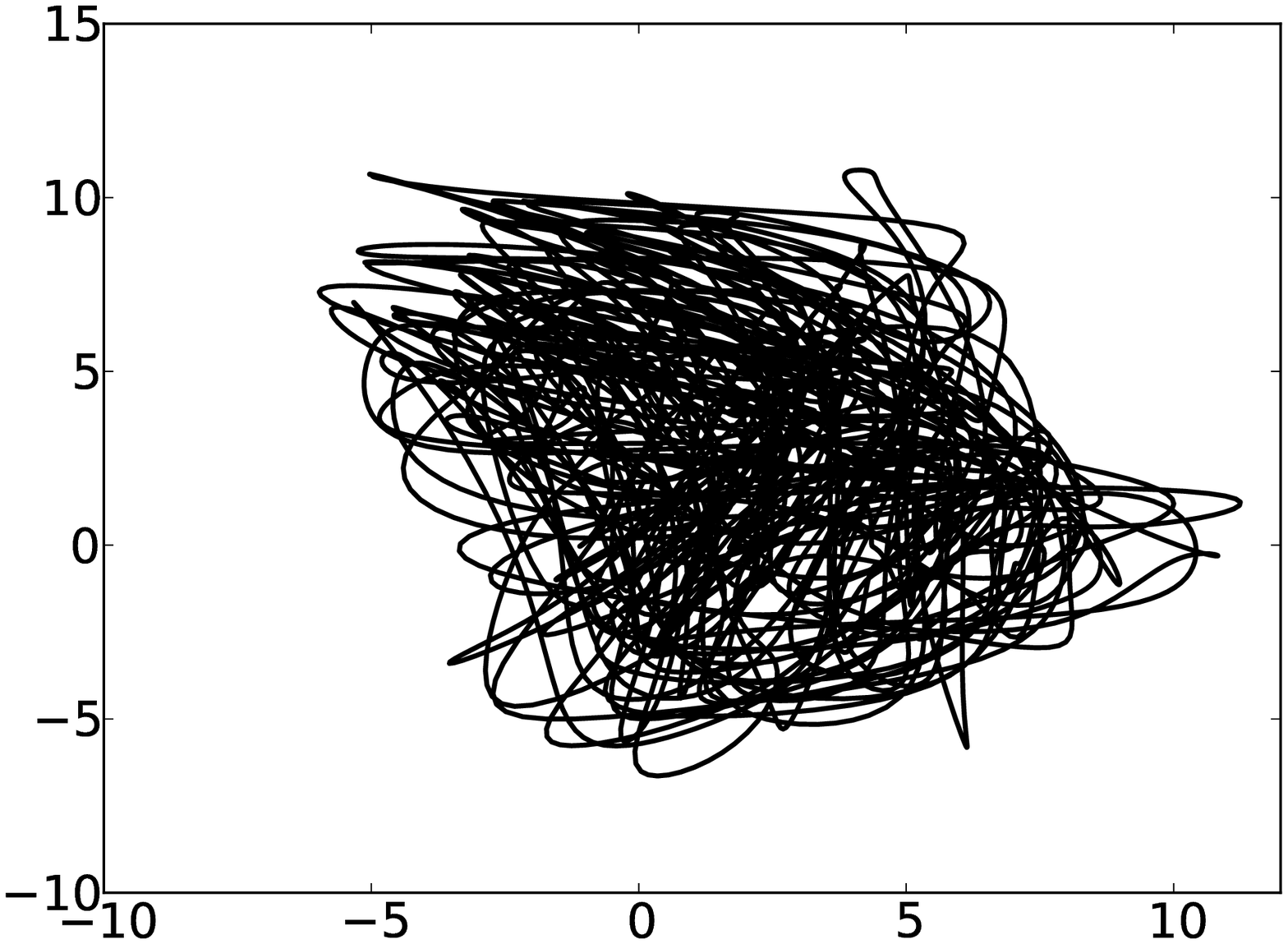}}
\caption{Projection of the Lorenz'96 attractor onto two different pairs of coordinates.}
\label{fig:ex7_1}
\end{figure}
\end{example}

\section{Smoothing Problem}\label{ssec:sp}

\subsection{Probabilistic Formulation of Data Assimilation}

Together \eqref{eq:dtf1} and \eqref{eq:dtf2} provide a probabilistic
model for the jointly varying random variable $(v,y)$. In the case
of deterministic dynamics, \eqref{eq:dtf11} and \eqref{eq:dtf2}
 provide a probabilistic model for the jointly varying random variable 
$(v_0,y)$. 
Thus in both cases we have a random variable $(u,y)$,
with $u=v$ (resp. $u=v_0$) in the stochastic (resp. deterministic) case.
Our aim is to find out information about the signal\index{signal} $v$, 
in the stochastic case, or $v_0$ in the deterministic case, 
from observation of a single instance of the data\index{data} $y$.
The natural probabilistic approach to this problem is to try and find
the probability measure\index{probability measure}
describing the random variable $u$ given $y$, denoted $u|y.$ This constitutes
the Bayesian \index{Bayesian} formulation of the problem of determining information 
about the signal arising in a noisy dynamical model, based on noisy
observations of that signal. We will refer to
the conditioned random variable $u|y$, in the case of either the stochastic
dynamics or deterministic dynamics, as the 
{\bf smoothing distribution}\index{smoothing!distribution}.
It is a random variable which contains all the probabilistic information
about the signal, given our observations. The key concept 
which drives this approach is Bayes' formula  \index{Bayes' formula}
from subsection \ref{ssec:BF} which we use repeatedly in what follows.

\subsection{Stochastic Dynamics}
\label{ssec:sd}

We wish to find the signal\index{signal} $v$ from \eqref{eq:dtf1} from
a single instance of data\index{data} $y$ given by \eqref{eq:dtf2}. To be
more precise we wish to condition the signal on
a discrete time interval $\J_0=\{0,...,J\}$, given data on
the discrete time interval $\J=\{1,...,J\}$; {we refer to
$\J_0$ as the data assimilation window\index{data assimilation!window}.} We
define $v=\{v_j\}_{j\in\J_0}, y=\{y_j\}_{j\in\J}, \xi=\{\xi_j\}_{j\in\J_0}$ and $\eta=\{\eta_j\}_{j\in\J}.$ 
The smoothing distribution\index{smoothing!distribution} here is 
the distribution of the conditioned random variable
$v|y$. Recall that we have assumed that
$v_0, \xi$ and $\eta$ are mutually independent random variables. With this
fact in hand we may apply Bayes' formula \index{Bayes' formula} to find the pdf $\bbP(v|y).$

{\bf Prior} The prior\index{prior} on $v$ is specified by (\ref{eq:dtf1}), together with the independence of $u$ and $\xi$ and the i.i.d. \index{i.i.d.} structure of $\xi.$
First note that, using \eqref{eq:cpeq} and the i.i.d. \index{i.i.d.}structure of $\xi$
in turn, we obtain
\begin{align*}
\bbP(v)&=\bbP(v_J,v_{J-1},\cdots,v_0)\\
&=\bbP(v_J|v_{J-1},\cdots,v_0)\bbP(v_{J-1},\cdots,v_0)\\
&=\bbP(v_J|v_{J-1})\bbP(v_{J-1},\cdots,v_0).
\end{align*}
Proceeding inductively gives
$$\bbP(v)=\prod_{j=0}^{J-1}\bbP(v_{j+1}|v_j)\bbP(v_0).$$
Now
$$\bbP(v_0) \propto \exp\Bigl(-\frac12\bigl|C_0^{-\frac12}(v_0-m_0)\bigr|^2\Bigr)$$
whilst
$$\bbP(v_{j+1}|v_j) \propto \exp\Bigl(-\frac12\Bigl|\Sigma^{-\frac12}\bigl(
v_{j+1}-\PPsi(v_{j})\bigr)\Bigr|^2\Bigr).$$ 
The probability distribution $\bbP(v)$ that we now write down
is {\em not} Gaussian, but the distribution on the initial
condition $\bbP(v_0)$, and the conditional distributions
$\bbP(v_{j+1}|v_j)$, are all Gaussian, making the explicit
calculations above straightforward.

Combining the preceding information we obtain
$$\bbP(v)\propto \exp(-\Jj(v))$$
where
\begin{subequations}
\label{eq:JJ}
\begin{eqnarray}
\Jj(v):=&
\frac12\bigl|C_0^{-\frac12}(v_0-m_0)\bigr|^2+\sum_{j=0}^{J-1}\frac12\bigl|\Sigma^{-\frac12}\bigl(v_{j+1}-\PPsi(v_j)\bigr)\bigr|^2\\
=&\frac12\bigl|v_0-m_0\bigr|_{C_0}^2+\sum_{j=0}^{J-1}\frac12\bigl|v_{j+1}-\PPsi(v_j)\bigr|_{\Sigma}^2.
\end{eqnarray}
\end{subequations}
The pdf $\rp(v)=\rho_0(v)$
proportional to $\exp(-\Jj(v))$
determines a prior measure $\mu_0$ on $\R^{|\J_0|\times n}$. 
The fact that the probability is not, in general, Gaussian
follows from the fact that $\PPsi$ is not, in general, linear.

{\bf Likelihood\index{likelihood}} 
The likelihood\index{likelihood} of the data $y|v$ is determined as follows.
It is a (Gaussian) probability distribution on $\R^{|\J|\times m}$, with pdf $\rp(y|v)$ proportional to $\exp(-\PPhi(v;y))$, where 
\begin{equation}\label{eq:dtf3}
\PPhi(v;y)=\sum_{j=0}^{J-1}\frac12\bigl|y_{j+1}-h(v_{j+1})\bigr|_{\Gamma}^2.
\end{equation}
To see this note that, because of the i.i.d. \index{i.i.d.} nature of the sequence $\eta$, 
it follows that
\begin{align*}
\bbP(y|v)&=\prod_{j=0}^{J-1} \bbP(y_{j+1}|v)\\
&=\prod_{j=0}^{J-1} \bbP(y_{j+1}|v_{j+1})\\
& \propto \prod_{j=0}^{J-1} \exp\Bigl(-\frac12 \bigl|\Gamma^{-\frac12}\bigl(y_{j+1}-h(v_{j+1})\bigr)\bigr|^2\Bigr)\\
&=\exp(-\PPhi(v;y)).
\end{align*}
In the applied literature 
$m_0$ and $C_0$ are often referred to as the {\bf background mean}\index{background!mean} and {\bf 
background covariance}\index{background!penalization} respectively;
we refer to $\PPhi$ as the {\bf model-data misfit}\index{model-data misfit} 
functional.

Using Bayes' formula \index{Bayes' formula} \eqref{eq:bayes} we can combine the prior and
the likelihood \index{likelihood} to determine the posterior distribution\index{posterior
distribution}, that is the
smoothing distribution\index{smoothing!distribution}, on $v|y.$ We denote the measure with this
distribution by $\mu.$

\begin{theorem} \label{th11}The posterior smoothing 
distribution\index{smoothing!distribution} on $v|y$ for
the stochastic dynamics \index{stochastic dynamics} model \eqref{eq:dtf1}, \eqref{eq:dtf2}
is a probability measure $\mu$ on 
$\R^{|\J_0|\times n}$ 
with pdf $\rp(v|y)=\rho(v)$ 
proportional to $\exp(-\Ii(v;y))$ where 
\begin{equation}\label{eq:dtf4}
\Ii(v;y)=\Jj(v)+\PPhi(v;y).
\end{equation}
\end{theorem}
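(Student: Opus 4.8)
The plan is to apply Bayes' formula \eqref{eq:bayes} directly, with the roles of $a$ and $b$ played by $v$ and $y$ respectively. All the ingredients have already been assembled in the paragraphs immediately preceding the statement: the prior $\bbP(v)\propto\exp(-\Jj(v))$ with $\Jj$ given by \eqref{eq:JJ}, and the likelihood $\bbP(y|v)\propto\exp(-\PPhi(v;y))$ with $\PPhi$ given by \eqref{eq:dtf3}. So the argument is essentially a one-line substitution followed by a justification that the resulting object is a genuine probability density.

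First I would write, using \eqref{eq:bayes},
\begin{equation*}
\bbP(v|y)=\frac{1}{\bbP(y)}\bbP(y|v)\bbP(v)\propto \exp\bigl(-\PPhi(v;y)\bigr)\exp\bigl(-\Jj(v)\bigr)=\exp\bigl(-\Jj(v)-\PPhi(v;y)\bigr),
\end{equation*}
where the constant of proportionality is independent of $v$ (it absorbs $\bbP(y)^{-1}$ together with the two normalization constants hidden in the $\propto$ signs for prior and likelihood). Setting $\Ii(v;y)=\Jj(v)+\PPhi(v;y)$ as in \eqref{eq:dtf4} gives $\bbP(v|y)\propto\exp(-\Ii(v;y))$, which is the claimed form.

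Second, I would check that $\exp(-\Ii(v;y))$ is integrable over $\R^{|\J_0|\times n}$ so that it can indeed be normalized to a pdf and defines a probability measure $\mu$. Since $\PPhi(v;y)\ge 0$ by \eqref{eq:dtf3}, we have $\exp(-\Ii(v;y))\le\exp(-\Jj(v))$, and the right-hand side is (a constant multiple of) the prior density $\rho_0$, which integrates to a finite value by construction; hence the normalization constant $\bbP(y)=\int\exp(-\Ii(v;y))\,dv$ is finite and strictly positive (the integrand is strictly positive everywhere since $\Ii$ is finite-valued, $\PPsi$ and $h$ being continuous). This legitimizes the division by $\bbP(y)$ in Bayes' formula.

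I do not expect a genuine obstacle here: the theorem is really a bookkeeping statement collecting the prior and likelihood computations already carried out. The only point requiring a word of care is the well-definedness of the normalizing constant $\bbP(y)$, i.e.\ that the marginal of the data is finite and positive; this follows immediately from the domination by the prior together with strict positivity of the Gaussian-type densities involved, exactly the standing assumptions of the book. The rest is substitution into \eqref{eq:bayes}.
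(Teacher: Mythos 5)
Your proposal is correct and follows essentially the same route as the paper's own proof: a direct application of Bayes' formula \eqref{eq:bayes} combining the prior $\bbP(v)\propto\exp(-\Jj(v))$ with the likelihood $\bbP(y|v)\propto\exp(-\PPhi(v;y))$. The additional check that the normalization constant is finite and positive (by domination with the prior density) is a sensible extra remark that the paper omits but does not change the substance of the argument.
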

\begin{proof}
Bayes' formula \index{Bayes' formula} \eqref{eq:bayes} gives us 
\[\rp(v|y)=\frac{\rp(y|v)\rp(v)}{\rp(y)}.\] Thus, ignoring constants of proportionality which depend only on $y$, 
\begin{align*}\rp(v|y)&\propto\rp(y|v)\rp(v_0)\\
&\propto \exp(-\PPhi(v;y)) \exp(-\Jj(v))\\
&=\exp(-\Ii(v;y)).
\end{align*}
\end{proof}

Note that, although the preceding calculations required only knowledge
of the pdfs of Gaussian distributions, the resulting posterior 
distribution\index{posterior distribution}
is non-Gaussian in general, unless $\PPsi$ and $h$ are linear. This is
because, unless $\PPsi$ and $h$ are linear, $\Ii(\cdot;y)$ is not quadratic. 
We refer to $\Ii$ as the negative log-posterior\index{posterior!log-posterior}.
It will be helpful later to note that
\begin{equation}\label{eq:dtsa33}
\frac{\rho(v)}{\rho_0(v)}\propto\exp\bigl(-\PPhi(v;y)\bigr).
\end{equation}

\subsection{Reformulation of Stochastic Dynamics}
\label{ssec:reform}

For the development of algorithms to probe the posterior distribution, 
the following reformulation of the stochastic dynamics \index{stochastic dynamics} problem can
be very useful. For this we define the vector 
$\xi=(v_0,\xi_0, \xi_1, \cdots, \xi_{J-1}) \in \bbR^{|\J_0|n}$. The
following lemma is key to what follows.

\begin{lemma}
\label{l:sicab}
Define the mapping $G:\bbR^{|\J_0| \times n} \mapsto \bbR^{|\J_0| \times n}$ by
$$G_j(v_0,\xi_0, \xi_1, \cdots, \xi_{J-1})=v_j, \quad j=0, \cdots, J,$$
where $v_j$ is determined by \eqref{eq:dtf1}. Then this mapping is invertible.
Furthermore, if  $\PPsi \equiv 0$, then $G$ is the identity mapping.
\end{lemma}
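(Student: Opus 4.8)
The plan is to prove invertibility by exhibiting the inverse map explicitly, reading off $(v_0,\xi_0,\ldots,\xi_{J-1})$ from $(v_0,v_1,\ldots,v_J)$ using the recursion \eqref{eq:dtf1}. The key observation is that \eqref{eq:dtf1} is in ``solved form'' for $v_{j+1}$: given $v_j$, the update $v_{j+1}=\PPsi(v_j)+\xi_j$ can be inverted for $\xi_j$ simply by rearrangement, since $\xi_j=v_{j+1}-\PPsi(v_j)$, and no inversion of $\PPsi$ itself is needed. So the obstacle one might fear --- namely that $\PPsi$ is only continuous, not invertible --- does not arise at all.

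First I would define a candidate inverse $H:\bbR^{|\J_0|\times n}\to\bbR^{|\J_0|\times n}$ by
$$H(v_0,v_1,\ldots,v_J)=\bigl(v_0,\; v_1-\PPsi(v_0),\; v_2-\PPsi(v_1),\;\ldots,\; v_J-\PPsi(v_{J-1})\bigr),$$
so that the first component of $H$ records $v_0$ and the $(j+1)$-st component records what must have been $\xi_j$. Then I would verify $H\circ G=\mathrm{id}$: given $(v_0,\xi_0,\ldots,\xi_{J-1})$, the sequence $v_j=G_j(v_0,\xi_0,\ldots,\xi_{J-1})$ satisfies $v_{j+1}=\PPsi(v_j)+\xi_j$ by definition, hence $v_{j+1}-\PPsi(v_j)=\xi_j$, so applying $H$ returns $(v_0,\xi_0,\ldots,\xi_{J-1})$. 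For the other direction, $G\circ H=\mathrm{id}$: given $(v_0,v_1,\ldots,v_J)$, set $\xi_j:=v_{j+1}-\PPsi(v_j)$ for $j=0,\ldots,J-1$; then the dynamical system \eqref{eq:dtf1} driven by $(v_0,\xi_0,\ldots,\xi_{J-1})$ produces, by an immediate induction on $j$, exactly the sequence $(v_0,v_1,\ldots,v_J)$ we started with, so $G(H(v_0,\ldots,v_J))=(v_0,\ldots,v_J)$. This establishes that $G$ is a bijection with $G^{-1}=H$.

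Finally, for the last assertion, setting $\PPsi\equiv 0$ in the formula for $G$ gives $v_{j+1}=\xi_j$ and $v_0=v_0$, so $G_0(v_0,\xi_0,\ldots,\xi_{J-1})=v_0$ and $G_{j+1}(v_0,\xi_0,\ldots,\xi_{J-1})=\xi_j$, i.e. $G$ just relabels its arguments and is the identity. I do not anticipate any real obstacle here; the only mild care needed is bookkeeping the indexing (the argument vector is $(v_0,\xi_0,\ldots,\xi_{J-1})$ of length $|\J_0|=J+1$ blocks, and the output is $(v_0,\ldots,v_J)$, also $J+1$ blocks), and being explicit that continuity of $\PPsi$ suffices because $\PPsi$ is only ever evaluated, never inverted.
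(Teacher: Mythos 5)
Your proposal is correct and follows essentially the same route as the paper's own proof: both recover the noise from the signal via $\xi_j=v_{j+1}-\PPsi(v_j)$ and observe that $\PPsi$ is never inverted, only evaluated. Your write-up is simply a more explicit version, spelling out the two compositions $H\circ G$ and $G\circ H$ that the paper leaves implicit.
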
 
\begin{proof}
In words the mapping $G$ takes the initial condition and noise into the
signal. Invertibility requires determination of the initial condition
and the noise from the signal. From the signal we may compute the noise as
follows noting that, of course, the initial condition is specified and
that then we have
$$\xi_{j}=v_{j+1}-\PPsi(v_j), \quad j=0, \cdots, J-1.$$
The fact that $G$ becomes the identity mapping when $\PPsi \equiv 0$
follows directly from \eqref{eq:dtf1} by inspection. 
\end{proof}

We may thus consider the smoothing problem as finding the probability
distribution of $\xi$, as defined prior to the lemma, given data\index{data}
$y$, with $y$ as defined in section \ref{ssec:sd}. Furthermore we have,
using the notion of pushforward\index{pushforward},
\begin{equation}
\label{eq:relate}
\bbP(v|y)=G \star \bbP(\xi|y), \quad \bbP(\xi|y)=G^{-1} \star \bbP(v|y).
\end{equation}
These formulae mean that it is easy to move between the two measures:
samples from one can be converted into samples from the other simply
by applying $G$ or $G^{-1}$. This means that algorithms can be applied
to, for example, generate samples from $\xi|y$, and then convert into
samples from $v|y.$ We will use this later on. In order to use this idea
it will be helpful to have an explicit expression for the pdf of $\xi|y$.
We now find such an expression.

To start we introduce the measure $\vartheta_0$ with density $\pi_0$ found from $\mu_0$ and $\rho_0$ in the case where $\PPsi\equiv0$. 
Thus 
\begin{subequations}
\label{eq:nuz}
\begin{eqnarray}
\pi_0(v)&\propto &\exp\left(-\frac12 \left|C_0^{-\frac12}(v_0-m_0)\right|^2
-\sum_{j=0}^{J-1} \frac12|\Sigma^{-\frac12}v_{j+1}|^2\right)\\
&\propto &\exp\left(-\frac12 \left|v_0-m_0\right|_{C_0}^2
-\sum_{j=0}^{J-1} \frac12|v_{j+1}|_{\Sigma}^2\right)
\end{eqnarray}
\end{subequations}
and hence $\vartheta_0$ is a Gaussian measure,
independent in each component $v_j$ for $j=0,\cdots, J.$
By Lemma \ref{l:sicab} we also deduce that measure $\vartheta_0$ 
with density $\pi_0$ is the prior on $\xi$ as defined above:
\begin{equation}
\label{eq:xipdf}
\pi_0(\xi)\propto \exp\left(-\frac12 \left|v_0-m_0\right|_{C_0}^2
-\sum_{j=0}^{J-1} \frac12|\xi_{j}|_{\Sigma}^2\right).
\end{equation}

We now compute the likelihood of $y|\xi$. For this we define
\begin{equation}
\label{eq:z1}
{\cal G}_j(\xi)=h\bigl(G_j(\xi)\bigr)
\end{equation}
and note that we may then concatenate the data and write
\begin{equation}
\label{eq:z2}
y={\cal G}(\xi)+\eta
\end{equation}
where $\eta=(\eta_1,\cdots,\eta_J)$ is a the Gaussian random
variable $N(0,\Gamma_{J})$ where $\Gamma_J$ is a block diagonal $nJ \times nJ$ matrix
with $n \times n$ diagonal blocks $\Gamma$.
It follows that the likelihood is determined by
$\bbP(y|\xi)=N\bigl({\cal G}(\xi),\Gamma_J\bigr)$.
Applying Bayes formula from \eqref{eq:bayes} to find the pdf for
$\xi|y$ we find the posterior $\vartheta$ on $\xi|y$, as summarized in the
following theorem. 

\begin{theorem} \label{t:101}The posterior smoothing 
distribution\index{smoothing!distribution} on $\xi|y$ for
the stochastic dynamics \index{stochastic dynamics} model \eqref{eq:dtf1}, \eqref{eq:dtf2}
is a probability measure $\vartheta$ on 
$\R^{|\J_0|\times n}$ 
with pdf $\rp(\xi|y)=\pi(\xi)$ 
proportional to $\exp(-\Ir(\xi;y))$ where 
\begin{equation}
\label{eq:z3}
\Ir(\xi;y)=\Jr(\xi)+\Phr(\xi;y),
\end{equation}
$$\Phr(\xi;y):=\frac12\left|\left(y-{\cal G}(\xi)\right)\right|_{\Gamma_J}^2$$
and
$$\Jr(\xi):=
\frac12 \left|v_0-m_0\right|_{C_0}^2
+\sum_{j=0}^{J-1} \frac12\left|\xi_{j}\right|_{\Sigma}^2.$$
\end{theorem}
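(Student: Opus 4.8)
The plan is to apply Bayes' formula \eqref{eq:bayes} in exactly the manner used to prove Theorem \ref{th11}, now with the prior and likelihood for $\xi$ that were assembled in the paragraphs immediately preceding the statement; all of the substantive work (factoring the prior via the i.i.d.\ structure of $\xi$, and expressing the likelihood through the concatenated forward map ${\cal G}$) has already been carried out, so what remains is to multiply the two and normalize.

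First I would record the prior: by \eqref{eq:xipdf} the law $\vartheta_0$ of $\xi=(v_0,\xi_0,\dots,\xi_{J-1})$ has density $\pi_0(\xi)\propto\exp(-\Jr(\xi))$ with $\Jr(\xi)=\frac12|v_0-m_0|_{C_0}^2+\sum_{j=0}^{J-1}\frac12|\xi_j|_{\Sigma}^2$, the identification of $\vartheta_0$ as the prior on $\xi$ following from Lemma \ref{l:sicab}. Next I would pin down the likelihood: from $y={\cal G}(\xi)+\eta$ as in \eqref{eq:z2}, with $\eta\sim N(0,\Gamma_J)$ independent of $(v_0,\xi)$, the conditional law of $y$ given $\xi$ is $N({\cal G}(\xi),\Gamma_J)$, so by the Gaussian density formula \eqref{gaussiandensity} --- whose normalizing constant depends only on $\det\Gamma_J$, hence is independent of $\xi$ --- we get $\bbP(y|\xi)\propto\exp\bigl(-\frac12|y-{\cal G}(\xi)|_{\Gamma_J}^2\bigr)=\exp(-\Phr(\xi;y))$. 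Bayes' formula \eqref{eq:bayes}, discarding the $\xi$-independent factor $\bbP(y)^{-1}$, then yields $\pi(\xi)=\bbP(\xi|y)\propto\bbP(y|\xi)\bbP(\xi)\propto\exp(-\Phr(\xi;y)-\Jr(\xi))=\exp(-\Ir(\xi;y))$, which is the claim.

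I do not anticipate a real obstacle: the one point deserving a word of care is the harmless observation that the Gaussian normalizing constant in $\bbP(y|\xi)$ does not depend on $\xi$, so that it may legitimately be absorbed into the proportionality constant. As a consistency check with Theorem \ref{th11}, I would also note that, because $v_{j+1}=\PPsi(v_j)+\xi_j$, the Jacobian $DG$ is block lower-triangular with identity diagonal blocks and so $|\det DG|\equiv1$; together with the pointwise identities $\Jj(G(\xi))=\Jr(\xi)$ and $\PPhi(G(\xi);y)=\Phr(\xi;y)$ this reconciles the present formula with \eqref{eq:relate} and \eqref{eq:dtf4}.
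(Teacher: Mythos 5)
Your proposal is correct and follows essentially the same route as the paper, which derives the prior \eqref{eq:xipdf} and the likelihood $\bbP(y|\xi)=N\bigl({\cal G}(\xi),\Gamma_J\bigr)$ in the paragraphs preceding the theorem and then simply invokes Bayes' formula \eqref{eq:bayes}. Your closing consistency check via $|\det DG|\equiv 1$ and \eqref{eq:relate} is a nice addition but not part of the paper's argument.
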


We refer to $\Ir$ as the negative log-posterior\index{posterior!log-posterior}.

\subsection{Deterministic Dynamics}
\label{ssec:dd}
It is also of interest to study the posterior distribution\index{posterior
distribution}
on the initial condition in the case where the model dynamics 
contains no noise, and is given by \eqref{eq:dtf11}; this we
now do. Recall that
$\PPsi^{(j)}(\cdot)$ denotes the $j-$fold composition of
$\PPsi(\cdot)$ with itself. 
In the following we sometimes refer to $\Jjd$ as the {\bf background}
penalization\index{background!penalization}, 
and $m_0$ and $C_0$ as the background\index{background!mean} mean and 
covariance\index{background!covariance};
we refer to $\Phid$ as the {\bf model-data misfit}\index{model-data misfit} 
functional.

\begin{theorem} \label{th112}
The posterior smoothing distribution\index{smoothing!distribution} on $v_0|y$ 
for the the deterministic dynamics model \eqref{eq:dtf11}, \eqref{eq:dtf2}
is a probability measure $\nu$ on $\R^n$ with density $\rp(v_0|y)=\varrho(v_0)$ 
proportional to $\exp(-\Iid(v_0;y))$ where 
\begin{subequations}
\label{eq:Ip}
\begin{align}
\Iid(v_0;y)&=\Jjd(v_0)+\Phid(v_0;y),\\
\Jjd(v_0)&=\frac12\bigl|v_0-m_0\bigr|_{C_0}^2,\\
\Phid(v_0;y)&=\sum_{j=0}^{J-1}\frac12\bigl|y_{j+1}-h\bigl(\PPsi^{(j+1)}(v_0)\bigr)\bigr|_{\Gamma}^2.
\end{align}
\end{subequations}
\end{theorem}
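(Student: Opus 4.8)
The plan is to follow the template of Theorem~\ref{th11}, but to exploit the simplification afforded by deterministic dynamics: when \eqref{eq:dtf11} holds, the whole signal is a deterministic function of $v_0$, namely $v_j=\PPsi^{(j)}(v_0)$, so that conditioning the signal on $y$ is the same as conditioning the single random variable $v_0$ on $y$. Bayes' formula~\eqref{eq:bayes} then applies directly with the ``unknown'' equal to $v_0$ and the ``data'' equal to $y$.

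First I would record the prior: by \eqref{eq:dtf11} we have $v_0\sim\G(m_0,C_0)$ with $C_0>0$, so \eqref{gaussiandensity} gives $\bbP(v_0)\propto\exp\bigl(-\Jjd(v_0)\bigr)$ with $\Jjd(v_0)=\tfrac12|v_0-m_0|_{C_0}^2$. Next the likelihood: since $v_{j+1}=\PPsi^{(j+1)}(v_0)$ and $\eta=\{\eta_j\}_{j\in\J}$ is i.i.d.\ $\G(0,\Gamma)$ and independent of $v_0$, the observation equation \eqref{eq:dtf2} shows that, conditionally on $v_0$, the $y_{j+1}$ are independent with $y_{j+1}\mid v_0\sim\G\bigl(h(\PPsi^{(j+1)}(v_0)),\Gamma\bigr)$. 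Arguing exactly as in the computation of $\bbP(y|v)$ that precedes \eqref{eq:dtf3}, this yields
\begin{align*}
\bbP(y|v_0)&=\prod_{j=0}^{J-1}\bbP(y_{j+1}|v_0)\\
&\propto\prod_{j=0}^{J-1}\exp\Bigl(-\tfrac12\bigl|\Gamma^{-\frac12}\bigl(y_{j+1}-h(\PPsi^{(j+1)}(v_0))\bigr)\bigr|^2\Bigr)=\exp\bigl(-\Phid(v_0;y)\bigr).
\end{align*}
Combining prior and likelihood via \eqref{eq:bayes}, and discarding factors that depend on $y$ alone (in particular the normaliser $\bbP(y)$), gives $\bbP(v_0|y)\propto\exp\bigl(-\Jjd(v_0)-\Phid(v_0;y)\bigr)=\exp\bigl(-\Iid(v_0;y)\bigr)$, which is the asserted form.

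The only nontrivial point --- the main, if routine, obstacle --- is to verify that $\exp(-\Iid(\cdot;y))$ can actually be normalised to a probability density on $\R^n$, i.e.\ that $\int_{\R^n}\exp(-\Iid(v_0;y))\,dv_0$ is finite and strictly positive, so that $\nu$ is a bona fide probability measure. Finiteness follows from $\Phid\ge0$ together with the quadratic growth of $\Jjd$ (here $C_0>0$ is used), since then $\exp(-\Iid)\le\exp(-\Jjd)$, an unnormalised Gaussian density. Positivity of the integral, and indeed strict positivity of the resulting density everywhere on $\R^n$, follows from continuity of $\PPsi$ and $h$; this is also consistent with the standing convention in the book that all densities are taken strictly positive.
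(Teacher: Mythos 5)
Your proof is correct and follows essentially the same route as the paper: Bayes' formula with the Gaussian prior on $v_0$ and the likelihood obtained from the conditional independence of the observations $y_{j+1}\mid v_0\sim N\bigl(h(\PPsi^{(j+1)}(v_0)),\Gamma\bigr)$. The added normalisability check is a harmless (and welcome) extra the paper omits.
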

\begin{proof}
We again use Bayes' rule which states that 
\[\rp(v_0|y)=\frac{\rp(y|v_0)\rp(v_0)}{\rp(y)}.\] 
Thus, ignoring constants of proportionality which depend only on $y$, 
\begin{align*}\rp(v_0|y)&\propto\rp(y|v_0)\rp(v_0)\\
&\propto \exp\bigl(-\Phid(v_0;y)\bigr)\exp\bigl(-\frac12|v_0-m_0|_{C_0}^2\bigr)\\
&=\exp(-\Iid(v_0;y)).
\end{align*}
Here we have used the fact that $\rp(y|v_0)$ is proportional to 
$\exp\bigl(-\Phid(v_0;y)\bigr)$; this follows from the fact that
$y_j|v_0$ form an i.i.d \index{i.i.d.} sequence of Gaussian random variables
$N\bigl(h(v_{j}),\Gamma)$ with $v_j=\PPsi^{(j)}(v_0).$
\end{proof}

We refer to $\Iid$ as the negative log-posterior\index{posterior!log-posterior}.

\section{Filtering\index{filtering} Problem}\label{ssec:fp}
The smoothing\index{smoothing} problem considered in the previous
section involves, potentially, conditioning $v_j$ on data
$y_k$ with $k>j.$ Such conditioning can only be performed
{\em off-line} and is of no use in {\em on-line} scenarios
where we want to determine information on the state of the
signal {\em now} hence using only data from the past up to the
present. To study this situation,
let $Y_j=\{y_l\}_{l=1}^{j}$ denote the accumulated 
data\index{data!accumulated} up to time $j$. {\bf Filtering} is concerned with 
determining $\rp(v_j|Y_j)$, the pdf 
associated with the probability measure\index{probability measure}
on the random variable $v_j|Y_j$; in particular filtering is concerned
with the sequential updating this pdf as the index $j$ is incremented. This update is defined by the following procedure
which provides a prescription for computing $\rp(v_{j+1}|Y_{j+1})$ from $\rp(v_j|Y_j)$ via two steps: {\bf prediction}\index{prediction} which computes the
mapping $\rp(v_j|Y_j) \mapsto \rp(v_{j+1}|Y_j)$ and {\bf analysis}\index{analysis}
which computes $\rp(v_{j+1}|Y_j) \mapsto \rp(v_{j+1}|Y_{j+1})$ by 
application of Bayes' formula.\index{Bayes' formula}
 
\noindent{\bf Prediction} Note that $\rp(v_{j+1}|Y_j,v_j)=\rp(v_{j+1}|v_j)$
because $Y_j$ contains noisy and indirect information about $v_j$ and cannot
improve upon perfect knowledge of the variable $v_j.$ Thus,
by \eqref{eq:cpeq}, we deduce that 
\begin{subequations}\label{eq:dtf5}
\begin{align}
\rp(v_{j+1}|Y_j)&=\int_{\R^n}\rp(v_{j+1}|Y_j,v_j)\rp(v_j|Y_j)dv_j\\
&=\int_{\R^n}\rp(v_{j+1}|v_j)\rp(v_j|Y_j)dv_j
\end{align}
\end{subequations}
Note that, since the forward model equation 
(\ref{eq:dtf1}) determines $\rp(v_{j+1}|v_j)$, this prediction step
provides the map from  $\rp(v_j|Y_j)$ to  $\rp(v_{j+1}|Y_j).$
This prediction \index{filtering!prediction step} step simplifies in the case of deterministic
dynamics \eqref{eq:dtf11}; in this case it simply corresponds
to computing the pushforward\index{pushforward} of $\rp(v_j|Y_j)$ under the map $\PPsi$.

\noindent{\bf Analysis}\index{analysis}
Note that $\rp(y_{j+1}|v_{j+1},Y_j)=\rp(y_{j+1}|v_{j+1})$ because 
 $Y_j$ contains noisy and indirect information about $v_{j}$ and cannot
improve upon perfect knowledge of the variable $v_{j+1}.$ 
Thus, using Bayes' 
formula \eqref{eq:bayes}, we deduce that 
\begin{align}\label{eq:dtf6}
\rp(v_{j+1}|Y_{j+1})&=\rp(v_{j+1}|Y_j, y_{j+1})\notag\\
&=\frac{\rp(y_{j+1}|v_{j+1},Y_j)\rp(v_{j+1}|Y_j)}{\rp(y_{j+1}|Y_j)}\notag\\
&=\frac{\rp(y_{j+1}|v_{j+1})\rp(v_{j+1}|Y_j)}{\rp(y_{j+1}|Y_j)}.
\end{align}
Since the observation equation (\ref{eq:dtf2}) determines $\rp(y_{j+1}|v_{j+1})$,
this analysis step  \index{filtering!analysis step} provides a map from $\rp(v_{j+1}|Y_j)$ to $\rp(v_{j+1}|Y_{j+1}).$
 
\noindent{\bf Filtering Update}
Together, then, the prediction \index{filtering!prediction step} and analysis step  \index{filtering!analysis step} provide a mapping from
$\rp(v_j|Y_j)$ to  $\rp(v_{j+1}|Y_{j+1}).$ 
Indeed if we let $\mu_j$ denote the probability measure 
on $\bbR^n$ corresponding
to the density $\rp(v_j|Y_j)$ and $\hmu_{j+1}$ be the probability 
measure on $\bbR^n$ corresponding to the density $\rp(v_{j+1}|Y_j)$
then the prediction \index{filtering!prediction step} step maps $\mu_j$ to $\hmu_{j+1}$ whilst
the analysis step  \index{filtering!analysis step} maps $\hmu_{j+1}$ to $\mu_{j+1}.$ 
However there is, in general, no easily usable closed form expression for 
the density of $\mu_j$, namely $\rp(v_j|Y_j)$. Nevertheless, formulae (\ref{eq:dtf5}), (\ref{eq:dtf6}) form the starting point for numerous algorithms to approximate $\rp(v_j|Y_j)$. In terms of analyzing the particle filter\index{filter!particle} it is
helpful conceptually to write the prediction  \index{filtering!prediction step} and analysis steps  \index{filtering!analysis step}
as
\begin{equation}
\label{eq:summary}
\hmu_{j+1}=P\mu_j\quad\quad\quad \mu_{j+1}=L_j \hmu_{j+1}.
\end{equation}
Note that  $P$ does not depend on $j$ as the same Markov process
governs the prediction step  \index{filtering!prediction step} at each $j$; however $L_j$ depends on
$j$ because the likelihood \index{likelihood} sees different data at each $j$.  Furthermore, the formula $\hmu_{j+1}=P\mu_j$ summarizes \eqref{eq:dtf5}
whilst $\mu_{j+1}=L_j \hmu_{j+1}$ summarizes \eqref{eq:dtf6}.
Note that $P$ is a linear mapping, whilst $L_j$ is nonlinear;
this issue is discussed in subsections \ref{ssec:mk} and \ref{ssec:btm},
at the level of pdfs.

\section{Filtering and Smoothing are Related}\label{ssec:fasar}

The filtering\index{filtering} and smoothing\index{smoothing} 
approaches to determining the signal from
the data are distinct, but related. They are related by the fact that
in both cases the solution computed at the {\em end} of any specified 
time-interval is conditioned on the same data, and must hence coincide;
this is made precise in the following.

\begin{theorem}\label{th12}
Let $\rp(v|y)$ denote the smoothing\index{smoothing} distribution on the
discrete time interval $j\in\J_0$, and $\rp(v_J|Y_J)$ the 
filtering distribution\index{filtering!distribution} at time $j=J$ for the stochastic dynamics \index{stochastic dynamics} model \eqref{eq:dtf1}.
Then the marginal of the smoothing\index{smoothing} distribution on $v_J$ 
is the same as the filtering distribution\index{filtering!distribution}
at time $J$:
\[\int\rp(v|y)dv_0dv_1...dv_{J-1}=\rp(v_J|Y_J).\]
\end{theorem}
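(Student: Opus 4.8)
The plan is to show that both sides of the claimed identity satisfy the same recursive update in $J$, and then conclude by induction. Equivalently, I would observe that $Y_J = y = \{y_j\}_{j\in\J}$ so that the filtering distribution $\rp(v_J|Y_J)$ is conditioned on exactly the same data as the full smoothing distribution $\rp(v|y)$; the claim is then just a statement about consistency of marginalization with conditioning. First I would write down the smoothing density explicitly from Theorem \ref{th11}: $\rp(v|y) \propto \exp\bigl(-\Ii(v;y)\bigr) = \exp\bigl(-\Jj(v)-\PPhi(v;y)\bigr)$, and note from \eqref{eq:JJ} and \eqref{eq:dtf3} that this factorizes as
\begin{equation*}
\rp(v|y) \propto \rho_0(v_0)\prod_{j=0}^{J-1} p\bigl(v_j,v_{j+1}\bigr)\,g_{j+1}(v_{j+1}),
\end{equation*}
where $p(v_j,v_{j+1}) \propto \exp\bigl(-\frac12|v_{j+1}-\PPsi(v_j)|_\Sigma^2\bigr)$ is the transition density and $g_{j+1}(v_{j+1})\propto\exp\bigl(-\frac12|y_{j+1}-h(v_{j+1})|_\Gamma^2\bigr)$ is the (unnormalized) likelihood. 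The key structural fact is that $g_{j+1}$ depends only on $v_{j+1}$, and the $v_J$-dependence sits entirely in the last transition factor $p(v_{J-1},v_J)$ and the last likelihood factor $g_J(v_J)$.

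Next I would carry out the integration $\int \rp(v|y)\,dv_0\cdots dv_{J-1}$ one variable at a time, integrating out $v_0$, then $v_1$, and so on. Integrating out $v_0$ against $\rho_0(v_0)p(v_0,v_1)$ produces precisely (up to normalization) the predictive density $\rp(v_1|Y_0)=\hat\rho_1(v_1)$; multiplying by $g_1(v_1)$ and renormalizing gives $\rp(v_1|Y_1)=\rho_1(v_1)$, by the analysis formula \eqref{eq:dtf6}. Proceeding inductively, after integrating out $v_0,\dots,v_{j-1}$ the remaining expression is proportional to $\rp(v_j|Y_j)$ times the factors $\prod_{k=j}^{J-1}p(v_k,v_{k+1})g_{k+1}(v_{k+1})$ still involving $v_j,\dots,v_J$; integrating out $v_j$ against $\rp(v_j|Y_j)p(v_j,v_{j+1})$ yields $\rp(v_{j+1}|Y_j)$ by the prediction formula \eqref{eq:dtf5}, and the factor $g_{j+1}(v_{j+1})$ then upgrades this to $\rp(v_{j+1}|Y_{j+1})$ via \eqref{eq:dtf6}. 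After the final step ($j=J-1$) we are left with something proportional to $\rp(v_J|Y_J)$, and since both the left side and the right side are probability densities in $v_J$, the proportionality constants must agree, giving equality.

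The main thing to be careful about — and the only real obstacle — is bookkeeping of the normalization constants. The densities in Theorem \ref{th11} and in the filtering recursion \eqref{eq:dtf5}–\eqref{eq:dtf6} are written only up to $y$-dependent constants, so I cannot blindly track $\propto$ symbols through $J$ successive integrations; instead I should either (i) argue abstractly that both sides are normalized probability densities on $\bbR^n$ proportional to the same unnormalized function of $v_J$, hence equal, or (ii) carry the Bayesian normalizers $\rp(y_{j+1}|Y_j)$ explicitly and check they telescope to give $\rp(y)$. Approach (i) is cleaner and is what I would use. A secondary point worth stating explicitly is why $\rp(y_{j+1}|v_{j+1},Y_j)=\rp(y_{j+1}|v_{j+1})$ and $\rp(v_{j+1}|v_j,Y_j)=\rp(v_{j+1}|v_j)$ — these conditional-independence facts are exactly the ones already established in the Prediction and Analysis derivations of section \ref{ssec:fp}, so they may simply be invoked. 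Everything else is the routine Gaussian/Markov marginalization sketched above.
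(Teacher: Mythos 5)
Your proof is correct, but it takes a much longer route than the paper, which disposes of the theorem in one line: since $y=Y_J$ and $v=(v_0,\dots,v_{J-1},v_J)$, the marginal on $v_J$ of the conditional distribution of $v$ given $y$ \emph{is by definition} the conditional distribution of $v_J$ given $Y_J$ --- there is nothing to compute. You actually state this observation yourself in your second sentence (``the claim is then just a statement about consistency of marginalization with conditioning''), and that sentence alone is the paper's entire proof. The sequential-integration argument you then develop --- factorizing $\rp(v|y)$ into transition kernels and likelihood factors and integrating out $v_0,\dots,v_{J-1}$ one at a time, tracking the prediction step \eqref{eq:dtf5} and analysis step \eqref{eq:dtf6} at each stage --- is sound, and your cautions about normalization constants and the conditional-independence facts are the right ones. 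What it buys you is something genuinely stronger than the theorem: a verification that the recursive filtering update of section \ref{ssec:fp} is consistent with the closed-form smoothing posterior of Theorem \ref{th11}, i.e.\ that iterating $L_jP$ from $\mu_0$ really does reproduce the marginal of $\mu$ on the final coordinate. That is a worthwhile exercise, but for the statement as posed it is unnecessary machinery; the trivial argument suffices and is what the paper uses.
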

\begin{proof}
Note that $y=Y_J$. Since $v=(v_0,...,v_{J-1},v_J)$ the result follows trivially.
\end{proof}

\begin{remark}\label{r13}
Note that the marginal of the smoothing\index{smoothing} distribution on say $v_j, \;j<J$ is 
{\em not} equal to the filter $\rp(v_j|Y_j)$. This is because the smoother induces a distribution on $v_j$ which is influenced by the entire data set
$Y_J=y=\{y_l\}_{l\in\J}$; in contrast the filter at $j$ involves only the data $Y_j=\{y_l\}_{l\in\{1,...,j\}}$.
\end{remark}

It is also interesting to mention the relationship
between filtering\index{filtering} and smoothing\index{smoothing} in the case of
noise-free dynamics. In this case the filtering 
distribution\index{filtering!distribution} 
$\rp(v_j|Y_j)$ is simply found as the pushforward\index{pushforward}
of the smoothing\index{smoothing} distribution on $\rp(v_0|Y_j)$
under $\PPsi^{(j)}$, that is under $j$ applications of $\PPsi.$

\begin{theorem}\label{th12a}
Let $\rp(v_0|y)$ denote the smoothing\index{smoothing} distribution on the
discrete time interval $j\in\J_0$, and $\rp(v_J|Y_J)$ the 
filtering distribution\index{filtering!distribution} at time $j=J$ for the deterministic dynamics model \eqref{eq:dtf11}.
Then the pushforward\index{pushforward} of the smoothing 
distribution\index{smoothing!distribution} on $v_0$ 
under $\PPsi^{(J)}$ is the same as the 
filtering distribution\index{filtering!distribution} at time $J$:
$$\PPsi^{(J)}\star\rp (v_0|Y_J)=\rp (v_J|Y_J).$$
\end{theorem}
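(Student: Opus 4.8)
The plan is to exploit the fact that, under deterministic dynamics \eqref{eq:dtf11}, the entire signal is a deterministic function of the initial condition: iterating \eqref{eq:dtf11a} gives $v_j=\PPsi^{(j)}(v_0)$ for every $j$, and in particular $v_J=\PPsi^{(J)}(v_0)$ almost surely. Moreover, by the definition of the data assimilation window in section \ref{ssec:sd}, the full data vector $y=\{y_l\}_{l\in\J}$ with $\J=\{1,\dots,J\}$ is precisely the accumulated data $Y_J$, so $\rp(v_0|y)=\rp(v_0|Y_J)$ is the density $\varrho$ of the measure $\nu$ identified in Theorem \ref{th112}. This reduces the claim to the statement that, since $v_J$ is an almost sure deterministic image of $v_0$, conditioning on $Y_J$ leaves that deterministic relationship intact, and hence the law of $v_J|Y_J$ is the pushforward of the law of $v_0|Y_J$ under $\PPsi^{(J)}$.

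First I would verify the identity at the level of probability measures rather than densities, since $\PPsi^{(J)}$ need not be invertible, so the explicit pushforward formula $G\star\rho(v)=\rho(G^{-1}(v))|DG^{-1}(v)|$ recalled in section \ref{ssec:p} is not directly available. Let $\varphi:\R^n\to\R$ be an arbitrary bounded continuous function. Using $v_J=\PPsi^{(J)}(v_0)$ almost surely,
\begin{align*}
\bbE\bigl[\varphi(v_J)\,\big|\,Y_J\bigr]&=\bbE\bigl[\varphi\bigl(\PPsi^{(J)}(v_0)\bigr)\,\big|\,Y_J\bigr]\\
&=\int_{\R^n}\varphi\bigl(\PPsi^{(J)}(v_0)\bigr)\,\rp(v_0|Y_J)\,dv_0\\
&=\int_{\R^n}\varphi(w)\,\bigl(\PPsi^{(J)}\star\rp(v_0|Y_J)\bigr)(dw),
\end{align*}
where the last line is the defining property of the pushforward measure. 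Since the left-hand side equals $\int_{\R^n}\varphi(w)\,\rp(v_J|Y_J)\,dw$, and $\varphi$ was arbitrary, the probability measures $\PPsi^{(J)}\star\rp(v_0|Y_J)$ and $\rp(v_J|Y_J)$ coincide, which is the assertion. If one additionally assumes $\PPsi^{(J)}$ invertible with a smooth inverse, the explicit density identity then follows from the pushforward formula of section \ref{ssec:p}.

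I do not expect a serious obstacle here: the entire content is the observation that conditioning on $Y_J$ does not disturb the deterministic map $v_0\mapsto v_J$, combined with the change-of-variables identity that defines the pushforward. The only point requiring a little care is to phrase the conclusion via integration against bounded continuous test functions (equivalently, via measures of Borel sets), so as not to presuppose that $\PPsi^{(J)}$ is a diffeomorphism. An alternative, purely ``algebraic'' route mirrors Theorem \ref{th12}: the joint law of $(v_0,y)$ determines the joint law of $(v_J,y)=(\PPsi^{(J)}(v_0),y)$ by a deterministic change in the first variable, and conditioning both on $y=Y_J$ then yields the stated pushforward relation directly.
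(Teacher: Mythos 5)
Your proof is correct. The paper states Theorem \ref{th12a} without supplying a proof (in contrast to Theorem \ref{th12}, which is dismissed as ``trivial''), and your argument --- $v_J=\PPsi^{(J)}(v_0)$ almost surely, $y=Y_J$, conditioning on $Y_J$ leaves this deterministic relation intact, and the pushforward identity then follows by integrating bounded test functions --- is precisely the natural argument the authors leave implicit; your choice to work at the level of measures rather than densities, so as not to assume $\PPsi^{(J)}$ invertible, is the right way to make it rigorous.
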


\section{Well-Posedness}\label{ssec:wp}

Well-posedness of a mathematical problem refers, generally,
to the existence of a unique solution which depends continuously
on the parameters defining the problem. We have shown, for both filtering\index{filtering}
and smoothing\index{smoothing}, how to construct a uniquely 
defined probabilistic solution to the
problem of determining the signal\index{signal}
given the data\index{data}. In this setting it
is natural to consider well-posedness with respect to the data itself.
Thus we now
investigate the continuous dependence of the probabilistic solution
on the observed data; indeed we will show Lipschitz dependence.
To this end we need probability metrics, as
introduced in section \ref{ssec:pm}.

As we do throughout the notes, we perform all calculations using
the existence of everywhere positive Lebesgue densities for our
measures.  We let $\mu_0$ denote the prior measure on $v$ for the 
smoothing\index{smoothing} problem
arising in stochastic dynamics,\index{stochastic dynamics} as defined by \eqref{eq:dtf1}.
Then $\mu$ and  $\mu'$ denote the posterior measures 
resulting from two different instances of the data, $y$ and $y'$ respectively. 
Let $\rho_0, \rho$ and $\rho'$ denote the Lebesgue densities on $\mu_0, \mu$ and $\mu'$ 
respectively. Then, for $\Jj$ and $\PPhi$ as defined in \eqref{eq:JJ} and
\eqref{eq:dtf3}, 
\begin{subequations}
\label{eq:splus}
\begin{align}
\rho_0(v)&=\frac1{Z_0}\exp(-\Jj(v)),\\
\rho(v)&=\frac1{Z}\exp(-\Jj(v)-\PPhi(v;y)),\\
\rho'(v)&=\frac1{Z'}\exp(-\Jj(v)-\PPhi(v;y')),
\end{align}
\end{subequations}
where
\begin{subequations}
\label{eq:szero}
\begin{align}
Z_0&=\int\exp(-\Jj(v))dv,\\
Z&=\int\exp(-\Jj(v)-\PPhi(v;y))dv,\\
Z'&=\int\exp(-\Jj(v)-\PPhi(v;y'))dv.
\end{align}
\end{subequations}
Here, and in the proofs that follow in this section, all integrals are over $\bbR^{|\J_0| \times n}$
(or, in the case of the deterministic dynamics model at the end of the section, over $\bbR^n$).
Note that $|\J_0|$ is the cardinality of the set $\J_0$ and is hence 
equal to $J+1$.
To this end we note explicitly that (\ref{eq:splus}a) implies that
\begin{equation}
\label{eq:sstar}
\exp\bigl(-\Jj(v)\bigr)dv=Z_0\rho_0(v)dv=Z_0\mu_0(dv),
\end{equation}
indicating that integrals weighted by $\exp\bigl(-\Jj(v)\bigr)$ may be
rewritten as expectations with respect to $\mu_0.$
We use the identities \eqref{eq:splus}, \eqref{eq:szero} and \eqref{eq:sstar}
repeatedly in what follows to express all integrals as expectations
with respect to the measure $\mu_0$. In particular the assumptions
that we make for the subsequent theorems and corollaries in this
section are all expressed in terms of expectations under $\mu_0$ (or,
under $\nu_0$ for the deterministic dynamics problem considered at the end of the
section).
This is convenient because it relates to the unconditioned problem
of stochastic dynamics \index{stochastic dynamics} for $v$, in the absence of any data, and may thus be
checked once and for all, independently of the particular data set $y$ or
$y'$ which are used to condition $v$ and obtain $\mu$ and $\mu'$.

We assume throughout what follows that $y,y'$ are both contained
in a ball of radius $r$ in the Euclidean norm on $\bbR^{|\J| \times n}.$
Again $|\J|$ is the cardinality of the set $\J$ and is hence 
equal to $J$.
We also note that $Z_0$ is bounded from above independently of $r$,
because $\rho_0$ is the density associated with the probability measure 
$\mu_0$, which is therefore normalizable, and this measure is independent
of the data. It also follows that $Z \le Z_0$, $Z' \le Z_0$ by using
\eqref{eq:sstar} in (\ref{eq:szero}b), (\ref{eq:szero}c), together with
the fact that $\PPhi(\cdot;y)$ is a positive function. 
Furthermore, if we assume that
\begin{equation}
\label{eq:vsf}
\sfv:=\sum_{j\in \J}\bigl(1+|h(v_{j})|^2\bigr)
\end{equation}
satisfies $\bbE^{\mu_0}\sfv<\infty$, then both $Z$ and $Z'$ are positive
with common lower bound depending only on $r$,
as we now demonstrate.  It is sufficient to prove the result for $Z$,
which we now do. In the following, and in the proofs which follow,
$K$ denotes a generic constant, which may depend on $r$ and $J$ but
not on the solution sequence $v$, and 
which may change from instance to instance. Note first that, by
\eqref{eq:szero}, \eqref{eq:sstar},
$$\frac{Z}{Z_0} = \int \exp\bigl(-\PPhi(v;y)\bigr) \rho_0(v)dv \ge
\int \exp\bigl(-K\sfv \bigr) \rho_0(v)dv.$$
Since $\bbE^{\mu_0} \sfv<\infty$ we deduce from \eqref{eq:mie2}, that
for $R$ sufficiently large,
\begin{align*}
\frac{Z}{Z_0} \ge \exp\bigl(-KR \bigr)\int_{|\sfv|<R} \rho_0(v)dv&=
\exp\bigl(-KR \bigr) \bbP^{\mu_0}(|\sfv|<R)\\
&\ge \exp\bigl(-KR \bigr)\bigl(1-R^{-1}\bbE^{\mu_0}\sfv\bigr).
\end{align*}
Since $K$ depends on $y,y'$ only through $r$, we deduce that, by choice
of $R$ sufficiently large, we have found lower bounds on $Z,Z'$
which depend on $y,y'$ only through $r$.

Finally we note that, since all norms are equivalent on finite dimensional
spaces, there is constant $K$, such that
\begin{equation}
\label{eq:lunch}
\left(\sum_{j=0}^{J-1}|y_{j+1}-y_{j+1}'|_{\Gamma}^2\right)^\frac12 \le K|y-y'|.
\end{equation}
The following theorem then shows that the
posterior measure is in fact Lipschitz continuous, in the Hellinger metric,
with respect to the data.
 
\begin{theorem}\label{th14} 
Consider the smoothing\index{smoothing}
problem arising from the stochastic dynamics \index{stochastic dynamics} model \eqref{eq:dtf1}, resulting
in the posterior probability distributions $\mu$ and $\mu'$ associated with
two different data sets $y$ and $y'$. Assume that 
$\bbE^{\mu_0}\sfv <\infty$ where $\sfv$ is given by \eqref{eq:vsf}. 
Then there exists $c=c(r)$ such that,
for all $|y|, |y'|\leq r$,
 \[d_{{\rm{\tiny Hell}}}(\mu, \mu')\leq c|y-y'|.\]
\end{theorem}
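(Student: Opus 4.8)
The plan is to estimate $\dhh(\mu,\mu')$ directly from its definition, using the explicit exponential forms \eqref{eq:splus} for $\rho,\rho'$ together with the facts, already established in this section, that $Z_0$ is bounded above and that $Z,Z'$ are bounded below by a positive constant depending only on $r$. Writing $a(v)=\exp\bigl(-\tfrac12\Jj(v)\bigr)$ we have $\sqrt{\rho(v)}=Z^{-1/2}a(v)\exp\bigl(-\tfrac12\PPhi(v;y)\bigr)$ and similarly for $\rho'$, so adding and subtracting $Z^{-1/2}a(v)\exp\bigl(-\tfrac12\PPhi(v;y')\bigr)$ gives
\begin{align*}
\sqrt{\rho(v)}-\sqrt{\rho'(v)}&=Z^{-1/2}a(v)\Bigl(\exp\bigl(-\tfrac12\PPhi(v;y)\bigr)-\exp\bigl(-\tfrac12\PPhi(v;y')\bigr)\Bigr)\\
&\qquad+\bigl(Z^{-1/2}-Z'^{-1/2}\bigr)a(v)\exp\bigl(-\tfrac12\PPhi(v;y')\bigr).
\end{align*}
Squaring, using $(p+q)^2\le2p^2+2q^2$, and integrating, the definition of $\dhh$ bounds $\dhh(\mu,\mu')^2$ by $I_1+I_2$, where $I_1$ involves the pointwise difference of the two likelihood factors and $I_2$ the difference of normalization constants. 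In both terms I would use \eqref{eq:sstar}, i.e. $a(v)^2\,dv=\exp\bigl(-\Jj(v)\bigr)dv=Z_0\,\mu_0(dv)$, to re-express everything as an expectation under $\mu_0$.

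The key estimate is the Lipschitz dependence of the misfit on the data. From \eqref{eq:dtf3} and the polarization identity $|p|_\Gamma^2-|q|_\Gamma^2=\langle p-q,\,p+q\rangle_\Gamma$,
$$\PPhi(v;y)-\PPhi(v;y')=\sum_{j=0}^{J-1}\tfrac12\bigl\langle y_{j+1}-y'_{j+1},\;y_{j+1}+y'_{j+1}-2h(v_{j+1})\bigr\rangle_\Gamma,$$
so that, using $|y|,|y'|\le r$, Cauchy--Schwarz and \eqref{eq:lunch},
$$\bigl|\PPhi(v;y)-\PPhi(v;y')\bigr|\le K\bigl(1+\sfv^{1/2}\bigr)|y-y'|,$$
with $\sfv$ as in \eqref{eq:vsf} and $K=K(r,J)$. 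Since $t\mapsto e^{-t/2}$ is $\tfrac12$-Lipschitz and bounded by $1$ on $[0,\infty)$, while both misfits are nonnegative, this yields
$$\bigl|\exp\bigl(-\tfrac12\PPhi(v;y)\bigr)-\exp\bigl(-\tfrac12\PPhi(v;y')\bigr)\bigr|\le K\bigl(1+\sfv^{1/2}\bigr)|y-y'|$$
and likewise for $\exp(-\PPhi)$ in place of $\exp(-\tfrac12\PPhi)$. Inserting the first bound into $I_1$, using $\exp(-\Jj)dv=Z_0\mu_0(dv)$, the lower bound on $Z$, and $\bbE^{\mu_0}\bigl(1+\sfv^{1/2}\bigr)^2\le K\bigl(1+\bbE^{\mu_0}\sfv\bigr)<\infty$ (here the hypothesis $\bbE^{\mu_0}\sfv<\infty$ and Cauchy--Schwarz enter), one gets $I_1\le c(r)^2|y-y'|^2$.

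For $I_2$ it remains to control $\bigl|Z^{-1/2}-Z'^{-1/2}\bigr|$. Since $Z,Z'\le Z_0$ and both are bounded below by a positive constant depending only on $r$, we have $\bigl|Z^{-1/2}-Z'^{-1/2}\bigr|\le K|Z-Z'|$, while from \eqref{eq:szero}, \eqref{eq:sstar} and the second bound above,
$$|Z-Z'|\le\int\exp\bigl(-\Jj(v)\bigr)\bigl|\exp\bigl(-\PPhi(v;y)\bigr)-\exp\bigl(-\PPhi(v;y')\bigr)\bigr|\,dv\le K\,\bbE^{\mu_0}\bigl(1+\sfv^{1/2}\bigr)\,|y-y'|,$$
which is finite; the remaining integral in $I_2$ is $\int\exp(-\Jj-\PPhi(\cdot;y'))\,dv=Z'\le Z_0$. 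Hence $I_2\le c(r)^2|y-y'|^2$ as well, and combining with the bound on $I_1$ and taking square roots gives $\dhh(\mu,\mu')\le c(r)|y-y'|$. The only real work is the misfit estimate and the bookkeeping that keeps every constant dependent on the data only through $r$; since boundedness of $Z_0$ from above and of $Z,Z'$ from below has already been arranged, the main point to be careful about is that the $v$-dependent factor in the Lipschitz bound for $\PPhi$ is square-integrable under $\mu_0$, which is exactly what the assumption $\bbE^{\mu_0}\sfv<\infty$ guarantees.
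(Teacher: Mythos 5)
Your proposal is correct and follows essentially the same route as the paper's proof: the same splitting of $\dhh(\mu,\mu')^2$ into a term $I_1$ controlling the difference of the likelihood factors and a term $I_2$ controlling $|Z-Z'|$, the same Lipschitz bound $|\PPhi(v;y)-\PPhi(v;y')|\le K\,\sfv^{1/2}|y-y'|$ obtained from \eqref{eq:lunch} and Cauchy--Schwarz, and the same reduction of all integrals to expectations under $\mu_0$ via \eqref{eq:sstar}. The only cosmetic difference is your extra factor $(1+\sfv^{1/2})$, which is absorbed into the constant since $\sfv\ge J$ by definition.
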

\begin{proof}
We have, by \eqref{eq:splus}, \eqref{eq:sstar},
\begin{align*}
d_{{\rm{\tiny Hell}}}(\mu,\mu')^2&=\frac12\int|\sqrt{\rho(v)}-\sqrt{\rho'(v)}|^2dv\\
&=\frac12\int Z_0\Bigl|\frac{1}{\sqrt{Z}}e^{-\frac12\PPhi(v;y)}-\frac{1}{\sqrt{Z'}}e^{-\frac12\PPhi(v;y')}\Bigr|^2\rho_0(v)dv\\
&\leq I_1+I_2,
\end{align*}
where \[I_1=Z_0\int\frac1{Z}\Bigl|e^{-\frac12\PPhi(v;y)}-e^{-\frac12\PPhi(v;y')}\Bigr|^2\rho_0(v)dv\]
and, using \eqref{eq:splus} and \eqref{eq:szero}, 
\begin{align*}
I_2&=Z_0\Bigl|\frac{1}{\sqrt{Z}}-\frac1{\sqrt{Z'}}\Bigr|^2\int e^{-\PPhi(v;y')}\rho_0(v)dv\\
&=Z'\Bigl|\frac1{\sqrt{Z}}-\frac{1}{\sqrt{Z'}}\Bigr|^2.
\end{align*}

We estimate $I_2$ first.
Since, as shown before the theorem, $Z,Z'$ are bounded below
by a positive constant depending only on $r$, we have 
$$I_2 = \frac{1}{Z}|\sqrt{Z}-\sqrt{Z'}|^2 = \frac{1}{Z}
\frac{|Z-Z'|^2}{|\sqrt{Z}+\sqrt{Z'}|^2} \le K|Z-Z'|^2.$$
As $\PPhi(v;y)\ge 0$ and $\PPhi(v;y')\ge 0$ we have from \eqref{eq:splus}, \eqref{eq:szero},
using the fact that $e^{-x}$ is Lipschitz on $\bbR^+$,
\begin{align*}
|Z-Z'|&\leq Z_0\int|e^{-\PPhi(v;y)}-e^{-\PPhi(v;y')}|\rho_0(v)dv\\
&\leq Z_0\int|\PPhi(v;y)-\PPhi(v;y')|\rho_0(v)dv.
\end{align*}
By definition of $\PPhi$ and use of \eqref{eq:lunch} 
\begin{align*}
|\PPhi(v;y)-\PPhi(v;y')|&\leq\frac12\sum_{j=0}^{J-1}|y_{j+1}-y_{j+1}'|_{\Gamma}|y_{j+1}+y_{j+1}'-2h(v_{j+1})|_{\Gamma}\\
&\leq \frac12\left(\sum_{j=0}^{J-1}|y_{j+1}-y_{j+1}'|_{\Gamma}^2\right)^\frac12\left(\sum_{j=0}^{J-1}|y_{j+1}+y_{j+1}'-2h(v_{j+1})|_{\Gamma}^2\right)^\frac12\\
&\leq K|y-y'|\left(\sum_{j=0}^{J-1}\bigl(1+|h(v_{j+1})|^2\bigr)\right)^{\frac12}\\
&=K|y-y'|\sfv^{\frac12}.
\end{align*}
Since $\bbE^{\mu_0}\sfv<\infty$ implies that $\bbE^{\mu_0}\sfv^{\frac12}<\infty$
it follows that 
\[|Z-Z'|\leq K|y-y'|.\]
Hence $I_2 \le K|y-y'|^2$ 

Now, using that $Z_0$ is bounded above independently of $r$,
that $Z$ is bounded below, depending on data only through $r$,
and that $e^{-\frac12 x}$ is Lipschitz on $\bbR^+$, it
follows that $I_1$ satisfies
$$I_1 \le K\int |\PPhi(v;y)-\PPhi(v;y')|^2 \rho_0(v)dv.$$
Squaring the preceding bound on $|\PPhi(v;y)-\PPhi(v;y')|$ gives
$$|\PPhi(v;y)-\PPhi(v;y')|^2 \le K|y-y'|^2\sfv$$
and so
$I_1 \le K|y-y'|^2$ as required.
\end{proof}

\begin{corollary}\label{c15}
Consider the smoothing\index{smoothing}
problem arising from the stochastic dynamics \index{stochastic dynamics} model \eqref{eq:dtf1}, resulting
in the posterior probability distributions $\mu$ and $\mu'$ associated with
two different data sets $y$ and $y'$. Assume that 
$\bbE^{\mu_0}\sfv <\infty$ where $\sfv$ is given by \eqref{eq:vsf}. 
Let $f:\R^{|\J_0|\times n}\to\R^p$ be such that $\bbE^{\mu_0}|f(v)|^2<\infty$. 
Then there is $c=c(r)>0$ such that, for all $|y|, |y'|<r$, 
\[|\bbE^{\mu}f(v)-\bbE^{\mu'}f(v)|\leq c|y-y'|.\]
\end{corollary}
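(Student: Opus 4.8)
The strategy is to combine the Hellinger continuity of the posterior from Theorem \ref{th14} with the bound on differences of expectations in terms of the Hellinger metric from Lemma \ref{lem:1h}. The only genuine gap to bridge is that Lemma \ref{lem:1h} requires $\bbE^{\mu}|f(v)|^2$ and $\bbE^{\mu'}|f(v)|^2$ to be finite, whereas the hypothesis here is stated in terms of $\bbE^{\mu_0}|f(v)|^2<\infty$, i.e. square-integrability against the \emph{prior}. So the real work is to transfer square-integrability from $\mu_0$ to $\mu$ and $\mu'$.

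\textbf{Step 1: control of the Radon--Nikodym derivatives.} First I would recall from \eqref{eq:splus} and \eqref{eq:sstar} that
\[
\frac{d\mu}{d\mu_0}(v)=\frac{Z_0}{Z}\exp\bigl(-\PPhi(v;y)\bigr),
\]
and similarly for $\mu'$. Since $\PPhi(\cdot;y)\ge 0$ we have $\exp(-\PPhi(v;y))\le 1$, and since $Z\ge \underline{Z}(r)>0$ (the lower bound established before Theorem \ref{th14}, using $\bbE^{\mu_0}\sfv<\infty$) and $Z_0$ is bounded above, we obtain a uniform bound
\[
\frac{d\mu}{d\mu_0}(v)\le K,\qquad \frac{d\mu'}{d\mu_0}(v)\le K,
\]
with $K=K(r)$ independent of $v$. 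Hence, for any $g\ge 0$,
\[
\bbE^{\mu}g(v)=\int g(v)\,\frac{d\mu}{d\mu_0}(v)\,\mu_0(dv)\le K\,\bbE^{\mu_0}g(v),
\]
and likewise for $\mu'$. Applying this with $g(v)=|f(v)|^2$ gives $\bbE^{\mu}|f(v)|^2\le K\,\bbE^{\mu_0}|f(v)|^2<\infty$ and the same for $\mu'$, so the hypotheses of Lemma \ref{lem:1h} are met, and moreover
\[
\bigl(\bbE^{\mu}|f(v)|^2+\bbE^{\mu'}|f(v)|^2\bigr)^{\frac12}\le \sqrt{2K}\,\bigl(\bbE^{\mu_0}|f(v)|^2\bigr)^{\frac12}=:K',
\]
with $K'=K'(r)$ not depending on $y$ or $y'$ (only through $r$).

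\textbf{Step 2: conclude.} Now I apply the inequality \eqref{eq:dtf8} of Lemma \ref{lem:1h} to the pair $\mu,\mu'$:
\[
|\bbE^{\mu}f(v)-\bbE^{\mu'}f(v)|\le 2\bigl(\bbE^{\mu}|f(v)|^2+\bbE^{\mu'}|f(v)|^2\bigr)^{\frac12}\,\dhh(\mu,\mu')\le 2K'\,\dhh(\mu,\mu').
\]
By Theorem \ref{th14} there is $\tilde c=\tilde c(r)$ with $\dhh(\mu,\mu')\le \tilde c|y-y'|$ for all $|y|,|y'|\le r$. Combining, $|\bbE^{\mu}f(v)-\bbE^{\mu'}f(v)|\le 2K'\tilde c\,|y-y'|$, which is the claimed bound with $c:=2K'\tilde c$, and $c$ depends on the data only through $r$ as required.

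\textbf{Main obstacle.} The one step that needs care, rather than being purely formal, is Step 1: verifying that the likelihood ratios $d\mu/d\mu_0$ and $d\mu'/d\mu_0$ are uniformly bounded. This rests entirely on the lower bound $Z,Z'\ge \underline{Z}(r)>0$, which in turn uses the assumption $\bbE^{\mu_0}\sfv<\infty$ exactly as in the discussion preceding Theorem \ref{th14}; that same assumption also guarantees $\bbE^{\mu_0}|f(v)|^2<\infty$ suffices to pass square-integrability to $\mu$ and $\mu'$. Once this transfer is in place, the corollary is an immediate consequence of Lemma \ref{lem:1h} and Theorem \ref{th14}, with no further estimation needed.
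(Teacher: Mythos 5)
Your proof is correct and follows essentially the same route as the paper's: both transfer square-integrability of $f$ from the prior $\mu_0$ to the posteriors $\mu,\mu'$ via the uniformly bounded likelihood ratio (using $\PPhi\ge 0$, the upper bound on $Z_0$, and the $r$-dependent lower bound on $Z,Z'$), and then conclude by combining Lemma \ref{lem:1h} with the Hellinger--Lipschitz estimate of Theorem \ref{th14}. Your write-up simply makes explicit the steps the paper compresses into two sentences.
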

\begin{proof} 
First note that, since $\PPhi(v;y)\geq0$, 
$Z_0$ is bounded above independently of $r$, and since
$Z$ is bounded from below depending only on $r$, 
$\bbE^{\mu}|f(v)|^2\leq c\bbE^{\mu_0}|f(v)|^2$; 
and a similar bound holds under $\mu'$. 
The result follows from (\ref{eq:dtf8}) and Theorem \ref{th14}.
\end{proof}

Using the relationship between filtering \index{filtering} and smoothing \index{smoothing} as described in the previous
section, we may derive a corollary concerning the filtering distribution \index{filtering!distribution}.

\begin{corollary}\label{c16}
Consider the smoothing\index{smoothing}
problem arising from the stochastic dynamics \index{stochastic dynamics} model \eqref{eq:dtf1}, resulting
in the posterior probability distributions $\mu$ and $\mu'$ associated with
two different data sets $y$ and $y'$. Assume that 
$\bbE^{\mu_0}\sfv <\infty$ where $\sfv$ is given by \eqref{eq:vsf}. 
Let $g:\R^n\to\R^p$ be such that $\bbE^{\mu_0}|g(v_J)|^2<\infty$. 
Then there is $c=c(r)>0$ such that, for all $|y|, |y'|<r$, 
\[|\bbE^{\mu_J}g(u)-\bbE^{\mu_J'}g(u)|\leq c|Y_J-Y_J'|,\]
where $\mu_J$ and $\mu_J'$ denote the filtering 
distributions\index{filtering!distribution}
at time $J$ corresponding to data $Y_J, Y_J'$ respectively
(i.e. the marginals of $\mu$ and $\mu'$ on the coordinate
at time $J$). 
\end{corollary}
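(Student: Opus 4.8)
The plan is to reduce the claim to Corollary \ref{c15} by exploiting the relationship between filtering and smoothing established in Theorem \ref{th12}. The essential point is that, in the setting of the smoothing problem over the window $\J_0=\{0,\dots,J\}$ with data on $\J=\{1,\dots,J\}$, the full data set $y$ coincides with the accumulated data $Y_J$, and the filtering distribution $\mu_J$ at time $J$ is precisely the marginal of the smoothing posterior $\mu$ on the coordinate $v_J$; similarly $\mu_J'$ is the marginal of $\mu'$ on $v_J$ and corresponds to the data $Y_J'=y'$.

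First I would define $f:\R^{|\J_0|\times n}\to\R^p$ by $f(v)=g(v_J)$, i.e. $f$ depends on the trajectory $v=(v_0,\dots,v_J)$ only through its final component. Then, by the marginalization identity of Theorem \ref{th12}, $\bbE^{\mu}f(v)=\bbE^{\mu_J}g(u)$ and likewise $\bbE^{\mu'}f(v)=\bbE^{\mu_J'}g(u)$. Moreover $\bbE^{\mu_0}|f(v)|^2=\bbE^{\mu_0}|g(v_J)|^2<\infty$ by hypothesis, so $f$ satisfies exactly the square-integrability assumption required by Corollary \ref{c15}.

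Next I would apply Corollary \ref{c15} to this choice of $f$: under the standing assumption $\bbE^{\mu_0}\sfv<\infty$, it yields a constant $c=c(r)>0$ such that for all $|y|,|y'|<r$ one has $|\bbE^{\mu}f(v)-\bbE^{\mu'}f(v)|\le c|y-y'|$. Substituting the two identities from the previous step, and using $y=Y_J$ and $y'=Y_J'$, gives $|\bbE^{\mu_J}g(u)-\bbE^{\mu_J'}g(u)|\le c|Y_J-Y_J'|$, which is the assertion.

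I do not anticipate a genuine obstacle here: the substantive work has already been carried out in Theorem \ref{th14} and Corollary \ref{c15}, and this corollary is essentially a transcription of that result through the smoothing--filtering link. The only points worth a word of care are (i) that the restriction $|y|,|y'|<r$ coincides with a bound on $|Y_J|,|Y_J'|$, which is immediate since $y=Y_J$; and (ii) that a function of $v$ depending only on $v_J$ is square-integrable under $\mu_0$ if and only if $g$ is square-integrable under the $v_J$-marginal of $\mu_0$, so the stated hypothesis $\bbE^{\mu_0}|g(v_J)|^2<\infty$ is exactly what is needed and nothing more.
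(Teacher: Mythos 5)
Your proof is correct and follows exactly the route the paper takes: set $f(v)=g(v_J)$, use Theorem \ref{th12} to identify $\mu_J$ and $\mu_J'$ as the $v_J$-marginals of $\mu$ and $\mu'$, and invoke Corollary \ref{c15}. The only difference is that you spell out the verification of the square-integrability hypothesis and the identification $y=Y_J$, which the paper leaves implicit.
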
 
\begin{proof} Since, by Theorem \ref{th12}, $\mu_J$ is the marginal of the smoother on the $v_J$ coordinate, the result follows from Corollary \ref{c15} by choosing $f(v)=g(v_J)$.
\end{proof}

A similar theorem, and corollaries, may be proved for the
case of deterministic dynamics \eqref{eq:dtf11}, 
and the posterior $\bbP(v_0|y).$ We state the theorem and leave
its proof to the reader.
We let $\nu_0$ denote the prior Gaussian measure $N(m_0,C_0)$ on $v_0$ 
for the smoothing\index{smoothing} problem arising in deterministic dynamics, and $\nu$ and  $\nu'$ the posterior measures on $v_0$
resulting from two different instances of the data, 
$y$ and $y'$ respectively. 
We also define
$$\sfv_0:=\sum_{j=0}^{J-1}\bigl(1+\bigl|h\bigl(\PPsi^{(j+1)}(v_0)\bigr)\bigr|^2\bigr).$$
 
\begin{theorem}\label{th14a} Consider the smoothing\index{smoothing} problem arising
from the deterministic dynamics model \eqref{eq:dtf11}.
Assume that $\bbE^{\nu_0}\sfv_0<\infty$. 
Then there is $c=c(r)>0$ such that, for all $|y|, |y'|\leq r$, 
\[d_{{\rm{\tiny Hell}}}(\nu, \nu')\leq c|y-y'|.\]
\end{theorem}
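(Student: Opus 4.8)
The plan is to transcribe, essentially verbatim, the proof of Theorem \ref{th14}, with the Gaussian prior $\nu_0=N(m_0,C_0)$ on $v_0\in\R^n$ playing the role of the path-space prior $\mu_0$, with $\Jjd$ in place of $\Jj$, with $\Phid(\cdot\,;y)$ in place of $\PPhi(\cdot\,;y)$, and with $\sfv_0$ in place of $\sfv$. Write the two posterior densities as $\varrho(v_0)=Z^{-1}\exp\bigl(-\Jjd(v_0)-\Phid(v_0;y)\bigr)$ and $\varrho'(v_0)=(Z')^{-1}\exp\bigl(-\Jjd(v_0)-\Phid(v_0;y')\bigr)$, and let $Z_0=\int\exp(-\Jjd(v_0))\,dv_0$ be the finite, data-independent normalizer of $\nu_0$. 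Exactly as in \eqref{eq:sstar}, $\exp(-\Jjd(v_0))\,dv_0=Z_0\,\nu_0(dv_0)$, so every integral below becomes an expectation under $\nu_0$; this is the point of phrasing the hypothesis as $\bbE^{\nu_0}\sfv_0<\infty$.

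First I would handle the normalizing constants. Since $\Phid(\cdot\,;y)\ge 0$, one gets $Z,Z'\le Z_0<\infty$ immediately. For the lower bound, note that $\Phid(v_0;y)\le K\sfv_0$ whenever $|y|\le r$, with $K=K(r,J)$ a generic constant (obtained from $|a-b|^2\le 2|a|^2+2|b|^2$ and $|y|\le r$, exactly as for $\PPhi$ in the stochastic case); hence, by the Markov inequality \eqref{eq:mie2},
\[
\frac{Z}{Z_0}=\bbE^{\nu_0}\exp\bigl(-\Phid(v_0;y)\bigr)\ge \exp(-KR)\,\bbP^{\nu_0}\bigl(\sfv_0<R\bigr)\ge \exp(-KR)\bigl(1-R^{-1}\bbE^{\nu_0}\sfv_0\bigr),
\]
which is positive for $R$ large; the same holds for $Z'$, so $Z,Z'$ have a common positive lower bound depending on the data only through $r$.

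Next comes the key Lipschitz estimate, obtained just as in Theorem \ref{th14} by expanding the difference of the quadratic misfits and using the norm-equivalence bound \eqref{eq:lunch}:
\[
|\Phid(v_0;y)-\Phid(v_0;y')|\le\frac12\Bigl(\sum_{j=0}^{J-1}|y_{j+1}-y'_{j+1}|_{\Gamma}^2\Bigr)^{\frac12}\Bigl(\sum_{j=0}^{J-1}\bigl|y_{j+1}+y'_{j+1}-2h(\PPsi^{(j+1)}(v_0))\bigr|_{\Gamma}^2\Bigr)^{\frac12}\le K|y-y'|\,\sfv_0^{\frac12}.
\]
Since $\bbE^{\nu_0}\sfv_0<\infty$ implies $\bbE^{\nu_0}\sfv_0^{1/2}<\infty$, and $x\mapsto e^{-x}$ is Lipschitz on $\R^+$, this yields $|Z-Z'|\le Z_0\,\bbE^{\nu_0}|\Phid(v_0;y)-\Phid(v_0;y')|\le K|y-y'|$. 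Then, writing $\dhh(\nu,\nu')^2\le I_1+I_2$ with $I_1=Z_0Z^{-1}\int|e^{-\frac12\Phid(v_0;y)}-e^{-\frac12\Phid(v_0;y')}|^2\nu_0(dv_0)$ and $I_2=Z'|Z^{-1/2}-(Z')^{-1/2}|^2$, the lower bound on $Z,Z'$ gives $I_2=Z^{-1}|Z-Z'|^2(\sqrt Z+\sqrt{Z'})^{-2}\le K|Z-Z'|^2\le K|y-y'|^2$, while boundedness of $Z_0$ from above, of $Z$ from below, and Lipschitz continuity of $e^{-x/2}$ on $\R^+$ give $I_1\le K\,\bbE^{\nu_0}|\Phid(v_0;y)-\Phid(v_0;y')|^2\le K|y-y'|^2\,\bbE^{\nu_0}\sfv_0\le K|y-y'|^2$. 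Taking square roots gives $\dhh(\nu,\nu')\le c(r)|y-y'|$.

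There is no genuine analytic obstacle: the argument is structurally identical to that of Theorem \ref{th14}. The only real work is bookkeeping — confirming that the three structural inputs (finiteness and data-independence of $Z_0$; the pointwise domination $\Phid(\cdot\,;y)\le K\sfv_0$ valid for $|y|\le r$; and the norm-equivalence inequality \eqref{eq:lunch}) all carry over to the deterministic setting, where $\Phid$ now involves the composed solution operator $\PPsi^{(j+1)}$ and the prior lives on $\R^n$ rather than on the path space $\R^{|\J_0|\times n}$ — together with checking that $\bbE^{\nu_0}\sfv_0<\infty$ is precisely what makes the Markov-inequality and Cauchy--Schwarz steps go through unchanged.
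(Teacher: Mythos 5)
Your proof is correct and follows exactly the route the paper intends: the text explicitly states that Theorem \ref{th14a} "may be proved similarly" to Theorem \ref{th14} and leaves the details to the reader, and your transcription — replacing $\mu_0,\Jj,\PPhi,\sfv$ by $\nu_0,\Jjd,\Phid,\sfv_0$ and checking that the normalization bounds, the pointwise domination $\Phid(\cdot\,;y)\le K\sfv_0$, and the Cauchy--Schwarz/Lipschitz estimates all carry over — is precisely that adaptation. All the individual steps (the Markov-inequality lower bound on $Z,Z'$, the $I_1+I_2$ decomposition, and the bound $|\Phid(v_0;y)-\Phid(v_0;y')|\le K|y-y'|\sfv_0^{1/2}$) are verified correctly.
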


\section{Assessing The Quality of Data Assimilation Algorithms}
\label{ssec:qual}

It is helpful when studying algorithms for data assimilation \index{data assimilation}
to ask two questions: (i) how informative is the data we have?
(ii) how good is our algorithm at extracting this information?
These are two separate questions, answers to both of which are
required in the quest to understand how well we perform at extracting
a signal, using model and data.
We take the two questions in separately, in turn; however we caution
that many applied papers entangle them both and simply measure algorithm
quality by ability to reconstruct the signal.

Answering question (i) is independent of any particular
algorithm: it concerns the properties of the Bayesian \index{Bayesian} posterior pdf itself.
In some cases we will be interested in studying the properties
of the probability distribution on the signal, or the initial
condition, for a particular instance of the data generated
from a particular instance of the signal, which we call
the {\bf truth}.\index{truth} In this context we will use the
notation $\yd=\{\yd_j\}$ to denote the {\it realization} of the data generated
from a particular realization of the truth $\vd=\{\vd_j\}.$
We first discuss properties 
of the smoothing\index{smoothing} problem for stochastic dynamics \index{stochastic dynamics}.
{\bf Posterior consistency}\index{posterior consistency}
concerns the question of the limiting behaviour of
$\bbP(v|\yd)$ as either $J \to \infty$ (large data sets)
or $|\Gamma| \to 0$ (small noise).
A key question is whether $\bbP(v|\yd)$ converges to
the truth in either of these limits; this might happen,
for example, if $\bbP(v|\yd)$ becomes closer and closer to
a Dirac probability measure centred on $\vd.$ When this occurs 
we say that the problem exhibits Bayesian \index{Bayesian} 
posterior consistency\index{posterior consistency!Bayesian}; 
it is then of interest to study the rate at which the limit is attained.
Such questions concern the information content of
the data; they do not refer to any algorithm and therefore they
are not concerned with the quality of any particular algorithm.
When considering filtering\index{filtering}, 
rather than smoothing\index{smoothing},
a particular instance of this question concerns 
marginal distributions\index{marginal distribution}:
for example one may be concerned with posterior consistency 
of $\bbP(v_J|\yd_J)$ with respect to a Dirac on $\vd_J$ in the 
filtering\index{filtering} case, 
see Theorem \ref{th12}; for the case of deterministic dynamics the distribution $\bbP(v|\yd)$
is completely determined by $\bbP(v_0|\yd)$ (see Theorem \ref{th12a}) 
so one may discuss posterior consistency of  $\bbP(v_0|\yd)$ with respect to 
a Dirac on $\vd_0.$

Here it is appropriate to mention the important concept of {\bf model
error}\index{model error}. In many (in fact most) applications the physical
system which generates the data set $\{y_j\}$ can be
(sometimes significantly) different from the mathematical model used, at least
in certain aspects. This can be thought of conceptually
by imagining data generated by \eqref{eq:dtf2}, with 
$\vd=\{\vd_j\}$ governed by the deterministic dynamics
\begin{subequations}
\begin{eqnarray}\label{eq:dtf111}
\vd_{j+1}=&\pst(\vd_j), \;j\in\Z^+. \\
\vd_0=&u\sim\G(m_0,C_0).
\end{eqnarray}
\end{subequations}
Here the function $\pst$ governs
the dynamics of the truth which underlies the data.
We assume that the true solution operator \index{solution operator} is not
known to us exactly, and seek instead to combine the data
with the stochastic dynamics \index{stochastic dynamics} model \eqref{eq:dtf1};
the noise $\{\xi_j\}$ is used to allow for the discrepancy
between the true solution operator \index{solution operator} $\pst$ and
that used in our model, namely $\PPsi.$
It is possible to think of many variants on this situation. For
example, the dynamics of the truth may be stochastic; or
 the dynamics of the truth may take place in a higher-dimensional space
than that used in our models, and may need to be projected
into the model space.  
Statisticians sometimes refer to the situation where the data source
differs from the model used as {\bf model misspecification}.\index{model misspecification}

We now turn from the information content, or quality, of the data to
the quality of algorithms for data assimilation\index{data assimilation}. 
We discuss three 
approaches to assessing quality. The first fully Bayesian \index{Bayesian} approach 
can be defined independently of the quality of the data. The
second estimation approach entangles the properties of the
algorithm with the quality of the data. We discuss these two
approaches in the context of the smoothing\index{smoothing} problem for
stochastic dynamics \index{stochastic dynamics}. The reader will easily see how to generalize
to smoothing\index{smoothing} for deterministic dynamics, or to 
filtering\index{filtering}.
The third approach is widely used in operational numerical weather
prediction\index{weather forecasting}
and judges quality by the ability to predict.

{\bf Bayesian Quality Assessment}\index{Bayesian quality assessment}. 
Here we assume that the
algorithm under consideration provides an approximation
$\bbaP(v|y)$ to the true posterior distribution\index{posterior distribution}
 $\bbP(v|y).$
We ask the question: how close is $\bbaP(v|y)$ to $\bbP(v|y)$.
We might look for a distance measure between probability distributions,
or we might simply compare some important moments of the distributions,
such as the mean and covariance. Note that this version of quality
assessment does not refer to the concept of a true solution $\vd$.
We may apply it with $y=\yd$, but we may also apply it when there
is model error \index{model error} present and the data comes from outside the model used
to perform data assimilation \index{data assimilation}. However, if combined with Bayesian \index{Bayesian}
posterior consistency, when $y=\yd$, then the triangle inequality
relates the output of the algorithm to the truth $\vd$. 
Very few practitioners evaluate their algorithms by this measure.
This reflects the fact that knowing the true distribution $\bbP(v|y)$
is often difficult in practical high dimensional problems. However
it is arguably the case that practitioners should spend more time
querying their algorithms from the perspective of
Bayesian quality assessment since the algorithms are often used to
make probabilistic statements and forecasts.

{\bf Signal Estimation Quality Assessment}\index{signal estimation quality assessment}. 
Here we assume that the algorithm under consideration provides an approximation
to the signal $v$ underlying the data, which we denote by $v_{\rm approx};$ 
thus $v_{\rm approx}$ attempts to determine and then {\em track} the 
true\index{truth} signal\index{signal} from the data\index{data}.
If the algorithm actually provides a probability
distribution, then this estimate might be, for example, the
mean. We ask the question: if the algorithm is applied in the situation
where the data $\yd$ is generated from the the signal $\vd$, 
how close is $\va$ to $\vd$?
There are two important effects at play here: the first is the information
content of the data -- does the data actually contain enough information
to allow for accurate reconstruction of the signal in principle; and the 
second is the role of the specific algorithm used -- does the 
specific algorithm in question
have the ability to extract this information when it is present.
This approach thus measures the overall effect of these two in combination.

{\bf Forecast Skill}\index{forecast skill}. 
In many cases the goal of data assimilation is 
to provide better forecasts of the future, for example in numerical
weather prediction\index{weather forecasting}. 
In this context data assimilation algorithms can 
be benchmarked by their ability to make forecasts. 
This can be discussed in both the Bayesian quality and signal
estimation\index{signal estimation quality assessment}\index{Bayesian quality
assessment} senses. 
We first discuss Bayesian \index{Bayesian} Estimation forecast skill \index{forecast skill} in the context of
stochastic dynamics \index{stochastic dynamics}. 
The Bayesian \index{Bayesian} $k$-lag forecast skill \index{forecast skill} can be defined by studying 
the distance \index{distance} between the approximation $\bbaP(v|y)$ and $\bbP(v|y)$ 
when both are pushed forward from the end-point of the data assimilation
window by $k$ applications of the dynamical model \eqref{eq:dtf1};
this model defines a Markov transition kernel which is applied $k-$times
to produce a forecast. 
We now discuss signal estimation\index{signal estimation quality assessment} forecast skill \index{forecast skill} in the context of
deterministic dynamics. Using $v_{{\rm approx}}$ at the end point of
the assimilation window as an initial condition,
we run the model \eqref{eq:dtf11} forward by $k$ steps and compare
the output with $v^\dagger_{j+k}.$ 
In practical application, this forecast methodology inherently
confronts the effect of model error \index{model error}, 
since the data used to test forecasts is real
data which is not generated by the model used to assimilate, as well
as information content in the data and algorithm quality.

\section{Illustrations}
\label{ssec:ill}

In order to build intuition concerning the probabilistic viewpoint
on data assimilation
we describe some simple examples where the posterior distribution\index{posterior distribution} may
be visualized easily. For this reason we concentrate on the case
of one-dimensional deterministic dynamics; the posterior pdf 
$\bbP(v_0|y)$ for deterministic dynamics \index{deterministic dynamics} is given by 
Theorem \ref{th112}.  It is one-dimensional when the dynamics is 
one-dimensional and takes place in $\bbR$. 
In section \ref{sec:dtsa} we will introduce 
more sophisticated sampling methods to probe probability distributions 
in higher dimensions which arise from noisy dynamics and/or from
high dimensional models.

Figure \ref{fig:smooth1}  concerns the scalar linear
problem from Example \ref{ex:ex1} (recall that throughout this
section we consider only the case of deterministic
dynamics)  with $\lambda=0.5.$ 
We employ a prior $N(4,5)$,
we assume that $h(v)=v$, and we set $\Gamma=\gamma^{2}$ and consider
two different values of $\gamma$ and two different values of $J$,
the number of observations. 
The figure shows the posterior distribution\index{posterior distribution}
in these various parameter regimes. 
The true value of the initial condition which underlies the
data is $\vd_0=0.5.$ For both $\gamma=1.0$ and $0.1$ we see that,
as the number of observations $J$ increases, the posterior distribution 
{appears to converge to a limiting distribution. 
However for smaller $\gamma$ the limiting distribution has much smaller
variance, and is centred closer to the true initial condition at $0.5$.
Both of these observations can be explained, using the fact that
the problem is explicitly solvable: we show that for fixed $\gamma$
and $J \to \infty$ the posterior distribution has a limit, which is a
Gaussian with non-zero variance. And for fixed $J$ as $\gamma \to 0$ the
posterior distribution converges to a Dirac measure (Gaussian with zero
variance) centred at the truth $\vd_0.$ 

To see these facts we start by noting that from Theorem \ref{th112} the posterior
distribution on $v_0|y$ is proportional to the exponential of
$$\Iid(v_0;y)=\frac{1}{2\gamma^2}\sum_{j=0}^{J-1}|y_{j+1}-\lambda^{j+1}v_0|^2
+\frac{1}{2\sigma_0^2}|v_0-m_0|^2$$
where $\sigma_0^2$ denotes the prior variance $C_0.$ As a quadratic
form in $v_0$ this defines a Gaussian posterior distribution and
we may complete the square to find the posterior mean $m$ and variance
$\sigp^2$:
$$\frac{1}{\sigp^{2}}=\frac{1}{\gamma^2}\sum_{j=0}^{J-1}\lambda^{2(j+1)}+\frac{1}{\sigma_0^2}=\frac{1}{\gamma^2}\Bigl(\frac{\lambda^2-\lambda^{2J+2}}{1-\lambda^2}\Bigr)+\frac{1}{\sigma_0^2}$$
and
$$\frac{1}{\sigp^2} m= \frac{1}{\gamma^2}\sum_{j=0}^{J-1}\lambda^{(j+1)}y_{j+1}+\frac{1}{\sigma_0^2}m_0.$$
We note immediately that the posterior variance is independent of the
data. Furthermore, if we fix $\gamma$ and let $J \to \infty$ then
for any $|\lambda|<1$ we see that the large $J$ limit of the posterior
variance is determined by
$$\frac{1}{\sigp^{2}}=\frac{1}{\gamma^2}\Bigl(\frac{\lambda^2}{1-\lambda^2}\Bigr)+\frac{1}{\sigma_0^2}$$
and is non-zero; thus uncertainty remains in the posterior, even in the
limit of large data. 
On the other hand, if 
we fix $J$ and let $\gamma \to 0$
then $\sigp^2 \to 0$ so that uncertainty disappears in this limit. It
is then natural to ask what happens to the mean. To this end we assume
that the data is itself generated by the linear model of Example \ref{ex:ex1}
so that 
$$y_{j+1}=\lambda^{j+1}\vd_0+\gamma \zeta_{j+1}$$
where $\zeta_j$ is an i.i.d. \index{i.i.d.} Gaussian sequence with
$\zeta_1 \sim N(0,1).$
Then
$$\frac{1}{\sigp^2} m=\frac{1}{\gamma^2}\Bigl(\frac{\lambda^2-\lambda^{2J+2}}{1-\lambda^2}\Bigr)\vd_0+\frac{1}{\gamma} \sum_{j=0}^{J-1}\lambda^{(j+1)} \zeta_{j+1}+\frac{1}{\sigma_0^2}m_0.$$
Using the formula for $\sigp^2$ we obtain
$$\Bigl(\frac{\lambda^2-\lambda^{2J+2}}{1-\lambda^2}\Bigr)m+\frac{\gamma^2}{\sigma_0^2}m=\Bigl(\frac{\lambda^2-\lambda^{2J+2}}{1-\lambda^2}\Bigr)\vd_0+\gamma \sum_{j=0}^{J-1}\lambda^{(j+1)} \zeta_{j+1}+\frac{\gamma^2}{\sigma_0^2}m_0.$$
From this it follows that, for fixed $J$ and as $\gamma \to 0$,
$m \to \vd_0$, almost surely with respect to the noise realization
$\{\zeta_j\}_{j \in \J}.$ This is an example of posterior consistency.
\index{posterior consistency}}

\begin{figure}[h]
\centering
\subfigure[$\gamma=1$]{\includegraphics[scale=0.365]{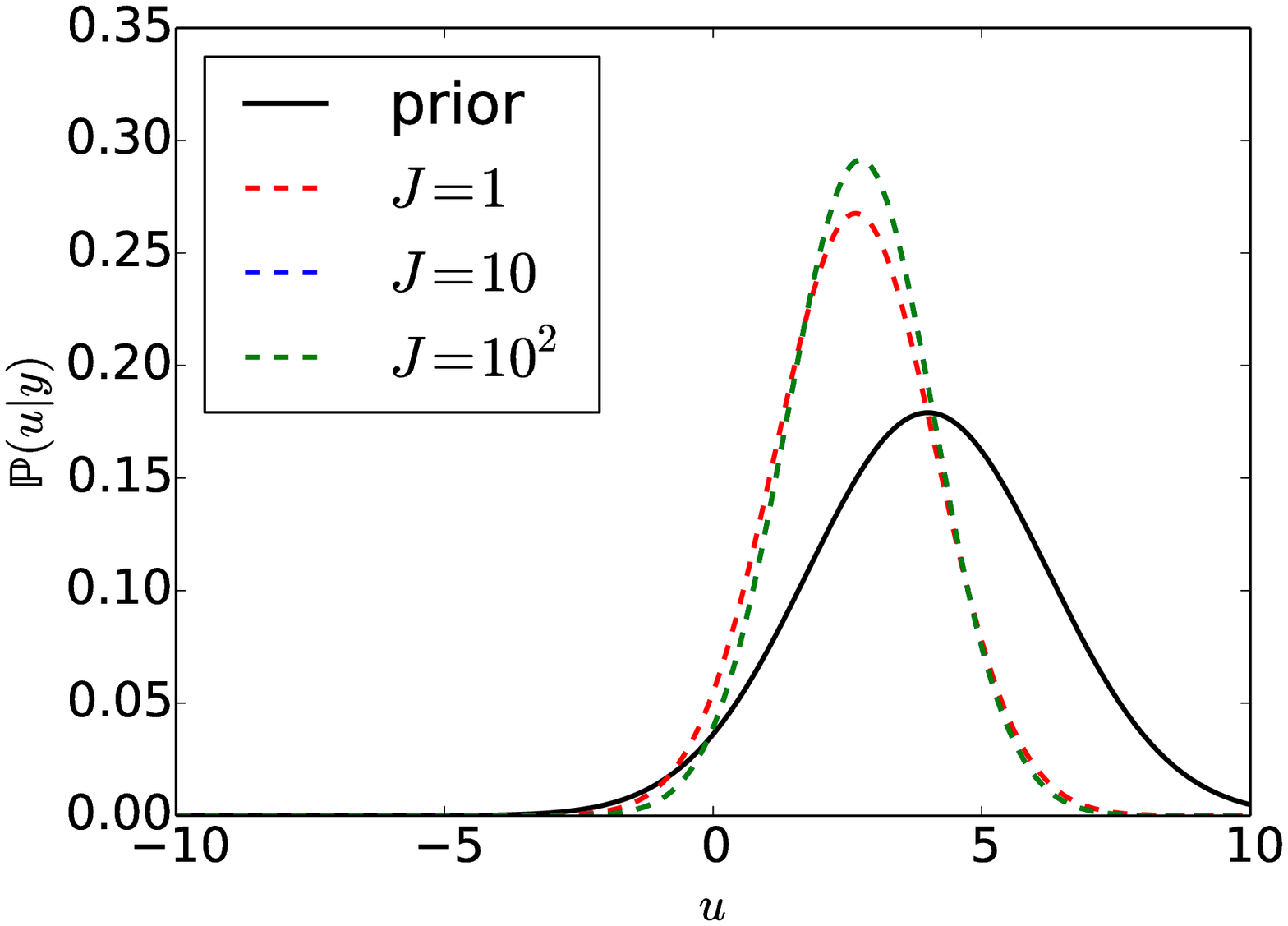}}
\subfigure[$\gamma=0.1$]{\includegraphics[scale=0.365]{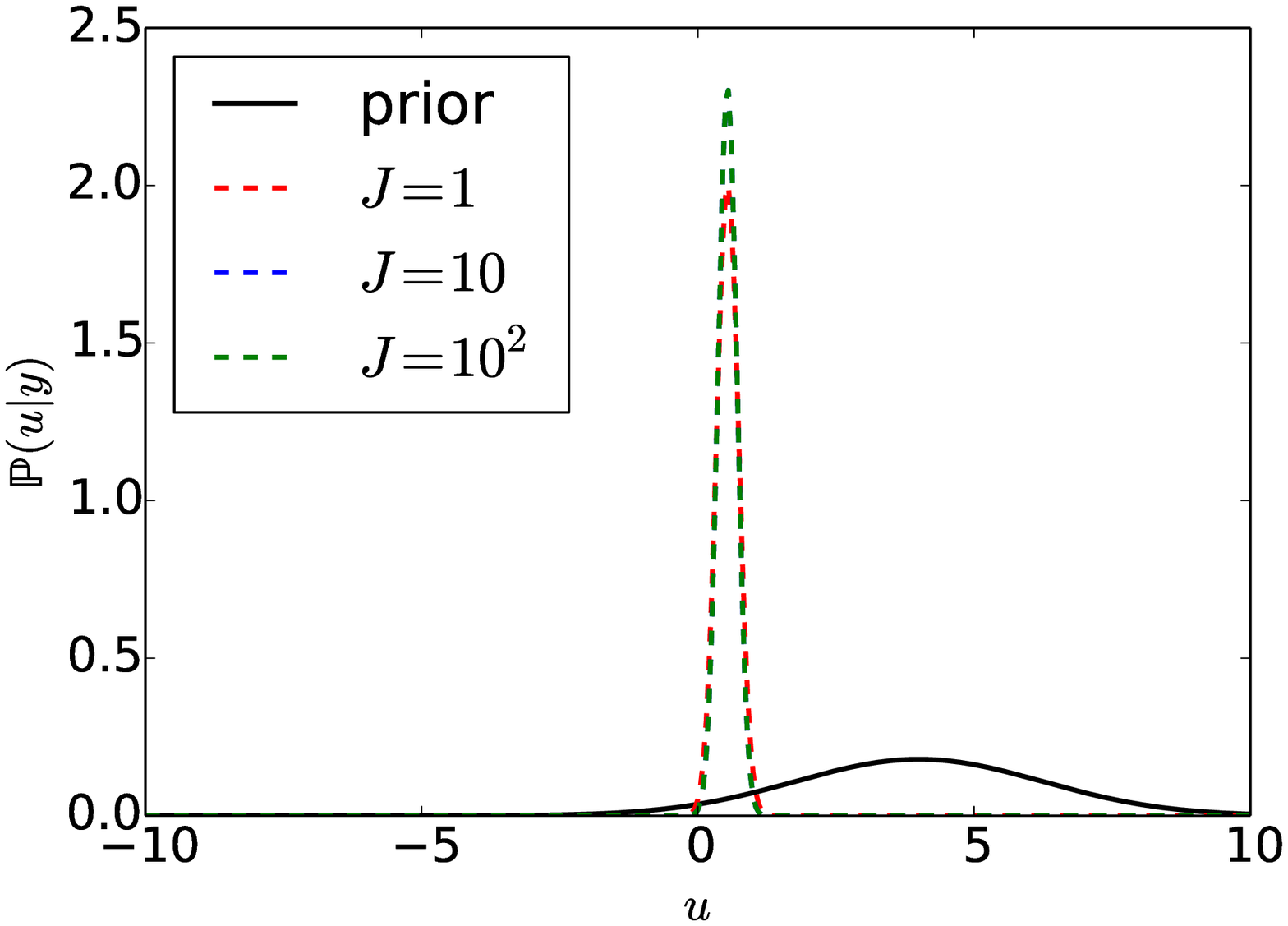}}
\caption{Posterior distribution for Examples \ref{ex:ex1} for different levels of observational noise. The true initial condition used in both cases is $v_{0}=0.5$, while we have assumed that $C_{0}=5$ and $m_{0}=4$ for the prior distribution.}
\label{fig:smooth1}
\end{figure}

We now study Example \ref{ex:ex4} in which the true 
dynamics are no longer linear. We start our investigation 
taking $r=2$ and investigate the effect of choosing 
different prior distributions. Before discussing the properties
of the posterior we draw attention to two facts.
Firstly, as Figure \ref{fig:ex4}a shows, the system converges
in a small number of steps
to the fixed point  \index{fixed point} at $1/2$ for this value of $r=2$.
And secondly the initial conditions $v_0$ and $1-v_0$ both result
in the same trajectory, if the initial condition is ignored.
The first point implies that, after a small number of steps,
the observed trajectory contains very little information about
the initial condition. The second point means that, since
we observe from the first step onwards, only the prior can distinguish
between $v_0$ and $1-v_0$ as the starting point. 

Figure \ref{fig:smooth2} concerns an experiment in which the
true initial condition underlying the data is $\vd_0=0.1$. Two different
priors are used, both with $C_0=0.01$, giving a standard deviation of
$0.1$, but with different means. The figure illustrates two facts:
firstly, even with $10^3$ observations, the posterior contains
considerable uncertainty, reflecting  the first point above. Secondly
the prior mean has an important role in the form of the posterior pdf:
shifting the prior mean to the right, from $m_0=0.4$ to $m_0=0.7$,
results in a posterior which favours the initial condition $1-\vd_0$
rather than the truth \index{truth} $\vd_0.$

\begin{figure}[h]
\centering
\subfigure[$r=2$, $m_{0}=0.4$]{\includegraphics[scale=0.365]{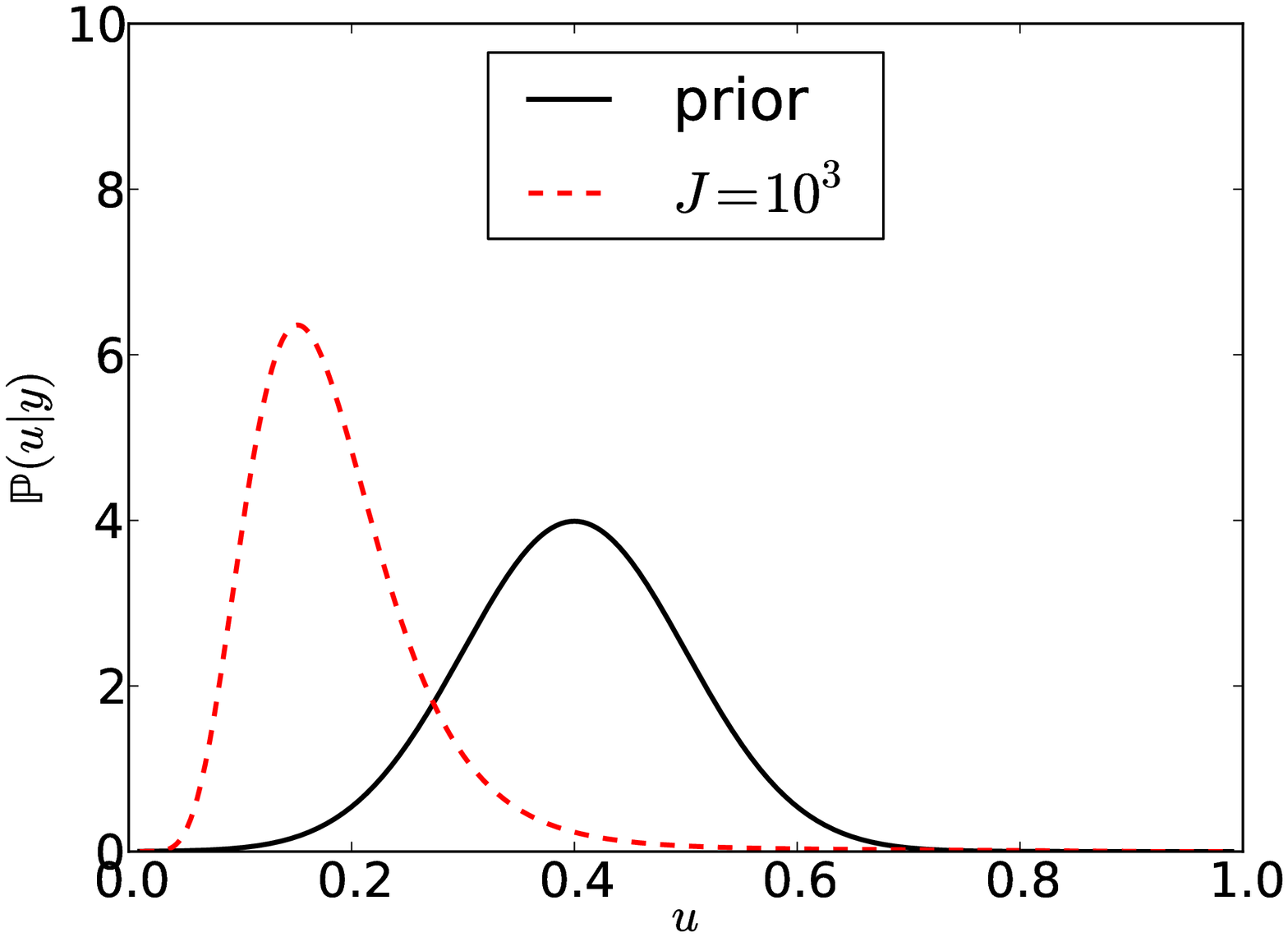}}
\subfigure[$r=2$ $m_{0}=0.7$]{\includegraphics[scale=0.365]{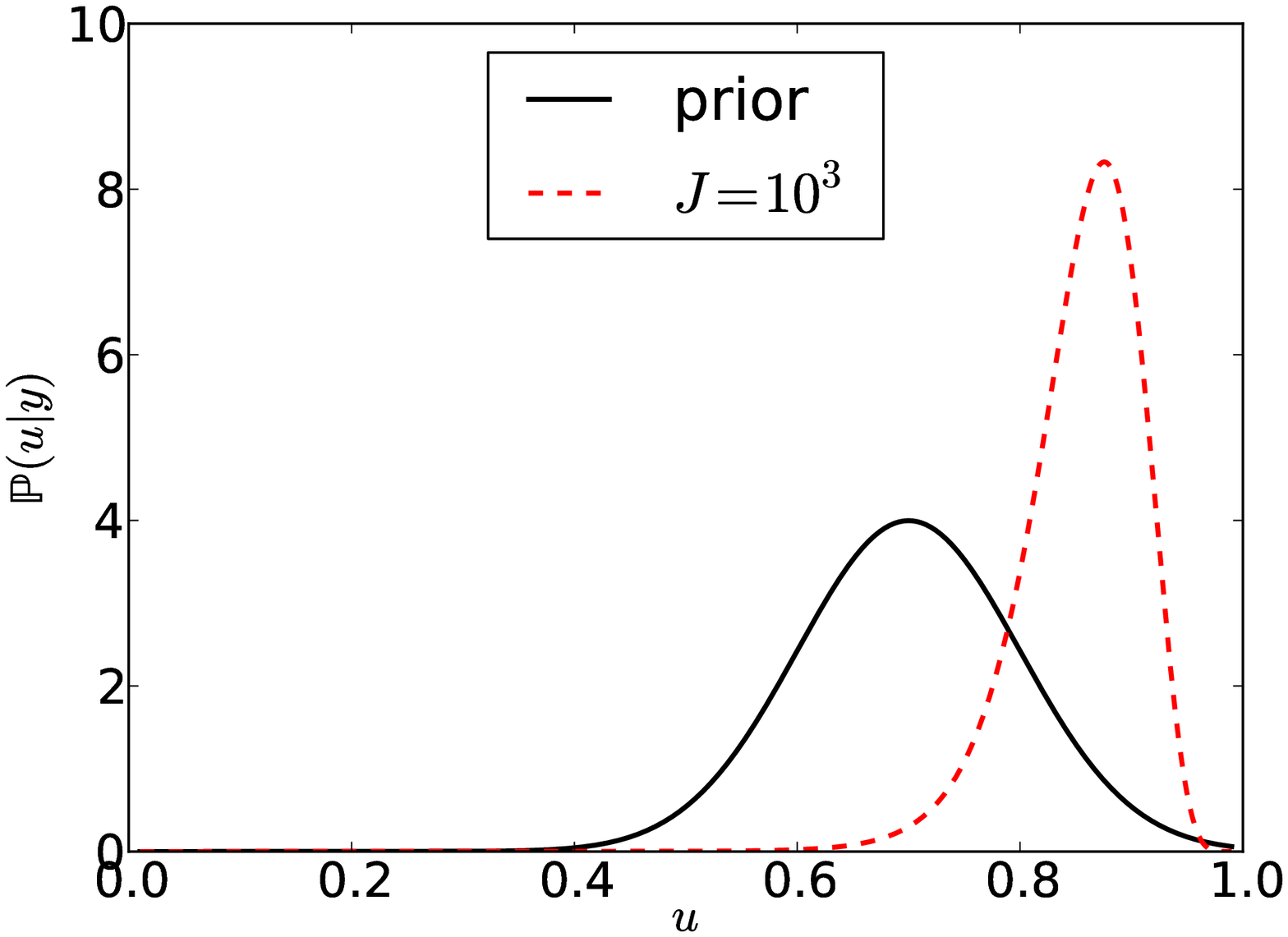}}
\caption{Posterior distribution for Example \ref{ex:ex4} for $r=2$ in the case of different means for the prior distribution. We have used $C_{0}=0.01$, $\gamma=0.1$ and true initial condition $v_{0}=0.1$, see also {\tt p2.m} in section \ref{ssec:p2}.}
\label{fig:smooth2}
\end{figure}

\begin{figure}[h]
\centering
\subfigure[$r=2$, $C_{0}=0.5$]{\includegraphics[scale=0.365]{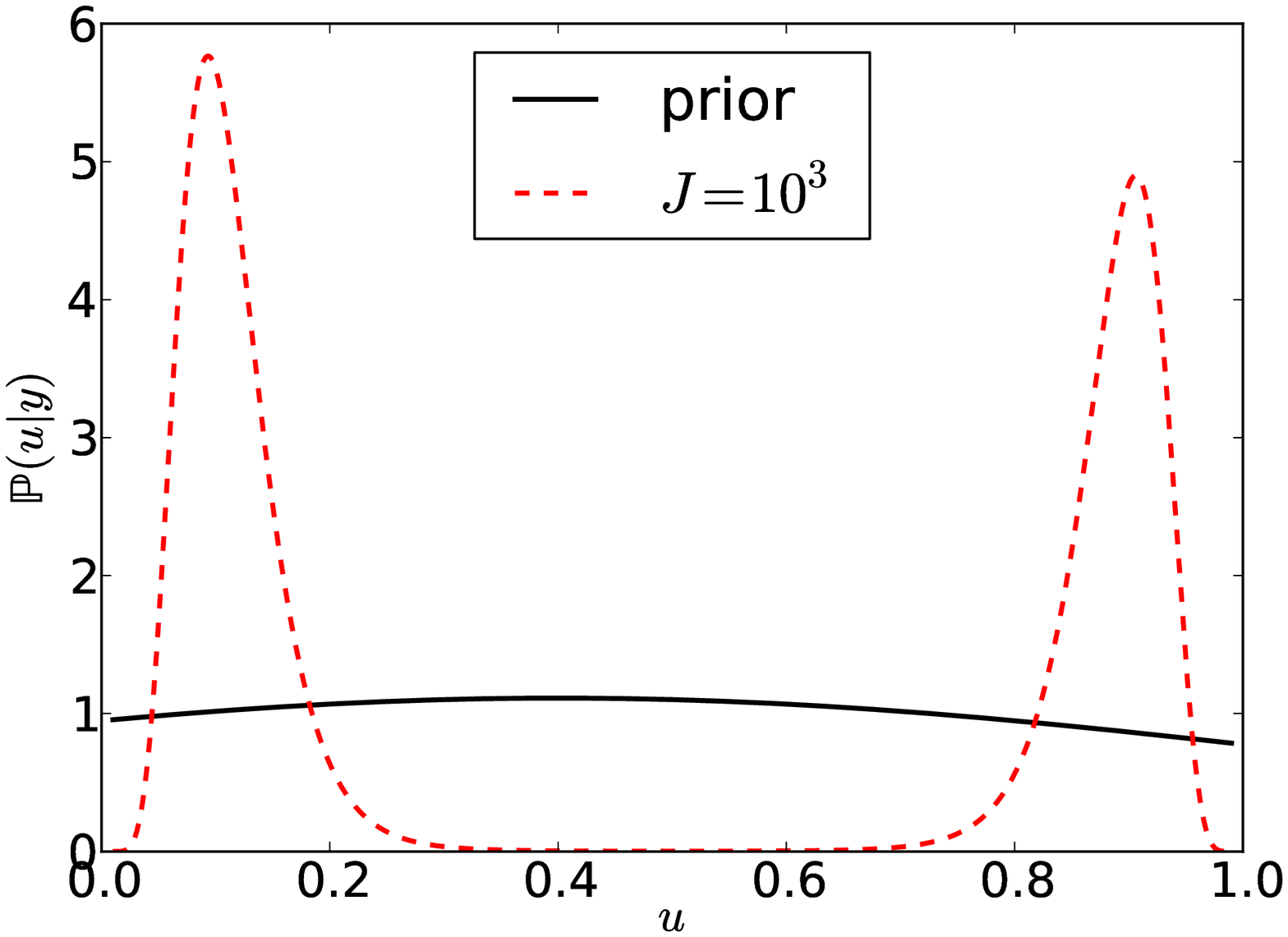}}
\subfigure[$r=2$ $C_{0}=5$]{\includegraphics[scale=0.365]{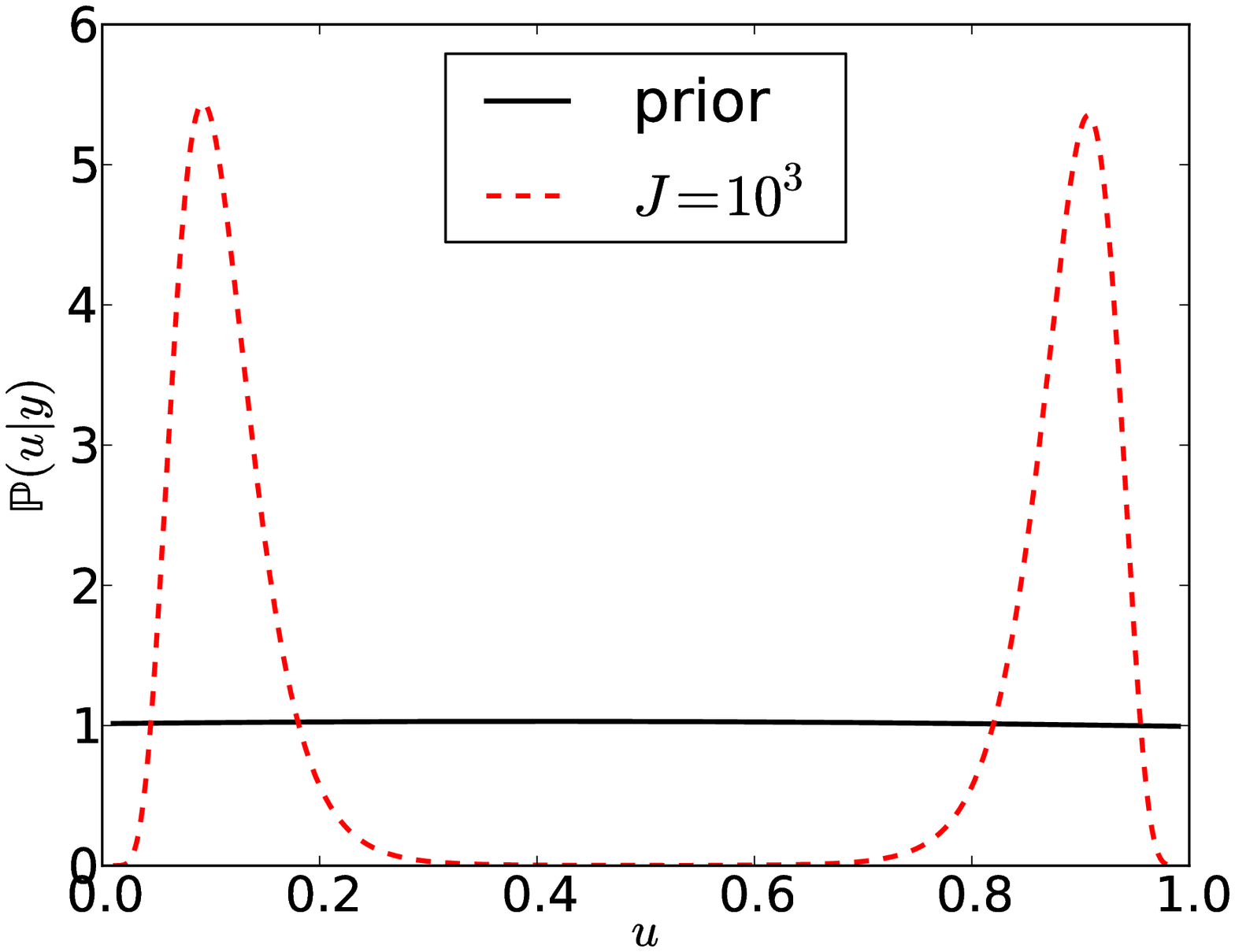}}
\caption{Posterior distribution for Example \ref{ex:ex4} for $r=2$ in the case of different covariance for the prior distribution. We have used $m_{0}=0.4$, $\gamma=0.1$ and true initial condition $v_{0}=0.1$.}
\label{fig:smooth2a}
\end{figure}

This behaviour of the posterior changes completely if we assume a flatter prior.
This is illustrated in Figure \ref{fig:smooth2a} where we consider the
prior $N(0.4,C_0)$ with $C_0=0.5$ and $5$ respectively. As we increase the 
prior covariance the mean plays a much weaker role than in the
preceding experiments: we now obtain a bimodal posterior centred 
around both the true initial condition 
$\vd_{0}$, and also around $1-\vd_{0}$.

In Figure  \ref{fig:smooth3} we consider the quadratic map \eqref{eq:ex4}
with $r=4,$ $J=5$ and prior $N(0.5,0.01)$, with
observational {standard deviation} $\gamma=0.2$. Here, 
after only five observations \index{observations} the posterior is very peaked,
although because of the $v \mapsto 1-v$ symmetry mentioned above, there
are two symmetrically related peaks; see Figure  \ref{fig:smooth3}a.
It is instructive to look at the negative of the logarithm of the posterior
pdf which, upto an additive constant, is given by $\Iid(v_0;y)$ in
Theorem \ref{th112}. The function $\Iid(\cdot;y)$
is shown in Figure  \ref{fig:smooth3}b. Its complexity indicates the 
considerable complications underlying solution of the smoothing\index{smoothing} problem.
We will return to this last point in detail later. 
Here we simply observe that normalizing the posterior distribution\index{posterior distribution} requires evaluation of the integral
\[
\int_{\bbR^n} e^{-\Ii(v_{0},y)}dv_{0}.
\] 
This integral may often be determined almost entirely 
by very small subsets of $\bbR^n$, meaning that this calculation
requires some care; indeed if $\Ii(\cdot)$ is very large over much of its 
domain then it may be impossible to compute the normalization
constant\index{normalization constant} numerically.  
We note, however, that the sampling 
methods that we will describe in the next chapter do not require evaluation
of this integral.
 
\begin{figure}[h]
\centering
\subfigure[$r=4$]{\includegraphics[scale=0.365]{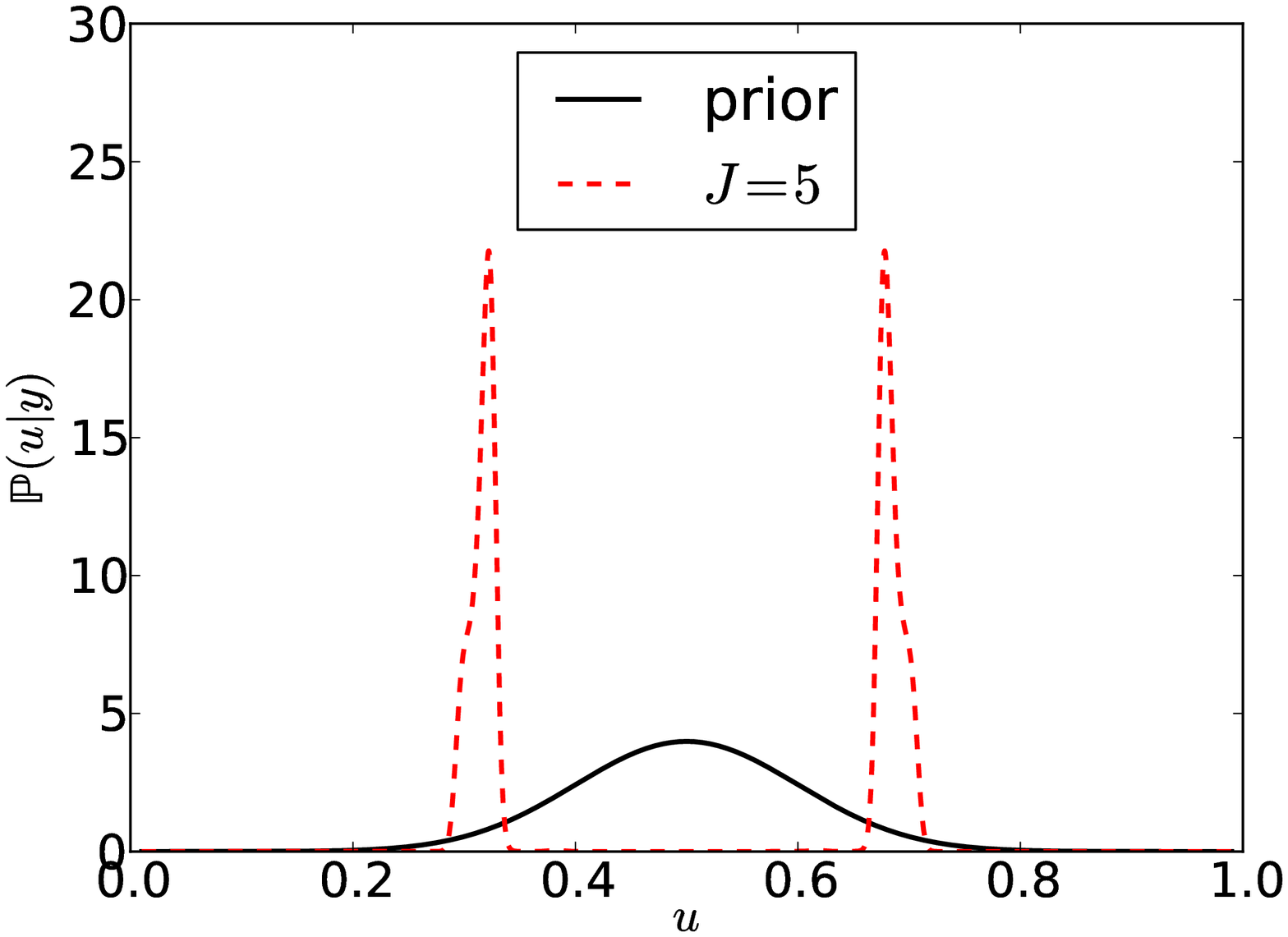}}
\subfigure[$\Ii(v_{0};y)$ $r=4, J=5$]{\includegraphics[scale=0.365]{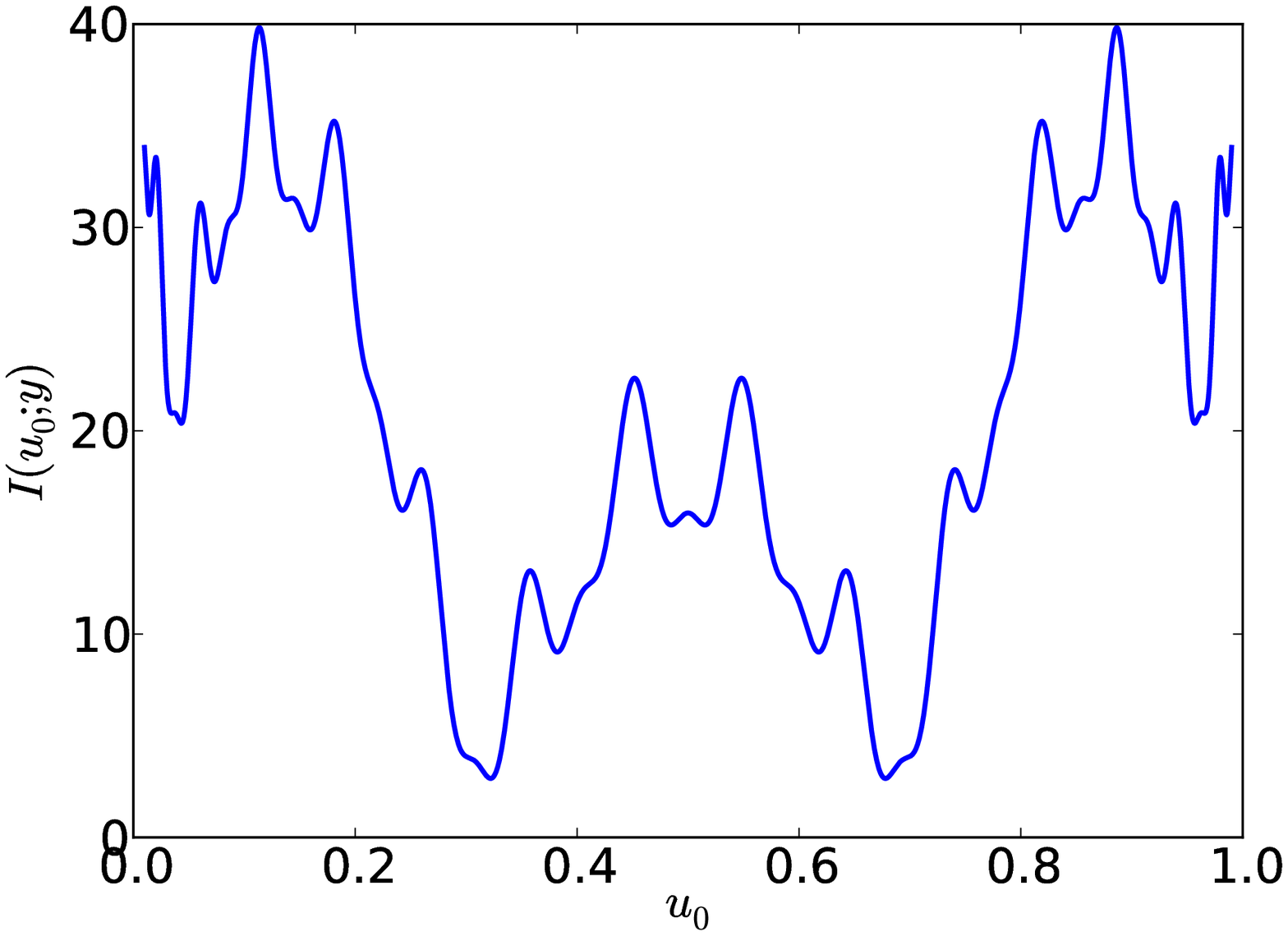}}
\caption{Posterior distribution and negative log posterior
for Example \ref{ex:ex4} for $r=4$ and $J=5$. We have used $C_{0}=0.01, m_{0}=0.5$, $\gamma=0.2$ and true initial condition $v_{0}=0.3$.}
\label{fig:smooth3}
\end{figure}

\section{Bibliographic Notes}
\label{ssec:bib}

\begin{itemize}

\item Section \ref{ssec:s} Data Assimilation has its
roots in the geophysical sciences\index{geophysical sciences}, 
and is driven by the desire to improve
inaccurate models of complex dynamically evolving phenomena by means
of incorporation of data. The book \cite{kal03} describes
data assimilation  \index{data assimilation}from the viewpoint of the atmospheric 
sciences\index{atmospheric sciences} and weather 
prediction\index{weather forecasting},
whilst the book \cite{ben02} describes the subject from the viewpoint
of oceanography\index{oceanography}. 
These two subjects were the initial drivers for
evolution of the field. However, other applications are increasingly
using the methodology of data assimilation, and the oil industry\index{oil recovery} 
in particular is heavily involved in the use, and development, of
algorithms in this area \cite{orl08}.
The recent book \cite{abarbanel13} provides a perspective on the
subject from the viewpoint of physics and nonlinear dynamical systems, and
includes motivational examples from neuroscience, as well as the geophysical
sciences. The article \cite{ICGL97} is a useful one to
read because it establishes a notation which is now widely used
in the applied communities and the articles \cite{Nic02,ApteJV08} provide
simple introductions to various aspects of
the subject from a mathematical perspective.
The special edition of the journal PhysicaD, devoted to
 Data Assimilation, \cite{JI07}, provides an overview of the state of
the art around a decade ago.

\item It is useful to comment on generalizations of the set-up
described in section \ref{ssec:s}. First we note that we have
assumed a {\em Gaussian} structure for the additive noise appearing
in both the signal model \eqref{eq:dtf1} and the data model 
\eqref{eq:dtf2}. This is easily relaxed in much of what
we describe here, provided that an explicit formula for the
probability density function \index{probability density function}  of the noise is known. However
the Kalman filter\index{Kalman filter}, described in the next chapter,
relies explicitly on the closed 
Gaussian form of the probability distributions resulting from
the assumption of Gaussian noise. There are also other
parts of the notes, such as the pCN \index{pCN} MCMC methods 
and the minimization principle underlying approximate Gaussian 
filters\index{filter!approximate Gaussian}, also both described
in the next chapter, which require the Gaussian structure.
Secondly we note that we have assumed {\em additive} noise. 
This, too, can be relaxed but has the complication that most
non-additive noise models do not yield explicit formulae for
the needed conditional probability density functions;\index{probability density function}
for example this situation arises if one looks at stochastic
differential equations\index{stochastic differential equations}
over discrete time-intervals -- see \cite{beskos2006exact} and the
discussion therein. However some of the methods
we describe rely only on drawing samples from the desired
distributions and do not require the explicit conditional
probability density function \index{probability density function}. Finally we note that much 
of what we describe here translates to {\em infinite dimensional}
spaces with respect to both the signal space, and the the
data space; however in the infinite dimensional data space 
case the additive Gaussian observational noise is currently the
only situation which is well-developed \cite{article:Stuart2010}. 

\item Section \ref{ssec:ge}. The subject of deterministic
discrete time dynamical systems of the form \eqref{eq:dtf11} 
is overviewed in numerous texts; see \cite{Wig90} and the
references therein, and Chapter 1 of \cite{SH96}, for example. The
subject of stochastic discrete time dynamical systems of the form  
\eqref{eq:dtf1}, and in particular the property of ergodicity\index{ergodic}
which underlies Figure \ref{fig:ex34}, is covered in some depth
in \cite{MT93}. The exact solutions of the quadratic map
(\ref{eq:ex4}) for $r=2$ and $r=4$ may be found in
\cite{raey} and \cite{TUS} respectively.
The Lorenz '63 model was introduced
in \cite{lorenz1963deterministic}. Not only does this paper demonstrate
the possibility of chaotic behaviour and  sensitivity with respect to
initial conditions, but it also makes a concrete connection between
the three dimensional continuous time dynamical system and
a one-dimensional chaotic map of the form \eqref{eq:dtf1}.
Furthermore, a subsequent computer assisted proof demonstrated
rigorously that the ODE does indeed exhibit chaos \cite{tucker1, tucker2}.
The book \cite{Sparrow82} discusses properties of the Lorenz '63 model
in some detail and the book \cite{falconer} discussed properties
such as fractal dimension.
The shift of origin that we have adopted for the Lorenz '63
model is explained in \cite{book:Temam1997}; it enables the
model to be written in an abstract form which  includes many 
geophysical\index{geophysical sciences} models of interest, 
such as the Lorenz '96 model
introduced in \cite{lorenz1996predictability}, and the Navier-Stokes
equation on a two-dimensional torus \cite{MW06,book:Temam1997}. 
We now briefly describe this common abstract form. 
The vector $u \in \bbR^J$ ($J=3$ for Lorenz '63, $J$ arbitrary for Lorenz' 96) 
solves the equation
\begin{equation}
\frac{du}{dt}+Au+B(u,u)=f,\quad u(0)=u_0,\label{eq:0}
\end{equation}
where there is $\lambda>0$ such that, for all $w \in \bbR^J,$
$$\langle Aw,w \rangle \ge \lambda|w|^2, \quad\langle B(w,w),w \rangle=0.$$
Taking the inner-product with $u$ shows that
$$\frac12\frac{d}{dt}|u|^2+\lambda|u|^2 \leq \langle f,u \rangle.$$
If $f$ is constant in time then this inequality may be used to show that
\eqref{eq:diss} holds:
$$\frac12\frac{d}{dt}|u|^2 \le \frac{1}{2\lambda}|f|^2-\frac{\lambda}{2}|u|^2.$$
Integrating this inequality gives the existence
of an absorbing set\index{absorbing set}
and hence leads to the existence of a 
global attractor;\index{global attractor} see Example \ref{ex:diss}, the book \cite{book:Temam1997} or Chapter 2 of \cite{SH96}, for example.

\item Section \ref{ssec:sp} contains the formulation of Data Assimilation
as a fully nonlinear and non-Gaussian problem in Bayesian \index{Bayesian} statistics.
This formulation is not yet the basis of practical algorithms in the 
geophysical\index{geophysical sciences}
systems such as weather forecasting\index{weather forecasting}.  This 
is because global weather\index{weather forecasting}
forecast models involve $n={\cal O}(10^9)$ unknowns, and incorporate
$m={\cal O}(10^6)$ data points daily; sampling the posterior on $\bbR^n$
given data in $\bbR^m$ in an online fashion, usable for forecasting, is
beyond current algorithmic and computational capability.
However the fully Bayesian \index{Bayesian} perspective provides a fundamental mathematical
underpinning of the subject, from which other more tractable approaches
can be systematically derived. See \cite{article:Stuart2010} for discussion of
the Bayesian \index{Bayesian} approach to inverse problems. Historically, data assimilation
has not evolved from this Bayesian \index{Bayesian} perspective, but has rather
evolved out of the control theory perspective. This perspective
is summarized well in the book \cite{jaz70}. However, the importance
of the Bayesian \index{Bayesian} perspective is increasingly being recognized in the
applied communities. In addition to providing a starting point
from which to derive approximate algorithms, it also provides
a gold standard against which other more {\em ad hoc} algorithms
can be benchmarked; this use of Bayesian \index{Bayesian} methodology was
suggested in \cite{LS12} in the context of meteorology (see discussion
that follows), and then 
employed in \cite{ILS13} in the context of 
subsurface\index{geophysical sciences!subsurface} inverse
problems arising in geophysics\index{geophysical sciences}.

\item Section \ref{ssec:fp} describes the filtering\index{filtering}, 
or sequential,
approach to data assimilation, within the fully Bayesian \index{Bayesian} framework.
For low dimensional systems the use of particle filters\index{filter!particle}, which
may be shown to rigorously approximate the required 
filtering distribution\index{filtering!distribution}
as it evolves in discrete time, has been enormously successful;
see \cite{DdFG01} for an overview. Unfortunately, these filters
can behave poorly in high dimensions \cite{BLB08,BBL08,SBBA08}.
Whilst there is ongoing work to overcome these problems with high-dimensional
particle filtering\index{filter!particle}, see \cite{bcj11,CMT10,vl10} for example, this work
has yet to impact practical data assimilation in, for example,
operational weather forecasting\index{weather forecasting}. 
For this reason the {\em ad hoc}
filters, such as 3DVAR\index{3DVAR}, extended Kalman Filter\index{Kalman filter!extended} and ensemble Kalman
Filter\index{Kalman filter!ensemble}, 
described in Chapter \ref{sec:dtfa}, are of
great practical importance. Their analysis is hence an important
challenge for applied mathematicians.

\item Section \ref{ssec:wp} Data assimilation may be viewed as
an inverse problem to determine the signal from the observations.
Inverse problems in differential equations are often ill-posed
when viewed from a classical non-probabilistic perspective. One
reason for this is that the data may not be informative about the
whole signal so that many solutions are possible.
However taking the Bayesian \index{Bayesian} viewpoint, in which the many solutions
are all given a probability, allows for well-posedness to be
established. This idea is used for data assimilation
problems arising in fluid mechanics in \cite{CDRS08}, for
inverse problems arising in subsurface geophysics\index{geophysical sciences!subsurface} in \cite{ClDS10,DHS12}
and described more generally in \cite{article:Stuart2010}. 
Well-posedness with respect to changes in the data is of importance
in its own right, but also more generally because it underpins other
stability results which can be used to control perturbations.
In particular the effect of numerical approximation on integration
of the forward model can be understood in terms of its effect on
the posterior distribution\index{posterior distribution}; see \cite{CDS09}.
A useful overview of probability metrics, including 
Hellinger \index{metric!Hellinger}
and total variation metrics,\index{metric!total variation}
is contained in \cite{GS02}.

\item Section \ref{ssec:qual}. The subject of posterior consistency
is central to the theory of statistics in general \cite{van2000asymptotic}, 
and within Bayesian \index{Bayesian} statistics in particular 
\cite{berger1985statistical,bs94,ghosal1999consistency}.
Assessing the quality of data assimilation\index{data assimilation}
algorithms is typically
performed in the ``signal\index{signal} estimation'' framework using
{\bf identical twin experiments}\index{identical twin experiments} in which the data is generated
from the same model used to estimate the signal; see \cite{JI07}
and the references therein.  
The idea of assessing ``Bayesian quality'' has only recently
been used within the data assimilation literature; see
\cite{LS12} where this approach is taken for the Navier-Stokes
inverse problem formulated in \cite{CDRS08}.
The evaluation of algorithms by means of forecast skill \index{forecast skill}
is enormously influential in the field of numerical weather
prediction\index{weather forecasting}
and drives a great deal of algorithmic selection.
The use of information theory to understand the effects of
model error \index{model error}, and to evaluate filter performance, is
introduced in \cite{MGH} and \cite{MB} respectively.
There are also a number of useful {\bf consistency checks} \index{consistency checks} which can
be applied to evaluate the computational model and its fit
to the data \cite{eve06, anderson1996method, anderson1999monte}.  
We discuss the idea of the variant known as {\em rank
histograms\index{histogram!rank}} at the end of Chapter \ref{sec:dtfa}.
If the empirical statistics of the innovations are 
inconsistent with the assumed model, then they can
be used to improve the model used in the future; this is known as 
{\em reanalysis.}\index{reanalysis} 

\end{itemize}

\section{Exercises}
\label{ex:first}

\begin{enumerate}

\item Consider the map given in Example \ref{ex:ex3}  and related program
{\tt p1.m}. By experimenting with the code determine approximately
the value of $\alpha$, denoted by $\alpha_1$, 
at which the noise-free dynamics
changes from convergence to an equilibrium point \index{equilibrium point} to convergence to a
period $2$ solution. Can you find a value of $\alpha=\alpha_2$ for which you
obtain a period $4$ solution? Can you find a value of $\alpha=\alpha_3$
for which you
obtain a non-repeating (chaotic) solution? For the values $\alpha=2, \alpha_2$ and $\alpha_3$ 
compare the trajectories of the dynamical system obtained with the initial
condition $1$ and with the initial condition $1.1$. Comment on what you find. 
Now fix the initial condition at $1$ and consider the same values
of $\alpha$, with and without noise ($\sigma \in \{0,1\}$). Comment on what
you find. Illustrate your answers with graphics. To get interesting
displays you will find it helpful to change the value of $J$ (number of
iterations) depending upon what you are illustrating.

\item  Consider the map given in Example \ref{ex:ex4} and verify the explicit
solutions given for $r=2$ and $r=4$ in formulae \eqref{eq:solr2}--\eqref{eq:solr4}. 

\item Consider the Lorenz '63 model given in Example \ref{ex:ex6}. 
Determine values of $\{\alpha,\beta\}$ for which \eqref{eq:diss} holds.

\item Consider the Lorenz '96 model given in Example \ref{ex:ex7}. Program
{\tt p19.m} plots solutions of the model, as well as studying
sensitivity to initial conditions. Study the behaviour of the equation
for $J=40, F=2$, for $J=40, F=4$ and report your results. Fix $F$ at $8$
and play with the value of the dimension of the system, $J$. 
Report your results. Again, illustrate your answers with graphics.

\item Consider the posterior smoothing distribution from Theorem \ref{th11}. 
Assume that the stochastic dynamics \index{stochastic dynamics} model \eqref{eq:dtf1}
is scalar and defined by $\PPsi(v)=av$ for
some $a \in \bbR$ and $\Sigma=\sigma^2$; and that the observation model 
\eqref{eq:dtf2} is defined by
$h(v)=v$ and $\Gamma=\gamma^2.$ Find explicit formulae for 
$\Jj(v)$ and $\PPhi(v;y)$, assuming that $v_0 \sim N(m_0,\sigma_0^2).$

\item Consider the posterior smoothing distribution from Theorem \ref{th112}. 
Assume that the dynamics model \eqref{eq:dtf11a}
is scalar and defined by $\PPsi(v)=av$ for
some $a \in \bbR$; and that the observation model \eqref{eq:dtf2} is defined by
$h(v)=v$ and $\Gamma=\gamma^2.$ Find explicit formulae for 
$\Jjd(v_0)$ and $\Phid(v_0;y)$, assuming that $v_0 \sim N(m_0,\sigma_0^2).$

\item Consider the definition of total variation\index{metric!total variation} 
distance given 
in Definition \ref{def:tvd}.
State and prove a theorem analogous to Theorem \ref{th14}, but employing the
total variation\index{metric!total variation} 
distance instead of the Hellinger distance.

\item Consider the filtering distribution \index{filtering!distribution} from section \ref{ssec:fp} 
in the case where the stochastic dynamics \index{stochastic dynamics} model \eqref{eq:dtf1}
is scalar and defined by $\PPsi(v)=av$ for
some $a \in \bbR$ and $\Sigma=\sigma^2$; and that the observation model
\eqref{eq:dtf2} is defined by
$h(v)=v$ and $\Gamma=\gamma^2$; and $v_0 \sim N(m_0,\sigma_0^2).$
Demonstrate that the prediction and analysis steps preserve Gaussianity
so that $\mu_j=N(m_j,\sigma_j^2).$
Find iterative formulae which update $(m_j,\sigma_j^2)$ to
give $(m_{j+1},\sigma_{j+1}^2)$.

\item Prove Theorem \ref{th14a}.

\end{enumerate}

\graphicspath{{./figs/chapter2/}}
\chapter{Discrete Time: Smoothing Algorithms}\label{sec:dtsa}\index{smoothing}

The formulation of the data assimilation\index{data assimilation}
problem described in the previous chapter is probabilistic, and its computational resolution requires the probing of 
a posterior probability distribution on signal given data. This probability
distribution is on the signal sequence $v=\{v_j\}_{j=0}^{J}$
when the underlying dynamics is stochastic
and given by \eqref{eq:dtf1}; the posterior is specified in Theorem
\ref{th11} and is proportional to $\exp\bigl(-\Ii(v;y)\bigr)$ 
given by \eqref{eq:dtf4}. On the other hand, if 
the underlying dynamics is deterministic
and given by \eqref{eq:dtf11}, then the probability distribution is
on the initial condition $v_0$ only; it is given in Theorem \ref{th112}
and is proportional to $\exp\bigl(-\Iid(v_0;y)\bigr)$, with $\Iid$ given 
by \eqref{eq:Ip}. Generically, in this chapter,
we refer to the unknown variable as
$u$, and then use $v$ in the specific case of stochastic dynamics,\index{stochastic dynamics} 
and $v_0$ in the specific case of deterministic dynamics \index{deterministic dynamics}. 
The aim of this chapter is to understand $\bbP(u|y).$ In this
regard we will do three things:
\begin{itemize}
\item find explicit expressions for the pdf $\bbP(u|y)$ in the linear,
Gaussian setting;
\item generate samples $\{u^{(n)}\}_{n=1}^N$ from $\bbP(u|y)$ by algorithms
applicable in the non-Gaussian setting;
\item find points where $\bbP(u|y)$ is maximized with respect to $u$,
for given data $y$.
\end{itemize}

In general the probability distributions of interest cannot be
described by a finite set of parameters, except in a few simple
situations such as the Gaussian scenario where the mean and covariance
determine the distribution in its entirety -- the {\bf Kalman Smoother}\index{Kalman smoother}.
When the probability distributions cannot be described by
a finite set of parameters, an expedient computational approach
to approximately representing the measure
is through the idea of {\bf Monte Carlo sampling}\index{Monte Carlo!sampling}.
The basic\index{Monte Carlo} idea is to approximate a measure $\nu$ by a set of $N$ samples
$\{u^{(n)}\}_{n \in \Z^+}$ drawn, or approximately drawn, 
from $\nu$ to obtain the measure $\nu^N \approx \nu$ given by:
\begin{equation}
\label{eq:mci}
\nu^N=\frac{1}{N}\sum_{n=1}^N \delta_{u^{(n)}}.
\end{equation}
We may view this as defining a (random) map $S^N$ on measures which takes
$\nu$ into $\nu^N$.
If the $u^{(n)}$ are exact draws from $\nu$ then the resulting approximation
$\nu^N$ converges to the true measure $\nu$ as $N \to \infty$.
\footnote{Indeed we prove such a result in Lemma \ref{l:s}
in the context of the particle filter\index{filter!particle}.}
For example if $v=\{v_j\}_{j=0}^{J}$ is governed by the 
probability distribution $\mu_0$ defined by the unconditioned
dynamics \eqref{eq:dtf1}, and with pdf determined by \eqref{eq:JJ}, 
then exact independent samples $v^{(n)}=\{v_j^{(n)}\}_{j=0}^{J}$
are easy to generate, simply by running the dynamics model forward in 
discrete time.  However for the complex probability measures of interest 
here, where the signal\index{signal} is conditioned on data\index{data},
exact samples are typically not possible and so instead
we use the idea of 
{\bf Monte Carlo Markov Chain (MCMC)}\index{MCMC}\index{Monte Carlo!Markov Chain} methods
which provide a methodology for generating approximate samples.
These methods do not require knowledge of the normalization
constant\index{normalization constant} for the measure $\bbP(u|y)$;
as we have discussed, Bayes' formula \eqref{eq:bayes} readily delivers
$\bbP(u|y)$ upto normalization, but the normalization itself
can be difficult to compute.
It is also of interest to simply maximize the posterior probability
distribution, to find a single point estimate of the solution, leading to
{\bf variational methods}\index{variational method}, which we also consider.

Section \ref{ssec:kalkal} gives explicit formulae for the solution
of the smoothing problem in the setting where the stochastic dynamics \index{stochastic dynamics}
model is linear, and subject to Gaussian noise, for which the observation
operator\index{observation operator} is linear, and for which
the distributions of the initial condition and the observational noise
are Gaussian; this is the Kalman smoother\index{Kalman smoother}. 
These explicit formulae help to build intuition about
the nature of the smoothing distribution.
In section \ref{ssec:mcmcm} we provide some background concerning 
MCMC \index{MCMC} methods, and in particular,
the {\bf Metropolis-Hastings \index{Metropolis-Hastings}} variant of 
MCMC\index{MCMC}, and show how they can be
used to explore the posterior distribution. 
It can be very difficult to sample the probability distributions
of interest with high accuracy, 
because of the two problems of high dimension and
sensitive dependence on initial conditions. Whilst we do not
claim to introduce the optimal algorithms to deal with these
issues, we do discuss such issues in relation to the samplers we introduce,
and we provide references to the active research ongoing in this area.
Furthermore, although sampling of the posterior 
distribution\index{posterior distribution} may be computationally 
infeasible in many situations, where possible, it provides an important 
benchmark solution, enabling other algorithms to be compared against 
a ``gold standard'' Bayesian \index{Bayesian} solution.

However, because sampling the posterior distribution can
be prohibitively expensive, a widely used computational 
methodology is simply to find the
point which maximizes the probability, using techniques
from optimization\index{optimization}. 
These are the variational methods\index{variational method},
also known as {\bf 4DVAR}\index{4DVAR} and {\bf weak constraint 4DVAR} \index{4DVAR!weak constraint}. 
We introduce this approach to the problem in section \ref{ssec:vm}. 
In section \ref{ssec:mci} we provide numerical illustrations
which showcase the MCMC \index{MCMC} and variational methods\index{variational method}.
The chapter concludes in sections \ref{ssec:bs} and \ref{ssec:ex3}
with bibliographic notes and exercises.

\section{Linear Gaussian Problems: The Kalman Smoother}\label{ssec:kalkal}

The Kalman smoother \index{Kalman smoother} plays an important role because it is one of the few examples for which the smoothing distribution can be explicitly determined. This explicit characterization occurs because the signal dynamics and observation operator are assumed to be linear. When combined with the Gaussian assumptions on the initial condition for the signal, and on the signal and observational noise, this gives rise to a posterior smoothing distribution which is also Gaussian. 

To find formulae for this Gaussian Kalman smoothing distribution we set 
\begin{equation}\label{eq:linearmodel}
\PPsi(v) = Mv, \quad h(v)=Hv
\end{equation}
for matrices $M\in \bbR^{n\times n},$ $H\in \bbR^{m\times n}$ and consider the signal/observation model \eqref{eq:dtf1}, \eqref{eq:dtf2}. Given 
data\index{data} $y=\{y_j\}_{j\in\J}$ and signal\index{signal}
$v=\{v_j\}_{j\in\J_0}$ we are interested in the probability distribution of $v|y,$ as characterized in subsection \ref{ssec:sd}. By specifying the linear model \eqref{eq:linearmodel}, and applying Theorem \ref{th11} we find the following:

\begin{theorem}
\label{t:ks1}
The posterior smoothing distribution on $v|y$ for the linear stochastic dynamics \index{stochastic dynamics} model \eqref{eq:dtf1}, \eqref{eq:dtf2}, \eqref{eq:linearmodel} with $C_0,\,\Sigma$ and $\Gamma$ symmetric positive-definite is a Gaussian probability measure $\mu=N(m,C)$ on $\bbR^{|\J_0|\times n}.$ The covariance $C$ is the inverse
of a symmetric positive-definite block tridiagonal 
precision\index{precision} matrix 
\begin{eqnarray*}
L=\left(
\begin{array}{ccccc}
L_{11}  & L_{12}  &              &        &   \\
L_{21}  & L_{22}  & L_{23}   &               &  \\
           &  \ddots &\ddots  &\ddots     &          \\
           &             &  \ddots          &  \ddots             &  L_{J J+1} \\
           &             &                 & L_{J+1 J} & L_{J+1 J+1}
\end{array}
\right)
\end{eqnarray*}  
with  $L_{ij} \in \bbR^{n\times n}$ given by  $L_{11}=C_0^{-1} + M^T \Sigma^{-1} M,$ $L_{jj} = H^T \Gamma^{-1} H + M^T \Sigma^{-1} M+\Sigma^{-1}$ for  $j=2,\ldots,J,$ $L_{J+1,J+1}=H^T \Gamma^{-1} H+\Sigma^{-1}$, $L_{j j+1}= -M^T \Sigma^{-1}$ and $ L_{j+1 j}=-\Sigma^{-1}M$ for $j=1,\ldots,J.$  
Furthermore the mean $m$ solves the equation 
\begin{equation*}
Lm  = r,
\end{equation*}
where $$r_1 = C_0^{-1}m_0, \quad \quad r_j = H^T \Gamma^{-1} y_{j-1},\quad j=2,\cdots,J+1.$$
This mean is also the unique minimizer of the functional
\begin{align}
\label{eq:Ilin}
\Ii (v;y)&= \frac12 \bigl|C_0^{-1/2} (v_0 -m_0)\bigr|^2 + \sum_{j=0}^{J-1} \frac12 \bigl|\Sigma^{-1/2}(v_{j+1} -M v_j)\bigr|^2 + \sum_{j=0}^{J-1} \frac12 \bigl|\Gamma^{-1/2}(y_{j+1}-Hv_{j+1})\bigr|^2\notag\\
&= \frac12 \bigl|v_0 -m_0\bigr|_{C_0}^2 + \sum_{j=0}^{J-1} \frac12 \bigl|v_{j+1} -M v_j|_{\Sigma}^2 + \sum_{j=0}^{J-1} \frac12 \bigl|y_{j+1}-Hv_{j+1}|_{\Gamma}^2
\end{align}
with respect to $v$ and, as such, is a maximizer, with respect to $v$, for the posterior pdf $\bbP(v|y)$.
\end{theorem}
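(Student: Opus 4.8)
The plan is to recognize that, with the linear substitutions \eqref{eq:linearmodel}, the negative log-posterior $\Ii(v;y)$ from Theorem \ref{th11} becomes a pure quadratic form in $v$, so that the posterior is automatically Gaussian and all quantities can be read off by completing the square. First I would write out $\Ii(v;y)$ explicitly in the linear case, obtaining exactly the displayed functional \eqref{eq:Ilin} -- this is immediate by inserting $\PPsi(v)=Mv$ and $h(v)=Hv$ into \eqref{eq:JJ} and \eqref{eq:dtf3}. Then I would expand each squared term using $|a|_B^2 = \langle a, B^{-1} a\rangle$ and collect the result into the canonical form $\Ii(v;y) = \tfrac12 \langle v, Lv\rangle - \langle r, v\rangle + \text{const}$. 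Matching the quadratic terms block-by-block gives $L$: the $v_0$-block receives $C_0^{-1}$ from the prior and $M^T\Sigma^{-1}M$ from the $j=0$ term of the dynamics sum, giving $L_{11}$; each interior $v_j$ ($1\le j\le J-1$, i.e.\ indices $2,\dots,J$ in the $1$-indexed matrix) receives $\Sigma^{-1}$ from the term $|v_j - Mv_{j-1}|_\Sigma^2$, $M^T\Sigma^{-1}M$ from $|v_{j+1}-Mv_j|_\Sigma^2$, and $H^T\Gamma^{-1}H$ from the observation term $|y_j - Hv_j|_\Gamma^2$; the terminal block $v_J$ gets $\Sigma^{-1} + H^T\Gamma^{-1}H$ only. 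The cross terms $-\langle v_{j+1}, \Sigma^{-1}M v_j\rangle$ (and its transpose) in the expansion of $|v_{j+1}-Mv_j|_\Sigma^2$ produce the off-diagonal blocks $L_{j,j+1} = -M^T\Sigma^{-1}$, $L_{j+1,j} = -\Sigma^{-1}M$, and no other cross terms appear, so $L$ is block tridiagonal as claimed. Matching the linear terms gives $r$: the prior contributes $\langle C_0^{-1}m_0, v_0\rangle$, i.e.\ $r_1 = C_0^{-1}m_0$, and each observation term contributes $\langle H^T\Gamma^{-1} y_{j+1}, v_{j+1}\rangle$, i.e.\ $r_{j} = H^T\Gamma^{-1}y_{j-1}$ for $j=2,\dots,J+1$ after re-indexing.

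Next I would argue positive-definiteness of $L$. Since $C_0,\Sigma,\Gamma > 0$, every squared term in \eqref{eq:Ilin} is nonnegative, so $\langle v, Lv\rangle = 2\bigl(\Ii(v;y) - \text{linear/const}\bigr)$ expanded as a quadratic form is nonnegative; and it vanishes only if $C_0^{-1/2}v_0 = 0$ and $v_{j+1} = Mv_j$ for all $j$, which forces $v_0 = 0$ and hence $v \equiv 0$ by the recursion. Thus $L>0$, it is invertible, and $C := L^{-1}$ is a well-defined symmetric positive-definite covariance. With the completed-square identity $\Ii(v;y) = \tfrac12 \langle v - L^{-1}r, L(v-L^{-1}r)\rangle + \text{const}$, the posterior density $\bbP(v|y) \propto \exp(-\Ii(v;y))$ is exactly the $N(m, C)$ density with $m = L^{-1}r$, i.e.\ $Lm = r$, invoking Lemma \ref{l:qf} to identify the Gaussian with precision $L$. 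Since a strictly positive-definite quadratic form has a unique global minimizer at its stationary point, $m$ is the unique minimizer of $\Ii(\cdot;y)$, and because $\bbP(v|y)$ is a strictly decreasing function of $\Ii(v;y)$, this same $m$ is the unique maximizer of the posterior pdf.

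I do not anticipate a genuine obstacle here: the whole argument is bookkeeping on a quadratic form. The one place requiring care is the index shift between the $0$-indexed time labels $j=0,\dots,J$ on the signal and the $1$-indexed block labels $1,\dots,J+1$ on the matrix $L$ and vector $r$, together with the fact that the observation sum runs over $y_1,\dots,y_J$ while the prior sits at time $0$; this is exactly why $r_1$ is special (prior mean) and $r_j = H^T\Gamma^{-1}y_{j-1}$ for $j\ge 2$, and why $L_{11}$ and $L_{J+1,J+1}$ differ from the interior blocks. A careful term-by-term accounting of which of the three sums in \eqref{eq:Ilin} deposits a contribution into which block resolves this cleanly, and I would present that accounting as a short table of contributions rather than a long algebraic grind.
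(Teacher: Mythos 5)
Your proposal is correct and follows essentially the same route as the paper's proof: write $\Ii(\cdot\,;y)$ as a quadratic form, read off $L$ block-by-block (the paper does this via the Hessian $\partial_v^2\Ii$, you by direct expansion -- equivalent for a quadratic), obtain $Lm=r$ from the linear terms, and prove $L>0$ by the identical observation that the quadratic form with $m_0=0$ vanishes only if $v_0=0$ and $v_{j+1}=Mv_j$ for all $j$, forcing $v=0$. Your block and index accounting, including the special roles of $L_{11}$, $L_{J+1,J+1}$ and $r_1$, matches the paper exactly.
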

\begin{proof}
The proof is based around Lemma \ref{l:qf}, and identification of
the mean and covariance by study of an appropriate quadratic form.
From Theorem \ref{th11} we know that the desired distribution has pdf proportional to $\exp\bigl(-\Ii(v;y)\bigr),$ where $\Ii(v;y)$ is given in \eqref{eq:Ilin}.
This is a quadratic form in $v$ and we deduce that the inverse covariance $L$ is given by $\partial_v^2 \Ii(v;y),$ the Hessian of $\Ii$ with respect to $v.$ To
determine $L$ we note the following identities:
\begin{align*}
D_{v_0}^2 I(v;y)&=C_0^{-1}+M^T\Sigma^{-1}M,\\
D_{v_j}^2 I(v;y)&=\Sigma^{-1}+M^T\Sigma^{-1}M+H^T\Gamma^{-1}H,\quad j=1,\dots, J-1\\
D_{v_J}^2 I(v;y)&=\Sigma^{-1}+H^T\Gamma^{-1}H,\\
D^2_{v_j,v_{j+1}}I(v;y)&=-M^T\Sigma^{-1},\\
D^2_{v_{j+1},v_{j}}I(v;y)&=-\Sigma^{-1}M.
\end{align*}

We may then complete the square and write 
$$\Ii (v;y)=\frac12 \langle (v-m),L(v-m)\rangle + q,$$
where $q$ is independent of $v.$ From this it follows that the mean
does indeed minimize $\Ii(v;y)$ with respect to $v$, and hence
maximizes $\bbP(v|y) \propto \exp\bigl(-\Ii(v;y)\bigr)$ with respect
to $v$. By differentiating with respect to $v$ we obtain 
\begin{equation}
Lm =r,\quad r=-\nabla_v \Ii(v;y)\Bigl|_{v=0},
\end{equation}
where $\nabla_v$ is the gradient of $\Ii$ with respect to $v.$
This characterization of $r$ gives the desired equation for the
mean. Finally we show that $L$, and hence $C$, is positive
definite symmetric. Clearly $L$ is symmetric and hence so is $C.$ It remains to check that $L$ is strictly positive definite. To see this note that if we set $m_0=0$ then
\begin{equation}
\frac12 \langle v,L v \rangle = \Ii (v;0)\ge 0.
\end{equation}
Moreover, $\Ii(v;0)=0$ with $m_0=0$ implies, since $C_0>0$ and $\Sigma>0$, 
\begin{subequations}
\begin{align*}
v_0 &= 0, \\
v_{j+1}&=Mv_j,  \quad j=0,\ldots,J-1, 
\end{align*}
\end{subequations}
i.e. $v=0.$ Hence we have shown that $\langle v,L v \rangle =0$ implies $v=0$ and the proof is complete.
\end{proof}

We now consider the Kalman smoother \index{Kalman smoother} in the case of deterministic dynamics. Application of Theorem \ref{th112} gives the following:

\begin{theorem}
\label{t:ks2}
The posterior smoothing distribution on $v_0|y$ for the deterministic
linear dynamics model \eqref{eq:dtf11}, \eqref{eq:dtf2}, \eqref{eq:linearmodel} with $C_0$ and $\Gamma$ symmetric positive definite is a Gaussian probability measure $\nu=N(\mdet,\Cdet)$ on $\bbR^n.$ The covariance $\Cdet$ is the inverse
of the positive-definite symmetric matrix $\Ldet$ given by the expression 
\begin{equation*}
\Ldet= C_0^{-1} + \sum_{j=0}^{J-1} (M^T)^{j+1} H^T \Gamma^{-1} H M^{j+1}.
\end{equation*}
The mean $\mdet$ solves
\begin{equation*}
\Ldet\mdet= C_0^{-1} m_0+ \sum_{j=0}^{J-1} (M^T)^{j+1} H^T \Gamma^{-1} y_{j+1}.
\end{equation*}
This mean is a minimizer of the functional
\begin{equation}
\label{eq:Iid2}
\Iid (v_0;y)= \frac12 \bigl|v_0 -m_0|_{C_0}^2 +  \sum_{j=0}^{J-1} \frac12 \bigl|y_{j+1}-HM^{j+1}v_0\bigr|_{\Gamma}^2
\end{equation}
with respect to $v_0$ and, as such, is a maximizer, with respect to
$v_0$, of the posterior pdf $\bbP(v_0|y).$
\end{theorem}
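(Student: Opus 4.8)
The plan is to mirror the structure of the proof of Theorem \ref{t:ks1}, specializing the general formula of Theorem \ref{th112} to the linear observation model \eqref{eq:linearmodel}. First I would invoke Theorem \ref{th112} to assert that the posterior density $\bbP(v_0|y)$ is proportional to $\exp\bigl(-\Iid(v_0;y)\bigr)$, and then substitute $h(v)=Hv$ together with the deterministic dynamics $\PPsi^{(j)}(v_0)=M^j v_0$ into the expression \eqref{eq:Ip} for $\Iid$. This immediately gives the quadratic form \eqref{eq:Iid2}, namely
$$\Iid(v_0;y)=\frac12\bigl|v_0-m_0\bigr|_{C_0}^2+\sum_{j=0}^{J-1}\frac12\bigl|y_{j+1}-HM^{j+1}v_0\bigr|_{\Gamma}^2.$$
Since $\Iid(\cdot;y)$ is quadratic in $v_0$, it is of the form treated in Lemma \ref{l:qf}, so once I identify the Hessian (precision matrix) and the linear term I can read off the Gaussian mean and covariance.

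Next I would compute $\Ldet=D_{v_0}^2\Iid(v_0;y)$ by differentiating twice: the background term contributes $C_0^{-1}$, and each observation term $\frac12|y_{j+1}-HM^{j+1}v_0|_{\Gamma}^2$ contributes $(M^T)^{j+1}H^T\Gamma^{-1}HM^{j+1}$, giving $\Ldet=C_0^{-1}+\sum_{j=0}^{J-1}(M^T)^{j+1}H^T\Gamma^{-1}HM^{j+1}$ as claimed. Completing the square as in the proof of Theorem \ref{t:ks1}, one writes $\Iid(v_0;y)=\frac12\langle v_0-\mdet,\Ldet(v_0-\mdet)\rangle+q$ with $q$ independent of $v_0$; differentiating and setting $v_0=0$ gives $\Ldet\mdet=-\nabla_{v_0}\Iid(v_0;y)|_{v_0=0}=C_0^{-1}m_0+\sum_{j=0}^{J-1}(M^T)^{j+1}H^T\Gamma^{-1}y_{j+1}$, which is the stated equation for $\mdet$. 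By Lemma \ref{l:qf} the normalized $\exp(-\Iid)$ is precisely the density of $N(\mdet,\Ldet^{-1})$, so $\Cdet=\Ldet^{-1}$ and $\nu=N(\mdet,\Cdet)$. The minimization/maximization claim is then immediate: completing the square shows $\mdet$ is the unique minimizer of the convex quadratic $\Iid(\cdot;y)$, and since $\bbP(v_0|y)\propto\exp(-\Iid(v_0;y))$ this minimizer is the mode of the posterior.

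The one genuine step requiring care is verifying that $\Ldet$ is symmetric positive definite, so that the inverse $\Cdet$ exists and $\nu$ is a bona fide Gaussian. Symmetry is clear from the explicit formula. For positive definiteness, I would argue as in Theorem \ref{t:ks1}: setting $m_0=0$ gives $\frac12\langle v_0,\Ldet v_0\rangle=\Iid(v_0;0)\ge0$, and if this vanishes then, since $C_0>0$, the background term forces $v_0=0$; hence $\langle v_0,\Ldet v_0\rangle=0$ implies $v_0=0$, so $\Ldet>0$. I expect this to be the main (though still routine) obstacle; note that, unlike the stochastic-dynamics case, positive definiteness here follows from the prior term alone and does not require any observability assumption on $(H,M)$, because the background penalization $|v_0-m_0|_{C_0}^2$ already pins down $v_0$ completely.
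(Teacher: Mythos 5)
Your proposal is correct and follows essentially the same route as the paper: specialize Theorem \ref{th112} to the linear case, identify $\Ldet$ as the Hessian of the quadratic $\Iid$, complete the square to obtain the equation for $\mdet$, and note that positive definiteness of $\Ldet$ follows from the background term alone since $\langle v_0,\Ldet v_0\rangle\ge\langle v_0,C_0^{-1}v_0\rangle>0$. Your closing remark that no observability condition on $(H,M)$ is needed is exactly the content of the paper's final inequality.
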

\begin{proof}
By Theorem \ref{th112} we know that the desired distribution has pdf proportional to $\exp\bigl(-\Iid(v_0;y)\bigr)$ given by \eqref{eq:Iid2}. 
The inverse covariance $\Ldet$ can be found as the Hessian of $\Iid,$ $\Ldet= \partial_v^2 \Iid(v_0;y),$ and the mean $\mdet$ solves
\begin{equation}
\Ldet \mdet = -\nabla_v \Iid (v_0;y)\Bigl|_{v_0=0}.
\end{equation}
As in the proof of the preceding theorem, we have that
$$\Iid(v_0;y)=\frac12 \langle \Ldet(v_0-\mdet),(v_0-\mdet)\rangle+q$$
where $q$ is independent of $v_0;$ this shows that $\mdet$ minimizes
$\Iid(\cdot\,;y)$ and maximizes $\bbP(\cdot|y).$

We have thus characterized $\Ldet$ and $\mdet$ and using this
characterization gives the desired expressions. It remains to check that $\Ldet$ is positive definite, since it is clearly symmetric by definition.
Positive-definiteness follows from the assumed 
positive-definiteness of $C_0$ and $\Gamma$ since, 
for any nonzero $v_0\in \bbR^n,$ 
\begin{equation}
\langle v_0, \Ldet v_0 \rangle \ge \langle v_0 C_0^{-1} v_0 \rangle > 0.
\end{equation}
\end{proof}

\section{Markov Chain-Monte Carlo Methods}\index{MCMC}\index{Monte Carlo!Markov Chain}\label{ssec:mcmcm}
In the case of stochastic dynamics,\index{stochastic dynamics} equation \eqref{eq:dtf1},
the posterior distribution\index{posterior distribution} of interest is the measure $\mu$ 
on $\bbR^{|\J_0|\times n}$, with density $\bbP(v|y)$ 
given in Theorem \ref{th11};
in the case of deterministic dynamics, equation \eqref{eq:dtf11}, it is
the measure $\nu$ on $\bbR^n$ with density $\bbP(v_0|y)$ given
in Theorem \ref{th112}. In this section we describe
the idea of Markov Chain-Monte Carlo (MCMC \index{MCMC}) methods for exploring
such probability distributions.

We will start by describing the general
MCMC methodology, after which we discuss the
specific Metropolis-Hastings\index{Metropolis-Hastings} 
instance of this methodology. This material makes no reference to
the specific structure of our sampling problem; it works in
the general setting of creating a Markov chain which is invariant for a
an arbitrary measure $\mu$ on $\bbR^\ell$ with pdf $\rho$. 
We then describe applications of the Metropolis-Hastings\index{Metropolis-Hastings} family of
MCMC methods to the smoothing problems
of noise-free dynamics and noisy dynamics respectively. 
When describing the generic Metropolis-Hastings\index{Metropolis-Hastings} methodology we will 
use $u$ (with indices) to denote the state of the Markov chain \index{Markov chain} and 
$w$ (with indices) the proposed moves. 
Thus the current state $u$ and proposed state $w$ live in the space
where signal sequences $v$ lie, in the case of stochastic 
dynamics, and in the space where initial conditions $v_0$
lie, in the case of deterministic dynamics.

\subsection{The MCMC \index{MCMC} Methodology}
\label{ssec:mcmcidea}

Recall the concept of a Markov 
chain\index{Markov chain}$\{u^{(n)}\}_{n \in \Z^+}$ 
introduced in subsection \ref{ssec:mk}. The idea of MCMC methods
is to construct a Markov chain \index{Markov chain} which is invariant with respect to
a given measure $\mu$ on $\bbR^{\ell}$ and, of particular interest to
us, a measure $\mu$ with positive Lebesgue density $\rho$ on $\bbR^{\ell}$. 
We now use a superscript $n$ to denote the index of the resulting 
Markov chain,\index{Markov chain}
instead of subscript $j$, to provide a clear distinction between the
Markov chain \index{Markov chain}defined by the stochastic (respectively
deterministic) dynamics model \eqref{eq:dtf1} (respectively \eqref{eq:dtf11})
and the Markov chains \index{Markov chain} that we will use to sample the posterior
distribution on the signal\index{signal} $v$ 
(respectively initial condition $v_0$)
given data\index{data} $y$. 

We have already seen that  Markov chains \index{Markov chain} allow the computation
of averages with respect to the invariant measure \index{invariant measure} by computing the
running time-average of the chain -- see \eqref{eq:erg}. More
precisely we have the following theorem (for which it is useful
to recall the notation for the iterated kernel $p^n$ from the
very end of subsection \ref{ssec:mk}):

\begin{theorem}\label{th21} Assume that, if $u^{(0)}\sim\mu$ with Lebesgue density $\rho$, then $u^{(n)}\sim\mu$ for all $n\in\Z^+$ so that $\mu$ is
invariant for the Markov chain\index{Markov chain}. If, in addition,  
the Markov chain\index{Markov chain} is ergodic\index{ergodic},
then for any bounded continuous $\varphi: \bbR^\ell \to \bbR$,
\[\frac1N\sum_{n=1}^{N}\varphi(u^{(n)})\stackrel{\text{a.s.}}{\longrightarrow}\bbE^{\mu}\varphi(u)\]
for $\mu$ a.e. initial condition $u^{(0)}$.
In particular, if there is probability measure ${\mathsf p}$ on $\bbR^\ell$
and $\varepsilon>0$ such that, for all $u \in \bbR^{\ell}$ and all Borel sets
$A \subseteq \calB(\bbR^\ell)$,
$p(u,A) \ge \varepsilon {\mathsf p}(A)$ 
then, for all $u \in \bbR^{\ell}$,
\begin{equation}
\label{eq:tv}
d_{{\tiny{\rm TV}}}\bigl(p^n(u,\cdot),\mu\bigr) \le 2(1-\varepsilon)^n.
\end{equation}
Furthermore, there is then  $K=K(\varphi)>0$ such that
\begin{equation}
\label{eq:clt}
\frac1N\sum_{n=1}^{N}\varphi(u^{(n)})=\bbE^{\mu}\varphi(u)+K\xi_N N^{-\frac12}
\end{equation}
where $\xi_N$ converges weakly\index{weak convergence} to $N(0,1)$ as $N \to \infty.$
\end{theorem}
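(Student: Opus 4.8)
The plan is to treat the three assertions in turn, since they rest on increasingly heavy tools, and to note that the Doeblin minorisation hypothesis of the ``in particular'' clause in fact supplies the ergodicity hypothesis used for the first assertion. \emph{Almost-sure convergence of ergodic averages.} The key observation is that when $u^{(0)}\sim\mu$ the invariance of $\mu$ makes $\{u^{(n)}\}_{n\in\Z^+}$ a stationary process, and the shift on its path space $(\bbR^\ell)^{\Z^+}$ preserves the law $\bbP_\mu$ of the chain. Reading ``ergodic'' as ergodicity of this shift, Birkhoff's ergodic theorem applied to the observable $\omega\mapsto\varphi(\omega_0)$ --- integrable since $\varphi$ is bounded --- gives $\frac1N\sum_{n=1}^N\varphi(u^{(n)})\to\bbE^\mu\varphi(u)=\mu(\varphi)$, $\bbP_\mu$-almost surely. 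Since $\bbP_\mu(\cdot)=\int\bbP_x(\cdot)\,\mu(dx)$, a $\bbP_\mu$-null set of bad paths is a $\bbP_x$-null set for $\mu$-a.e.\ $x$, which is exactly the assertion ``for $\mu$ a.e.\ initial condition $u^{(0)}$''.

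\emph{Geometric convergence in total variation.} Here I would use the minorisation directly. Assuming $\varepsilon<1$ (the case $\varepsilon=1$ being immediate), write $p(u,\cdot)=\varepsilon\,{\mathsf p}(\cdot)+(1-\varepsilon)\,q(u,\cdot)$, where the residual $q(u,\cdot):=(1-\varepsilon)^{-1}\bigl(p(u,\cdot)-\varepsilon{\mathsf p}\bigr)$ is a genuine Markov kernel. This splitting defines a coupling of two copies of the chain, one started at $u$ and one from $\mu$, in which at each step with probability $\varepsilon$ both copies jump to a common sample from ${\mathsf p}$ and coincide thereafter, and with probability $1-\varepsilon$ they evolve by $q$. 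The probability of not having coalesced by step $n$ is at most $(1-\varepsilon)^n$, so the coupling inequality gives $\dtv\bigl(p^n(u,\cdot),\mu\bigr)\le(1-\varepsilon)^n\le 2(1-\varepsilon)^n$; the stated factor $2$ is not sharp. Equivalently one checks that $\lambda\mapsto\lambda P$ contracts the total-variation distance between probability measures by $1-\varepsilon$ and iterates from $\delta_u$ and $\mu$; this also yields uniqueness of $\mu$ and uniform ergodicity of the chain, so the ergodicity hypothesis of the previous step is automatic under Doeblin.

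\emph{Central limit theorem.} I would use the martingale-approximation method. Put $\bar\varphi=\varphi-\mu(\varphi)$ and $g=\sum_{n\ge0}P^n\bar\varphi$; by the previous step $\|P^n\bar\varphi\|_\infty\le 2\|\bar\varphi\|_\infty(1-\varepsilon)^n$, so the series converges uniformly, $g$ is bounded, and it solves the Poisson equation $g-Pg=\bar\varphi$. Hence, with $\mathcal{F}_n=\sigma(u^{(0)},\dots,u^{(n)})$,
\begin{align*}
\varphi(u^{(n)})-\mu(\varphi)&=g(u^{(n)})-(Pg)(u^{(n)})=\bigl(g(u^{(n)})-g(u^{(n+1)})\bigr)+D_n,\\
D_n&:=g(u^{(n+1)})-(Pg)(u^{(n)}),
\end{align*}
and $\bbE[D_n\mid\mathcal{F}_n]=0$, so $\{D_n\}$ is a bounded martingale-difference sequence which, started from $\mu$, is stationary and ergodic. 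Summing over $n=1,\dots,N$, the $g$-terms telescope to $g(u^{(1)})-g(u^{(N+1)})$, which is bounded and hence negligible after division by $\sqrt N$; the martingale central limit theorem for bounded stationary ergodic martingale differences then gives $N^{-1/2}\sum_{n=1}^N D_n\Rightarrow N(0,\sigma^2)$ with $\sigma^2=\bbE^\mu[D_1^2]$. Taking $K=\sigma$ and $\xi_N=(K\sqrt N)^{-1}\sum_{n=1}^N\bigl(\varphi(u^{(n)})-\mu(\varphi)\bigr)$ gives the claimed representation with $\xi_N\Rightarrow N(0,1)$ --- the degenerate coboundary case $\sigma=0$ being excluded, since there the fluctuations are $O(1)$ rather than $O(\sqrt N)$. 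To allow a general initial condition rather than a stationary one, couple the chain started at $x$ with one started from $\mu$ as above: they agree after a coalescence time $\tau$ with geometric tails, so the two partial sums differ by at most $2\|\varphi\|_\infty\,\tau$, which is $o(\sqrt N)$ almost surely, leaving the limit unchanged.

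\emph{Main obstacle.} The first two parts are routine; the genuine work is the CLT. The delicate points are the careful verification of the martingale-CLT hypotheses (square-integrability is immediate from boundedness of $g$, but stationarity and ergodicity of $\{D_n\}$ must be traced back to invariance and uniform ergodicity of the chain) and the transfer from a stationary to an arbitrary start. An alternative I would keep in reserve is the regeneration construction via Nummelin splitting: under the same Doeblin condition it breaks the trajectory into i.i.d.\ excursions and reduces the statement to the classical finite-variance CLT for sums of i.i.d.\ random variables, at the cost of a more elaborate state-space augmentation and a moment check on excursion lengths (geometric, hence harmless).
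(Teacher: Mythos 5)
The paper itself offers no proof of this theorem: it is stated without one, and the bibliographic notes simply defer to Meyn and Tweedie \cite{MT93}. Your argument is therefore supplying something the text omits, and it is correct. Your three-step route --- Birkhoff's ergodic theorem for the stationary chain, the Doeblin splitting $p(u,\cdot)=\varepsilon\,{\mathsf p}+(1-\varepsilon)q(u,\cdot)$ with the coalescence coupling for \eqref{eq:tv}, and the Poisson-equation/martingale approximation for \eqref{eq:clt} --- is the cleanest way to prove the theorem \emph{under the uniform minorisation actually assumed here}, and is more elementary than the general machinery (petite sets, drift conditions, split-chain regeneration) that \cite{MT93} develops to handle chains that are only geometrically or polynomially ergodic; your reserve option of Nummelin splitting is in fact the route closest to that reference. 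Two small points. First, the hypothesis ``the Markov chain is ergodic'' is left deliberately vague in the text; your reading of it as ergodicity of the shift on path space under $\bbP_\mu$ is the standard one and is exactly what Birkhoff requires, but it is worth saying explicitly that this is an interpretive choice. Second, you are right that the coboundary case $\sigma^2=\bbE^\mu[D_1^2]=0$ is a genuine exception to \eqref{eq:clt} as stated (there $K>0$ with $\xi_N\Rightarrow N(0,1)$ is impossible since the centred partial sums stay bounded); flagging this is appropriate, since the theorem as printed silently assumes non-degeneracy. The remaining details you would need to fill in --- that $q$ is a bona fide kernel, that $\{D_n\}$ inherits stationarity and ergodicity from the chain so that the Billingsley--Ibragimov martingale CLT applies, and the $o(\sqrt N)$ transfer from a stationary to an arbitrary start via the coupling time --- are all routine given what you have set up.
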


\begin{remark}
\label{rem:mcmc}
This theorem is the backbone behind MCMC. As we will see, there is
a large class of methods which ensure invariance of a given measure
$\mu$ and, furthermore, these methods are often provably ergodic\index{ergodic} 
so that the preceding theorem applies. As with all algorithms in computational
science, the optimal algorithm is the one the delivers smallest error
for given unit computational cost. In this regard there are two
observations to make about the preceding theorem.

\begin{itemize}

\item The constant $K$ measures the size of the variance of the estimator
of $\bbE^{\mu}\varphi(x),$ multiplied by $N$. It is thus a surrogate
for the error incurred in running MCMC over a finite number of steps.
The constant $K$ depends on 
$\varphi$ itself, but will also reflect general properties of the
Markov chain. For a given MCMC method there will often be tunable
parameters whose choice will affect the size of $K$, without affecting the
cost per step of the Markov chain. The objective of choosing these
parameters is to minimize the constant $K$, within a given class of
methods all of which have the same cost per step.
In thinking about how to do this it is important to appreciate that
$K$ measures the amount of correlation in the Markov chain; lower
correlation leads to decreased constant $K$. More precisely, $K$ is computed
by integrating the autocorrelation\index{autocorrelation} of the Markov chain.

\item A further tension in designing MCMC methods is in the choice of
the class of methods themselves. Some 
Markov chains\index{Markov chain} 
are expensive to implement, but the convergence
in \eqref{eq:clt} is rapid (the constant $K$ can be made small by appropriate
choice of parameters), whilst other
Markov chains\index{Markov chain} are cheaper
to implement, but the convergence in \eqref{eq:clt} 
is slower (the constant $K$ is much larger). 
Some compromise between ease of implementation and rate of convergence
needs to be made.

\end{itemize}

\end{remark}

\subsection{Metropolis-Hastings\index{Metropolis-Hastings} Methods}
\label{ssec:mh}

The idea of Metropolis-Hastings\index{Metropolis-Hastings} methods is to build an MCMC
method for measure $\mu$, by adding an accept/reject
test on top of a Markov chain\index{Markov chain} which is easy to implement, but
which is not invariant with respect to $\mu$; the accept/reject step
is designed to enforce invariance with respect to $\mu.$
This is done by enforcing {\em detailed balance}\index{detailed balance}:
\begin{equation}
\label{eq:db1}
\rho(u)p(u,w)=\rho(w)p(w,u)\quad\forall u,w \in \bbR^{\ell}\times\bbR^{\ell}.
\end{equation}
Note that integrating with respect to $u$ and using the fact that
$$\int_{\bbR^{\ell}} p(w,u)du=1$$
we obtain
$$\int_{\bbR^{\ell}} \rho(u)p(u,w)du=\rho(w)$$
so that \eqref{eq:comeon2} is satisfied and density $\rho$ is indeed
invariant. We now exhibit an algorithm designed to satisfy detailed
balance, by correcting a given Markov chain,\index{Markov chain} which is not invariant with respect to
$\mu$, by the addition of an accept/reject mechanism.

We are given a probability density function \index{probability density function}  $\rho$ hence satisfying
$\rho:\R^\ell\to\R^+$, with $\int \rho(u)du=1$. Now consider a Markov transition kernel $q:\R^\ell\times \R^\ell\to\R^+$ with the property that $\int q(u,w)dw=1$ for every $u\in\R^\ell$. Recall 
the notation, introduced in subsection \ref{ssec:mk},
that we use function $q(u,w)$ to denote a pdf and, simultaneously,
a probability measure $q(u,dw).$ We create a Markov chain\index{Markov chain} $\{u^{(n)}\}_{n\in\N}$ which is invariant for $\rho$ as follows. 
Define\footnote{Recall that we use the 
$\wedge$ operator to denote the minimum between
the two real numbers.}
\begin{equation}
\label{eq:dtma1}a(u,w)=1\wedge \frac{\rho(w)q(w,u)}{\rho(u)q(u,w)}.
\end{equation}
The algorithm is:
\begin{enumerate}
\item[1.] Set $n=0$ and choose $u^{(0)}\in\R^{\ell}$.
\item[2.] $n\to n+1$.
\item[3.] Draw {$w^{(n)}\sim q(u^{(n-1)},\cdot)$.}
\item[4.] Set $u^{(n)}=w^{(n)}$ with probability $a(u^{(n-1)},w^{(n)})$, $u^{(n)}=u^{(n-1)}$ otherwise.
\item[5.] Go to step 2.
\end{enumerate}

At each step in the algorithm there are two sources
or randomness: that required for drawing $w^{(n)}$ in step 3; and
that required for accepting or rejecting  $w^{(n)}$ as the next $u^{(n)}$
in step 4. These two sources of randomness are chosen to 
be independent of one another.
Furthermore, all the randomness at discrete algorithmic time $n$ is 
independent of randomness at preceding discrete algorithmic times,
conditional on $u^{(n-1)}$.  Thus the whole procedure gives a
Markov chain.\index{Markov chain} If $\us=\{\us^{(j)}\}_{j \in \N}$ 
is an i.i.d. \index{i.i.d.} sequence of $U[0,1]$ random variables
then we may write the algorithm as follows:
\begin{align*}
w^{(n)}&\sim q(u^{(n-1)},\cdot)\\
u^{(n)}&=w^{(n)}\bbI\bigl(\us^{(n)} \le a(u^{(n-1)},w^{(n)})\bigr)+
u^{(n-1)}\bbI\bigl(\us^{(n)} > a(u^{(n-1)},w^{(n)})\bigr).
\end{align*}
Here $\bbI$ denotes the indicator function of a set.
We let $p:\R^{\ell} \times \R^{\ell} \to \bbR^+$ denote the
transition kernel of the resulting Markov chain,\index{Markov chain}, 
and we let $p^n$ denote the transition kernel over $n$ steps; recall that 
hence $p^n(u,A)=\rp(u^{(n)} \in A|u^{(0)}=u).$
Similarly as above, for fixed $u$, $p^n(u,dw)$ denotes a probability measure on $\bbR^\ell$
with density $p^n(u,w).$
The resulting algorithm is known as a Metropolis-Hastings\index{Metropolis-Hastings}
MCMC \index{MCMC} algorithm, and satisfies detailed balance\index{detailed balance}
with respect to $\mu.$

\begin{remark}\label{r22} 

The following two observations are central to
Metropolis-Hastings \index{Metropolis-Hastings} MCMC \index{MCMC} methods.

\begin{itemize}

\item The construction of Metropolis-Hastings \index{Metropolis-Hastings} MCMC \index{MCMC} methods is 
designed to ensure the {\em detailed balance}\index{detailed balance} condition \eqref{eq:db1}.
We will use the condition expressed in this form in
what follows later. It is also sometimes written in integrated form
as the statement
\begin{equation}
\label{eq:db}
\int_{\R^{\ell}\times \R^{\ell}} f(u,w) \rho(u)p(u,w) dudw = \int_{\R^{\ell}\times \R^{\ell}}  f(u,w) \rho(w)p(w,u) dudw
\end{equation}
for all $f:\R^{\ell}\times \R^{\ell} \rightarrow \R.$
Once this condition is obtained it follows trivially that the measure
$\mu$ with density $\rho$ is invariant since, for $f=f(w)$, 
we obtain
\begin{align*}
\int_{\R^{\ell}} f(w) \left ( \int_{\R^{\ell}} \rho(u)p(u,w)du \right) dw & = \int_{\R^{\ell}} f(w) \rho(w) dw \int_{\R^{\ell}} p(w,u)du\\
&=\int_{\R^{\ell}} f(w) \rho(w) dw.
\end{align*}
Note that $\int_{\R^{\ell}} \rho(u)p(u,w)du$ is the density of the
distribution of the Markov chain\index{Markov chain} after one step, given that it is
initially distributed according to density $\rho.$ Thus the preceding
identity shows that the expectation of $f$ is unchanged by the
Markov chain,\index{Markov chain} if it is initially distributed with density $\rho.$
This means that if the Markov chain\index{Markov chain} is distributed according to measure
with density $\rho$ initially then it will be distributed according
to the same measure for all algorithmic time.

\item Note that, in order to implement Metropolis-Hastings \index{Metropolis-Hastings} MCMC \index{MCMC} methods,
it is not necessary to know the normalisation constant for $\rho(\cdot)$ 
since only its ratio appears in the definition of the 
acceptance probability $a$.  
\end{itemize}

\end{remark}

The Metropolis-Hastings \index{Metropolis-Hastings} algorithm defined above satisfies the following,
which requires definition of TV distance given in section \ref{ssec:pm}:

\begin{corollary}\label{c21} For the Metropolis-Hastings MCMC methods
we have that the detailed balance \index{detailed balance} condition \eqref{eq:db1} is satisfied
and that hence $\mu$ is invariant: if $u^{(0)}\sim\mu$ with Lebesgue density $\rho$, then $u^{(n)}\sim\mu$ for all $n\in\Z^+$. 
Thus, if the Markov chain \index{Markov chain} is ergodic\index{ergodic}, 
then the conclusions of Theorem \ref{th21} hold.
\end{corollary}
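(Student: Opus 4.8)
The plan is to establish the detailed balance identity \eqref{eq:db1} directly from the definition of the acceptance probability $a(u,w)$ in \eqref{eq:dtma1}, and then invoke the already-developed machinery: detailed balance implies invariance (shown in Remark \ref{r22}), and invariance plus ergodicity gives the conclusions of Theorem \ref{th21}. The real content is therefore the verification of \eqref{eq:db1} for the Metropolis-Hastings kernel $p(u,w)$.

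The first step is to write down $p(u,\cdot)$ explicitly. Since the chain at state $u$ either moves to a proposed $w\sim q(u,\cdot)$ with probability $a(u,w)$ or stays at $u$, the transition kernel has the form
$$p(u,dw)=q(u,w)a(u,w)\,dw+\Bigl(1-\int_{\R^\ell}q(u,w')a(u,w')\,dw'\Bigr)\delta_u(dw).$$
I would split the verification of \eqref{eq:db1} into the ``off-diagonal'' part ($u\neq w$, coming from the $q(u,w)a(u,w)$ term) and the ``diagonal'' part (the Dirac term), noting that the diagonal part trivially satisfies $\rho(u)\bigl(\cdots\bigr)\delta_u = \rho(w)\bigl(\cdots\bigr)\delta_w$ when tested against any function, since both sides are supported on $\{u=w\}$. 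For the off-diagonal part the claim reduces to the pointwise identity
$$\rho(u)q(u,w)a(u,w)=\rho(w)q(w,u)a(w,u)\qquad\forall\,u,w\in\R^\ell.$$

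The key computation is this last identity, and it is where I would be slightly careful though it is not genuinely hard. Fix $u,w$ with $\rho(u)q(u,w)>0$ and $\rho(w)q(w,u)>0$ (the degenerate cases where one of these vanishes are handled separately, using that then the corresponding $a$ is either $1$ or the product $\rho q\, a$ is $0$ on both sides — here the standing assumption that $\rho$ is everywhere strictly positive helps, so only $q$ can vanish). Write $\alpha=\rho(w)q(w,u)$ and $\beta=\rho(u)q(u,w)$. Then $a(u,w)=1\wedge(\alpha/\beta)$ and $a(w,u)=1\wedge(\beta/\alpha)$, so
$$\beta\,a(u,w)=\beta\wedge\alpha=\alpha\,a(w,u),$$
which is exactly the desired identity. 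This establishes \eqref{eq:db1}.

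Having verified detailed balance, the remainder is assembly: by the integrated form \eqref{eq:db} of detailed balance and the calculation in Remark \ref{r22} (taking $f=f(w)$), the measure $\mu$ with density $\rho$ is invariant, i.e. if $u^{(0)}\sim\mu$ then $u^{(n)}\sim\mu$ for all $n\in\Z^+$; and then, if the chain is additionally ergodic, Theorem \ref{th21} applies verbatim to the Metropolis-Hastings chain, yielding the strong law \eqref{eq:clt} and (under the minorization hypothesis there) the total-variation convergence \eqref{eq:tv}. I do not expect any serious obstacle; the only point requiring mild attention is the bookkeeping for pairs $(u,w)$ at which a proposal density vanishes, and separating the Dirac-at-diagonal contribution to $p$ from the absolutely continuous part when checking \eqref{eq:db1}.
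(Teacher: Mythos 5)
Your proof is correct, and it supplies exactly the argument the paper leaves implicit: the paper merely asserts that the algorithm ``satisfies detailed balance'' and relies on Remark \ref{r22} for the step from detailed balance to invariance, so the pointwise identity $\rho(u)q(u,w)a(u,w)=\rho(w)q(w,u)a(w,u)$ via $\beta(1\wedge\alpha/\beta)=\alpha\wedge\beta=\alpha(1\wedge\beta/\alpha)$ is precisely the intended content. Your explicit separation of the kernel into its absolutely continuous part and the Dirac mass at the current state, and your handling of the degenerate cases where $q$ vanishes, are in fact more careful than the paper's informal pointwise statement of \eqref{eq:db1}, which tacitly treats $p(u,\cdot)$ as having a density; nothing further is needed.
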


We now describe some exemplars of Metropolis-Hastings \index{Metropolis-Hastings} methods
adapted to the data assimilation problem. These are not to be
taken as optimal MCMC \index{MCMC} methods for data assimilation, but rather
as examples of how to construct proposal\index{proposal}
distributions $q(u,\cdot)$ for
Metropolis-Hastings \index{Metropolis-Hastings} methods in the context of data assimilation.
In any given application the proposal\index{proposal} distribution plays a central
role in the efficiency of the MCMC \index{MCMC} method and tailoring it to the
specifics of the problem can have significant impact on efficiency
of the MCMC \index{MCMC} method. 
Because of the level of generality at which we are presenting the
material herein (arbitrary $f$ and $h$), 
we cannot discuss such tailoring in any detail.

\subsection{Deterministic Dynamics}
\label{ssec:dd2}

In the case of deterministic dynamics \eqref{eq:dtf11}, the measure of interest is a measure on
the initial condition $v_0$ in $\mathbb{R}^{n}$. 
Perhaps the simplest Metropolis-Hastings \index{Metropolis-Hastings} algorithm is the {\bf Random Walk
Metropolis}\index{Random Walk
Metropolis} (RWM)\index{RWM}\index{Random Walk
Metropolis!RWM} sampler which employs a Gaussian
proposal,\index{proposal!Gaussian}\index{proposal!RWM}
centred at the current state; we now illustrate this
for the case of deterministic dynamics.
Recall that the measure of interest is $\nu$ with pdf $\varrho.$
Furthermore $\varrho \propto \exp\bigl(-\Iid(v_0;y)\bigr)$
as given in Theorem \ref{th112}.
 
The RWM method proceeds as follows:
given that we are at $u^{(n-1)} \in \bbR^n$,
a current approximate sample from the posterior distribution\index{posterior distribution}
on the initial condition, we propose
\begin{equation}
\label{eq:prop1}
w^{(n)}=u^{(n-1)}+\beta\iota^{(n-1)}
\end{equation}
where $\iota^{(n-1)} \sim N(0,C_{{\rm prop}})$
for some symmetric positive-definite proposal covariance\index{proposal!covariance}
$C_{{\rm prop}}$ and proposal variance\index{proposal!variance} 
scale parameter $\beta>0$; 
natural choices for this proposal
covariance\index{proposal!covariance}
include the identity $I$ or the prior covariance
$C_0.$ Because of the
symmetry of such a random walk proposal it follows that
$q(w,u)=q(u,w)$ and hence that
\begin{align*}
a(u,w)&=1\wedge \frac{\varrho(w)}{\varrho(u)}\\
&=1\wedge \exp\bigl(\Iid(u;y)-\Iid(w;y)\bigr).
\end{align*}

\begin{remark} \label{r:connect}
The expression for the acceptance probability shows that
the proposed move to $w$ is accepted with probability
one if the value of $\Iid(\cdot\,;y)$, the
log-posterior\index{posterior!log-posterior},
is decreased by moving to $w$ from the current
state $u$. On the other hand, if $\Iid(\cdot\,;y)$ increases then the proposed
state is accepted only with some probability less than one.
Recall that $\Iid(\cdot\,;y)$ is the sum of the prior penalization (background)
and the model-data misfit functional. 
The algorithm thus has a very natural interpretation in terms of the
data assimilation problem: it biases samples towards decreasing $\Iid(\cdot\,;y)$
and hence to improving the fit to both the model and the data in
combination. 

The algorithm has two key tuning parameters:
the proposal covariance\index{proposal!covariance}
$C_{{\rm prop}}$ and the scale parameter $\beta.$
See Remark \ref{rem:mcmc}, first bullet, for discussion of the role of
such parameters.
The covariance can encode any knowledge, or guesses, about the relative
strength of correlations in the model; given this, the parameter $\beta$ should
be tuned to give an acceptance probability that is neither close to $0$ nor to
$1$. This is because if the acceptance probability is small then successive 
states of the Markov chain are highly correlated, leading to a large constant
$K$ in \eqref{eq:clt}. On the other hand if the acceptance probability
is close to one then this is typically because $\beta$ is small, also leading 
to highly correlated steps and hence to a large constant
$K$ in \eqref{eq:clt}. 
\end{remark}

Numerical results illustrating the method are given in section \ref{ssec:mci}.

\subsection{Stochastic Dynamics}
\label{ssec:sd2}

We now apply the Metropolis-Hastings \index{Metropolis-Hastings} methodology to the data assimilation smoothing problem in the case of the stochastic dynamics \index{stochastic dynamics} model \eqref{eq:dtf1}. 
Thus the probability measure is on an entire signal\index{signal}
sequence $\{v_j\}_{j=0}^J$ and not just on $v_0$; hence it  
lives on $\bbR^{|\J_0|\times n}$. 
{It is possible to apply the random walk method to this
situation, too, but we take the opportunity to introduce several 
different Metropolis-Hastings \index{Metropolis-Hastings} methods, in order to highlight the
flexibility of the methodology. Furthermore, it is also possible
to take the ideas behind the proposals introduced in this
section and apply them in the case of deterministic dynamics.

In what follows recall the measures $\mu_0$ and $\mu$ defined in
section \ref{ssec:sp}, with densities $\rho_0$ and $\rho$, representing
(respectively) the measure on sequences $v$ generated by \eqref{eq:dtf1}
and the resulting measure when the signal\index{signal}
is conditioned on the data\index{data} $y$ from \eqref{eq:dtf2}.}
We now construct,  via the Metropolis-Hastings\index{Metropolis-Hastings} 
methodology, two Markov chains \index{Markov chain} $\{u^{(n)}\}_{n\in\N}$ which are invariant with respect to $\mu$. Hence 
we need only specify the transition  kernel $q(u,w)$, and identify the resulting acceptance probability $a(u,w)$. The sequence $\{w^{(n)}\}_{n\in\Z^+}$
will denote the proposals.\index{proposal}

{\bf Independence Dynamics Sampler}\index{Independence Dynamics Sampler} 
Here we choose the proposal\index{proposal!independence dynamics sampler}
$w^{(n)}$, independently of the current state $u^{(n-1)}$, from the prior $\mu_0$ with
density $\rho_0.$ Thus we are simply proposing independent draws from
the dynamical model \eqref{eq:dtf1}, with no information from the data used
in the proposal. 
Important in what follows is the observation that 
\begin{equation}\label{eq:dtsa2}
\frac{\rho(v)}{\rho_0(v)}\propto\exp(-\PPhi(v;y)).
\end{equation}
With the given definition of proposal we have that
$q(u,w)=\rho_0(w)$ and hence that 
\begin{align*}a(u,w)&=1\wedge\frac{\rho(w)q(w,u)}{\rho(u)q(u,w)}\\
&=1\wedge\frac{\rho(w)/\rho_0(w)}{\rho(u)/\rho_0(u)}\\
&=1\wedge\exp(\PPhi(u;y)-\PPhi(w;y)).
\end{align*}

\begin{remark} \label{r:connect2}
The expression for the acceptance probability shows that
the proposed move to $w$ is accepted with probability
one if the value of $\PPhi(\cdot\,;y)$ is decreased by moving to $w$ from the current
state $u$. On the other hand, if $\PPhi(\cdot\,;y)$ increases then the proposed
state is accepted only with some probability less than one, with the
probability decreasing exponentially fast with respect to the size of 
the increase. Recall that $\PPhi(\cdot\,;y)$ measures the fit of the signal 
to the data. Because the proposal\index{proposal} builds in the underlying signal model the
acceptance probability does not depend on $\Ii(\cdot\,;y)$, the
negative log-posterior,\index{posterior!log-posterior} 
but only the part reflecting the data, namely the negative 
log-likelihood.\index{likelihood!log-likelihood}
In contrast the RWM\index{Random Walk Metropolis}\index{proposal!RWM} 
method, explained in the
context of deterministic dynamics, does not build the model into
its proposal and hence the accept-reject mechanism depends on the
entire log-posterior\index{posterior!log-posterior}; 
see Remark \ref{r:connect}. 
\end{remark}

The Independence Dynamics Sampler\index{Independence Dynamics Sampler} 
does not have any tuning parameters
and hence can be very inefficient as there are no parameters to
modify in order to obtain a reasonable acceptance probability;
as we will see in the illustrations section \ref{ssec:mci} below, the
method can hence be quite inefficient because of the resulting
frequent rejections.  We now discuss this point and an approach to resolve it. 
The rejections are caused by attempts to move far from the
current state,  and in particular to proposed states which
are based on the underlying stochastic dynamics,\index{stochastic dynamics} 
but not on the observed data. 
This typically leads to increases in the model-data misfit 
functional $\PPhi(.\,;y)$ once the Markov chain has found a state
which fits the data reasonably well. Even if data is not explicitly used in
constructing the proposal\index{proposal}, this effect can be ameliorated
by making local proposals, which do not move far from the current state. 
These are exemplified in the following MCMC \index{MCMC} algorithm.

{\bf The pCN \index{pCN} Method.}
It is helpful in what follows to recall the measure $\vartheta_0$ with 
density $\pi_0$ found from $\mu_0$ and $\rho_0$ 
in the case where $\PPsi\equiv0$ and given by equation \eqref{eq:nuz}.
We denote the mean by $m$ and covariance 
by $C$, noting that $m=(m_0^T,0^T,\cdots,0^T)^T$ and that
$C$ is block diagonal with first block $C_0$ and the remainder
all being $\Sigma.$  Thus $\vartheta_0=N(m,C).$
The basic idea of this method is to make proposals\index{proposal!pCN} 
with the
property that, if $\PPsi \equiv 0$ so that the dynamics is Gaussian
and with no time correlation, and if $h \equiv 0$ so that the data
is totally uninformative, then the proposal\index{proposal} would be accepted
with probability one. Making small incremental proposals of this type
then leads to a Markov chain which
incorporates the effects of $\PPsi \ne 0$ and $h \ne 0$ through
the accept-reject mechanism. We describe the details of how this works.

Recall the prior on the stochastic dynamics \index{stochastic dynamics} model with density
$\rho_0(v)\propto \exp\bigl(-\Jj(v)\bigr)$ given by \eqref{eq:JJ}.
It will be useful to rewrite $\pi_0$ as follows:
\[\pi_0(v)\propto\exp(-\Jj(v)+F(v)),\]where
\begin{equation} \label{eq:Gu}
F(v)=\sum_{j=0}^{J-1}\left(\frac12\left|\Sigma^{-\frac12}\PPsi(v_{j})\right|^2-\left\langle 
\Sigma^{-\frac12}v_{j+1}, \Sigma^{-\frac12}\PPsi(v_{j})\right\rangle\right).
\end{equation}
We note that
\begin{equation*}
\frac{\rho_0(v)}{\pi_0(v)}\propto\exp(-F(v))
\end{equation*}
and hence that, using \eqref{eq:dtsa33},
\begin{equation}\label{eq:dtsa3}
\frac{\rho(v)}{\pi_0(v)}\propto\exp(-\PPhi(v;y)-F(v)).
\end{equation}

Recall the Gaussian measure $\vartheta_0=N(m,C)$
defined via its pdf in \eqref{eq:nuz}.
The pCN \index{pCN} method is a variant of random walk type methods, based on the following Gaussian proposal\index{proposal!Gaussian}\index{proposal!pCN} 
\begin{equation}
w^{(n)}=m+\left(1-\beta^2\right)^\frac12\left(u^{(n-1)}-m\right)+\beta\iota^{(n-1)},
\label{eq:pcnprop}
\end{equation}
\[\beta \in (0,1], \quad \iota^{(n-1)}\sim N(0,C).\]
Here $\iota^{(n-1)}$ is assumed to be independent of $u^{(n-1)}$. 


\begin{lemma} Consider the Markov chain \index{Markov chain}
\begin{equation}
u^{(n)}=m+\left(1-\beta^2\right)^\frac12\left(u^{(n-1)}-m\right)+\beta\iota^{(n-1)},
\label{eq:pcnprop2}
\end{equation}
\[\beta \in (0,1], \quad \iota^{(n-1)}\sim N(0,C)\]
with $\iota^{(n-1)}$ independent of $u^{(n-1)}.$
The Markov kernel \index{Markov kernel} for this chain $q(u,w)$ satisfies detailed
balance \eqref{eq:db1}
with respect to the measure $\vartheta_0$ with density $\pi_0$:
\begin{equation}
\label{eq:deen}
\frac{\pi_0(w)q(w,u)}{\pi_0(u)q(u,w)}=1.
\end{equation}
\end{lemma}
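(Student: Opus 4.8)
The plan is to verify the detailed balance identity \eqref{eq:deen} by direct computation, exploiting the fact that the proposal \eqref{eq:pcnprop2} is an autoregressive (AR(1)) update that preserves the Gaussian measure $\vartheta_0=N(m,C)$. First I would reduce to the centred case: set $\tilde u = u-m$, $\tilde w = w-m$, so that the chain becomes $\tilde u^{(n)} = (1-\beta^2)^{1/2}\tilde u^{(n-1)} + \beta \iota^{(n-1)}$ with $\iota^{(n-1)}\sim N(0,C)$, and $\pi_0$ becomes (up to the normalising constant) $\pi_0(u)\propto \exp\bigl(-\tfrac12|\tilde u|_C^2\bigr)$ where $|\cdot|_C$ denotes the norm induced by $C^{-1}$ as in the conventions of Section \ref{ssec:p}. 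The transition kernel is then explicitly Gaussian: given $\tilde u$, the proposal $\tilde w$ is distributed as $N\bigl((1-\beta^2)^{1/2}\tilde u,\;\beta^2 C\bigr)$, so
\[
q(u,w)\propto \exp\Bigl(-\frac{1}{2\beta^2}\bigl|\tilde w-(1-\beta^2)^{1/2}\tilde u\bigr|_C^2\Bigr).
\]

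Next I would form the ratio $\dfrac{\pi_0(w)q(w,u)}{\pi_0(u)q(u,w)}$ and show the exponent vanishes. The numerator exponent is $-\tfrac12|\tilde w|_C^2 - \tfrac{1}{2\beta^2}\bigl|\tilde u-(1-\beta^2)^{1/2}\tilde w\bigr|_C^2$ and the denominator exponent is $-\tfrac12|\tilde u|_C^2 - \tfrac{1}{2\beta^2}\bigl|\tilde w-(1-\beta^2)^{1/2}\tilde u\bigr|_C^2$. Expanding the squared norms using $|a-b|_C^2 = |a|_C^2 - 2\langle a,b\rangle_C + |b|_C^2$ (with $\langle\cdot,\cdot\rangle_C$ the $C^{-1}$-inner product), the cross terms $\langle \tilde u,\tilde w\rangle_C$ match on both sides and cancel, and the diagonal terms combine: the numerator contributes $-\tfrac12|\tilde w|_C^2 - \tfrac{1}{2\beta^2}|\tilde u|_C^2 - \tfrac{1-\beta^2}{2\beta^2}|\tilde w|_C^2$, which equals $-\tfrac{1}{2\beta^2}|\tilde w|_C^2 - \tfrac{1}{2\beta^2}|\tilde u|_C^2$; the denominator gives the symmetric expression with the roles of $\tilde u$ and $\tilde w$ swapped, which is identical. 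Hence the difference of exponents is zero, the ratio is $1$, and the normalising constants of $\pi_0$ and $q$ cancel in the ratio as well. Finally I would remark that, as noted after \eqref{eq:comeon2}, detailed balance with respect to $\vartheta_0$ implies $\vartheta_0$ is invariant for $q$, which is the reason the pCN proposal is designed this way.

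The computation is entirely routine linear algebra, so there is no real obstacle; the only mild care needed is bookkeeping the coefficient $(1-\beta^2)^{1/2}$ and verifying that the $(1-\beta^2)$ and $\beta^2$ factors recombine cleanly — which they do because $1 + (1-\beta^2)/\beta^2 = 1/\beta^2$ symmetrically on both sides. I would present this as a short half-page proof: state the centring reduction, write down the explicit Gaussian form of $q(u,w)$, expand and cancel, and conclude. If one wishes to avoid coordinates entirely, an alternative is to observe that the AR(1) map $(\tilde u,\iota)\mapsto \bigl(\tilde u,(1-\beta^2)^{1/2}\tilde u+\beta\iota\bigr)$ together with the observation that the pair $(\tilde u,\tilde w)$ is jointly Gaussian with a covariance matrix symmetric under swapping the two blocks — since $\mathrm{Cov}(\tilde u,\tilde w)=(1-\beta^2)^{1/2}C=\mathrm{Cov}(\tilde w,\tilde u)$ and both marginals are $N(0,C)$ — forces the joint density to be symmetric, which is precisely \eqref{eq:db1}. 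I would include the explicit expansion as the primary argument since it is self-contained and matches the computational style of the surrounding text.
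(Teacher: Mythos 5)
Your proposal is correct and is essentially the paper's own argument: the paper likewise shows that $\pi_0(u)q(u,w)$ is symmetric in $(u,w)$ by expanding the quadratic form $\frac12|u-m|_{C}^2+\frac{1}{2\beta^2}\bigl|w-m-(1-\beta^2)^{\frac12}(u-m)\bigr|^2_{C}$ and observing that the coefficients recombine to $\frac{1}{2\beta^2}|u-m|_{C}^2+\frac{1}{2\beta^2}|w-m|_{C}^2-\frac{(1-\beta^2)^{\frac12}}{\beta^2}\langle w-m,u-m\rangle_{C}$, exactly the cancellation $\frac12+\frac{1-\beta^2}{2\beta^2}=\frac{1}{2\beta^2}$ you identify. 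Your coordinate-free aside via the symmetric joint Gaussian covariance is a pleasant alternative but not needed.
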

\begin{proof}
We show that $\pi_0(u)q(u,w)$ is symmetric in $(u,w).$ To demonstrate
this it suffices to consider the quadratic form found by taking the
negative of the logarithm of this expression. This is given by
$$\frac12|u-m|_{C}^2+\frac{1}{2\beta^2}|w-m-(1-\beta^2)^{\frac12}(u-m)|^2_{C}.$$
This is the same as
$$\frac{1}{2\beta^2}|u-m|_{C}^2+\frac{1}{2\beta^2}|w-m|_{C}^2-
\frac{(1-\beta^2)^{\frac12}}{\beta^2}\langle w-m,u-m\rangle_{C}$$
which is clearly symmetric in $(u,w).$ The result follows.
\end{proof}

By use of \eqref{eq:deen} and (\ref{eq:dtsa3})
we deduce that the acceptance probability for the MCMC method 
with proposal\index{proposal} \eqref{eq:pcnprop} is 
\begin{align*}
a(u,w)&=1\wedge\frac{\rho(w)q(w,u)}{\rho(u)q(u,w)}\\
&=1\wedge\frac{\rho(w)/\pi_0(w)}{\rho(u)/\pi_0(u)}\\
&=1\wedge\exp(\PPhi(u;y)-\PPhi(w;y)+F(u)-F(w)).
\end{align*}
{Recall that the proposal preserves the underlying Gaussian structure
of the stochastic dynamics \index{stochastic dynamics} model; the accept-reject mechanism then
introduces non-Gaussianity into the stochastic dynamics \index{stochastic dynamics} model,
via $F$, and 
introduces the effect of the data, via $\PPhi.$
By choosing $\beta$ small, so that $w^{(n)}$ is close to $u^{(n-1)}$, we can make $a(v^{(n-1)}, w^{(n)})$ reasonably large and obtain a usable algorithm.
This is illustrated in section \ref{ssec:mci}.

Recall from subsection \ref{ssec:reform} that, if $\PPsi \equiv 0$ 
(as assumed to define the measure $\vartheta_0$), then the noise sequence $\{\xi_{j-1}\}_{j=1}^{\infty}$
is identical with the signal sequence $\{v_j\}_{j=1}^{\infty}$.
More generally, even if $\PPsi \ne 0$, the noise sequence $\{\xi_j\}_{j=1}^{\infty}$, together with $v_0$, a vector which we denote in 
subsection \ref{ssec:reform} by $\xi$, uniquely determines the  
signal sequence $\{v_j\}_{j=0}^{\infty}$: see Lemma \ref{l:sicab}. 
This motivates a different formulation of the smoothing
problem for stochastic dynamics \index{stochastic dynamics} where one views the noise sequence
and initial condition as the unknown, rather than the signal sequence
itself. Here we study the implication of this perspective for 
MCMC \index{MCMC} methodology, in the context of the pCN \index{pCN} Method,
leading to our third sampler within this subsection: the
pCN \index{pCN} Dynamics Sampler. We now describe this algorithm.

{\bf The pCN \index{pCN} Dynamics Sampler} is so-named because the
proposal\index{proposal!pCN dynamics sampler} 
(implicitly, via the mapping $G$ defined in Lemma \ref{l:sicab})
samples from the dynamics as in the Independence Sampler\index{Independence Dynamics Sampler}, 
while the proposal also includes a parameter $\beta$ allowing small
steps to be taken and chosen to ensure good acceptance probability,
as in the pCN \index{pCN} Method.
The posterior measure we wish to sample is given in Theorem \ref{t:101}. 
Note that this theorem implicitly contains the fact that 
$$\vartheta(d\xi) \propto \exp\bigl(-\Phr(\xi;y)\bigr)\vartheta_0(d\xi).$$
Furthermore $\vartheta_0=N(m,C)$ 
where the mean $m$ and covariance $C$ are as
described above for the standard pCN\index{pCN} method.
We  use the pCN \index{pCN} proposal\index{proposal!pCN} \eqref{eq:pcnprop}:
\begin{equation*}
\zeta^{(n)}=m+\left(1-\beta^2\right)^\frac12\left(\xi^{(n-1)}-m\right)+\beta\iota^{(n-1)},
\end{equation*}
and the acceptance probability is given by
$$
a(\xi,\zeta)=1\wedge\exp\left(\Phr(\xi;y)-\Phr(\zeta;y)\right). 
$$
When interpreting this formula it is instructive to note that
$$\Phr(\xi;y)=\frac12\left|y-{\cal G}(\xi)\right|_{\Gamma_J}^2=\frac12\left|\Gamma_J^{-\frac12}\left(y-{\cal G}(\xi)\right)\right|^2 =
\PPhi\bigl({\cal G}(\xi);y\bigr),$$
and that $\xi$ comprises both $v_0$ and the noise sequence
$\{\xi\}_{j=0}^{J-1}.$
Thus the method has the same acceptance probability as the Independence
Dynamics Sampler\index{Independence Dynamics Sampler}, 
albeit expressed in terms of initial condition
and model noise rather than signal, and also possesses a tunable parameter
$\beta$; it thus has the nice conceptual interpretation of
the acceptance probability that is present in the Independence
Dynamics Sampler\index{Independence Dynamics Sampler}, 
as well as the advantage of the pCN\index{pCN} method 
that the proposal\index{proposal!variance} variance $\beta$ 
may be chosen to ensure a reasonable acceptance probability.

\section{Variational Methods}\label{ssec:vm}
Sampling the posterior using MCMC \index{MCMC} methods can be prohibitively 
expensive. This is because, in general, sampling involves generating
many different points in the state space of the Markov chain. \index{Markov chain} It can
be of interest to generate a single point, or small number of points,
which represent the salient features of the probability distribution, when this is possible. If the probability is peaked at one, or a small number of places, 
then simply locating these peaks may be sufficient in some applied contexts. 
This is the basis for variational methods\index{variational method} 
which seek to maximize the posterior 
probability, thereby locating such peaks. 
In practice this boils down to minimizing the negative log-posterior.
\index{posterior!log-posterior}

We start by illustrating the idea in the context of the Gaussian distributions
highlighted in section \ref{ssec:kalkal} concerning the Kalman smoother.\index{Kalman smoother}
In the case of stochastic dynamics,\index{stochastic dynamics} Theorem \ref{t:ks1} shows that
$\bbP(v|y)$, the pdf of the posterior distribution, has the form
$$P(v|y) \propto \exp\Bigl(-\frac12|v-m|_{L}^2\Bigr).$$
Now consider the problem
$$v^{\star}={\rm argmax}_{v \in \bbR^{|\J_0| \times n}} \bbP(v|y).$$ 
From the structure of $\bbP(v|y)$ we see that
$$v^{\star}={\rm argmin}_{v \in \bbR^{|\J_0| \times n}} \Ii(v;y)$$
where
$$\Ii(v;y)=\frac12|v-m|_{L}^2=\frac12|L^{-\frac12}(v-m)|^2.$$
Thus $v^{\star}=m$, the mean of the posterior.
Similarly, using Theorem \ref{t:ks2}, we can show that
in the case of deterministic dynamics \index{deterministic dynamics},
$$v_0^{\star}={\rm argmax}_{v_0 \in \bbR^{|\J_0| \times n}} \bbP(v_0|y),$$
in the case of deterministic dynamics,
is given by $v_0^{\star}=\mdet.$

In this section we show how to characterize peaks in the posterior
probability, in the general non-Gaussian case,
leading to problems in the calculus of variations\index{calculus of variations}.
The methods are termed {\bf variational methods}\index{variational method}. 
In the atmospheric sciences\index{atmospheric sciences}
these variational methods
are referred to as {\bf 4DVAR}\index{4DVAR}; this nomenclature
reflects the fact that they are variational methods\index{variational method} which incorporate data over three spatial dimensions and one temporal dimension (thus four dimensions in
total), in order to estimate the state. In Bayesian \index{Bayesian} statistics the methods are called {\bf MAP estimators}\index{MAP estimator}: 
maximum {\em a posteriori} estimators\index{maximum {\em a posteriori} estimators}. 
It is helpful to realize that the MAP estimator \index{MAP estimator} is not,
in general, equal to the mean of the posterior distribution.
However, in the case of Gaussian posteriors, it is equal to the mean.
Computation of the mean of a posterior distribution, in general,
requires integrating against the posterior distribution. This can
be achieved, via sampling for example, but is typically quite
expensive, if sampling is expensive. MAP estimators \index{MAP estimator},
in contrast, only require solution of an optimization\index{optimization} 
problem.
Unlike the previous section on MCMC methods we do not attempt to
overview the vast literature on relevant algorithms (here
optimization\index{optimization} algorithms); instead 
references are given in the bibliographic notes of section \ref{ssec:bs}.

First we consider the case of stochastic dynamics \index{stochastic dynamics}.
 
\begin{theorem}\label{th22}
Consider the data assimilation problem for
stochastic dynamics \index{stochastic dynamics}: \eqref{eq:dtf1}, \eqref{eq:dtf2}, with $\PPsi \in C^1(\bbR^n,\bbR^n)$ and $h \in C^1(\bbR^n,\bbR^m).$
Then:

\begin{itemize} 

\item (i)  the infimum of $\Ii(\cdot\,;y)$ given in (\ref{eq:dtf4})
is attained at at least one point $v^\star$ in $\bbR^{|\J_0|\times n}$.
It follows that the density $\rho(v)=\bbP(v|y)$ on $\bbR^{|\J_0|\times n}$
associated with the posterior probability $\mu$ given by Theorem \ref{th11}
is maximized at $v^\star;$ 

\item (ii) furthermore, 
let $B(u,\delta)$ denote a ball in $\R^{|\J_0|\times n}$ of radius $\delta$ and centred at $u$. Then 
\begin{equation}
\label{eq:missed}
\lim_{\delta\to0}\frac{\rp^\mu\bigl(B(u_1,\delta)\bigr)}{\rp^\mu\bigl(B(u_2,\delta)\bigr)}=\exp\bigl(\Ii(u_2;y)-\Ii(u_1;y)\bigr) 
\quad {\rm for ~all~} u_1, u_2 \in \bbR^{|\J_0|\times n}.
\end{equation}

\end{itemize}
\end{theorem}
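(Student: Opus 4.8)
The plan is to split the statement into its two parts and handle them in order. For part (i), the strategy is a standard coercivity-plus-continuity argument. First I would observe that $\Ii(\cdot;y) = \Jj(\cdot) + \PPhi(\cdot;y)$, that $\PPhi(\cdot;y) \ge 0$ since it is a sum of squared weighted norms, and that $\Jj(v) \ge \frac12|C_0^{-\frac12}(v_0-m_0)|^2$ controls the $v_0$ component. The key point is coercivity: I need $\Ii(v;y) \to \infty$ as $|v| \to \infty$ in $\bbR^{|\J_0|\times n}$. This does \emph{not} follow from the $v_0$-term alone, since that only controls one block; instead one uses the chain of terms $\frac12|v_{j+1}-\PPsi(v_j)|_{\Sigma}^2$ in $\Jj$. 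Arguing inductively: if $|v_0|$ is bounded by the first term, then boundedness of $\frac12|v_1-\PPsi(v_0)|_{\Sigma}^2$ together with continuity of $\PPsi$ (so $\PPsi(v_0)$ is bounded) forces $v_1$ to be bounded, and so on up to $v_J$. Contrapositively, $\Ii(v;y) \le C$ implies $|v|$ bounded, which is coercivity. Since $\PPsi, h \in C^1 \subset C$, the map $\Ii(\cdot;y)$ is continuous, hence attains its infimum on the closed bounded (compact) sublevel set $\{v : \Ii(v;y) \le \Ii(m_0 \oplus \PPsi\text{-trajectory};y)+1\}$, which is nonempty. Call a minimizer $v^\star$. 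Since $\rho(v) \propto \exp(-\Ii(v;y))$ is a strictly decreasing function of $\Ii$, the density is maximized exactly where $\Ii$ is minimized, giving the claim.

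For part (ii), the plan is a direct computation with the ratio of small-ball probabilities. Writing $\rho(v) = \frac1Z \exp(-\Ii(v;y))$ with $Z$ the (finite, positive) normalization constant, we have
\begin{equation*}
\frac{\bbP^\mu(B(u_1,\delta))}{\bbP^\mu(B(u_2,\delta))} = \frac{\int_{B(u_1,\delta)} \exp(-\Ii(v;y))\,dv}{\int_{B(u_2,\delta)} \exp(-\Ii(v;y))\,dv},
\end{equation*}
where the $Z$'s and Lebesgue normalizations cancel. The idea is to factor out $\exp(-\Ii(u_i;y))$ from each integral: writing $v = u_i + w$ with $|w| < \delta$,
\begin{equation*}
\int_{B(u_i,\delta)} \exp(-\Ii(v;y))\,dv = \exp(-\Ii(u_i;y)) \int_{|w|<\delta} \exp\bigl(-(\Ii(u_i+w;y)-\Ii(u_i;y))\bigr)\,dw.
\end{equation*}
Since $\Ii(\cdot;y)$ is continuous at $u_i$, the integrand $\exp(-(\Ii(u_i+w;y)-\Ii(u_i;y)))$ converges to $1$ uniformly on $|w|<\delta$ as $\delta\to 0$, so the remaining integral is $\mathrm{Vol}(B(0,\delta))(1+o(1))$. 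The volume factors in numerator and denominator are identical and cancel, leaving
\begin{equation*}
\frac{\bbP^\mu(B(u_1,\delta))}{\bbP^\mu(B(u_2,\delta))} = \frac{\exp(-\Ii(u_1;y))(1+o(1))}{\exp(-\Ii(u_2;y))(1+o(1))} \longrightarrow \exp\bigl(\Ii(u_2;y)-\Ii(u_1;y)\bigr),
\end{equation*}
which is exactly \eqref{eq:missed}.

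The main obstacle, and the step deserving the most care, is the coercivity claim in part (i): one must be explicit that continuity of $\PPsi$ alone (not a growth bound) suffices, because the inductive argument only needs that $\PPsi$ maps bounded sets to bounded sets, which follows from continuity on $\bbR^n$. The rest — continuity of $\Ii$, compactness of sublevel sets, and the elementary small-ball ratio computation — is routine. One should also note in passing that $Z = \int \exp(-\Ii(v;y))\,dv < \infty$ (needed for $\mu$ to be a probability measure) follows from the same coercivity bound, since $\exp(-\Ii) \le \exp(-\Jj)$ and $\exp(-\Jj)$ is integrable (it is proportional to the prior density $\rho_0$), though this was already established in the well-posedness section.
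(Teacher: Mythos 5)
Your proposal is correct and follows essentially the same route as the paper: part (i) via the chain of terms $\frac12|v_{j+1}-\PPsi(v_j)|_{\Sigma}^2$ forcing boundedness block-by-block (the paper phrases this as boundedness of a minimizing sequence, you as compactness of sublevel sets, which is the same estimate), and part (ii) by factoring out $\exp(-\Ii(u_i;y))$ and cancelling the ball volumes. The only difference is that in (ii) you use mere continuity of $\Ii(\cdot\,;y)$ to get a $(1+o(1))$ factor, whereas the paper uses the $C^1$ hypothesis to write an explicit first-order remainder bounded by $K^{\pm}\delta$ — and the paper itself remarks (Remark \ref{r23}) that continuous differentiability is stronger than what is actually needed, so your weaker-hypothesis version is perfectly in order.
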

\begin{proof}
Note that $\Ii(\cdot\,;y)$ is non-negative and continuous so that the infimum
$\Ibi$ is finite and non-negative.
To show that the infimum of $\Ii(\cdot\,;y)$ is attained in 
$\bbR^{|\J_0|\times n}$ we let $v^{(n)}$ denote a minimizing 
sequence. Without loss of generality we may assume that, for
all $n \in \bbN$,
$$\Ii(v^{(n)};y) \le \Ibi+1.$$
From the structure of $\Ii(\cdot\,;y)$ it follows that
\begin{align*}
v_0&=m_0+C_0^{\frac12}r_0,\\
v_{j+1}&=\PPsi(v_j)+\Sigma^{\frac12}r_{j+1}, \quad j \in \Z^+
\end{align*}
where $\frac12|r_j|^2 \le \Ibi+1$ for all $j \in \Z^+.$ By iterating
and using the inequalities on the $|r_j|$ we deduce the existence
of $K>0$ such that $|v^{(n)}| \le K$ for all $n \in \bbN$. From this
bounded sequence we may extract a convergent subsequence, relabelled
to $v^{(n)}$ for simplicity, with limit $v^\star$. By construction
we have that $v^{(n)} \to v^\star$ and, for any $\epsilon>0$, there is
$N=N(\epsilon)$ such that
$$\Ibi \le \Ii(v^{(n)};y) \le \Ibi+\epsilon,\quad \forall n \ge N.$$
Hence, by continuity of $\Ii(\cdot\,;y)$, it follows that
$$\Ibi \le \Ii(v^\star;y) \le \Ibi+\epsilon.$$ 
Since $\epsilon>0$ is arbitrary it follows that $\Ii(v^\star;y)=\Ibi.$
Because
\begin{align*} \mu(dv)&=\frac1Z\exp(-\Ii(v;y))dv\\
&=\rho(v)dv\end{align*}
it follows that $v^\star$ also maximizes the posterior pdf $\rho$. 

For the final result we first note that,
because $\PPsi$ and $h$ are continuously differentiable, the function
$I(\cdot\,;y)$ is continuously differentiable. Thus we have 
\begin{align*}
\rp^\mu\bigl(B(u,\delta)\bigr)&=\frac1Z\int_{|v-u|<\delta}\exp\bigr(-\Ii(v;y)\bigl)dv\\
&=\frac1Z\int_{|v-u|<\delta}\Bigl(\exp(-\Ii(u;y))+e(u;v-u)\Bigr)dv
\end{align*}
where 
$$e(u;v-u)=\Bigl\langle -\int_0^1 D_v I\bigl(u+s(v-u);y\bigr)ds,v-u \Bigr\rangle.$$ 
As a consequence we have, for $K^{\pm}>0$,
$$-K^-|\delta| \le e(u;v-u) \le K^+|\delta|$$
for $u=u_1,u_2$ and $|v-u|<\delta.$
Using the preceding we find that, for
$E:=\exp\bigl(\Ii(u_2;y)-\Ii(u_1;y)\bigr)$
$$\frac{\rp^\mu\bigl(B(u_1,\delta)\bigr)}{\rp^\mu\bigl(B(u_2,\delta)\bigr)}
\le E\frac{\int_{|v-u_1|<\delta} \exp\bigl(K^+|\delta|\bigr)dv}
{\int_{|v-u_2|<\delta} \exp\bigl(-K^-|\delta|\bigr)dv}=
E\frac{\exp\bigl(K^+|\delta|\bigr)}{\exp\bigl(-K^-|\delta|\bigr)}.$$
Similarly we have that 
$$\frac{\rp^\mu\bigl(B(u_1,\delta)\bigr)}{\rp^\mu\bigl(B(u_2,\delta)\bigr)}
\ge E\frac{\int_{|v-u_1|<\delta} \exp\bigl(-K^-|\delta|\bigr)dv}
{\int_{|v-u_2|<\delta} \exp\bigl(K^+|\delta|\bigr)dv}=
E\frac{\exp\bigl(-K^-|\delta|\bigr)}{\exp\bigl(K^+|\delta|\bigr)}.$$
Taking the limit $\delta \to 0$ gives the desired result.
\end{proof}

\begin{remark}\label{r23} The second statement in Theorem \ref{th22} may appear a little abstract. It is, however, essentially a complicated way of restating 
the first statement. To see this fix $u_2$ and note that the right hand
side of \eqref{eq:missed}
 is maximized at point $u_1$ which minimizes $\Ii(\cdot\,;y).$ Thus,
independently of the choice of  
{\em any} fixed $u_2$, the identity \eqref{eq:missed} shows that the 
probability of a small ball of radius $\delta$ centred at $u_1$ is,
approximately, maximized by choosing centres at minimizers of $\Ii(\cdot\,;y).$
Why, then, do we bother with the second statement? We do so because
it makes no reference to Lebesgue density. As such it can be generalized to 
infinite dimensions, 
as is required in continuous time for example. 
We include the second statement for precisely this reason.
We also remark that our assumption on continuous differentiability of
$\PPsi$ and $h$ is stronger than what is needed, but makes for
the rather explicit bounds used in the preceding proof and is hence
pedagogically desirable.
\end{remark}

The preceding theorem leads to a natural algorithm: compute 
$$v={\rm argmin}_{u \in \bbR^{|\J_0|\times n}}\, \Ii(u;y).$$
In applications to meteorology this algorithm
is known as {\bf weak constraint 4DVAR}\index{4DVAR!weak constraint}, and we
denote this as {\bf w4DVAR} in what follows. 
The word ``weak'' in this context is used to indicate that the deterministic
dynamics model (\ref{eq:dtf11}a) is not imposed as a strong
constraint. Instead the objective functional $\Ii(\cdot\,;y)$ is minimized;
this penalizes deviations from exact satisfaction of the
deterministic dynamics model, as well as deviations from the data.

The w4DVAR  method generalizes the standard {\bf 4DVAR} 
method which may be derived
from w4DVAR in the limit $\Sigma \to 0$ so that the prior on the
model dynamics \eqref{eq:dtf1} is deterministic, 
but with a random initial condition, as in \eqref{eq:dtf11}.
In this case the appropriate minimization is of $\Iid(v_0;y)$
given by \eqref{eq:Ip}. This has the advantage of being a lower
dimensional minimization problem than w4DVAR;
however it is often a harder minimization problem, especially
when the dynamics is chaotic. The basic 4DVAR algorithm is sometimes
called {\bf strong constraint 4DVAR}\index{4DVAR!strong constraint}
to denote the fact that the dynamics model (\ref{eq:dtf11}a) is imposed
as a strong constraint on the minimization of the model-data
misfit\index{model-data misfit} with respect to the
initial condition; we simply refer to the method as 4DVAR.
The following theorem may be
proved similarly to Theorem \ref{th22}.

\begin{theorem}\label{th22a}
Consider the data assimilation problem for
deterministic dynamics: \eqref{eq:dtf11},\eqref{eq:dtf2} with $\PPsi \in C^1(\bbR^n,\bbR^n)$
and $h \in C^1(\bbR^n,\bbR^m).$
Then:

\begin{itemize}

\item (i) the infimum of $\Iid(\cdot\,;y)$ given in (\ref{eq:Ip})
is attained at at least one point $v^\star_0$ in $\bbR^{n}$.
It follows that the density $\varrho(v_0)=\bbP(v_0|y)$ on $\bbR^{n}$
associated with the posterior probability $\nu$ given by Theorem \ref{th112}
is maximized at $v^\star_0;$ 

\item (ii) furthermore, if $B(z,\delta)$ denotes 
a ball in $\R^n$ of radius $\delta$, centred at $z$, then 
\[\lim_{\delta\to0}\frac{\rp^\nu\bigl(B(z_1,\delta)\bigr)}{\rp^\nu\bigl(B(z_2,\delta)\bigr)}=\exp(\Iid(z_2;y)-\Iid(z_1;y)).\]

\end{itemize}

\end{theorem}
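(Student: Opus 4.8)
The plan is to mirror the proof of Theorem \ref{th22}, making the straightforward modifications required for the deterministic case. The key structural differences are that the unknown lives in $\bbR^n$ rather than $\bbR^{|\J_0| \times n}$, and that $\Iid(\cdot\,;y)$ has the simpler form \eqref{eq:Ip}, namely the background term $\frac12|v_0-m_0|_{C_0}^2$ plus the model-data misfit $\Phid(v_0;y)=\sum_{j=0}^{J-1}\frac12\bigl|y_{j+1}-h\bigl(\PPsi^{(j+1)}(v_0)\bigr)\bigr|_{\Gamma}^2$.

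For part (i), first I would note that $\Iid(\cdot\,;y)$ is non-negative and continuous (since $\PPsi$, hence every iterate $\PPsi^{(j+1)}$, and $h$ are continuous), so the infimum $\overline{\Iid}$ over $\bbR^n$ is finite and non-negative. Take a minimizing sequence $v_0^{(n)}$; without loss of generality assume $\Iid(v_0^{(n)};y) \le \overline{\Iid}+1$ for all $n$. Unlike the stochastic-dynamics case, here coercivity in $v_0$ comes directly from the background term alone: $\frac12|v_0^{(n)}-m_0|_{C_0}^2 \le \Iid(v_0^{(n)};y) \le \overline{\Iid}+1$, and since $C_0>0$ this gives a uniform bound $|v_0^{(n)}| \le K$. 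Extract a convergent subsequence, relabel it $v_0^{(n)} \to v_0^\star$, and use continuity of $\Iid(\cdot\,;y)$ exactly as in the proof of Theorem \ref{th22}: for any $\epsilon>0$ eventually $\overline{\Iid} \le \Iid(v_0^{(n)};y) \le \overline{\Iid}+\epsilon$, so $\overline{\Iid} \le \Iid(v_0^\star;y) \le \overline{\Iid}+\epsilon$, and since $\epsilon$ is arbitrary $\Iid(v_0^\star;y)=\overline{\Iid}$. Since $\nu(dv_0)=\frac1Z\exp(-\Iid(v_0;y))dv_0 = \varrho(v_0)dv_0$ by Theorem \ref{th112}, maximizing $\varrho$ is the same as minimizing $\Iid(\cdot\,;y)$, so $\varrho$ attains its maximum at $v_0^\star$.

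For part (ii), the argument is verbatim that of Theorem \ref{th22}(ii) with $\Ii$ replaced by $\Iid$, $\mu$ by $\nu$, and the ambient space $\bbR^{|\J_0|\times n}$ by $\bbR^n$. Since $\PPsi$ and $h$ are $C^1$, so is $\Iid(\cdot\,;y)$; write
$$\rp^\nu\bigl(B(z,\delta)\bigr)=\frac1Z\int_{|v_0-z|<\delta}\Bigl(\exp(-\Iid(z;y))+e(z;v_0-z)\Bigr)dv_0,$$
where $e(z;v_0-z)=\bigl\langle -\int_0^1 D_{v_0} \Iid\bigl(z+s(v_0-z);y\bigr)\,ds,\,v_0-z \bigr\rangle$ satisfies $-K^-|\delta| \le e(z;v_0-z) \le K^+|\delta|$ for $z=z_1,z_2$ and $|v_0-z|<\delta$ (using continuity of $D_{v_0}\Iid$ on the bounded region and boundedness of $|v_0-z|$). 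Setting $E:=\exp\bigl(\Iid(z_2;y)-\Iid(z_1;y)\bigr)$, the volumes of the two balls cancel and one obtains the two-sided bound
$$E\,\frac{\exp(-K^-|\delta|)}{\exp(K^+|\delta|)} \le \frac{\rp^\nu\bigl(B(z_1,\delta)\bigr)}{\rp^\nu\bigl(B(z_2,\delta)\bigr)} \le E\,\frac{\exp(K^+|\delta|)}{\exp(-K^-|\delta|)},$$
and letting $\delta\to0$ gives the claimed limit.

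There is no serious obstacle here: the only point requiring a moment's care is the coercivity estimate in part (i), where one must observe that in the deterministic case the background term by itself already pins down $v_0$ to a bounded set (so no iteration of the dynamics is needed, in contrast to Theorem \ref{th22} where the full chain $v_0,\ldots,v_J$ had to be controlled step by step). Everything else is a direct transcription of the proof of Theorem \ref{th22}.
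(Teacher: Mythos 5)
Your proof is correct and follows exactly the route the paper intends (the text states Theorem \ref{th22a} "may be proved similarly to Theorem \ref{th22}" and leaves the details as an exercise). You correctly identify the one point of genuine difference — that coercivity now follows from the background term $\frac12|v_0-m_0|_{C_0}^2$ alone, with $C_0>0$, rather than from iterating the dynamics — and the rest is the appropriate verbatim transcription.
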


As in the case of stochastic dynamics\index{stochastic dynamics} 
we do not discuss optimization\index{optimization} methods 
to perform minimization associated with 
variational\index{calculus of variations} problems; this is
because  optimization is a well-established 
and mature research area which is hard to do justice to within the
confines of this book. However we conclude this section 
with an example which illustrates certain advantages of the Bayesian \index{Bayesian}
perspective over the optimization\index{optimization} or variational perspective.
Recall from Theorem \ref{th14} that the Bayesian \index{Bayesian} posterior distribution
is continuous with respect to small changes in the data. In contrast,
computation of the global maximizer of the probability may be
discontinuous as a function of data. To illustrate this consider
the probability measure $\mue$ on $\bbR$ with Lebesgue density
proportional to $\exp\bigl(-\Ve(u)\bigr)$ where
\begin{equation}
\label{eq:V}
\Ve(u)=\frac14(1-u^2)^2+\epsilon u.
\end{equation} 
It is a straightforward application of the methodology behind the
proof of Theorem \ref{th14} to show that $\mue$ is Lipschitz
continuous in $\epsilon$, with respect to the Hellinger metric.
Furthermore the methodology behind Theorems \ref{th22} and \ref{th22a} 
shows that the probability with respect to this measure is maximized
where $\Ve$ is minimized.
The global minimum, however, changes discontinuously, even though
the posterior distribution changes smoothly. This is illustrated 
in Figure \ref{fig:varsky}, where the left hand panel shows the
continuous evolution of the probability density function,\index{probability density function} whilst
the right hand-panel shows the discontinuity in the global maximizer
of the probability (minimizer of $\Ve$) as $\epsilon$ passes through zero.
The explanation for this difference
between the fully Bayesian \index{Bayesian} approach and MAP\index{MAP estimator}
estimation is as follows. 
The measure $\mue$ has two peaks, for small $\epsilon$,
close to $\pm 1.$ The Bayesian \index{Bayesian} approach accounts for both of these
peaks simultaneously and weights their contribution to expectations.
In contrast the MAP\index{MAP estimator}
estimation approach leads to a global minimum located
near $u=-1$ for $\epsilon>0$ and near $u=+1$ for $\epsilon<0$, resulting
in a discontinuity.

\begin{figure}[h]
\centering
\subfigure[$V^\epsilon$ for $\epsilon>0$, $\epsilon<0$, and $\epsilon=0$.]{\includegraphics[scale=0.365]{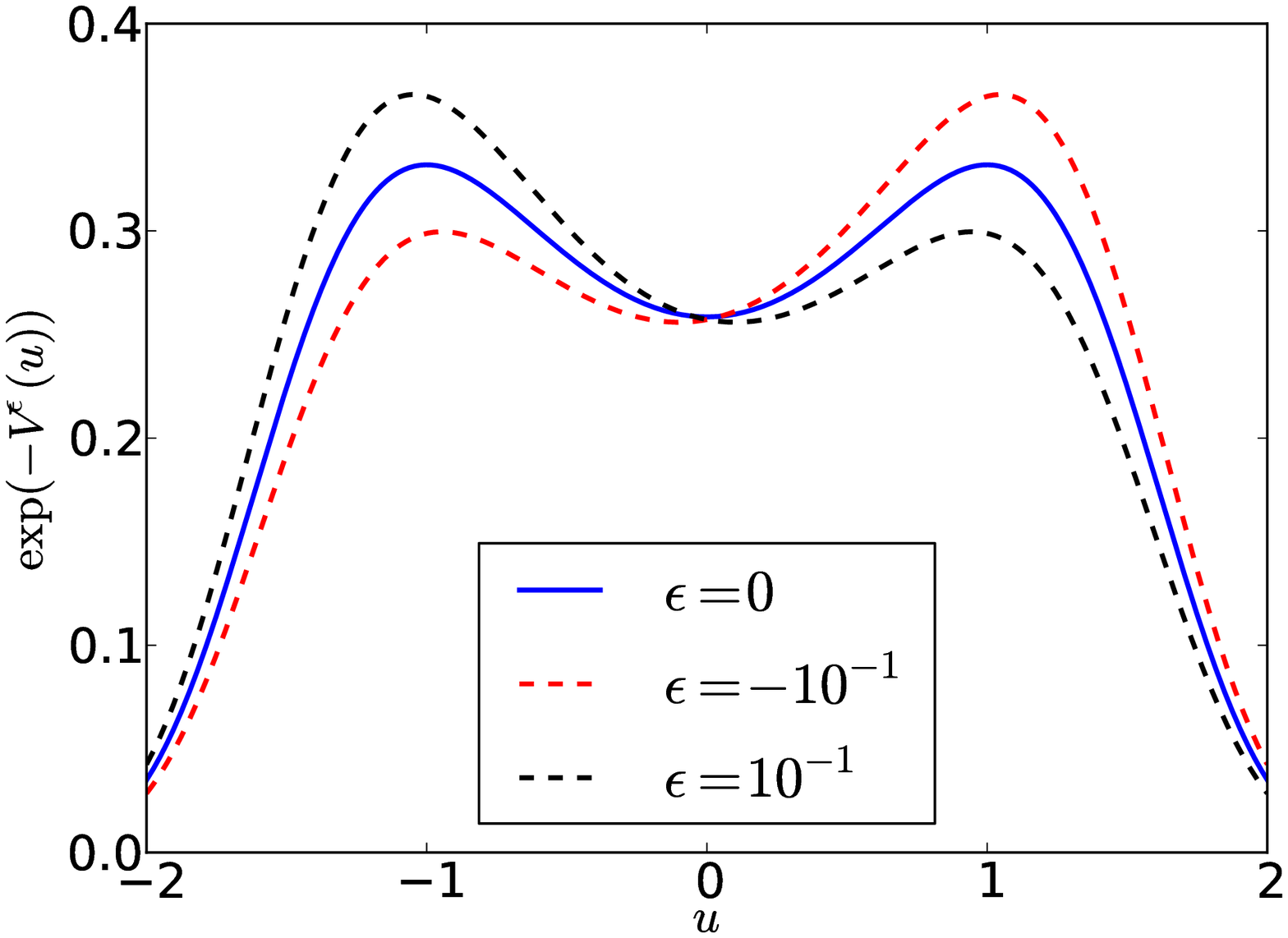}}
\subfigure[Global minima of $V^\epsilon$ as a function of $\epsilon$]{\includegraphics[scale=0.365]{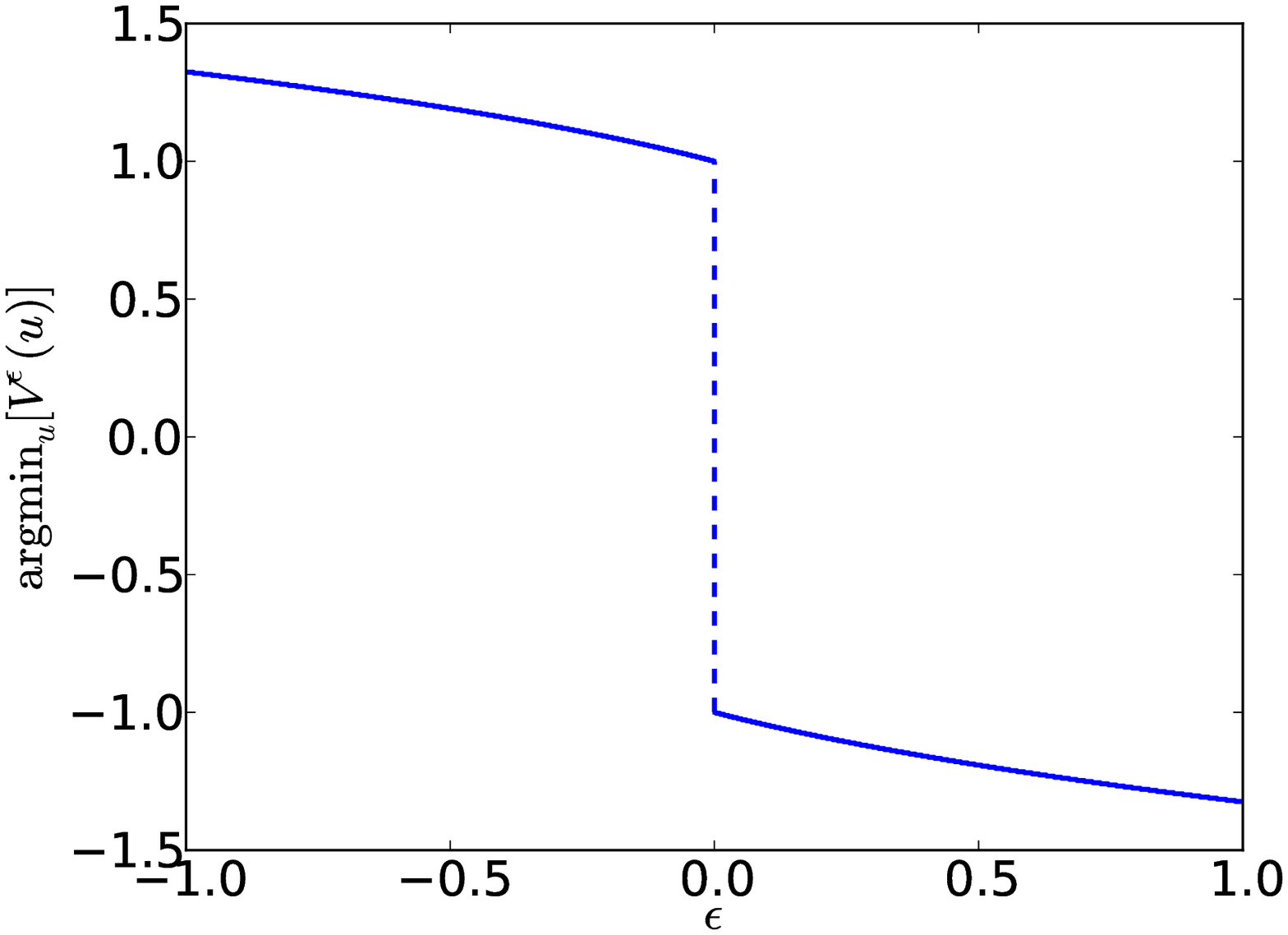}}
\caption{Plot of \eqref{eq:V} shows discontinuity of the global maximum as a function of $\epsilon$.}
\label{fig:varsky}
\end{figure}

\section{Illustrations}
\label{ssec:mci}

We describe  a range of numerical experiments which illustrate the
application of MCMC \index{MCMC} methods and variational methods\index{variational method} to the smoothing
problems which arise in both deterministic and stochastic dynamics \index{stochastic dynamics}.

The first illustration concerns use of
the RWM\index{Random Walk Metropolis} algorithm to 
study the smoothing distribution 
for Example \ref{ex:ex4} in the case of deterministic dynamics where
our aim is to find $\bbP(v_0|y).$  
Recall Figure \ref{fig:smooth3}a which shows the true posterior
pdf, found by plotting the formula given in Theorem
\ref{th11}. We now approximate the true posterior pdf by the MCMC \index{MCMC}
method, using the same parameters, namely $m_0=0.5, C_0=0.01, \gamma=0.2$
and $\vd_0=0.3$. 
In Figure \ref{fig:MCMC1} we compare the posterior pdf calculated by 
the RWM\index{Random Walk Metropolis} method (denoted by $\rho^N$, the histogram \index{histogram}
of the output of the Markov chain) with the true posterior pdf $\rho$.
\begin{figure}[h]
\centering
\subfigure[$\rho^N$ vs $\rho$]{\includegraphics[scale=0.365]{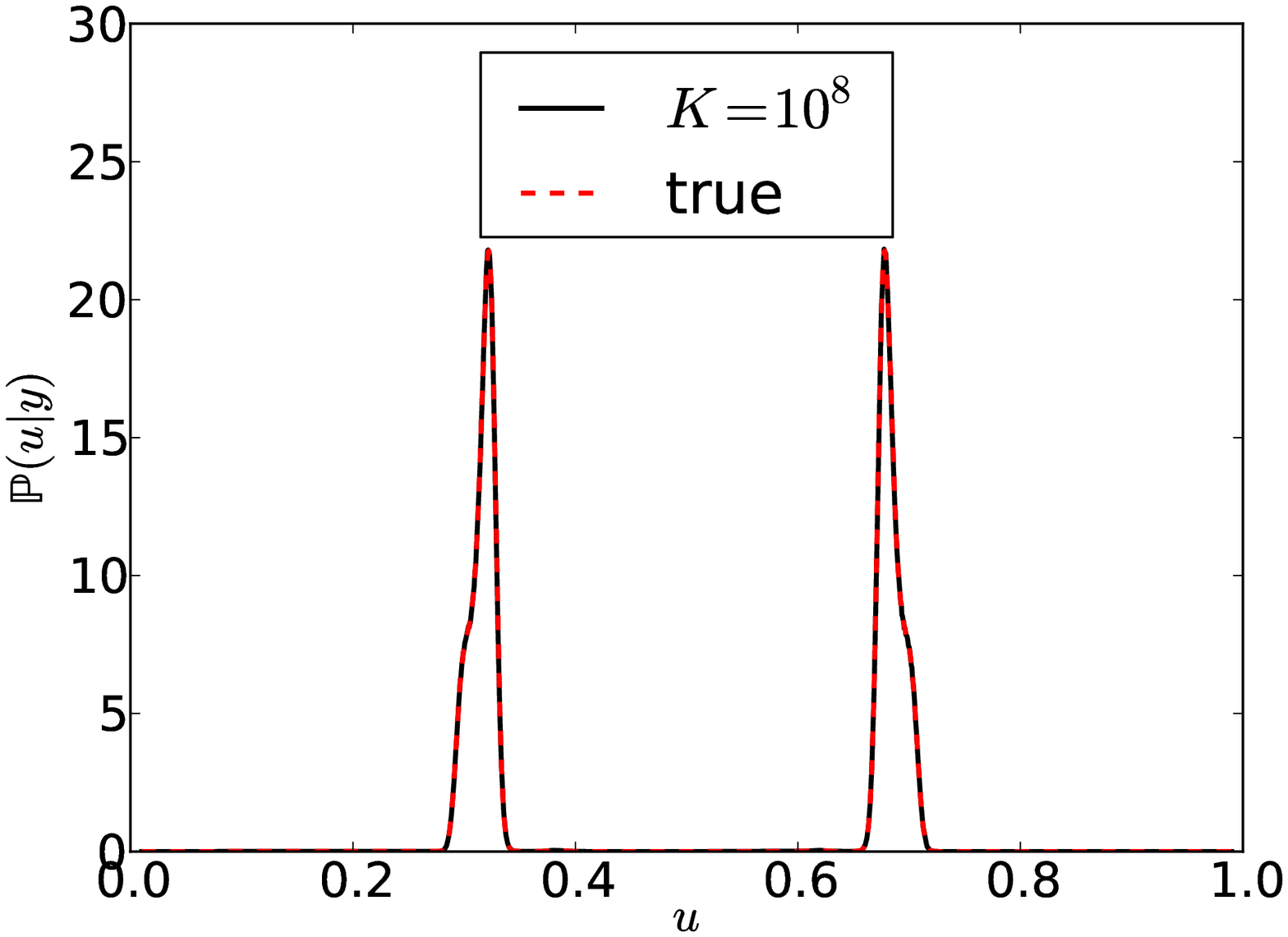}}
\subfigure[$\rho^N-\rho$]{\includegraphics[scale=0.365]{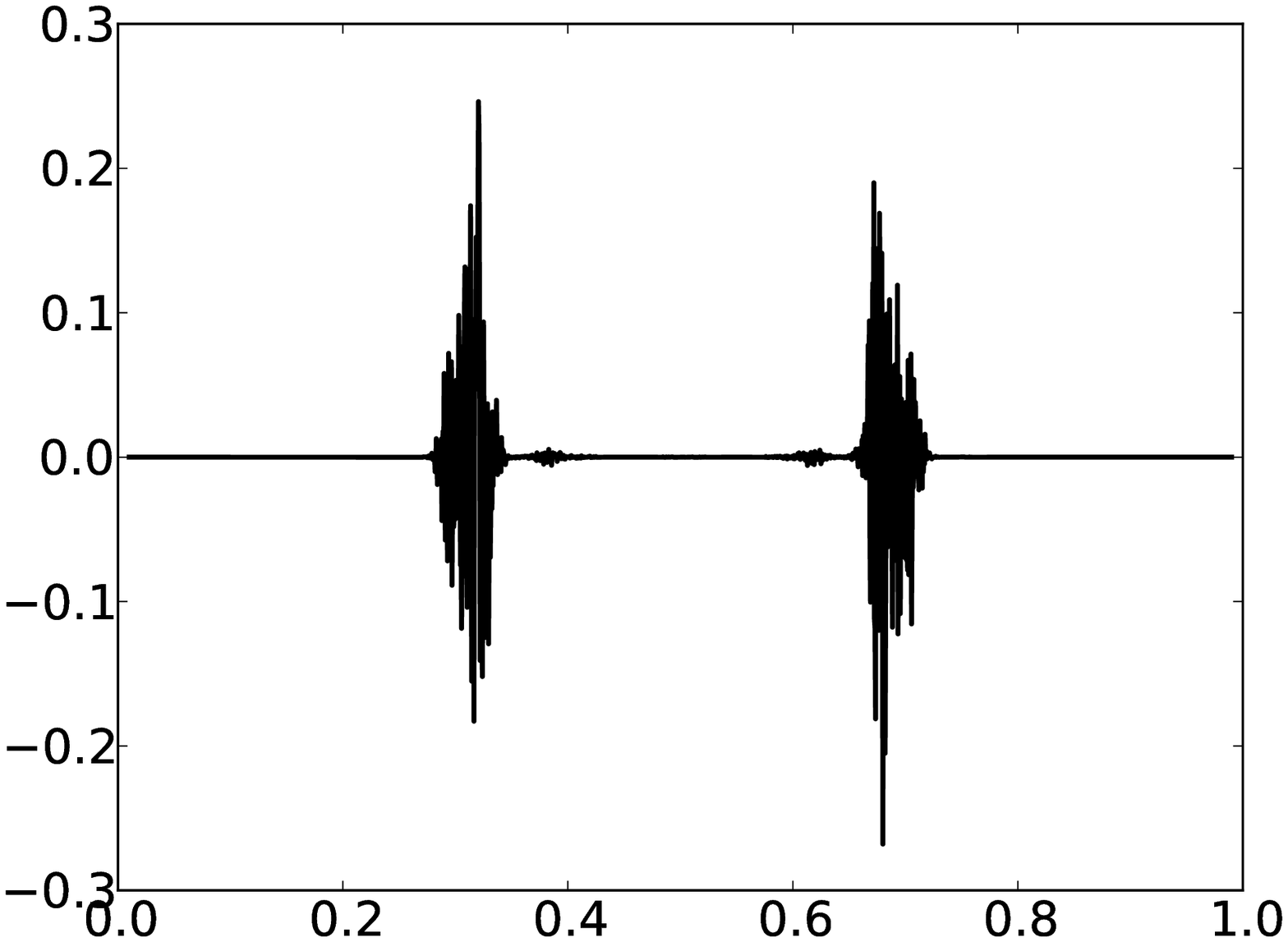}}
\caption{Comparison of the posterior for Example \ref{ex:ex4} for $r=4$ using random walk metropolis and equation \eqref{eq:Ip} directly as in the  \MAT program {\tt p2.m}. We have used  $J=5$ $C_{0}=0.01, m_{0}=0.5$, $\gamma=0.2$ and true initial condition $v_{0}=0.3$, see also {\tt p3.m} in section \ref{ssec:p3}. We have used  $N=10^8$ samples from the MCMC \index{MCMC} algorithm.}
\label{fig:MCMC1}
\end{figure}
The two distributions are almost indistinguishable when plotted together
in Figure \ref{fig:MCMC1}a; in Figure \ref{fig:MCMC1}b we plot their difference, which as we can see is small, relative to the true value.
We deduce that the number of samples used, $N=10^8$, results here in
accurate sampling of the posterior.

We now turn to the use of MCMC \index{MCMC} methods to sample the smoothing
pdf $\bbP(v|y)$ in the case of stochastic dynamics \index{stochastic dynamics} \eqref{eq:dtf1},
using  the Independence Dynamics Sampler\index{Independence Dynamics Sampler} and both pCN \index{pCN} methods.
Before describing application of numerical methods we study the
ergodicity\index{ergodic} of the Independence Dynamics Sampler\index{Independence Dynamics Sampler} in a simple, but illustrative, setting.
For simplicity assume that the observation operator $h$ is bounded so that,
for all $u \in \R^N$, $|h(u)| \le \hx.$ Then, recalling the notation
$Y_j=\{y_{\ell}\}_{\ell=1}^{j}$ from the filtering problem, we have
\begin{align*}
\PPhi(u;y) & \le \sum_{j=0}^{J-1}\bigl(|\Gamma^{-\frac12}y_{j+1}|^2+|\Gamma^{-\frac12}
h(u_{j+1})|^{2}\bigr)\\
& \le |\Gamma^{-\frac12}|^2\Bigl(\sum_{j=0}^{J-1}|y_{j+1}|^2+J\hx^2\Bigr)\\
& \le |\Gamma^{-\frac12}|^2\Bigl(|Y_J|^2+J\hx^2\Bigr)\\
& =: \ppx.
\end{align*} 
Since $\PPhi \ge 0$ this shows that every proposed step is accepted
with probability exceeding $e^{-\ppx}$ and hence that, since proposals
are made with the prior measure $\mu_0$ describing the unobserved
stochastic dynamics \index{stochastic dynamics},
$$p(u,A) \ge e^{-\ppx}\mu_0(A).$$
Thus Theorem \ref{th21} applies and, in particular, \eqref{eq:tv}
and \eqref{eq:clt} hold, with $\varepsilon=e^{-\ppx}$, under these
assumptions. This positive result about the ergodicity\index{ergodic}
of the MCMC method, also indicates the potential difficulties
with the Independence Dynamics Sampler\index{Independence Dynamics Sampler}.
The Independence Sampler\index{Independence Dynamics Sampler} 
relies on draws from the prior matching the data well. Where the data set is large ($J\gg1$) or the noise covariance small 
($|\Gamma|\ll 1$) this will happen infrequently, because $\ppx$ will be
large, and the MCMC \index{MCMC} method will reject frequently and be inefficient. 
To illustrate this we consider application of the method to the Example \ref{ex:ex3}, using the same parameters as in Figure \ref{fig:p1};
specifically we take $\alpha=2.5$ and $\Sigma=\sigma^2=1$.
We now sample the posterior distribution and then plot the resulting
accept-reject ratio $a$ for the Independence Dynamics Sampler\index{Independence Dynamics Sampler}, employing
different values of noise $\Gamma$ and different sizes of the data set $J$.
This is illustrated in Figure \ref{fig:smooth_MCMC1}.
\begin{figure}[h]
\centering
\subfigure[$J=5$, different values of $\gamma$]{\includegraphics[scale=0.365]{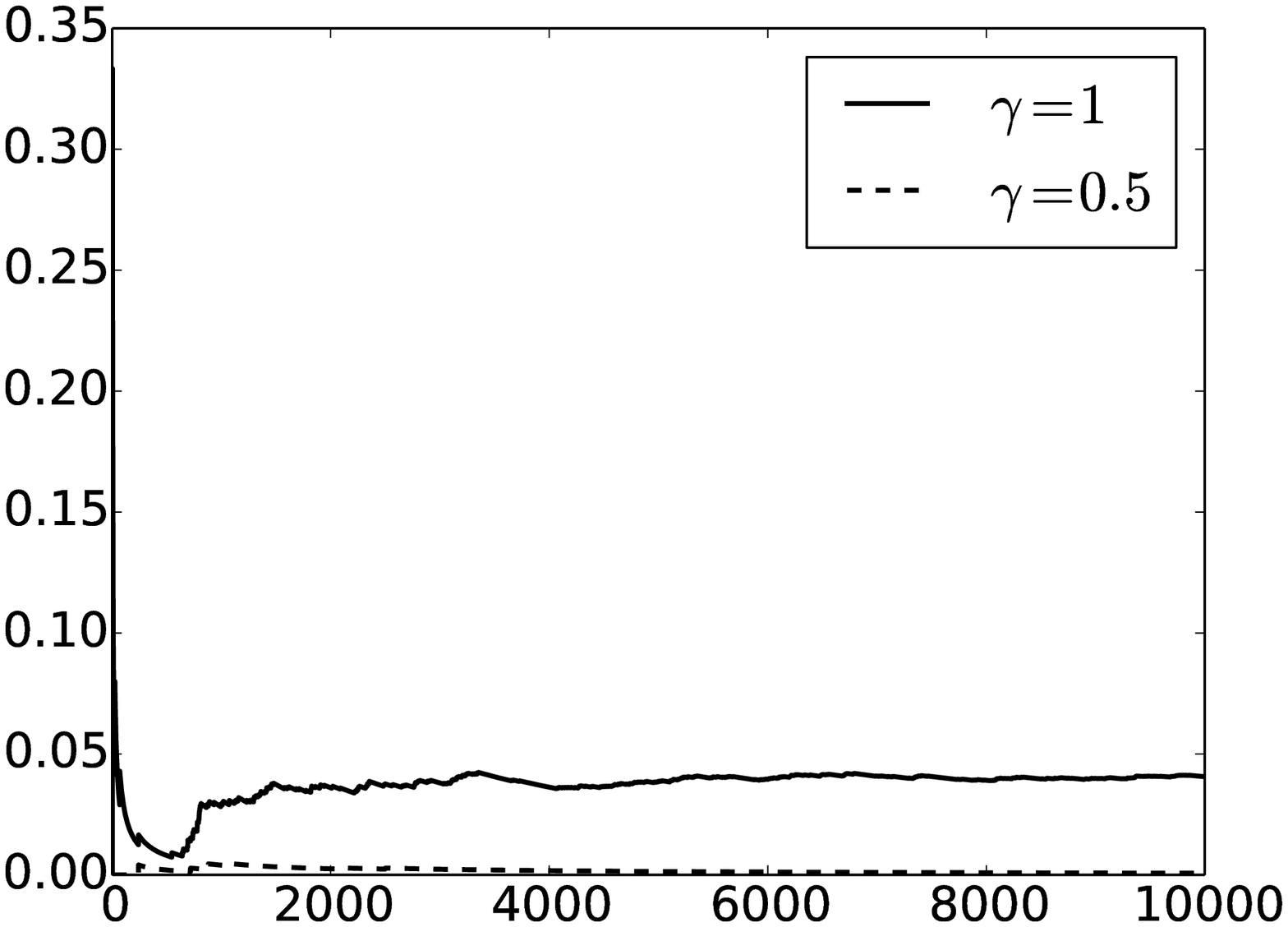}}
\subfigure[$\gamma=1$, different values of $J$]{\includegraphics[scale=0.365]{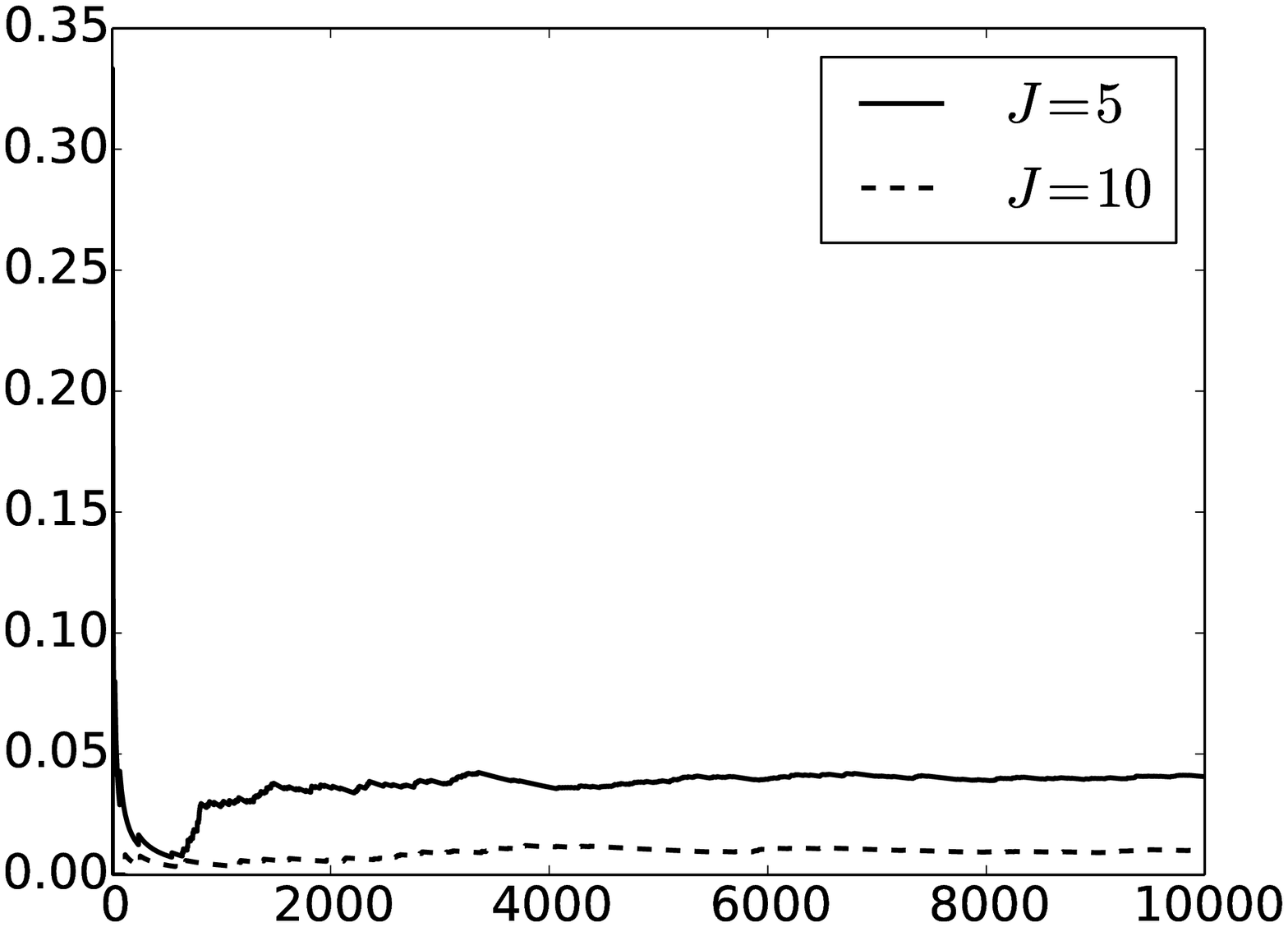}}
\caption{Accept-reject probability of the Independence Sampler\index{Independence Dynamics Sampler} for Example \ref{ex:ex3} for $\alpha=2.5$, $\Sigma=\sigma^{2}=1$ and $\Gamma=\gamma^{2}$ for different values of $\gamma$ and $J$.}
\label{fig:smooth_MCMC1}
\end{figure}

\begin{figure}[h]
\centering
\subfigure[First element of $v^{(k)}$]{\includegraphics[scale=0.365]{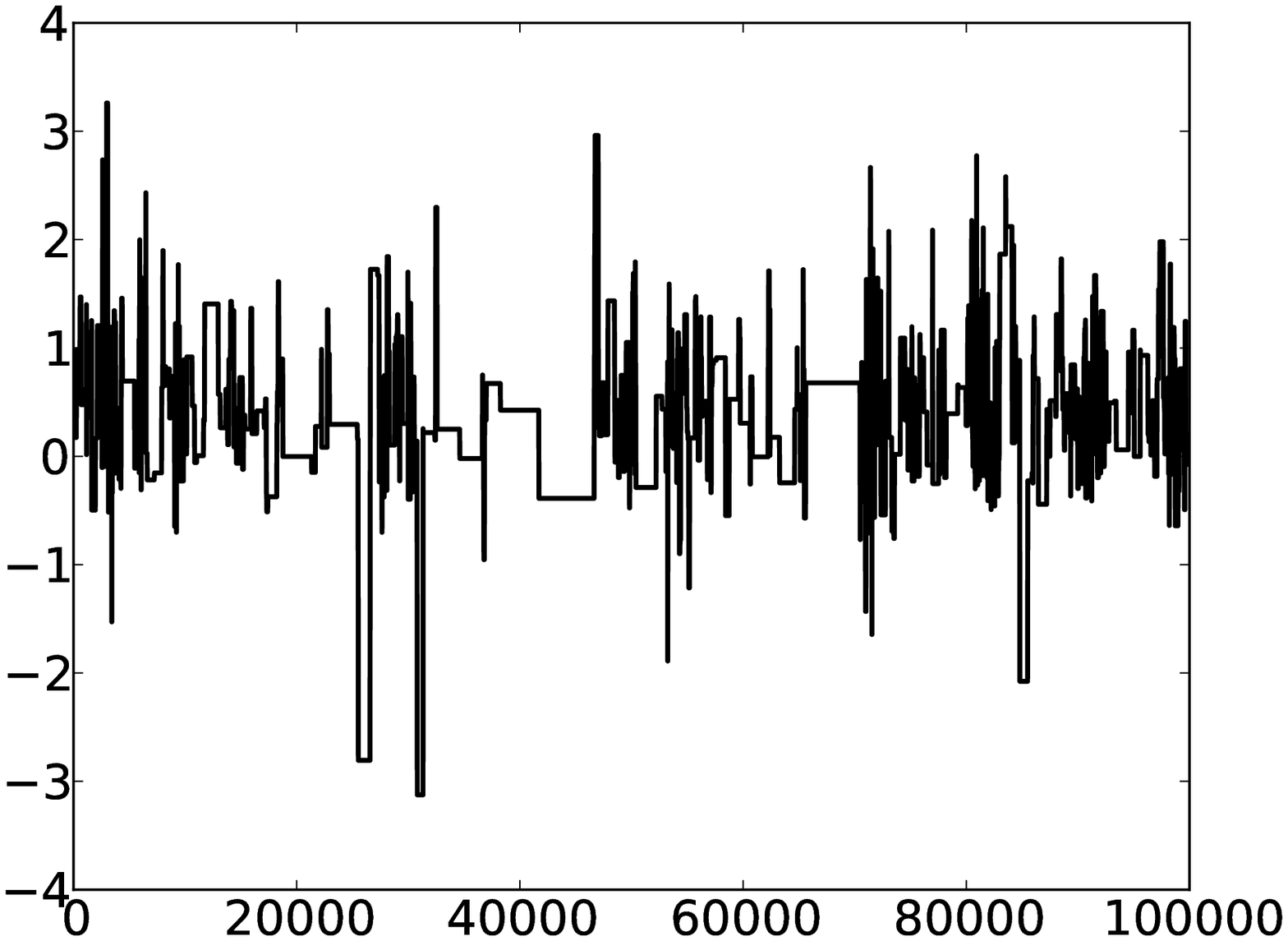}}
\subfigure[Running average of first element of $v^{(k)}$]{\includegraphics[scale=0.365]{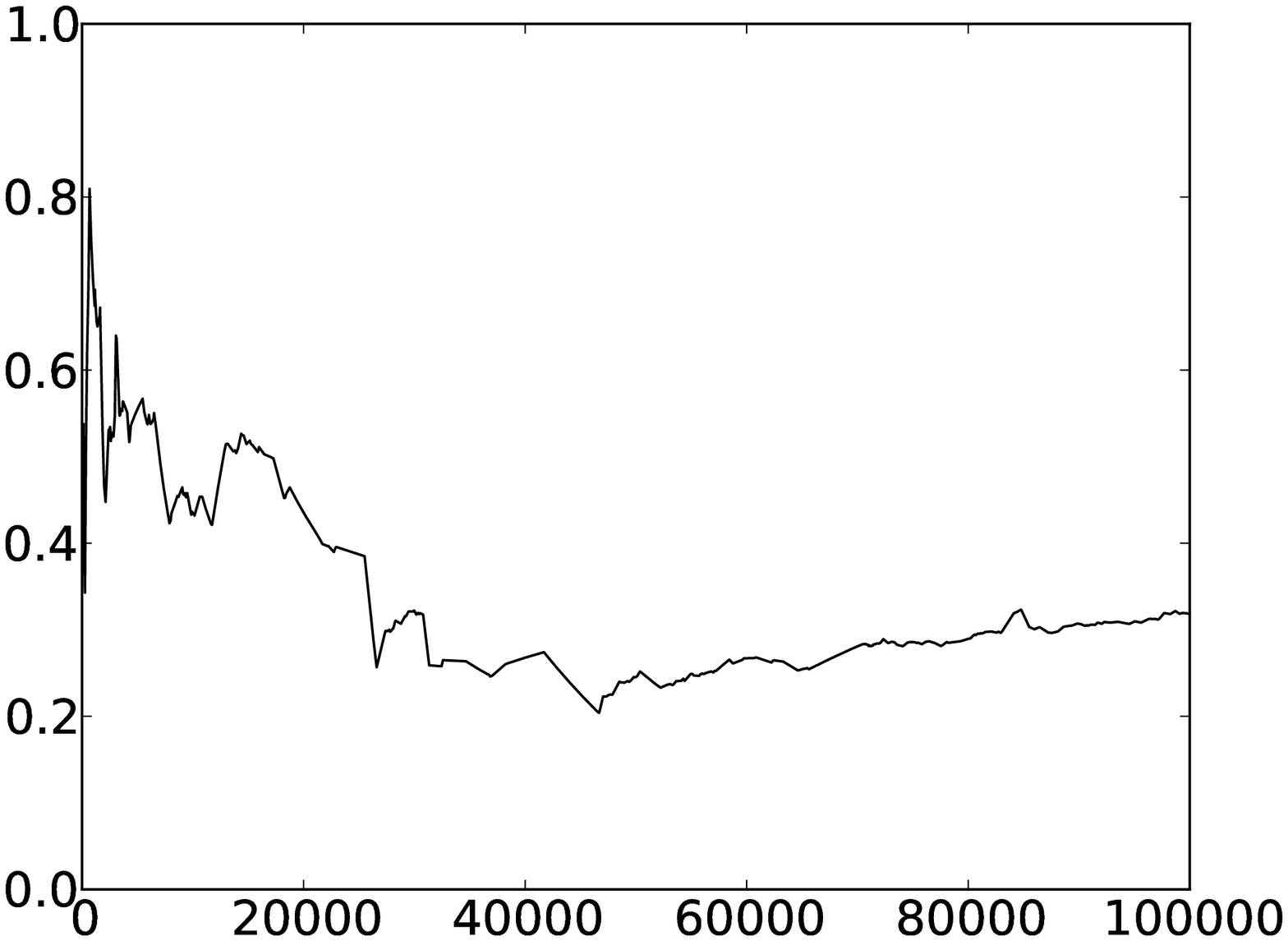}}
\caption{Output and running average of the  Independence  Dynamics Sampler\index{Independence Dynamics Sampler} after $K=10^5$ steps,  for Example \ref{ex:ex3} for $\alpha=2.5$, $\Sigma=\sigma^{2}=1$ and $\Gamma=\gamma^{2}=1$, with 
$J=10$, see also {\tt p4.m} in section \ref{ssec:p4}.}
\label{fig:smooth_MCMC2}
\end{figure}
In addition, in Figure \ref{fig:smooth_MCMC2}, we plot the output,
and the running average of the output, projected into the first element of 
the vector $v^{(k)}$, the initial condition  -- remember that we are defining a Markov chain \index{Markov chain}
on $\mathbb{R}^{J+1}$ -- for $N=10^5$ steps. Figure \ref{fig:smooth_MCMC2}a 
clearly exhibits the fact that there are many rejections caused
by the low average acceptance probability. Figure \ref{fig:smooth_MCMC2}b
shows that the running average has not converged after $10^5$ steps, indicating
that the chains needs to be run for longer.
If we run the Markov chain \index{Markov chain}over $N=10^8$ steps then we do get convergence.
This is illustrated in Figure \ref{fig:smooth_MCMC3}. In Figure \ref{fig:smooth_MCMC3}a we see that the running average has converged to its limiting value 
when this many steps are used. In Figure \ref{fig:smooth_MCMC3}b where we plot 
the marginal probability distribution for the first element of $v^{(k)}$,
calculated from this converged Markov chain\index{Markov chain}.

\begin{figure}[h]
\centering
\subfigure[Running average of first element of $v^{(k)}$]{\includegraphics[scale=0.36]{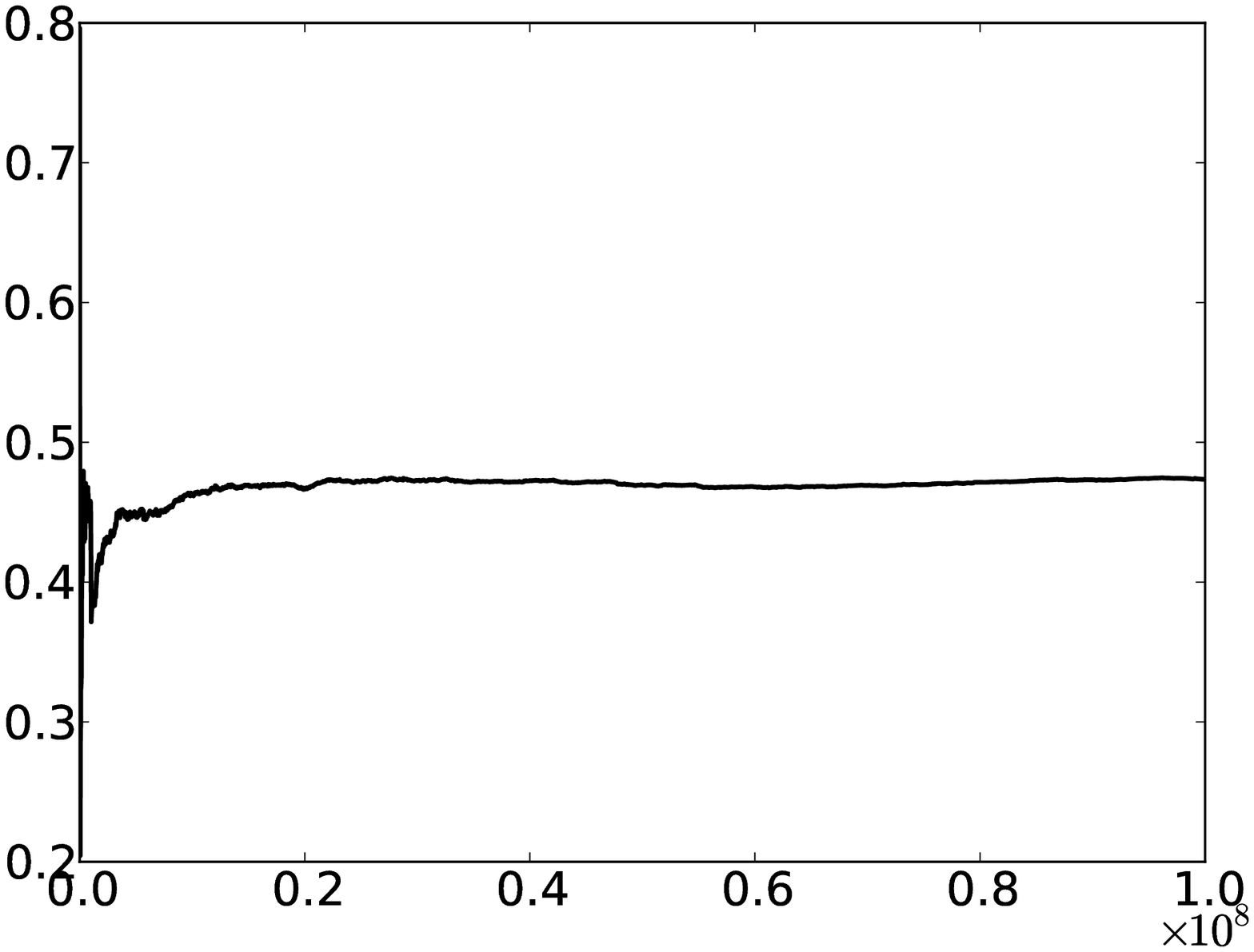}}
\subfigure[Probability density of first element of $v^{(k)}$]{\includegraphics[scale=0.36]{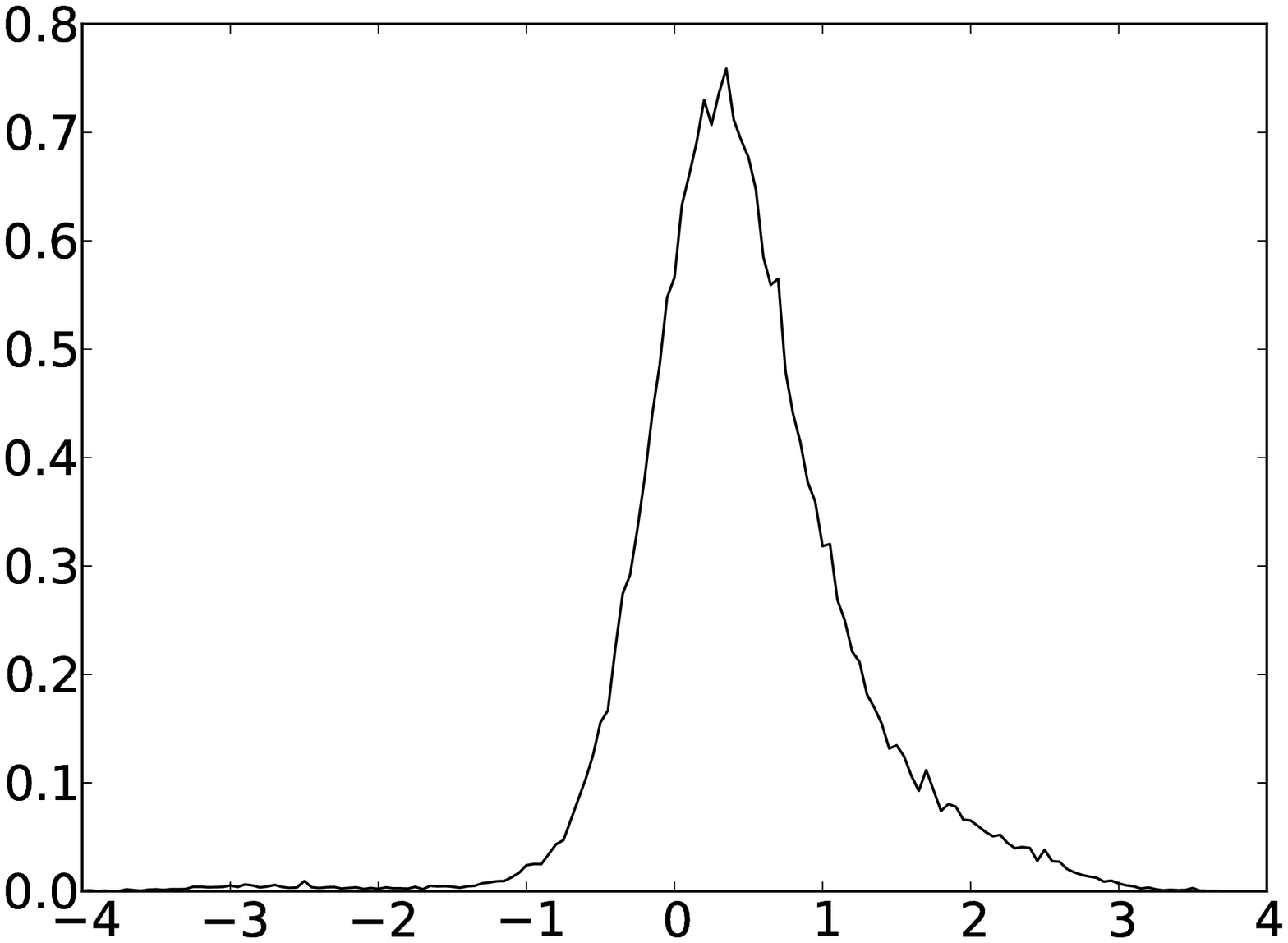}}
\caption{Running average and probability density of the first element of $v^{(k)}$  for the  Independence  Dynamics Sampler\index{Independence Dynamics Sampler} after $K=10^8$ steps,  for  Example \ref{ex:ex3} for $\alpha=2.5$, $\Sigma=\sigma^{2}=1$ and $\Gamma=\gamma^{2}$, with $\gamma=1$ and $J=10$, see also {\tt p4.m} in section \ref{ssec:p4}.}
\label{fig:smooth_MCMC3}
\end{figure}

In order to get faster convergence when sampling the 
posterior distribution\index{posterior distribution}
we turn to application of the pCN \index{pCN} method. Unlike the Independence Dynamics Sampler\index{Independence Dynamics Sampler}, this contains a tunable parameter which can vary the size of
the proposals. In particular, the possibility of making small
moves, with resultant higher acceptance probability, makes this a more
flexible method than the Independence Dynamics Sampler\index{Independence Dynamics Sampler}.
In Figure \ref{fig:pCN} we show application of the pCN \index{pCN} sampler, again
considering 
 Example \ref{ex:ex3} for $\alpha=2.5$, $\Sigma=\sigma^{2}=1$ and $\Gamma=\gamma^{2}=1$, with 
 $J=10$, the same parameters used in Figure \ref{fig:smooth_MCMC2}.

In the case that the dynamics are significantly influencing the trajectory, 
i.e. the regime of large $\PPsi$ or small $\sigma$, 
it may be the case that the standard pCN \index{pCN} method
is not effective, due to large effects of the $G$ term, and the improbability of 
Gaussian samples being close to samples of the prior on the dynamics.
The pCN \index{pCN} Dynamics sampler, recall, acts on the space comprising the 
the initial condition and forcing, both of which are Gaussian
under the prior, and so may sometimes have an advantage given that
pCN \index{pCN}-type methods are based on Gaussian proposals\index{proposal!Gaussian}. 
The use of this method is explored in Figure \ref{fig:pCND}
for Example \ref{ex:ex3} for $\alpha=2.5$, $\Sigma=\sigma^{2}=1$ and $\Gamma=\gamma^{2}=1$, with 
 $J=10$.  

\begin{figure}[h]
\centering
\subfigure[First element of $v^{(k)}$]{\includegraphics[scale=0.365]{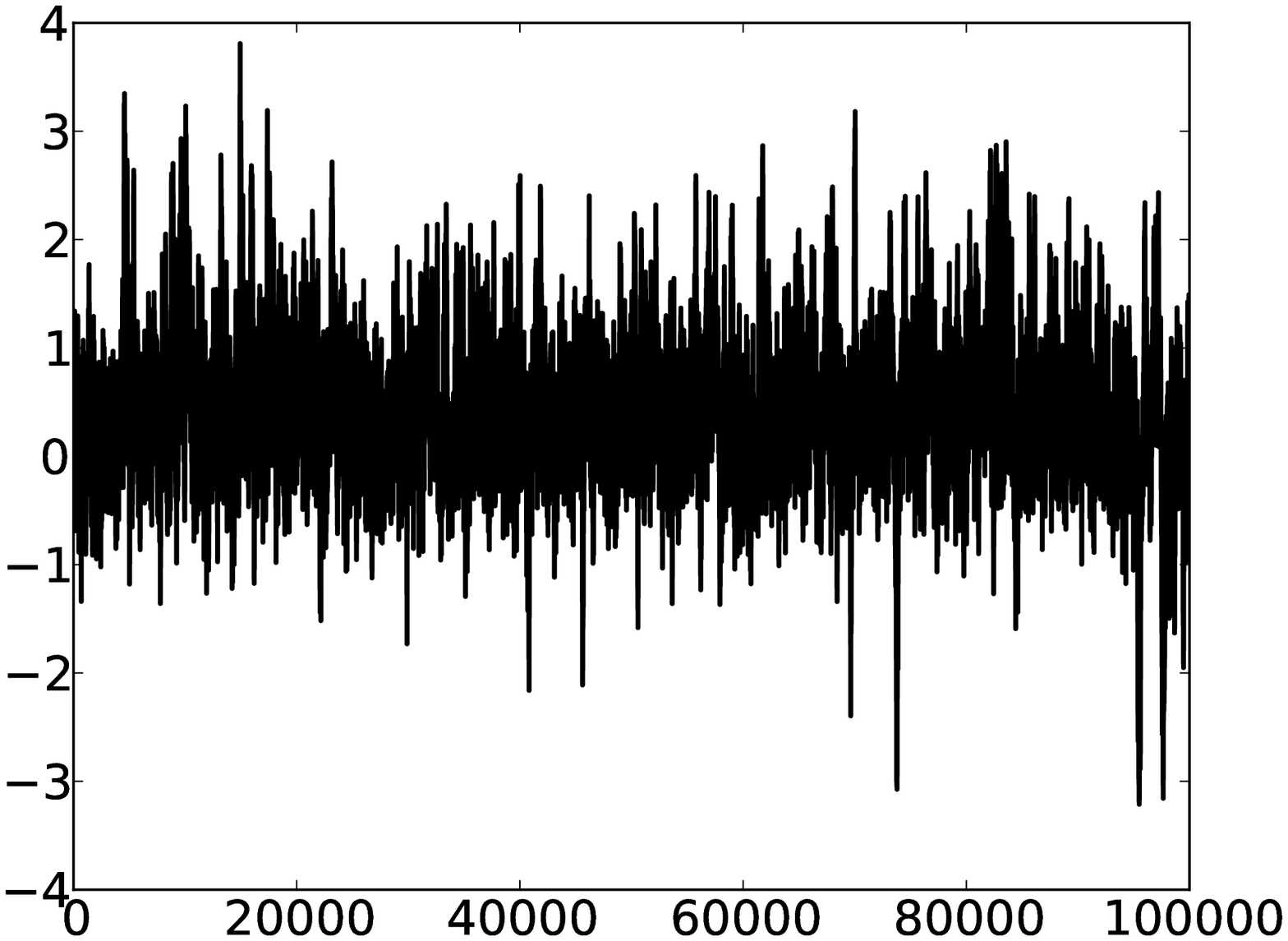}}
\subfigure[Running average of first element of $v^{(k)}$]{\includegraphics[scale=0.365]{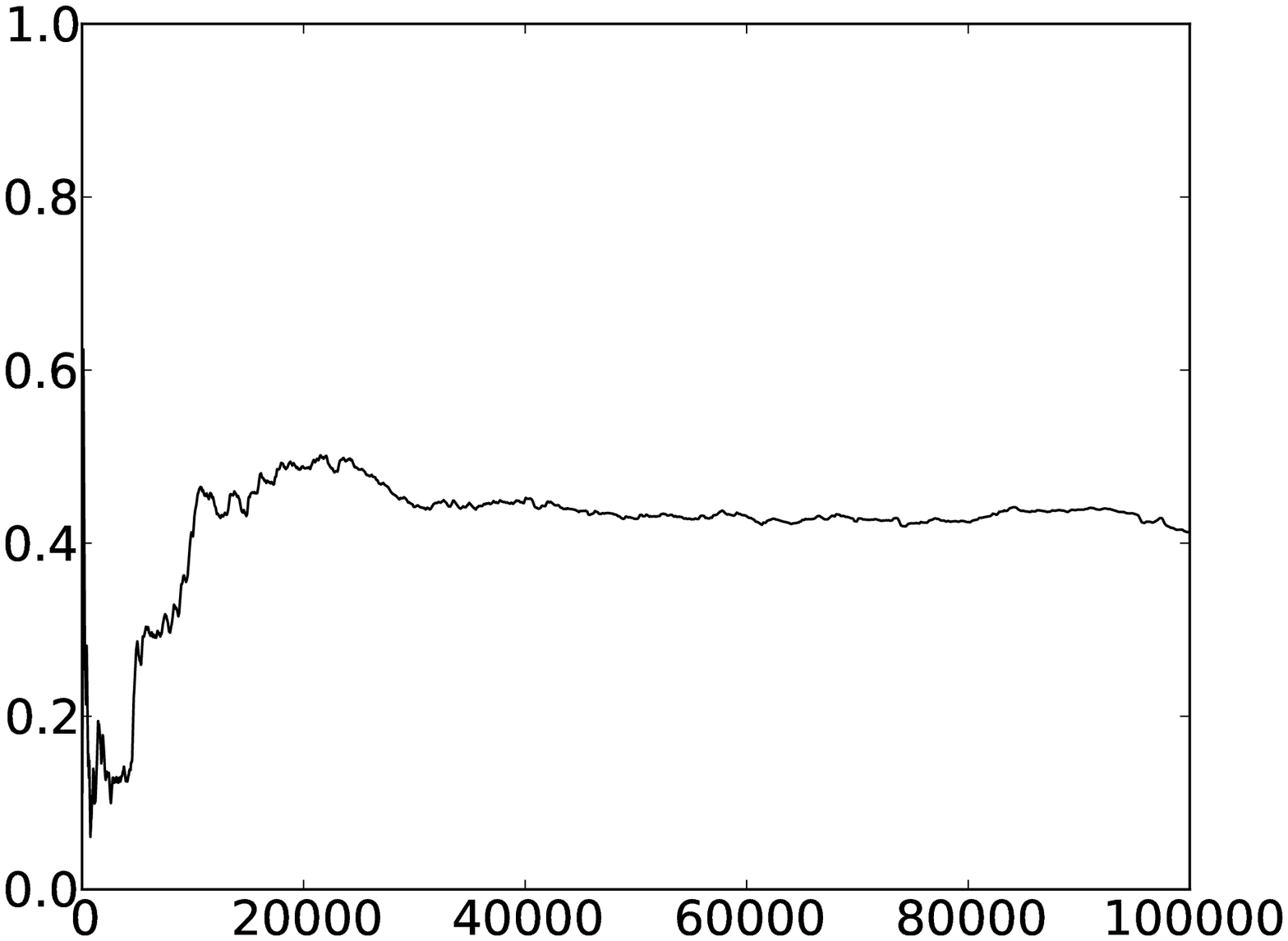}}
\caption{Trace-plot and running average of the first element of $v^{(k)}$  for the  pCN \index{pCN} sampler after $K=10^5$ steps,  
for  Example \ref{ex:ex3} with $\alpha=2.5$, $\Sigma=\sigma^{2}=1$ and $\Gamma=\gamma^{2}=1$, with 
$J=10$, see also {\tt p5.m} in section \ref{ssec:p5}.}
\label{fig:pCN}
\end{figure}

\begin{figure}[h]
\centering
\subfigure[First element of $v^{(k)}$]{\includegraphics[scale=0.365]{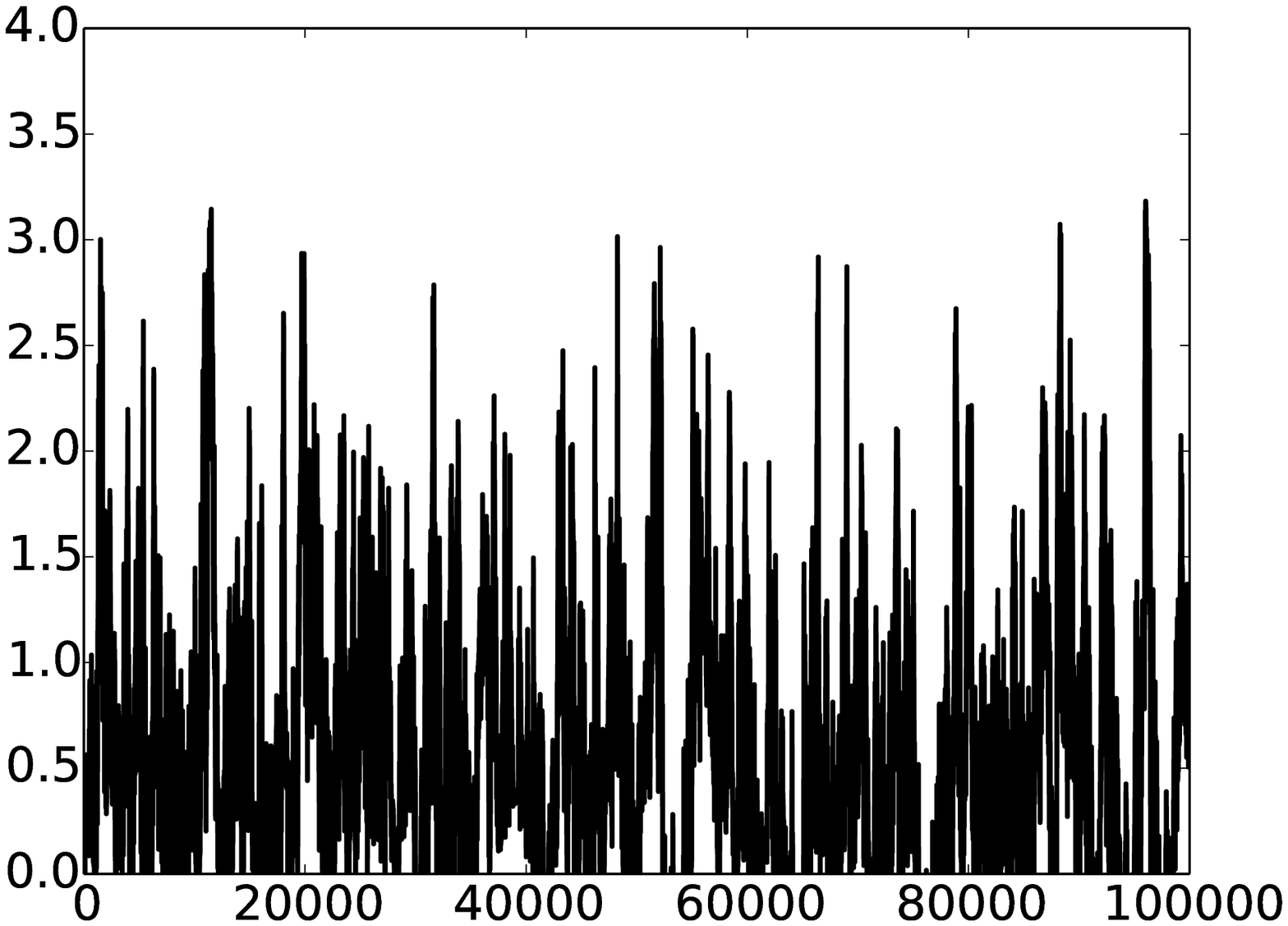}}
\subfigure[Running average of first element of $v^{(k)}$]{\includegraphics[scale=0.365]{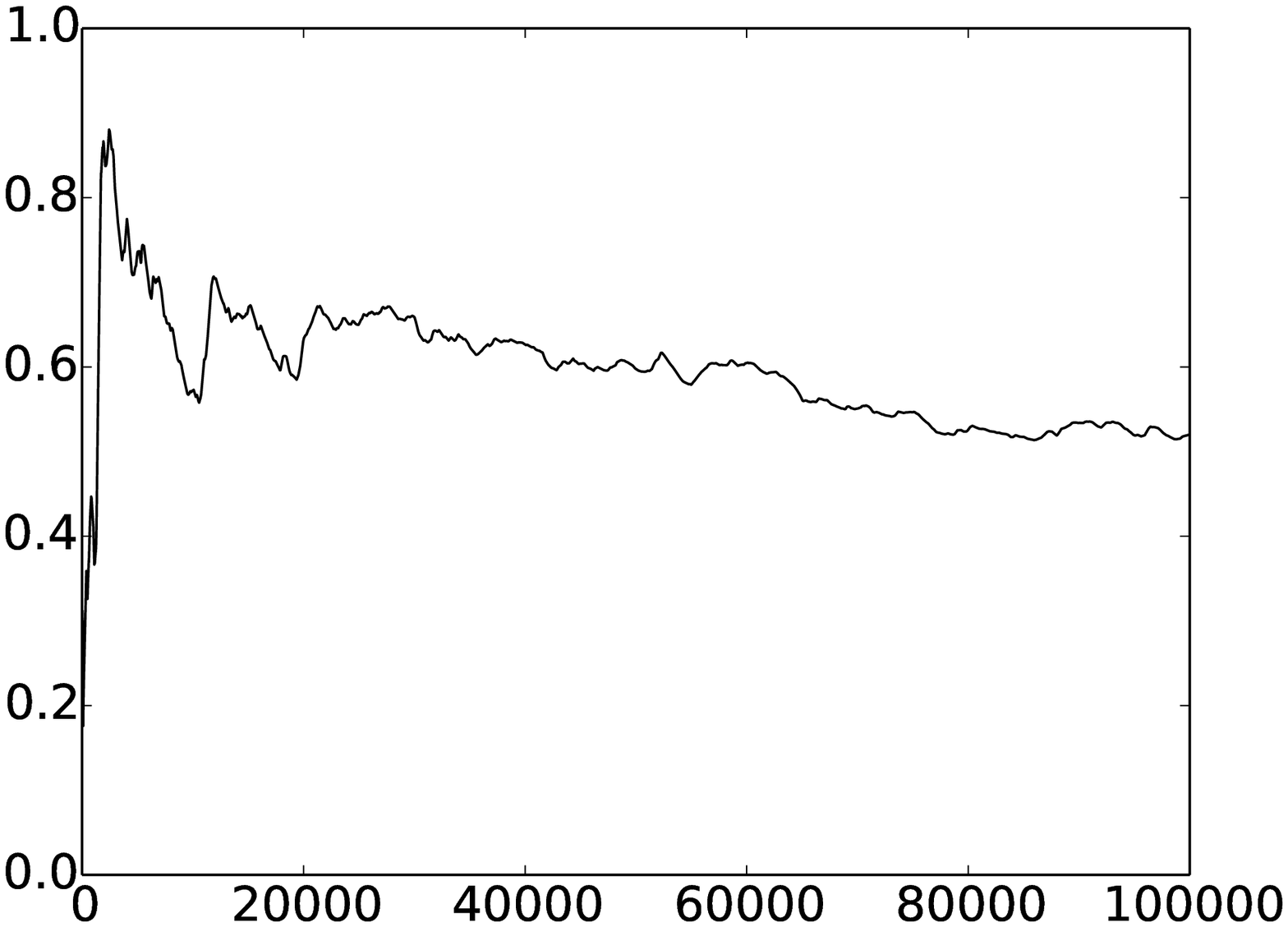}}
\caption{Trace-plot and running average of the first element of $v^{(k)}$  for the  pCN \index{pCN} dynamics sampler after $K=10^5$ steps,  
for  Example \ref{ex:ex3} with $\alpha=2.5$, $\Sigma=\sigma^{2}=1$ and $\Gamma=\gamma^{2}=1$, with 
$J=10$, see also {\tt p6.m} in section \ref{ssec:p5}.}
\label{fig:pCND}
\end{figure}


We now turn to variational methods\index{variational method}; 
recall Theorems \ref{th22} and
\ref{th22a} in the stochastic and deterministic cases respectively.
In Figure \ref{fig:4DVAR}a we plot the MAP\index{MAP estimator} 
(4DVAR)\index{4DVAR} estimator for 
our Example \ref{ex:ex1}, choosing exactly the same parameters and data as for Figure \ref{fig:smooth1}a, in the case where $J=10^{2}$. 
In this case the function $\Iid(\cdot\,;y)$ is
quadratic and has a unique global minimum. A straightforward
minimization routine will easily find this: we employed standard
\MAT optimization\index{optimization} software initialized at three different
points. From all three starting points
chosen the algorithm finds the correct global minimizer. 

In Figure \ref{fig:4DVAR}b we plot the MAP\index{MAP estimator} (4DVAR) estimator for 
our Example \ref{ex:ex4} for  the case $r=4$ choosing exactly the same parameters and data as for Figure \ref{fig:smooth3}. We again employ
a \MAT optimization\index{optimization} routine, and we again initialize it 
at three different points. 
\begin{figure}[h]
\centering
\subfigure[$I(v_{0};y)$ and its minima for Example \ref{ex:ex1}]{\includegraphics[scale=0.365]{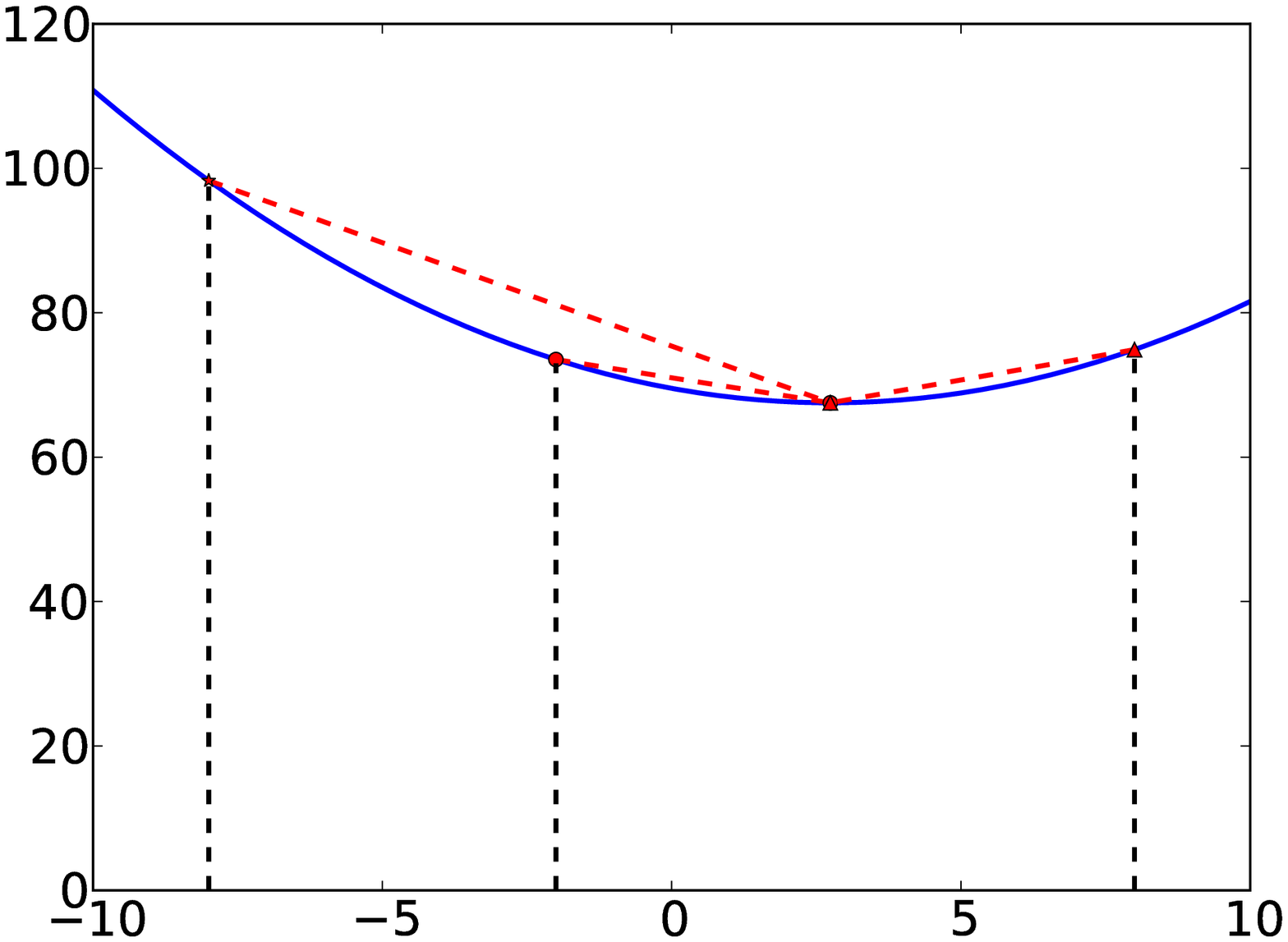}}
\subfigure[$I(v_{0};y)$ and its minima for Example \ref{ex:ex4}]{\includegraphics[scale=0.365]{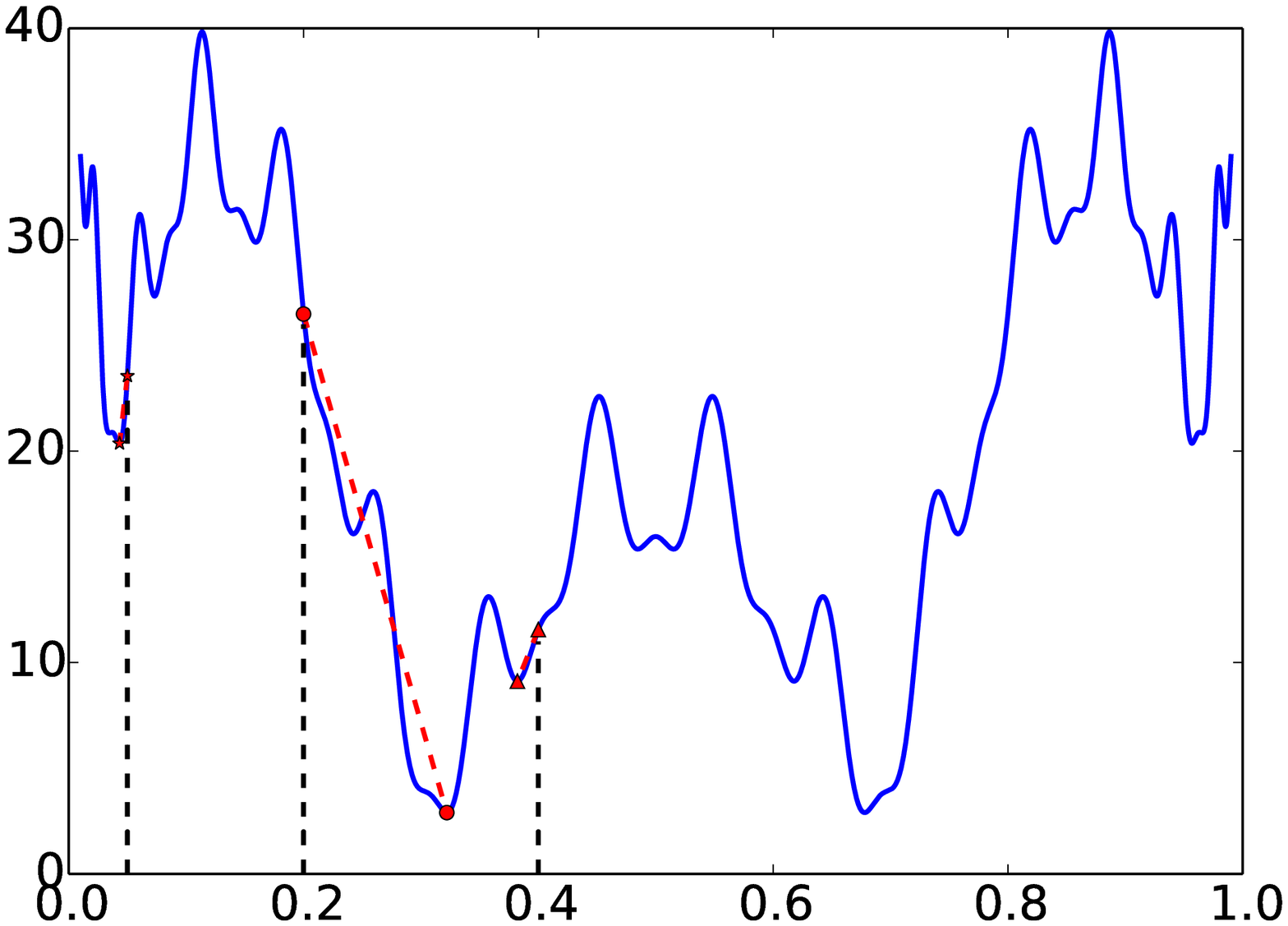}}
\caption{Finding local minima of $I(v_{0};y)$ for Examples \ref{ex:ex1} and \ref{ex:ex4}. The values and the data used are the same as for Figures \ref{fig:smooth1}a and \ref{fig:smooth3}b. $(\circ,\star,\Box)$ denote three different initial conditions for the starting the minimization process. $(-8,-2,8)$ for Example \ref{ex:ex1} and  $(0.05,0.2,0.4)$ for Example \ref{ex:ex4}. }
\label{fig:4DVAR}
\end{figure}
The value obtained for our MAP\index{MAP estimator} estimator depends crucially on the choice of initial condition in our minimization procedure. In particular, on the choices of starting point presented: for the three initializations 
shown, it is only when we start from $0.2$ are we able to find the global minimum of $\Iid(v_{0};y)$. 
By Theorem \ref{th22a} this global minimum corresponds to the maximum
of the posterior distribution\index{posterior distribution}, and we see that finding the MAP\index{MAP estimator} estimator
is a difficult task for this problem. Starting with the other two initial 
conditions displayed we converge to one of the many local minima of 
$\Iid(v_{0};y)$; these local minima are in fact regions of very low
probability, as we can see in Figure \ref{fig:smooth3}a. 
This illustrates the care required when computing 4DVAR\index{4DVAR} solutions
in cases where the forward problem exhibits
sensitivity to initial conditions.

Figure \ref{fig:w4DVAR} shows application of the w4DVAR\index{4DVAR!weak constraint}
method, or MAP estimator\index{MAP estimator} given by Theorem \ref{th22}, in the case of
the Example \ref{ex:ex3} with parameters set at 
$J=5, \gamma=\sigma=0.1$.  
In contrast to the previous example, this
is no longer a one-dimensional minimization problem: we are minimizing
$\Ii(v;y)$ given by \eqref{eq:dtf4} over $v \in \bbR^6$, given the
data $y \in \bbR^5.$  
The figure shows that there are at least $2$ local
minimizers for this problem, with $v^{(1)}$ closer to the truth than
$v^{(2)}$, and with $I(v^{(1)};y)$ considerably smaller that $I(v^{(2)};y).$ 
However $v^{(2)}$ has a larger basin of attraction for the optimization\index{optimization} software
used: many initial conditions lead to $v^{(2)}$, while fewer lead to $v^{(1)}$.  
Furthermore, whilst we believe that 
$v^{(1)}$ is the global minimizer, it is difficult to state this with 
certainty, even for this relatively low-dimensional model.
To get greater certainty an exhaustive and expensive search of
the six dimensional parameter space would be needed.

\begin{figure}[h]
\centering
{\includegraphics[scale=0.45]{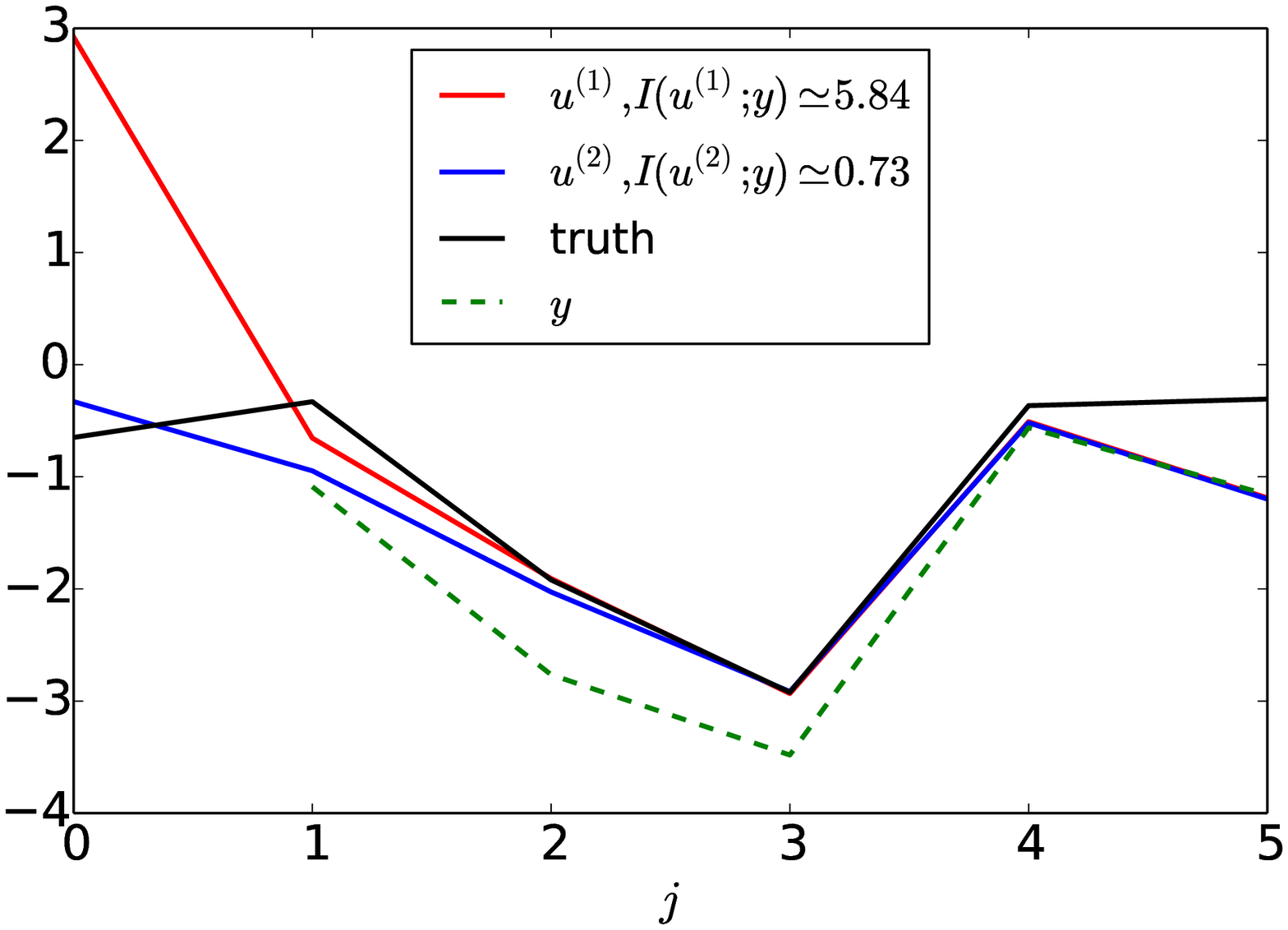}}
\caption{Weak constraint 4DVAR for $J=5, \gamma=\sigma=0.1$, illustrating
two local minimizers $v^{(1)}$ and $v^{(2)}$, see also {\tt p7.m} in section \ref{ssec:p6}.}
\label{fig:w4DVAR}
\end{figure}

\section{Bibliographic Notes}
\label{ssec:bs}

\begin{itemize}

\item The Kalman Smoother from subsection
\ref{ssec:kalkal} leads to a system of linear equations,
characterized in Theorem \ref{t:ks1}. These equations are of
block tridiagonal form, and may be solved by 
LU factorization\index{LU factorization}. The Kalman filter
corresponds to the LU sweep in this factorization, a fact that
was highlighted in \cite{BF63}.

\item Section \ref{ssec:mcmcm}.
Monte Carlo Markov Chain\index{MCMC}\index{Monte Carlo!Markov Chain}
methods have a long history,
initiated in the 1953 paper \cite{MRRT53} and then generalized
to an abstract formulation in the 1970 paper \cite{Has}. The
subject is overviewed from an algorithmic point of
view in \cite{Liu}. Theorem \ref{th21} is contained
in \cite{MT93}, and that reference also contains many 
other convergence theorems for Markov chains\index{Markov chain}; in particular we
note that it is often possible to increase substantially the class
of functions $\varphi$ to which the theorem applies by means
of Lyapunov function techniques, which control the tails of the
probability distribution.  The specific form of the
pCN-MCMC\index{pCN}\index{MCMC} method which we introduce here has been chosen to be
particularly effective in high dimensions; 
see \cite{CRSW12} for an overview, \cite{BRSV08} for
the introduction of pCN \index{pCN} and other methods for sampling probability
measures  in infinite dimensions, in the context of
conditioned diffusions, and \cite{CDS11} for the application to
a data assimilation problem.

The key point about pCN \index{pCN} methods is that the proposal is 
reversible with respect to an underlying Gaussian measure.
Even in the absence of data, if $\PPsi \ne 0$ then this Gaussian measure is far from the
measure governing the actual dynamics. In contrast, still in the absence of
data, this Gaussian
measure is {\em precisely} the measure governing the
noise and initial condition, giving the pCN \index{pCN} Dynamics Sampler
a natural advantage over the standard pCN\index{pCN} method. 
In particular, notice that the acceptance probability is now determined
only by the model-data misfit for the pCN \index{pCN} Dynamics Sampler, 
and does {\it not} have to account
for incorporation of the dynamics as it does in the original pCN \index{pCN} method; 
this typically improves the acceptance rate of the pCN \index{pCN} Dynamics Sampler over
the standard pCN \index{pCN} method. Therefore, this method may be 
preferable, particularly in the case of unstable dynamics.
The pCN \index{pCN} Dynamics Sampler was introduced in \cite{CDS11}
and further trialled in \cite{HLS13}; it shows
considerable promise.

The subject of MCMC methods is an enormous one to which we cannot
do justice in this brief presentation. There are two relevant
time-scales for the Markov chain: the burn-in time\index{burn-in time}
which determines the time to reach part of state-space where
most of the probability mass is concentrated, and the 
mixing time\index{mixing time} which determines the time taken to
fully explore the probability distribution. Our
brief overview would not be complete without a cursory discussion of 
convergence diagnostics \cite{gelman2013bayesian} which attempt to ensure
that the Markov chain is run long enough to have both burnt-in and mixed.
Whilst none of the diagnostics
are foolproof, there are many simple tests that can and
should be undertaken. The first is to simply study (as we have done in this
section) trace plots of quantities of interest (components of the
solution, acceptance probabilities) and the running average of 
these quantities of interest. More sophisticated diagnostics 
are also available. For example, 
comparison of the within-chain and between-chain 
variances of multiple chains beginning from over-dispersed initial conditions
is advocated in the works \cite{gelman1992inference, brooks1998general}.  
The authors of those works advise to apply a range of tests
based on comparing inferences from individual chains and 
a mixture of chains.  These and other more sophisticated
diagnostics are not considered further here, 
and the reader is referred to the cited works for further details and discussion.

\item Section \ref{ssec:vm}. Variational Methods, known as 
4DVAR\index{4DVAR}
in the meteorology community and widely used in practice, 
have the distinction, when compared
with the {\em ad hoc} non-Gaussian filters described in the next chapter
which are also widely used in practice in their EnKF and 3DVAR formulations,
of being well-founded statistically: they correspond
to the maximum {\em a posteriori} estimator (MAP estimator)
\index{MAP estimator} for the fully
Bayesian \index{Bayesian} posterior distribution\index{posterior distribution} on model state given data \cite{KS05}.
See \cite{Z97} and the references therein for a discussion of
the applied context; see \cite{dlsv13} for a more theoretical
presentation, including connections to the Onsager-Machlup
functional\index{Onsager-Machlup functional}
arising in the theory of diffusion processes \index{diffusion process}.
The European Centre for Medium-Range Weather Forecasts
(ECMWF) runs a weather prediction code based on spectral approximation
of continuum versions of Newton's balance laws, together with
various sub-grid scale models. 
Initialization of this prediction code 
is based on the use of 4DVAR like methods. The conjunction of this
computational forward model, together with the use of 4DVAR to
incorporate data, results in what is the best weather predictor, 
worldwide, according to a widely adopted
metric by which the prediction skill of forecasts is measured.
The subject of algorithms for optimization\index{optimization},
which of course underpins variational methods,
is vast and we have
not attempted to cover it here; we mention briefly that
many methods use first derivative information (for example
steepest descent methods) and second derivative information
(Newton methods); the reader is directed to 
\cite{nocedal1999numerical} for details.
Derivatives can also be useful in making MCMC proposals, leading the
Langevin the hybrid Monte Carlo\index{Monte Carlo!hybrid}
methods, for example; see \cite{RC99} and
the references therein.

\end{itemize}

\section{Exercises}
\label{ssec:ex3}

\begin{enumerate}

\item Consider the posterior distribution on the initial condition,
given by Theorem \ref{th112}, in the case of deterministic dynamics.
In the case of Example \ref{ex:ex4}, program {\tt p2.m} plots the 
prior and posterior distributions for this problem for data generated
with true initial condition $v_0=0.1$ Why is the posterior distribution
concentrating much closer to $0.9$ than to the true initial condition at $0.1$? 
Change the mean of the prior from $0.7$ to $0.3$; what do you observe regarding
the effect on the posterior. Explain what you observe. Illustrate
your findings with graphics.

\item Consider the posterior distribution on the initial condition, 
given by Theorem \ref{th112}, in the case of deterministic dynamics.
In the case of Example \ref{ex:ex4}, program {\tt p3.m} approximates the 
posterior distribution for this problem for data generated
with true initial condition $v_0=0.3$. Why is the posterior distribution
in this case approximately symmetric about $0.5$? What happens if the mean of the
prior is changed from $0.5$ to $0.1$? Explain what you observe. Illustrate
your findings with graphics. 

\item Consider the posterior distribution on the initial condition,
given by Theorem \ref{th112}, in the case of deterministic dynamics.
In the case of Example \ref{ex:ex4}, program {\tt p3.m} approximates the 
posterior distribution for this problem. Modify the program so that the prior
and data are the same as for the first exercise in this section.
Compare the approximation to the posterior obtained by use of
program {\tt p3.m} with the true posterior as computed by program
{\tt p2.m}. Carry out similar comparisons for different choices of
prior, ensuring that programs {\tt p2.m} and {\tt p3.m} share the same
prior and the same data.  In all cases experiment with the choice of the 
parameter $\beta$ in the proposal distribution within {\tt p3.m}, 
and determine its effect on the displayed approximation  of the
true posterior computed from {\tt p2.m}. 
Illustrate your findings with graphics.

\item Consider the posterior distribution on the initial condition,
given by Theorem \ref{th112}, in the case of deterministic dynamics.
In the case of Example \ref{ex:ex4}, program {\tt p3.m} approximates the
posterior distribution for this problem. Modify the program so that it
applies to Example \ref{ex:ex3}. Experiment with the choice of the 
parameter $J$, which determines the length of the Markov chain\index{Markov chain}
simulation, within {\tt p3.m}. Illustrate your findings with graphics.

\item Consider the posterior distribution on the signal,
given by Theorem \ref{th11}, in the case of stochastic dynamics \index{stochastic dynamics}.
In the case of Example \ref{ex:ex3}, program {\tt p4.m} approximates the
posterior distribution for this problem, using the Independence Dynamics
Sampler\index{Independence Dynamics Sampler}. 
Run this program for a range of values of $\gamma$. Report 
and explain the effect of $\gamma$ on the acceptance probability curves.

\item Consider the posterior distribution on the signal,
given by Theorem \ref{th11}, in the case of stochastic dynamics \index{stochastic dynamics}.
In the case of Example \ref{ex:ex3}, program {\tt p5.m} approximates the
posterior distribution for this problem, using the pCN \index{pCN} 
sampler. Run this program for a range of values of $\gamma$. Report 
and explain the effect of $\beta$ on the acceptance probability curves.

\item Consider the posterior distribution on the signal,
given by Theorem \ref{th11}, in the case of stochastic dynamics \index{stochastic dynamics}.
In the case of Example \ref{ex:ex3}, program {\tt p6.m} approximates the
posterior distribution for this problem, using the pCN \index{pCN} dynamics 
sampler. Run this program for a range of values of $\gamma$. Report 
and explain the effect of $\sigma$ and of $J$ on the acceptance probability curves.

\item Consider the MAP estimator for the posterior distribution on the signal,
given by Theorem \ref{th22} in the case of stochastic dynamics \index{stochastic dynamics}. 
Program {\tt p7.m} finds the MAP estimator for Example \ref{ex:ex3}.
Increase $J$ to $50$ and display your results graphically. Now
repeat your experiments for the values $\gamma=0.01,0.1$ and $10$ and
display and discuss your findings. Repeat the experiments using the
``truth'' as the initial condition for the minimization. What effect
does this have? Explain this effect. 

\item Prove Theorem \ref{th22a}. 

\item Consider application of the RWM \index{Random Walk Metropolis} proposal \eqref{eq:prop1},
applied in the case of stochastic dynamics \index{stochastic dynamics}. Find the form of the
Metropolis-Hastings\index{Metropolis-Hastings}
acceptance probability in this case. 

\item Consider the family of probability measures $\mu^{\epsilon}$ on $\bbR$
with Lebesgue density proportional to $\exp\bigl(-V^{\epsilon}(u)\bigr)$
with $V^{\epsilon}(u)$ given by \eqref{eq:V}. Prove that the family of
measure $\mu^{\epsilon}$ is locally Lipschitz in the Hellinger\index{metric!Hellinger} metric
and in the total variation metric\index{metric!total variation}.

\end{enumerate}

\graphicspath{{./figs/chapter3/}}
\chapter{Discrete Time: Filtering Algorithms}\label{sec:dtfa}\index{filtering}

In this chapter we describe various algorithms for the filtering \index{filtering}
problem. Recall from section \ref{ssec:fp}
that filtering \index{filtering} refers to the sequential update of
the probability distribution on the state given the data, as 
data is acquired, and that $Y_j=\{y_{\ell}\}_{\ell=1}^j$ denotes
the data\index{data!accumulated} accumulated up to time $j$. The filtering
update \index{filtering!update} from time $j$ to time $j+1$ may be broken into two steps: {\em
prediction}\index{prediction}
which is based on the equation for the state evolution,
using the Markov kernel \index{Markov kernel} for the stochastic 
or deterministic dynamical system \index{dynamical system!stochastic}
which maps $\bbP(v_{j}|Y_j)$ into $\bbP(v_{j+1}|Y_j)$;
and {\em analysis}\index{analysis} which incorporates data via Bayes' formula\index{Bayes' formula} and
maps $\bbP(v_{j+1}|Y_j)$ into $\bbP(v_{j+1}|Y_{j+1})$.
All but one of the algorithms we study (the optimal proposal\index{proposal!optimal} version of the particle filter) will also reflect these two steps.

We start in section \ref{ssec:tkf}
with the Kalman filter\index{Kalman filter} which provides
an exact algorithm to determine the filtering distribution \index{filtering!distribution}
for linear problems with additive Gaussian noise. 
Since the filtering distribution  \index{filtering!distribution}
is Gaussian in this case, the algorithm comprises an iteration
which maps the mean and covariance from time $j$ to time $j+1$.
In section \ref{ssec:ngf} we show how the idea of Kalman
filtering\index{Kalman filter} may be used to combine dynamical model with data
for nonlinear problems; in this case the posterior distribution\index{posterior distribution} is
not Gaussian, but the algorithms proceed by invoking a Gaussian
ansatz in the analysis step  \index{filtering!analysis step} of the filter. This results in
algorithms which do not provably approximate the true filtering
distribution\index{filtering!distribution} in general; 
in various forms they are, however, robust to
use in high dimension. In section \ref{ssec:pf} we introduce
the particle filter\index{particle filter} methodology which leads to provably accurate
estimates of the true filtering distribution  \index{filtering!distribution} but which is, in its current forms,
poorly behaved in high dimensions. 
The algorithms in sections \ref{ssec:tkf}--\ref{ssec:pf} are
concerned primarily with stochastic dynamics,\index{stochastic dynamics} but setting $\Sigma=0$
yields the corresponding algorithms for deterministic dynamics.\index{deterministic dynamics}
In section \ref{ssec:stab} we study the long time behaviour of some of the filtering algorithms  \index{filtering!algorithm} introduced in the previous sections. Finally, in section \ref{ssec:filti}  we present some numerical illustrations and conclude with bibliographic notes and exercises 
in sections \ref{ssec:dtnb3} and \ref{ex:filt}.

For clarity of exposition we again recall the form of the data assimilation
problem. The signal is governed by the model of equations \eqref{eq:dtf1}:
\begin{subequations}
\begin{eqnarray*}
&v_{j+1}=\PPsi(v_j)+\xi_j, \;j\in\Z^+, \\
&v_0\sim \G(m_0,C_0),
\end{eqnarray*}
\end{subequations}
where $\xi=\{\xi_j\}_{j\in\N}$ is an i.i.d. \index{i.i.d.} sequence, independent of $v_0$,
with $\xi_0\sim\G(0,\Sigma)$. The data is given by equation \eqref{eq:dtf2}:
\begin{equation*}
y_{j+1}=h(v_{j+1})+\eta_{j+1}, \;j\in\Z^+,
\end{equation*}
where $h:\R^n\to\R^m$ and $\eta=\{\eta_j\}_{j\in\Z^+}$ is an i.i.d. \index{i.i.d.} sequence,
independent of $(v_0,\xi)$, with $\eta_1\sim\G(0,\Gamma)$.

\section{Linear Gaussian Problems: The Kalman\index{Kalman filter} Filter}\label{ssec:tkf}
This algorithm provides a sequential method for updating the filtering 
distribution  \index{filtering!distribution} $\rp(v_j|Y_j)$ from time $j$ to time $j+1$, 
when $\PPsi$ and $h$ are linear maps. 
In this case the filtering distribution  \index{filtering!distribution} is Gaussian and it can be characterized entirely through its mean and covariance. 
To see this we note that the prediction   \index{filtering!prediction step} step preserves Gaussianity 
by Lemma \ref{lem:to}; the analysis step  \index{filtering!analysis step} preserves Gaussianity because it
is an application of Bayes' formula\index{Bayes' formula} \eqref{eq:bayes} and then Lemma \ref{l:qf}
establishes the required Gaussian property since the log pdf 
is quadratic in the unknown.

To be concrete we let 
\begin{equation}\label{eq:dtfa1}
\PPsi(v)=Mv, \;h(v)=Hv
\end{equation}
for matrices $M\in\R^{n\times n}, H\in\R^{m\times n}$. We assume that $m\leq n$ and $\mbox{Rank}(H)=m$. We let $(m_j, C_j)$ denote the mean and covariance of 
$v_j|Y_j$, noting that this entirely characterizes the random variable since
it is Gaussian. We let 
$(\widehat{m}_{j+1}, \widehat{C}_{j+1})$ denote the mean and covariance of 
$v_{j+1}|Y_j$, noting that this too completely characterizes the random
variable, since it is also Gaussian. 
We now derive the map $(m_j, C_j)\mapsto(m_{j+1}, C_{j+1})$, 
using the intermediate variables  $(\widehat{m}_{j+1}, \widehat{C}_{j+1})$ so that 
we may compute the prediction  \index{filtering!prediction step} and analysis   \index{filtering!analysis step} steps separately.
This gives the Kalman filter in a form where the update is expressed in
terms of precision\index{precision} rather than covariance\index{covariance}.

\begin{theorem}\label{th31}
Assume that $C_0, \Gamma, \Sigma>0$. Then $C_j>0$ for all $j\in\Z^+$ and 
\begin{subequations} \label{eq:kalman_update}
\begin{eqnarray}
C_{j+1}^{-1}&=&(MC_jM^T+\Sigma)^{-1}+H^T\Gamma^{-1}H,\\
C_{j+1}^{-1}m_{j+1}&=&(MC_jM^T+\Sigma)^{-1}Mm_j+H^T\Gamma^{-1}y_{j+1}.
\end{eqnarray}
\end{subequations}
\end{theorem}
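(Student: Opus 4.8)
The plan is to prove this by induction on $j$, carrying along the statement that $C_j > 0$ together with the Gaussianity of the filtering distribution $v_j | Y_j \sim N(m_j, C_j)$. The base case $j = 0$ is immediate from the assumption $v_0 \sim N(m_0, C_0)$ with $C_0 > 0$. For the inductive step, I would split the update into the two steps identified in the filtering discussion of Section \ref{ssec:fp}: the prediction step $\mu_j \mapsto \hmu_{j+1}$ and the analysis step $\hmu_{j+1} \mapsto \mu_{j+1}$.

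First I would handle the prediction step. Since $v_{j+1} = Mv_j + \xi_j$ with $v_j | Y_j \sim N(m_j, C_j)$ and $\xi_j \sim N(0,\Sigma)$ independent of $v_j$ (and of $Y_j$), Lemma \ref{lem:to} gives that $v_{j+1} | Y_j$ is Gaussian with mean $\hm_{j+1} = Mm_j$ and covariance $\hC_{j+1} = MC_jM^T + \Sigma$. Note $\hC_{j+1} > 0$ because $\Sigma > 0$ and $MC_jM^T \ge 0$; this positivity is what lets me invert $\hC_{j+1}$ below.

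Next I would handle the analysis step via Bayes' formula \eqref{eq:dtf6}: $\bbP(v_{j+1}|Y_{j+1}) \propto \bbP(y_{j+1}|v_{j+1})\bbP(v_{j+1}|Y_j)$. The likelihood is $\bbP(y_{j+1}|v_{j+1}) \propto \exp\bigl(-\frac12|y_{j+1} - Hv_{j+1}|_\Gamma^2\bigr)$ and the predictive density is $\propto \exp\bigl(-\frac12|v_{j+1} - \hm_{j+1}|_{\hC_{j+1}}^2\bigr)$. Adding the two quadratic forms, the negative log-posterior is, up to terms independent of $v_{j+1}$,
\begin{equation*}
\tfrac12\bigl\langle v_{j+1}, (\hC_{j+1}^{-1} + H^T\Gamma^{-1}H) v_{j+1}\bigr\rangle - \bigl\langle v_{j+1}, \hC_{j+1}^{-1}\hm_{j+1} + H^T\Gamma^{-1}y_{j+1}\bigr\rangle.
\end{equation*}
This is a positive-definite quadratic form in $v_{j+1}$ (the Hessian $\hC_{j+1}^{-1} + H^T\Gamma^{-1}H$ is strictly positive since $\hC_{j+1}^{-1} > 0$), so by Lemma \ref{l:qf} the posterior is Gaussian $N(m_{j+1}, C_{j+1})$ with precision matrix $C_{j+1}^{-1} = \hC_{j+1}^{-1} + H^T\Gamma^{-1}H$ and mean determined by $C_{j+1}^{-1}m_{j+1} = \hC_{j+1}^{-1}\hm_{j+1} + H^T\Gamma^{-1}y_{j+1}$. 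Substituting $\hC_{j+1} = MC_jM^T + \Sigma$ and $\hm_{j+1} = Mm_j$ yields exactly \eqref{eq:kalman_update}. Finally $C_{j+1} > 0$ because its inverse is a sum of a positive-definite and a positive-semidefinite matrix, completing the induction.

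The main obstacle is bookkeeping rather than conceptual: one must be careful that $\hC_{j+1}$ is genuinely invertible (which needs $\Sigma > 0$, supplied by hypothesis) so that the completion-of-the-square argument via Lemma \ref{l:qf} applies, and one must correctly track which variable the quadratic form is being completed in. There is no need to assume $\mathrm{Rank}(H) = m$ or $m \le n$ for this precision-form update — those hypotheses become relevant only when one rewrites the update in covariance (Kalman gain) form, which is not what is being claimed here. Everything reduces to Lemma \ref{lem:to}, Bayes' formula, and Lemma \ref{l:qf}, all available in the excerpt.
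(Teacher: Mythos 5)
Your proof is correct and follows essentially the same route as the paper's: induction on $j$ carrying positivity of $C_j$, the prediction step giving $\hm_{j+1}=Mm_j$ and $\hC_{j+1}=MC_jM^T+\Sigma>0$, and the analysis step via Bayes' formula with matching of quadratic and linear terms in $v_{j+1}$ (the paper does this by hand where you cite Lemma \ref{l:qf}, but it is the same completion of the square). Your side remark that $\mathrm{Rank}(H)=m$ is not needed for the precision-form update is also accurate; the paper invokes that hypothesis only in Corollary \ref{c32}.
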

\begin{proof}
We assume for the purposes of induction that $C_j>0$ noting that this
is true for $j=0$ by assumption.
The prediction  \index{filtering!prediction step} step is determined by (\ref{eq:dtf1}) in the case $\PPsi(\cdot)=M\cdot$:
\[v_{j+1}=Mv_j+\xi_j, \;\xi_j\sim\G(0,\Sigma).\]
From this it is clear that \[\bbE(v_{j+1}|Y_j)=\bbE(Mv_j|Y_j)+\bbE(\xi_j|Y_j).\]
Since $\xi_j$ is independent of $Y_j$ we have
\begin{equation}\label{eq:dtfa2}
\widehat{m}_{j+1}=Mm_j.
\end{equation}
Similarly 
\begin{align*}
\bbE\bigl((v_{j+1}-\widehat{m}_{j+1})\otimes(v_{j+1}-\widehat{m}_{j+1})|Y_j\bigr)&=\bbE
\bigl(M(v_j-m_j)\otimes M(v_j-m_j)|Y_j\bigr)+\bbE\bigl(\xi_j\otimes \xi_j|Y_j\bigr)\\&+\bbE\bigl(M(v_j-m_j)\otimes \xi_j|Y_j\bigr)+\bbE\bigl(\xi_j\otimes M(v_j-m_j)|Y_j\bigr).
\end{align*}
Again, since $\xi_j$ is independent of $Y_j$ and of $v_j$, we have 
\begin{align}\label{eq:dtfa3}
\widehat{C}_{j+1}&=M\bbE((v_j-m_j)\otimes (v_j-m_j)| Y_j)M^T +\Sigma\notag\\
&=MC_jM^T+\Sigma.
\end{align}
Note that $\widehat{C}_{j+1}>0$ because $C_j>0$ by the inductive hypothesis
and $\Sigma>0$ by assumption.

Now we consider the analysis step  \index{filtering!analysis step}. By (\ref{eq:dtf6}), which is just Bayes' formula\index{Bayes' formula}, and using Gaussianity, we have 
\begin{subequations}
\begin{eqnarray}
\exp\Bigl(-\frac12\bigl|v-m_{j+1}\bigr|_{C_{j+1}}^2\Bigr)
&\propto&\exp\Bigl(-\frac12\bigl|\Gamma^{-\frac12}(y_{j+1}-Hv)\bigr|^2-
\frac12\bigl|\hc_{j+1}^{-\frac12}(v-\widehat{m}_{j+1})\bigr|^2\Bigr)\\
&=&\exp\Bigl(-\frac12\bigl|y_{j+1}-Hv\bigr|_{\Gamma}^2-\frac12\bigl|v-\widehat{m}_{j+1}\bigr|_{\hc_{j+1}}^2\Bigr).
\end{eqnarray}
\label{eq:this}
\end{subequations}
Equating quadratic terms in $v$ gives, since $\Gamma>0$ by assumption, 
\begin{equation}\label{eq:dtfa4} C_{j+1}^{-1}=\widehat{C}_{j+1}^{-1}+H^T\Gamma^{-1}H\end{equation}
and equating linear terms in $v$ gives 
\footnote{We do not need to match the constant terms (with respect to $v$) since
the normalization constant \index{normalization constant} in Bayes theorem deals with matching these.}
\begin{equation}\label{eq:dtfa5}C_{j+1}^{-1}m_{j+1}=\widehat{C}_{j+1}^{-1}\widehat{m}_{j+1}+H^T\Gamma^{-1}y_{j+1}.\end{equation}
Substituting the expressions (\ref{eq:dtfa2}) and (\ref{eq:dtfa3}) for $(\widehat{m}_{j+1}, \widehat{C}_{j+1})$ gives the desired result. It remains to verify that $C_{j+1}>0$.  From \eqref{eq:dtfa4} it follows, since $\Gamma^{-1}>0$
by assumption and $\widehat{C}_{j+1}>0$ (proved above), that $C_{j+1}^{-1}>0$.
Hence $C_{j+1}>0$ and the induction is complete. 
\end{proof}

We may now reformulate the Kalman filter using covariances directly, rather
than using precisions\index{precision}.

\begin{corollary}\label{c32}
Under the assumptions of Theorem \ref{th31}, the 
formulae for the Kalman filter\index{Kalman filter} given there may be rewritten as follows:
\begin{align*}
d_{j+1}&=y_{j+1}-H\widehat{m}_{j+1},\\
S_{j+1}&=H\widehat{C}_{j+1}H^T+\Gamma,\\
K_{j+1}&=\widehat{C}_{j+1}H^TS_{j+1}^{-1},\\
m_{j+1}&=\widehat{m}_{j+1}+K_{j+1}d_{j+1},\\
C_{j+1}&=(I-K_{j+1}H)\widehat{C}_{j+1},
\end{align*}
with $(\widehat{m}_{j+1}, \widehat{C}_{j+1})$ given in (\ref{eq:dtfa2}), (\ref{eq:dtfa3}).
\end{corollary}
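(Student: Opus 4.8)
The plan is to derive the covariance form of the Kalman filter from the precision form established in Theorem \ref{th31}, by applying the Sherman--Morrison--Woodbury matrix identity to the expression for $C_{j+1}^{-1}$. The key algebraic fact I would use is that, since $\widehat C_{j+1}>0$ (proved in Theorem \ref{th31}) and $\Gamma>0$, we have
\[
C_{j+1}^{-1}=\widehat C_{j+1}^{-1}+H^T\Gamma^{-1}H,
\]
and the Woodbury identity gives
\[
C_{j+1}=\widehat C_{j+1}-\widehat C_{j+1}H^T\bigl(H\widehat C_{j+1}H^T+\Gamma\bigr)^{-1}H\widehat C_{j+1}.
\]
Recognizing $S_{j+1}=H\widehat C_{j+1}H^T+\Gamma$ and $K_{j+1}=\widehat C_{j+1}H^TS_{j+1}^{-1}$, the right-hand side is exactly $\widehat C_{j+1}-K_{j+1}H\widehat C_{j+1}=(I-K_{j+1}H)\widehat C_{j+1}$, which is the claimed update for $C_{j+1}$. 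First I would note that $S_{j+1}>0$ (as a sum of a positive semi-definite term and the positive-definite $\Gamma$), so that $S_{j+1}^{-1}$ and hence $K_{j+1}$ are well-defined.

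For the mean, I would start from the precision-form identity $C_{j+1}^{-1}m_{j+1}=\widehat C_{j+1}^{-1}\widehat m_{j+1}+H^T\Gamma^{-1}y_{j+1}$ (equation \eqref{eq:dtfa5} in the proof of Theorem \ref{th31}), multiply through by $C_{j+1}$ using the just-derived covariance form, and simplify. Writing $y_{j+1}=d_{j+1}+H\widehat m_{j+1}$, the term $C_{j+1}H^T\Gamma^{-1}y_{j+1}$ splits into a piece involving $d_{j+1}$ and a piece involving $H\widehat m_{j+1}$; the latter, combined with $C_{j+1}\widehat C_{j+1}^{-1}\widehat m_{j+1}$, should collapse to $\widehat m_{j+1}$ after using $C_{j+1}\widehat C_{j+1}^{-1}=I-C_{j+1}H^T\Gamma^{-1}H$. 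The remaining term should be shown to equal $K_{j+1}d_{j+1}$, which requires the identity $C_{j+1}H^T\Gamma^{-1}=\widehat C_{j+1}H^TS_{j+1}^{-1}=K_{j+1}$; this follows by another short manipulation, e.g.\ from $C_{j+1}^{-1}\widehat C_{j+1}H^T=(I+H^T\Gamma^{-1}H\widehat C_{j+1})H^T=H^T\Gamma^{-1}(\Gamma+H\widehat C_{j+1}H^T)=H^T\Gamma^{-1}S_{j+1}$, hence $\widehat C_{j+1}H^T=C_{j+1}H^T\Gamma^{-1}S_{j+1}$ and therefore $K_{j+1}=\widehat C_{j+1}H^TS_{j+1}^{-1}=C_{j+1}H^T\Gamma^{-1}$.

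The expressions for $d_{j+1}$ (the innovation), $S_{j+1}$ (the innovation covariance) and $K_{j+1}$ (the Kalman gain) are then just definitions, so nothing needs to be proved about them beyond well-definedness; and the formulas for $(\widehat m_{j+1},\widehat C_{j+1})$ are quoted verbatim from \eqref{eq:dtfa2} and \eqref{eq:dtfa3}. So the entire corollary reduces to two applications of the Woodbury identity (one for the covariance, one implicitly for the gain identity $K_{j+1}=C_{j+1}H^T\Gamma^{-1}$) plus bookkeeping.

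The main obstacle, such as it is, is purely the algebra of keeping track of which identity is being applied where --- in particular making sure that the gain identity $K_{j+1}=C_{j+1}H^T\Gamma^{-1}$ is established cleanly before it is used in the mean update, since that is the one non-obvious intermediate step. There is no analytic difficulty: all invertibility is inherited from Theorem \ref{th31} and the assumption $\Gamma>0$, and $S_{j+1}>0$ because $H\widehat C_{j+1}H^T\ge 0$. I would present the proof as: (i) establish $S_{j+1}>0$ and the Woodbury formula for $C_{j+1}$; (ii) establish $K_{j+1}=C_{j+1}H^T\Gamma^{-1}$; (iii) substitute into \eqref{eq:dtfa5} and simplify to obtain the mean update.
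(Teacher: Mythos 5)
Your proof is correct and follows essentially the same route as the paper: apply the Woodbury identity (Lemma \ref{l34}) to $C_{j+1}^{-1}=\widehat C_{j+1}^{-1}+H^T\Gamma^{-1}H$ to obtain $C_{j+1}=(I-K_{j+1}H)\widehat C_{j+1}$, then substitute into the precision-form mean equation \eqref{eq:dtfa5} and simplify via the gain identity $K_{j+1}=C_{j+1}H^T\Gamma^{-1}$. The one place you genuinely diverge is in establishing that gain identity: the paper derives $C_{j+1}H^T\Gamma^{-1}H=K_{j+1}H$ and then cancels the $H$ on the right by invoking the standing assumption that $H$ has full row rank $m$, whereas you compute $C_{j+1}^{-1}\widehat C_{j+1}H^T=H^T\Gamma^{-1}S_{j+1}$ directly and read off $K_{j+1}=C_{j+1}H^T\Gamma^{-1}$ without any rank hypothesis. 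Your version is marginally cleaner and slightly more general (it would survive a rank-deficient $H$), at no extra cost; otherwise the two arguments are interchangeable.
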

\begin{proof}
By (\ref{eq:dtfa4}) we have \[C_{j+1}^{-1}=\widehat{C}_{j+1}^{-1}+H^T\Gamma^{-1}H\]
and application of Lemma \ref{l34} below gives 
\begin{align*}C_{j+1}&=
\hc_{j+1}-\widehat{C}_{j+1}H^T(\Gamma+H\hc_{j+1}H^T)^{-1}H \hc_{j+1}\\
&=\Bigl(I-\widehat{C}_{j+1}H^T(\Gamma+H\hc_{j+1}H^T)^{-1}H\Bigr)\hc_{j+1}\\
&=(I-\hc_{j+1}H^TS_{j+1}^{-1}H)\hc_{j+1}\\
&=(I-K_{j+1}H)\hc_{j+1}
\end{align*}
as required. Then the identity (\ref{eq:dtfa5}) gives
\begin{align}\label{eq:dtfa6}
m_{j+1}&=C_{j+1}\hc_{j+1}^{-1}\hm_{j+1}+C_{j+1}H^T\Gamma^{-1}y_{j+1}\notag\\
&=(I-K_{j+1}H)\hm_{j+1}+C_{j+1}H^T\Gamma^{-1}y_{j+1}.
\end{align}
Now note that, again by (\ref{eq:dtfa4}),\[C_{j+1}(\hc_{j+1}^{-1}+H^T\Gamma^{-1}H)=I\]so that
\begin{align*}
C_{j+1}H^T\Gamma^{-1}H&=I-C_{j+1}\hc_{j+1}^{-1}\\
&=I-(I-K_{j+1}H)\\
&=K_{j+1}H.
\end{align*}
Since $H$ has rank $m$ we deduce that \[C_{j+1}H^T\Gamma^{-1}=K_{j+1}.\]
Hence (\ref{eq:dtfa6}) gives 
\begin{align*}
m_{j+1}&=(I-K_{j+1}H)\hm_{j+1}+K_{j+1}y_{j+1}
=\hm_{j+1}+K_{j+1}d_{j+1}
\end{align*}
as required.
\end{proof}
\begin{remark}\label{r33}
The key difference between the Kalman\index{Kalman filter} update formulae in Theorem \ref{th31} and in Corollary \ref{c32} is that, in the former matrix inversion takes place in the state space, with dimension $n$, whilst in the latter matrix inversion takes place in the data space, with dimension $m$. In many applications
$m\ll n$, as the observed subspace dimension is much less than
the state space dimension, and thus the formulation in 
Corollary \ref{c32} is frequently employed in practice. 
The quantity $d_{j+1}$ is referred to as the {\em innovation}\index{innovation}
at time-step $j+1$ and measures the mismatch of the predicted state from 
the data. The matrix $K_{j+1}$ is known as the 
{\em Kalman gain}\index{Kalman gain}.
\end{remark}

The following matrix identity was used to derive the formulation of the 
Kalman Filter\index{Kalman filter} in which inversion takes place in the data space.

\begin{lemma} {\bf Woodbury Matrix Identity}\index{Woodbury matrix identity} 
\label{l34} Let $A \in \bbR^{p \times p},
U \in \bbR^{p \times q}, C \in \bbR^{q \times q}$ and $V \in
\bbR^{q \times p}.$ If $A$ and $C$ are {positive} 
then $A+UCV$ is
invertible and
$$(A+UCV)^{-1}=A^{-1}-A^{-1}U\Bigl(C^{-1}+VA^{-1}U\Bigr)^{-1}VA^{-1}.$$
\end{lemma}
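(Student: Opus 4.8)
The plan is to prove the Woodbury matrix identity by direct verification: rather than deriving the formula, I would simply check that the proposed right-hand side, when multiplied by $A+UCV$, yields the identity matrix. This is the cleanest route and avoids any need to motivate where the formula comes from.

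First I would set $B := A^{-1}-A^{-1}U\bigl(C^{-1}+VA^{-1}U\bigr)^{-1}VA^{-1}$, noting that this expression makes sense: $A$ is positive hence invertible, $C$ is positive hence invertible, and $C^{-1}+VA^{-1}U$ is a sum of a positive-definite matrix $C^{-1}$ and a positive-semidefinite matrix $VA^{-1}U = V A^{-1} V^T$-type term — wait, one must be slightly careful here since $V$ need not equal $U^T$. Actually the cleanest sufficient condition is simply to observe that in all our applications $U=H^T$, $V=H$, $C=\Gamma^{-1}$, $A=\hc_{j+1}^{-1}$ or similar, so $C^{-1}+VA^{-1}U = \Gamma + H\hc_{j+1}H^T$ which is manifestly positive-definite; in the general statement one takes the invertibility of $C^{-1}+VA^{-1}U$ as following from positivity of $A$ and $C$ in the symmetric case $V=U^T$. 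I would then compute $(A+UCV)B$ and expand:
\begin{align*}
(A+UCV)B &= I + UCVA^{-1} - U\bigl(C^{-1}+VA^{-1}U\bigr)^{-1}VA^{-1}\\
&\quad - UCVA^{-1}U\bigl(C^{-1}+VA^{-1}U\bigr)^{-1}VA^{-1}.
\end{align*}
The middle step is to factor the last three terms through $U(\cdot)VA^{-1}$ on the outside, obtaining
$$UCVA^{-1} - U\Bigl[I + CVA^{-1}U\Bigr]\bigl(C^{-1}+VA^{-1}U\bigr)^{-1}VA^{-1},$$
and then to recognize that $I + CVA^{-1}U = C\bigl(C^{-1}+VA^{-1}U\bigr)$, so that $\bigl[I+CVA^{-1}U\bigr]\bigl(C^{-1}+VA^{-1}U\bigr)^{-1} = C$. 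Substituting, the bracketed terms collapse to $UCVA^{-1} - UCVA^{-1} = 0$, leaving $(A+UCV)B = I$. A symmetric computation of $B(A+UCV)$ confirms $B$ is also a left inverse, so $A+UCV$ is invertible with inverse $B$, as claimed.

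The main (and only) obstacle is bookkeeping: keeping the non-commuting matrix products in the correct order and spotting the factorization $I+CVA^{-1}U = C(C^{-1}+VA^{-1}U)$, which is the one non-mechanical step. Everything else is routine algebra. I would also add a remark, or fold into the statement, that the hypotheses actually used in the paper (applying the lemma with $A=\hc_{j+1}^{-1}>0$, $U=H^T$, $C=\Gamma^{-1}>0$, $V=H$) guarantee $C^{-1}+VA^{-1}U = \Gamma + H\hc_{j+1}H^T = S_{j+1} > 0$, so no invertibility concern arises in practice. Since the excerpt ends exactly at the statement of Lemma \ref{l34}, this direct-verification proof is all that is needed to close the section.
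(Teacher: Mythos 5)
Your verification is correct, and in fact the paper states Lemma \ref{l34} without any proof at all, so your direct computation fills a genuine omission rather than duplicating an argument. The algebra is sound: the factorization $I+CVA^{-1}U=C\bigl(C^{-1}+VA^{-1}U\bigr)$ is exactly the step that collapses the three extra terms in $(A+UCV)B$, and the left-inverse computation is symmetric. You are also right to flag that the hypothesis ``$A$ and $C$ positive'' does not by itself guarantee invertibility of $C^{-1}+VA^{-1}U$ for arbitrary $U,V$ (e.g.\ $p=q=1$, $A=C=U=1$, $V=-1$ gives $A+UCV=0$); the statement implicitly assumes the symmetric situation $V=U^T$ in which it is actually applied (Corollary \ref{c32} and the ETKF derivation use $A=\hc_{j+1}^{-1}$ or $\Gamma$, $U=H^T$, $V=H$), where $C^{-1}+U^TA^{-1}U$ is positive-definite and the proof goes through without further assumption.
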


\section{Approximate Gaussian Filters}\label{ssec:ngf}

Here we introduce a family of methods, based on invoking
a minimization principle which underlies the Kalman filter\index{Kalman filter},
and which has a natural generalization to non-Gaussian problems.
The update equation for the Kalman filter\index{Kalman filter} mean, (\ref{eq:dtfa5}), can be  written as
$$m_{j+1}=\mbox{arg}\min_v\Jtd(v)$$
where
\begin{equation}\label{eq:dtfa7}
\Jtd(v):=\frac12|y_{j+1}-Hv|_{\Gamma}^2+\frac12|v-\hm_{j+1}|_{\hc_{j+1}}^2;
\end{equation}
here $\hm_{j+1}$ is calculated from \eqref{eq:dtfa2}, and $\hc_{j+1}$
is given by \eqref{eq:dtfa3}.
The fact that this minimization principle holds follows from  (\ref{eq:this}).
(We note that $\Jtd(\cdot)$ in fact depends on $j$, but we suppress explicit
reference to this dependence for notational simplicity.)

Whilst the Kalman filter\index{Kalman filter} itself is restricted to linear, Gaussian problems, the formulation via minimization generalizes to nonlinear problems. A natural generalization of (\ref{eq:dtfa7}) to the nonlinear case is to define 
\begin{equation}
\label{eq:J3}
\Jtd(v):=\frac12|y_{j+1}-h(v)|_{\Gamma}^2+\frac12|v-\hm_{j+1}|_{\hc_{j+1}}^2,
\end{equation}
where
$$\hm_{j+1}=\PPsi(m_j)+\xi_j,$$
and then to set \[m_{j+1}=\mbox{arg}\min_{v}\Jtd(v).\]
This provides a family of algorithms for updating the mean, differing
depending upon how $\hc_{j+1}$ is specified. In this section we
will consider several choices for this specification, and hence
several different algorithms. Notice that the minimization principle 
is very natural: it enforces a compromise between fitting the model
prediction  $\hm_{j+1}$ and the data $y_{j+1}$.

For simplicity we consider the case where observations \index{observations} are linear and $h(v)=Hv$ leading to the update algorithm 
$m_j\mapsto m_{j+1}$ defined by 
\begin{subequations}
\label{eq:dtfa8}
\begin{align}
\hm_{j+1}&=\PPsi(m_j)+\xi_j,\\
\Jtd(v)&=\frac12|y_{j+1}-Hv|_{\Gamma}^2+\frac12|v-\hm_{j+1})|_{\hc_{j+1}}^2,\\
m_{j+1}&=\mbox{arg}\min_v \Jtd(v).
\end{align}
\end{subequations}
This quadratic minimization problem is explicitly solvable and,
by the arguments used in deriving Corollary \ref{c32}, 
we deduce the following update formulae:
\begin{subequations}\label{eq:dtfa9}
\begin{align}
m_{j+1}&=(I-K_{j+1}H)\hm_{j+1}+K_{j+1}y_{j+1},\\
K_{j+1}&=\hc_{j+1}H^TS_{j+1}^{-1},\\
S_{j+1}&=H\hc_{j+1}H^T+\Gamma.
\end{align}
\end{subequations}

The next three subsections each correspond to algorithms derived in this way,
namely by minimizing $\Jtd(v)$, but corresponding to different choices 
of the model covariance $\hc_{j+1}$. We also note that in the  
first two of these subsections we choose $\xi_j \equiv 0$ in
equation (\ref{eq:dtfa8}a) so that
the prediction is made by the noise-free dynamical model; however is
not a necessary choice and whilst it is natural for the extended Kalman
filter\index{Kalman filter!extended} for 3DVAR\index{3DVAR} including
random effects in the  (\ref{eq:dtfa8}a) is also reasonable in some
settings. Likewise the ensemble Kalman filter\index{Kalman filter!ensemble}
can also be implemented with noise-free prediction models.

We refer to these three algorithms collectively
as {\bf approximate Gaussian filters}\index{filter!approximate Gaussian}. 
This is because they invoke
a Gaussian approximation when updating the estimate of the signal
via (\ref{eq:dtfa8}b). Specifically this update is the correct update
for the mean if the assumption that 
$\bbP(v_{j+1}|Y_j)=N\bigl(\hm_{j+1}),\hc_{j+1})$
is invoked for the prediction step  \index{filtering!prediction step}.
In general the approximation implied by this assumption will not be
a good one and this can invalidate the statistical accuracy of the
resulting algorithms. However the resulting algorithms may still have
desirable properties in terms of signal estimation; in subsection
\ref{sssec:3s} we will demonstrate that this is indeed so. 

\subsection{3DVAR}\index{3DVAR}\label{ssec:3}
This algorithm is derived from (\ref{eq:dtfa9}) by simply fixing the model covariance $\hc_{j+1}\equiv\hc$ for all $j$.
Thus we obtain 
\begin{subequations}\label{eq:dtfa10}
\begin{align}
\hm_{j+1}&=\PPsi(m_j),\\
m_{j+1}&=(I-KH)\hm_{j+1}+Ky_{j+1},\\
K&=\hc H^T S^{-1}, \quad S=H\hc H^T+\Gamma.
\end{align}
\end{subequations}
The nomenclature 3DVAR refers to the fact that the method is
variational (it is based on the minimization principle underlying
all of the approximate Gaussian methods), and it works sequentially
at each fixed time $j$; as such the minimization, when applied to
practical physical problems, is over three spatial dimensions.
This should be contrasted with 4DVAR\index{4DVAR} 
which involves a minimization over all spatial dimensions, as well
as time -- four dimensions in all.

We now describe two methodologies which generalize 3DVAR\index{3DVAR} by
employing model covariances which evolve from step $j$ to step $j+1$:
the extended and ensemble Kalman filters\index{Kalman filter!ensemble}\index{Kalman filter!extended}. We present both methods in
basic form but conclude the section with some discussion of
methods widely used in practice to improve their practical performance.

\subsection{Extended Kalman Filter}\label{ssec:exkf}
The idea of the extended Kalman filter\index{Kalman filter!extended} 
(ExKF\index{Kalman filter!extended}\index{Kalman filter!ExKF}) is to propagate covariances according to the linearization of (\ref{eq:dtf1}), and propagate the mean, using (\ref{eq:dtf11}). Thus we obtain, from modification of Corollary \ref{c32} and (\ref{eq:dtfa2}), (\ref{eq:dtfa3})
\[\mbox{Prediction}\;\left\{\begin{array}{ll} \hm_{j+1}&=\PPsi(m_j), \vspace{4pt}\\ \hc_{j+1}&=D\PPsi(m_j)C_jD\PPsi(m_j)^T+\Sigma.
                                    \end{array}\right.
                                     \] 
\[\quad\;\;\;\;\;\mbox{Analysis}\;\;\;\;\left\{\begin{array}{llll} S_{j+1}&=H\hc_{j+1}H^T+\Gamma, \vspace{4pt}\\K_{j+1}&=\hc_{j+1}H^TS_{j+1}^{-1},
\vspace{4pt}\\m_{j+1}&=(I-K_{j+1}H)\hm_{j+1}+K_{j+1}y_{j+1},\vspace{4pt}
\\C_{j+1}&=(I-K_{j+1}H)\hc_{j+1}.
                                    \end{array}\right.
                                     \]

\subsection{Ensemble Kalman Filter}\label{ssec:enkf}

The ensemble Kalman filter\index{Kalman filter!ensemble} (EnKF\index{Kalman filter!ensemble}\index{Kalman filter!EnKF}) generalizes the
idea of approximate Gaussian filters\index{filter!approximate Gaussian} 
in a significant
way: rather than using the minimization procedure
(\ref{eq:dtfa8}) to update a single estimate of the {\em mean},
it is used to generate an {\em ensemble} of particles
which all satisfy the model/data compromise inherent
in the minimization; the mean and covariance used in the minimization
are then estimated using this ensemble, thereby adding further
coupling to the particles, in addition to that introduced
by the data.

The EnKF\index{Kalman filter!ensemble} is executed in a variety of ways and 
we start by describing one of these, the perturbed observation\index{perturbed observations} EnKF\index{Kalman filter!ensemble}:

\[\;\;\;\quad\quad\mbox{Prediction}\;\left\{\begin{array}{lll}  
\widehat{v}_{j+1}^{(n)}&=\PPsi(v_{j}^{(n)}) + \xi_j^{(n)}, \;n=1,...,N,\vspace{4pt}\\
\hm_{j+1}&=\frac1N\sum_{n=1}^N\widehat{v}_{j+1}^{(n)},\vspace{4pt}\\
\hc_{j+1}&=\frac1{N-1}\sum_{n=1}^N(\widehat{v}_{j+1}^{(n)}-\hm_{j+1})(\widehat{v}_{j+1}^{(n)}-\hm_{j+1})^T.

 \end{array}\right.
                                   \] 
\[\mbox{Analysis}\;\;\;\;\left\{\begin{array}{llll} S_{j+1}&=H\hc_{j+1}H^T+\Gamma, \vspace{4pt}\\ K_{j+1}&=\hc_{j+1}H^TS_{j+1}^{-1},
\vspace{4pt}\\v_{j+1}^{(n)}&=(I-K_{j+1}H)\widehat{v}_{j+1}^{(n)}+K_{j+1}y_{j+1}^{(n)}, \;n=1,...,N, \vspace{4pt}\\y_{j+1}^{(n)}&=y_{j+1}+\eta_{j+1}^{(n)}, \;n=1,...,N.                                    \end{array}\right.
                                     \] 
Here $\eta_j^{(n)}$ are i.i.d. \index{i.i.d.} draws from $\G(0,\Gamma)$ and $\xi_j^{(n)}$ are i.i.d. draws from $\G(0,\Sigma)$.  Perturbed observation refers to the fact that each particle sees an observation perturbed by an independent draw from $\G(0,\Gamma)$.  This procedure gives the Kalman Filter\index{Kalman filter} in the linear case in the limit of infinite ensemble.  {Even though the algorithm is motivated through
our general approximate Gaussian filters\index{filter!approximate Gaussian} 
framework, notice that
the ensemble is not prescribed to be Gaussian. Indeed it evolves under
the full nonlinear dynamics in the prediction step. This fact, together
with the fact that covariance matrices are not propagated explicitly,
other than through the empirical properties of the ensemble,
has made the algorithm very appealing to practitioners. }

Another way to motivate the preceding algorithm 
is to introduce the family of cost functions
\begin{equation}
\label{eq:minn}
\Jtdn(v):=\frac12|y^{(n)}_{j+1}-Hv|_{\Gamma}^2+\frac12|v-\hv^{(n)}_{j+1}|_{\hc_{j+1}}^2.
\end{equation}
The analysis step\index{filtering!analysis step} proceeds
to determine the ensemble $\{v_{j+1}^{(n)}\}_{n=1}^N$ 
by minimizing $\Jtdn$ with $n=1,\cdots, N.$ 
The set $\{\hv_{j+1}^{(n)}\}_{n=1}^N$ is found from running the 
prediction step\index{filtering!prediction step}
using the fully nonlinear dynamics. These minimization
problems are coupled through $\hc_{j+1}$ which depends on the
entire set of $\{\hv_{j}^{(n)}\}_{n=1}^N.$ 
The algorithm thus provides update rules of the form
\begin{equation}
\label{eq:etadpu2}
\{ v_j^{(n)}\}_{n=1}^N \mapsto \{ \hv_{j+1}^{(n)}\}_{n=1}^{N},\quad\quad\quad \{ \hv_{j+1}^{(n)}\}_{n=1}^N \mapsto \{ v_{j+1}^{(n)}\}_{n=1}^{N},
\end{equation}
defining approximations of the prediction  \index{filtering!prediction step} and analysis steps  \index{filtering!analysis step} respectively.

It is then natural to think of the algorithm making the approximations
\begin{equation}
\label{eq:diraca}
\mu_j \approx \mu_j^N=\frac{1}{N}\sum_{n=1}^{N} \delta_{v_{j}^{(n)}},\quad\quad\quad
\hmu_{j+1} \approx \mu_j^N=\frac{1}{N}\sum_{n=1}^{N} \delta_{\hv_{j+1}^{(n)}}.
\end{equation}
Thus we have a form of Monte Carlo\index{Monte Carlo!approximation}
approximation of the distribution of interest.
However, except for linear problems, the approximations
given do not, in general, converge to the true distributions
$\mu_j$ and $\hmu_j$ as $N \to \infty.$

\subsection{Square Root Ensemble Kalman Filters}\index{Kalman filter!ensemble square root}
\label{ssec:sqrt}

We now describe another popular variant of the EnKF\index{Kalman filter!ensemble}.
The idea of this variant is to define the analysis
step in such a way that an ensemble of particles
is produced whose empirical covariance {\em exactly}
satisfies the Kalman identity 
\begin{equation}
\label{eq:K}
C_{j+1}=(I-K_{j+1}H)\hc_{j+1}
\end{equation}
which relates the covariances in the analysis step
to those in the prediction step.\index{filtering!prediction step}
This is done by mapping the mean of the predicted
ensemble according to the standard Kalman update,
and introducing a linear deterministic transformation of the differences
between the particle positions and their mean to enforce
(\ref{eq:K}). 
Doing so eliminates a sampling error 
inherent in the perturbed observation\index{perturbed observations} approach.
The resulting algorithm has the following form:

\[\;\;\;\quad\quad\mbox{Prediction}\;\left\{\begin{array}{lll}  
\widehat{v}_{j+1}^{(n)}&=\PPsi(v_{j}^{(n)}) + \xi_j^{(n)}, \;n=1,...,N,\vspace{4pt}\\
\hm_{j+1}&=\frac1N\sum_{n=1}^N\widehat{v}_{j+1}^{(n)},\vspace{4pt}\\
\hc_{j+1}&=\frac1{N-1}\sum_{n=1}^N(\widehat{v}_{j+1}^{(n)}-\hm_{j+1})(\widehat{v}_{j+1}^{(n)}-\hm_{j+1})^T,

 \end{array}\right.
                                   \] 
\[\mbox{Analysis}\;\;\;\;\left\{\begin{array}{llll} S_{j+1}&=H\hc_{j+1}H^T+\Gamma, \vspace{4pt}\\ K_{j+1}&=\hc_{j+1}H^TS_{j+1}^{-1},
\vspace{4pt}\\m_{j+1}&=(I-K_{j+1}H)\widehat{m}_{j+1}+K_{j+1}y_{j+1},\vspace{4pt}\\v_{j+1}^{(n)}&=m_{j+1}+\zeta_{j+1}^{(n)}.                                    \end{array}\right.
                                     \] 
Here the $\{\zeta_{j+1}^{(n)}\}_{n=1}^N$ 
are designed{ to 
have sample covariance $C_{j+1}=(I-K_{j+1}H)\hc_{j+1}.$}
There are several ways to do this and we now describe one of
them, referred to as the ensemble transform Kalman filter\index{Kalman filter!ensemble transform} (ETKF)\index{Kalman filter!ETKF}.

If we define 
$$
\widehat{X}_{j+1}=\frac{1}{\sqrt{N-1}}\left [ \widehat{v}_{j+1}^{(1)}-\widehat{m}_{j+1}, \dots, \widehat{v}_{j+1}^{(N)}-\widehat{m}_{j+1} \right ]
$$   
then $\widehat{C}_{j+1}=\widehat{X}_{j+1}\widehat{X}_{j+1}^T$.  
We now seek a transformation $T_{j+1}$ so that, if 
$X_{j+1}=\widehat{X}_{j+1} T_{j+1}^{\frac12}$, then
\begin{equation}
\label{trans}
C_{j+1}:={X}_{j+1}X_{j+1}^T=(I-K_{j+1}H)\hc_{j+1}.
\end{equation}
Note that the $X_{j+1}$ (resp. the $\widehat{X}_{j+1}$)
correspond to Cholesky
factors of the matrices $C_{j+1}$ (resp. $\hc_{j+1}$)
respectively. We may now define the 
$\{\zeta_{j+1}^{(n)}\}_{n=1}^N$ by 
$$
{X}_{j+1}=\frac{1}{\sqrt{N-1}}\left [ \zeta_{j+1}^{(1)}, \dots, \zeta_{j+1}^{(N)} \right ].
$$   
We now demonstrate how to find an appropriate
transformation $T_{j+1}.$ 
We assume that $T_{j+1}$ is symmetric and positive-definite and
the standard matrix square-root is employed. Choosing
$$
{T}_{j+1} = \left [  I + (H\widehat{X}_{j+1})^T \Gamma^{-1} (H \widehat{X}_{j+1}) \right ]^{-1}
$$
we see that
\[
\begin{array}{ll}
{X}_{j+1}X_{j+1}^T &= \widehat{X}_{j+1} T_{j+1} \widehat{X}_{j+1}^T \\
 &=   \widehat{X}_{j+1} \left [  I + (H\widehat{X}_{j+1})^T \Gamma^{-1} (H \widehat{X}_{j+1}) \right ]^{-1} \widehat{X}_{j+1}^T \\
&=  \widehat{X}_{j+1} \left \{ I  - (H\widehat{X}_{j+1})^T \left [ (H\widehat{X}_{j+1})(H\widehat{X}_{j+1})^T+ \Gamma \right ]^{-1} (H\widehat{X}_{j+1}) \right \} \widehat{X}_{j+1}^T \\
&= (I-K_{j+1}H)\hc_{j+1}
\end{array}
\]
as required, where the transformation between the second and third lines is justified by Lemma \ref{l34}.
It is important to ensure that
${\bf 1}$, the vector of all ones, 
is an eigenvector of the transformation $T_{j+1}$, 
and hence of $T_{j+1}^{\frac12}$, so that the mean 
of the ensemble is preserved. This is guaranteed 
by $T_{j+1}$ as defined.

\section{The Particle Filter}
\label{ssec:pf}

In this section we introduce an important class of filtering methods
known as {\em particle filters\index{particle filter}}.  In contrast to the filters introduced 
in the preceding section, the particle filter\index{particle filter} can be {\em proved} to 
reproduce the true posterior filtering distribution  \index{filtering!distribution} in the large particle 
limit and, as such, has a privileged places amongst all the filters 
introduced in this book. We will describe the method in its basic form -- 
the {\em bootstrap filter}  \index{filter!bootstrap}-- and then give a proof of convergence.  
It is important to appreciate that the form of particle filter\index{particle filter} introduced
here is far from state-of-the-art and that far more sophisticated versions
are used in practical applications. Nonetheless, despite this sophistication,
particle filter\index{particle filter}s do not perform well in applications such as those arising
in geophysical applications of data assimilation,\index{data assimilation} 
because the data in those
applications places very strong constraints on particle locations, making
efficient algorithms very hard to design. It is
for this reason that we have introduced particle filter\index{particle filter}s after the approximate 
Gaussian filters\index{filter!approximate Gaussian} 
introduced in the preceding section. The filters in the 
preceding section tend to be more robust to data specifications. However they 
do all rely on the invocation of ad hoc Gaussian assumptions in their 
derivation and hence do not provably produce the correct posterior filtering 
distribution, \index{filtering!distribution} notwithstanding their ability, in partially observed
small noise scenarios, to correctly identify the signal itself, as in
Theorem \ref{t35}. Because it can provably reproduce the correct filtering
distribution, \index{filtering!distribution} 
the particle filter\index{particle filter} thus plays an important role, conceptually,
even though it is not, in current form, a practical algorithm in geophysical 
applications. With further improvements it may, in time, form the basis
for practical algorithms in geophysical applications.

\subsection{The Basic Approximation Scheme}
\label{ssec:dirac}

All probability measures which possess density with respect to
Lebesgue measure can be approximated by a finite convex combination
of Dirac probability measures; an example of this is the 
{\bf Monte Carlo sampling}\index{Monte Carlo!sampling} 
idea that we described at the start of Chapter \ref{sec:dtsa}, and also underlies
the ensemble Kalman filter\index{Kalman filter!ensemble} of 
subsection \ref{ssec:enkf}. In practice 
the idea of approximation by a convex combination of probability
measures requires the determination of the locations and weights 
associated with these Dirac measures.
Particle filters are sequential algorithms which 
use this idea to approximate the true filtering distribution  \index{filtering!distribution} $\rp(v_j|Y_j).$

Basic Monte Carlo\index{Monte Carlo}, as in \eqref{eq:mci}, and the ensemble Kalman
filter\index{Kalman filter!ensemble}, as in \eqref{eq:diraca}, correspond to approximation by
equal weights.
Recall $\mu_j$, the probability measure on $\bbR^n$ corresponding
to the density $\rp(v_j|Y_j)$, and $\hmu_{j+1}$, the probability 
measure on $\bbR^n$ corresponding to the density $\rp(v_{j+1}|Y_j).$
The basic form of the particle filter\index{particle filter}
proceeds by allowing the weights to vary
and by finding $N$-particle Dirac measure approximations of the form 
\begin{equation}
\mu_j \approx 
\mu_j^{N}:= 
\sum_{n=1}^{N} w_j^{(n)} \delta_{v_j^{(n)}}, \quad\quad\quad
\hmu_{j+1} \approx 
\hmu_{j+1}^{N}:= 
\sum_{n=1}^{N} \hw_{j+1}^{(n)} \delta_{\hv_{j+1}^{(n)}}.
\label{eq:emp_filt}
\end{equation}
The weights must sum to one.
The approximate distribution $\mu_j^N$ is completely
defined by particle positions $v_j^{(n)}$ and weights $w_j^{(n)}$, and the approximate distribution $\hmu_{j+1}^N$ is completely
defined by particle positions $\hv_{j+1}^{(n)}$ and weights $\hw_{j+1}^{(n)}.$ 
Thus the objective of the method is to
find update rules 
\begin{equation}
\label{eq:etadpu}
\{ v_j^{(n)}, w_j^{(n)}\}_{n=1}^N \mapsto \{ \hv_{j+1}^{(n)}, \hw_{j+1}^{(n)}\}_{n=1}^{N},\quad\quad\quad \{ \hv_{j+1}^{(n)}, \hw_{j+1}^{(n)}\}_{n=1}^N \mapsto \{ v_{j+1}^{(n)}, w_{j+1}^{(n)}\}_{n=1}^{N}
\end{equation}
defining the prediction and analysis approximations respectively;
compare this with \eqref{eq:etadpu2} for the EnKF where the particle
weights are uniform and only the positions are updated.
Defining the updates for the particle filter
may be achieved by an application of sampling,
for the prediction step  \index{filtering!prediction step}, and of Bayesian \index{Bayesian} probability, for the analysis
step.

Recall the prediction and analysis formulae from \eqref{eq:dtf5}
and \eqref{eq:dtf6} which can be summarized as
\begin{subequations}
\label{eq:dt56}
\begin{align}
\label{eq:dtf56a}
\rp(v_{j+1}|Y_j)&=\int_{\R^n}\rp(v_{j+1}|v_j)\rp(v_j|Y_j)dv_j,\\
\rp(v_{j+1}|Y_{j+1})&=\frac{\rp(y_{j+1}|v_{j+1})\rp(v_{j+1}|Y_j)}{\rp(y_{j+1}|Y_j)}.
\label{eq:dtf56b}\end{align}
\end{subequations}
We may rewrite \eqref{eq:dt56} as 
{
\begin{subequations}
\label{eq:dtf66}
\begin{align}
\label{eq:dtf66a}
\hmu_{j+1}(\cdot) &= (P\mu_j)(\cdot) := \int_{\R^n}\rp(\cdot|v_j) \mu_j(dv_j)\\
\frac{d\mu_{j+1}}{d\hmu_{j+1}}(v_{j+1})&=\frac{\rp(y_{j+1}|v_{j+1})}{\rp(y_{j+1}|Y_j)}.
\label{eq:dtf66b}\end{align}
\end{subequations}
}
Writing the update formulae this way is important for us because 
they then make sense in the absence of Lebesgue densities; in particular 
we can use them in situations where Dirac masses \index{Dirac mass}  appear, as they
do in our approximate probability measures.  The formula \eqref{eq:dtf66b}
for the {\em density} or 
{\em Radon-Nikodym derivative}\index{Radon-Nikodym derivative} 
of $\mu_{j+1}$ with respect to that of $\hmu_{j+1}$ 
has a straightforward interpretation: the
righthand-side quantifies how to reweight expectations under
$\hmu_{j+1}$ so that they become expectations under $\mu_{j+1}$. 
To be concrete we may write
$$\bbE^{\mu_{j+1}} \varphi(v_{j+1})=\bbE^{\hmu_{j+1}}
\Bigl( \frac{d\mu_{j+1}}{d\hmu_{j+1}}(v_{j+1})\varphi(v_{j+1})\Bigr).$$

\subsection{Sequential Importance Resampling}
\label{ssec:bsf}

The simplest particle filter\index{particle filter}, which is
based on {\em sequential importance resampling}\index{sequential importance 
resampling} is now described. We start by assuming that we have an
approximation $\mu_j^N$ given by \eqref{eq:emp_filt} and
explain how to evolve the weights $\{ v_j^{(n)}, w_j^{(n)}\}_{n=1}^N$
into $\{ v_j^{(n+1)}, w_j^{(n+1)}\}_{n=1}^N$,
via $\{ \hv_{j+1}^{(n)}, \hw_{j+1}^{(n)}\}_{n=1}^N$ as in \eqref{eq:etadpu}. 

\vspace{0.1in}

\noindent{\bf Prediction} In this step we approximate the prediction
phase of the Markov chain\index{Markov chain}. To do this we simply draw
$\hv_{j+1}^{(n)}$ from the kernel $p$ of the Markov chain\index{Markov chain}
(\ref{eq:dtf1}a) started from $v_{j}^{(n)}$. Thus the
relevant kernel is $p(v_j,v_{v+1})=\rp(v_{j+1}|v_j).$
We then have $\hv_{j+1}^{(n)} \sim p(v_j^{(n)},\cdot).$
We leave the weights of the approximation unchanged so that
$\hw_{j+1}^{(n)}=w_j^{(n)}$. From these new particles and (in fact
unchanged) weights we have the particle approximation
\begin{equation}
\hmu_{j+1}^N=\sum_{n=1}^{N} w_j^{(n)} \delta_{\hv_{j+1}^{(n)}}.
\label{eq:emp_filt_ratz}
\end{equation}

\vspace{0.1in}

\noindent{\bf Analysis} In this step we approximate the incorporation
of data via Bayes' formula\index{Bayes' formula}. Define $g_j(v)$ by 
\begin{equation}
\label{eq:BRG}
g_j(v_{j+1}) \propto \rp(y_{j+1}|v_{j+1}),
\end{equation}
where the constant of proportionality is, for example, the normalization
for the Gaussian, and is hence independent of both $y_{j+1}$ and $v_{j+1}.$
We now apply Bayes' formula \index{Bayes' formula} in
the form \eqref{eq:dtf66b}. Thus we obtain 
{\begin{equation}
\mu_{j+1}^N=\sum_{n=1}^{N} w_{j+1}^{(n)} \delta_{\hv_{j+1}^{(n)}}
\label{eq:emp_filt2}
\end{equation}
where 
\begin{equation}
\label{eq:waits}
w_{j+1}^{(n)}=\tw_{j+1}^{(n)}\Bigl{/}\left(\sum_{n=1}^{N} \tw_{j+1}^{(n)}\right ),
\quad \quad \tw_{j+1}^{(n)} = g_j\bigl(\hv_{j+1}^{(n)}\bigr)w_{j}^{(n)}.
\end{equation}
The first equation in the preceding is required for normalization.
Thus in this step we do not change the particle positions, but we reweight
them.

\vspace{0.1in}

\noindent{\bf Resampling} 
The algorithm as described is deficient in two regards, both of which
can be dealt with by introducing a re-sampling step into the algorithm.
Firstly, the initial measure $\mu_0$ for the true filtering distribution  \index{filtering!distribution}
will not typically be made up of a combination of Dirac measures.
Secondly, the method can perform poorly if one of the particle weights
approaches $1$ (and then all others approach $0$). The effect of the
first can be dealt with by sampling the initial measure and approximating
it by an equally weighted (by $N^{-1}$) sum of Dirac measures at the samples.
The second can be ameliorated by drawing a set of $N$ particles
from the measure \eqref{eq:emp_filt2} and assigning weight $N^{-1}$ to
each; this has the effect of multiplying particles with high weights
and killing particles with low weights. 

\vspace{0.1in}

Putting together the three preceding steps leads to 
the following algorithm; for notational convenience we use 
$Y_0$ to denote the empty vector (no observations at the start): 

\begin{enumerate}
\item[1.] Set $j=0$ and $\mu_0^N(dv_0)=\mu_0(dv_0)$.
\item[2.] Draw $v_{j}^{(n)} \sim \mu_j^N$, $n=1,\dots,N$. 
\item[3.] Set $w_{j}^{(n)}=1/N$, $n=1,\dots,N$; redefine $\mu_j^{N}:=
\sum_{n=1}^{N} w_j^{(n)} \delta_{v_j^{(n)}}$.
\item[4.] Draw $\hv_{j+1}^{(n)} \sim p(v_{j}^{(n)}|\cdot).$
\item[5.] Define $w_{j+1}^{(n)}$ by \eqref{eq:waits}
and $\mu_{j+1}^{N}:= \sum_{n=1}^{N} w_{j+1}^{(n)} \delta_{\hv_{j+1}^{(n)}}$. 
\item[6.] $j+1\to j$.
\item[7.] Go to step 2.
\end{enumerate}

This algorithm is conceptually intuitive, proposing that each 
particle moves according to the dynamics of the underlying model 
itself, and is then re-weighted according to the likelihood \index{likelihood} of the
proposed  particle, i.e. according to the data.
This sequential importance resampling filter is also
sometimes termed the {\bf bootstrap filter}\index{filter!bootstrap}.
We will comment on important improvements to this basic algorithm
in the the following section and in the bibliographic notes. 
Here we prove convergence of this basic
method, as the number of particles goes to infinity, 
thereby demonstrating the potential power of the bootstrap
filter and more sophisticated variants on it.

Recall that, by \eqref{eq:summary}, the true filtering distribution  \index{filtering!distribution} 
simply satisfies the iteration
\begin{equation}
\label{eq:pfn}
\mu_{j+1}=L_j P \mu_j, \quad \mu_0=N(m_0,C_0),
\end{equation}
where $P$ corresponds to moving a point currently at $v$ according to the Markov
kernel $p(\cdot|v)$ describing the dynamics given by (\ref{eq:dtf1}a)
and $L_j$ denotes the application of Bayes' formula \index{Bayes' formula} 
with likelihood \index{likelihood} proportional to
$g_j(\cdot)$ given by \eqref{eq:BRG}. Recall also 
the sampling operator $S^N$ defined by \eqref{eq:mci}.
It is then instructive to write the particle filtering algorithm 
which approximates \eqref{eq:pfn} in the following form:
\begin{equation}
\label{eq:pfna}
\mu_{j+1}^N=L_j S^N P \mu_j^N, \quad \mu_0^N=\mu_0.
\end{equation}
There is a slight trickery here in writing application of the sampling
$S^N$ {\em after} application of $P$, but some reflection shows that
this is well-justified: applying $P$ followed by $S^N$ can be shown,
by first conditioning on the initial point and sampling with respect
to $P$, and then sampling over the distribution of the initial point,
to be the algorithm as defined. 

Comparison of \eqref{eq:pfn} and \eqref{eq:pfna} shows that
analyzing the particle filter\index{particle filter} requires estimation 
of the error induced by application of $S^N$ (the {\em resampling error})
together with estimation of the rate of accumulation of this error
in time under the application of $L_j$ and $P$. We now build the tools
to allow us to do this.  The operators $L_j, P$ and $S^N$ map the space 
$\cP(\bbR^n)$ of probability measures on $\bbR^n$ into itself according 
to the following: 
\begin{subequations}
\begin{align}
(L_j \mu) (dv) &= \frac{g_j(v) \mu(dv)}{\int_{\bbR^n} g_j(v) \mu(dv)}, \\
(P \mu) (dv) &= \int_{\bbR^n} p(v',dv) \mu(dv'), \\
(S^N \mu)(dv) &= \frac{1}{N} \sum_{n=1}^N \delta_{v^{(n)}} (dv) , \quad v^{(n)} \sim \mu ~~~ {\rm i.i.d.}\,. 
\end{align}
\end{subequations}
Notice that both $L_j$ and $P$ are deterministic maps, whilst $S^N$
is random. 
Let $\mu=\mu_\omega$ denote, for each $\omega$, an element of $\cP(\bbR^n)$.
If we then assume that $\omega$ is a random variable describing the
randomness required to define the sampling operator $S^N$, and let $\bbE^{\omega}$
denote expectation over $\omega$, then we may define a 
{``root mean square''}
distance $d(\cdot,\cdot)$
between two random probability measures $\mu_\omega, \nu_\omega,$ as follows:
$$
d(\mu,\nu) = {\rm sup}_{|f|_{\infty} \leq 1} \sqrt{ \bbE^{\omega} |\mu(f)-\nu(f)|^2 }.
$$
Here we have used the convention that $\mu(f) = \int_{\bbR^n} f(v)\mu(dv)$ 
for measurable $f:\bbR^n \to \bbR$, and similar for $\nu.$ Furthermore
$$|f|_{\infty}=\sup_{u} |f(u)|.$$ 
This distance does indeed generate a metric and, in particular,
satisfies the triangle inequality.
Note also that, in the absence of randomness within the measures,
the metric satisfies $d(\mu,\nu)=2\dtv(\mu,\nu)$, by
\eqref{eq:vtd}; that is, it
reduces to the total variation metric \index{metric!total variation}.
In our context the randomness within the probability measures comes
from the sampling operator $S^N$ used to define the numerical approximation.

\begin{theorem}\label{th39}
We assume in the following that there exists $\kappa \in (0,1]$ 
such that
for all $v\in \bbR^n$ and $j \in \bbN$
$$
\kappa \le g_j(v) \le  \kappa^{-1}.
$$ 
Then
$$d(\mu^N_J,\mu_J) \le \sum_{j=1}^J (2\kappa^{-2})^j \frac{1}{\sqrt N}.$$
\end{theorem}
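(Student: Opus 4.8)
The plan is to decompose the error $d(\mu^N_J,\mu_J)$ into a sum of one-step errors, each of which measures the discrepancy introduced by a single application of the sampling operator $S^N$, and then to control how each such error is amplified by the subsequent applications of $P$ and $L_j$. The key algebraic observation is the telescoping identity: writing $\mu_J = (L_{J-1}P)\cdots(L_0P)\mu_0$ and $\mu^N_J = (L_{J-1}S^NP)\cdots(L_0S^NP)\mu_0^N$ with $\mu_0^N=\mu_0$, one inserts and removes intermediate hybrid measures so that
$$\mu^N_J-\mu_J = \sum_{j=1}^J \Bigl[(L_{J-1}S^NP)\cdots(L_jS^NP)\Bigr]\bigl(S^N-I\bigr)\bigl(L_{j-1}P\cdots L_0P\bigr)\mu_0.$$
Taking the metric $d$ of both sides and using the triangle inequality reduces the problem to (i) bounding the ``raw'' sampling error $d\bigl(S^N\nu,\nu\bigr)$ for an arbitrary (random) probability measure $\nu$, and (ii) bounding the Lipschitz constants of $P$ and of each $L_j$ with respect to $d$.

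First I would establish the two elementary stability lemmas. For the sampling operator, given $\nu=\nu_\omega$ and conditioning on $\omega$, the samples $v^{(n)}\sim\nu$ are i.i.d., so for $|f|_\infty\le 1$ one has $\bbE\bigl|(S^N\nu)(f)-\nu(f)\bigr|^2 = \frac1N\mathrm{Var}\bigl(f(v^{(1)})\bigr)\le \frac1N$; taking the supremum over such $f$ and then the outer expectation over $\omega$ gives $d(S^N\nu,\nu)\le N^{-1/2}$. For $P$: since $(P\mu)(f) = \mu(Pf)$ where $(Pf)(v)=\int f(v')p(v,dv')$ satisfies $|Pf|_\infty\le|f|_\infty$, it follows immediately that $d(P\mu,P\nu)\le d(\mu,\nu)$, i.e. $P$ is a (weak) contraction. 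For $L_j$: using the assumed bounds $\kappa\le g_j\le\kappa^{-1}$, a standard manipulation of the ratio defining $L_j$ — writing $(L_j\mu)(f)-(L_j\nu)(f)$ over a common denominator and splitting — yields $d(L_j\mu,L_j\nu)\le 2\kappa^{-2}\,d(\mu,\nu)$. This last estimate is the step where the constant $2\kappa^{-2}$ enters, and it is the main technical obstacle: one must carefully handle the nonlinearity of Bayes' formula, controlling both the numerator and the normalising constant $\int g_j\,d\mu$ from below by $\kappa$, so that the ratio estimate closes with exactly this constant.

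Finally I would assemble the pieces. The $j$-th term in the telescoping sum has its raw sampling error $d(S^N\nu,\nu)\le N^{-1/2}$ amplified by the composition of $L_j,P,L_{j+1},P,\dots,L_{J-1}$ applied afterward; since $P$ contracts and each of the (at most $J-j$, but we bound crudely) applications of $L_i$ multiplies by $2\kappa^{-2}\ge 2$, the amplification factor for the term corresponding to index $j$ is at most $(2\kappa^{-2})^{J-j+1}$ — or, after re-indexing the sum so that the innermost sampling step is labelled $j=1$, at most $(2\kappa^{-2})^{j}$. Summing over $j=1,\dots,J$ gives
$$d(\mu^N_J,\mu_J)\le \sum_{j=1}^J (2\kappa^{-2})^j\,\frac{1}{\sqrt N},$$
which is the claimed bound. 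The only subtlety in the bookkeeping is to make sure the number of $L$-factors after the $j$-th sampling step matches the exponent claimed; using the crude bound $2\kappa^{-2}\ge 2>1$ throughout means any over-counting only helps, so no sharpness is lost relative to the stated inequality.
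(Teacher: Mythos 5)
Your argument is correct and is essentially the paper's proof: the three stability estimates you isolate (the raw sampling error bound $d(S^N\nu,\nu)\le N^{-1/2}$, the nonexpansiveness of $P$, and the Lipschitz bound $2\kappa^{-2}$ for $L_j$) are exactly the three lemmas the paper proves, and your telescoping sum is just the unrolled form of the paper's one-step recursion $d(\mu^N_{j+1},\mu_{j+1})\le 2\kappa^{-2}\bigl(d(\mu^N_j,\mu_j)+N^{-1/2}\bigr)$. One small caveat: since $L_j$ is nonlinear, the displayed identity with the factor $(S^N-I)$ pulled out of the composition is not literally valid as an operator identity; it must be read, as you indicate in words, as a triangle-inequality chain over hybrid measures, which is precisely what the recursive formulation packages cleanly.
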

\begin{proof}
The desired result is proved below 
in a straightforward way from the following three 
facts, whose proof we postpone to three lemmas at the end of the section:
\begin{subequations}
\begin{align}
 \sup_{\mu \in \cP(\bbR^n)}d(S^N \mu,\mu) &\leq \frac{1}{\sqrt{N}}, \\
 d(P \nu , P \mu) &\leq   d(\nu,\mu),\\
 d(L_j \nu , L_j \mu) &\leq 2 \kappa^{-2}  d( \nu , \mu ). 
\end{align}
\end{subequations}
By the triangle inequality we have, for $\nu_j^N=P\mu_j^N$,
\begin{align*}
d(\mu^N_{j+1},\mu_{j+1}) &= d(L_j S^N P \mu^N_j,L_j P \mu_j)\\
& \le d(L_j P \mu^N_j,L_j P \mu_j)+d(L_j S^N P \mu^N_j,L_j P \mu^N_j)\\
& \le 2\kappa^{-2}\Bigl(d(\mu^N_j,\mu_j)+d(S^N \nu_j^N,\nu^N_j)\Bigr)\\
& \le 2\kappa^{-2}\Bigl(d(\mu^N_j,\mu_j)+\frac{1}{\sqrt N}\Bigr).
\end{align*}
Iterating, after noting that $\mu_0^N=\mu_0$, gives the desired result.
\end{proof}

\begin{remark}
\label{rem:fil}
This important theorem shows that the particle filter\index{particle filter} 
reproduces the true filtering distribution,\index{filtering!distribution} in the large particle
limit. We make some comments about this.

\begin{itemize}

\item
This theorem shows that, at any fixed discrete time $j$, the filtering distribution  \index{filtering!distribution}
$\mu_j$ is well-approximated by the bootstrap filtering distribution
$\mu_j^N$ in the sense that, as the number of particles
$N \to \infty$, the approximating measure converges to the
true measure. However, since $\kappa<1$, the number of
particles required to decrease the upper bound on the error
beneath a specified tolerance grows with $J$.

\item If the likelihoods have a small lower bound then the constant
in the convergence proof may be prohibitively expensive, requiring
an enormous number of particles to obtain a small error. This
is similar to the discussion concerning the Independence 
Dynamics Sampler\index{Independence Dynamics Sampler}
in section \ref{ssec:mci} where we showed that large values in
the potential $\PPhi$ lead to slow convergence of the Markov chain\index{Markov chain},
and the resultant need for a large number of samples. 

\item In fact in many applications the likelihoods\index{likelihood} $g_j$ may 
not be bounded from above or below, uniformly in $j$, and more refined 
analysis is required.  However, if the Markov kernel\index{Markov kernel} $P$ 
is ergodic\index{ergodic} then it is possible to obtain 
bounds in which the error constant arising
in the analysis has  milder growth with respect to $J$.

\item Considering the case of deterministic dynamics shows just how
difficult it may be to make the theorem applicable in practice:
if the dynamics is deterministic then the original set of samples
from $\mu_0$, $\{v_0^{(n)}\}_{n=1}^N$ give rise to 
a set of particles $v_j^{(n)}=\PPsi^{(j)}(v_0^{(n)})$; in other
words the particle positions are {\em unaffected by the data}.
This is clearly a highly undesirable situation, in general,
since there is no reason at all why the pushforward\index{pushforward}
under the dynamics of the initial measure $\mu_0$ should have
substantial overlap with the filtering distribution\index{filtering!distribution} for a given fixed data set. Indeed for chaotic\index{chaos} 
dynamical systems
one would expect that it does not as the pushforward measure will
be spread over the global attractor, whilst the data will, at fixed time,
correspond to a single point on the attractor\index{global attractor}.  This
example motivates the improved proposals\index{proposal} of the next section.

\end{itemize}

\end{remark}

Before describing improvements to the basic particle filter\index{particle filter},
we prove the three lemmas underlying the convergence proof.

\begin{lemma}
\label{l:s}
The sampling operator satisfies
$$\sup_{\mu \in \cP(\bbR^n)}d(S^N \mu,\mu) \leq \frac{1}{\sqrt{N}}.$$
\end{lemma}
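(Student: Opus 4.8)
The plan is to fix an arbitrary $\mu \in \cP(\bbR^n)$ and an arbitrary test function $f$ with $|f|_\infty \le 1$, and to estimate $\bbE^\omega |S^N\mu(f) - \mu(f)|^2$ uniformly in $\mu$ and $f$. Writing the random measure explicitly, we have $S^N\mu(f) = \frac{1}{N}\sum_{n=1}^N f(v^{(n)})$, where the $v^{(n)}$ are i.i.d.\ draws from $\mu$, so that $\bbE^\omega f(v^{(n)}) = \mu(f)$ for each $n$. Thus $S^N\mu(f) - \mu(f)$ is the average of the $N$ i.i.d.\ mean-zero random variables $X_n := f(v^{(n)}) - \mu(f)$.

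The key computation is then the standard variance-of-an-average estimate: because the $X_n$ are independent and identically distributed with mean zero,
\[
\bbE^\omega \bigl| S^N\mu(f) - \mu(f) \bigr|^2
= \bbE^\omega \Bigl| \frac{1}{N}\sum_{n=1}^N X_n \Bigr|^2
= \frac{1}{N^2}\sum_{n=1}^N \bbE^\omega |X_n|^2
= \frac{1}{N}\,\bbE^\omega |X_1|^2,
\]
where the cross terms vanish by independence and the mean-zero property. It remains to bound $\bbE^\omega|X_1|^2 = \bbE^\omega |f(v^{(1)}) - \mu(f)|^2 \le \bbE^\omega|f(v^{(1)})|^2 \le |f|_\infty^2 \le 1$, using that subtracting the mean cannot increase the second moment (or, more crudely, that $|f(v) - \mu(f)| \le 2|f|_\infty$ would give a factor $4/N$, which still suffices after adjusting constants, but the sharper bound is clean here). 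Therefore $\bbE^\omega |S^N\mu(f) - \mu(f)|^2 \le 1/N$, and taking square roots and then the supremum over $|f|_\infty \le 1$ gives $d(S^N\mu,\mu) \le 1/\sqrt{N}$. Since $\mu \in \cP(\bbR^n)$ was arbitrary, we may take the supremum over $\mu$ as well, yielding the claim.

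There is no real obstacle here: the only point requiring a modicum of care is the legitimacy of exchanging $\bbE^\omega$ with the finite sum and of the variance decomposition, which is immediate because everything is a finite sum of bounded random variables, so all expectations are finite and Fubini-type issues do not arise. The one stylistic choice is whether to state the bound on $\bbE^\omega|X_1|^2$ via the "variance $\le$ second moment" inequality or via the trivial $|X_1| \le 2$; I would use the former to land exactly on the constant $1$ that appears in the statement of the lemma.
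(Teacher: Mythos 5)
Your proof is correct and follows essentially the same route as the paper's: write $S^N\mu(f)-\mu(f)$ as an average of the i.i.d.\ mean-zero variables $f(v^{(n)})-\mu(f)$, use independence to reduce to $\frac{1}{N}\bbE|X_1|^2$, and bound that by $\bbE|f(v^{(1)})|^2\le 1$ via the variance-is-at-most-the-second-moment identity. No gaps.
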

\begin{proof} 
Let $\nu$ be an element of $\cP(\bbR^n)$ and $\{v^{(n)}\}_{n=1}^N$
i.i.d. with $v^{(1)} \sim \nu$. In this proof the randomness
in the measure $S^N$ arises from these samples $\{v^{(n)}\}_{n=1}^N$
and expectation over this randomness is denoted $\bbE$. Then
$$S^N \nu(f)=\frac{1}{N}\sum_{n=1}^N f \left (v^{(n)} \right)$$
and, defining $\bF=f-\nu(f)$, we deduce that
$$S^N\nu(f)-\nu(f)=\frac{1}{N}\sum_{n=1}^N \bF\left (v^{(n)} \right).$$
It is straightforward to see that
$$\bbE \bF\left (v^{(n)} \right)\bF\left (v^{(l)} \right)=\delta_{nl}\bbE \left | \bF\left (v^{(n)} \right) \right |^2.$$
Furthermore, for $|f|_{\infty} \le 1$,
$$\bbE \left |\bF\left (v^{(1)} \right) \right |^2=\bbE \left |f\left (v^{(1)} \right) \right |^2-\left |\bbE f\left (v^{(1)} \right)\right|^2 \le 1.$$
It follows that, for $|f|_{\infty} \le 1$,
$$\bbE\left |\nu(f)- S^N\nu(f)\right|^2=\frac{1}{N^2}\sum_{n=1}^N \bbE \left |\bF\left (v^{(n)} \right)\right|^2 \le \frac{1}{N}.$$
Since the result is independent of $\nu$ we may take the supremum over all
probability measures and obtain the desired result.
\end{proof}

\begin{lemma}
Since $P$ is a Markov kernel \index{Markov kernel} we have
$$d(P \nu , P \nu') \leq   d(\nu,\nu').$$
\end{lemma}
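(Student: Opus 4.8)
The plan is to show that applying the Markov kernel $P$ to two probability measures does not increase their distance, exploiting the fact that $P$ acts by integration against a stochastic kernel and that the test functions used to define $d$ are bounded by one in sup-norm.

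First I would unwind the definitions. Fix a test function $f$ with $|f|_\infty \le 1$. Then $(P\nu)(f) = \int_{\bbR^n} (Pf)(v')\,\nu(dv')$, where I set $(Pf)(v') := \int_{\bbR^n} f(v)\,p(v',dv)$; this is the standard duality between the forward action of $P$ on measures and its action on functions. Since $p(v',\cdot)$ is a probability measure for each $v'$ and $|f|_\infty \le 1$, we have $|(Pf)(v')| \le \int_{\bbR^n} |f(v)|\,p(v',dv) \le 1$, so $g := Pf$ is itself an admissible test function, i.e. $|g|_\infty \le 1$. With this observation,
$$(P\nu)(f) - (P\nu')(f) = \nu(g) - \nu'(g).$$

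Next I would pass to the distance $d$. Because the operator $P$ is deterministic, the only randomness in $P\nu$ and $P\nu'$ comes from whatever randomness already resides in $\nu$ and $\nu'$ (in applications, the sampling randomness $\omega$), and this randomness is untouched by $P$. Hence
$$
d(P\nu, P\nu')^2 = \sup_{|f|_\infty \le 1} \bbE^\omega |(P\nu)(f) - (P\nu')(f)|^2 = \sup_{|f|_\infty \le 1} \bbE^\omega |\nu(Pf) - \nu'(Pf)|^2.
$$
Since $f \mapsto Pf$ maps the unit ball $\{|f|_\infty \le 1\}$ into itself, the supremum over $f$ of the right-hand side is bounded above by the supremum over all $g$ with $|g|_\infty \le 1$ of $\bbE^\omega|\nu(g) - \nu'(g)|^2$, which is exactly $d(\nu,\nu')^2$. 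Taking square roots gives $d(P\nu, P\nu') \le d(\nu,\nu')$, as required.

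There is no real obstacle here; the only point requiring a little care is the bookkeeping around the randomness $\omega$ — making explicit that $P$ commutes with $\bbE^\omega$ in the sense that applying the deterministic map $P$ pathwise (for each realization of $\omega$) and then taking the supremum/expectation is consistent with the definition of $d$. One should also note, for completeness, that $Pf$ is measurable (so that $\nu(Pf)$ makes sense), which follows from the measurability clause in the definition of a Markov kernel. Beyond that the argument is a one-line contraction estimate built on the stochasticity of $p(v',\cdot)$.
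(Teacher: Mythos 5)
Your proof is correct and follows essentially the same route as the paper's: both define the dual action $q = Pf$ of the kernel on test functions, observe that $|f|_\infty \le 1$ implies $|Pf|_\infty \le 1$ so that $P$ maps the admissible test class into itself, use the duality identity $(P\nu)(f) = \nu(Pf)$, and conclude by enlarging the supremum. Your added remarks on the measurability of $Pf$ and on $P$ commuting with the randomness $\omega$ are sensible bookkeeping but do not change the argument.
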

\begin{proof}
Define
$$q(v')=\int_{\bbR^n} p(v',v)f(v)dv=\bbE(v_1|v_0=v'),$$
that is the expected value of $f$ under one-step of the Markov chain\index{Markov chain}
given by (\ref{eq:dtf1}a), started from $v'.$
Clearly, for $|f|_{\infty} \le 1$,
$$|q(v')| \le \int_{\bbR^n} p(v',dv)|f(v) \le \int_{\bbR^n} p(v',dv)=1.$$
Thus
$$|q|_{\infty} \le \sup_{v} |q(v)| \le 1.$$
Note that 
\begin{align*}
\nu(q)&=\bbE(v_1|v_0 \sim \nu)=\int_{\bbR^n}\int_{\bbR^n} p(v',v)f(v)
\nu(dv')\,dv\\
&=\int_{\bbR^n}\Bigl(\int_{\bbR^n} p(v',v)\nu(dv')\Bigr)f(v)dv=P\nu(f).
\end{align*}
Thus $P\nu(f)=\nu(q)$ and it follows that
$$|P\nu(f)-P\nu'(f)|=|\nu(q)-\nu'(q)|.$$
Thus
\begin{align*}
d(P\nu,P\nu')&=\sup_{|f|_{\infty}\le 1}\Bigl(\bbE^{\omega}|P\nu(f)-P\nu'(f)|^2\Bigr)^{\frac12}\\
& \le \sup_{|q|_{\infty}\le 1}\Bigl(\bbE^{\omega}|\nu(q)-\nu'(q)|^2\Bigr)^{\frac12}\\
& = d(\nu,\nu')
\end{align*}
as required.
\end{proof}

\begin{lemma}
Under the Assumptions of Theorem \ref{th39} we have
$$d(L_j \nu , L_j \mu) \leq 2 \kappa^{-2}  d( \nu , \mu ).$$ 
\end{lemma}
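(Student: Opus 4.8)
The plan is to estimate the effect of applying Bayes' operator $L_j$ on two probability measures, exploiting the uniform two-sided bound $\kappa \le g_j(v) \le \kappa^{-1}$. First I would fix a test function $f$ with $|f|_\infty \le 1$ and a realisation $\omega$ of the randomness inside the measures, and write out $L_j\nu(f) - L_j\mu(f)$ explicitly using the definition $(L_j\mu)(dv) = g_j(v)\mu(dv)/\mu(g_j)$. The natural manipulation is to add and subtract a cross term so as to separate the numerator discrepancy from the normalisation discrepancy:
\begin{align*}
L_j\nu(f)-L_j\mu(f) &= \frac{\nu(g_jf)}{\nu(g_j)}-\frac{\mu(g_jf)}{\mu(g_j)}\\
&=\frac{\nu(g_jf)-\mu(g_jf)}{\nu(g_j)}+\mu(g_jf)\Bigl(\frac{1}{\nu(g_j)}-\frac{1}{\mu(g_j)}\Bigr)\\
&=\frac{\nu(g_jf)-\mu(g_jf)}{\nu(g_j)}+\frac{\mu(g_jf)}{\mu(g_j)}\cdot\frac{\mu(g_j)-\nu(g_j)}{\nu(g_j)}.
\end{align*}

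Next I would bound each piece. Since $\nu(g_j)\ge \kappa$ (as $g_j\ge\kappa$ and $\nu$ is a probability measure), the denominator $\nu(g_j)^{-1}\le\kappa^{-1}$. For the first term, $g_jf$ is a function bounded in sup-norm by $\kappa^{-1}$, so $|\nu(g_jf)-\mu(g_jf)|\le \kappa^{-1}\,\sup_{|\phi|_\infty\le 1}|\nu(\phi)-\mu(\phi)|$ after rescaling $\phi = \kappa g_j f$; this contributes something controlled by $\kappa^{-2}$ times the relevant sup. For the second term, $|\mu(g_jf)/\mu(g_j)| = |L_j\mu(f)| \le 1$ because $L_j\mu$ is a probability measure and $|f|_\infty\le 1$, while $|\mu(g_j)-\nu(g_j)|$ is again bounded by $\kappa^{-1}$ times a difference of the form $|\mu(\phi)-\nu(\phi)|$ with $|\phi|_\infty\le 1$, and the leftover $\nu(g_j)^{-1}\le\kappa^{-1}$. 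Hence pointwise in $\omega$ we obtain $|L_j\nu(f)-L_j\mu(f)| \le 2\kappa^{-2}\,\sup_{|\phi|_\infty\le 1}|\nu(\phi)-\mu(\phi)|$, uniformly over $|f|_\infty\le 1$.

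Finally I would pass to the root-mean-square distance: taking $\bbE^\omega|\cdot|^2$, using the pointwise bound just derived and then taking the supremum over $|f|_\infty\le 1$, gives
$$d(L_j\nu,L_j\mu)=\sup_{|f|_\infty\le 1}\bigl(\bbE^\omega|L_j\nu(f)-L_j\mu(f)|^2\bigr)^{\frac12}\le 2\kappa^{-2}\sup_{|\phi|_\infty\le 1}\bigl(\bbE^\omega|\nu(\phi)-\mu(\phi)|^2\bigr)^{\frac12}=2\kappa^{-2}d(\nu,\mu),$$
which is the claim. I do not anticipate a genuine obstacle here; the only mildly delicate point is making sure the decomposition is arranged so that every factor appearing is either a probability-measure expectation of a function bounded by $1$ (hence trivially bounded) or controlled by a single occurrence of $\sup_{|\phi|_\infty\le1}|\nu(\phi)-\mu(\phi)|$ with an explicit $\kappa$-power prefactor — getting exactly the constant $2\kappa^{-2}$ rather than something larger requires choosing the split above (numerator error plus normalisation error) rather than a cruder triangle-inequality expansion, and then checking that each of the two resulting terms carries a factor $\kappa^{-2}$.
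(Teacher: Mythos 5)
Your decomposition is exactly the one used in the paper's proof --- adding and subtracting $\mu(g_jf)/\nu(g_j)$, then using $\nu(g_j)^{-1}\le\kappa^{-1}$, $|g_jf|_\infty\le\kappa^{-1}$, and $|\mu(g_jf)/\mu(g_j)|=|L_j\mu(f)|\le 1$ --- and each of those individual estimates is correct. The one place where your argument as written does not go through is the final passage from the pointwise bound to the root-mean-square bound. You establish, for each fixed realisation $\omega$,
$$|L_j\nu(f)-L_j\mu(f)|\le 2\kappa^{-2}\sup_{|\phi|_\infty\le 1}|\nu(\phi)-\mu(\phi)|,$$
and then take $\bbE^{\omega}|\cdot|^2$ and assert that the right-hand side becomes $2\kappa^{-2}\sup_{|\phi|_\infty\le 1}\bigl(\bbE^{\omega}|\nu(\phi)-\mu(\phi)|^2\bigr)^{1/2}$. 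That step would require $\bbE^{\omega}\bigl(\sup_{\phi}|\nu(\phi)-\mu(\phi)|\bigr)^2\le\sup_{\phi}\bbE^{\omega}|\nu(\phi)-\mu(\phi)|^2$, but the inequality between $\bbE\sup$ and $\sup\bbE$ goes the other way. Since in the application the measures are genuinely random (they involve the sampling operator $S^N$), the supremum inside the expectation is not harmless.

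The repair is immediate and is what the paper does: do not pass to the supremum over test functions pointwise in $\omega$. Keep the two specific \emph{deterministic} test functions $\kappa g_jf$ and $\kappa g_j$, both of sup-norm at most $1$, so that
$$|L_j\nu(f)-L_j\mu(f)|\le \kappa^{-2}|\nu(\kappa g_jf)-\mu(\kappa g_jf)|+\kappa^{-2}|\nu(\kappa g_j)-\mu(\kappa g_j)|;$$
then apply $(a+b)^2\le 2a^2+2b^2$, take $\bbE^{\omega}$ of each of the two resulting terms separately, and only afterwards bound each by $\sup_{|h|_\infty\le 1}\bbE^{\omega}|\nu(h)-\mu(h)|^2$. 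This yields $\bbE^{\omega}|L_j\nu(f)-L_j\mu(f)|^2\le 4\kappa^{-4}\sup_{|h|_\infty\le 1}\bbE^{\omega}|\nu(h)-\mu(h)|^2$, i.e.\ exactly the constant $2\kappa^{-2}$ after taking square roots and the supremum over $f$. With that adjustment your proof coincides with the paper's.
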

\begin{proof}
Notice that for $|f|_\infty <\infty$
we can rewrite 
\begin{subequations}
\begin{align}
(L_j \nu)(f) - (L_j \mu)(f)  = & \frac{\nu(f g_j)}{\nu(g_j)} - \frac{\mu(f g_j)}{\mu(g_j)} \\
 = & \frac{\nu(f g_j)}{\nu(g_j)}  - \frac{\mu(f g_j)}{\nu(g_j)} + \frac{\mu(f g_j)}{\nu(g_j)} - \frac{\mu(f g_j)}{\mu(g_j)} \\
 = & \frac{\kappa^{-1}}{\nu(g_j)}[\nu(\kappa f g_j) -\mu(\kappa f g_j)] + 
 \frac{\mu(f g_j)}{\mu(g_j)} \frac{\kappa^{-1}}{\nu(g_j)} [\mu(\kappa g_j) - \nu(\kappa g_j)].
\end{align}
\end{subequations}
Now notice that 
$\nu(g_j)^{-1} \leq \kappa^{-1}$ and that 
$\mu(fg_j)/\mu(g_j) \le 1$ since the expression corresponds to
an expectation with respect to measure found from $\mu$ by reweighting
with likelihood \index{likelihood} proportional to $g_j$. Thus 
$$|(L_j \nu)(f) - (L_j \mu)(f)| \le \kappa^{-2}|\nu(\kappa f g_j) -\mu(\kappa f g_j)|+\kappa^{-2}|\nu(\kappa g_j) - \mu(\kappa g_j)|.$$ 
Since $|\kappa g_j|_{\infty} \le 1$ it follows that $|\kappa f g_j|_{\infty}$
and hence that 
$$\bbE^{\omega}|(L_j \nu)(f) - (L_j \mu)(f)|^2 \le 4\kappa^{-4}
\sup_{|h|_{\infty} \le 1} \bbE^{\omega}|\nu(h) - \mu(h)|^2.$$
The desired result follows.
\end{proof}

\subsection{Improved Proposals}
\label{ssec:opt}

In the particle filter\index{particle filter} described in the previous section
we propose according to the underlying unobserved dynamics,
and then apply Bayes' formula \index{Bayes' formula} to incorporate the data.
The final point in Remarks \ref{rem:fil} demonstrates
that this may result in a very poor set of particles
with which to approximate the 
filtering distribution\index{filtering!distribution}.
Cleverer proposals\index{proposal}, which use the data, can lead
to improved performance and we outline this methodology here.

Instead of moving the particles $\{v_{j}^{(n)}\}_{n=1}^N$
according to the Markov kernel\index{Markov kernel} $P$, we use a Markov kernel
$Q_j$ with density $\rqq(v_{j+1}|v_{j},Y_{j+1})$.
The weights $w_{j+1}^{(n)}$ are found, as before,
by applying Bayes' formula \index{Bayes' formula} for each particle, and 
then weighting appropriately as in \eqref{eq:waits}: 
\begin{subequations}
\label{eq:waits2}
\begin{eqnarray}
\tw_{j+1}^{(n)} &=& w_{j}^{(n)}\frac{\rp\left(y_{j+1}|\hv_{j+1}^{(n)}\right)\rp\left(\hv_{j+1}^{(n)}|v_{j}^{(n)}\right)}
{\rqq\left(\hv_{j+1}^{(n)}|v_{j}^{(n)},Y_{j+1}\right)},\\
w_{j+1}^{(n)}&=&\tw_{j+1}^{(n)}\Bigl{/}\left(\sum_{n=1}^{N} \tw_{j+1}^{(n)}\right ).
\end{eqnarray}
\end{subequations}
The choice 
$$\rqq\left(v_{j+1}|v_{j}^{(n)},Y_{j+1}\right)=
\rp\left(v_{j+1}|v_{j}^{(n)}\right)$$
results in the bootstrap filter\index{filter!bootstrap}
from the preceding subsection.
In the more general case the approach
results in the following algorithm:

\begin{enumerate}
\item[1.] Set $j=0$ and $\mu_0^N(v_0)dv_0=\bbP(v_0)dv_0$.
\item[2.] Draw $v_{j}^{(n)} \sim \mu_j^N$, $n=1,\dots,N$. 
\item[3.] Set $w_{j}^{(n)}=1/N$, $n=1,\dots,N$.
\item[4.] Draw $\hv_{j+1}^{(n)} \sim \rqq(\cdot|v_{j+1}^{(n)},Y_{j+1}).$
\item[5.] Define $w_{j+1}^{(n)}$ by \eqref{eq:waits2} 
and $\mu_{j+1}^N=\rp^{N}(v_{j+1}|Y_{j+1})$ by \eqref{eq:emp_filt2}. 
\item[6.] $j+1\to j$.
\item[7.] Go to step 2.
\end{enumerate}

We note that the normalization constants in (\ref{eq:waits2}a), 
here assumed known in the definition
of the reweighting, or not of course needed.
The so-called \textbf{optimal proposal} \index{proposal!optimal} is found by
choosing
$$
\rqq\left(v_{j+1}|v_{j}^{(n)},Y_{j+1}\right) \equiv \rp\left(v_{j+1}|v_{j}^{(n)},y_{j+1}\right)$$
which results in
\begin{equation} 
\tw_{j+1}^{(n)} = w_{j}^{(n)} \rp\left(y_{j+1}|v_{j}^{(n)}\right).
\label{eq:opwts}
\end{equation}
{The above can be seen by observing that the definition of conditional probability gives
\begin{equation}
\begin{array}{lll}
\rp\left(y_{j+1}|\hv_{j+1}^{(n)}\right)\rp\left(\hv_{j+1}^{(n)}|v_{j}^{(n)}\right) &= & \rp\left(y_{j+1},\hv_{j+1}^{(n)}|v_{j}^{(n)}\right)\\
 &= & \rp\left(\hv_{j+1}^{(n)} | v_{j}^{(n)},y_{j+1}\right)\rp\left(y_{j+1}|v_{j}^{(n)}\right).
\end{array}
\end{equation}
Substituting the optimal proposal \index{proposal!optimal} into \eqref{eq:waits2} then immediately gives \eqref{eq:opwts}.

}

This small difference from the bootstrap filter\index{filter!bootstrap}
may seem trivial at a glance, and at the potentially
large cost of sampling from $\bbQ$. 
However, in the case of nonlinear Gaussian Markov models as we study here,
the distribution and the weights are given in closed form.  If the dynamics is highly nonlinear or the 
model noise is larger than the observational noise then 
the variance of the weights for the optimal proposal \index{proposal!optimal} may be much smaller than for the standard 
proposal.\index{proposal!standard} The corresponding particle filter\index{particle filter} will be referred to with the acronym
SIRS(OP) to indicate the optimal proposal \index{proposal!optimal}.
For deterministic dynamics the optimal proposal \index{proposal!optimal} reduces to the standard proposal\index{proposal!standard}.

\section{Large-Time Behaviour of Filters}
\label{ssec:stab}

With the exception of the Kalman filter\index{Kalman filter} for linear problems, and the
particle filter in the general case, the filtering methods presented
in this chapter do not, in general, give accurate approximations of
the true posterior distribution; in particular the approximate Gaussian 
filters\index{filter!approximate Gaussian} 
do not perform well as measured by the Bayesian \index{Bayesian} quality
assessment\index{Bayesian quality assessment} test of section \ref{ssec:qual}. 
However they may perform well as measured by the signal estimation quality assessment\index{signal estimation quality assessment} test and the purpose
of this section is to demonstrate this fact.

More generally, an important question concerning filters is their behaviour
when iterated over long times and, in particular, their ability to
recover the true signal underlying the data if iterated for long
enough, even when initialized far from the truth.
In this section we present some basic
large time asymptotic results for filters 
to illustrate the key issue which affects
the ability of filters to accurately recover the signal when iterated
for long enough. The main idea is that the data must be sufficiently
rich to stabilize any inherent instabilities within the underlying
dynamical model \eqref{eq:dtf1}; in rough terms it is necessary to
observe only the unstable directions as the dynamics of the model
itself will enable recovery of the true signal within the
space spanned by the stable directions. 
We illustrate this idea first, in subsection
\ref{sssec:kfo}, for the
explicitly solvable case of the Kalman filter\index{Kalman filter}
in one dimension, and then, in subsection \ref{sssec:3s}, 
for the 3DVAR\index{3DVAR} method. 

\subsection{The Kalman Filter\index{Kalman filter} in One Dimension}
\label{sssec:kfo}

We consider the case of one dimensional dynamics with 
\[
\PPsi(v)=\lambda v, \quad h(v)=v,
\]
while we will also assume that
\[
\Sigma=\sigma^{2}, \quad \Gamma=\gamma^{2}.
\]
With these definitions equations (\ref{eq:kalman_update}a,b) become
\begin{subequations} \label{eq:kalman_update1d_a}
\begin{eqnarray}  
\frac{1}{c_{j+1}} &=&\frac{1}{\sigma^{2}+\lambda^{2}c_{j}}+\frac{1}{\gamma^{2}}, \\
\frac{m_{j+1}}{c_{j+1}} &=& \frac{\lambda m_{j}}{\sigma^{2}+\lambda^{2}c_j}+\frac{1}{\gamma^{2}}y_{j+1},
\end{eqnarray}
\end{subequations}
which, after some algebraic manipulations, give 
\begin{subequations} \label{eq:kalman_update1d}
\begin{eqnarray}
c_{j+1}&=&g(c_{j}), \\
m_{j+1} &=&\left(1-\frac{c_{j+1}}{\gamma^{2}} \right)\lambda m_{j}+\frac{c_{j+1}}{\gamma^{2}}y_{j+1},
\end{eqnarray}
\end{subequations}
where we have defined 
\begin{equation} \label{eq:map1}
g(c):=\frac{\gamma^{2}(\lambda^{2} c+\sigma^{2})}{\gamma^{2}+\lambda^{2} c+\sigma^{2}}.
\end{equation}

We wish to study the behaviour of the Kalman filter\index{Kalman filter} 
as $j \rightarrow \infty$, \emph{i.e.} 
when more and more data points are assimilated into the model. 
Note that the covariance evolves independently of the data $\{y_j\}_{j \in \Z^+}$ and satisfies an autonomous nonlinear dynamical system. 
However it is of interest to note that, if $\sigma^2=0$, then the dynamical
system for $c_j^{-1}$ is linear.

We now study the
asymptotic properties of this map.
The fixed points  \index{fixed point} $c^{\star}$ of (\ref{eq:kalman_update1d}a) satisfy 
\begin{equation} \label{eq:kalman_eq1d}
c^{\star}=\frac{\gamma^{2}(\lambda^{2} c^{\star}+\sigma^{2})}{\gamma^{2}+\lambda^{2} c^{\star}+\sigma^{2}},
\end{equation}
and thus solve the quadratic equation
$$\lambda^2 (c^{\star})^2+\bigl(\gamma^2(1-\lambda^2)+\sigma^2\bigr)c^{\star}-
\gamma^2\sigma^2=0.$$
We see that, provided $\lambda\gamma\sigma \ne 0$, one root is positive and
one negative. The roots are given by 
\begin{equation} \label{eq:equilibria}
c^{\star}_{\pm}=\frac{-(\gamma^{2}+\sigma^{2}-\gamma^{2}\lambda^{2})
\pm \sqrt{(\gamma^{2}+\sigma^{2}-\gamma^{2}\lambda^{2})^{2}+4\lambda^{2}\gamma^{2}\sigma^{2}}}{2\lambda^{2}}.
\end{equation}
We observe that the update formula for the covariance ensures that, provided
$c_0 \ge 0$ then $c_j \ge 0$ for all $j \in \N.$ It also demonstrates that $c_j
\le \gamma^2$ for all $j \in \Z^+$ so that the variance of the
filter is no larger
than the variance in the data. We may hence fix our attention
on non-negative covariances, knowing that they are also uniformly
bounded by $\gamma^2.$
We will now study the stability of the non-negative fixed points  \index{fixed point}. 

We first start with the case $\sigma=0$, which corresponds to 
deterministic dynamics, and for which the dynamics of $c_j^{-1}$ is
linear. In this case we obtain
 \[
c^{\star}_{+}=0, \quad c^{\star}_{-}=\frac{\gamma^{2}(\lambda^{2}-1)}{\lambda^{2}}, 
\]
and 
\[
g'(c^{\star}_{+})= \lambda^{2}, \quad g'(c^{\star}_{-})=\lambda^{-2},
\]
which implies that when $\lambda^{2} <1$, $c^{\star}_{+}$ is an asymptotically 
stable fixed point, while when $\lambda^{2}>1$, $c^{\star}_{-}$ is an asymptotically stable fixed point  \index{fixed point}. When $|\lambda|=1$ the two roots are coincident at
the origin and neutrally stable. Using the aforementioned linearity,
for the case $\sigma=0$ it is possible to solve (\ref{eq:kalman_update1d_a}a) to obtain for $\lambda^{2} \neq 1$
\begin{equation} \label{eq:decay}
\frac{1}{c_{j}}=\left(\frac{1}{\lambda^{2}}\right)^{j}\frac{1}{c_{0}}+\frac{1}{\gamma^{2}}\left[\frac{\left(\frac{1}{\lambda^{2}}\right)^{j}-1}{\frac{1}{\lambda^{2}}-1}\right]. 
\end{equation}
This explicit formula shows that the fixed point  \index{fixed point} $c^{\star}_{+}$ (resp. $c^{\star}_{-}$) is globally asymptotically stable, and exponentially attracting on $\mathbb{R}^{+}$, when $\lambda^{2}<1$
(resp. $\lambda^{2}>1$). 
Notice also that $c^{\star}_{-}=\mathcal{O}(\gamma^{2})$ so that when $\lambda^{2}>1$, the asymptotic variance of the filter scales as the observational noise variance.
Furthermore, when $\lambda^{2}=1$ we may solve (\ref{eq:kalman_update1d_a}a) to obtain 
\[
\frac{1}{c_{j}}=\frac{1}{c_{0}}+\frac{j}{\gamma^{2}},
\]
showing that $c^{\star}_{-}=c^{\star}_{+}=0$ is globally asymptotically stable on $\mathbb{R}^{+}$, but is only algebraically attracting.

We now study the  stability of the fixed points  \index{fixed point} $c^{\star}_{+}$ and $c^{\star}_{-}$ in the case of $\sigma^{2} > 0$ 
corresponding to the case where the dynamics are stochastic. 
To this end we prove some bounds on 
$g'(c^{\star})$ that will also be useful when we study the behaviour of the error between the true signal and the estimated mean; here, and in what
follows in the remainder of this example, prime denotes differentiation with
respect to $c$. We start by noting that 
\begin{equation} \label{eq:map}
g(c)=\gamma^{2}-\frac{\gamma^{4}}{\gamma^{2}+\lambda^{2}c+\sigma^{2}},
\end{equation}
and so
\[
g'(c)=\frac{\lambda^{2}\gamma^{4}}{(\gamma^{2}+\lambda^{2}c+\sigma^{2})^{2}}.
\]
Using the fact that $c^{\star}$ satisfies \eqref{eq:kalman_eq1d} 
together with equation \eqref{eq:map}, we obtain 
\[
g'(c^{\star})=\frac{1}{\lambda^{2}}\frac{(c^{\star})^{2}}{\left(c^{\star}+\frac{\sigma^{2}}{\lambda^{2}} \right)^{2}} \quad  \text{and} \quad  
g'(c^{\star})=\lambda^{2}\left(1-\frac{c^{\star}}{\gamma^{2}} \right)^{2}.
\]
We can now see that from the first equation we obtain the following
two bounds, since $\sigma^{2}>0$: 
\[
g'(c^{\star})<\lambda^{-2}, \quad \text{for} \quad \lambda \in \bbR,\quad \text{and} \quad g'(c^{\star})<1, \quad \text{for} \quad \lambda^{2}=1,
\]
while from the second equality and the fact that, since $c^{\star}$ satisfies 
\eqref{eq:kalman_eq1d}, $c^{\star}<\gamma^{2}$
we obtain 
\[
g'(c^{\star})<\lambda^{2}.
\]
when $c^{\star}>0$. 
We thus conclude that when $\sigma^{2} > 0$ the fixed point  \index{fixed point} $c_{+}^{\star}$ of (\ref{eq:kalman_update1d}a) is always stable independently of the value of the parameter $\lambda$.

\begin{table} [h!]
\begin{center}
\scalebox{1.1}{
\begin{tabular}{ |c || c |  c|}
    \hline
             & Limiting covariance for $\sigma^{2}=0$ &  Limiting covariance for $\sigma^{2}>0$\\     \hline
   $ |\lambda|<1$ & $c_{j}\rightarrow0$  (exponentially) & $c_{j} \rightarrow c^{\star}_{+}=\mathcal{O}(\gamma^{2})$ (exponentially)  \\  \hline
 $ |\lambda|=1$ &  $c_{j}\rightarrow0$ (algebraically)  & $c_{j} \rightarrow c^{\star}_{+}=\mathcal{O}(\gamma^{2})$  (exponentially) \\
    \hline
    $ |\lambda|>1$  & $c_{j} \rightarrow c^{\star}_{-}=\mathcal{O}(\gamma^{2})$ (exponentially)  &$c_{j} \rightarrow c^{\star}_{+}=\mathcal{O}(\gamma^{2})$  (exponentially) \\
    \hline
  \end{tabular}}
\caption{Summary of the limiting behaviour of covariance $c_{j}$ for Kalman filter\index{Kalman filter} applied to one dimensional dynamics}
\label{tb:summary}
\end{center}
\end{table}

\begin{figure}[h]
\centering
\subfigure[$\sigma=0, \lambda=0.8, \gamma=1$]{\includegraphics[scale=0.365]{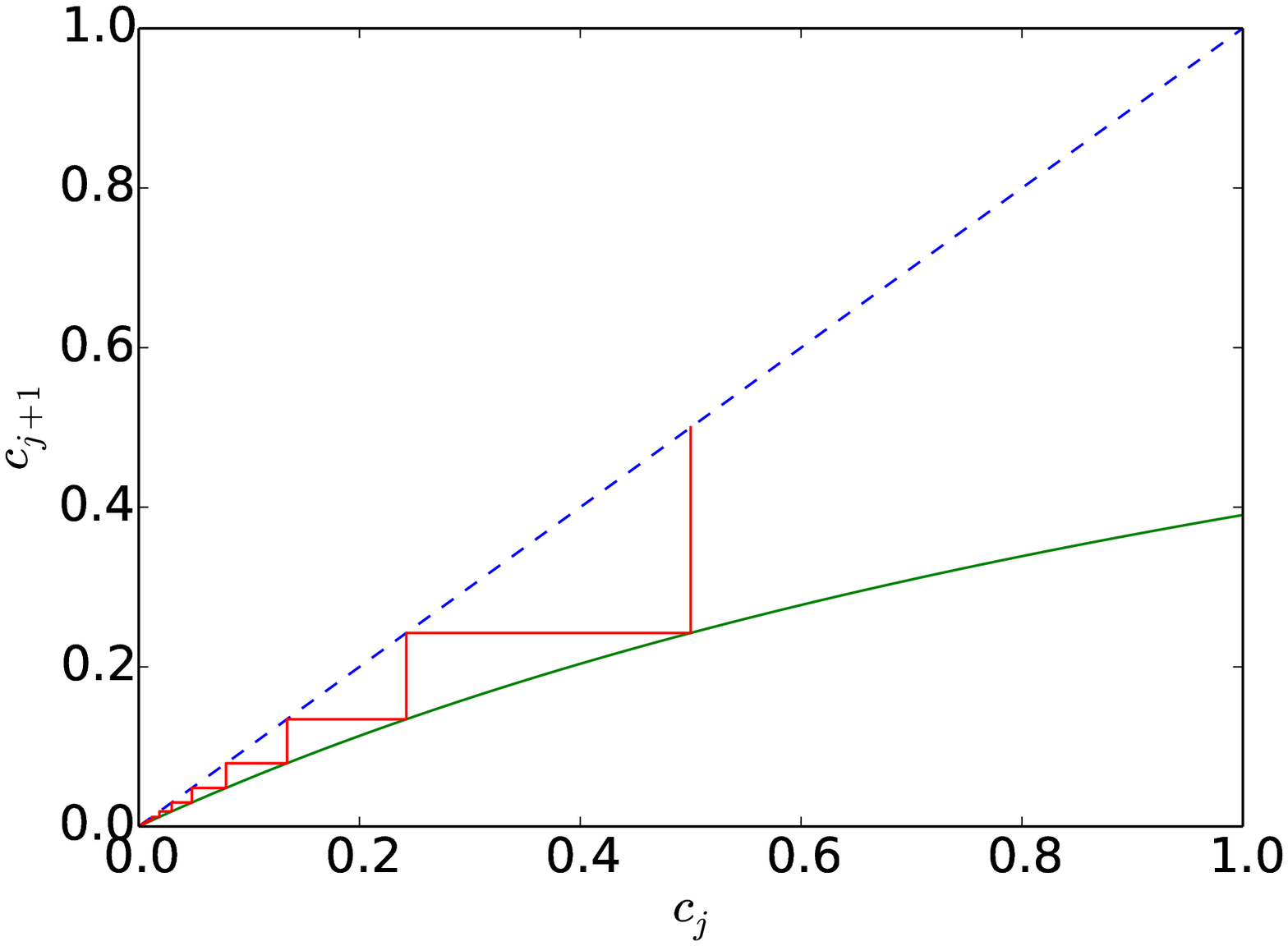}}
\subfigure[$\sigma=0, \lambda=1, \gamma=1$]{\includegraphics[scale=0.365]{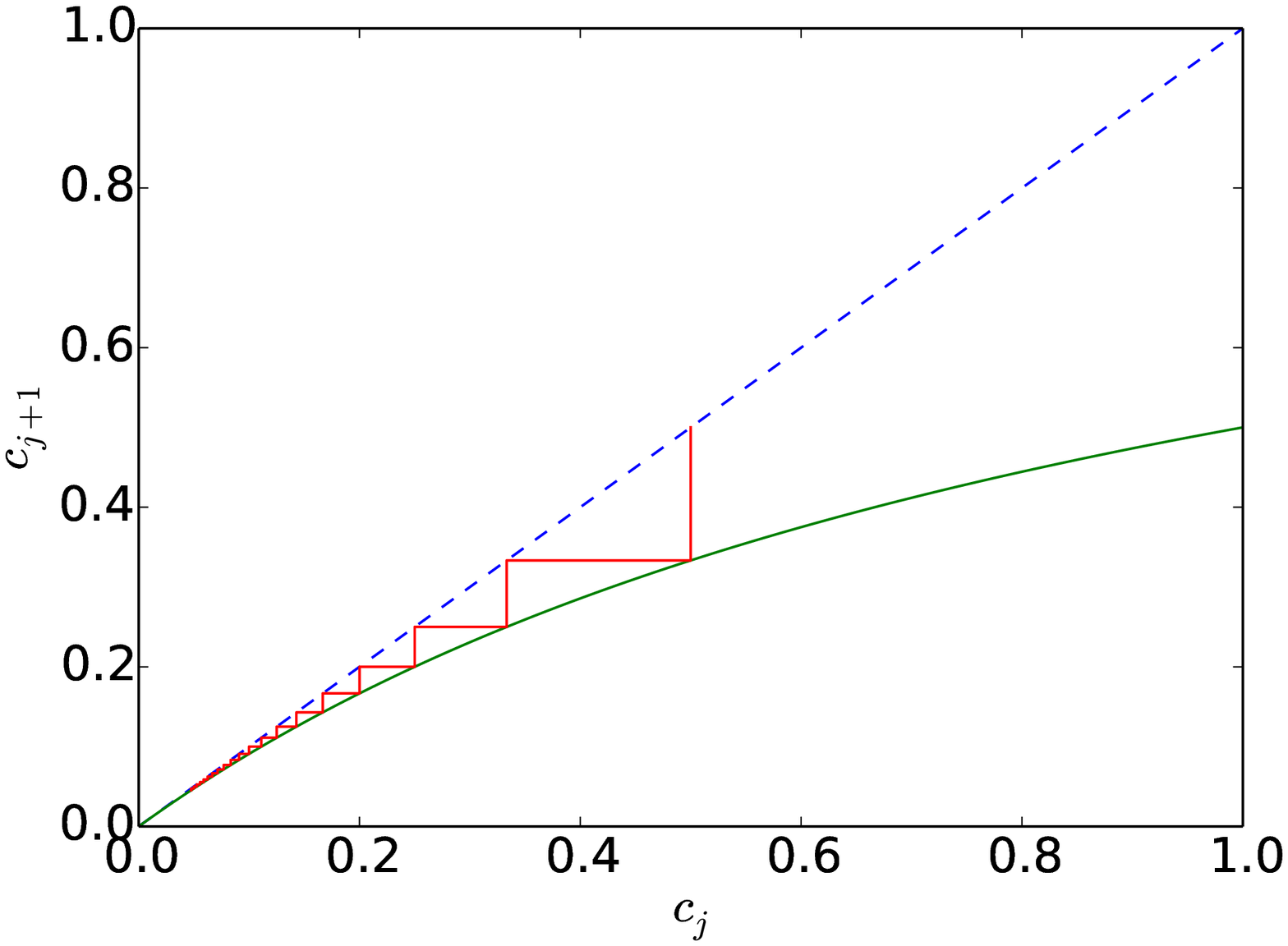}}
\caption{Cobweb diagram for equation (\ref{eq:kalman_update1d}a) }
\label{fig:cobwebb1}
\end{figure}

\begin{figure}[h]
\centering
\subfigure[$\sigma=0,\lambda=1.2,\gamma=1$]{\includegraphics[scale=0.365]{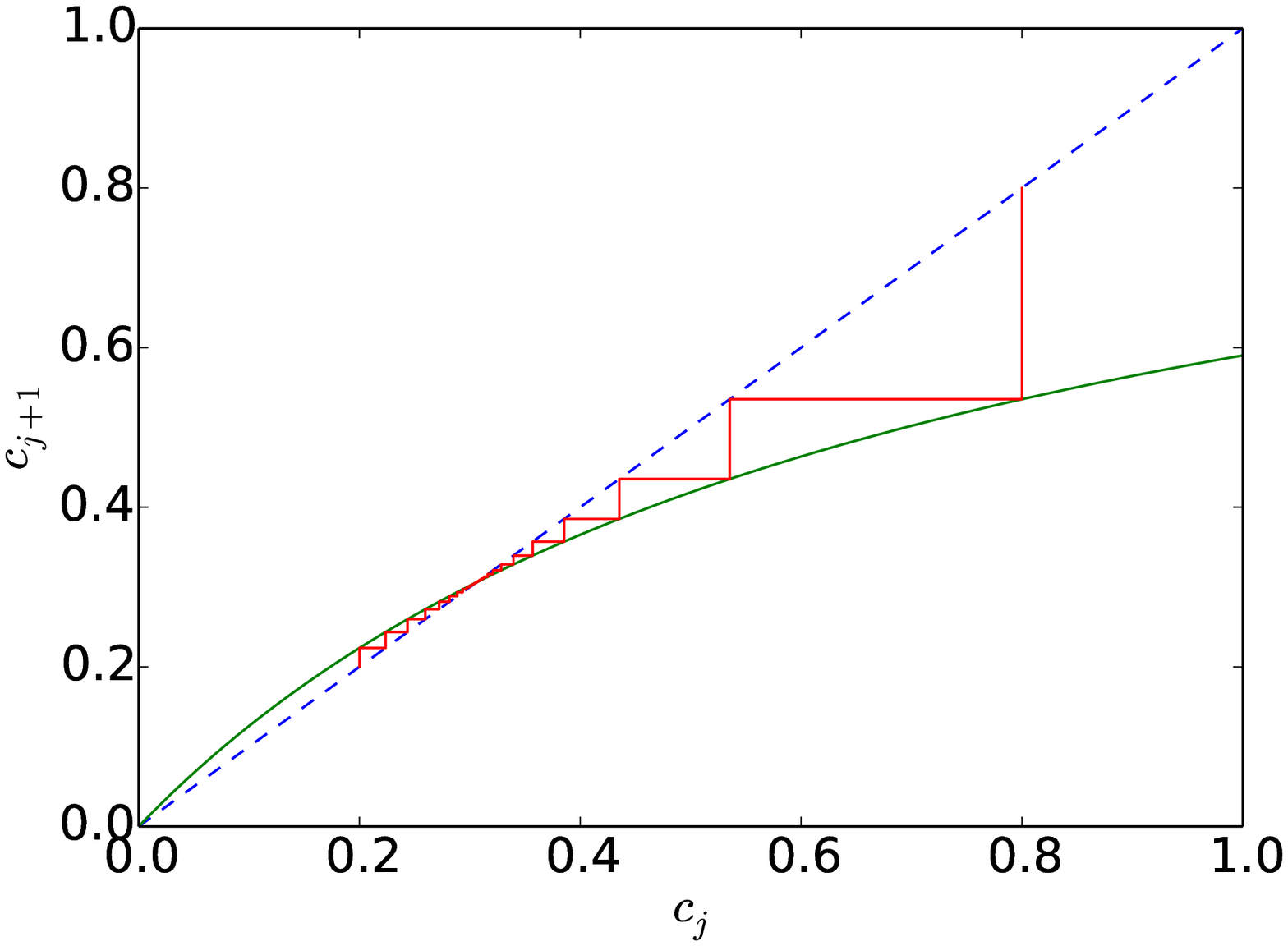}}
\subfigure[$\sigma=0.5,\lambda=1.2,\gamma=1$]{\includegraphics[scale=0.365]{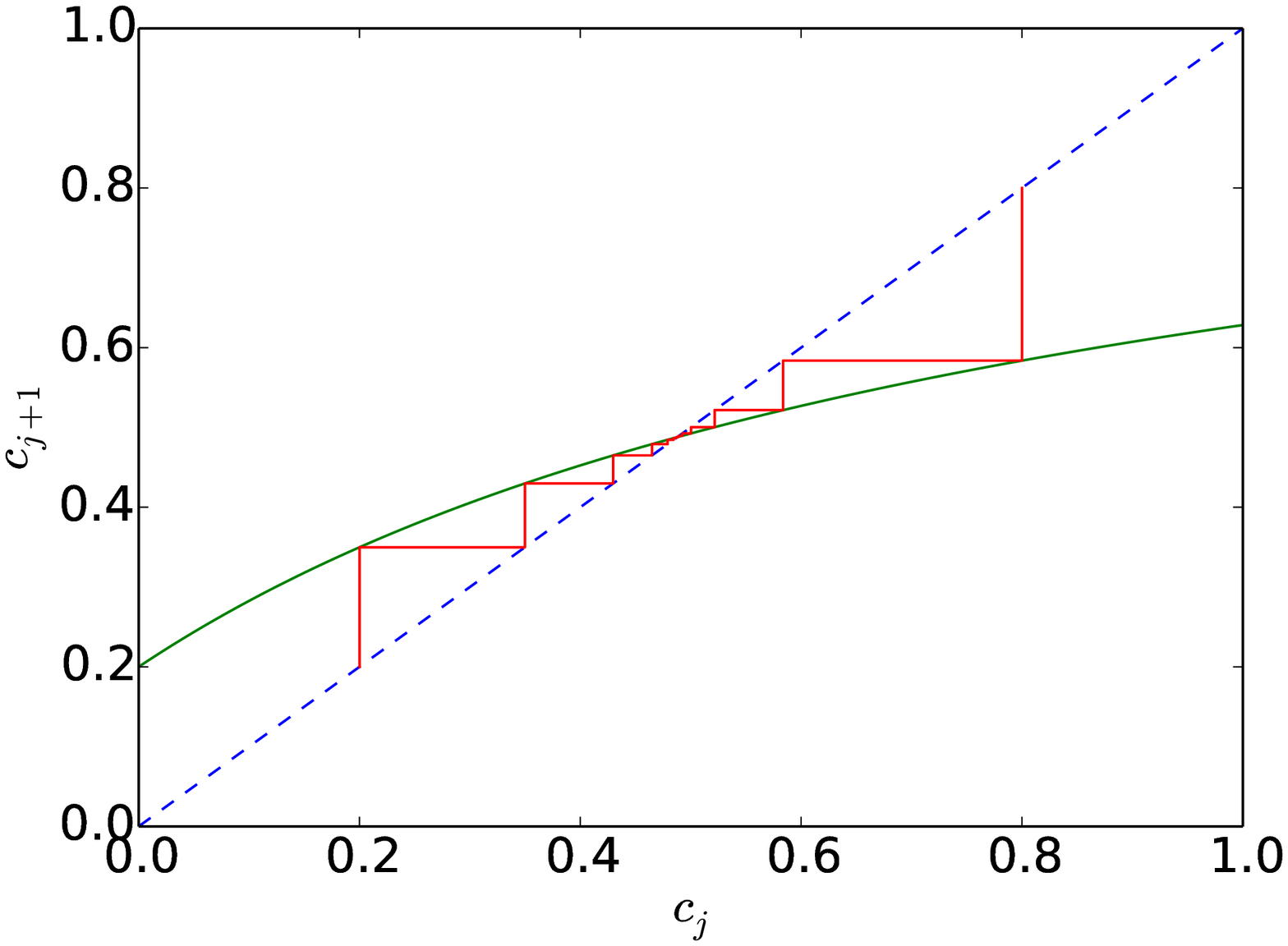}}
\caption{Cobweb diagram for equation (\ref{eq:kalman_update1d}a)}
\label{fig:cobwebb2}
\end{figure}

Table \ref{tb:summary} summarises the behaviour of the variance
of the Kalman filter\index{Kalman filter} in the case of one-dimensional dynamics.  This is illustrated further in Figures \ref{fig:cobwebb1} and \ref{fig:cobwebb2} where we plot the cobweb diagram for the map \eqref{eq:map}. In particular, in Figure \ref{fig:cobwebb1} we observe the difference between the algebraic and geometric convergence to 0, for different values of $\lambda$ in the case $\sigma=0$, while in Figure \ref{fig:cobwebb2} we observe the exponential convergence to $c^{\star}_{+}$ for the case of $|\lambda|>1$. 
The analysis of the error between the mean and the truth \index{truth} underlying
the data is left as an exercise at the end of the chapter. This shows
that the error in the mean is, asymptotically, of order $\gamma^2$
in the case where $\sigma=0.$

\subsection{The 3DVAR\index{3DVAR} Filter}
\label{sssec:3s}

In the previous subsection we showed that the Kalman filter\index{Kalman filter} accurately
recovers any one-dimensional signal, provided the observational noise
is small. The result allows for initialization far from the true
signal and is, in this sense, quite strong. On the other hand being
only one-dimensional it gives a somewhat limited picture. In this
subsection we study the 3DVAR\index{3DVAR} filter given by \eqref{eq:dtfa10}.
We study conditions under which the 3DVAR\index{3DVAR} filter will 
recover the true signal, to within a small observational noise level
of accuracy, in dimensions bigger than one, and when only part of
the system is observed.

To this end we assume that 
\begin{equation}\label{eq:dtfa11}
y_{j+1}=Hv_{j+1}^\dagger+\epsilon_j
\end{equation}
where the true signal $\{v_j^\dagger\}_{j\in\N}$ satisfies
\begin{subequations}\label{eq:dtfa12}
\begin{align}
v_{j+1}^\dagger&=\PPsi(v_j^\dagger), \;j\in\N\\
v_0^\dagger&=u
\end{align}
\end{subequations}
and, for simplicity, we assume that the observational noise satisfies 
\begin{equation}
\label{eq:error}
\sup_{j\in\N}|\epsilon_j|=\epsilon.
\end{equation}
We have the following result. 
\begin{theorem}\label{t35}
Assume that the data is given by \eqref{eq:dtfa11}, where the
signal follows equation \eqref{eq:dtfa12} and the error in the
data satisfies \eqref{eq:error}. Assume furthermore that $\hc$ 
is chosen so that
$(I-KH)\PPsi:\R^n\to\R^n$ is globally Lipschitz with constant $a<1$ 
in some norm $\norm{\cdot}$. Then there is constant $c>0$ such that
\[\limsup_{j\to\infty}\norm{m_j-v_j^\dagger}\leq \frac{c}{1-a}\epsilon.\]
\end{theorem}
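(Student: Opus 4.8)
The plan is to derive an inequality for the error $e_j := m_j - v_j^\dagger$ and apply a discrete Gronwall-type argument using the assumed contraction property.

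First I would write down the 3DVAR update for $m_{j+1}$ from \eqref{eq:dtfa10}. We have
\[
m_{j+1} = (I-KH)\PPsi(m_j) + KHv_{j+1}^\dagger + K\epsilon_j,
\]
where I have used $y_{j+1} = Hv_{j+1}^\dagger + \epsilon_j$ from \eqref{eq:dtfa11}. Meanwhile the true signal obeys $v_{j+1}^\dagger = \PPsi(v_j^\dagger)$ from \eqref{eq:dtfa12}, which I can rewrite as
\[
v_{j+1}^\dagger = (I-KH)\PPsi(v_j^\dagger) + KH\PPsi(v_j^\dagger) = (I-KH)\PPsi(v_j^\dagger) + KHv_{j+1}^\dagger.
\]
Subtracting, the $KHv_{j+1}^\dagger$ terms cancel and we obtain
\[
e_{j+1} = (I-KH)\PPsi(m_j) - (I-KH)\PPsi(v_j^\dagger) + K\epsilon_j.
\]

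Next I would take norms. By the triangle inequality and the assumed global Lipschitz property of $(I-KH)\PPsi$ with constant $a<1$,
\[
\norm{e_{j+1}} \le a\norm{e_j} + \norm{K}\,|\epsilon_j| \le a\norm{e_j} + \norm{K}\epsilon,
\]
where I use \eqref{eq:error} to bound $|\epsilon_j| \le \epsilon$, and $\norm{K}$ denotes the operator norm of $K$ induced by the norm $\norm{\cdot}$ (one should remark that $\norm{K\epsilon_j}\le \norm{K}|\epsilon_j|$ after noting all norms on $\R^n$ are equivalent, absorbing the equivalence constant if the norm on the observation space differs). This is the discrete inequality $v_{j+1} \le a v_j + b$ with $v_j = \norm{e_j}$, $b = \norm{K}\epsilon$, and $0<a<1$.

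Finally I would invoke the discrete Gronwall Lemma~\ref{lem:GL}: since $a \neq 1$,
\[
\norm{e_j} \le a^j\norm{e_0} + \norm{K}\epsilon\,\frac{1-a^j}{1-a}.
\]
Taking $\limsup_{j\to\infty}$ and using $a<1$ so that $a^j \to 0$ gives
\[
\limsup_{j\to\infty}\norm{e_j} \le \frac{\norm{K}}{1-a}\epsilon,
\]
which is the claimed bound with $c = \norm{K}$ (or $c$ a suitable constant absorbing norm-equivalence factors). The only mildly delicate point is bookkeeping with the operator norm of $K$ relative to the norm $\norm{\cdot}$ in which the contraction holds — there is no real obstacle here, just a need to state clearly that $c$ depends on $K$ and on the choice of norm. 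The essential structural input is entirely the hypothesis that $\hc$ is chosen to make $(I-KH)\PPsi$ a contraction; everything else is the elementary Gronwall estimate.
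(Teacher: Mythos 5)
Your proof is correct and follows essentially the same route as the paper's: the same decomposition of $m_{j+1}$ and $v_{j+1}^\dagger$ using $(I-KH)\PPsi$ so that the $KH\PPsi(v_j^\dagger)$ terms cancel, the same Lipschitz bound yielding $\norm{e_{j+1}}\le a\norm{e_j}+c\epsilon$, and the same application of the discrete Gronwall Lemma \ref{lem:GL}. Your explicit identification of $c$ with the operator norm of $K$ (modulo norm-equivalence constants) is a slightly more careful bookkeeping than the paper's generic constant, but the argument is identical.
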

\begin{proof}We may write (\ref{eq:dtfa10}), (\ref{eq:dtfa12}), using (\ref{eq:dtfa11}), as 
\begin{align*}
m_{j+1}&=(I-KH)\PPsi(m_j)+KH\PPsi(v_{j}^\dagger)+K\epsilon_j\\
v_{j+1}^\dagger&=(I-KH)\PPsi(v_{j}^\dagger)+KH\PPsi(v_{j}^\dagger).
\end{align*}
Subtracting, and letting $e_j=m_j-v_j^\dagger$ gives, for some
finite constant $c$ independent of $j$, 
\begin{align*}
\norm{e_{j+1}}&\leq\norm{(I-KH)\PPsi(m_j)-(I-KH)\PPsi(v_j^\dagger)}+\|K\epsilon_j\|\\
&\leq a\norm{e_j}+c\epsilon.
\end{align*}
Applying the Gronwall\index{Gronwall} Lemma \ref{lem:GL}
gives the desired result.
\end{proof}

We refer to a map with Lipschitz constant less than $1$ as a {\em contraction}
in what follows.

\begin{remark}
The preceding simple theorem shows that it is possible to construct filters
which can recover from being initialized far
from the truth and lock-on to a small neighbourhood of the true signal
underlying the data, when run for long enough. 
Furthermore, this can happen even when the system is
only partially observed, provided that the observational noise
is small and enough of the system is observed. This concept of
observing ``enough'' illustrates a key idea in 
filtering\index{filtering}: the question of 
whether  the fixed model covariance in 3DVAR, $\hc$, 
can be chosen to make $(I-KH)\PPsi$ into a contraction
involves a subtle interplay between the underlying dynamics,
encapsulated in $\PPsi$, and the observation operator $H$. In rough terms
the question of making $(I-KH)\PPsi$ into a contraction is the question
of whether the unstable parts of the dynamics are observed; if they
are then it is typically the case that $\hc$ can be designed to obtain the
desired contraction.
\end{remark}

\begin{example}
\label{ex:101}
Assume that $H=I$, so that the whole system is observed, that $\Gamma=\gamma^2I$ and $\hc=\sigma^2I$. Then, for $\eta^2=\frac{\gamma^2}{\sigma^2}$
\[S=(\sigma^2+\gamma^2)I, \quad K=\frac{\sigma^2}{(\s^2+\gamma^2)}I\]
and \[(I-KH)=\frac{\gamma^2}{(\s^2+\gamma^2)}I=\frac{\eta^2}{(1+\eta^2)}I.\]
Thus, if $\PPsi:\R^n\to\R^n$ is globally Lipschitz with constant $\lambda>0$ in the Euclidean norm, $|\cdot|$, then $(I-KH)\PPsi$ is globally Lipschitz with constant $a<1$, if $\eta$ is chosen so that $\frac{\eta^2\lambda}{1+\eta^2}<1$. Thus, by choosing $\eta$ sufficiently small the filter can be made to contract. This corresponds to trusting the data sufficiently in comparison to the model.
It is a form of {\bf variance inflation}\index{variance inflation} in
that, for given level of observational noise, $\eta$ can be made
sufficiently small by choosing the model variance scale $\sigma^2$
sufficiently large -- ``inflating'' the model variance.
\end{example}

\begin{example}
\label{ex:202}
Assume that there is a partition of the state space in which
$H=(I,0)^T$, so that only part of the system is observed. 
Set $\Gamma=\gamma^2I$ and $\hc=\sigma^2I$. Then, with $\eta$ as in the
previous example, 
\begin{eqnarray*}
I-KH=\left(
\begin{array}{cc}
\frac{\eta^2}{1+\eta^2}I & 0\\
0 & I
\end{array}
\right).
\end{eqnarray*}
Whilst the previous example shows that
variance inflation\index{variance inflation}
may help to stabilize the filter,
this example shows that, in general, more is required:
in this case it is clear that making $(I-KH)\PPsi(\cdot)$ into
a contraction will require a relationship between the subspace in which we
observe and the space in which the dynamics of the map is expanding
and contracting. For example, if $\PPsi(u)=Lu$ and
\begin{eqnarray*}
L=\left(
\begin{array}{cc}
2I & 0\\
0 & aI
\end{array}
\right)
\end{eqnarray*} 
then
\begin{eqnarray*}
(I-KH)L=\left(
\begin{array}{cc}
\frac{2\eta^2}{1+\eta^2}I & 0\\
0 & aI
\end{array}
\right)
\end{eqnarray*} 
When $|a|<1$ this can be made into a contraction by choosing $\eta$
sufficiently small; but for $|a| \ge 1$ this is no longer possible. 
The example thus illustrates the intuitive idea that the observations \index{observations} should
be sufficiently rich to ensure that the unstable directions within the
dynamics can be tamed by observing them.

\end{example}

\subsection{The Synchronization Filter}\label{ssec:4}
A fundamental idea underlying successful filtering of partially
observed dynamical systems, is synchronization\index{synchronization}. 
To illustrate this we introduce and study the idealized 
{\em synchronization filter}\index{synchronization}\index{filter!synchronization}.  
To this end consider a partition of the identity $P+Q=I$.
We write $v=(p,q)$ where $p=Pv, q=Qv$
and then, with a slight abuse of notation, write $\PPsi(v)=\PPsi(p,q)$. 
Consider a true signal
governed by the deterministic dynamics\index{deterministic dynamics} 
model \eqref{eq:dtfa12}
and write $\vd_k=(\pd_k,\qd_k)$, with $\pd_k=P\vd_k$ and $\qd_k=Q\vd_k$. Then
\begin{align*}
\pd_{k+1}&=P\PPsi(\pd_k,\qd_k),\\
\qd_{k+1}&=Q\PPsi(\pd_k,\qd_k).
\end{align*} 
Now imagine that we observe $y_k=\pd_k$ exactly, without noise.
Then the synchronization filter\index{synchronization}\index{filter!synchronization} simply
fixes the image under $P$ to $\pd_k$ and plugs this into the
image of the dynamical model under $Q$; if the filter is $m_k=(p_k,q_k)$
with $p_k=Pm_k$ and $q_k=Qm_k$ then
\begin{align*}
p_{k+1}&=\pd_{k+1},\\
q_{k+1}&=Q\PPsi(\pd_k,q_k).
\end{align*} 
We note that, expressed in terms of the data, this filter has the form
\begin{equation}
\label{eq:sync2}
m_{k+1}=Q\PPsi(m_k)+Py_{k+1}.
\end{equation}
A key question now is whether or not the filter synchronizes in the following
sense:
\begin{equation*}
|q_k-\qd_k| \to 0\,\,{\rm {as}}\,\, k \to \infty.
\end{equation*}
This of course is equivalent to 
\begin{equation}
\label{eq:sync}
|m_k-\vd_k| \to 0\,\,{\rm {as}}\,\, k \to \infty.
\end{equation}
Whether or not this happens involves, as for 3DVAR described above,
a subtle interplay between the underlying dynamics and the
observation operator, here $P$. The bibliography section \ref{ssec:dtnb3}
contains pointers to the literature studying this question.

In fact the following example shows how the synchronization filter can be
viewed as a distinguished parameter limit, corresponding to
infinite variance inflation\index{variance inflation}, 
for a particular family of 3DVAR filters.

\begin{example}
\label{ex:203}

Let $H=P$ and $\Gamma=\gamma^2 I$. If we choose 
 ${\widehat C}$ as in Example \ref{ex:202} 
then the 3DVAR filter can be written as
\begin{subequations}
\label{eq:sub}
\begin{align}
m_{k+1}&=S\PPsi(m_k)+(I-S)y_{k+1},\\
S&=\frac{\eta^2}{1+\eta^2}P+Q.
\end{align}
\end{subequations}
The limit $\eta \to 0$ is the the extreme limit of 
variance inflation\index{variance inflation} 
referred to in Example \ref{ex:101}. 
In this limit  the 3DVAR filter becomes the synchronization
filter \eqref{eq:sync2}. 
\end{example}

\section{Illustrations}
\label{ssec:filti}

\begin{figure}[h]
\centering
\subfigure[Solution.]{\includegraphics[scale=0.365]{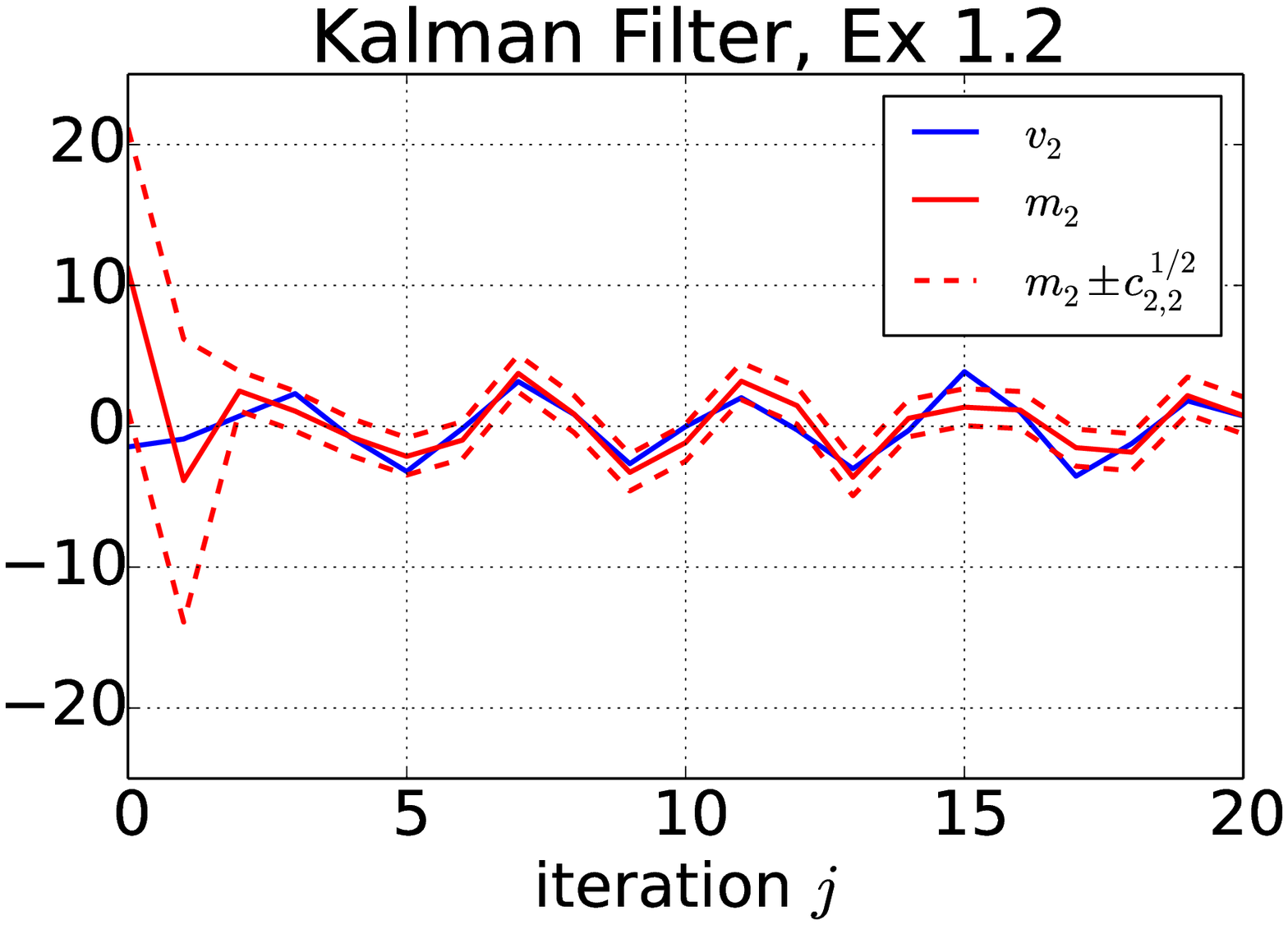}}
\subfigure[Covariance.]{\includegraphics[scale=0.365]{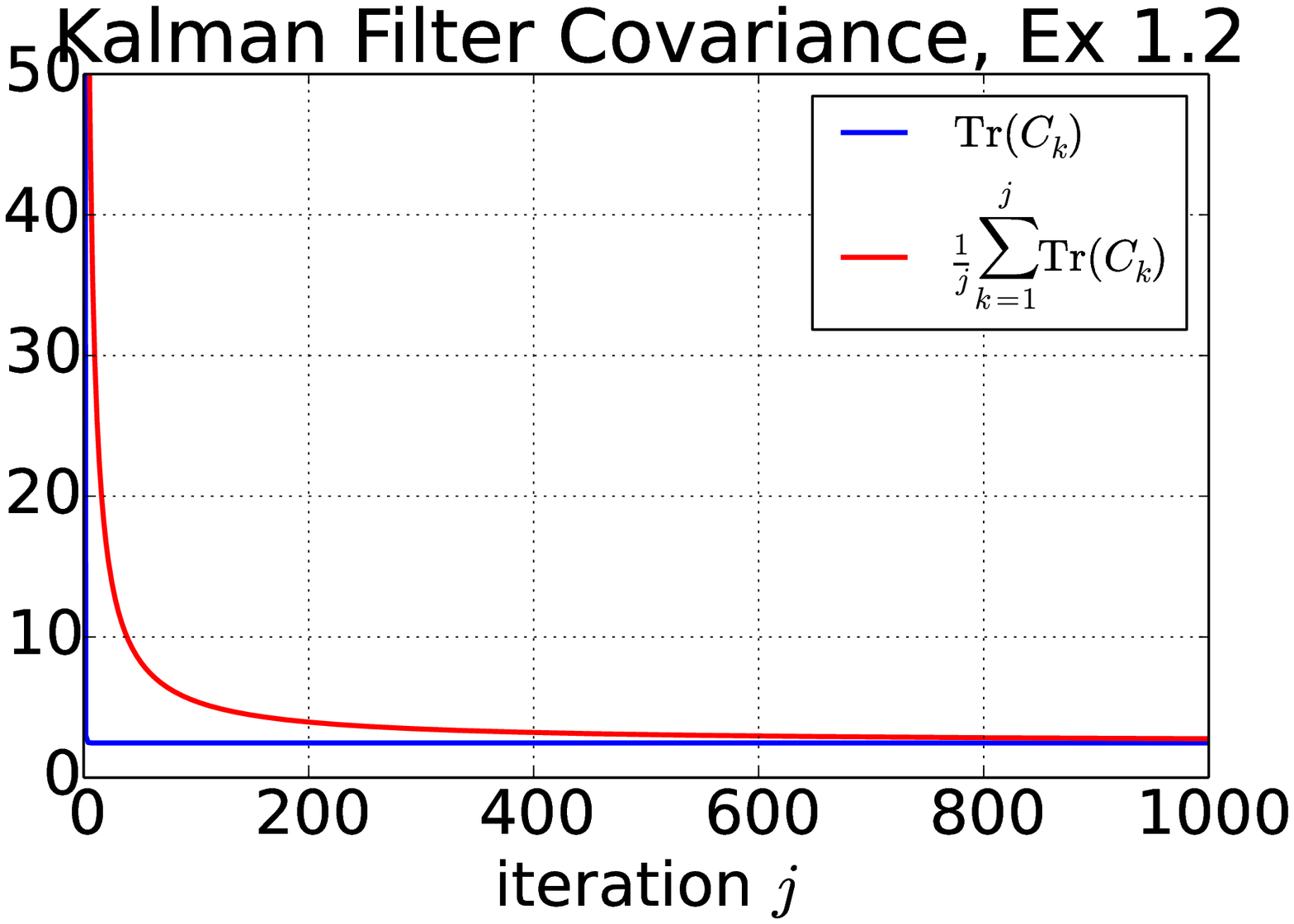}}
\subfigure[Error.]{\includegraphics[scale=0.365]{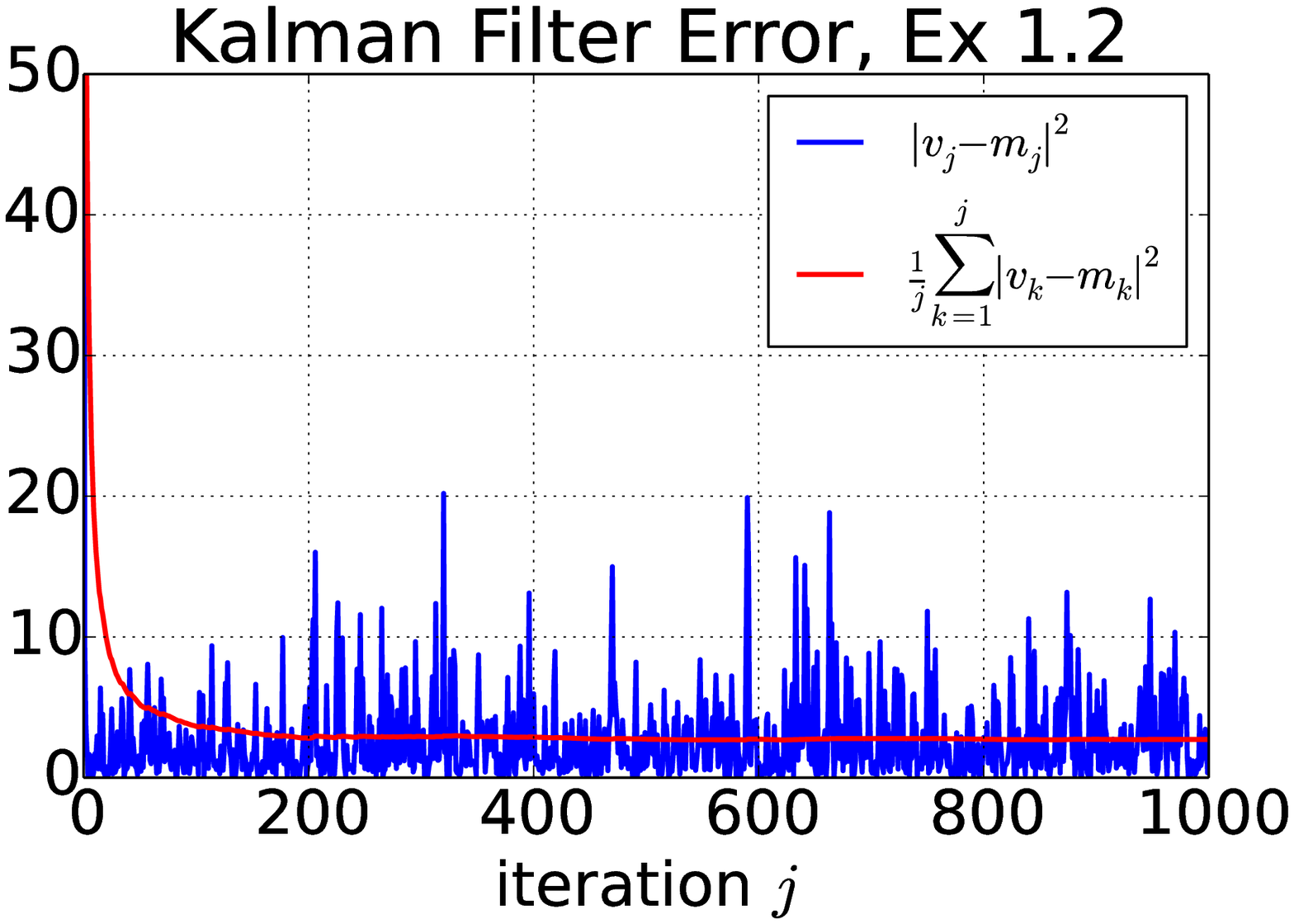}}
\caption{Kalman filter\index{Kalman filter} applied to the
linear system of Example \ref{ex:ex2} with $A=A_3$,
$H=(1,0)$, $\Sigma=I$, and $\Gamma=1$, see also {\tt p8.m} in Section \ref{ssec:p6}.
The problem is initialized with mean $0$ and covariance $10\,I$.}
\label{fig:KF}
\end{figure}

\begin{figure}[h]
\centering
\subfigure[Solution.]{\includegraphics[scale=0.365]{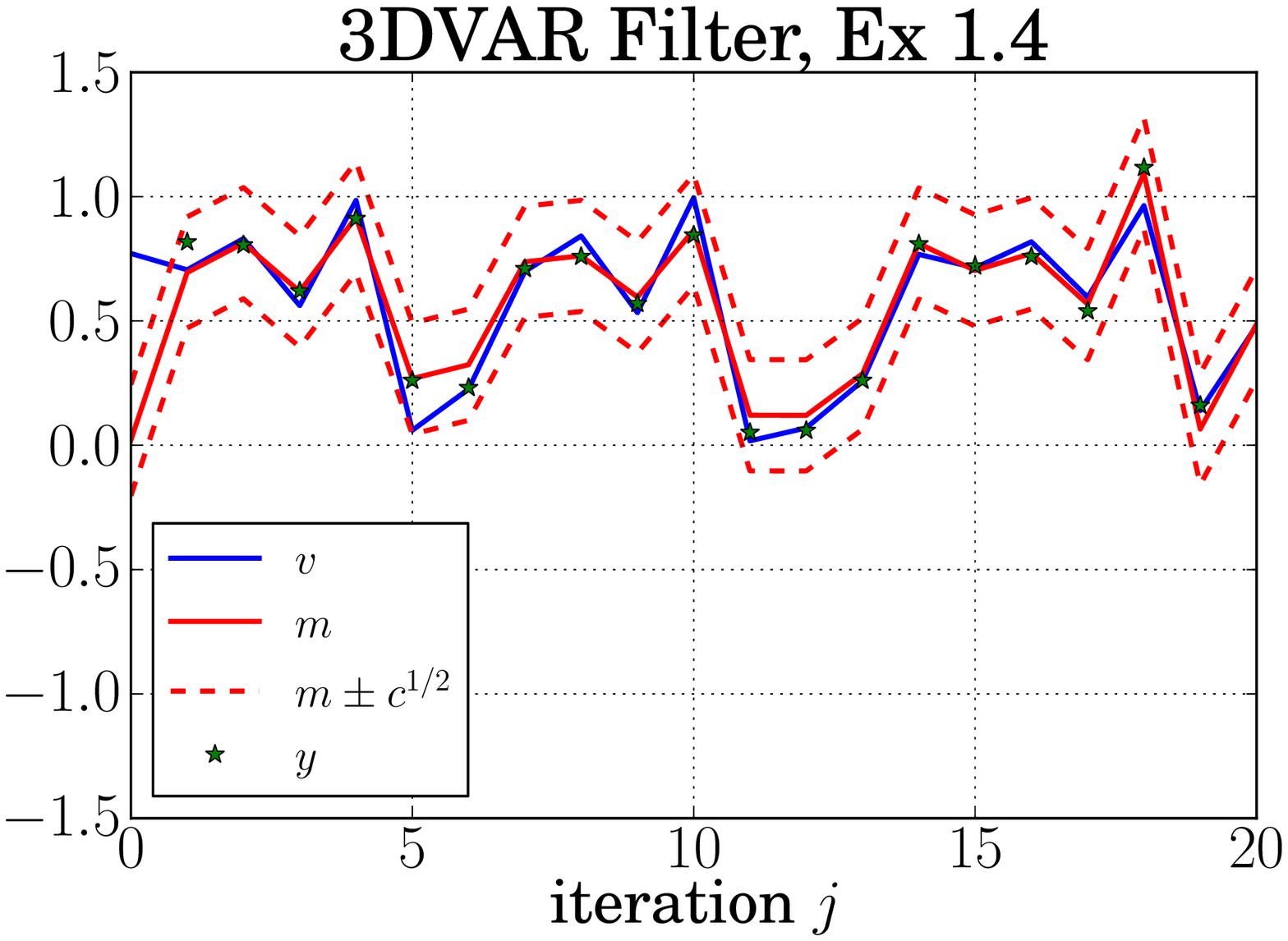}}
\subfigure[Covariance.]{\includegraphics[scale=0.365]{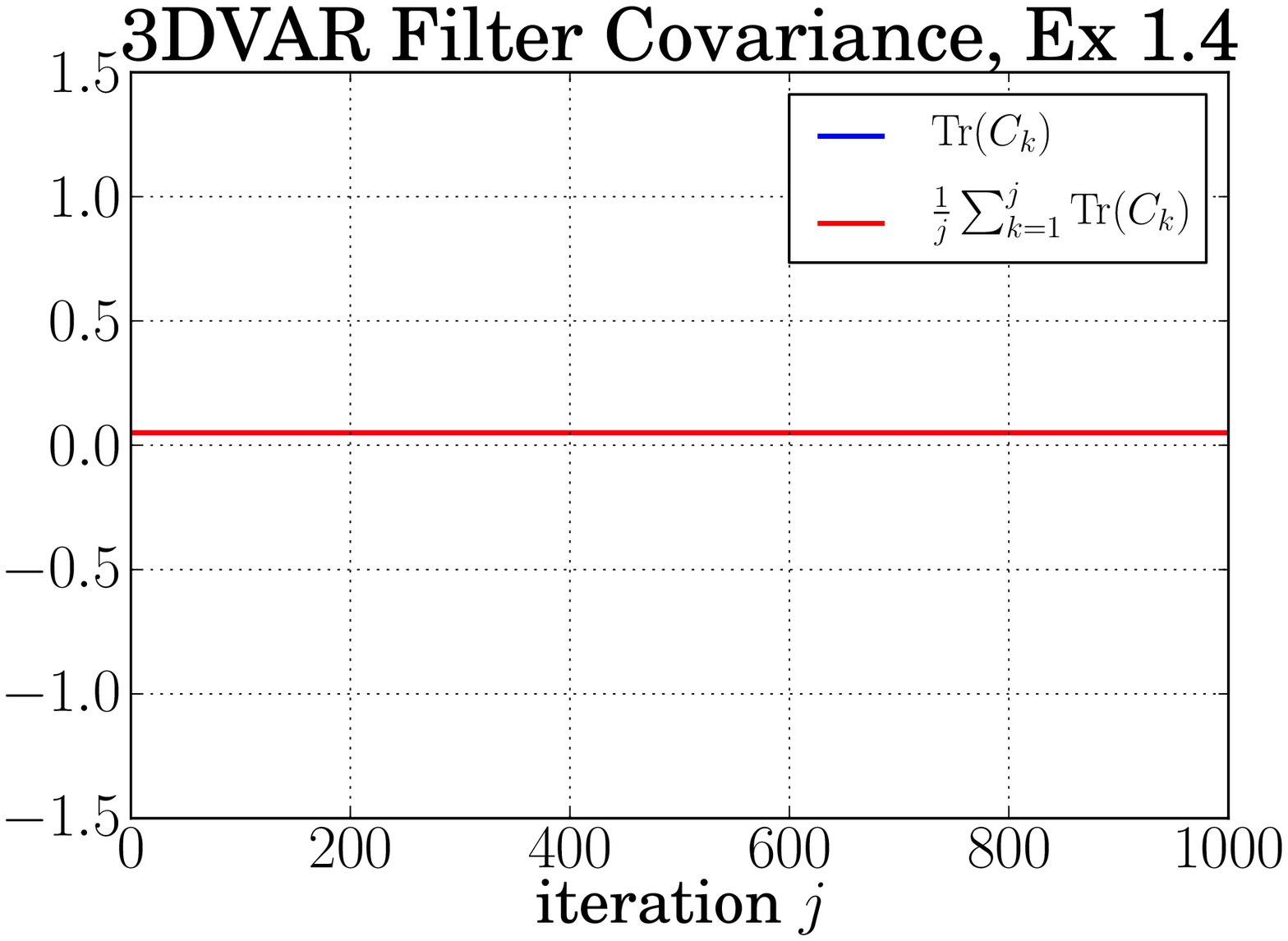}}
\subfigure[Error.]{\includegraphics[scale=0.365]{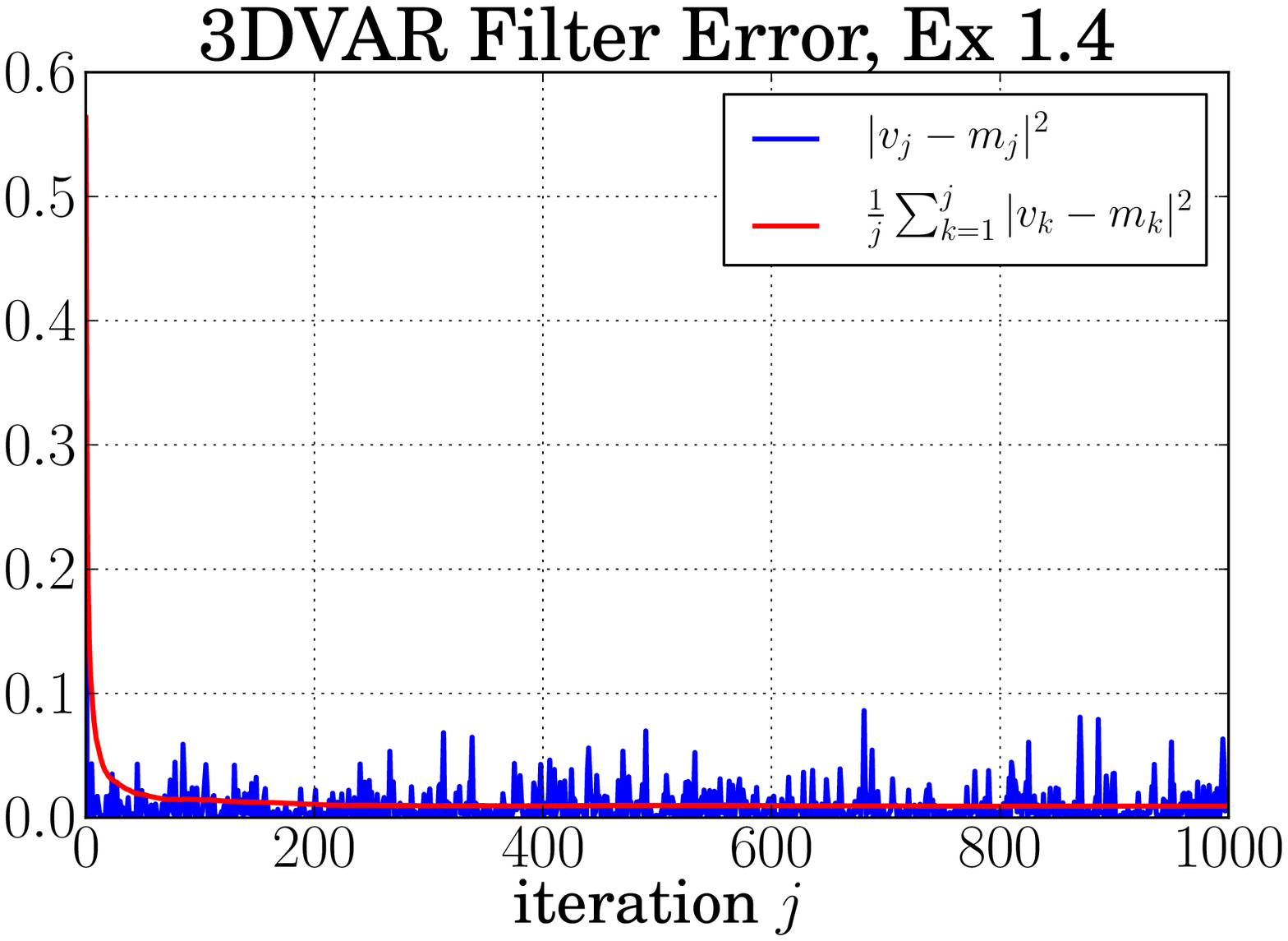}}
\caption{3DVAR\index{3DVAR} methodology applied to the logistic map
Example \ref{ex:ex4} with $r=4$, $\gamma^2=10^{-2}$,
and $c=\gamma^2/\eta$ with $\eta=0.2$, see also {\tt p9.m} in section \ref{ssec:p8}. }
\label{fig:3DVAR4}
\end{figure}

\begin{figure}[h]
\centering
\subfigure[Solution.]{\includegraphics[scale=0.365]{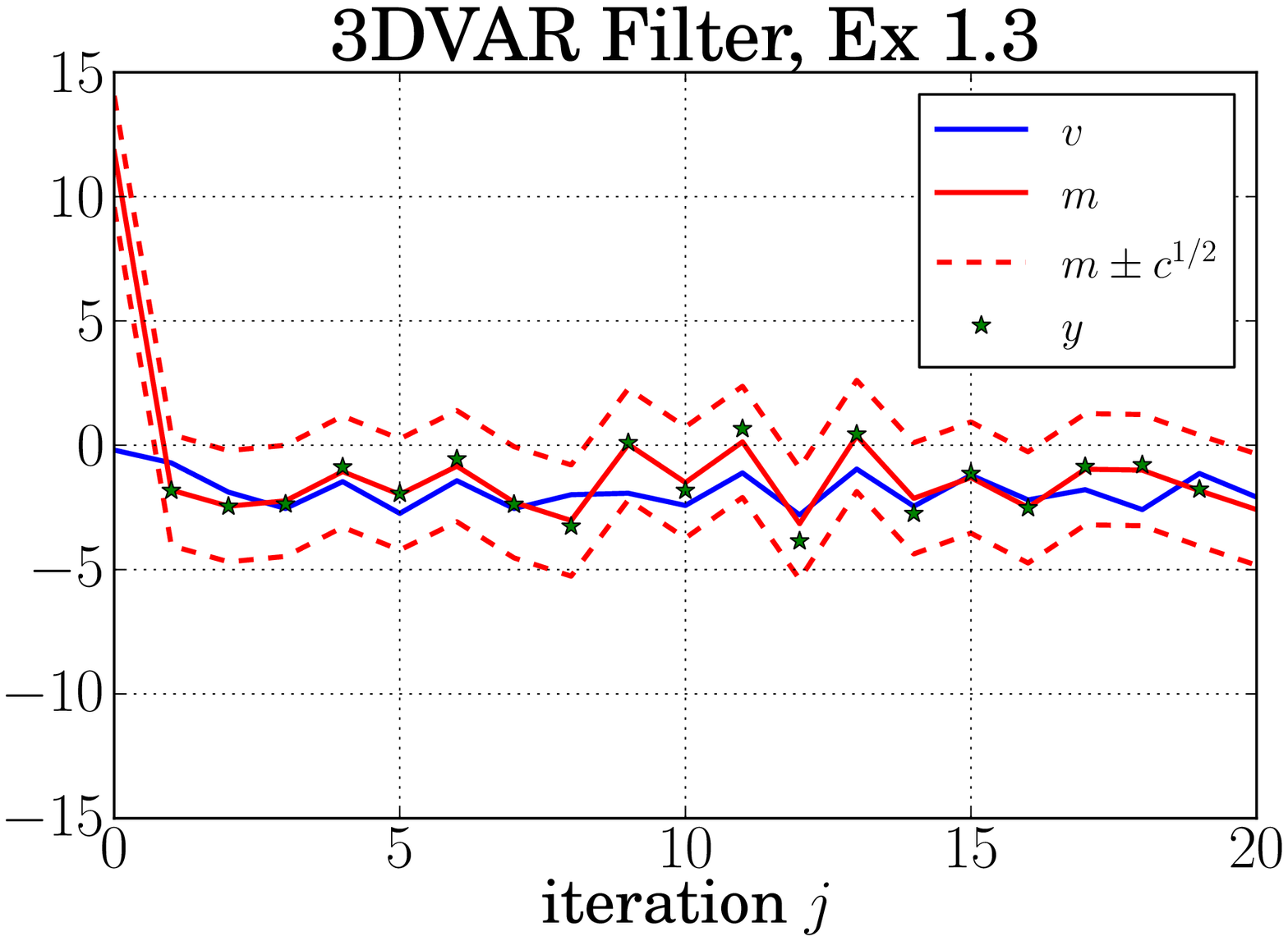}}
\subfigure[Covariance.]{\includegraphics[scale=0.365]{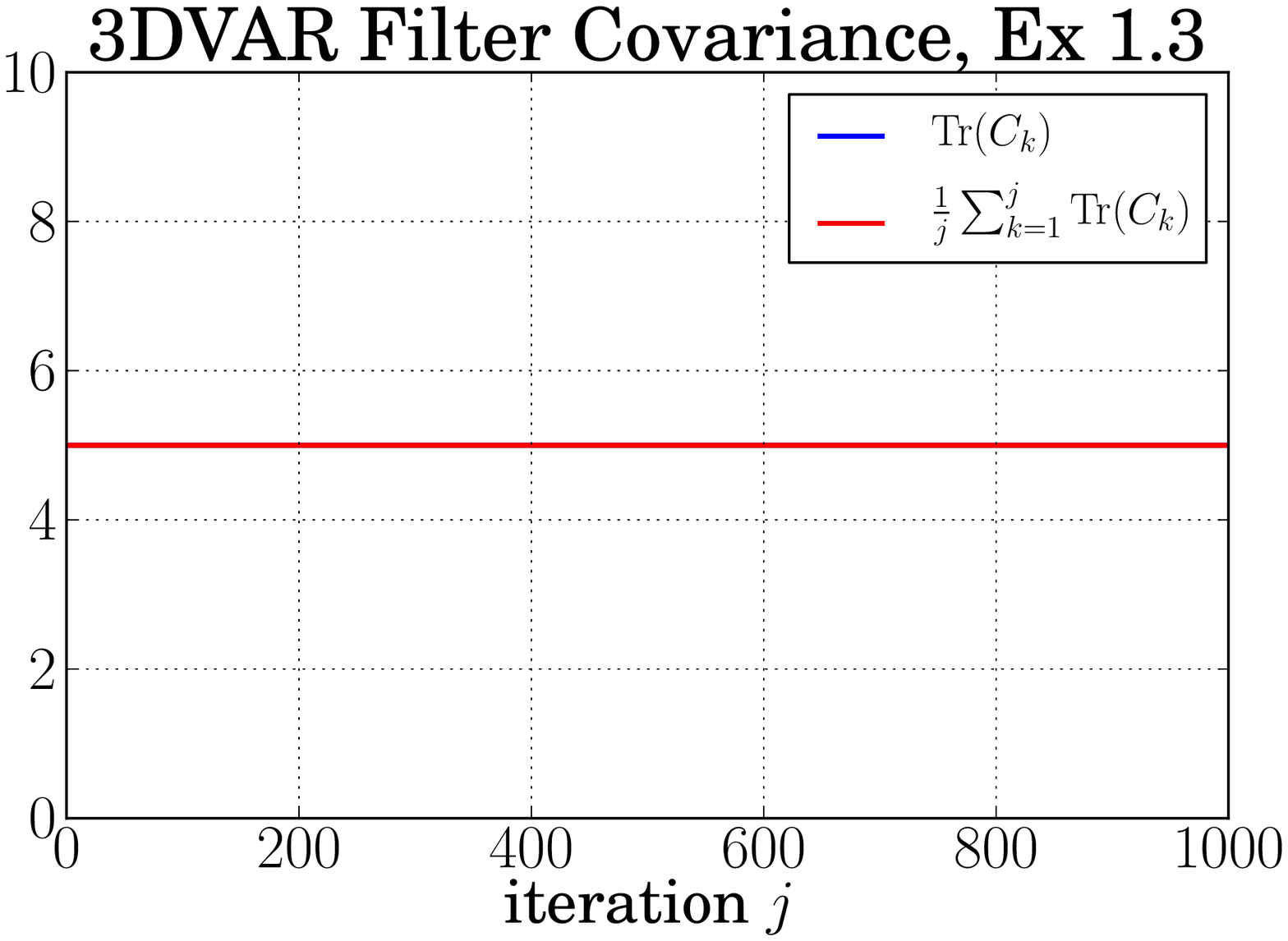}}
\subfigure[Error.]{\includegraphics[scale=0.365]{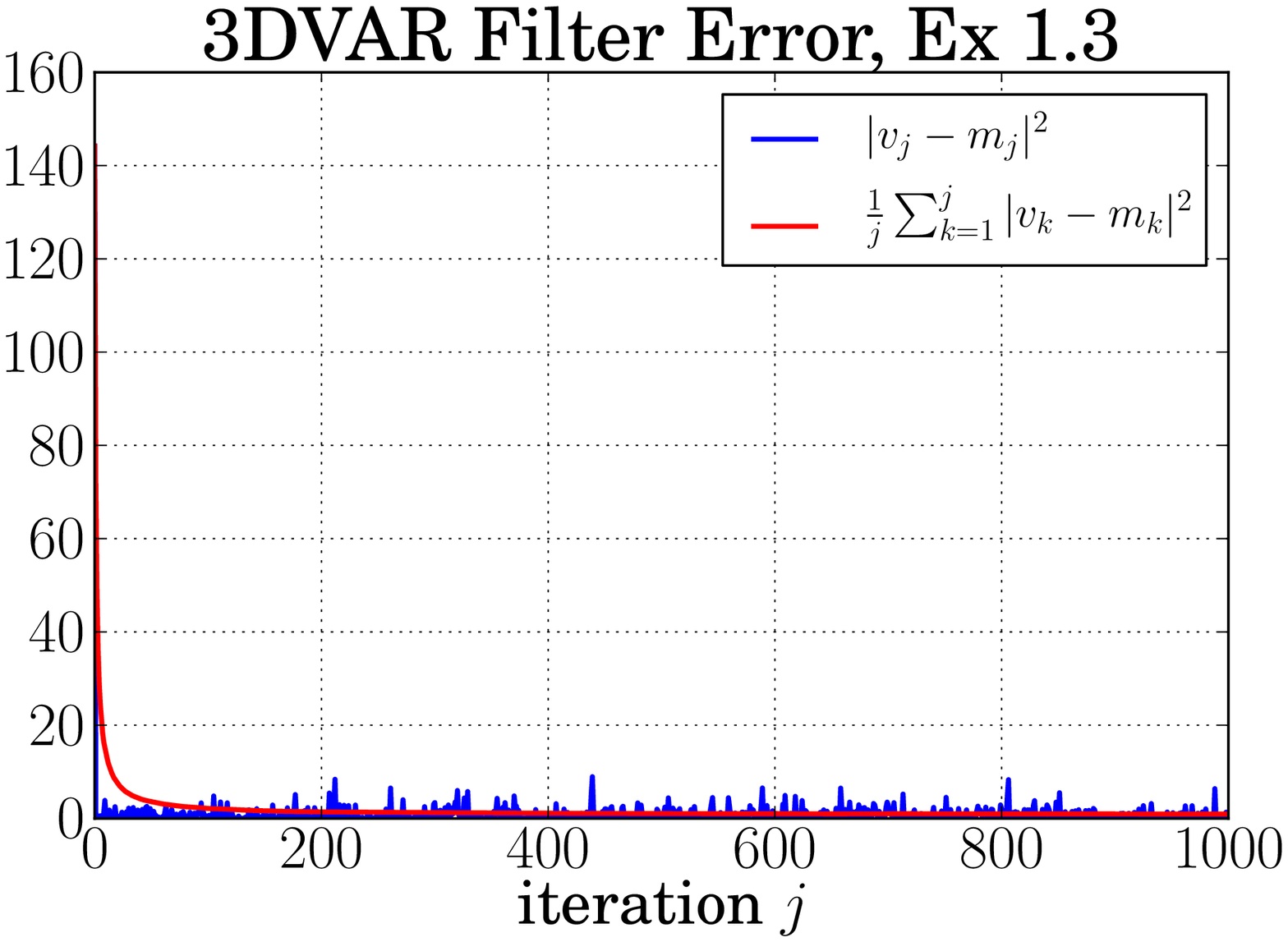}}
\caption{3DVAR\index{3DVAR} for the sin map Example
\ref{ex:ex3} with $\alpha=2.5$, $\sigma=0.3, \gamma=1$, 
and $\eta=0.2$, see also {\tt p10.m} in Section \ref{ssec:p9}.}
\label{fig:3DVAR3}
\end{figure}

\begin{figure}[h]
\centering
\subfigure[Solution.]{\includegraphics[scale=0.365]{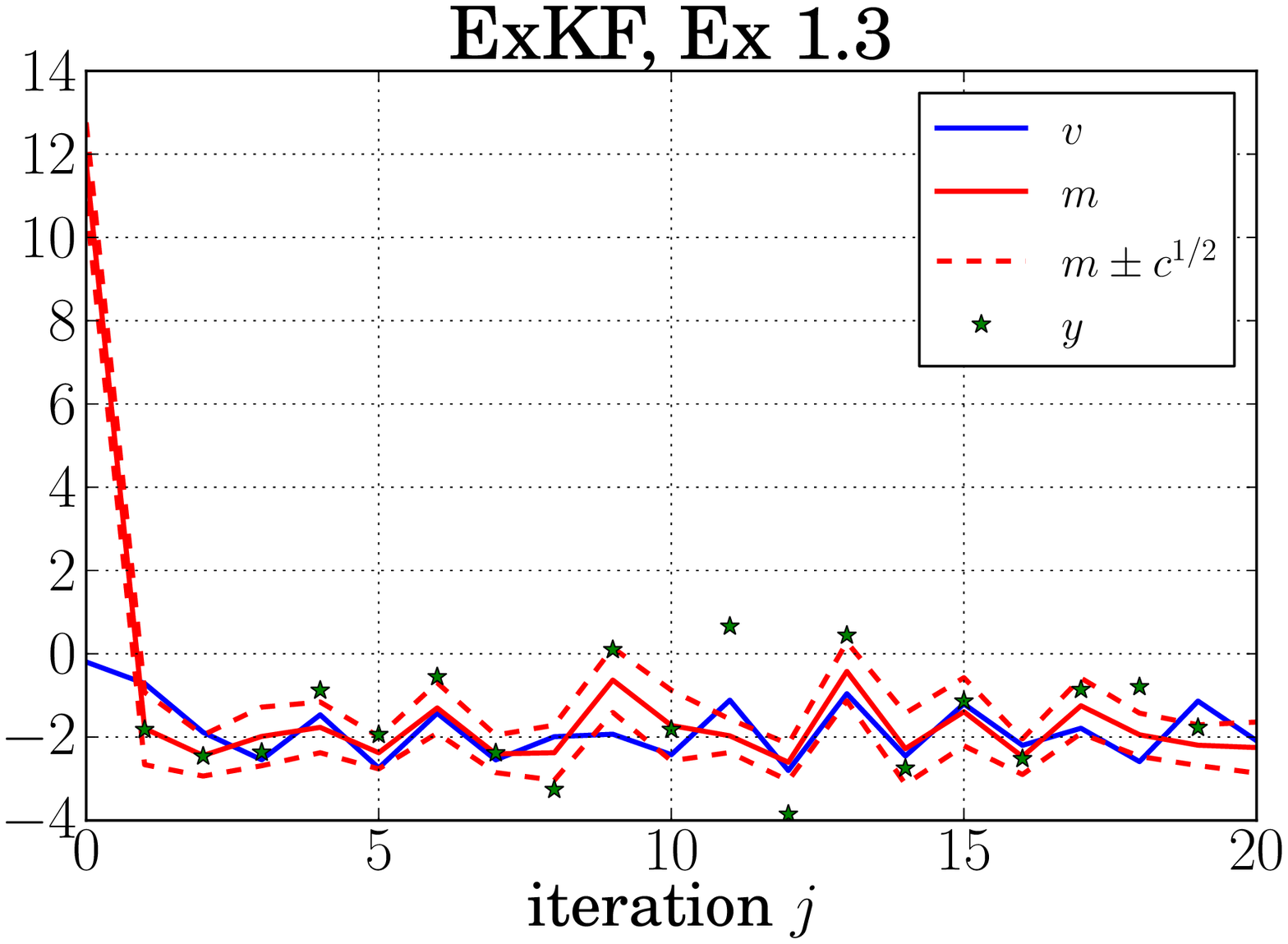}}
\subfigure[Covariance.]{\includegraphics[scale=0.365]{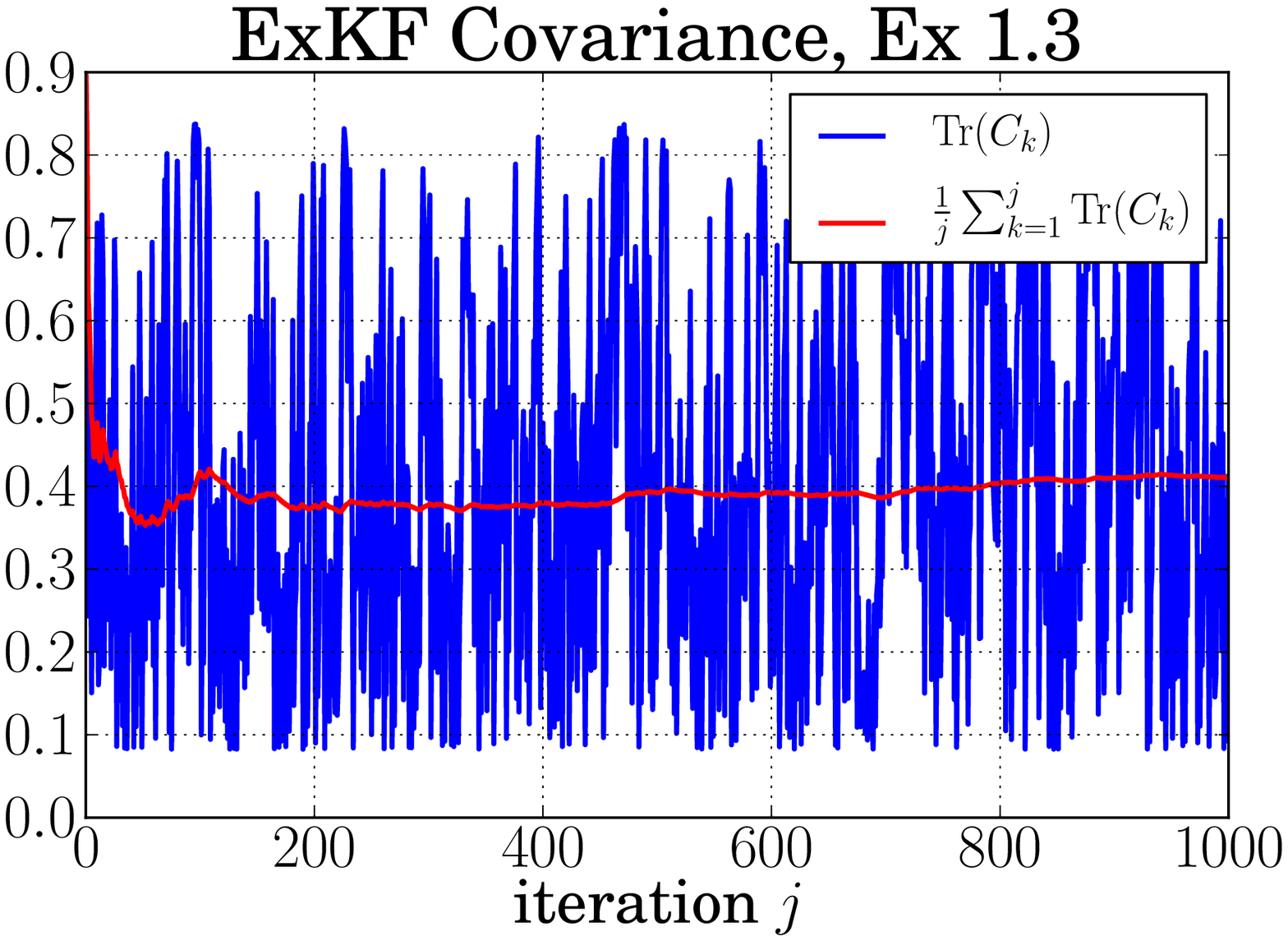}}
\subfigure[Error.]{\includegraphics[scale=0.365]{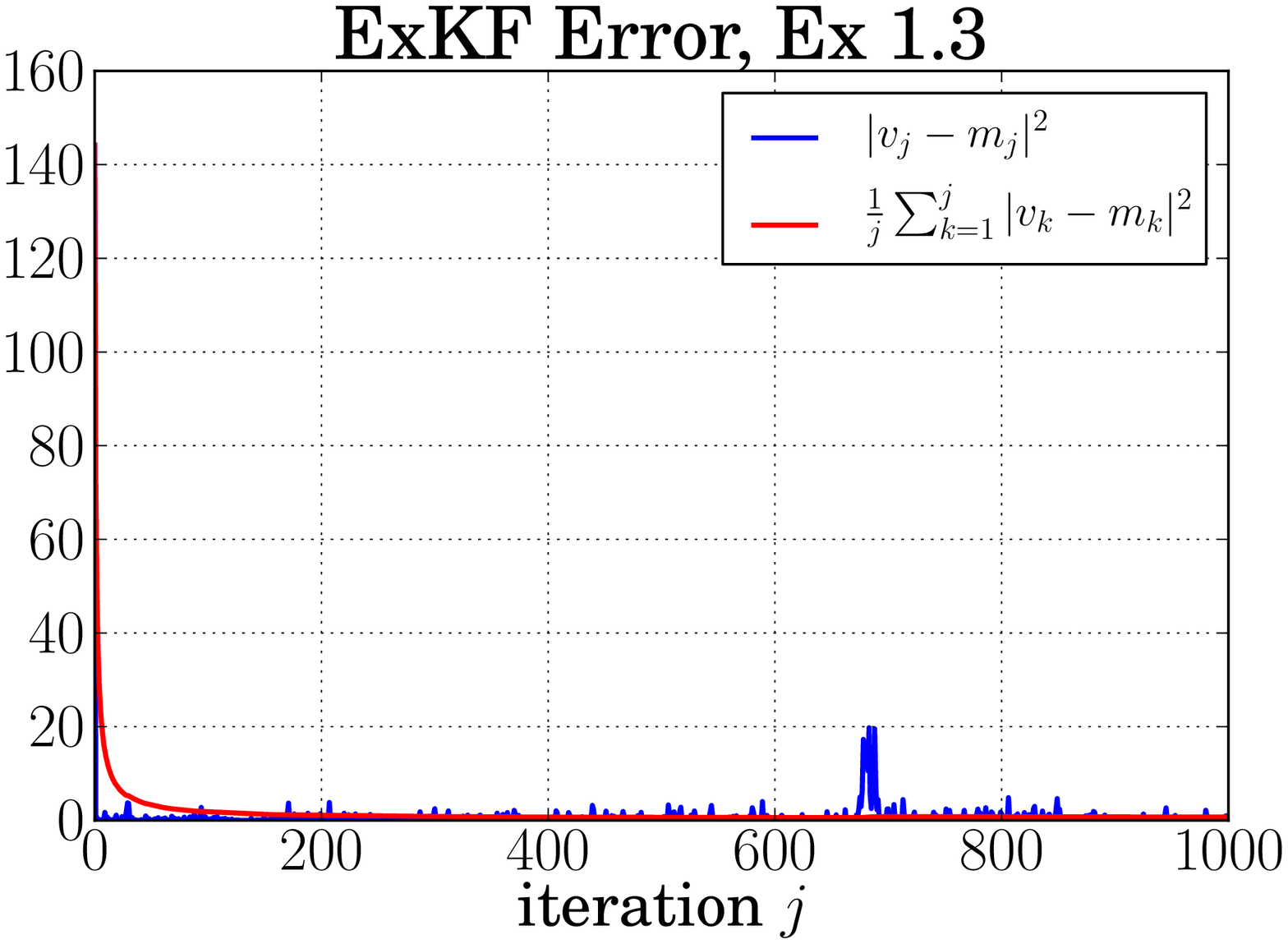}}
\caption{ExKF\index{Kalman filter!extended} on the sin map Example
\ref{ex:ex3} with $\alpha=2.5$, $\sigma=0.3,$ and $\gamma=1$, see also {\tt p11.m} in Section \ref{ssec:p10}.}
\label{fig:ExKF}
\end{figure}

\begin{figure}[h]
\centering
\subfigure[Solution.]{\includegraphics[scale=0.365]{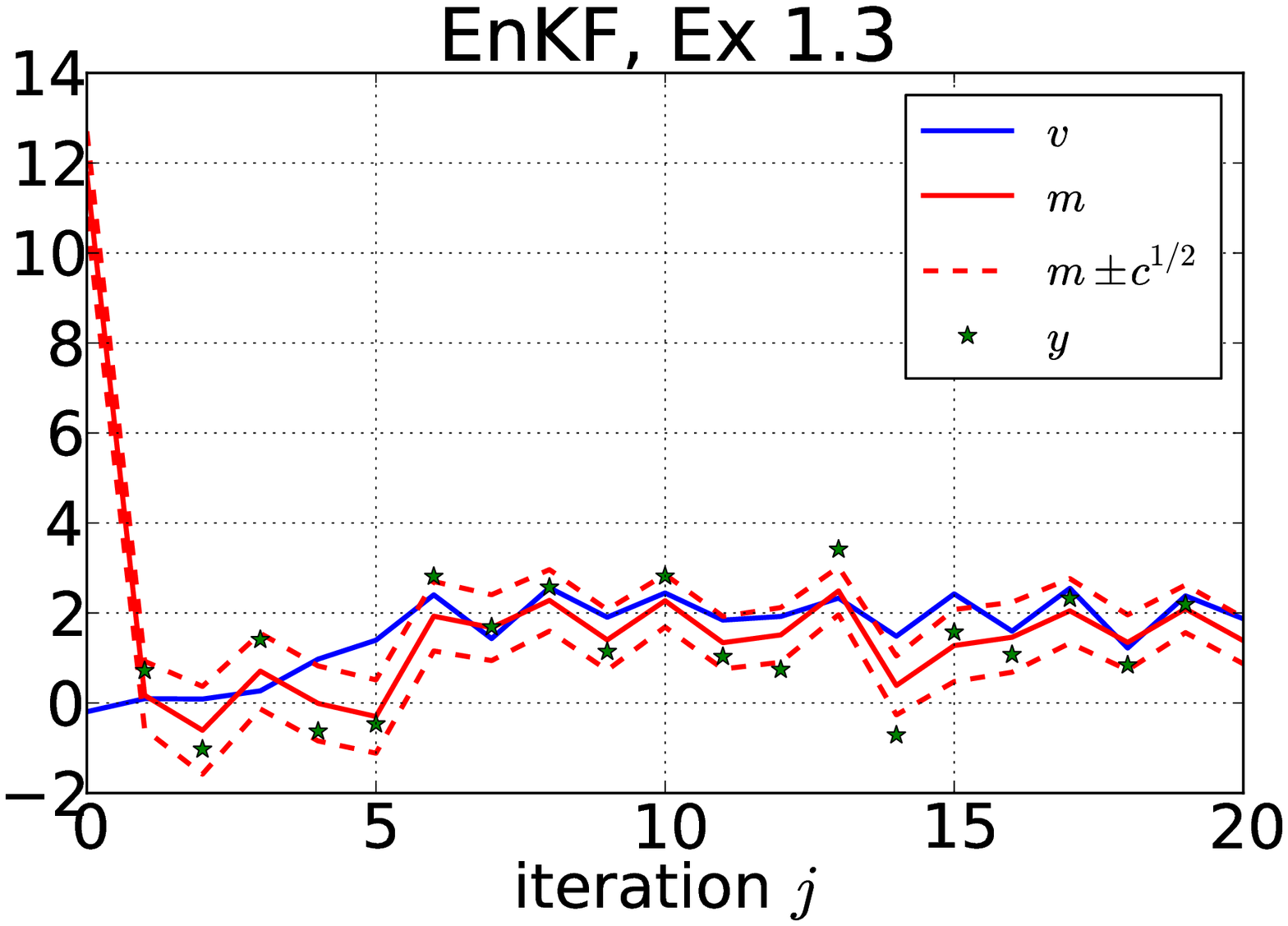}}
\subfigure[Covariance.]{\includegraphics[scale=0.365]{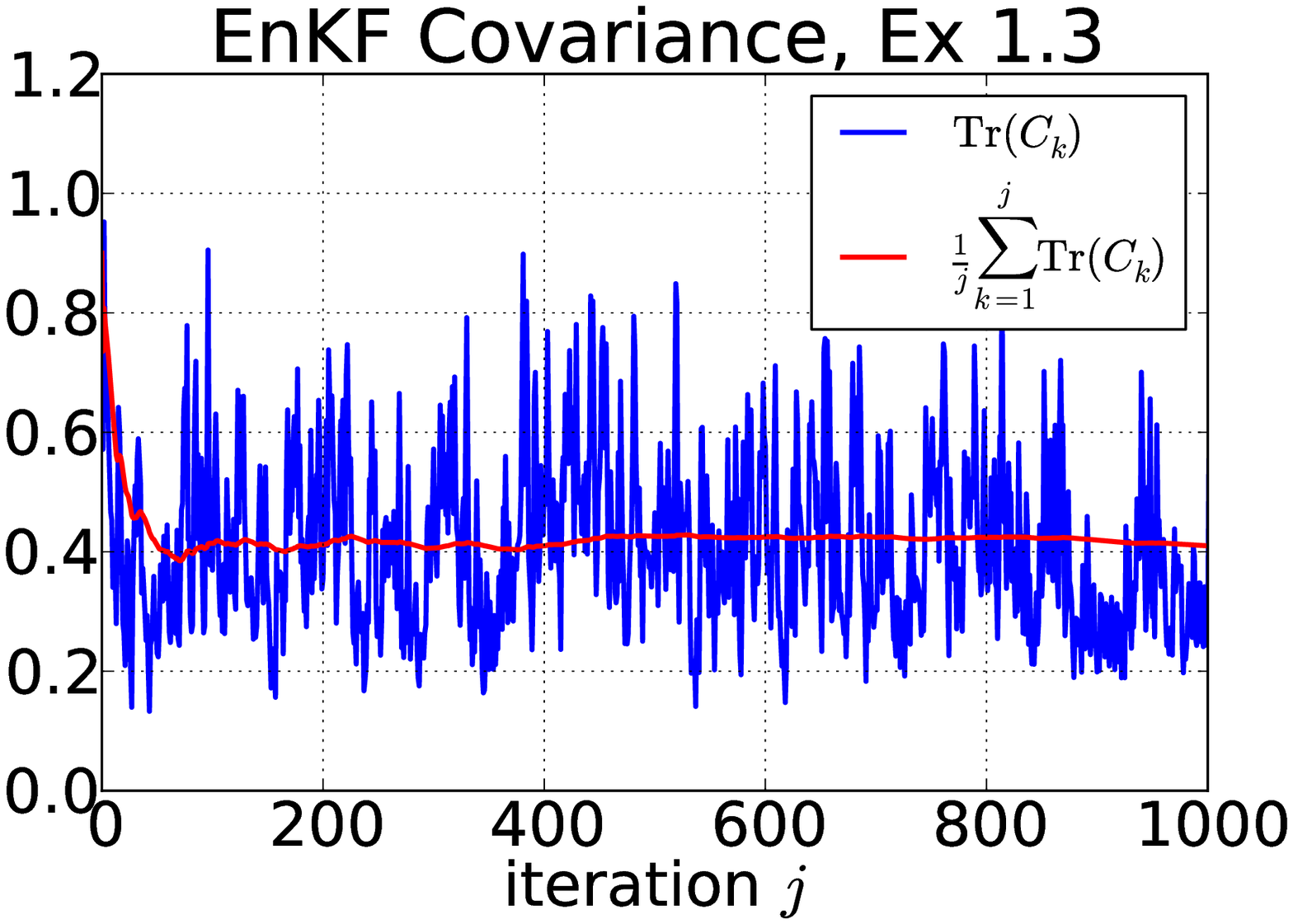}}
\subfigure[Error.]{\includegraphics[scale=0.365]{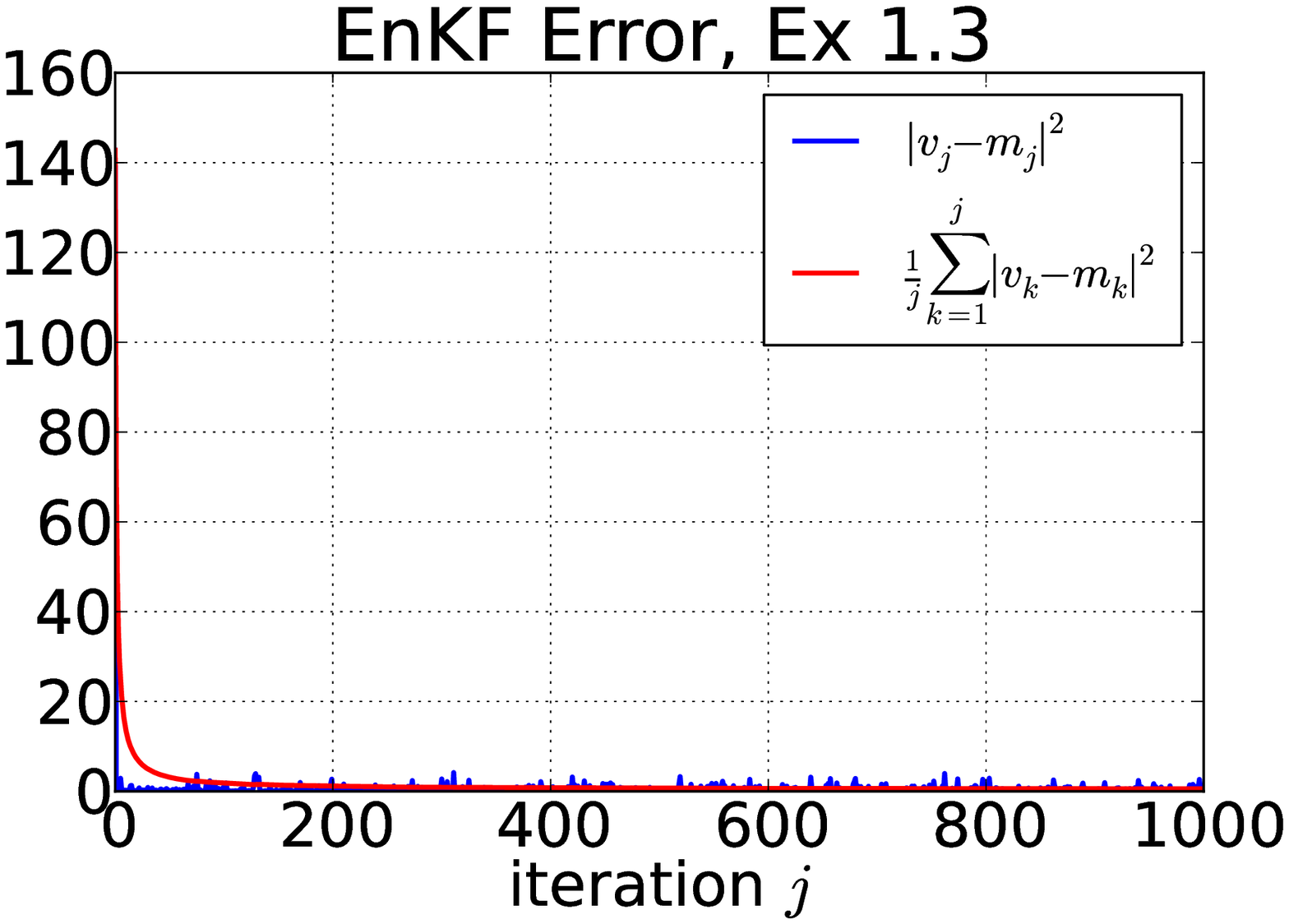}}
\caption{EnKF\index{Kalman filter!ensemble} on the sin map Example \ref{ex:ex3} with 
$\alpha=2.5$, $\sigma=0.3,$ $\gamma=1$ and $N=100$, see also {\tt p12.m} in Section \ref{ssec:p11}.}
\label{fig:EnKF}
\end{figure}

\begin{figure}[h]
\centering
\subfigure[Solution.]{\includegraphics[scale=0.365]{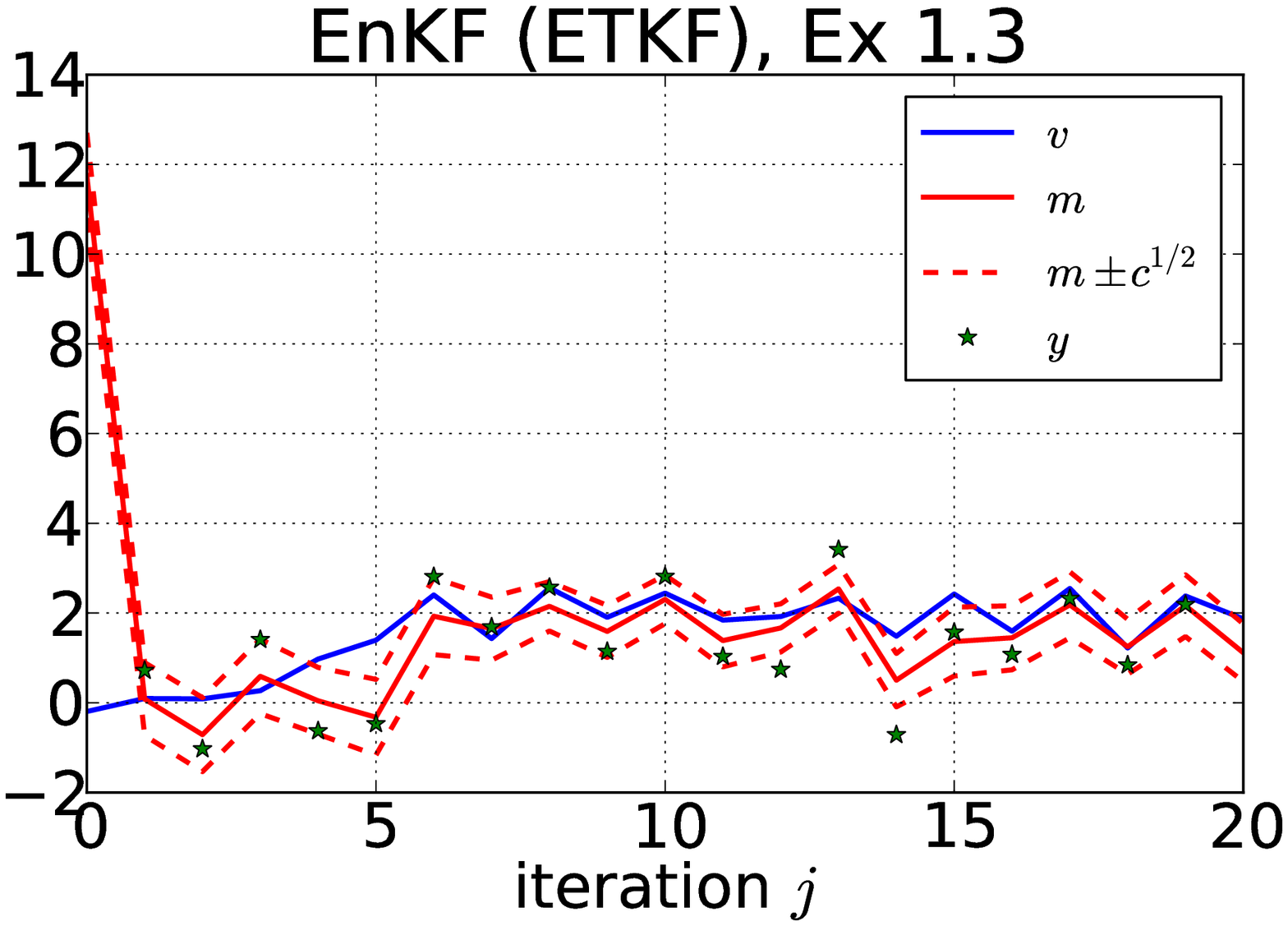}}
\subfigure[Covariance.]{\includegraphics[scale=0.365]{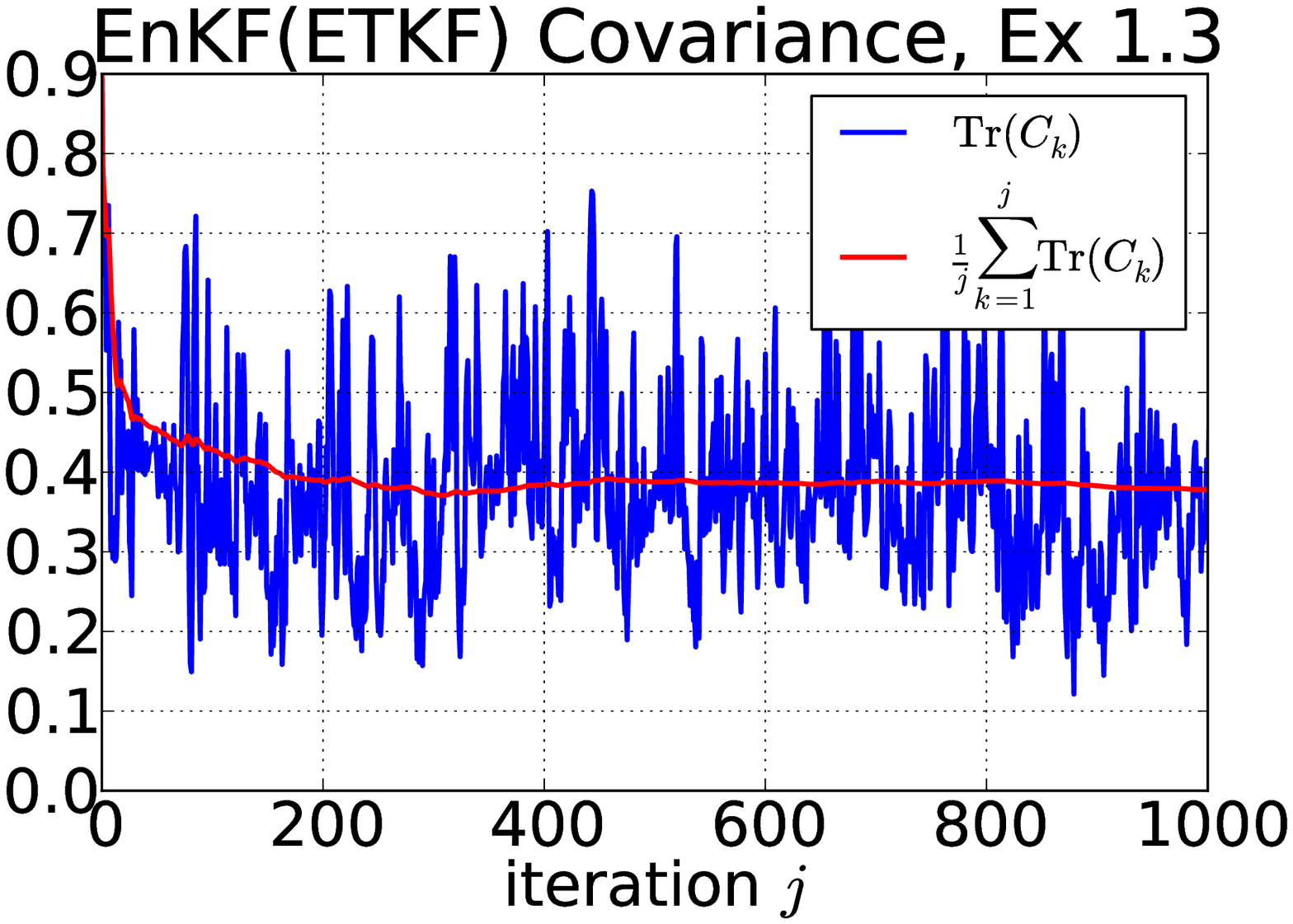}}
\subfigure[Error.]{\includegraphics[scale=0.365]{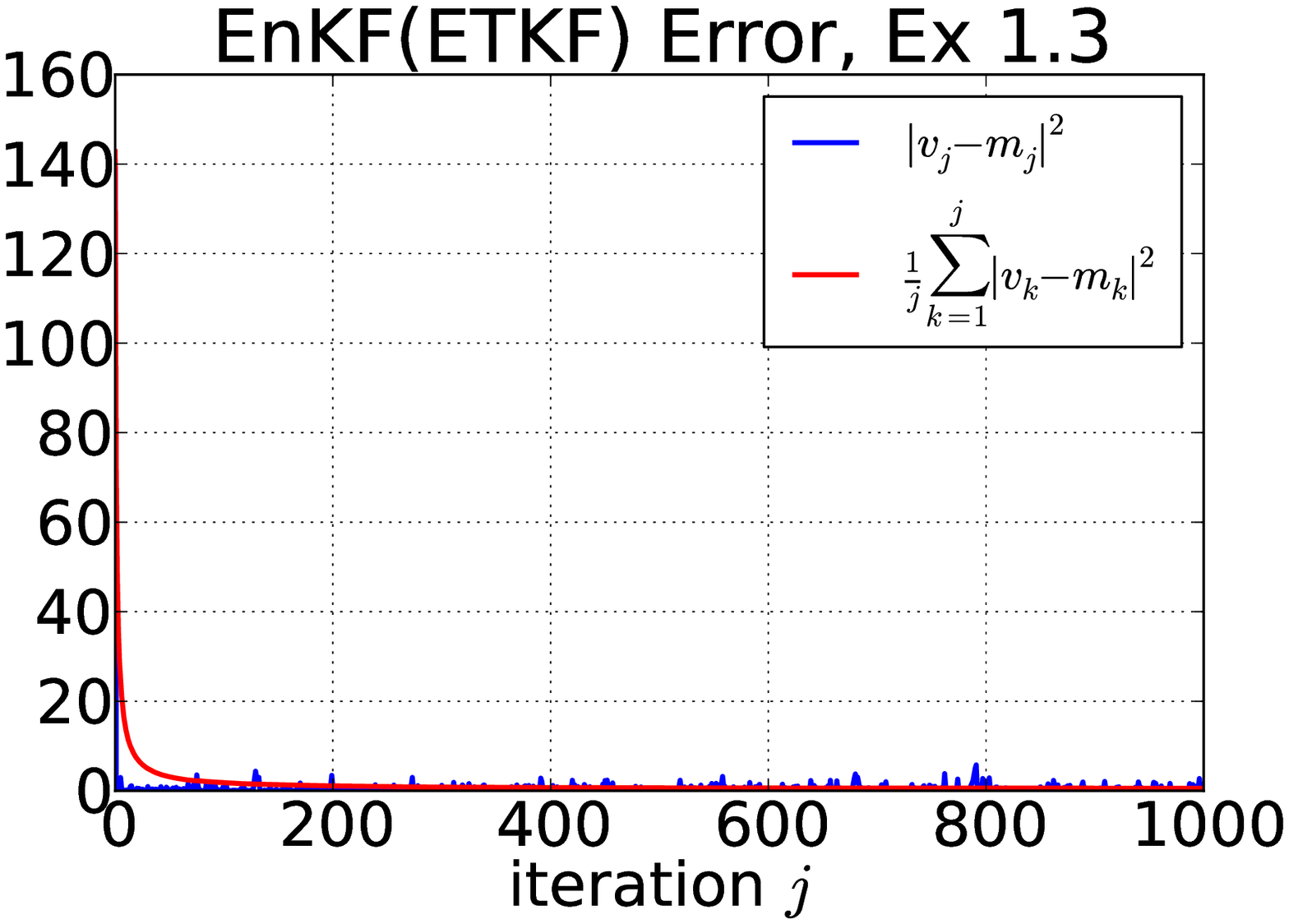}}
\caption{ETKF on the sin map Example \ref{ex:ex3} with 
$\alpha=2.5$, $\sigma=0.3,$ $\gamma=1$ and $N=100$, see also {\tt p13.m} in Section \ref{ssec:p12}.}
\label{fig:ETKF}
\end{figure}

\begin{figure}[h]
\centering
\subfigure[Solution.]{\includegraphics[scale=0.365]{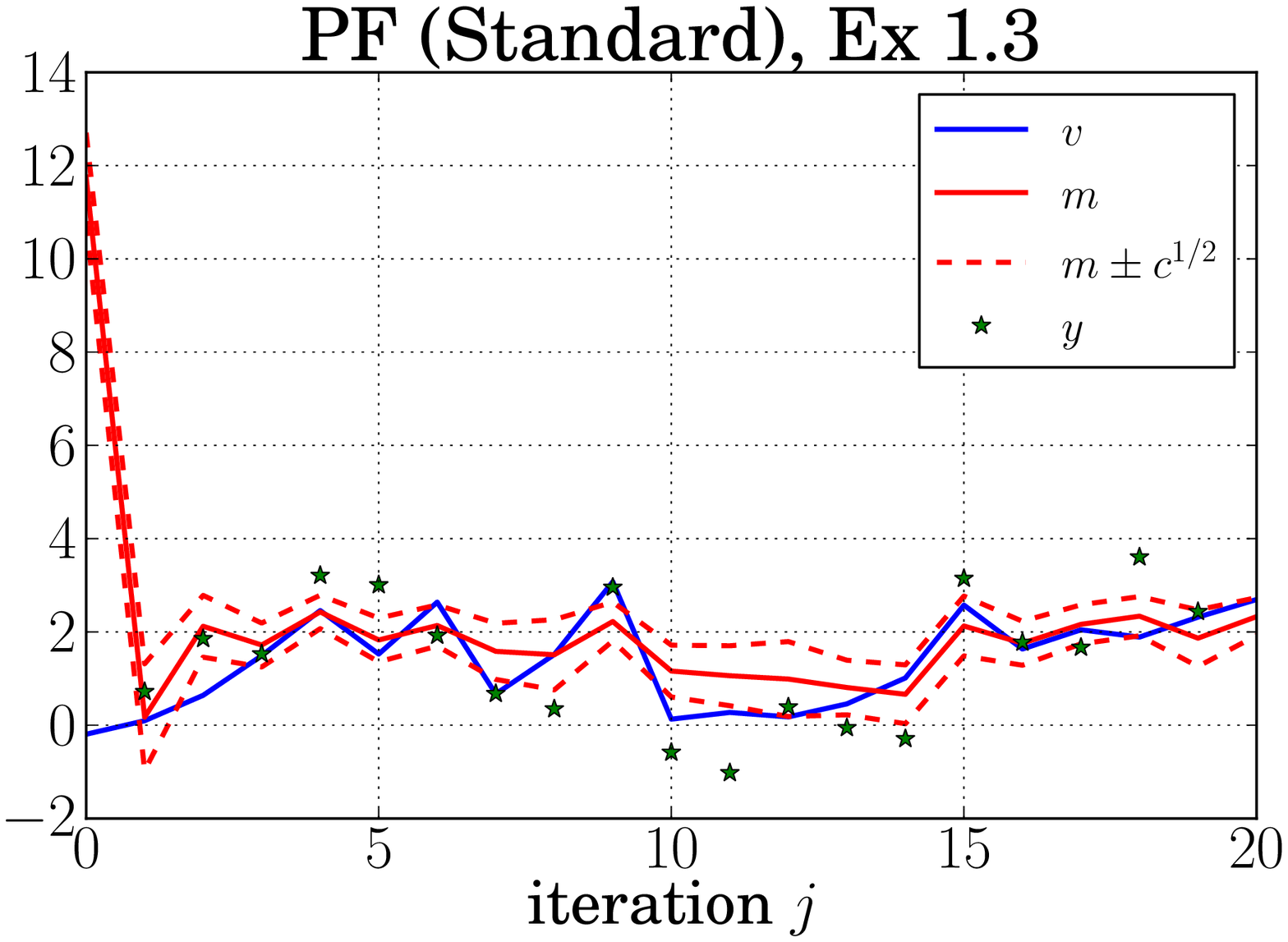}}
\subfigure[Covariance.]{\includegraphics[scale=0.365]{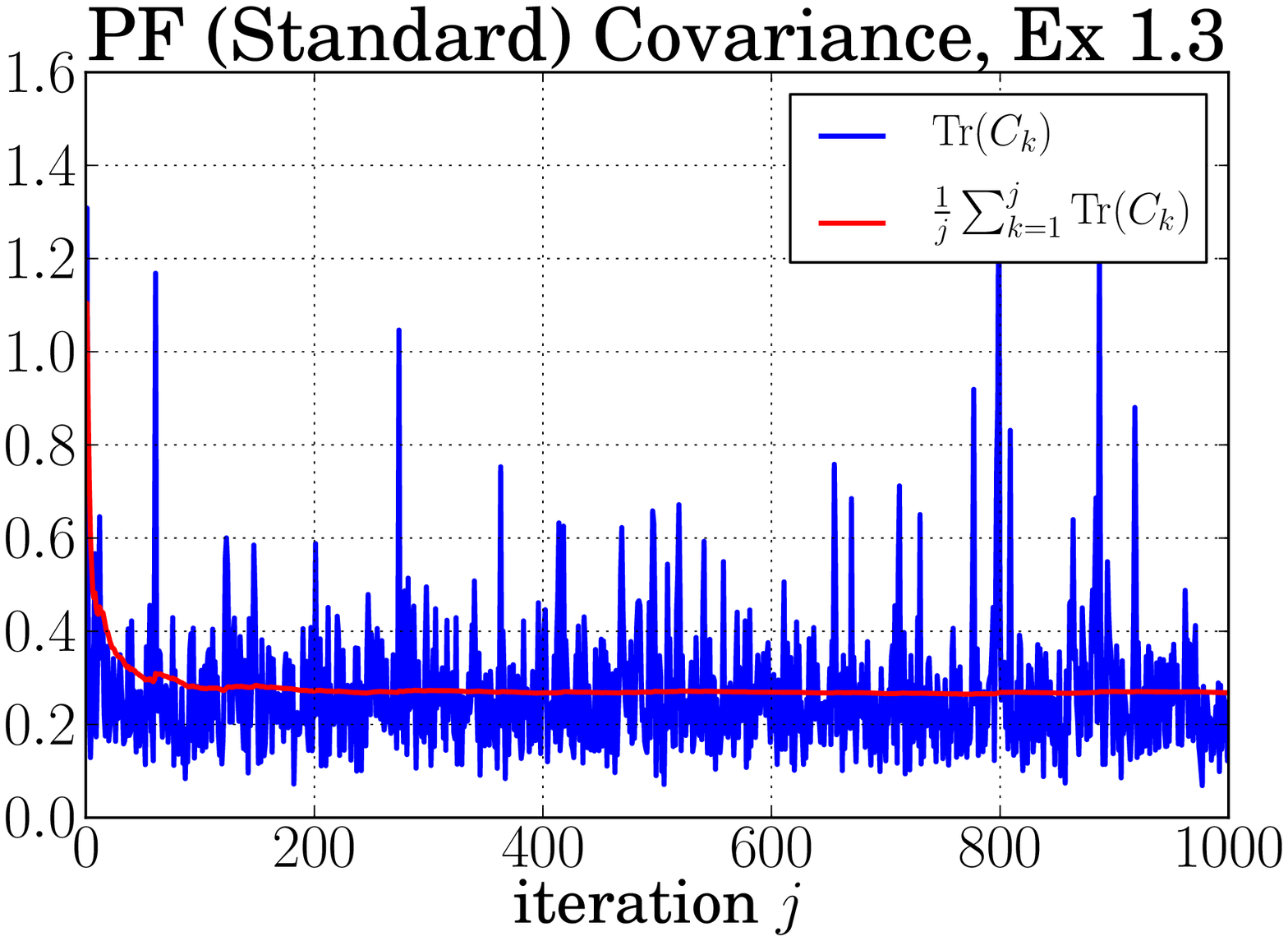}}
\subfigure[Error.]{\includegraphics[scale=0.365]{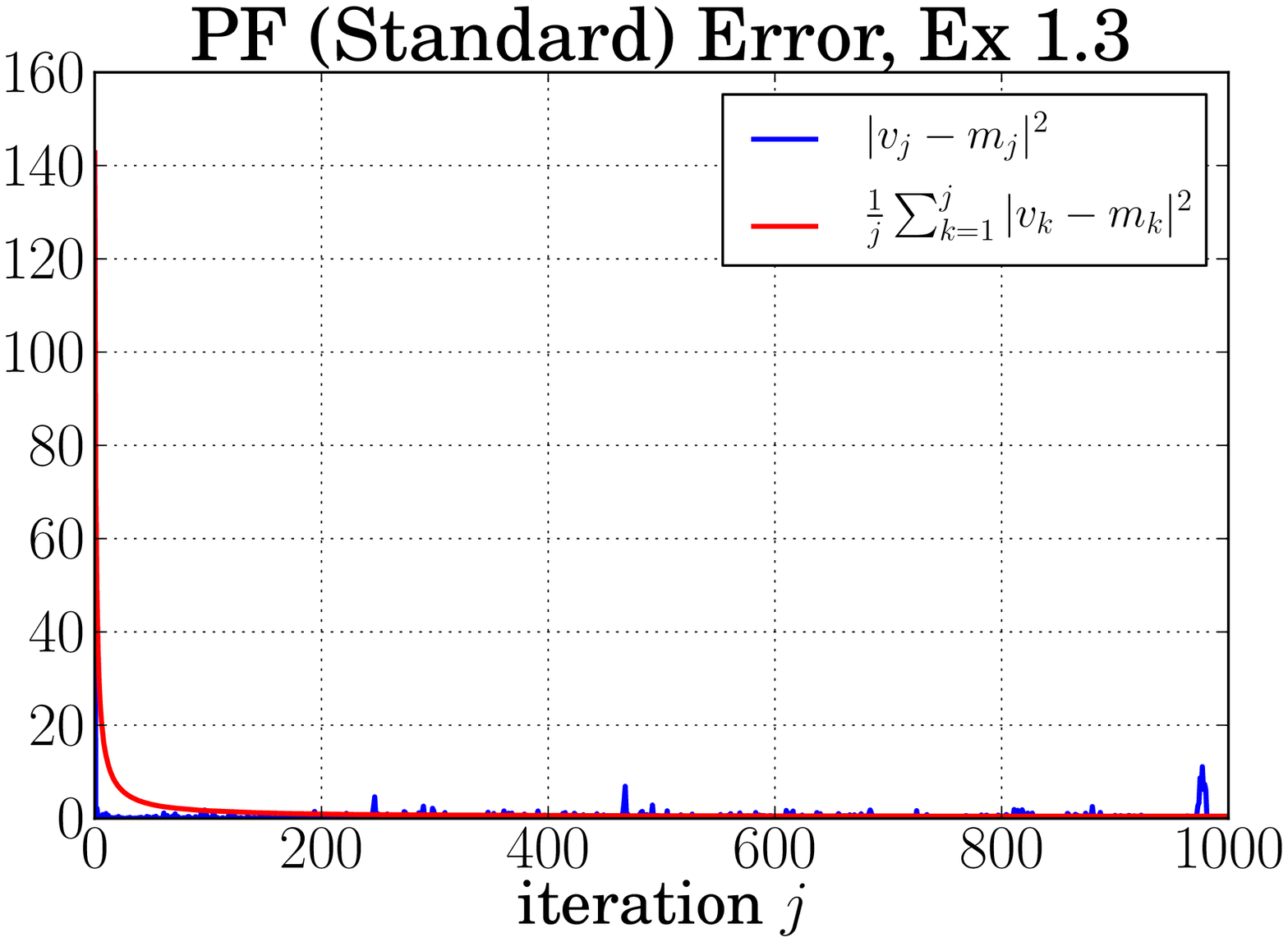}}
\caption{Particle Filter (standard proposal\index{proposal!standard}) 
on the sin map Example \ref{ex:ex3} with 
$\alpha=2.5$, $\sigma=0.3,$ $\gamma=1$ and $N=100$, see also {\tt p14.m} in Section \ref{ssec:p13}.}
\label{fig:PFST}
\end{figure}

\begin{figure}[h]
\centering
\subfigure[Solution.]{\includegraphics[scale=0.365]{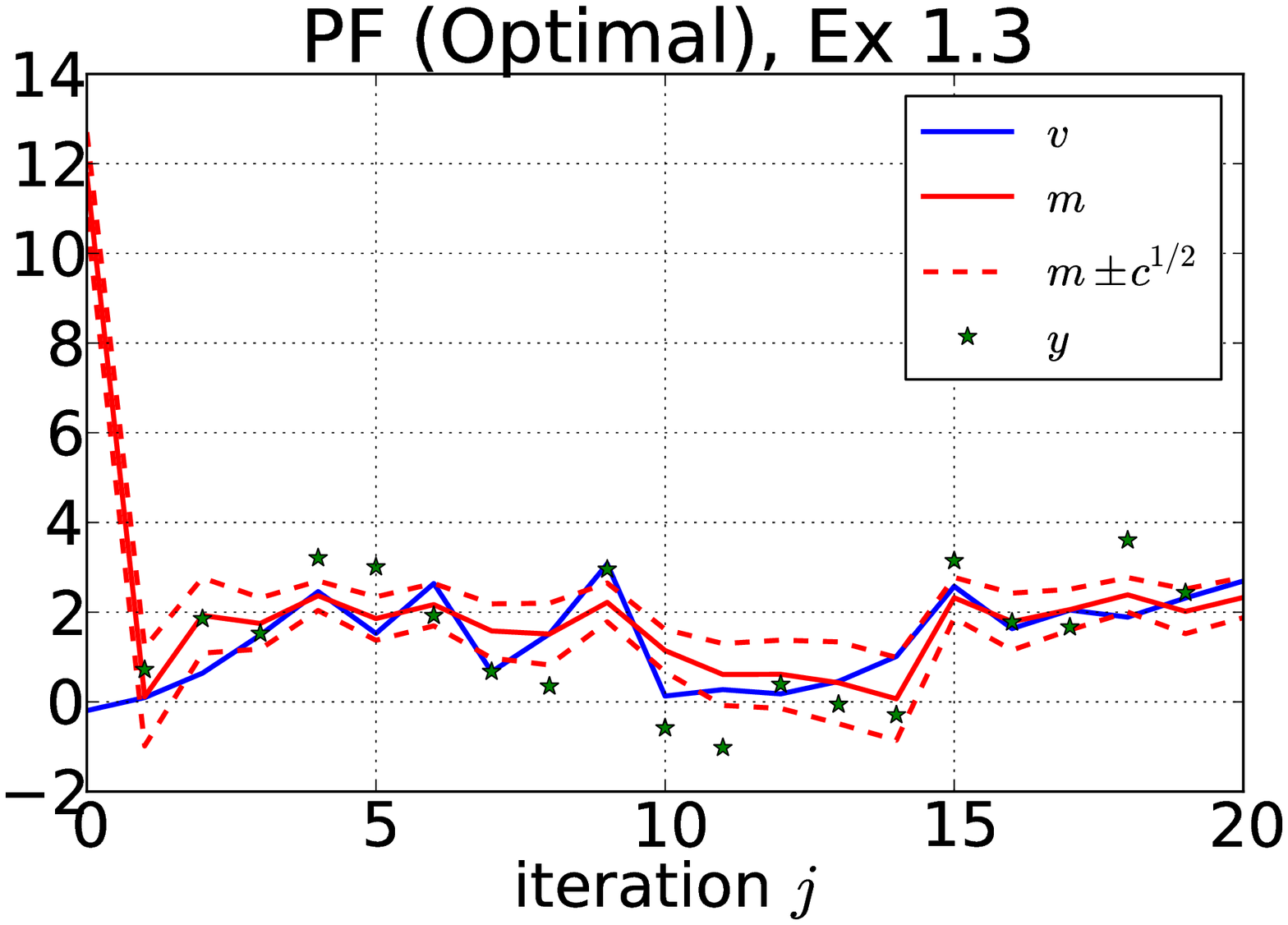}}
\subfigure[Covariance.]{\includegraphics[scale=0.365]{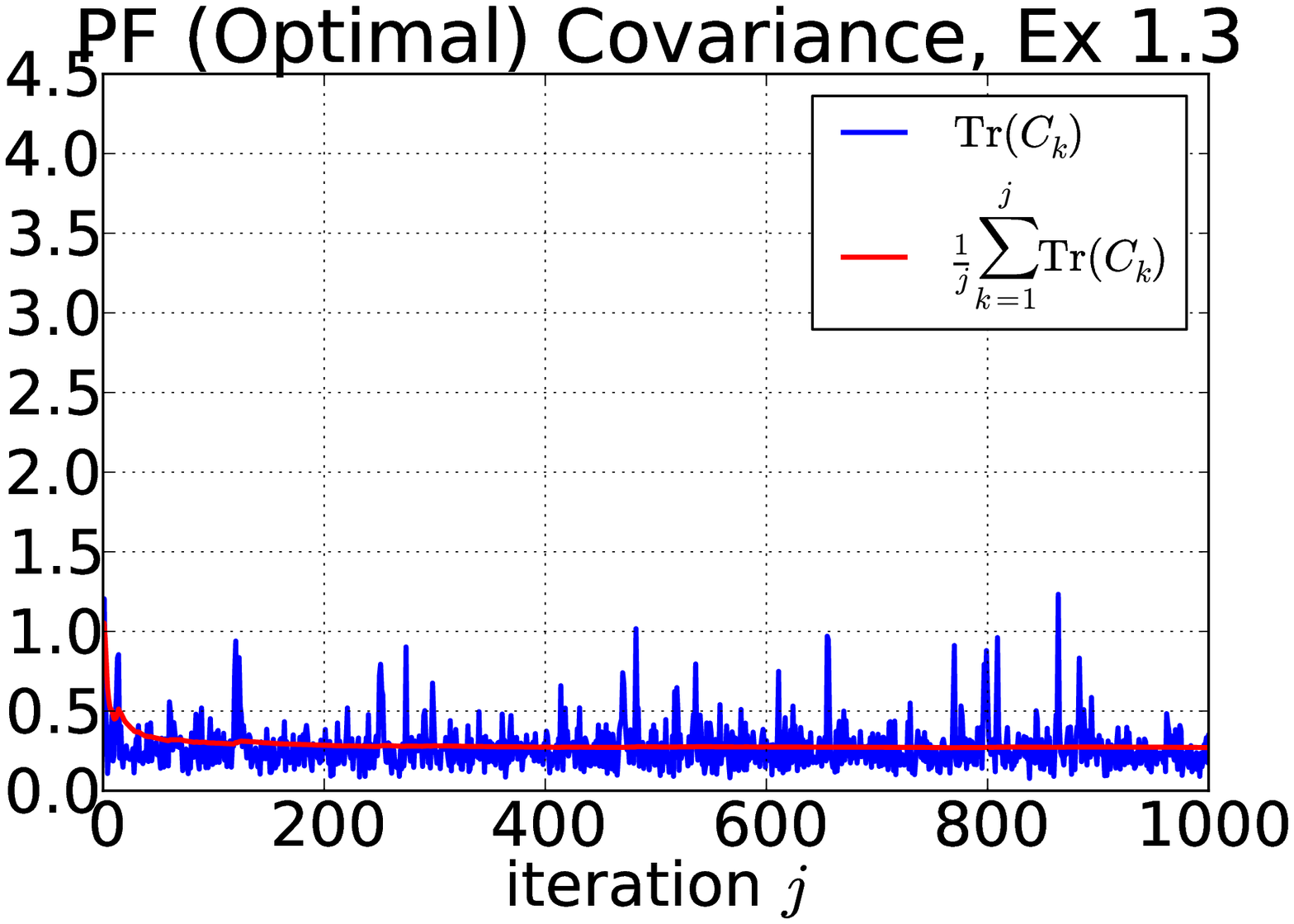}}
\subfigure[Error.]{\includegraphics[scale=0.365]{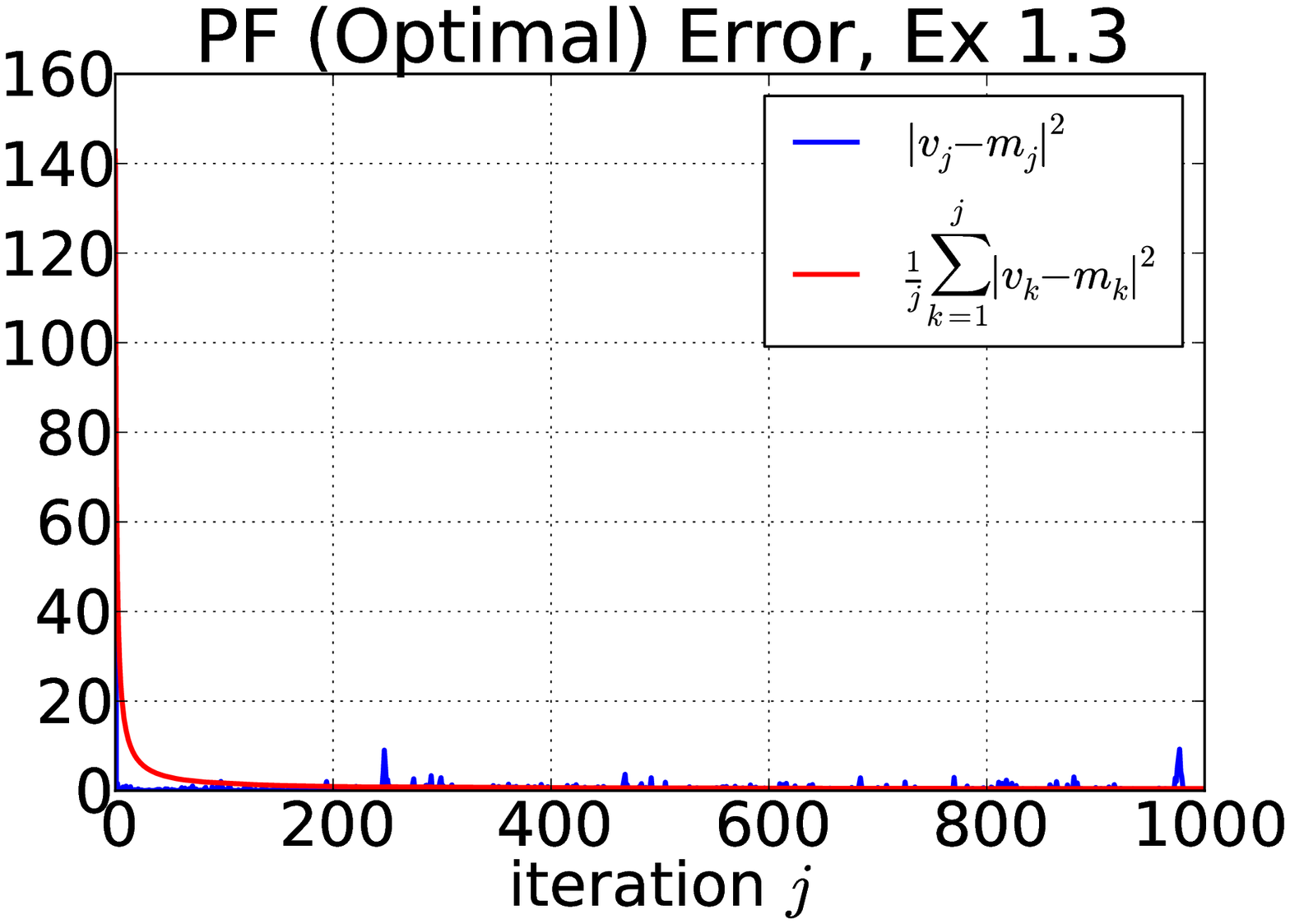}}
\caption{Particle Filter (optimal proposal\index{proposal!optimal}) on the sin map Example \ref{ex:ex3} with 
$\alpha=2.5$, $\sigma=0.3,$ $\gamma=1$ and $N=100$, see also {\tt p15.m} in section \ref{ssec:p14}.}
\label{fig:PFOP}
\end{figure}

\begin{figure}[h]
\centering
\subfigure[Log-scale histograms.]{\includegraphics[scale=0.365]{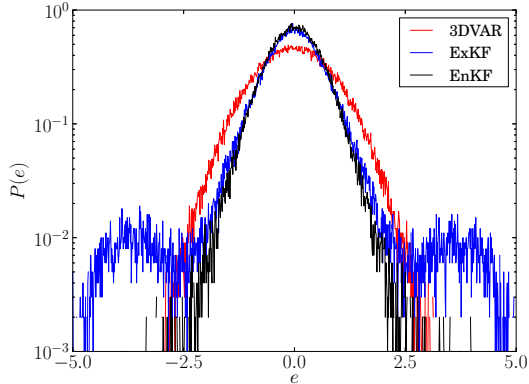}}
\subfigure[Running average root mean square $e$.]{\includegraphics[scale=0.365]{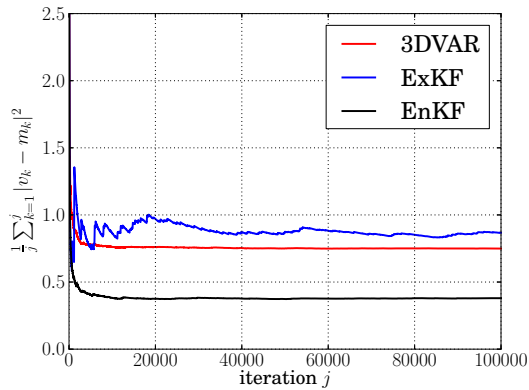}}
\caption{Convergence of $e=m-\vd$ for each filter for the sin map Example \ref{ex:ex3}, corresponding
to solutions from Figs. \ref{fig:3DVAR3}, \ref{fig:ExKF}, \ref{fig:EnKF}.}
\label{fig:error}
\end{figure}

\begin{figure}[h]
\centering
\subfigure[Log-scale histograms.]{\includegraphics[scale=0.365]{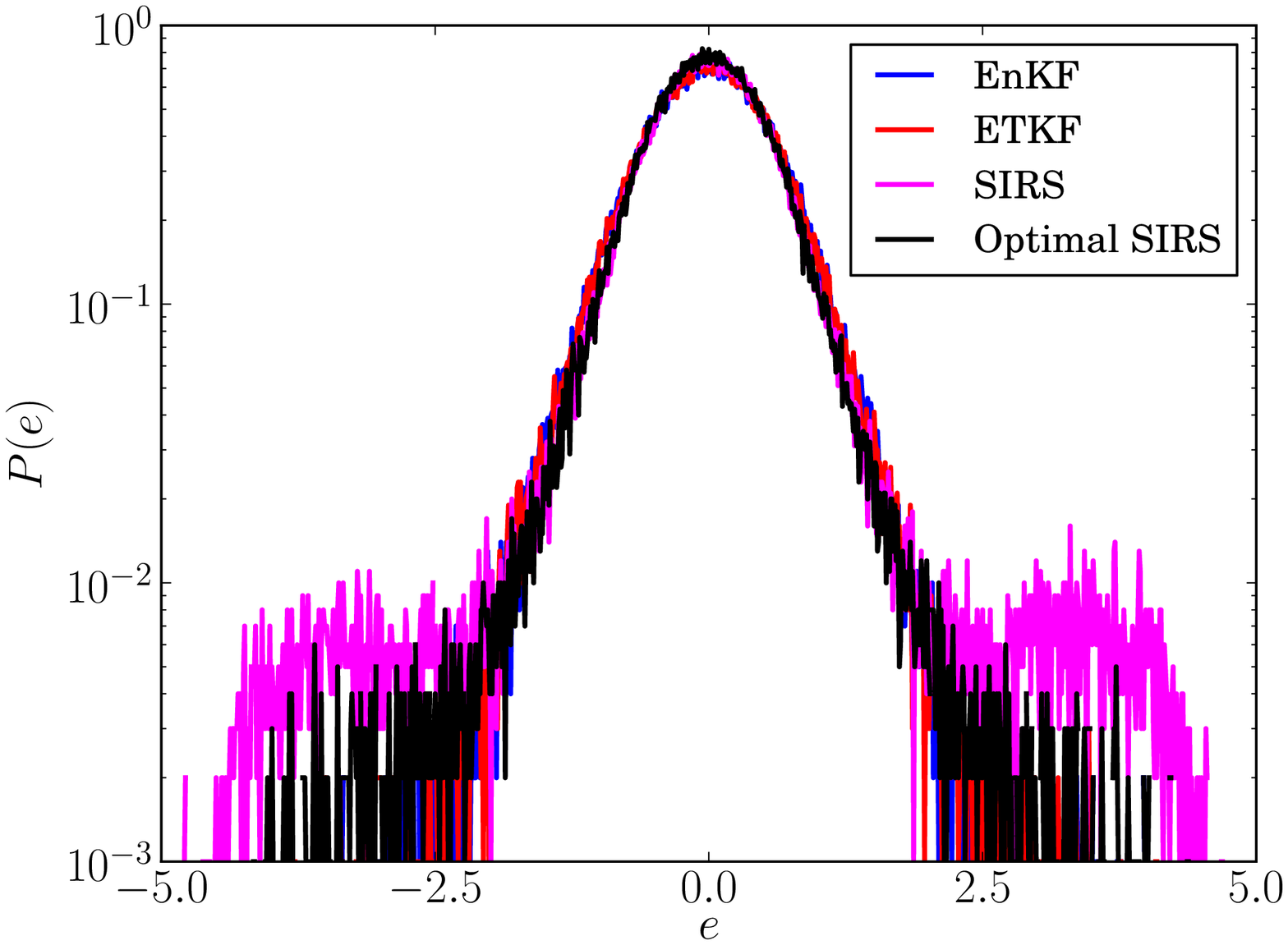}}
\subfigure[Running average root mean square $e$.]{\includegraphics[scale=0.365]{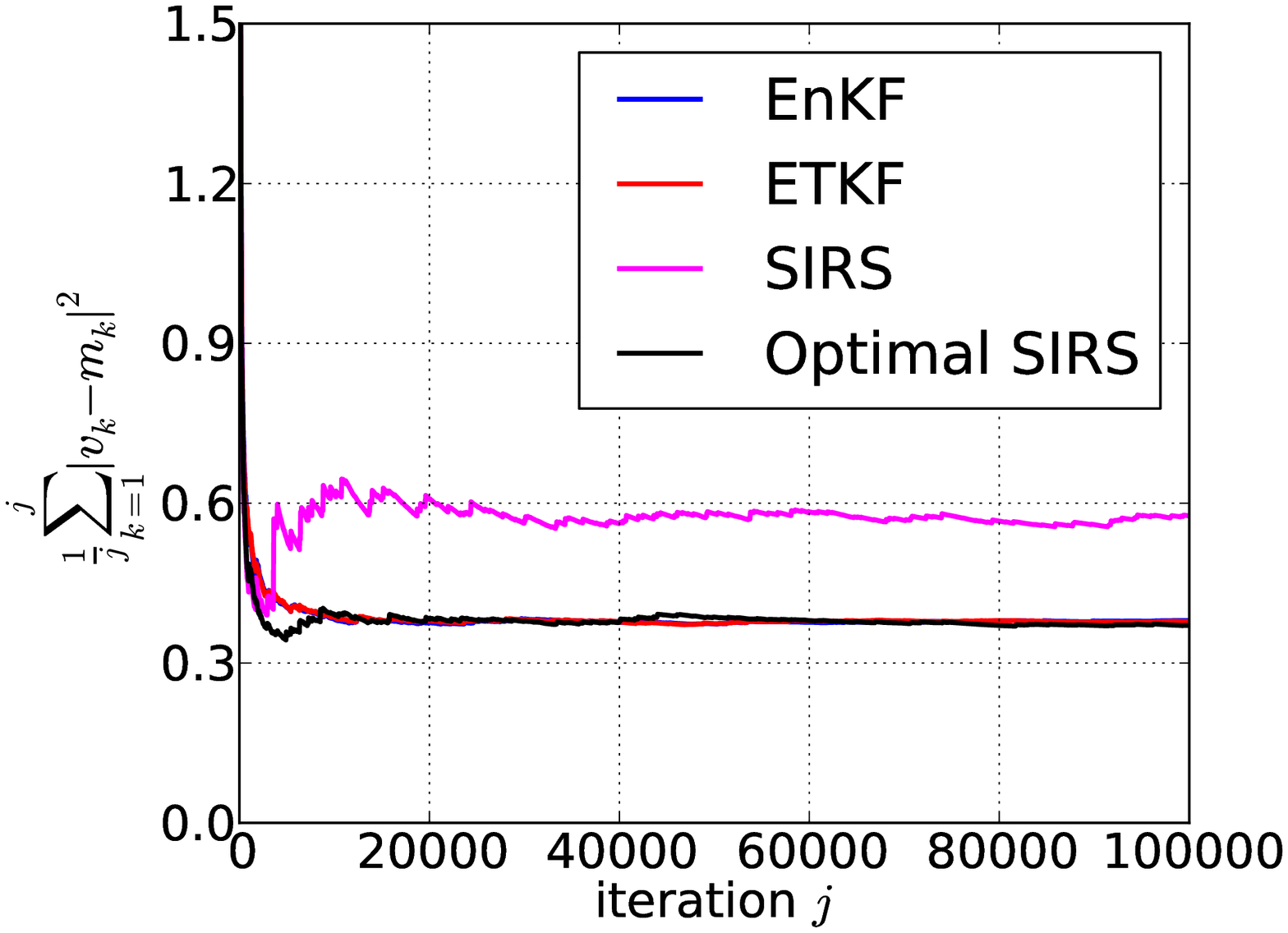}}
\caption{Convergence of $e=m-\vd$ for both versions of EnKF\index{Kalman filter!ensemble} in comparison to the
particle filter\index{particle filter}s for the sin map Ex. 1.3, corresponding
to solutions from Figs. \ref{fig:EnKF}, \ref{fig:ETKF}, \ref{fig:PFST}, and  \ref{fig:PFOP}.}
\label{fig:errorp}
\end{figure}

The first illustration concerns the Kalman filter\index{Kalman filter} applied to the
linear system of Example \ref{ex:ex2} with $A=A_3$. We assume that
$H=(1,0)$ so that we observe only the first component of the system
and the model and observational covariances are $\Sigma=I$ 
and $\Gamma=1$, where
$I$ is the $2 \times 2$ identity.
The problem is initialized with mean $0$ and covariance $10\,I$. 
Figure \ref{fig:KF}a shows the
behaviour of the filter on the unobserved component, showing how the
mean locks onto a small neighbourhood of the truth and how the
one-standard deviation confidence intervals computed from the variance on the
second component also shrink from a large initial value to an asymptotic
small value; this value is determined by the observational noise variance
in the first component. In Figure \ref{fig:KF}b
the trace of the covariance matrix is plotted demonstrating 
that the total covariance matrix asymptotes to a small limiting matrix.
And finally Figure \ref{fig:KF}c shows the error (in the Euclidean norm)
between the filter mean and the truth underlying the data, together with its
running average.
We will employ similar figures (a), (b) and (c) in the examples
which follow in this section.

The next illustration shows the 3DVAR\index{3DVAR} algorithm applied to the
Example \ref{ex:ex4} with $r=2.5.$ We consider noise-free dynamics
and observational variance of $\gamma^2=10^{-2}.$ The fixed
model covariance is chosen to be $c=\gamma^2/\eta$ with $\eta=0.2$. 
The resulting algorithm performs well at tracking the truth with asymptotic 
time-averaged Euclidean error of size roughly $10^{-2}.$
See Figure \ref{fig:3DVAR4}.

The rest of the figures illustrate the behaviour of the various
filters, all applied to the Example
\ref{ex:ex3} with $\alpha=2.5, \sigma=0.3,$ and $\gamma=1$.
In particular, 3DVAR\index{3DVAR}(Figure \ref{fig:3DVAR3}), 
ExKF\index{Kalman filter!extended}(Figure \ref{fig:ExKF}),
EnKF\index{Kalman filter!ensemble} (Figure \ref{fig:EnKF}), 
ETKF (Figure \ref{fig:ETKF}), 
and the particle filter \index{particle filter} with standard (Figure \ref{fig:PFST})
and optimal (Figure \ref{fig:PFOP})
proposals\index{proposal} are all compared on the same example.
The ensemble-based methods all use $100$ ensemble members each (notice
this is much larger than the dimension of the state space which is $n=1$ here,
and so a regime outside of which the ensemble methods would usually be employed
in practice).
For 3DVAR\index{3DVAR}, results from which (for this example)
are only shown in the summary Figure \ref{fig:error}, we take $\eta=0.5$.  

All of the methods perform well at tracking the true signal,
asymptotically in time, recovering from a large initial error.
However they also all exhibit occasional instabilities, and lose
track of the true signal for short periods of time.
From Fig. \ref{fig:ExKF}(c) we can observe 
that the ExKF\index{Kalman filter!extended} has small error for most of
the simulation, but that sporadic large 
excursions are seen in the error.
From Fig. \ref{fig:ETKF}(c) one can observe that ETKF is similarly prone to 
small destabilization and local instability as the 
EnKF\index{Kalman filter!ensemble} with perturbed observations\index{perturbed observations} in Fig. \ref{fig:EnKF}(c). 
 Also, notice from Figure \ref{fig:PFST}(c) that the particle filter\index{particle filter} with
standard proposal\index{proposal!standard} 
is perhaps slightly more prone to destabilization than 
the optimal proposal \index{proposal!optimal} 
in Figure \ref{fig:PFOP}(c), although the difference is minimal. 

The performance of the filters is now compared through a detailed
study of the statistical properties of the error $e = m - v^\dagger$, 
over long simulation times. In particular we compare the histograms of the 
errors, and their large time averages.
Figure \ref{fig:error} compares the errors incurred by the three basic methods
3DVAR \index{3DVAR}, ExKF, and EnKF, demonstrating that the EnKF\index{Kalman filter!ensemble} 
is the most accurate method of the three on average, with 
ExKF\index{Kalman filter!extended} the least accurate on average. 
Notice from Fig. \ref{fig:error}(a) that the error distribution of 
3DVAR\index{3DVAR} is the widest, and both it and EnKF\index{Kalman filter!ensemble} remain consistently 
accurate.  The distribution of ExKF\index{Kalman filter!extended}
is similar to EnKF\index{Kalman filter!ensemble}, except with "fat tails" associated to the 
destabilization intervals seen in Fig. \ref{fig:ExKF}.
 
Figure \ref{fig:errorp} compares the errors incurred by the four more
accurate ensemble-based methods EnKF, ETKF, SIRS, and SIRS(OP).
The error distribution, Fig.  \ref{fig:errorp}(a) of all these filters is similar.
In Fig.  \ref{fig:errorp}(b) one can see that the time-averaged  error is
indistinguishable between EnKF and ETKF. 
Also, the EnKF, ETKF, and SIRS(OP) also remain more or less 
consistently accurate. 
 The distribution of $e$ for SIRS is similar 
to SIRS(OP), except with fat tails associated to the 
destabilization intervals seen in Fig. \ref{fig:PFST}, which leads to the larger 
time-averaged error seen in Fig.  \ref{fig:errorp}(b).  
In this sense, the distribution of $e$ is similar to that for
ExKF\index{Kalman filter!extended}.

\section{Bibliographic Notes} 
\label{ssec:dtnb3}

\begin{itemize}

\item Section \ref{ssec:tkf} The Kalman Filter
has found wide-ranging application to low dimensional engineering
applications where the linear Gaussian model is appropriate,
since its introduction in 1960 \cite{kalman1960new}.
In addition to the original motivation in control of flight vehicles,
it has grown in importance
in the fields of econometric time-series analysis, and
signal processing \cite{harvey1991forecasting}.
It is also important because it plays a key role in
the development of the  approximate Gaussian
filters\index{filter!approximate Gaussian} which are the subject of section \ref{ssec:ngf}.
The idea behind the Kalman filter, to optimally combine model
and data, is arguably one of the most important ideas in
applied mathematics over the last century: the impact of the
paper \cite{kalman1960new} on many applications domains
has been huge.

\item Section \ref{ssec:ngf} All the non-Gaussian Filters 
we discuss are based on modifying the Kalman filter\index{Kalman filter} so that
it may be applied to non-linear problems. The development
of new filters is a very active area of research
and the reader is directed to the book \cite{majda2012filtering},
together with the articles \cite{CMT10},\cite{GHM09} and \cite{VL09} for insight into some of the recent developments with an
applied mathematics perspective.

The 3DVAR\index{3DVAR} algorithm was proposed at the UK
Met Office in 1986 \cite{article:Lorenc1986, article:Lorenc2000}, 
and was subsequently developed by the US National Oceanic and 
Atmospheric Administration \cite{article:Parrish1992}
and by the European Centre for Medium-Range Weather Forecasts (ECMWF)
in \cite{article:Courtier1998}. The perspective of these
papers was one of minimization and, as such, easily incorporates
nonlinear observation operators via the objective functional \eqref{eq:J3},
with a fixed $\hC=\hC_{j+1}$, for the analysis step of filtering;\index{filtering} 
nonlinear observation operators are important in numerous applications, 
including numerical weather forecasting.
In the case of linear observation operators the
objective functional is given by \eqref{eq:dtfa8} with explicit solution
given, in the case $\hC=\hC_{j+1}$, by \eqref{eq:dtfa10}. In fact the method of
{\em optimal interpolation}\index{optimal interpolation} 
predates 3DVAR \index{3DVAR} and takes the linear equations \eqref{eq:dtfa10} as
the starting point, rather than starting from a minimization principle;
it is then very closely related to the method of krigging \index{krigging}
from the geosciences \cite{Tar05}.
The 3DVAR\index{3DVAR} algorithm is important because it is 
prototypical of the many more sophisticated filters which are
now widely used in practice and it is thus natural to study it.

The extended Kalman filter\index{Kalman filter!extended}
was developed in the control theory community and is discussed
at length in \cite{jaz70}. It is not practical to implement
in high dimensions, and low-rank extended Kalman filter\index{Kalman filter!extended}s
are then used instead; see \cite{LS12} for a recent
discussion. 

The ensemble Kalman filter\index{Kalman filter!ensemble}
uses a set of particles to estimate covariance information,
and may be viewed as an approximation of the extended
Kalman filter, designed to be suitable in high dimensions.
See \cite{eve06} for an overview of the methodology, written
by one of its originators, and \cite{EVL96} for an early
example of the power of the method.
We note that the minimization principle \eqref{eq:minn} has
the very desirable property that the samples $\{\hv_{n+1}^{(n)}\}_{n=1}^N$ 
correspond, to samples of the Gaussian distribution
found by Bayes theorem with prior $N(\hm_{j+1},\hC_{j+1})$ 
likelihood  \index{likelihood} $y_{j+1}|v$. This is the idea behind the
{\em randomized maximum likelihood} method described in \cite{orl08},
and widely used in petroleum applications; the idea is discussed
in detail in the context of the EnKF in \cite{KLS13}. 
There has been some analysis of the EnKF in the large sample limit;
see for example \cite{li2008numerical,le2009large,mandel2011convergence}. 
However, the primary power of the method for practitioners is that
it seems to provide useful information for small sample sizes; it is
therefore perhaps a more interesting direction for analysis to study
the behaviour of the algorithm, and determine methodologies to improve it,
for fixed numbers of ensemble members. There is some initial work in
this direction and we describe it below.

Note that the $\Gamma$ appearing in the perturbed observation\index{perturbed observations}
EnKF\index{Kalman filter!ensemble} can be replaced 
by the sample covariance $\tilde{\Gamma}$ of the $\{\eta_{j+1}^{(n)}\}_{n=1}^N$
and this is often done in practice.  
The sample covariance of the updated
ensemble in this case is equal to $(I-\tilde{K}_{j+1}H)\hc_{j+1}$ where 
$\tilde{K}_{j+1}$ is the gain corresponding to 
the sample covariance $\tilde{\Gamma}$.

There are a range of parameters which can be used to
tune the approximate Gaussian filters\index{filter!approximate Gaussian} 
or modifications of those filters.
In practical implementations, especially for high dimensional problems,
the basic forms of the ExKF\index{Kalman filter!extended} and EnKF\index{Kalman filter!ensemble} as described here are prone to poor behaviour and
such tuning is essential \cite{kal03, eve06}. 
In Examples \ref{ex:101} and \ref{ex:202} we have already shown
the role of variance inflation\index{variance inflation} 
for 3DVAR\index{3DVAR} and this type of approach is also 
fruitfully used within ExKF\index{Kalman filter!extended} and EnKF\index{Kalman filter!ensemble}. 
A basic version of variance
inflation\index{variance inflation}
is to replace the estimate $\widehat{C}_{j+1}$ in 
\eqref{eq:dtfa9} by
$\epsilon \widehat{C}+\widehat{C}_{j+1}$
where $\widehat{C}$ is a fixed covariance such as that used in
a 3DVAR \index{3DVAR} method. Introducing $\epsilon \in (0,1)$ 
leads, for positive-definite $\widehat{C}$,
to an operator without a null-space and consequently to
better behaviour. In contrast taking $\epsilon=0$ can lead
to singular model covariances. This observation
is particularly important when the EnKF\index{Kalman filter!ensemble} is used
in high dimensional systems where the number of ensemble members, $N$, is always less
than the dimension $n$ of the state space. In this situation $\widehat{C}_{j+1}$
necessarily has a null-space of dimension at least $n-N$. It can also
be important for the ExKF\index{Kalman filter!extended} where the evolving dynamics can lead, asymptotically in $j$,
to degenerate $\widehat{C}_{j+1}$ with non-trivial null-space. 
Notice also that this form of variance inflation\index{variance inflation}
can be thought of as using 3DVAR\index{3DVAR}-like covariance updates,
in the directions not described by the ensemble covariance.
This can be beneficial in terms of the ideas underlying Theorem 
\ref{t35} where the key idea is that $K$ close to the
identity can help ameliorate growth in the underlying
dynamics. This may also be achieved 
by replacing the estimate $\widehat{C}_{j+1}$ in \eqref{eq:dtfa9}
by $(1+\epsilon)\widehat{C}_{j+1}.$  This is another commonly
used inflation tactic; note, however, that it 
lacks the benefit of rank correction.  It may therefore be combined
with the additive inflation yielding 
$\epsilon_1 \widehat{C}+(1+\epsilon_2)\widehat{C}_{j+1}.$
More details regarding tuning of filters through inflation can be found in
\cite{anderson1999monte,fisher2005equivalence,jaz70,kal03, eve06}.  

Another methodology which is important for practical implementation of the
EnKF\index{Kalman filter!ensemble} is {\bf localization}\index{localization} 
\cite{kal03, eve06}. 
This is used to 
reduce unwanted correlations in $\widehat{C}_{j}$ 
between points which are separated by large distances in space.
 The underlying assumption is that the
 correlation between points decays proportionally to their 
 distance from one another,
 and as such is increasingly corrupted by the sample error
in ensemble methods.  
 The sample covariance is hence modified to remove
correlations between points separated by large distances in space.
This is typically achieved by composing the empirical correlation
matrix with a convolution kernel. 
Localization can have the further benefit of increasing rank, 
as for the first type of variance inflation\index{variance inflation}
described above. 
An early reference illustrating the benefits and possible implementation
of localization \index{localization} is \cite{houtekamer2001sequential}.
An important reference which links this concept firmly with ideas from
dynamical systems is \cite{ott04}.

Following the great success of the ensemble Kalman filter\index{Kalman filter!ensemble} 
algorithm, in a series of papers 
\cite{tippett2003ensemble,bishop2001adaptive,anderson2001ensemble,
whitaker2002ensemble}, the square-root filter framework was
(re)discovered.  The idea goes back to at least 
\cite{andrews1968square}.  We focused the discussion above in section
\ref{ssec:sqrt} on the ETKF, but we note that it is possible to
derive different transformations. 
For example, the singular evolutive interpolated Kalman (SEIK) filter proceeds by first projecting the ensemble into the $(K-1)$-dimensional 
mean-free subspace, 
and then identifying a $(K-1) \times (K-1)$ matrix transformation, 
effectively prescribing a $K \times (K-1)$ matrix transformation $L_j$ 
as opposed to the $K \times K$ rank $(K-1)$ matrix $T_j^{1/2}$ proposed in ETKF.  
The former is unique up to unitary transformation, while the 
latter is unique only up to unitary transformations which have
${\bf 1}$ as eigenvector.  Other alternative transformations may take the forms
$A_j$ or $\tilde{K}_j$ such that $X_j = A_j \widehat{X}_j$ or 
$X_j = (I-\tilde{K}H) \widehat{X}_j$.  These are known as the
ensemble adjustment Kalman filter (EAKF) and the
ensemble square-root filter (ESRF)\index{Kalman filter!ESRF}\index{Kalman filter!square root} respectively.
See \cite{bishop2001adaptive} for details about the 
ETKF, \cite{anderson2001ensemble} for details about the 
EAKF and \cite{whitaker2002ensemble} for details about the 
ESRF \cite{whitaker2002ensemble}.  A review of all three is given in 
\cite{tippett2003ensemble}.  
The similar singular evolutive interpolated Kalman (SEIK) filter was introduced in 
\cite{pham1998singular} and is compared with the other square root filters in
\cite{nerger2012unification}.  Other ensemble-based filters have been developed 
in recent years bridging the ensemble Kalman filter with the particle filter, for example 
\cite{hoteit2008new, hoteit2012particle, reich2012gaussian,slivinski2014hybrid}.

\item Section \ref{ssec:pf}. 
In the linear case, the extended Kalman filter\index{Kalman filter!extended}
of course coincides with the Kalman filter\index{Kalman filter};
furthermore, in this case the perturbed observation\index{perturbed observations} ensemble 
Kalman filter reproduces the true posterior distribution\index{posterior distribution} in the large particle limit \cite{eve06}.
However the filters introduced in section \ref{ssec:ngf}
do not produce the correct posterior distribution when applied
to general nonlinear problems. 
On the contrary, the particle filter\index{particle filter} {\em does} 
recover the true posterior distribution as the number of particles tends to
infinity, as we show in Theorem \ref{th39}. This proof
is adapted from the very clear exposition in \cite{rebeschini2013can}.

For more refined analyses of the convergence of particle
filters see, for example, \cite{crisan2002survey,del2001stability}
 and references therein. As explained in Remarks
\ref{rem:fil} the constant appearing in the convergence results 
may depend exponentially on time if the mixing 
properties of the transition kernel $\rp(dv_j|v_{j-1})$ are poor 
(the undesirable properties of deterministic dynamics illustrate
this). There is also interesting work studying the effect of
the dimension \cite{SBBA08}.  A proof which exploits 
ergodicity\index{ergodic} of the transition kernel, when that is present,
may be found in \cite{del2001stability}; the assumptions 
there on the transition and observation kernels are very strong, 
and are generally not satisfied  in practice, but studies 
indicate that comparable results may hold under less stringent conditions.

For a derivation and discussion of the optimal proposal \index{proposal!optimal},
introduced in section \ref{ssec:opt},
see \cite{doucet2000sequential} and references therein.
We also mention here the implicit filters developed by Chorin
and co-workers \cite{CT09,CT09b,CMT10}.
These involve solving an implicit nonlinear equation
for each particle which includes knowledge of the next
set of observed data. This has some similarities to the method
proposed in \cite{vl10} and both are related to the
optimal proposal \index{proposal!index} mentioned above. 

\item Section \ref{ssec:stab}. 
The stability of the Kalman filter is a well-studied subject
and the book \cite{LR} provides an
excellent overview from the perspective of linear
algebra. For extensions to the extended Kalman filter
see \cite{jaz70}.
Theorem \ref{t35} provides a glimpse into the mechanisms
at play within 3DVAR\index{3DVAR}, and approximate Gaussian filters\index{filter!approximate Gaussian} 
in general, in determining stability and accuracy: 
the incorporation of data can convert
unstable dynamical systems, with positive Lyapunov exponents,
into contractive non-autonomous dynamical systems, thereby
leading, in the case of small observational noise, to
filters which recover the true signal within a small error.
This idea was highlighted in \cite{carrassi2008data}
and first studied rigorously for the 3DVAR\index{3DVAR} method 
applied to the Navier-Stokes equation in \cite{brett2012accuracy};
this work was subsequently generalized to a variety of different models in  
\cite{moodey2012nonlinear,shuk14,LSSS14}. 
It is also of note that these analyses of 3DVAR \index{3DVAR} build heavily on
ideas developed in \cite{hayden2011discrete} for a specialized
form of data assimilation in which the observations \index{observations} are noise-free. 
In the language of the synchronization\index{synchronization}
filter\index{filter!synchronization}
introduced in subsection \ref{ssec:4}, this paper demonstrates the
synchronization property \eqref{eq:sync} for the Navier-Stokes equation
with sufficiently large number of Fourier mode observations, and for the 
Lorenz\index{Lorenz model!'63} '63 model of Example \ref{ex:ex6} observed in only the first component.
The paper \cite{LSSS14} consider similar issues for the Lorenz\index{Lorenz model!'96} '96
model of Example \ref{ex:ex7}.  Similar ideas are studied for the perturbed
observation EnKF\index{Kalman filter!ensemble} in \cite{KLS13}; 
in this case it is necessary to introduce a form a variance inflation to
get a result analogous to Theorem \ref{t35}.
An important step in the theoretical analysis of ensemble Kalman
filter methods is the paper \cite{hunt13} which uses ideas from 
the theory of shadowing\index{shadowing} in dynamical systems; the
work proves that the ETKF\index{Kalman filter!ETKF} variant can shadow
the truth on arbitrarily long time intervals, provided the
dimension of the ensemble is greater than the number of unstable directions
in the system.

In the context of filter stability it is important
to understand the {\em optimality} of the
mean of the true filtering distribution  \index{filtering!distribution}. We observe that all of the filtering
algorithms that we have described produce an estimate of the
probability distribution $\bbP(v_j|Y_j)$ that depends only
on the data $Y_j$. There is a precise sense in which the
true filtering distribution  \index{filtering!distribution} can be used to find a lower bound
on the accuracy that can be achieved by any of these approximate
algorithms. We let $\bbE(v_j|Y_j)$ denote the mean of
$v_j$ under the probability distribution $\bbP(v_j|Y_j)$ and 
let $E_j(Y_j)$ denote any estimate of the state $v_j$ based only
on data $Y_j.$ Now consider all possible random data sets
$Y_j$ generated by the model \eqref{eq:dtf1}, \eqref{eq:dtf2}, noting
that the randomness is generated by the initial condition $v_0$ and
the noises $\{\xi_j,\eta_j\};$ in particular, conditioning on
$Y_j$ to obtain the probability distribution $\bbP(v_j|Y_j)$ can
be thought of as being induced by conditioning on the observational
noise $\{\eta_k\}_{k=1,\dots, j}.$ 
Then $E_j^*(Y_j):=\bbE(v_j|Y_j)$ minimizes the mean-square error with
respect to the random model  \eqref{eq:dtf1}, \eqref{eq:dtf2}
\cite{luenberger1968optimization, jaz70, kalman1960new}:
\begin{equation}
\label{eq:OF}
\bbE \|v_j-E_j^*(Y_j)\|^2 \le \bbE \|v_j-E_j(Y_j)\|^2
\end{equation}
for all $E_j(Y_j).$
Thus the algorithms we have described can do no better
at estimating the state of the system than can be achieved,
in principle, from the conditional mean of the state given
the data $\bbE(v_j|Y_j).$ This lower bound holds on average
over all instances of the model. An alternative way to
view the inequality \eqref{eq:OF} is as a means to
providing upper bounds on the true filter. For example,
under the conditions of Theorem \ref{t35} the righthand
side of \eqref{eq:OF} is, asymptotically as $j \to \infty$,
of size ${\cal O}(\epsilon^2)$; thus we deduce that
$$\limsup_{j \to \infty} \bbE \|v_j-\bbE(v_j|Y_j)\|^2 \le C\epsilon^2.$$
This viewpoint is adopted in \cite{SAS14} where the 3DVAR \index{3DVAR}
 filter
is used to bound the true filtering distribution.\index{filtering!distribution}
This latter optimality property can be viewed as resulting from the
Galerkin\index{Galerkin} orthogonality interpretation of
the error resulting from taking conditional expectation.

We have considered large time-behaviour on the assumption that
the map $\PPsi$ can be implemented exactly.
In situations where the underlying map $\PPsi$ arises from a
differential equation and numerical methods are required, 
large excursions in phase space caused 
by observational noise can cause numerical instabilities in the
integration methods underlying the filters. 
Remark \ref{rem:added} illustrates this fact in the context 
of continuous time. See \cite{gottwald2013mechanism} for a discussion
of this issue.

\item Section \ref{ssec:filti} We mention here the {\it rank\index{histogram!rank} histogram}. This is another consistency 
check on the output of ensemble or particle 
based approximations of the filtering
distribution  \index{filtering!distribution}. The idea is to consider scalar observed quantities 
consisting of generating ordered bins associated to that scalar and then keeping track of the statistics
over time of the data $y_j$ with respect to the bins. 
For example, if one has an approximation of the distribution consisting of $N$ equally-weighted particles, 
then a rank\index{histogram!rank} histogram for the first component of the state
consists of three steps, each carried out at each time $j$.  
First, add a random draw from the observational noise $N(0,\Gamma)$ to each 
particle after the  prediction phase of the algorithm. 
Secondly order the particles according to the value of their first component, generating $N-1$ bins 
between the values of the first component of each particle, and with one extra bin on each end.   
Finally, rank the current observation $y_j$ between $1$ and $N+1$ depending on which bin it
lands in.  Proceeding to do this at each time $j$, a histogram \index{histogram!rank} of the rank 
of the observations is obtained.  
The ``spread'' of the ensemble can be evaluated using this diagnostic.  If the histogram \index{histogram} is uniform,
then the spread is consistent.  If it is concentrated to the center, then the spread is overestimated.
If it is concentrated at the edges, then the spread is underestimated.  
This consistency check on the statistical model used was introduced
in \cite{anderson1996method} and is widely adopted throughout the
data assimilation community.

\end{itemize}

\section{Exercises}
\label{ex:filt}

\begin{enumerate}

\item Consider the Kalman filter in the case where $M=H=I,$
$\Sigma=0$ and $\Gamma>0.$ Prove that the covariance operator $C_j$ converges
to $0$ as $j \to \infty.$ Modify the program {\tt p8.m} so that it
applies to this set-up, in the two dimension setting. Verify what you
have proved regarding the covariance and make a conjecture about the
limiting behaviour of the mean of the Kalman filter. 

\item Consider the 3DVAR algorithm in the case where $\PPsi(v)=v, H=I,$
$\Sigma=0$ and $\Gamma>0.$ Choose $\hc=\alpha \Gamma$. Find an equation
for the error $e_j:=v_j-m_j$ and derive an expression for
$\lim_{j \to \infty}\bbE |e_j|^2$ in terms of $\alpha$ and $\sigma^2:=\bbE |\eta_j|^2.$ 
Modify the program {\tt p9.m} so that it
applies to this set-up, in the one dimensional setting. Verify what you
have proved regarding the limiting behaviour of the $m_j$.

\item Consider the EnKF algorithm in the same setting as the previous example.
Modify program {\tt p12.m} so that it
applies to this set-up, in the one dimensional setting. Study the behaviour
of the sequence $m_j$ found as the mean of the particles $v_j^{(n)}$ over the
ensemble index $n$. 

\item Consider the SIRS algorithm in the same setting as the previous example.
Modify program {\tt p14.m} so that it
applies to this set-up, in the one dimensional setting. Study the behaviour
of the sequence $m_j$ found as the mean of the particles $v_j^{(n)}$ over the
ensemble index $n$. 

\item Make comparative comments regarding the 3DVAR, EnKF and SIRS
algorithms, on the basis of your solutions to the three preceding
exercises.

\item 
In this exercise we study the behaviour of the  mean of the Kalman 
filter\index{Kalman filter} in the case of one dimensional dynamics.
The notation follows the development in subsection \ref{sssec:kfo}.
Consider the case $\sigma=0$ and assume that the data $\{y_j\}_{j \in \N}$ 
is generated from a true signal $\{\vd_j\}_{j \in \Z^+}$ governed
by the equation
$$\vd_{j+1}=\lambda \vd_j,$$
and that the additive observational noise  $\{\eta_j\}_{j \in \N}$ is 
drawn from an i.i.d. \index{i.i.d.}sequence with variance $\gamma^2.$
Define  the error $e_{j}=m_{j}-\vd_{j}$ between the estimated mean and the true signal and use (\ref{eq:kalman_update1d}b) to show that 
\begin{equation} \label{eq:specific}
e_{j+1}=\left(1-\frac{c_{j+1}}{\gamma^{2}}\right)\lambda e_{j}+\frac{c_{j+1}}{\gamma^{2}}\eta_{j+1}.
\end{equation}
Deduce that $e_j$ is Gaussian and that its mean and covariance satisfy the
equations
\begin{equation} \label{eq:specific1}
\mathbb{E}e_{j+1}=\lambda \left(1-\frac{c_{j+1}}{\gamma^{2}}\right)\mathbb{E}e_{j},
\end{equation}
and
\begin{equation} \label{eq:specific2}
\mathbb{E}e^{2}_{j+1}=\lambda^{2}\left(1-\frac{c_{j+1}}{\gamma^{2}}\right)^{2}\mathbb{E}e_{j}^{2}+\frac{c^{2}_{j+1}}{\gamma^{2}}.
\end{equation}
Equation \eqref{eq:specific1} can be solved to obtain 
\begin{equation} \label{eq:specific1a}
\mathbb{E}e_{j}=\lambda^{j}\left[\prod_{i=0}^{j}\left(1-\frac{c_{i+1}}{\gamma^{2}}\right) \right]\mathbb{E}e_{0},
\end{equation}
and, in a similar way, obtain for the solution of \eqref{eq:specific2}:
\begin{equation}\label{eq:specific2a}
\mathbb{E}e^{2}_{j}=\lambda^{2j}\left[\prod_{i=0}^{j-1}\left(1-\frac{c_{i+1}}{\gamma^{2}}\right)^{2} \right]\mathbb{E}e^{2}_{0}+\sum_{i=0}^{j-1}\left \{\left[\prod_{k=i+1}^{j}\left(1-\frac{c_{k}}{\gamma^{2}}\right)\right]\lambda^{2(j-i)}\frac{c^{2}_{i}}{\gamma^{2}}\right \}+\frac{c^{2}_{j}}{\gamma^{2}}
\end{equation}
Using the properties of the variance derived in \ref{sssec:kfo}, 
prove that the mean of the error tends to zero and that the asymptotic
variance is bounded by $\gamma^2$.

\end{enumerate}

\chapter{Discrete Time: {\sc matlab} Programs}\label{sec:matd}

This chapter is dedicated to illustrating the examples, theory and algorithms,
as presented in the previous chapters, through a few short and easy to follow {\sc matlab} programs. 
These programs are provided for two reasons:
(i) for some readers they will form the best route by which to appreciate the details of
the examples, theory and algorithms we describe; (ii) for other readers they 
will be a useful starting point to develop their own codes: 
whilst ours are not necessarily the optimal 
implementations of the algorithms discussed in these notes, they have been
structured to be simple to understand, to modify
and to extend. In particular the code may be readily extended to solve problems more complex than those
described in the Examples \ref{ex:ex1}-\ref{ex:ex7} which we will use for most  
of our illustrations.  
The chapter is divided into three sections, corresponding to 
programs relevant to each of the preceding three chapters.

Before getting into details we highlight a few principles 
that have been adopted in the programs and in 
accompanying text of this chapter.  
First, notation is consistent between programs, and matches the
text in the previous sections of the book as far as possible.  
Second, since many of the elements of the individual programs are repeated,
they will be described in detail only in the text corresponding to 
the program in which they first appear;  the short annotations explaining
them will be repeated within the programs however.
Third, the reader is advised to use
the documentation available at the command line
for {\it any} built-in functions of {\sc {matlab}}; this 
information can be accessed using the {\tt help} command --  
for example the documentation for the command 
{\tt help} can be accessed by typing {\tt help help}.

\section{Chapter \ref{sec:dtf} Programs}

The programs {\tt p1.m} and {\tt p2.m} used to generate the figures in Chapter 
\ref{sec:dtf} are presented in this section. Thus these algorithms
simply solve the dynamical system \eqref{eq:dtf1}, and process the
resulting data.

\subsection{p1.m}
\label{ssec:p1}

The first program {\tt p1.m} illustrates how to obtain sample paths from equations 
\eqref{eq:dtf1} and \eqref{eq:dtf11}.   In particular the program  simulates 
sample paths of the equation
\begin{equation}
u_{j+1}=\alpha \sin(u_{j})+ \xi_{j},
\label{eq:matsin}
\end{equation}
with $\xi_j \sim N(0,\sigma^{2})$ i.i.d. \index{i.i.d.} and $\alpha=2.5$, both for deterministic ($\sigma=0$) and stochastic dynamics \index{stochastic dynamics} ($\sigma \neq 0$) corresponding to Example \ref{ex:ex3}.   In line 5 the variable {\tt J} is defined, which corresponds to the number of forward steps that we will take. The parameters $\alpha$ and $\sigma$
are set in lines 6-7.  The seed\index{seed} for the random number generator is set to {\tt sd}$\in \bbN$ in line 8 using the command {\tt rng(sd)}.  This guarantees the results will be reproduced exactly by running the program with this same {\tt sd}.  Different choices of {\tt sd}$\in \bbN$ will lead to different streams of random numbers used in the program, which may also be desirable in order to observe the effects of different random numbers on the output.  The command 
{\tt sd} will be called in the preamble of all of the programs that follow.  In line 9,
two vectors of length {\tt J} are created named {\tt v} and {\tt vnoise}; after running the program,
these two vectors contain the 
solution for the case of deterministic ($\sigma=0$) and stochastic dynamics \index{stochastic dynamics} ($\sigma=0.25$) respectively. After setting the initial conditions in line 10,
the desired map is iterated, without and with noise, in lines $12-15$. 
Note that the only difference between the forward iterations of {\tt v} and {\tt vnoise} is the presence of the {\tt sigma*randn} term, which corresponds to the generation of a random variable sampled from  $N(0,\sigma^{2})$. 
Lines 17-18 contain code which graphs the trajectories, with and without noise,
to  produce Figure \ref{fig:p1}.  
Figures  \ref{fig:ex1},\ref{fig:ex2} and \ref{fig:ex4} were obtained by simply modifying lines $12-15$ of this program, in order to create sample paths for the corresponding $\PPsi$ for the other three examples; furthermore,
Figure \ref{fig:ex34}a was generated from output of this program and
Figure \ref{fig:ex34}b was generated from output of a modification of
this program.  
\newline
\lstinputlisting{./matlab_files/p1.m}
\newpage

\subsection{p2.m}
\label{ssec:p2}

The second program presented here, {\tt p2.m}, is designed
to visualize the posterior distribution\index{posterior distribution}  
in the case of  one dimensional deterministic dynamics \index{deterministic dynamics}. 
For clarity, the program is separated into three main sections.
The {\tt setup} section in lines 5-10 defines the parameters of the problem.
The model parameter {\tt r} is defined in line 6, and determines the
dynamics of the forward model, in this case
given by the logistic map \eqref{eq:ex4}:
\begin{equation}
v_{j+1}=rv_{j}(1-v_{j}).
\label{eq:matlog}
\end{equation}
The dynamics are taken as deterministic, so the parameter {\tt sigma} does
not feature here.  The parameter {\tt r}$=2$ so that the dynamics are not chaotic,
as the explicit solution given in Example \ref{ex:ex4} shows.
The parameters {\tt m0} and {\tt C0} define the mean and covariance
of the prior distribution
$v_0 \sim N(m_0,C_0)$, whilst {\tt gamma} defines the observational noise 
$\eta_{j}\sim N(0,\gamma^{2})$.

The {\tt truth} section in lines 14-20 generates the true reference trajectory (or, truth) {\tt vt}
in line 18 given by \eqref{eq:matlog}, as well as the observations {\tt y} in line 19 given by 
\begin{equation}
y_{j}=v_{j}+\eta_{j}.
\label{eq:matdat}
\end{equation} 
Note that the index of {\tt y(:,j)} corresponds to observation of {\tt H*v(:,j+1)}.
This is due to the fact that the first index of an array in matlab is {\tt j=1}, while
the initial condition is $v_0$, and the first observation is of $v_1$.  So, effectively the
indices of {\tt y} are correct as corresponding to the text and equation \eqref{eq:matdat}, 
but the indices of {\tt v} are one off.
The memory for these vectors is {\it preallocated}\index{preallocation} in line 14.  This is not necessary
because \MAT would simply {\it dynamically allocate} the memory in its absence, but 
it would slow down the computations due to the necessity of allocating new memory each
time the given array changes size.  Commenting this line out allows
observation of this effect,
which becomes significant when {\tt J} becomes sufficiently large.

The {\tt solution} section after line 24 computes the solution, in this case the 
point-wise representation of the posterior smoothing 
distribution\index{smoothing!distribution}
on the scalar initial condition. 
The point-wise values of initial condition are given by the vector 
{\tt v0} ($v_0$) defined in line 24.
There are many ways to construct such vectors, this convention defines the initial (0.01) and final (0.99) 
values and a uniform step size 0.0005.  It is also possible to use the command 
{\tt v0=linspace(0.01,0.99,1961)}, defining the {\it number} $1961$ of intermediate points, rather than the stepsize 0.0005. 
The corresponding vector of values of {\tt Phidet} ($\Phid$), {\tt Jdet} ($\Jjd$), 
and {\tt Idet} ($\Iid$) 
are computed in lines 32, 29, and 34 for each value of {\tt v0}, as related by the equation
\begin{equation}
\Iid(v_{0};y)=\Jjd(v_{0})+\Phid(v_{0};y),
\label{eq:matlogpostdet}
\end{equation}
where $\Jjd(v_{0})$ is the {\bf background} penalization\index{background!penalization} and 
$\Phid(v_{0};y)$ is the {\bf model-data misfit}\index{model-data misfit} functional given by (\ref{eq:Ip}b) and (\ref{eq:Ip}c) respectively.
The function $\Iid(v_{0};y)$ is the negative log-posterior as given
in Theorem \ref{th112}.
Having obtained  $\Iid(v_{0};y)$
we calculate $\mathbb{P}(v_{0}|y)$ in lines 37-38, using the formula
 \begin{equation}
 \mathbb{P}(v_{0}|y) = \frac{\exp(-\Iid(v_{0};y))}{\int \exp(-\Iid(v_{0};y))}.
\label{eq:matinitpost}
 \end{equation}
 The trajectory $v$ corresponding to the given value of $v_0$ ({\tt v0(i)}) is denoted by {\tt vv}
 and is replaced for each new value of {\tt v0(i)} in lines 29 and 31 since it is only required to compute {\tt Idet}.
 The command {\tt trapz(v0,exp(-Idet))} in line 37 approximates the denominator of the 
 above by the trapezoidal rule, i.e. the summation
\begin{equation}
{\tt trapz(v0,exp(-Idet))} = \sum_{i=1}^{N-1} {\tt (v0(i+1) - v0(i))*(Idet(i+1)+Idet(i))/2}.  
\label{eq:mattrapz}
\end{equation}
The rest of the program deals with plotting our results and in this instance it coincides with the output of Figure \ref{fig:smooth2}b. 
Again simple modifications of this program were used to produce Figures \ref{fig:smooth1}, \ref{fig:smooth2a} and \ref{fig:smooth3}. 
Note that {\tt rng(sd)} in line 8 allows us to use 
the same random numbers every time the file is executed; those
random  numbers are  generated with the seed\index{seed} 
{\tt sd} as described in the previous section \ref{ssec:p1}. Commenting this line out would 
result in the creation of new realizations of the random data $y$,
different from the ones used to obtain Figure \ref{fig:smooth2}b. 
\newline
\lstinputlisting{./matlab_files/p2.m}
\newpage

\section{Chapter \ref{sec:dtsa} Programs}

The programs {\tt p3.m-p7.m}, 
used to generate the figures in Chapter \ref{sec:dtsa}, 
are presented in this section. 
Hence various MCMC\index{MCMC} algorithms used to sample the posterior 
smoothing distribution\index{smoothing!distribution} are given. Furthermore, 
optimization\index{optimization} algorithms used to obtain solution of the 
4DVAR\index{4DVAR}  and w4DVAR\index{4DVAR!weak constraint} 
variational methods\index{variational method} 
are also introduced.  Our general theoretical development of 
MCMC methods in section \ref{ssec:mcmcm}
employs a notation of $u$ for the 
state of the chain and $w$ for the proposal\index{proposal}.
For deterministic dynamics the state is the
initial condition $v_0$; for stochastic
dynamics it is either the signal $v$ or
the pair $(v_0,\xi)$ where $\xi$ is the noise (since this
pair determines the signal).  Where appropriate, the programs 
described here use the letter $v$, and variants on it,
for the state of the Markov chain\index{Markov chain}, 
to keep the connection with the underlying dynamics model.

\subsection{p3.m}
\label{ssec:p3}

The program {\tt p3.m} contains an implementation of the Random Walk Metropolis (RWM)\index{Random Walk Metropolis} MCMC algorithm.
The development follows section \ref{ssec:dd2} where the algorithm is
used to determine the posterior distribution\index{posterior distribution} on the initial condition arising from the deterministic logistic map of Example \ref{ex:ex4} given by \eqref{eq:matlog}.
Note that in this case, since the the underlying dynamics are  
deterministic and hence completely determined by the initial condition, 
the RWM algorithm will provide samples from a probability distribution on $\mathbb{R}$. 
 
As in program {\tt p2.m}, the code is divided into 3 sections:
{\tt setup} where parameters are defined, {\tt truth} where the truth and data are generated, 
and {\tt solution} where the solution is computed, this time by means
of MCMC samples from the posterior smoothing distribution\index{smoothing!distribution}.
The parameters in lines 5-10 and the true solution 
(here taken as only the initial condition, rather than the trajectory it gives rise to) 
{\tt vt} in line 14 are taken to be the same as those used to generate 
Figure \ref{fig:smooth3}.  
The temporary vector {\tt vv} generated in line 19 is the trajectory
corresponding to the truth ({\tt vv(1)=vt} in line 14), 
and used to calculate the observations {\tt y} in line 20.  
The true value {\tt vt} will also be used as the initial sample in the 
Markov chain\index{Markov chain} for this and for all subsequent MCMC programs.
This scenario is, of course, not possible in the case that the data is 
not simulated. However it is useful in the case that the data is simulated, 
as it is here, because it can reduce the burn-in\index{burn-in time} 
time, i.e. the time 
necessary for the current sample in the chain to reach the target distribution,
or the high-probability region of the state-space.
Because we initialize the Markov chain at the truth, 
the value of $\Iid(v^\dagger)$, denoted by the temporary variable {\tt Idet}, 
is required to determine the initial acceptance probability, 
as described below. 
It is computed in lines 15-23 exactly as in lines 25-34 of program {\tt p2.m},
as described around equation \eqref{eq:matlogpostdet}.


In the {\tt solution} section some additional MCMC parameters are defined.  
In line 28 the number of samples is set to {\tt N =}$10^5$.
 For the parameters and specific data used here, this is sufficient for convergence
of the Markov chain\index{Markov chain}.  In line 30 the step-size parameter {\tt beta} is pre-set such that the algorithm
for this particular posterior distribution has a reasonable acceptance probability, or 
ratio of accepted vs. rejected moves.  A general rule of thumb for this is that it should be somewhere around $0.5$, to ensure that the algorithm is not
too correlated because of high rejection rate (acceptance probability
near zero) and that it is not too correlated
because of small moves (acceptance probability near one).
The vector {\tt V} defined in line 29 will save all of the samples.  
This is an example where preallocation\index{preallocation} is {\it very}
important.  Try using the commands {\tt tic} and {\tt toc} before and respectively after the loop in lines
33-50 in order to time the chain both with and without preallocation\index{allocation}.
\footnote{In practice, one may often choose to collect certain statistics from the chain ``on-the-fly'' rather than 
saving every sample, particularly if the state-space is high-dimensional where
the  memory required for each sample is large.}
In line 34 a move is proposed according to the proposal\index{proposal}
equation \eqref{eq:prop1}:
\[
w^{(k)}=v^{(k-1)}+\beta \iota^{(k-1)}
\]
where $v$({\tt v}) is the current state of the chain (initially taken to be equal to the true initial condition $v_{0}$), 
$\iota^{(k-1)}$={\tt randn} is an i.i.d. \index{i.i.d.} standard normal, and {\tt w} represents $w^{(k)}$.  Indices are not used for 
{\tt v} and {\tt w} because they will be overwritten at each iteration. 

The temporary variable {\tt vv} is again used for the trajectory corresponding to 
$w^{(k)}$ as a vehicle to compute the value of
the proposed $\Iid(w^{(k)};y)$, denoted 
in line 42 by {\tt I0prop = J0prop + Phiprop}.
In lines 44-46 the decision to accept or reject the
proposal\index{proposal}
is made based on the acceptance probability 
\[
a(v^{(k-1)},w^{(k)})= 1\wedge\exp(\Iid(v^{(k-1)};y)-\Iid(w^{(k)};y)).
\]
In practice this corresponds to drawing a uniform random number {\tt rand} and
replacing {\tt v} and {\tt Idet} in line 45 with {\tt w} and {\tt I0prop}
if {\tt rand<exp(I0-I0prop)} in line 44.
The variable {\tt bb} is incremented if the proposal\index{proposal} is accepted, so that 
the running ratio of accepted moves {\tt bb} to total steps {\tt n} can be computed
in line 47.  This approximates the average acceptance probability.  
The current sample $v^{(k)}$ is stored in line 48.  Notice that here one could replace
{\tt v} by {\tt V(n-1)} in line 34, and by {\tt V(n)} in line 45, thereby eliminating {\tt v}
and line 48, and letting {\tt w} be the only temporary variable.  However, the present 
construction is favourable because, as mentioned above,  in general one may not wish to save every sample. 

The samples {\tt V} are used in lines 51-53 in order to visualize the posterior distribution. 
In particular, bins of width {\tt dx} are defined in line 51, and the command {\tt hist} is used
in line 52.  The assignment {\tt Z = hist(V,v0)} means first the real-number line is split into $M$ bins with centers 
defined according to {\tt v0(i)} for $i=1,\dots, M$, with the first and last bin corresponding to the negative, 
respectively positive, half-lines.  Second, 
{\tt Z(i)} counts the number of {\tt k} for which {\tt V(k)} 
is in the bin with center determined by {\tt v0(i)}.
Again, {\tt trapz} \eqref{eq:mattrapz} is used to compute the normalizing constant in line 53,
directly within the plotting command.
The choice of the location of the histogram bins 
allows for a direct comparison with the posterior distribution calculated from the 
program {\tt p2.m}
by directly evaluating $\Iid(v;y)$ defined in \eqref{eq:matlogpostdet} 
for different values of initial conditions $v$. 
This output is then compared with the corresponding output of {\tt p2.m} for the same 
parameters in Figure \ref{fig:MCMC1}.

\newpage
\lstinputlisting{./matlab_files/p3.m}
\newpage

\subsection{p4.m}
\label{ssec:p4}

The 
program {\tt p4.m} contains an implementation of the 
Independence Dynamics Sampler\index{Independence Dynamics Sampler} 
for stochastic dynamics \index{stochastic dynamics}, as
introduced in subsection \ref{ssec:sd2}. Thus the 
posterior distribution\index{posterior distribution} 
is on the entire signal $\{v_j\}_{j \in \J}.$ 
The forward model in this case is from Example \ref{ex:ex3}, 
given by \eqref{eq:matsin}.
The smoothing distribution\index{smoothing!distribution} 
$\mathbb{P}(v|Y)$ is therefore over 
the state-space $\mathbb{R}^{J+1}$.

The sections {\tt setup}, {\tt truth}, and {\tt solution} are defined as for 
program {\tt p3.m}, but note that now
the smoothing distribution\index{smoothing!distribution} 
is over the entire path, not just over the initial
condition, because we are considering stochastic dynamics \index{stochastic dynamics}.    
Since the state-space is now the path-space, rather than the initial condition 
as it was in program {\tt p3.m}, the truth {\tt vt}$\in \bbR^{J+1}$ is now a vector.
Its initial condition is taken as a draw from $N(m_0,C_0)$ in line 16, and 
the trajectory is computed in line 20, so that at the end {\tt vt}$\sim \rho_0$.
As in program {\tt p3.m}, $v^\dagger$ ({\tt vt}) will be the chosen
initial condition in the Markov chain\index{Markov chain}
(to ameliorate burn-in issues\index{burn-in}) and so $\PPhi(v^\dagger;y)$
is computed in line 23.  Recall from subsection \ref{ssec:sd2} that only $\PPhi(\cdot;y)$ is required 
to compute the acceptance probability in this algorithm. 

Notice that the collection of samples {\tt V}$\in \bbR^{N\times J+1}$ 
preallocated\index{preallocation} in line 30 
is substantial in this case, illustrating
the memory issue which arises when the dimension of the signal space, 
and number of samples, increase.

The current state of the chain $v^{(k)}$, and the value of $\Phi(v^{(k)};y)$
are again denoted {\tt v} and {\tt Phi}, while the proposal\index{proposal} 
$w^{(k)}$ and the value of $\PPhi(w^{(k)};y)$
are again denoted {\tt w} and {\tt Phiprop},
as in program {\tt p3}.
 As discussed in section \ref{ssec:sd2}, the proposal\index{proposal} 
$w^{(k)}$ is an independent sample from  the prior distribution $\rho_{0}$, 
similarly to $v^\dagger$, and it is constructed in lines 34-39.   
The acceptance probability used in line 40 is now
\begin{equation}
a(v^{(k-1)},w^{(k)})= 1\wedge\exp(\PPhi(v^{(k-1)};y)-\PPhi(w^{(k)};y)),
\label{eq:matacceptind}
\end{equation}

The remainder of the program is structurally the same as {\tt p3.m}.
The outputs of this program are used to plot Figures \ref{fig:smooth_MCMC1}, \ref{fig:smooth_MCMC2}, and \ref{fig:smooth_MCMC3}. 
Note that in the case of Figure \ref{fig:smooth_MCMC3}, we have used $N=10^8$ samples. 

\newpage
\lstinputlisting{./matlab_files/p4.m}
\newpage

\subsection{p5.m}
\label{ssec:p5}

The Independence Dynamics Sampler\index{Independence Dynamics Sampler} of subsection
\ref{ssec:p4} may be very inefficient 
as typical random draws from the dynamics may be unlikely to fit the
data as well as the current state, and will then be rejected. 
The fifth 
program {\tt p5.m} gives 
an implementation of 
the pCN \index{pCN}  algorithm from section \ref{ssec:sd2} which is designed to overcome this issue by including the parameter $\beta$ which, if chosen small, allows
for incremental steps in signal space and hence the possibility of
non-negligible acceptance probabilities.
This program is used to generate Figure \ref{fig:pCN}

This program is almost identical to {\tt p4.m}, and so only the points at which it differs will be described.
First, since the acceptance probability is given by
\[
a(v^{(k-1)},w^{(k)})=1\wedge\exp(\PPhi(v^{(k-1)};y)-\PPhi(w^{(k)};y)+G(v^{(k-1)})-G(w^{(k)})),
\] 
the quantity 
\[
G(u)=\sum_{j=0}^{J-1}\Bigl(\frac12|\Sigma^{-\frac12}\PPsi(u_{j})|^2-\langle 
\Sigma^{-\frac12}u_{j+1}, \Sigma^{-\frac12}\PPsi(u_{j})\rangle\Bigr)
\]
will need to be computed, both for $v^{(k)}$ (denoted by {\tt v} in lines 31 and 44) 
where its value is denoted by {\tt G} ($v^{(0)}=v^\dagger$), as well as for $G(v^\dagger)$ is computed in 
line 22), and for $w^{(k)}$ (denoted by {\tt w} in line 36) where its value is denoted by {\tt Gprop} in line 39.


%
As discussed in section \ref{ssec:sd2} the proposal\index{proposal} 
$w^{(k)}$ is given by \eqref{eq:pcnprop}: 
\begin{equation}
w^{(k)}=m+(1-\beta^{2})^{\frac{1}{2}}(v^{(k-1)}-m)+\beta \iota^{(k-1)};
\label{eq:matproppcn}
\end{equation}  
here $\iota^{(k-1)} \sim N(0,C)$ are i.i.d. \index{i.i.d.} and denoted by {\tt iota} in line
35.  $C$ is the covariance of 
the Gaussian measure $\pi_0$ given in Equation \eqref{eq:nuz}
corresponding to the case of trivial dynamics $\PPsi=0$, and $m$ is the mean of $\pi_0$.  
The value of $m$ is given by {\tt m} in line 33.

\newpage
\lstinputlisting{./matlab_files/p5.m}
\newpage

\subsection{p6.m}
\label{ssec:p5_5}

The pCN \index{pCN} dynamics sampler is now introduced as program {\tt p6.m}.
The Independence Dynamics Sampler\index{Independence Dynamics Sampler} of subsection
\ref{ssec:p4} may be viewed as a special case of this algorithm for 
proposal variance\index{proposal!variance} $\beta=1$.
This proposal\index{proposal} combines the benefits of tuning the step size $\beta$, while
still respecting the prior distribution on the dynamics.  It does so by sampling 
the initial condition and noise $(v_0,\xi)$ rather than the path itself, in lines 
34 and 35, as given by Equation \eqref{eq:matproppcn}.  However, as opposed to 
the pCN \index{pCN} sampler of the previous section, this variable {\tt w} is 
now interpreted as a sample of $(v_0,\xi)$ and is therefore fed into the 
path {\tt vv} itself in line 39.  The acceptance probability is the 
same as the Independence Dynamics Sampler\index{Independence Dynamics Sampler} 
\eqref{eq:matacceptind}, depending 
only on {\tt Phi}.
If the proposal\index{proposal} 
is accepted, both the forcing {\tt u=w} and the path 
{\tt v=vv} are updated in line 44.  Only the path is saved as in the previous
routines, in line 47.

\newpage
\lstinputlisting{./matlab_files/p6.m}

\newpage

\subsection{p7.m}
\label{ssec:p6}

The next 
program {\tt p7.m} contains an implementation 
of the weak constrained variational algorithm w4DVAR\index{4DVAR!weak constraint} 
discussed in section \ref{ssec:vm}. 
This program is written as a function, whilst all previous programs were
written as scripts. This choice was made for {\tt p7.m}
so that the \MAT built-in function 
{\tt fminsearch} can be used for optimization\index{optimization} in the {\tt solution} section,
and the program can still be self-contained.  
To use this built-in function it is necessary to define an {\it auxiliary} objective 
function {\tt I} to be optimized.
The function {\tt fminsearch} 
can be used within a script, but the auxiliary function would then have to be written
separately, so we cannot avoid functions altogether unless we write the 
optimization\index{optimization} 
algorithm by  hand.  We avoid the latter in order not to divert the focus of this text 
from the data assimilation problem, and algorithms to solve it, to the problem of  
how to optimize an objective function.

Again the forward model is that given 
by Example \ref{eq:ex3}, namely \eqref{eq:matsin}. 
The {\tt setup} and {\tt truth} sections are similar to the previous programs, except that $G$, for example, need not be computed here.
The auxiliary objective function {\tt I} in this case is $\Ii(\cdot;y)$ from equation 
\eqref{eq:dtf4} given by
\begin{equation}\label{eq:matdtf4}
\Ii(\cdot;y)=\Jj(\cdot)+\PPhi(\cdot;y),
\end{equation}
where 
\begin{equation}\Jj(u):=\frac12\bigl|C_0^{-\frac12}(u_0-m_0)\bigr|^2+\sum_{j=0}^{J-1}\frac12\bigl|\Sigma^{-\frac12}\bigl(u_{j+1}-\PPsi(u_j)\bigr)\bigr|^2,
\label{eq:matJJ}
\end{equation}
and
\begin{equation}\label{eq:matdtf3}
\PPhi(u;y)=\sum_{j=0}^{J-1}\frac12\bigl|\Gamma^{-\frac12}\bigl(y_{j+1}-h(u_{j+1})\bigr)\bigr|^2.
\end{equation}
It is defined in lines 38-45.  The auxiliary objective function
takes as inputs ({\tt u,y,sigma,gamma,\allowbreak alpha,m0,}
{\tt C0,J}), and gives
output {\tt out}$=\Ii(u;y)$ where $u\in R^{J+1}$ (given all the other parameters in its definition -- 
the issue of identifying the input to be optimized over is discussed also below).
 
 The initial guess for the optimization\index{optimization} algorithm {\tt uu} 
is taken as a standard normal random vector over $\bbR^{J+1}$ in line 27.
In line 24, a standard normal random matrix of size $100^2$ is drawn and thrown
away.  This is so one can easily change the input, e.g. to {\tt randn(z)} for {\tt z}$\in \bbN$, 
and induce different random initial vectors {\tt uu} for the optimization\index{optimization} algorithm, while
keeping the data fixed by the random number seed\index{seed} {\tt sd} set in line 12.  
The truth {\tt vt} may be used as initial guess by uncommenting line 28.
In particular, if the output of the minimization procedure is different 
for different initial conditions, then it is
possible that the objective function $\Ii(\cdot;y)$ has multiple minima, 
and hence the posterior distribution\index{posterior distribution} 
$\mathbb{P}(\cdot|y)$ is multi-modal. 
As we have already seen in Figure \ref{fig:4DVAR} 
this is certainly true even in the case of scalar deterministic dynamics \index{deterministic dynamics}, 
when the underlying map gives rise to a chaotic flow.  

The \MAT optimization\index{optimization} function {\tt fminsearch} is called in line 32.
The {\it function handle} command {\tt @(u)I(u, $\cdots$)} is used to 
tell {\tt fminsearch} that the objective function {\tt I} is to be considered a function of {\tt u},
even though it may take other parameter values as well 
(in this case, {\tt y,sigma,gamma,alpha,m0,C0}, and {\tt J}).  
The outputs of {\tt fminsearch} are the value {\tt vmap} such
that {\tt I(vmap)} is minimum, the value {\tt fval =  I(vmap)}, and the
{\tt exit flag} which takes the value 1 if the algorithm has converged.
The reader is encouraged to use the {\tt help} command for more details
on this and other \MAT functions used in the notes.
The results of this minimization procedure are plotted in lines 34-35
together with the true value $\vd$ as well as the data $y$.  In Figure 
\ref{fig:w4DVAR} such results are presented, including two minima 
which were found with different initial conditions. 
\newline
\lstinputlisting{./matlab_files/p7.m}
\newpage

\section{Chapter \ref{sec:dtfa} Programs}
\label{sec:c4ch3}

The programs {\tt p8.m-p15.m}, 
used to generate the figures in Chapter \ref{sec:dtfa}, 
are presented in this section. 
Various filtering\index{filtering!algorithm} 
algorithms used to sample the posterior 
filtering distribution\index{filtering!distribution} 
are given, involving both 
Gaussian approximation and particle approximation.
Since these algorithms are run for very large times (large $J$),
they will only be divided in two sections, {\tt setup} in which the parameters are defined, 
and {\tt solution} in which {\it both} the truth and observations are generated, {\it and} 
the online assimilation of the current observation into the filter 
solution is performed.  The generation of truth can be separated into
a {\tt truth} section as in the previous sections, 
but two loops of length $J$ would be required, and 
loops are inefficient in \MAT, so the present 
format is preferred.  The programs in this section are all very similar, 
and their output is also similar,
giving rise to Figures \ref{fig:KF}-\ref{fig:errorp}.  With the exception of {\tt p8.m} and {\tt p9.m},
the forward model is given by Example \ref{eq:ex3} \eqref{eq:matsin}, and the output is
identical, given for {\tt p10.m} through {\tt p15.m} in 
Figures \ref{fig:3DVAR3}-\ref{fig:EnKF} and \ref{fig:ETKF}-\ref{fig:PFOP}.  
Figures \ref{fig:error} and \ref{fig:errorp} compare the filters
from the other Figures.   {\tt p8.m} features a two-dimensional linear forward model, 
and {\tt p9.m} features the forward model from Example \ref{eq:ex4} \eqref{eq:matlog}.
At the end of each program, the outputs are used to plot the mean and the covariance  
as well as the mean square error of the filter as  functions of the iteration number $j$.

\subsection{p8.m}
\label{ssec:p8}

The first 
filtering\index{filtering!program} 
program is {\tt p8.m} which contains an implementation of the Kalman Filter
applied to Example \ref{ex:ex2}: 
\[
v_{j+1}=Av_{j}+\xi_{j}, \quad \text{with} \quad A=\left( \begin{array}{cc}
0 & 1 \\
-1 & 0 \end{array} \right)
\]
and observed data given by 
$$y_{j+1}=Hv_{j+1}+\eta_{j+1}$$
with $H=(1,0)$ and Gaussian noise.  Thus only the first component of $v_j$ is observed. 

The parameters and initial condition are defined in the {\tt setup} section, lines 3-19. 
The vectors {\tt v}, {\tt m} $\in \bbR^{N\times J}$, {\tt y}$\in \bbR^J$, and {\tt c} $\in \bbR^{N\times N \times J}$ 
are preallocated to hold the truth, mean, observations, and covariance over the {\tt J} observation times
defined in line 5.
In particular, notice that the true initial condition is drawn from $N(m_0,C_0)$ in line 16, 
where $m_0=0$ and $C_0=1$ are defined in lines 10-11.  The initial {\it estimate} of the distribution
is defined in lines 17-18 as $N(m_0,C_0)$, where $m_0\sim N(0,100 I)$ and $C_0 \leftarrow 100C_0$
so that the code may test the ability of the filter 
to lock onto the true distribution, asymptotically in $j$, 
given a poor initial estimate.  That is to say, the values of $(m_0,C_0)$ are {\it changed}
such that the initial condition is {\it not} drawn from this distribution.

The main {\tt solution} loop then follows in lines 21-34. 
The truth {\tt v} and the data that are being assimilated {\tt y} are 
sequentially generated within the loop, in lines 24-25. 
The filter prediction step, in lines 27-28, consists of computing 
the predictive mean and covariance $\hm_{j}$ and $\hc_{j}$
as  defined in \eqref{eq:dtfa2} and \eqref{eq:dtfa3} respectively:
\[
\hm_{j+1}=Am_{j}  ,\quad \hc_{j+1}=AC_{j}A^{T}+\Sigma.
\]
Notice that 
indices are not used for the transient variables {\tt mhat} and {\tt chat} representing
$\hm_{j}$ and $\hc_{j}$ because they will not be saved from one iteration to the next. 
 In lines 30-33 we implement the analysis formulae for the Kalman filter\index{Kalman filter}
from Corollary \ref{c32}.  
In particular, the innovation\index{innovation}
between the observation of the 
predicted mean and the actual observation, as introduced in
Corollary \ref{c32}, is first computed in line 30
\begin{equation}
d_{j} = y_{j} - H \hm_{j}.
\label{eq:matinno}
\end{equation}
Again {\tt d}, which represents $d_j$, 
does not have any index for the same reason as above.  
Next, the Kalman gain \index{Kalman gain}defined in Corollary \ref{c32} is computed in line 31
\begin{equation}
K_j = \hc_j H^T ( H \hc_j H^T + \Gamma)^{-1}.
\label{eq:matkalg}
\end{equation}
Once again index $j$ is not used for the transient variable {\tt K} representing $K_j$.  
Notice the "forward slash"
{\tt /} is used to compute {\tt B/A=B A}$^{-1}$.  This is an internal function of \MAT which will 
analyze the matrices {\tt B} and {\tt A} to determine an ``optimal''
method for inversion, given their structure. 
The update given in Corollary \ref{c32} is completed in lines 30-32 with the equations
\begin{equation}
m_j = \hm_j + K_j d_j \quad {\rm and} \quad C_j =  (I - K_j H) \hc_j.
\label{eq:matkalup}
\end{equation}

Finally, in lines 36-50 the outputs of the program are used to plot the mean and the covariance  
as well as the mean square error of the filter as  functions of the iteration number $j$, as shown
in Figure \ref{fig:KF}.

\newpage

\lstinputlisting{./matlab_files/p8.m}

\newpage

\subsection{p9.m}
\label{ssec:p82}

The 
program 
{\tt p9.m} 
contains an implementation of the 3DVAR\index{3DVAR} method applied to the chaotic 
logistic map of Example \ref{ex:ex4} \eqref{eq:matlog} for $r=4$. 
As in the previous section, 
the parameters and initial condition are defined  in the {\tt setup} section, lines 3-16.
In particular, notice that the truth initial condition {\tt v(1)} and initial mean {\tt m(1)}, are now initialized 
in lines 12-13 with a {\it uniform} random number using the command {\tt rand}, so that they are in the interval $[0,1]$ 
where the model is well-defined.  Indeed the 
solution will eventually become unbounded if initial conditions are chosen outside this interval.  
With this in mind, we set the 
dynamics noise  {\tt sigma = 0} in line 8, i.e. deterministic dynamics, so that the true dynamics themselves 
do not go unbounded.

The analysis step  \index{filtering!analysis step} of 3DVAR \index{3DVAR} consists of 
minimizing
$$\Jtd(v)=\frac12|\Gamma^{-\frac12}(y_{j+1}-Hv)|^2+\frac12|\hc^{-\frac12}(v-\PPsi(m_j))|^2.$$
In this one-dimensional case we set $\Gamma=\gamma^2$, $\hC=\sigma^2$
and define $\eta^2=\gamma^2/\eta^2.$
The stabilization parameter $\eta$ ({\tt eta}) from Example \ref{ex:101} 
is set in line 14, representing the ratio in uncertainty in the data to
that of  the model; equivalently it measures trust in the model over 
the observations.  The choice $\eta=0$ means the model is irrelevant 
in the minimization step \eqref{eq:dtfa8} of 3DVAR \index{3DVAR}, in the
observed space --the synchronization 
filter\index{synchronization}\index{filter!synchronization}.
Since, in the example, the signal space and observation space both have
dimension equal to one, the choice $\eta=0$ simply corresponds to
using only the data.
In contrast the choice 
$\eta=\infty$ ignores the observations  and uses only the model.

The 3DVAR set-up gives rise to the constant scalar covariance 
{\tt C} and resultant constant scalar gain {\tt K}; this should not 
be confused with the changing $K_j$ in \eqref{eq:matkalg}, 
temporarily defined by {\tt K} in line 31 of {\tt p8.m}.
The main {\tt solution} loop follows in lines 20-33.  
Up to the different forward model, lines 21-22, 24, 26, and 27 of this program are 
identical to lines 24-25, 27, 30, and 32 of {\tt p8.m} described in 
section \ref{ssec:p8}.
The only other difference is that the covariance updates are not here 
because of the constant covariance assumption underlying the 3DVAR \index{3DVAR} algorithm.

The 3DVAR\index{3DVAR} filter 
may in principle generate estimated mean {\tt mhat} outside $[0,1]$, because of the noise in the
data. In order to flag potential unbounded trajectories of the filter,
which in principle could arise because of this, 
an extra stopping criteria is included in lines 29-32. 
To illustrate this try setting {\tt sigma}$\neq0$ in line 8.  Then 
the signal will eventually become unbounded, regardless of
how small the noise variance is chosen.  In this case the estimate will surely blowup while
tracking the unbounded signal.  Otherwise, if $\eta$ is chosen appropriately so as to 
stabilize the filter it is extremely unlikely that the estimate will ever blowup.
Finally, similarly to {\tt p8.m}, in the last lines of the program
we use the outputs of the program in order to produce Figure \ref{fig:3DVAR4}, 
namely plotting the mean and the covariance  as well as the mean square error of the filter 
as  functions of the iteration number $j$.

\newpage

\lstinputlisting{./matlab_files/p9.m}
\newpage


\subsection{p10.m}
\label{ssec:p9}

A variation of program {\tt p9.m} is given by {\tt p10.m}, where 
the  3DVAR\index{3DVAR} filter is implemented for Example \ref{ex:ex3} given by \eqref{eq:matsin}.
Indeed the remaining programs of this section will all be for the same Example \ref{ex:ex3} 
so this will not be mentioned again. 
In this case, the initial condition is again taken as a draw from the prior $N(m_0,C_0)$ as in {\tt p7.m},
and the initial mean estimate is again {\it changed} to $m_0\sim N(0,100 I)$ 
so that the code may test the ability of the filter 
to lock onto the signal given a poor initial estimate.
Furthermore, for this problem there is no need to 
introduce the stopping criteria present in the case of {\tt p9.m} since the underlying deterministic 
dynamics are dissipative.
The output of this program is shown in Figure \ref{fig:3DVAR3}.

\newpage

\lstinputlisting{./matlab_files/p10.m}
\newpage

\subsection{p11.m}
\label{ssec:p10}

The next 
program is {\tt p11.m}. 
This program comprises an implementation of the 
extended Kalman Filter.
It is very similar in structure to {\tt p8.m}, except with a different 
forward model. Since the dynamics are scalar, the observation operator 
is defined by setting {\tt H} to take value $1$ in line 16.
The predicting covariance $\hc_j$ is not independent of the mean
as it is for the linear problem {\tt p8.m}.
Instead, as described in section \ref{ssec:exkf}, it is determined via the 
{\it linearization} of the forward map around $m_j$, in line 26:
\[
\hc_{j+1}=\left( \alpha\cos(m_{j}) \right) C_{j} \left( \alpha\cos(m_{j}) \right).
\]
As in {\tt p8.m} we {\it change} the prior to a poor initial estimate 
of the distribution to study if, and how, the filter locks onto a neighbourhood
of the true signal, despite poor initialization, for large $j.$
This initialization is in lines 15-16, 
where $m_0\sim N(0,100 I)$ and $C_0 \leftarrow 10C_0$.
Subsequent filtering\index{filtering!program} 
programs use an identical initialization, with
the same rationale as in this case. We will not state this again.
The output of this program is shown in Figure \ref{fig:ExKF}.

\newpage

\lstinputlisting{./matlab_files/p11.m}

\newpage

\subsection{p12.m}
\label{ssec:p11}

The program {\tt p12.m} contains an implementation of the ensemble Kalman Filter, with perturbed observations\index{perturbed observations}, 
as described in section \ref{ssec:enkf}.
The structure of this program is again very similar to {\tt p8.m} and {\tt p11.m}, 
except now an ensemble of particles, of size {\tt N} defined in line 12, 
is retained as an approximation of the 
filtering distribution\index{filtering!distribution}.
The ensemble $\{v^{(n)}\}_{n=1}^N$ represented by the matrix 
{\tt U} is then constructed out of draws from this Gaussian in line 18, 
and the mean $m'_0$ is reset to the ensemble sample mean.

In line 27 the predicting ensemble $\{\widehat{v}_j^{(n)}\}_{n=1}^N$ represented
by the matrix {\tt Uhat} is computed from a realization of 
the forward map applied to each ensemble member.
This is then used to compute the ensemble sample mean $\hm_j$ ({\tt mhat})
and covariance $\hc_j$ ({\tt chat}).
There is now an ensemble of "innovations" 
with a new i.i.d. \index{i.i.d.} realization $y^{(n)}_j \sim N(y_j,\Gamma)$ for each ensemble member, 
computed in line 31(not to be confused with the actual innovation \index{innovation} as defined in
Equation \eqref{eq:matinno}) 
\[
d^{(n)}_j = y_j^{(n)} - H \hv^{(n)}_j. 
\]
The Kalman gain \index{Kalman gain}$K_j$ ({\tt K}) is computed using  \eqref{eq:matkalg}, 
very similarly to {\tt p8.m} and {\tt p11.m},
and the ensemble of updates are computed in line 33:
\[
v^{(n)}_j = \hv^{(n)}_j + K_j d^{(n)}_j.
\]
The output of this program is shown in Figure \ref{fig:EnKF}.
Furthermore, long simulations of length $J=10^5$ were performed for this and the previous two
programs {\tt p10.m} and {\tt p11.m} and their errors are compared in Figure \ref{fig:error}.

\newpage
\lstinputlisting{./matlab_files/p12.m}
\newpage

\subsection{p13.m}
\label{ssec:p12}

The program {\tt p13.m} contains a particular square-root filter 
implementation of the ensemble Kalman filter \index{Kalman filter!ensemble},
namely the ETKF filter\index{Kalman filter!ensemble transform} 
described in detail in section 
\ref{ssec:sqrt}.  The program thus is very similar to {\tt p12.m} for 
the EnKF\index{Kalman filter!ensemble} with perturbed observations.
In particular, the filtering distribution  \index{filtering!distribution} of the state is again approximated by an ensemble of particles.
The predicting ensemble $\{\widehat{v}_j^{(n)}\}_{n=1}^N$ ({\tt Uhat}), 
mean $\hm_j$({\tt mhat}), and covariance $\hc_j$ ({\tt chat}) are
computed exactly as in {\tt p12.m}.  However, this time the covariance is kept in factorized
form $\widehat{X}_j \widehat{X}_j^\top= \hc_j$ in lines 29-30, with factors denoted {\tt Xhat}.  
The transformation matrix is computed in line 31
\[
T_j=\left ( I_N + \widehat{X}_j^\top H^\top \Gamma^{-1} H \widehat{X}_j\right)^{-\frac12},
\]
and $X_j = \widehat{X}_j T_j$ ({\tt X}) is computed in line 32,
from which the covariance $C_j = X_j X_j^\top$ is reconstructed in line 38.
A single innovation \index{innovation}$d_j$ is computed in line 34 and 
a single updated mean $m_j$ is then computed in line 36 using the Kalman gain \index{Kalman gain} 
$K_j$ \eqref{eq:matkalg} computed in line 35. 
 This is the same as in the Kalman Filter\index{Kalman filter}
 and extended Kalman filter (ExKF)\index{Kalman filter!extended} of {\tt p8.m} and {\tt p11.m}, 
 in contrast to the EnKF\index{Kalman filter!ensemble} 
with perturbed observations
\index{perturbed observations} appearing in {\tt p12.m}.
The ensemble is then updated to {\tt U} in line 37 using the formula
\[
v_j^{(n)} = m_j + X_j^{(n)}\sqrt{N-1},
\]
where $X_j^{(n)}$ is the $n^{th}$ column of $X_j$.

Notice that the operator which is factorized and inverted has dimension $N$,
which in this case is large in comparison to the state and observation dimensions.
This is of course natural for computing sample statistics but in the
context of the one dimensional examples considered here makes 
{\tt p13.m} run far more slowly than {\tt p12.m}. However in 
many applications the signal state-space dimension is 
the largest, with the observation dimension
coming next, and the ensemble size being far smaller than either of these.  
In this context the ETKF has become a very popular method. So its 
relative inefficiency, compared for example with the perturbed observations
Kalman filter, should not be given too much weight in the overall
evaluation of the method. 
Results illustrating the algorithm are shown in Figure \ref{fig:ETKF}.

\newpage

\lstinputlisting{./matlab_files/p13.m}

\newpage

\subsection{p14.m}
\label{ssec:p13}

The program {\tt p14.m} is an implementation of the standard 
SIRS 
filter from subsection \ref{ssec:bsf}.  The 
{\tt setup} section is almost identical to the EnKF\index{Kalman filter!ensemble} methods, because those
methods also rely on particle approximations of the filtering distribution  \index{filtering!distribution}.  
However, the particle filters consistently estimate quite general distributions,
whilst the EnKF\index{Kalman filter!ensemble} is only provably accurate for Gaussian distributions. 
The truth and data generation and ensemble prediction in lines 24-27 
are the same as in {\tt p12.m} and {\tt p13.m}.  The way this prediction in line 27
is phrased in section \ref{ssec:bsf} is $\widehat{v}_{j+1}^{(n)} \sim \bbP(\cdot | v_{j}^{(n)})$.
An ensemble of "innovation" terms $\{d_j^{(n)}\}_{n=1}^N$ are again required
again, but all using the {\it same} observation, as computed in line 28.
Assuming $w_{j}^{(n)}=1/N$,
then  
\[
\widehat{w}_{j}^{(n)} \propto \bbP(y_j|v_j^{(n)}) \propto \exp \left \{ {-\frac{1}{2} \left |d_j^{(n)}\right |_\Gamma^2} \right \},
\]  
where $d_j^{(n)}$ is the innovation\index{innovation} of the $n^{th}$ particle, as given in \eqref{eq:waits}.
The vector of un-normalized weights $\{\widehat{w}_j^{(n)}\}_{n=1}^N$ ({\tt what})
are computed in line 29 and normalized to $\{w_{j}^{(n)}\}_{n=1}^N$ ({\tt w}) 
in line 30.  
Lines 32-39 implement the resampling step.  
First, the cumulative distribution function 
of the weights $W \in [0,1]^N$ ({\tt ws})
is computed in line 32.  
Notice $W$ has the properties that $W_1 = w_{j}^{(1)}$, $W_{n}\leq W_{n+1}$, 
and $W_N=1$. 
Then $N$ uniform random numbers $\{u^{(n)}\}_{n=1}^N$
are drawn.  
For each $u^{(n)}$, let $n^*$ be such that $W_{n^*-1}\leq u^{(n)} < W_{n^*}$.
This $n^*$ ({\tt ix}) is found in line 34 using the {\tt find} function, which can identify
the first or last element in an array to exceed zero (see {\tt help} file):
{\tt ix = find ( ws > rand, 1, 'first' )}. 
This corresponds to drawing the $(n^*)^{th}$ element from the discrete measure
defined by $\{w_{j}^{(n)}\}_{n=1}^N$.  
The $n^{th}$ particle 
$v_j^{(n)}$ ({\tt U(j+1,n)}) is set to be equal to $\widehat{v}_j^{(n^*)}$ ({\tt Uhat(ix)})
in line 37.
The sample mean and covariance are then computed in lines 41-42.
The rest of the program follows the others, generating the output displayed in 
Figure \ref{fig:PFST}.

\newpage

\lstinputlisting{./matlab_files/p14.m}

\newpage

\subsection{p15.m}
\label{ssec:p14}

The program {\tt p15.m} is an implementation of the 
SIRS(OP) algorithm from subsection \ref{ssec:opt}.
The {\tt setup} section and truth and observation generation are again the 
same as in the previous programs.  The difference between this program
and {\tt p14.m} arises arises because 
the importance sampling proposal\index{proposal} kernel $Q_j$ with density
$\bbP(v_{j+1}|v_{j},y_{j+1})$ used to propose each $\widehat{v}_{j+1}^{(n)}$ 
given each particular $v_{j}^{(n)}$; in particular $Q_j$ depends on the next 
data point whereas the kernel $P$ used in {\tt p14.m} has density 
$\bbP(v_{j+1}|v_{j})$ which is independent of $y_{j+1}.$

Observe that if $v_{j}^{(n)}$ and $y_{j+1}$ are both fixed, then $\bbP\left (v_{j+1}|v_{j}^{(n)},y_{j+1}\right )$
is the density of the Gaussian with mean $m'^{(v)}$ and covariance $\Sigma'$ given by
\[
m'^{(n)} = \Sigma' \left (\Sigma^{-1} \PPsi\left (v_{j}^{(n)}\right ) + H^\top \Gamma^{-1} y_{j+1} \right ), \quad  \left(\Sigma'\right )^{-1} = \Sigma^{-1} + H^\top \Gamma^{-1} H.
\]
Therefore, $\Sigma'$ ({\tt Sig}) and the ensemble of means $\left\{m'^{(n)}\right \}_{n=1}^{N}$ (vector {\tt em})
are computed in lines 27 and 28 and 
used to sample $\widehat{v}_{j+1}^{(n)} \sim N(m'^{(n)},\Sigma')$ in line 29 for all of 
$\left\{\widehat{v}_{j+1}^{(n)}\right\}_{n=1}^N$ ({\tt Uhat}).  

Now the weights are therefore updated by \eqref{eq:opwts} rather than \eqref{eq:waits}, 
i.e. assuming $w_{j}^{(n)}=1/N$, then  
\[
\widehat{w}_{j+1}^{(n)} \propto \bbP\left (y_{j+1}|v_{j}^{(n)}\right ) \propto 
\exp \left \{{-\frac12 \left |y_{j+1} - \PPsi\left(v_{j}^{(n)}\right )\right |_{\Gamma+\Sigma}^2}\right \}.
\]  
 This is computed in lines 31-32, using another auxiliary "innovation" vector {\tt d} 
 in line 31.  
 Lines 35-45 are again identical to lines 32-42 of program {\tt p14.m}, performing 
 the resampling step and computing sample mean and covariance.
 



The output of this program was used to produce Figure \ref{fig:PFOP} similar to the other filtering algorithms  \index{filtering!algorithm}.
Furthermore, long simulations of length $J=10^5$ were performed for this and the previous three
programs {\tt p12.m}, {\tt p13.m} and {\tt p14.m} and their errors are compared in Figure \ref{fig:errorp}, 
similarly to Figure \ref{fig:error} comparing the basic filters {\tt p10.m}, {\tt p11.m}, and {\tt p12.m}.

\newpage

\lstinputlisting{./matlab_files/p15.m}
\newpage

\section{ODE Programs}

The programs {\tt p16.m} and {\tt p17.m} are 
used to simulate and plot the Lorenz '63 and '96 models\index{Lorenz model}
from Examples
\ref{ex:ex6} and \ref{ex:ex7}, respectively.  These programs are 
both \MAT functions, similar to the program {\tt p7.m} presented in
Section \ref{ssec:p6}.  The reason for using functions and not
scripts is that the black box \MAT
built-in function {\tt ode45} can be used for the time integration (see {\tt help}
page for details regarding this function).  Therefore, each has an {\it auxiliary}
function defining the right-hand side of the given ODE, which is passed via 
{\it function handle} to {\tt ode45}.

\subsection{p16.m}
\label{ssec:p16}

The first of the ODE programs, {\tt p16.m}, integrates the Lorenz '63
model \ref{ex:ex6}.  The setup section of the program, on lines 4-11, 
defines the parameters of the model and the initial conditions.  
In particular, a random Gaussian initial condition is chosen in line 9, 
and a small perturbation to its first ($x$) component is introduced in
line 10.  The trajectories are computed on lines 13-14 using the built-in 
function {\tt ode45}.
Notice that the auxiliary function {\tt lorenz63}, defined on line 29, takes 
as arguments $(t,y)$, prescribed through the definition of the function 
handle {\tt @(t,y)}, while $(\alpha, b, r)$ are given as fixed parameters 
{\tt (a,b,r)}, defining the particular instance of the function.  
The argument $t$ is intended for defining {\it non-autonomous} ODE, and is 
spurious here as it is an {\it autonomous} ODE and therefore $t$ does not
appear on the right-hand side.  It is nonetheless included for completeness,
and causes no harm.  The Euclidean norm of the error is computed in line
16, and the results are plotted similarly to previous programs in lines 18-25.
This program is used to plot Figs. \ref{fig:ex6_1} and \ref{fig:ex6_2}.

\subsection{p17.m}
\label{ssec:p17}

The second of the ODE programs, {\tt p17.m}, integrates the {\tt J=40} dimensional
Lorenz '96 model \ref{ex:ex7}.  This program is almost identical to the previous one,
where a small perturbation of the random Gaussian initial condition 
defined on line 9 is introduced on lines 10-11.  The major difference is the 
function passed to {\tt ode45} on lines 14-15, which now defines the
right-hand side of the Lorenz '96 model given by sub-function {\tt lorenz96} 
on line 30.  Again the system is
autonomous, and the spurious $t$ variable is included for completeness.
A few of the 40 degrees of freedom are plotted along with the error in lines
19-27.
This program is used to plot Figs. \ref{fig:ex7_2} and \ref{fig:ex7_1}

\newpage
\lstinputlisting{./matlab_files/p16.m}
\newpage

\lstinputlisting{./matlab_files/p17.m}

%
%
%
%



\bibliographystyle{abbrv}
\bibliography{mybib}

\printindex


\end{document}